\def\l@subsection{\@tocline{2}{0pt}{1pc}{4.6em}{}}
\renewcommand{\tocsubsection}[3]{\indentlabel{\@ifnotempty{#2}{\hspace*{2em}\makebox[2em][l]{\ignorespaces#1 #2. \hfill}}}#3}
\numberwithin{equation}{section}
\def\Im{\operatorname{Im}}
\def\Re{\operatorname{Re}}
\def\ov{\overline}
\def\11{{\rm 1~\hspace{-1.4ex}l} }
\def\R{\mathbb R}
\def\C{\mathbb C}
\def\Z{\mathbb Z}
\def\N{\mathbb N}
\def\T{\mathbb T}
\def\lg{\langle}
\def\rg{\rangle}
\def\pa{\partial}
\newcommand{\Id}{{\rm Id}}
\newcommand{\noi}{\noindent}
\newcommand{\Op}{\operatorname{Op}}
\newcommand{\tr}{\operatorname{Tr}}
\newcommand{\w}{\mathrm{w}}
\theoremstyle{plain}
\newtheorem{thm}{Theorem}
\newtheorem{prop}{Proposition}[section]
\newtheorem{cor}[prop]{Corollary}
\newtheorem{lemma}[prop]{Lemma}
\newtheorem{definition}[prop]{Definition}
\theoremstyle{remark}
\newtheorem{remark}[prop]{Remark}
\theoremstyle{definition}
\newtheorem{rem}{Remark}[section]
\theoremstyle{remark}
\newtheorem{ex}[prop]{Example}
\numberwithin{equation}{section}
\definecolor{darkred}{rgb}{0.7,0.1,0.1}
\definecolor{darkblue}{rgb}{0,0,0.7}
\title[Sharp resolvent estimate for the Baouendi-Grushin operator and applications]{Sharp resolvent estimate for the damped-wave Baouendi-Grushin operator and applications}
\author{Victor Arnaiz and Chenmin Sun}
\address[V. Arnaiz]{CNRS, Universit\'e Paris-Saclay. Laboratoire de Math\'ematiques d'Orsay, and Laboratoire de Math\'ematiques Jean Leray, Universit\'e de Nantes. UMR CNRS 6629, 2
	rue de la Houssini\`ere, 44322 Nantes Cedex 03, France.}
\email{victor.arnaiz@univ-nantes.fr}
\address[C.~ Sun]{CNRS, Universit\'e Paris-Est Cr\'eteil, Laboratoire d'Analyse et de Math\'ematiques appliqu\'ees, UMR  8050 du CNRS, 
	94010 Cr\'eteil cedex, France.}
\email{chenmin.sun@cnrs.fr}
\begin{document}    
	
	\begin{abstract}
		In this article we study the semiclassical resolvent estimate for the non-selfadjoint Baouendi-Grushin operator on the two-dimensional torus $\mathbb{T}^2=\mathbb{R}^2/(2\pi\mathbb{Z})^2$ with H\"older dampings. The operator is subelliptic degenerating along the vertical direction at $x=0$.
		We exhibit three different situations: 
		(i) the damping region verifies the geometric control condition with respect to both the non-degenerate Hamiltonian flow and the vertical subelliptic flow; (ii) the undamped region contains a horizontal strip; (iii) the undamped part is a  horizontal line. In all of these situations, we obtain sharp resolvent estimates. Consequently, we prove the optimal energy decay rate for the associated damped waved equations. For (i) and (iii), our results are in sharp contrast to the Laplace resolvent since the optimal bound is governed by the quasimodes in the subelliptic regime. While for (ii), the optimality is governed by the quasimodes in the elliptic regime, and the optimal energy decay rate is the same as for the classical damped wave equation on $\mathbb{T}^2$. 
		
		Our analysis contains the study of adapted two-microlocal semiclassical measures, construction of quasimodes and refined Birkhoff normal-form reductions in different regions of the phase-space. Of independent interest, we also obtain the propagation theorem for semiclassical measures of quasimodes microlocalized in the subelliptic regime.
	\end{abstract}
	
	\maketitle
	
	\begin{spacing}{0.8}
		\tableofcontents
	\end{spacing}
	
	
	
	
	
	
	\section{Introduction}

\subsection{Motivation} 
After the fundamental work of Lars H\"ormander, the theory of subelliptic operators has been developed since  the late 60's. Subelliptic operators are rooted from physics (such as kinetic theory) and are linked to probability theory (such as the Mallivian calculus). From the PDE point of view, the presence of subellipticity brings some degeneracy which may dramatically change the behavior of solutions. 

In the past decade, typical control problems (observability, controllability, stabilization) for evolution PDEs driven by subelliptic operators have been widely investigated (see \cite{BeCa1},\cite{BeCa2},\cite{BDE},\cite{DA},\cite{Ko} and the references therein), with a particular focus on parabolic equations (heat equations). Only very recently, such problems  have been addressed for the wave \cite{Let} and some particular types of Schr\"odinger equations \cite{BuSun},\cite{FeLe} (see also the survey \cite{LetS}), with completely different mechanism compared to parabolic PDEs, as the study for wave and Schr\"odinger equations relies heavily on the study of propagations for high-energy wave-packets (quasimodes) at fine scales.

In this article, we address the question of  stabilization, namely the energy decay rate 
associated with a damped wave equation given in terms of the  subelliptic Laplacian known as \textit{Baouendi-Grushin operator}, via the study of the resolvent of the related damped-wave operator.  Our results exhibit different situations where the subellipticity and the damping properties are responsible for the optimal energy decay rates. Our analysis reveals fine concentration properties of quasimodes for the damped-wave operator. This  completes the program of the observability and stabilization for the Baouendi-Grushin Schr\"odinger and wave equations previously investigated in \cite{BuSun} and \cite{LeS20}. In fact, obtaining the observability for the Schr\"odinger equation is easier than getting the sharp resolvent estimate for the damped-wave operator, since not only the geometry of the damping region, but the profile and regularity of the damping term play a role in the latter problem.

\subsection{Setup}
Let  $\mathbb{T}^2= \mathbb{R}^2/(2\pi\mathbb{Z})^2$ be the two dimensional flat torus, so that the open rectangle $(-\pi,\pi)_x\times(-\pi,\pi)_y$ is a fundamental domain of $\mathbb{T}^2$. 
We consider the Baouendi-Grushin operator defined on $\mathbb{T}^2$ by
\begin{equation} 
	\label{e:Baouendi-Grushin}
	\Delta_G:=\partial_x^2+V(x)\partial_y^2
\end{equation}
where the function $V(x)$ is smooth, $2\pi$-periodic, satisfying
$$ 
V(0)=V'(0)=0, \quad V''(0)>0,
$$
and moreover $V(x)>0$ for all $x\in[-\pi,\pi]\setminus \{0\}$. 

Without loss of generality, we assume that $V''(0)=2$ in this article. Typically, near the vertical axis $x=0$, the Baouendi-Grushin operator can be written by Taylor expansion as$$\Delta_G=\partial_x^2 + \big( x^2 + \mathcal{O}(x^3) \big) \partial_y^2.$$ 
A concrete example is given by $V(x)=4\sin^2\big(\frac{x}{2}\big)$.

Our main interest in this work is the study of the damped wave equation
\begin{align}\label{e:damped_wave_equation}
	\begin{cases}
		&\!\!\!  \partial_t^2 u -\Delta_{G}u+b(x,y)\partial_tu=0,\\ &\!\!\!(u,\partial_tu)|_{t=0}=(u_0,v_0)
	\end{cases}
\end{align}
with damping $b(x,y)\geq 0$, and initial data $(u_0,v_0)$ lying in some sufficiently regular Sobolev space precised below. Our main goal is to study how the geometry of the \emph{damping region} 
$$
\Omega:= \big \{ (x,y)\in\T^2:b(x,y)>0 \big \}
$$ and the regularity of $b$ affect the energy decay rate of the solutions to \eqref{e:damped_wave_equation}, and what is the influence of the subellipticity stemmed from the Baouendi-Grushin operator in this decay rate, in comparison with the elliptic Laplacian $\Delta$ on the torus (see \cite{AL14}, \cite{BH07}, \cite{DK20}, \cite{K19}).

The well-posedness of \eqref{e:damped_wave_equation} can be viewed as a consequence of the  Hille-Yosida theory (see \cite[Appendix A.3]{LeS20} for details) and the solution $(u(t),\partial_tu(t))$ is given by a semi-group $e^{t\mathcal{A}}$ generated by
\begin{align}\label{matrixform}
	\mathcal{A}=
	\left(\begin{matrix}
		0 &1\\
		\Delta_G &-b
	\end{matrix}
	\right),
\end{align}
with domain
\begin{align*}
	D(\mathcal{A}):=H_G^2(\T^2)\times H_G^1(\T^2),
\end{align*}
where the associated Sobolev spaces $H_G^k$ are defined in \eqref{e:seminorms}.

Moreover, using the equation and integration by parts, we see that the energy of the solution $u(t)$, defined as
\begin{align} 
	\label{e:energy}
	E[u](t)=\frac{1}{2}\int_{\T^2} \big( |\partial_tu|^2+|\nabla_G u|^2 \big) dxdy,
\end{align}
where $\nabla_G=\big( \partial_x,V(x)^{\frac{1}{2}}\partial_y \big)$, verifies that 
$$ \frac{d}{dt}E[u](t)=-\int_{\T^2} b(x,y)|\partial_tu|^2dxdy\leq 0.
$$
We are interested in the study of the sharp energy dacay rates of $E[u](t)$ given by \eqref{e:energy} in three cases regarding the damping region $\Omega$:	
\begin{enumerate}		
	\item \emph{Geometric control case}: The open set $\Omega$ satisfies the elliptic and subelliptic geometric control condition, given by Definitions \ref{EGCC} and \ref{SGCC} below.
	\smallskip
	
	\item \emph{Widely undamped case}: The undamped set $b^{-1}(0) = \T^2 \setminus \Omega$ consists of a horizontal strip of the form $b^{-1}(0) = \T_x\times I_y$ for a  strict closed interval $I_y = [\alpha,\beta]\subset\T_y$.
	\smallskip
	
	\item \emph{Narrowly undamped case:} The set $b^{-1}(0)$ is a horizontal line $b^{-1}(0) = \T_x \times \{ y_0 \}$ for $y_0 \in \mathbb{T}_y$. 
\end{enumerate}

With respect to the regularity of $b$, we will assume that $b \in W^{k_0,\infty}(\T^2)$ for some sufficiently large $k_0 \in \mathbb{N}$,  and, similar to \cite[Eq. (2.13)]{AL14}  and \cite[Lemma 3.1]{BH07}, we will consider the following condition:
\begin{equation} 
	\label{e:B-H_condition}
	|\nabla b|\leq Cb^{1-\sigma},\quad |\nabla^2b|\leq Cb^{1-2\sigma}
\end{equation}
for some $\sigma <\frac{1}{4}$. These conditions imply that $b$ vanishes smoothly. Moreover, in connection with \cite{DK20}, \cite{K19}, we will sharpen some of our results in some particular cases when $b$ vanishes like  $y^\nu$ near the vanishing set $b^{-1}(0)$, for some larger $\nu$ (see precisely \eqref{e:particular_b} and \eqref{e:finite_type_condition} below).  


Before stating our main results, we first recall the relation between damped waves  and resolvent estimates. As we will see, in all the situations (1), (2), and (3) above, the energy decays at polynomial rate\footnote{Here the polynomial rate means a rate like $t^{-p}$ for some $p>0$, not necessary an integer. We respect this terminology from the existing literature.  }, so we restrict our definitions to this case.

\begin{definition}
	Let $\alpha > 0$. We say that the system \eqref{e:damped_wave_equation} is \emph{stable at rate $t^{-\frac{1}{\alpha} }$} if there exists $C>0$ such that for every initial data $(u_0,u_1)\in D(\mathcal{A})$,
	$$ (E[u](t))^{\frac{1}{2}}\leq Ct^{-\frac{1}{\alpha}}\|(u_0,u_1)\|_{D(\mathcal{A})}.
	$$
	We say that the rate $t^{-\frac{1}{\alpha}}$ is optimal if
	moreover
	$$ \limsup_{t\rightarrow+\infty}\hspace{0.1cm}t^{\frac{1}{\alpha}}\cdot\!\!\!\!\!\!\!\!\!\sup_{ \substack{(u_0,u_1)\in D(\mathcal{A})\\ (u_0,u_1)\notin\mathrm{Ker}(\mathcal{A}) } }\frac{(E[u](t))^{\frac{1}{2}}}{\|(u_0,u_1)\|_{D(\mathcal{A})}}>0.
	$$
\end{definition} 
Let us now consider, for any $k \geq 1$, the Grushin Sobolev spaces $H_G^k$ defined by the seminorms \eqref{e:seminorms} below.  Let $\mathscr{H} := H_G^1\times L^2$ be the energy space, we set $\dot{\mathcal{A}} := \mathcal{A} \vert_{\dot{\mathscr{H}}}$, where the homogeneous Hilbert space $\dot{\mathscr{H}}$ is defined by $\dot{\mathscr{H}} = (\ker \mathcal{A})^\perp \subset \mathscr{H}$. One can see, from an abstract theorem of Borichev-Tomilov\footnote{We will recall the precise statement in Appendix \ref{a:borichev_tomilov}.} \cite{BoT}, that with $D(\mathcal{A}) =  H^2_G \times H_G^1$,  the system \eqref{e:damped_wave_equation} is stable at rate $t^{-\frac{1}{\alpha}}$ if and only if\footnote{For the specific operator $\mathcal{A}$, the condition $i\R\cap\sigma(\dot{\mathcal{A}})=\emptyset$ is satisfied as a consequence of the unique continuation property, see \cite{LeS20}. } $i\R\cap \sigma(\dot{\mathcal{A}})=\emptyset$ and the following resolvent estimate holds:
\begin{align}\label{resolvent1}
	\| ( \mathcal{A}-i\lambda\mathrm{Id} )^{-1} \|_{\mathcal{L}(\mathscr{H})}\leq C|\lambda|^{\alpha},\text{ as } |\lambda|\rightarrow+\infty, \quad \lambda\in\R.
\end{align}
On the other hand, the resolvent estimate \eqref{resolvent1} (c.f. \cite{AL14} or Appendix \ref{a:borichev_tomilov}) is equivalent to the following semiclassical resolvent estimate:
\begin{align}\label{resolvent2} 
	\|(-h^2\Delta_G-1\pm ihb)^{-1}\|_{\mathcal{L}(L^2)}\leq Ch^{-\alpha-1},\text{ uniformly in  } 0<h\ll 1.
\end{align}

From now on, we concentrate ourselves on the resolvent estimate of type \eqref{resolvent2}. 
Our main results will show that there is a qualitative difference between the widely undamped case (2) and the narrowly undamped case (3), appearing as a jump in the sharp exponent $\alpha$ in \eqref{resolvent2}. This strong difference of behaviors also arises in the case of the Laplacian $\Delta$ on the torus (see \cite{K19} and \cite{DK20}).
More precisely, in case (2), our resolvent bound is the same as the optimal resolvent bound for Laplacian, hence the wave-energy for the subelliptic problem decays at the same order as the classical one.   
While in cases (1) and (3), the optimal resolvent bounds are larger than the resolvent bounds for the Laplacian, meaning that the energy of waves under subellipticiy decays slower than that of  waves subject to the classical equation.


\subsection{Main results} In this  section we state the main results of this article.

\subsubsection{Geometric control condition}

We first consider the case in which suitable adapted geometric control conditions to the problem \eqref{e:damped_wave_equation} hold. Let us first recall that the classical \textit{geometric control condition} for a continuous damping  $b:M\rightarrow\R_+$ defined on a compact Riemannian manifold $(M,g)$ states that there exist $T,c > 0$ such that 
\begin{equation}
	\label{e:classic_GCC}
	\inf_{\rho \in S^*M} \int_0^T b(\gamma_\rho(t)) dt \geq c,
\end{equation}
where $\gamma_\rho : \R \to M$ denotes the geodesic orbit issued from the point $\rho$ of the co-sphere bundle $S^*M$. This is a necessary and sufficient condition \cite{BLR92}, \cite{BG97}, \cite{RT74} for the \textit{uniform stabilization} of the elliptic damped-wave equation
$\partial_t^2u-\Delta_gu+b\partial_tu=0$
on $M$, that is, 
\begin{align}\label{uniform} 
	E[u](t) \leq Ce^{-\kappa t} E[u](0),
\end{align}
for some constants $C,\kappa> 0$. Moreover, the optimal decay rate $\kappa$ is related to the averaging property of the damping along the geodesic flow on the co-sphere bundle $S^*M$ and the real part of finitely many eigenvalues of $\mathcal{A}$ (possibly empty) on the right side of the line
$$\Re(z)=-\lim_{T\rightarrow\infty}\inf_{\rho\in S^*M}\frac{1}{T}\int_{0}^T b(\gamma_\rho(t))dt, 
$$
by \cite[Thm. 2]{Leb93}.

In our setting, the geodesic flow is replaced by the Hamiltonian flow generated by the principal symbol of the Baouendi-Grushin operator $-\Delta_G$, and subellipticity of this operator comes into play, leading to a weaker geometric control condition which will only provide a polynomial stabilization rate (for smoother initial data), or equivalently a resolvent estimate of type \eqref{resolvent2}. 

First of all, the principal symbol $p(x,y,\xi,\eta) := \xi^2+V(x)\eta^2$ of the operator $-\Delta_G$ generates a Hamiltonian flow on the phase space $T^*\T^2$ which we denote by $\phi_t^{\mathrm{e}}$. We define the \textit{elliptic geometric control condition} corresponding to the Hamiltonian flow $\phi_t^{\mathrm{e}}$:
\begin{definition}\label{EGCC} 
	We say that $b$ satisfies the elliptic geometric control condition (EGCC) if 
	$$ \liminf_{T\rightarrow\infty}\frac{1}{T}\int_0^Tb\circ \pi \circ\phi_t^{\mathrm{e}}(\rho) \, dt>0
	$$
	for all $\rho\in T^*\T^2$ such that $p(\rho)=1$, where $\pi : T^* \mathbb{T}^2 \to \mathbb{T}^2$ is the canonical projection.
\end{definition}

\begin{rem}
	Notice that the (EGCC) condition is weaker than the classical geometric control condition, since the time required for the orbit $\pi \circ \phi_t^{\mathrm{e}}(\rho)$ to reach the set $\Omega$ is not uniformly bounded in $\rho \in p^{-1}(1)$, and moreover the average of $b$ along any orbit is not uniformly bounded from below (compare with \eqref{e:classic_GCC}). We remark that the set $p^{-1}(1)$ is not compact.
\end{rem}

We next define the vertical flow $\phi_t^{\mathrm{v}}: \big(y,\eta \big)\mapsto \big(y+\frac{t\eta}{|\eta|}, \eta\big)$ on $\mathbb{T}_y \times (\R_\eta \setminus \{ 0 \})$ and the subelliptic flow $\phi_t^{\rm s}$ on the cone $\mathcal{C} := \{ 0 \} \times \mathbb{T}_y \times (\R_\eta \setminus \{0 \})$ by $\phi_t^{\rm s} = \Id_{x=0} \otimes \phi_t^{\rm v}$. This yields to the following notion of \textit{subelliptic geometric control condition}.
\begin{definition}\label{SGCC} 
	We say that $b$ satisfies the subelliptic geometric control condition (SGCC), if there exists $c_1>0$ such that 
	$$ \liminf_{T\rightarrow\infty}\frac{1}{T}\int_0^Tb\circ\pi_1 \circ \phi_t^{\mathrm{s}}(\rho_1) \, dt\geq c_1
	$$	
	for all $\rho_1\in\mathcal{C}$, where $\pi_1 (0, y, \eta) := (0,y)$.
\end{definition}

\begin{ex}
	Consider a function $0 \leq b_1 \in C^{\infty}(\T^2)$ such that:
	\begin{align*}
		b_1(x,y)=
		\begin{cases}
			\, 0, &   \text{if } |(x,y)|<\frac{\pi}{8},\\
			\, 1, &   \text{if }  |(x,y)|>\frac{\pi}{4},
		\end{cases}
	\end{align*}
	in the fundamental domain $(-\pi,\pi)^2$. 
	It can be easily verified that $b_1$ satisfies both (SGCC) and (EGCC). 
\end{ex}

\begin{ex}
	Next, consider the smooth damping $b_2(x,y)=b_2(y)\in C^{\infty}(\T)$ such that
	$$ b_2(y)=\begin{cases}
		\, 0, & \text{if } |y|<\frac{\pi}{4},\\
		\, 1, & \text{if } |y|\geq \frac{\pi}{2},
	\end{cases} 
	$$
	in the fundamental domain $(-\pi,\pi)$. In this case, only condition (SGCC) is verified by $b_2$. Since the classical trajectory 
	$$ x(t)=x_0+2t\; (\text{mod } 2\pi), \; \xi(t)=1,\; y(t)=0,\; \eta(t)=0
	$$
	on $p^{-1}(1)$ never encounters the damped region $\{b_2>0\}$, then (EGCC) fails. The damping $b_2(y)$ is the prototype that will be considered in Theorems \ref{t:main_theorem} and  \ref{t:sharp_in_1}. 
\end{ex}

\begin{ex}				Finally, consider a damping $0\leq b_3\in C^{\infty}(\T^2)$ such that near $x=0$, 
	$b_3(x,y)=x^2.
	$
	Then condition (EGCC) is satisfied by $b_3$, but (SGCC) is not. This type of damping requires further analysis and is not studied in the present work.
\end{ex}

We are now in position to state our first result. 
\begin{thm}\label{t:EGCC}
 Let $\sigma>0$ be sufficiently small and $k_0$ sufficiently large\footnote{We will require enough regularity on $b$ to use pseudodifferential symbolic calculus when necessary along the proof.}. Let $b \in W^{k_0,\infty}(\mathbb{T}^2)$ satisfy condition \eqref{e:B-H_condition}. Assume also that $b\geq 0$ satisfies (EGCC), and (SGCC). Then there exist $C_0>0$ and $h_0\in(0,1)$ such that, for all $0<h\leq h_0$,
	\begin{align}\label{resolvent:EGCC}
		\|(-h^2\Delta_G-1\pm ihb)^{-1}\|_{\mathcal{L}(L^2)}\leq C_0h^{-2}.
	\end{align}
	Consequently, the system \eqref{e:damped_wave_equation} is stable at rate $t^{-1}$. If moreover $\mathrm{supp} (b) \cap \{ x = 0 \} \neq \{0\}_x\times\mathbb{T}_y$, then there exists $C_1>0$ such that
	\begin{align}\label{resolvent:EGCClower}
		\|(-h^2\Delta_G-1\pm ihb)^{-1} \|_{\mathcal{L}(L^2)}\geq C_1h^{-2},
	\end{align}
	and in this case, the stable rate $t^{-1}$ for the associated damped wave equation is optimal.
\end{thm}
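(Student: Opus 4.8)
The plan is to split the proof into the upper bound \eqref{resolvent:EGCC} and the lower bound \eqref{resolvent:EGCClower}. For the upper bound, I would argue by contradiction and compactness: suppose \eqref{resolvent:EGCC} fails, so there exist sequences $h_n \to 0$ and $u_n \in L^2(\mathbb{T}^2)$ with $\|u_n\|_{L^2}=1$ such that $h_n^2\|(-h_n^2\Delta_G - 1 \pm i h_n b)u_n\|_{L^2} = o(1)$; call the right-hand side $f_n$, so $\|f_n\|_{L^2} = o(h_n^{-2})$ — more precisely we normalize so that $\|(-h_n^2\Delta_G-1\pm ih_nb)u_n\|_{L^2}=o(h_n^2)$. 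Pairing with $u_n$ and taking imaginary parts gives the a priori damping estimate $\int_{\mathbb{T}^2} b|u_n|^2\,dxdy = o(h_n)$, hence $\sqrt{b}\,u_n \to 0$ in $L^2$. The strategy is then to extract a semiclassical (defect) measure $\mu$ associated to $(u_n)$ and derive a contradiction by showing $\mu = 0$ while $\mu$ must have total mass $1$ on $p^{-1}(1)$. The subtlety, and the reason the problem is genuinely subelliptic rather than a routine elliptic argument, is that the sequence $(u_n)$ may concentrate on the characteristic cone $\{x=0,\ \xi=0\}$ where the classical symbol $p$ degenerates; there the standard semiclassical measure loses information and one must use a \emph{two-microlocal} measure resolving the scale at which $\xi$ and $x$ vanish near $x=0$ (this is the "adapted two-microlocal semiclassical measures" advertised in the abstract). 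Away from the cone, elliptic regularity and propagation of the classical measure along $\phi_t^{\mathrm e}$ together with (EGCC) force the part of $\mu$ living there to vanish in the usual Rauch–Taylor–Lebeau manner: $\mu$ is invariant under $\phi_t^{\mathrm e}$, supported where $b\circ\pi = 0$ along the whole orbit, which by (EGCC) is empty. On the cone, the two-microlocal measure is propagated by the vertical/subelliptic flow $\phi_t^{\mathrm s}$ (this is the propagation theorem "of independent interest" mentioned in the abstract, presumably proved earlier in the paper), and (SGCC) — which quantitatively controls the average of $b$ along $\phi_t^{\mathrm s}$ — kills the remaining mass. Combining the two, $\mu=0$, contradicting mass $1$; this yields \eqref{resolvent:EGCC}, and the rate $t^{-1}$ follows from the Borichev–Tomilov equivalence recalled in the excerpt with $\alpha = 1$.

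For the lower bound \eqref{resolvent:EGCClower} under the hypothesis that $\mathrm{supp}(b)\cap\{x=0\}$ misses some point $(0,y_0)$, the plan is to construct explicit quasimodes concentrated in the \emph{subelliptic regime} near the segment $\{x=0,\ y=y_0\}$. The idea: on the characteristic cone the operator $-h^2\Delta_G$ behaves, after the vertical Fourier mode $\eta$ is frozen at a scale $\eta \sim h^{-2/3}$ (so that $h^2\eta^2 \sim h^{2/3}$ balances the quantum harmonic-oscillator term), like a rescaled one-dimensional harmonic oscillator $-h^2\partial_x^2 + V(x)h^2\eta^2 \approx h^{4/3}(-\partial_X^2 + X^2)$ in the blown-up variable $X = h^{-1/3}\eta^{1/3} x$ — precisely the mechanism that makes the subelliptic quasimodes dominate the elliptic ones. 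One builds $u_h(x,y) = e^{iy\eta}\,\chi(y)\,v_h(x)$, with $v_h$ a Hermite-type ground state of the harmonic oscillator suitably rescaled, $\chi$ a cutoff localizing $y$ near $y_0$ away from $\mathrm{supp}(b)$, and $\eta$ chosen so that the harmonic oscillator eigenvalue plus the $\xi^2$ contribution matches $1$. Because $b$ vanishes on a neighborhood of the support of $u_h$ in the relevant regime, the damping term $ihb\,u_h$ is negligible (or one arranges $\langle b u_h, u_h\rangle$ to be of lower order than the elliptic-case quasimode defect), so $\|(-h^2\Delta_G - 1 \pm ihb)u_h\|_{L^2} = O(h^{2})\|u_h\|_{L^2}$ rather than $O(h)$, forcing the resolvent norm to be $\gtrsim h^{-2}$. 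Here the point $\mathrm{supp}(b)\cap\{x=0\}\neq\{0\}_x\times\mathbb{T}_y$ is exactly what is needed to find room for the cutoff $\chi$; if $b>0$ on the whole segment $\{x=0\}$ the construction would be obstructed and (consistently with the theorem) the bound could improve. The optimality of the rate $t^{-1}$ is then read off from the Borichev–Tomilov characterization together with \eqref{resolvent:EGCClower}.

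The main obstacle, in my view, is the two-microlocal analysis near the cone in the upper-bound proof: one must carefully set up the second-microlocalization in the variables conjugate to $(x,\xi)$ at the correct subelliptic scale $h^{1/3}$, show that the associated operator-valued measure is well-defined, prove it propagates along $\phi_t^{\mathrm s}$ (handling the Hamiltonian lift and the possible transfer of mass between the cone and its complement at the "border" region where $x$ is small but nonzero), and then feed (SGCC) in quantitatively. Matching this with the classical measure away from the cone — making sure no mass escapes undetected at intermediate scales — is the delicate bookkeeping step. The lower-bound construction, while technically involved (controlling remainders from the $\mathcal{O}(x^3)$ correction to $V$ and from the cutoffs, and checking that the chosen $\eta$ genuinely lies in the subelliptic window), is conceptually more routine, being a standard Grushin-type quasimode adapted to avoid the damping.
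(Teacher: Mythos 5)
Your upper-bound strategy is essentially the paper's: argue by contradiction, extract a scalar semiclassical measure $\mu_0$ on $T^*\T^2$ and a subelliptic two-microlocal measure $\ov{\mu}_0$, propagate $\mu_0$ by $\phi_t^{\mathrm e}$ and kill it via (EGCC), propagate $\ov{\mu}_0$ by the vertical flow and kill it via (SGCC); the total-mass-$1$ property supplies the contradiction. This is precisely what the paper does via Proposition \ref{Measureformulationj=012}, which packages the construction of the two measures, their total mass, their invariances, and the vanishing of $\ov{\mu}_0$ under (SGCC).

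However, you have the subelliptic scale wrong, and the error recurs in both halves of your outline. You put the critical vertical frequency at $\eta\sim h^{-2/3}$ with blow-up $x\sim h^{1/3}$; that is an Airy-type scaling appropriate to a linear barrier, not to this degenerate harmonic-oscillator problem. Freezing $\eta$, the $x$-operator is $h^2(-\partial_x^2+\eta^2 x^2)$, whose ground-state energy is $h^2|\eta|$; this matches $1$ only when $|\eta|\sim h^{-2}$, and then the ground state $\varphi_0(\eta,x)=\eta^{1/4}\mathrm e^{-\eta x^2/2}$ concentrates at $x\sim|\eta|^{-1/2}\sim h$. The paper's subelliptic regime is $h^{-1}\ll|\eta|\lesssim h^{-2}$, and its two-microlocal symbols $\mathbf a_R^h(x,y,\xi,\eta)=(1-\chi_R(h\eta))\,a(hx|\eta|,y,\xi,h\eta,h^2\eta)$ encode the correct $\eta$-dependent scale $x\sim(h|\eta|)^{-1}$. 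With your scaling the oscillator eigenvalue is only $O(h^{4/3})\ll1$, so the state cannot sit on $p^{-1}(1)$ without carrying $\xi$-frequency $\sim h^{-1}$ in $x$ — which places it in the compact regime, not the subelliptic one.

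For the lower bound this is not merely cosmetic. With the correct scale $\eta\sim h^{-2}$, the Taylor remainder $V(x)-x^2=O(|x|^3)$ contributes $h^2\eta^2|x|^3\sim h^2\cdot h^{-4}\cdot h^3=h$, so a direct ansatz yields only $O(h)$, not the $O(h^2)$ you assert. The paper obtains $O(h^2)$ by a finite-dimensional Birkhoff normal form (Lemma \ref{l:normal_form}, noting that the cubic term has vanishing diagonal, Lemma \ref{l:third_eigenvalue}) combined with a time-dependent averaging construction in the spirit of Arnaiz--Sj\"ostrand (Section \ref{s:subelliptic_quasimodes}), with the $y$-localization arising from non-stationary phase after Fourier transforming in time rather than from the static cutoff $\chi(y)$ you propose. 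You flag the quasimode step as "conceptually more routine"; the normal form and the time-averaging are in fact where the real work of that step lies.
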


\begin{rem}
	There exist non-degenerate trajectories spiraling around the degenerate line $x=0$. For example, assuming that $V(x)=x^2$ near $x=0$, consider the family of trajectories
	$$ x_{\epsilon}(t)=\epsilon \sin(2t/\epsilon) \text{ \textnormal{mod} }2\pi, \quad y_{\epsilon}(t)=\epsilon(t-\epsilon/4\sin(4t/\epsilon) ) \text{ \textnormal{mod} }2\pi.
	$$
	Roughly speaking, (SGCC) allows to control not only the propagation along the subelliptic flow $\phi_t^{\mathrm{s}}$, but also these non-degenerate trajectories of $\phi_t^{\mathrm{e}}$ close to the degenerate line $x=0$. The classical trajectories in the compact regime, away from the vertical trajectory, are then controlled by condition (EGCC).
\end{rem}	
\begin{rem}
	The analogue of the vertical flow $\phi_t^{\mathrm{v}}$
	can be intrinsically defined on more general manifolds with sub-Riemannian metric, such as three-dimensional contact manifolds (\cite{CdVHT}), quotient of H-type groups (\cite{FeLe}), etc. We propose as an open question to obtain sharp damped-wave decay rate for the above-mentioned general compact sub-Riemannian manifolds, under (EGCC) and (SGCC).  
\end{rem}


\subsubsection{Rectangular damping}
In the second set of main results, we assume that $b=b(y)$ depends only on the $y$ variable. Notice that in cases (2) and (3) the (EGCC) condition is violated.
We first state our results in case (2), in correspondence with \cite[Thm. 2.6]{AL14} for the damped-wave equation with elliptic Laplacian on the torus.

\begin{thm}
	\label{t:main_theorem}
	Let $\sigma>0$ be sufficiently small and $k_0$ sufficiently large. Let $b \in W^{k_0,\infty}(\mathbb{T}^2)$ satisfy \textnormal{(2)} and condition \eqref{e:B-H_condition}. Then there exist $h_0>0$, $\delta_0=\delta_0(\sigma) > 0$, $\delta_0\rightarrow 0$ as $\sigma\rightarrow 0$, and $C_0>0$ such that, for all $0<h\leq h_0$:
	\begin{align}\label{resolvent:generalupper}		
		\|(-h^2\Delta_G-1\pm ihb)^{-1}\|_{\mathcal{L}(L^2)}\leq C_0h^{-2-\delta_0}.
	\end{align}
	Consequently, the system \eqref{e:damped_wave_equation} is stable at rate $t^{-\frac{1}{1+\delta_0}}$. Moreover, there exists $C_1>0$ such that
	\begin{align}\label{resolvent:generallower}
		\|(-h^2\Delta_G-1\pm ihb)^{-1} \|_{\mathcal{L}(L^2)}\geq C_1h^{-2}.
	\end{align}
\end{thm}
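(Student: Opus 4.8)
The plan is to treat the upper bound \eqref{resolvent:generalupper} and the lower bound \eqref{resolvent:generallower} separately, the former being the substantial part. For the upper bound, I would argue by contradiction: suppose there is a sequence $h_n \to 0$, functions $u_n \in L^2(\T^2)$ with $\|u_n\|_{L^2}=1$, and $f_n$ with $\|f_n\|_{L^2} = o(h_n^{2+\delta_0})$ such that $(-h_n^2\Delta_G - 1 \pm i h_n b)u_n = f_n$. Testing against $u_n$ and taking imaginary parts gives the damping estimate $\|b^{1/2} u_n\|_{L^2}^2 = o(h_n^{1+\delta_0})$, so $u_n$ concentrates (in the appropriate semiclassical sense) away from the damped region, i.e.\ its semiclassical defect measure is supported over the undamped strip $\T_x \times I_y$ on the characteristic set $p = 1$. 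The key is then a frequency/phase-space decomposition of $u_n$ according to the size of the $\eta$-frequency relative to $h_n$: an \emph{elliptic regime} where $|\eta| h_n \sim 1$ (the classical regime, where $V(x)\eta^2$ is bounded and the operator behaves like a genuine second-order operator), a \emph{subelliptic regime} where $|\eta| h_n \gg 1$ (frequencies so high that the potential well $V(x)\eta^2$ forces concentration near $x=0$), and intermediate transition zones. Since $b = b(y)$ and the undamped set contains a full horizontal strip $\T_x \times I_y$, the $x$-behavior is essentially free; the relevant dynamics is the propagation in the $(y,\eta)$ variables, and (crucially) a trajectory of $\phi_t^{\rm e}$ that stays over $I_y$ must have $\eta$-component whose sign is controlled, so the $y$-drift eventually exits $I_y$ unless $\eta = 0$ — mirroring the torus argument of Anantharaman–Léautaud \cite{AL14}.

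The heart of the argument, and what I expect to be the main obstacle, is the normal-form reduction in the subelliptic regime. In the elliptic regime one can import the \cite{AL14}-type second-microlocalization on $\{\eta = 0\}$ more or less directly, producing the loss $h^{-\delta_0}$ coming from the regularity condition \eqref{e:B-H_condition} with $\delta_0 = \delta_0(\sigma) \to 0$; this is the mechanism behind the non-integer exponent. But in the subelliptic regime the operator $-h^2\Delta_G$ restricted to a high-$\eta$ fiber looks, after rescaling $x \mapsto (|\eta| h)^{1/2} x$, like a semiclassical harmonic oscillator $-\tilde h^2 \partial_x^2 + x^2$ in $x$ with effective parameter $\tilde h = (|\eta| h)^{-1}\to 0$, plus error terms $\mathcal O(x^3 \eta^2)$ from the Taylor remainder of $V$. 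Projecting onto the $m$-th Hermite mode produces an effective one-dimensional operator in $(y,\eta)$ of the form $-h^2\partial_y^2$ acting against the eigenvalue band $(2m+1)(|\eta|h)$, and I would perform a Birkhoff normal form to diagonalize the $x$-dependence and push the non-resonant remainders to high order; the difficulty is controlling these remainders uniformly across the whole subelliptic range of $\eta$ (including the delicate transition where $|\eta| h \sim h^{-\kappa}$ for various $\kappa$) while keeping track of the limited regularity of $b$. Once the reduction is done, the propagation theorem for semiclassical measures in the subelliptic regime (announced in the introduction as being of independent interest) shows that a measure living over the undamped strip and invariant under the reduced ($y$-translation) flow must vanish, since $b(y)>0$ off $I_y$ and the $y$-dynamics is transitive on $\T_y \setminus I_y$; this contradicts $\|u_n\|_{L^2} = 1$ and yields \eqref{resolvent:generalupper}.

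For the lower bound \eqref{resolvent:generallower}, I would construct an explicit quasimode in the \emph{elliptic} regime — concretely, take $\eta$ fixed of order $h^{-1}$ so that $V(x)\eta^2 \sim 1$, and build a wave packet $u_h(x,y) = h^{-1/2} w(x) e^{i\eta y/h} \chi$ concentrated in $x$ on the undamped strip and oscillating at the right frequency so that $(-h^2\Delta_G - 1)u_h = \mathcal O(h)$ in $L^2$ while $b u_h = 0$ to leading order (using that $b = b(y)$ vanishes on $\T_x\times I_y$ and $u_h$ is localized there in $y$ up to $o(1)$ mass, which is arranged by concentrating $y$ away from $\partial I_y$). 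Then $\|(-h^2\Delta_G - 1 \pm ihb)u_h\|_{L^2} = \mathcal O(h) + \mathcal O(h\|bu_h\|) = \mathcal O(h)$ against $\|u_h\|_{L^2} \sim 1$, giving $\|(-h^2\Delta_G - 1 \pm ihb)^{-1}\| \gtrsim h^{-1} \cdot h^{-1} = h^{-2}$. This is exactly the same lower bound as for the flat Laplacian on $\T^2$, consistent with the claim in the introduction that in case (2) the optimality is governed by elliptic-regime quasimodes; combining with \eqref{resolvent:generalupper} and the Borichev–Tomilov equivalence \eqref{resolvent1}–\eqref{resolvent2} yields the stated stability rate $t^{-1/(1+\delta_0)}$.
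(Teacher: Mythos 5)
Your sketch of the upper bound is broadly in the right spirit — argue by contradiction, derive the damping estimate $\|b^{1/2}u_n\|^2 = o(h_n^{1+\delta_0})$, and split phase space into subelliptic and compact/elliptic pieces — but it inverts where the two main tools actually live in the paper's argument. The paper handles the subelliptic regime $h^{-1}\ll|D_y|\lesssim h^{-2}$ entirely via two-microlocal \emph{semiclassical measures} (a scalar measure $\mu_1$ for $h^{-1}\ll|\eta|\ll h^{-2}$ and an operator-valued measure $M_2$ for $|\eta|\sim h^{-2}$, Proposition~\ref{p:semiclassical_measures}), and the Birkhoff normal form is reserved for the compact regime: after Lemma~\ref{dynamical} kills the part of the measure with $\eta\neq 0$, the remaining difficulty sits at the trapped horizontal frequencies $D_y\sim 1$. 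There a second semiclassical parameter $\hbar=h^{(1+\delta_0)/2}$ splits the quasimode into a transversal-high-frequency piece (killed by a positive-commutator argument, Proposition~\ref{TH}) and a transversal-low-frequency piece, which is transformed by the normal form of Proposition~\ref{prop:reduction} into a quasimode for the \emph{flat} damped-wave operator, after which the torus result of Anantharaman--L\'eautaud (Lemma~\ref{AL}) is applied as a black box. Your description places the normal form in the subelliptic regime and treats the compact part as an import of \cite{AL14} ``more or less directly'' — but the whole point of Proposition~\ref{prop:reduction} is that reducing the Baouendi--Grushin operator to the flat one on a window $|D_y|\lesssim\hbar^{-1}$ (rather than $|D_y|\lesssim h^{-\epsilon}$ as in the earlier self-adjoint setting of~\cite{LeS20}) forces an iterated normal form, since the Fourier truncation no longer commutes with the damping. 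That is the genuinely new difficulty here, and it is not visible in your outline.

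The lower bound proposal has a concrete error. Your ansatz $u_h = h^{-1/2}w(x)e^{i\eta y/h}\chi$ with $\eta\sim h^{-1}$ has physical $y$-frequency $\eta/h\sim h^{-2}$, so it lives in the \emph{subelliptic} regime (concentrated at $|x|\lesssim h$, where $V(x)(\eta/h)^2\sim 1$), not the elliptic one as you claim. Moreover, even granting the estimate $(-h^2\Delta_G-1+ihb)u_h=O(h)$ with $\|u_h\|_{L^2}\sim 1$, the consequence is $\|(\,\cdot\,)^{-1}\|\gtrsim h^{-1}$, not $h^{-2}$; the factorization ``$h^{-1}\cdot h^{-1}=h^{-2}$'' is unjustified. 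To reach the lower bound \eqref{resolvent:generallower} you need the remainder to be $O(h^2)$. The paper achieves this via the subelliptic quasimode of Theorem~\ref{t:quasimodes_outside_damping}: one starts from the Hermite ground state of $-\partial_x^2+\eta^2x^2$ at $\eta\sim h^{-2}$, propagates it along the vertical flow inside the undamped strip for a small time $T$, and averages in time; a normal form in the Hermite basis is needed to reduce the Taylor remainder $V(x)-x^2$ so that the error is $O(h^2)$. In the widely undamped case~(2) there is actually also a much simpler genuinely elliptic alternative which your sketch would need to discover: take $k=1/h\in\N$ and $\chi\in C_c^\infty(\operatorname{int}I_y)$, and set $\psi_h=e^{ikx}\chi(y)/\|e^{ikx}\chi\|_{L^2}$. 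Then $(-h^2\Delta_G-1)\psi_h=-h^2V(x)\chi''(y)e^{ikx}/\|\,\cdot\,\|=O_{L^2}(h^2)$ and $b\psi_h\equiv 0$, giving the $h^{-2}$ lower bound directly. This quasimode has $D_x\sim h^{-1}$, $D_y\sim 1$ — the trapped horizontal regime $\eta=0$ — which is consistent with the undamped horizontal geodesics being the obstruction, and is orthogonal to the oscillation you chose. Your proposal oscillates in $y$, but the undamped set is a strip in $y$, so the quasimode must be \emph{localized}, not oscillating, in $y$.
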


\begin{rem}\label{ObSt}
	In \cite{BuSun}, N.~Burq and the second author have shown the observability for the Schr\"odinger equation\footnote{Though the theorem in \cite{BuSun} is stated for the Schr\"odinger equation associated to a simpler Baouendi-Grushin operator $\Delta_{G_0}:=\partial_x^2+x^2\partial_y^2$ on a bounded domain with Dirichlet boundary condition, essentially the same proof would lead to \eqref{Ob}. }:
	\begin{align}\label{Ob} 
		\|u_0\|_{L^2(\T^2)}^2\leq C_{T}\int_0^T\|b^{1/2}\mathrm{e}^{it\Delta_G}u_0\|_{L^2(\T^2)}^2dt
	\end{align}
	for some sharp time threshold $T>T_0$.
	Together with an abstract theorem (\cite[Thm. 2.3]{AL14}), the observability \eqref{Ob} leads to a stable rate $t^{-\frac{1}{2}}$ for  \eqref{e:damped_wave_equation}. More precisely, the Schr\"odinger observability \eqref{Ob} is essentially equivalent to the semi-classical observability estimate of the type (\cite[Thm. 4]{BuZ}):
	\begin{align}\label{semiob} 
		\|u\|_{L^2}\leq C\|b^{\frac{1}{2}}u\|_{L^2}+\frac{C}{h^2}\|(-h^2\Delta_G-1)u\|_{L^2},
	\end{align}
	which would only lead to the upper bound $O(h^{-3})$ for the norm of the non-selfadjoint resolvent $(-h^2\Delta_G-1+ihb)^{-1}$ (see also \cite[Prop. B.1, Cor. B.2]{LeS20}). Though it is not clear to the authors how to deduce from such upper bound the Schr\"odinger observability by an abstract argument, 
	the improvement of the $O(h^{-3})$ bound for the resolvent estimate of the damped-wave operator requires more work than proving the observability inequalities \eqref{Ob} or \eqref{semiob}.  Precisely, for more regular damping\footnote{For the bound $O(h^{-3})$ it is sufficient that $b \in L^\infty$.} $b(y)$,  Theorem \ref{t:main_theorem}  improves the resolvent bound up to $O(h^{-2-\delta_0})$ (and the stable rate of \eqref{e:damped_wave_equation}), which is almost sharp.  
\end{rem}

In connection with \cite{DK20} and \cite{K19}, we consider the following particular example regarding the regularity of $b$ in case (2), which allows us to reach the sharp resolvent estimate for the damped-wave Baouendi-Grushin operator.
Let $\nu >4$, $0 < \rho < 1$, $y_0 > 0 $, such that $y_0 + \rho < \pi$, we assume that $b$ has the form:
\begin{equation}
	\label{e:particular_b}
	b(y) = \left \lbrace \begin{array}{ll}
		0, & \text{if } \vert y \vert \leq y_0, \\[0.2cm]
		(\vert y \vert -  y_0)^{\nu}, & \text{if } y_0 \leq \vert y \vert < y_0 + \rho, \\[0.2cm]
		c(\vert y \vert), & \text{if } y_0 + \rho < \vert y \vert < \pi,
	\end{array} \right.
\end{equation}
for some positive and non-vanishing $c $ such that $b\in W^{4,\infty}(\T)$.

\begin{thm}[Widely undamped case]
	\label{t:sharp_in_1}
	Let $\nu>4$ and assume that $b \in W^{4,\infty}(\T)$ satisfies \eqref{e:particular_b}. Then there exist $h_0>0$ sufficiently small and $C_0 >0$ such that, for all $0<h\leq h_0$, we have:
	$$
	C_0^{-1} h^{-2 - \frac{1}{\nu+2}}  \leq  \|(-h^2\Delta_G-1\pm ihb)^{-1}\|_{\mathcal{L}(L^2)} \leq C_0h^{-2 - \frac{1}{\nu+2}}.
	$$
	Consequently, the system \eqref{e:damped_wave_equation} is stable at rate $t^{-\frac{\nu+2}{\nu+3}}$, which is optimal.
\end{thm}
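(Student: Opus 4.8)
The plan is to establish the two inequalities separately; the stabilization statement and its optimality then follow from the Borichev--Tomilov criterion (Appendix~\ref{a:borichev_tomilov}) together with the equivalence between \eqref{resolvent1} and \eqref{resolvent2}, applied with $\alpha=1+\tfrac1{\nu+2}$. Since $b$ is real and $(-h^2\Delta_G-1-ihb)^{*}=-h^2\Delta_G-1+ihb$, the two operators have inverses of equal norm, so it suffices to treat the $+$ sign. The upper bound is equivalent to the a priori estimate $\|u\|_{L^2}\le C_0h^{-2-\frac1{\nu+2}}\|f\|_{L^2}$ with $f:=(-h^2\Delta_G-1+ihb)u$; pairing with $u$ already yields the two structural inequalities $\|b^{1/2}u\|^2\le h^{-1}\|f\|\,\|u\|$ and $\|h\nabla_Gu\|^2\le\|u\|^2+\|f\|\,\|u\|$, and I would follow the scheme of Theorems~\ref{t:EGCC} and~\ref{t:main_theorem}, the point of the present statement being to sharpen one contribution by using the explicit profile \eqref{e:particular_b}.

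\emph{Upper bound.} Away from the characteristic set $\{p=1\}$ of $-h^2\Delta_G$ the operator is elliptic and contributes an $O(1)$ bound, so I would split the part of $u$ microlocalized near $\{p=1\}$ into three regimes: (i) the subelliptic one, $x\approx 0$ with $h|\eta|\gtrsim 1$; (ii) the compact elliptic one, $\varepsilon_0\le|\eta|\le C_0$ for a fixed small $\varepsilon_0$; and (iii) the glancing one, $|\xi|\approx 1$ with $|\eta|\le\varepsilon_0$. In case~(2) the undamped set is the strip $\mathbb T_x\times I_y$ with $I_y\subsetneq\mathbb T_y$, so the subelliptic flow $\phi_t^{\mathrm{s}}$ of Definition~\ref{SGCC} is the unit-speed translation in $y$ and $\tfrac1T\int_0^Tb\circ\pi_1\circ\phi_t^{\mathrm{s}}\to\tfrac1{2\pi}\int_{\mathbb T_y}b>0$; thus $b$ satisfies (SGCC), and the subelliptic part of the argument of Theorem~\ref{t:EGCC} controls~(i) by $O(h^{-2})\|f\|$. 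On~(ii) the conserved momentum $\eta\ne 0$ forces $y(t)$ to wind around $\mathbb T_y$, so $b$ has positive average along every such orbit and a geometric-control estimate restricted to $\{|\eta|\ge\varepsilon_0\}$ gives $O(h^{-1})\|f\|$ with a constant depending on $\varepsilon_0$. Both are dominated by $h^{-2-\frac1{\nu+2}}$, so everything hinges on~(iii). There the Hamilton flow $\dot x=2\xi$ is nondegenerate and sweeps $\mathbb T_x$ quasi-uniformly, crossing $\{x=0\}$ transversally, so a Birkhoff normal form in the fast variable $x$ conjugates $-h^2\Delta_G-1$, microlocally in this region and modulo $O(h^\infty)+O(\varepsilon_0)$, to the constant-coefficient operator $-h^2\partial_x^2-\langle V\rangle h^2\partial_y^2-1$ with $\langle V\rangle=\tfrac1{2\pi}\int_{\mathbb T}V>0$, the conjugation commuting with multiplication by $b(y)$ up to errors controlled by $b\in W^{4,\infty}$ and \eqref{e:B-H_condition}. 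This reduces~(iii) to the resolvent of $\mathcal P_0:=-h^2\partial_x^2-\langle V\rangle h^2\partial_y^2-1+ihb(y)$ on $\mathbb T^2$, which, up to the harmless constant $\langle V\rangle$, is the classical damped-wave operator with damping \eqref{e:particular_b}; its sharp resolvent bound $\|\mathcal P_0^{-1}\|_{\mathcal L(L^2(\mathbb T^2))}\le Ch^{-2-\frac1{\nu+2}}$ is provided by the analysis of \cite{DK20,K19}, the exponent $\tfrac1{\nu+2}$ coming from the boundary layer at the edge of the undamped strip, of spatial scale $h^{1/(\nu+2)}$ and spectral scale $h^{(2\nu+2)/(\nu+2)}$. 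Recombining (i)--(iii) gives the upper bound.

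\emph{Lower bound and optimality.} For the matching lower bound I would transplant the extremal quasimode of $\mathcal P_0$. The construction of \cite{DK20,K19} produces a family $U_h=e^{ik_hx}v_h(y)$, normalized in $L^2$, with $v_h$ concentrated at the scale $h^{1/(\nu+2)}$ near $y=y_0$ and $\|\mathcal P_0U_h\|\le Ch^{2+\frac1{\nu+2}}$; note that $U_h$ lies precisely in regime~(iii), since $h k_h\approx 1$ and $v_h$ has small $y$-frequency. Pulling $U_h$ back through the (almost unitary) normal-form conjugation of the previous paragraph yields $u_h$ with $\|u_h\|\asymp 1$ and $\|(-h^2\Delta_G-1+ihb)u_h\|\le Ch^{2+\frac1{\nu+2}}$, because the transplanted error and the normal-form remainders are of the same order; hence $\|(-h^2\Delta_G-1+ihb)^{-1}\|_{\mathcal L(L^2)}\ge C_0^{-1}h^{-2-\frac1{\nu+2}}$. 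Combining the two resolvent bounds with Borichev--Tomilov and \eqref{resolvent2} then shows that \eqref{e:damped_wave_equation} is stable at the rate $t^{-\frac{\nu+2}{\nu+3}}$ and that this rate is optimal.

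\emph{Main difficulty.} The crux is regime~(iii): since $V$ is not constant, $-h^2\Delta_G$ does not decouple under the Fourier transform in $x$, so the reduction to $\mathcal P_0$ cannot be done by separation of variables and instead requires the normal-form averaging $V\rightsquigarrow\langle V\rangle$ carried out to enough orders that the remainders — and, crucially, their commutators with $b(y)$, where the hypotheses $b\in W^{4,\infty}$ and \eqref{e:B-H_condition} enter — are negligible compared with $h^{2+\frac1{\nu+2}}$; the microlocal cut-offs producing (i)--(iii) contribute only lower-order commutator errors. Once this reduction is in place, the whole subtlety of the exponent $\tfrac1{\nu+2}$ is the one-dimensional boundary-layer analysis of \eqref{e:particular_b}, exactly as for the Laplacian on $\mathbb T^2$.
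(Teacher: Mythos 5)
The broad decomposition (subelliptic regime, compact regime with $\eta\neq 0$ controlled by geometric control, and the horizontal trapped regime $\eta\approx 0$ handled by a normal-form reduction to the flat Laplacian, followed by citing \cite{DK20,K19}, together with transplanting the $\mathbb T^2$ quasimode through the inverse conjugation for the lower bound) is indeed the paper's scheme. But there is a concrete quantitative gap in your treatment of regime~(iii), and it is precisely the part you flag as the crux.

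You claim the normal form conjugates $-h^2\Delta_G-1+ihb$, microlocally in regime~(iii), to $-h^2\partial_x^2-\langle V\rangle h^2\partial_y^2-1+ihb(y)$ ``modulo $O(h^\infty)+O(\varepsilon_0)$''. That is not achievable. The generators of the conjugation must be built with a Fourier cut-off $\chi(\hbar D_y)$, and the second semiclassical parameter cannot be chosen freely: to preserve the a~priori decay $\|b^{1/2}u\|=o(h^{(1+\delta)/2})$ under the truncation (the commutator $[\chi(\hbar D_y),b]$ only gains one factor of~$\hbar$), one is forced to take $\hbar=h^{(1+\delta)/2}$ with $\delta=\tfrac{1}{\nu+2}$ (Lemma~\ref{localization} and Lemma~\ref{preserving}). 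With this choice the scale $|D_y|\lesssim\hbar^{-1}$ is far larger than the $|D_y|\lesssim h^{-\epsilon}$ used in the self-adjoint works \cite{BuSun}, \cite{LeS20}, and a single conjugation leaves subprincipal remainders of operator size $h\hbar^{-1}=h^{(1-\delta)/2}$, which is far from $O(h^\infty)$. The paper therefore performs three successive conjugations $e^{ihF_{h,0}},e^{ihF_{h,1}},e^{ihF_{h,2}}$ (Proposition~\ref{prop:reduction}), and even after those, a self-adjoint secular term $\overline r_2(hD_x)\,h^4D_y^4\,\widetilde\chi_0(\hbar D_y)$ survives in $\mathcal L_{h,3}$, of size $(h/\hbar)^4\sim h^{2-2\delta}$ on the TL regime, which is \emph{larger} than the target error $h^{2+\delta}$. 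Hence one cannot invoke the flat-torus resolvent bound of \cite{DK20,K19} directly; the paper instead proves a perturbed one-dimensional resolvent estimate (Proposition~\ref{1Dresolvent}) which carries this self-adjoint perturbation through the three $E$-ranges explicitly. Finally, the proposal also conflates two compact sub-regimes: the paper separates the compact region into ``transversal high frequency'' $\hbar^{-1}\lesssim|D_y|\lesssim\varepsilon_0 h^{-1}$, treated by a positive-commutator argument (Proposition~\ref{TH}), and ``transversal low frequency'' $|D_y|\lesssim\hbar^{-1}$, treated by the normal form — your sketch covers only the latter. The lower-bound paragraph is essentially the paper's Section~\ref{s:compact_quasimodes}: there the $\mathbb T^2$ quasimode of \cite{K19} is concentrated at $|D_y|=O(h^{-\delta/2})$, which is small enough that a single conjugation $e^{-ihF_{h,0}}$ suffices, with Proposition~\ref{estimatesT2quasimodes} controlling the commutator errors.
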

\begin{rem}
	The optimal stable rate matches with the case for the flat Laplacian. More precisely,
	with the same $b(y)$, in \cite{DK20} and \cite{K19}, the authors proved that
	$$ \|(-h^2\Delta-1\pm ihb)^{-1}\|_{\mathcal{L}(L^2)}\sim h^{-2-\frac{1}{\nu+2}}.
	$$
	The optimality is saturated by elliptic quasimodes constructed in \cite{K19}.
\end{rem}

In case (3), our main result can be regarded as a subelliptic version of  \cite[Thms. 1.6 and 1.7]{LeL17}. We assume that $b$ satisfies the following \textit{finite-type} condition:
for $\nu \geq 6$ :
\begin{equation}
	\label{e:finite_type_condition}
	b(y) = \left \lbrace \begin{array}{ll}
		0, & \text{if } y = y_0, \\[0.2cm]
		\vert y -  y_0\vert^{\nu}, & \text{if } \vert y - y_0 \vert \leq \rho, \\[0.2cm]
		c(\vert y \vert), & \text{if } \vert y - y_0 \vert > \rho,
	\end{array} \right.
\end{equation}
for some positive and non-vanishing $c$ such that $ b\in W^{4,\infty}(\T)$.

\begin{thm}[Narrowly undamped case]
	\label{t:sharp_in_2}
	Let $ \nu \geq 6$ and assume that $b \in W^{4,\infty}(\mathbb{T})$ satisfies \eqref{e:finite_type_condition}. Then there exist $h_0>0$ sufficiently small and $C_0 >0$ such that, for all $0<h\leq h_0$, we have:
	\begin{align}
		\label{e:narrow_resolvent}
		C_0^{-1} h^{-2 + \frac{1}{\nu+1}}  \leq  \|(-h^2\Delta_G-1\pm ihb)^{-1}\|_{\mathcal{L}(L^2)} \leq C_0h^{-2 + \frac{1}{\nu+1}}.
	\end{align}
	Consequently, the system \eqref{e:damped_wave_equation} is stable at rate $t^{-\frac{\nu+1}{\nu}}$, which is optimal.
\end{thm}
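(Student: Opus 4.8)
The plan is to split the analysis according to where in phase-space a quasimode $u$ for the operator $P_h := -h^2\Delta_G - 1 + ihb$ can concentrate, exactly as in the dichotomy underlying the previous theorems, and then to show that in each regime the resolvent bound $h^{-2+\frac{1}{\nu+1}}$ is both an obstruction and achievable. For the \emph{upper bound}, I would argue by contradiction: assume a sequence $h_n \to 0$, $\|u_n\|_{L^2} = 1$, with $h_n^{2-\frac{1}{\nu+1}}\|P_{h_n} u_n\|_{L^2} = o(1)$, and analyse the adapted two-microlocal semiclassical measures of $(u_n)$. Away from the characteristic cone $\mathcal{C}$ (the elliptic regime $\{V(x)\eta^2 \text{ bounded}\}$) the operator behaves like the flat damped-wave operator on $\mathbb{T}^2$, so the propagation + normal-form arguments reduce to the situation of \cite{LeL17}: a one-dimensional damped Schrödinger/wave reduction in the $y$ variable with damping vanishing to order $\nu$ at $y_0$, whose sharp resolvent exponent is $h^{-2+\frac{1}{\nu+1}}$ (note this is \emph{below} $h^{-2}$ because a line, unlike a strip, does not support the worst invariant tori). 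In the subelliptic regime near $\mathcal{C}$, one performs the Birkhoff normal form reducing $-h^2\Delta_G$ to a model $-h^2\partial_x^2 + x^2\eta^2$ plus remainders; the transverse harmonic-oscillator quantization costs a factor $h^{2/3}$-type gain in the effective semiclassical parameter, and the residual one-dimensional operator in $y$ again sees $b(y)\sim |y-y_0|^\nu$. One must check that the subelliptic contribution is \emph{dominated} by the elliptic one here (opposite to cases (1),(3) in the EGCC/strip discussion — wait, case (3) \emph{is} this case, so rather: the subelliptic quasimodes here give the same exponent or a better one, so they do not worsen it), which follows because the spectral gap of the transverse oscillator pushes the relevant frequencies $\eta$ to a range where $b$ is effectively controlled. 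Combining the two regimes via a microlocal partition of unity yields $\|u_n\|_{L^2} = o(1)$, a contradiction.

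For the \emph{lower bound} $C_0^{-1}h^{-2+\frac{1}{\nu+1}} \leq \|P_h^{-1}\|$, the strategy is to construct an explicit family of quasimodes saturating the estimate. Following the construction in \cite{LeL17} adapted to the Grushin setting, I would take $u_h(x,y) = \varphi_h(x)\psi_h(y)$ where $\varphi_h$ is (a rescaled truncation of) the ground state of the transverse operator $-h^2\partial_x^2 + V(x)\eta_h^2$ at a well-chosen frequency $\eta_h \sim h^{-1}$, and $\psi_h$ is a concentrating profile in $y$ of width $\sim h^{\theta}$ around $y_0$ with $\theta$ tuned to the vanishing order $\nu$. Optimizing the scale $\theta$ against the three competing contributions — the $y$-localization error $\sim h^2/(\text{width})^2$, the spectral mismatch, and the damping loss $\|b^{1/2}\psi_h\|^2 \sim (\text{width})^\nu$ — produces the balance giving exponent $\frac{1}{\nu+1}$; one then verifies $\|P_h u_h\|_{L^2} \lesssim h^{2-\frac{1}{\nu+1}}\|u_h\|_{L^2}$.

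Finally, the equivalence between the resolvent estimate \eqref{resolvent2} and the stabilization rate via the Borichev–Tomilov theorem (recalled in Appendix \ref{a:borichev_tomilov}), together with the fact that $i\mathbb{R}\cap\sigma(\dot{\mathcal{A}}) = \emptyset$ by unique continuation \cite{LeS20}, converts the two-sided bound \eqref{e:narrow_resolvent} into the optimal energy decay rate $t^{-\frac{\nu+1}{\nu}}$: indeed $\alpha = 2 - \frac{1}{\nu+1} = \frac{2\nu+1}{\nu+1}$ gives $\frac{1}{\alpha} = \frac{\nu+1}{2\nu+1}$ — one checks the bookkeeping so that the stated rate $t^{-\frac{\nu+1}{\nu}}$ emerges after accounting for the $h^{-1}$ shift between \eqref{resolvent1} and \eqref{resolvent2}.

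The main obstacle I expect is the two-microlocal propagation analysis at the interface between the elliptic and subelliptic regimes: near $\mathcal{C}$ one must simultaneously track the harmonic-oscillator quantization in $x$ and the degenerate, low-regularity ($b$ only $W^{4,\infty}$, vanishing to finite order $\nu$) damping in $y$, and show that no quasimode can hide in an intermediate zone where neither the subelliptic gap nor the elliptic geometric control is yet effective. This is precisely where the assumption $\nu \geq 6$ (rather than a smaller threshold) should enter: it guarantees enough regularity and vanishing margin for the normal-form remainders and the condition \eqref{e:B-H_condition}-type bounds on $b$ to be absorbed, and it is the reason the narrowly-undamped threshold is larger than the $\nu > 4$ of the widely-undamped case.
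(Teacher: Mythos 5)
There is a genuine gap, and it is at the heart of the theorem: your lower-bound construction lives in the wrong regime of phase space. You propose a separable quasimode $u_h=\varphi_h(x)\psi_h(y)$ with the transverse ground state at frequency $\eta_h\sim h^{-1}$ and a $y$-profile of width $h^\theta$; optimizing $\theta$ as you describe (balancing the $y$-localization error $h^{2-2\theta}$ against the damping contribution $h\cdot h^{\theta\nu}$) gives $\theta=\tfrac{1}{\nu+2}$ and a quasimode width $h^{2-\frac{2}{\nu+2}}$, hence the lower bound $\|P_h^{-1}\|\gtrsim h^{-2+\frac{2}{\nu+2}}$. This is precisely the flat-Laplacian exponent of \cite{LeL17}, and since $\frac{2}{\nu+2}>\frac{1}{\nu+1}$ it is strictly \emph{weaker} than the claimed $h^{-2+\frac{1}{\nu+1}}$. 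The sharp lower bound is in fact produced by quasimodes microlocalized in the \emph{subelliptic} regime $\vert D_y\vert\sim h^{-2}$ (Theorem \ref{t:quasimodes_2}): one propagates the Hermite ground state $\varphi_0(\eta,x)$ with $\eta\sim h^{-2}$ along the vertical flow into the damping region, averages the time-dependent solution over a window of length $\sim h^{\frac{1}{\nu+1}}$, and the competition between friction $e^{-\mathbf{b}_0(t)/h}$ with $\mathbf{b}_0(t)\sim t^{\nu+1}/h$ and the cutoff derivative $g'_h$ gives exactly the exponent $\frac{1}{\nu+1}$. This is what the abstract announces for cases (1) and (3); your own intermediate parenthetical ("wait, case (3) is this case\ldots") shows you noticed the tension but resolved it the wrong way: the subelliptic contribution is \emph{not} dominated by the elliptic one here, it dominates, and it does worsen the resolvent bound compared to the flat Laplacian.

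On the upper bound the same misreading makes the hardest step disappear. You state that the subelliptic quasimodes "do not worsen" the bound because "the spectral gap pushes the relevant frequencies to a range where $b$ is controlled," but since the subelliptic regime is the saturating one, the contradiction argument must show that $o(h^{2-\frac{1}{\nu+1}})$-quasimodes put no mass there, and this requires more than Proposition \ref{Measureformulationj=012}: the paper introduces a further two-microlocalization near $y_0$ at the scale $\vert y-y_0\vert\sim\vert\eta\vert^{-\frac{1}{1+\nu}}$ (Section \ref{s:narrowly_undamped_measures}), precisely so that the vertical propagation term $\omega(\xi^2+\sigma^2)\partial_\theta$ and the damping term $\vert\theta\vert^\nu$ in the Wigner equation balance and force the corresponding measures $\mu_1^1$, $M_2^1$ to vanish. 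Your plan has no analogue of this second microlocalization. Finally, the role of $\nu\geq 6$ is also not where you place it: the paper uses it only in the bootstrap for the transversal high-frequency part (Proposition \ref{TH}, case $j=3$) to close the iteration $\|h\partial_y v_h\|_{L^2}=o(\hbar^{4/3})\Rightarrow\|v_h\|_{L^2}=o(1)$ given that $\delta_h^{(3)}=h^{-\frac{1}{\nu+1}}$ is now large, not for the Birkhoff normal-form remainders or \eqref{e:B-H_condition}-type estimates.
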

The regularity assumption $\nu\geq 6$ is only technical, which is relevant for the proof of Proposition \ref{TH} in the narrowly undamped case ($j=3$). More important remarks are in order:
\begin{rem} 
	In the narrowly undamped case, the norm of the resolvent is larger than in the case of the elliptic Laplacian:
	$$ \|(-h^2\Delta-1\pm ihb)^{-1}\|_{\mathcal{L}(L^2)}\sim h^{-2+\frac{2}{\nu+2}},
	$$
	proved in \cite[Thms. 1.6 and 1.7]{LeL17}. This is in sharp contrast to the widely undamped situation \textnormal{(2)}, where the resolvent norm is the same as for the elliptic Laplacian. We emphasize the gain with respect to $O(h^{-2})$ in both cases, and in comparison with Theorems \ref{t:main_theorem} and \ref{t:sharp_in_1}.  
\end{rem}

\begin{rem}
	In \cite{BG20}, the authors prove uniform stabilization \eqref{uniform} for the classical damped-wave equation on $\T^2$ in a related geometrical situation (although with non-continuous damping) where only finitely many geodesics miss the damping region. More precisely, the damping region is a disjoint union of polygons and there exist undamped geodesics that follows for some time one of the sides of a polygon on the left, and for some other time one of the sides of a polygon on the right. 
\end{rem}

\begin{rem}
	From our proofs based on the averaging method with respect to the principal  selfadjoint part of the Baouendi-Grushin operator, it is most likely that in cases \textnormal{(2)} and \textnormal{(3)} one can relax the assumption that $b$ depends only on the vertical variable $y$, assuming only conditions \eqref{e:B-H_condition}, \eqref{e:particular_b}, and \eqref{e:finite_type_condition} for the average of $b$ along the horizontal variable $x$. 
\end{rem}

\subsection{Propagation of singularities for subelliptic quasimodes}

As a byproduct of our analysis, we are able to describe the structure of semiclassical measures associated with (a sequence of) quasimodes $(\psi_h)_{h>0}$ satisfying
\begin{align}\label{quasimodeso(h2)}
	( -h^2\Delta_G - 1) \psi_h =o_{L^2}(h^2),\quad \|\psi_h\|_{L^2}=1.
\end{align}
We will describe the concentration properties of the probability density $d\nu_h:=|\psi_h(x,y)|^2dxdy$ as $h\rightarrow 0$. Roughly speaking, any semiclassical limit of the lift of $(d\nu_h)_{h>0}$ to the phase space is decomposed into two parts, a measure $\mu_0$ capturing the mass on the compact part of the energy level $p^{-1}(1)$, and another one giving account of the energy trapped in the closure of the  region $\{ ( x, y, \xi, \eta) \in p^{-1}(1): \vert \eta \vert \geq R \}$, as $R \to \infty$. Let us define 
$$
\T_y\times\ov{\R_\eta \setminus \{0 \}} := (\T_y\times\mathbb{S}^0_\omega)\bigsqcup \, (\T_y\times( \R_\eta \setminus \{0 \})).
$$ 
Notice that the vertical flow $\phi_t^{\rm v}$ extends to $\T_y\times\ov{\R_\eta \setminus \{0 \}}$ as $\phi_t^{\rm v}(y, \omega) = (y + t\omega, \omega)$.

\begin{thm}\label{t:measurelimit} 
	Let $(\psi_h)_{h>0}$ be a sequence of quasimodes satisfying \eqref{quasimodeso(h2)}. Then there exist a subsequence $h_n\rightarrow 0$, positive Radon measures $\mu_0$ on $T^*\T^2$ and $\ov{\mu}_0$ on $\T_y\times \overline{\R_\eta \setminus \{ 0 \}}$, such that for any symbol $a_0 \in C_c^{\infty}(T^*\T^2)$ and any symbol $a_1=a_1(y,\eta,\theta)$, homogeneous of degree $0$ at infinity in $\eta$ and compactly supported in $\theta$, we have
	\begin{align*}
		\lim_{n\rightarrow+\infty} \big \langle\Op_{h_n}^{\w}(a_0+\widetilde{a}_1)\psi_{h_n},\psi_{h_n} \big \rangle_{L^2(\T^2)}=\int_{T^*\T^2}(a_0+a_1|_{ \theta =0})d\mu_0+\int_{\T_y\times\ov{\R_\eta \setminus \{0 \}} } \, \overline{a}_\infty \, d\ov{\mu}_0,
	\end{align*}
	where 
	\begin{equation}
		\label{e:closure_of_a}
		\overline{a}_\infty(y, \eta) = \left \lbrace \begin{array}{ll} \displaystyle a_\infty(y,\omega,0), & \eta = \omega  \in \mathbb{S}_\omega^0, \\[0.5cm]
			\displaystyle a_\infty\left( y, \frac{\eta}{\vert \eta \vert}, \eta \right), & \eta \in \R_\eta \setminus \{ 0 \},
		\end{array} \right.
	\end{equation}
	and we denote:
	\begin{equation}
		\label{e:further_notations_symbols}
		\widetilde{a}_1(y,\eta)=a_1(y,\eta,h\eta), \quad \text{ and }\quad  a_\infty \Big(y, \frac{\eta}{\vert \eta \vert}, \eta\Big) = \lim_{s\rightarrow+\infty}a_1\big(y,s\eta,\eta\big), \quad \text{for } \eta \neq 0.
	\end{equation}
	Moreover, these measures $\mu_0$ and $\overline{\mu}_0$ satisfy the following properties:
	\begin{itemize}
		\item The sum of total masses of $\mu_0$ and $\ov{\mu}_0$ is $1$.
		\smallskip
		
		\item The measure $\mu_0$ is invariant along the elliptic flow $\phi_t^{\mathrm{e}}$, while $\ov{\mu}_0$ is invariant along the vertical flow $\phi_t^{\mathrm{v}}$.
	\end{itemize}
\end{thm}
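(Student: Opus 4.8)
The plan is to build the two measures $\mu_0$ and $\ov\mu_0$ by a two-microlocal decomposition of phase space with respect to the cone $\mathcal{C}=\{x=0\}\times\T_y\times(\R_\eta\setminus\{0\})$, separating the region where $|\eta|$ stays bounded (after semiclassical rescaling) from the region where $|\eta|\to\infty$. Concretely, I would first establish the existence of the measures. For symbols $a_0\in C_c^\infty(T^*\T^2)$, the quantities $\lg\Op_{h_n}^\w(a_0)\psi_{h_n},\psi_{h_n}\rg$ are bounded by $\|a_0\|_\infty+o(1)$ (Calder\'on--Vaillancourt plus sharp G\aa rding), so along a subsequence they converge to a positive functional, giving a Radon measure $\mu_0$ on $T^*\T^2$; classical arguments show it is supported on $p^{-1}(1)$ and $\phi_t^{\rm e}$-invariant (test with $a_0$ and with $\{p,a_0\}$, using that $(-h^2\Delta_G-1)\psi_h=o(h^2)$, which is stronger than the usual $o(h)$ and will be needed for the second measure but is certainly enough here). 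For the second family, with $a_1=a_1(y,\eta,\theta)$ homogeneous of degree $0$ in $\eta$ at infinity and compactly supported in $\theta$, I would observe that $\wt a_1(y,\eta)=a_1(y,\eta,h\eta)$ is, for fixed $h$, a legitimate bounded symbol, and again extract a convergent subsequence; the content is to show that the limiting functional depends on $a_1$ only through the ``boundary value'' $\ov a_\infty$ defined in \eqref{e:closure_of_a}, i.e.\ through $a_1|_{\theta=0}$ on the classical part and through $a_\infty$ on the part where $\theta$ has been inflated.

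The key mechanism is that, because $V(0)=V'(0)=0$ and $V''(0)=2$, on the energy shell $p=\xi^2+V(x)\eta^2=1$ a large value of $\eta$ forces $x$ to be $O(|\eta|^{-1})$ and $\xi$ to be $O(1)$; rescaling $x=|\eta|^{-1/2}$-type variables (or rather using the Grushin/harmonic-oscillator structure $\partial_x^2+x^2\eta^2$) one sees that the relevant second microlocal variable is $\theta\sim h\eta$, and that in the regime $|\eta|\gg 1$ the operator $-h^2\Delta_G$ restricted to a spectral window behaves like a rescaled harmonic oscillator whose ``angular'' dynamics in $y$ is precisely the vertical flow $\phi_t^{\rm v}$. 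I would make this precise by a normal-form / averaging argument with respect to the principal selfadjoint part, exactly in the spirit announced in the abstract: conjugating $-h^2\Delta_G$ near the cone by an appropriate metaplectic-type transform in $x$ reduces, at leading order, to $h^2 D_y^2\cdot(\text{oscillator eigenvalue})$, and the quasimode equation then gives invariance of the associated measure under translation in $y$ at speed $\eta/|\eta|$. The bookkeeping of the two components of $\T_y\times\ov{\R_\eta\setminus\{0\}}$ — the sphere at infinity $\mathbb{S}^0_\omega$ (where $\theta=h\eta$ stays bounded, so one genuinely sees $a_1(y,\omega,\theta)$ but the flow is still $y\mapsto y+t\omega$) versus the ``interior'' $\R_\eta\setminus\{0\}$ (where $\theta=h\eta\to\infty$ and only $a_\infty$ survives) — is handled by a dyadic partition $\sum_j\chi(2^{-j}h\eta)$ together with a diagonal extraction, each dyadic piece being controlled uniformly.

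For the normalization, I would test the identity with $a_0$ a cutoff equal to $1$ on $\{|\eta|\le R\}\cap p^{-1}(1)$ (thickened) and with $\wt a_1$ supported on $\{|\eta|\ge R\}$ and equal to $1$ there, add them, and let $R\to\infty$ after $n\to\infty$: since $\|\psi_{h_n}\|_{L^2}=1$ and $\psi_{h_n}$ is (by the elliptic estimate away from $p^{-1}(1)$) semiclassically concentrated on the energy shell, no mass escapes, and the total masses of $\mu_0$ and $\ov\mu_0$ sum to $1$; the only subtlety is the uniform-in-$R$ non-concentration near $|\eta|=R$, which follows from the a priori estimate $\|\,|hD_y|^{1/2}\psi_h\|_{L^2}\lesssim 1$ coming from $\lg -h^2\Delta_G\psi_h,\psi_h\rg=1+o(1)$ and the subelliptic lower bound $-h^2\Delta_G\gtrsim h|hD_y|$ on the relevant microlocal region. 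I expect the main obstacle to be precisely this last point together with the commutator estimates for the two-microlocal symbols $\wt a_1$: one must check that $[\,-h^2\Delta_G,\Op_{h_n}^\w(\wt a_1)]$ produces, to the order needed, only the vertical transport term $h\,\partial_y$-type contribution and error terms that vanish in the limit — this requires the symbolic calculus to remain valid for the rescaled symbols (whence the finite but large regularity used elsewhere in the paper is more than enough here) and a careful treatment of the non-smooth transition at $x=0$, for which I would borrow the Birkhoff normal-form reduction developed in the body of the paper rather than redo it.
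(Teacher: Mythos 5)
Your proposal captures the high-level architecture of the paper's argument (two-microlocal decomposition around the cone $\mathcal{C}$, elliptic invariance of $\mu_0$ from the quasimode equation, vertical invariance of $\ov\mu_0$ in the subelliptic regime), but there is a genuine gap at the heart of it, and one misdirection.

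The gap is that you treat the limiting object in the regime $|\eta|\sim h^{-2}$ as if it could be extracted by the same scalar Wigner-measure machinery, possibly after a ``metaplectic-type'' conjugation. In fact, on the critical scale $h^2\eta\sim 1$ the natural rescaling $x\mapsto hx$ makes the $(x,\xi)$ pseudodifferential calculus critical with respect to the uncertainty principle: the remainders in the Moyal expansion are $O(1)$, not $o(1)$, and sharp G\aa rding/positivity for the rescaled symbols simply fails in the scalar sense. The paper addresses this head-on by splitting the subelliptic region further: a scalar measure $\mu_1$ on $\R_\sigma\times\T_y\times\R_\xi\times\mathbb{S}^0_\omega$ for the regime $h^{-1}\ll|\eta|\ll h^{-2}$ where the horizontal calculus is still subcritical, and an \emph{operator-valued} measure $M_2\in\mathcal{M}_+(\T_y\times(\R_\eta\setminus\{0\});\mathcal{L}^1(L^2(\R_x)))$ for the critical regime, constructed by testing against $\mathcal{K}(L^2(\R_x))$-valued symbols and invoking Banach--Alaoglu. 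The scalar measure $\ov\mu_0$ in the theorem statement is then recovered as $\operatorname{Tr}M_2$ plus the boundary part. Your proposal never produces this object, and the vertical-invariance step genuinely needs it: the argument first establishes invariance of $M_2$ under the quantum flow $e^{it(-\partial_x^2+\eta^2x^2)}$, then uses the finite-dimensional averaging identity $\frac{|\eta|}{2\pi}\int_0^{2\pi/|\eta|}U_\eta(t)^* x^2\eta^2\,U_\eta(t)\,dt=\tfrac12(D_x^2+x^2\eta^2)$ together with $D_x^2+x^2\eta^2\geq|\eta|$ on the trace level. Without the operator-valued measure and its flow-invariance there is no way to perform this averaging, and with a scalar $\ov\mu_0$ alone the commutator $[-h^2\Delta_G,\Op_1^\w(\mathbf{a})]$ produces an uncontrollable $x^2\eta^2$-term.

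The misdirection is your plan to ``borrow the Birkhoff normal-form reduction developed in the body of the paper.'' That reduction (Section 4) applies only to the compact regime $|D_y|\lesssim h^{-1}$, where it conjugates the operator to the flat Laplacian; it is not used anywhere in the subelliptic propagation theorem, and indeed cannot be, since the construction and error control there require $|\hbar D_y|\lesssim 1$ with $\hbar\gg h$, which is false on the cone. The paper instead develops a self-contained exact commutator calculus (Lemmas in Appendix~B) for rescaled symbols $\mathbf{a}(hx|\eta|,y,h\xi,h\eta,h^2\eta)$ and a special symbol class $\mathbf{S}^0_{1,1;1^-,0}$. Your dyadic partition $\sum_j\chi(2^{-j}h\eta)$ would also need to be replaced by the paper's $(\epsilon,R)$-splitting $\chi_\epsilon(h^2\eta)(1-\chi_R(h\eta))$ versus $(1-\chi_\epsilon(h^2\eta))$, precisely because the transition between scalar and operator-valued behaviour occurs at $h^2\eta\sim\epsilon$, not at a dyadic threshold. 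Finally, your no-loss-of-mass argument should be replaced (or supplemented) by the $(-h^2\Delta_{G_0})$-oscillating property, which is what allows the traces of $\mu_1$ and $M_2$ over the horizontal variables to recover $\ov\mu_1,\ov\mu_2$ with the correct normalization.
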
 

\begin{rem}
	Theorem \ref{t:measurelimit} can be viewed as a generalization of the measure-invariance result  (part 1 of Theorem B) in \cite{CdVHT} to $o(h^2)$ quasimodes, in a much simpler geometric setting.  
\end{rem}



\subsection{Localization of spectrum and pseudo-spectrum}

In Section \ref{s:quasimodes}, following the main idea in \cite{Ar20}, we give a novel construction of quasimodes whose energy concentrates asymptotically on the subelliptic regime (see Section \ref{s:structure_proof} below) and that saturate the lower bounds on the resolvent \eqref{resolvent:EGCClower} in Theorem \ref{t:EGCC}, \eqref{resolvent:generallower} in Theorem \ref{t:main_theorem}, and the lower bound of \eqref{e:narrow_resolvent} in Theorem \ref{t:sharp_in_2}. Moreover, in Section \ref{s:compact_quasimodes} we construct quasimodes in the compact part of the energy level $p^{-1}(1)$. These quasimodes saturate the resolvent estimate of Theorem \ref{t:sharp_in_1}.

In view of \cite[Eq. (2.8), Prop. 2.8]{AL14}, which can be faithfully adapted to our setting, the resolvent estimate \eqref{resolvent1} (or equivalently the semiclassical resolvent \eqref{resolvent2}) implies the existence of constants $C> 0$ and $s_0 > 0$ such that there are no eigenvalues of $\mathcal{A}$ in the region 
$$
\vert \zeta \vert \geq s_0, \quad \vert \Re \zeta \vert \leq \frac{1}{C \vert \Im \zeta \vert^{\frac{1}{\alpha}}},
$$
together with the resolvent estimate
$$
\Vert (\mathcal{A} - \zeta \, \Id)^{-1} \Vert_{\mathcal{L}(\mathscr{H})} \leq C \vert \Im \zeta \vert^{\frac{1}{\alpha}}.
$$

On the other hand, we emphasize that the corresponding lower bounds on the resolvent, although showing that the estimate of the resolvent is optimal, do not imply an analogous localization property for the spectrum of the operator due to non-selfadjointness. To explain this phenomenon, let us connect the damped-wave Baouendi-Grushin operator with a related non-selfadjoint semiclassical operator  sharing some relevant spectral properties. In \cite[Eq. (7)]{ArR20} (see also \cite[Eq. (1.15)]{Ar20}), it is shown the  semiclassical resolvent estimate analogous to \eqref{resolvent2}, for a semiclassical harmonic oscillator $\widehat{H}_h = \frac{1}{2} \sum_{j=1}^d \omega_j ( -\partial_{x_j}^2 + x_j^2)$ perturbed by a subprincipal non-selfadjoint operator $h \widehat{P}_h$. This resolvent estimate is sharp, which is shown in \cite[Thm. 2]{Ar20}, proving the existence of quasimodes concentrating on the strip of the complex plane given by \cite[Eq. (1.15)]{Ar20}. However, assuming further hypothesis of analyticity on the symbol $P$ of $\widehat{P}_h$, in \cite[Thm. 7]{ArR20} (see also \cite{AsL}) it is shown that the spectrum of the semiclassical operator is separated from the real axis by a larger gap than the one predicted by the resolvent estimate. In other words, the spectrum lies deep inside the pseudo-spectrum, given essentially by the set of points of the complex plane $\{ P(z) \, : \, z \in T^*\R^d \}$ where the H\"ormander bracket condition $\{ \Re P, \Im P \}(z) \neq 0$ holds. 

In our setting, in the subelliptic regime the vertical flow plays the role of  the flow generated by the subprincipal (selfadjoint) perturbation, and the pseudo-spectrum appears since this vertical flow is transversal to the level sets of the damping term. We omit a further study on the precise  localization of the spectrum of the damped-wave Baouendi-Grushin operator since this is irrelevant for the sharp resolvent estimate \eqref{resolvent2}. We refer to \cite{DeSZ04} for a discussion of this non-selfadjoint phenomenon and for references to one dimensional examples. 

\subsection{Structure of the article and strategy of proof}
\label{s:structure_proof}
The proof of Theorems \ref{t:EGCC}, \ref{t:main_theorem}, \ref{t:sharp_in_1}, and \ref{t:sharp_in_2} will be divided into two parts of analysis: The proof of the upper bounds and the construction of quasimodes to show the lower bounds. 

The upper bound analysis will be organized in a rather unified way, by contradiction. Roughly speaking, we use diverse semiclassical  analysis tools to deal with different regimes of quasimodes, localized at semiclassical scale $|-\Delta_G|\sim h^{-2}$:
\begin{itemize}
	\item[(a)] Subelliptic regime: This corresponds to $h^{-1}\ll |D_y|\lesssim h^{-2}$.
	In Section \ref{subellipticPropagation}, we show that the subelliptic quasimodes cannot concentrate in the undamped region. We develop a completely different analysis, compared to the earlier work for the Schr\"odinger observability \cite{BuSun} and the self-adjoint resolvent estimate obtained in \cite{LeS20}. Our analysis is inspired by the proof of \cite[Thm. 3]{ArR20} via the study of semiclassical measures associated to subprincipal non-selfadjoint perturbations of the semiclassical harmonic oscillator. The main idea relies on obtaining a first invariance property of the semiclassical measure by the Hamiltonian flow of the principal part of the operator (a semiclassical harmonic oscillator $-\partial_x^2 + \eta^2 x^2$ with frequency $\vert \eta \vert$ given by the Fourier variable $\eta \leftrightarrow D_y$) and, in a second scale, to study the interaction between real (subprincipal self-adjoint remainders) and imaginary (damping term remainders) parts of the subprincipal symbols. In our setting, to capture the proper subelliptic scales at which the classical properties emerge, we introduce and systematically study the propagation properties of suitable two microlocal semiclassical measures adapted to the subelliptic regime of the phase-space. Roughly speaking, we split the semiclassical measure in a scalar measure capturing the regime $h^{-1}\ll |D_y| \ll h^{-2}$, and an operator-valued measure capturing the regime $\vert D_y \vert \sim h^{-2}$. This technique is inspired by \cite{F00} and the series of works \cite{AFM15}, \cite{AL14}, \cite{ALM16}, \cite{ALM16(2)}, \cite{Mac_Rive16}, \cite{Mac_Riv18}, \cite{Mac_Riv19}, where the authors fruitfully develop and apply (two-microlocal) semiclassical measures techniques to spectral problems involving high-energy asymptotics of completely integrable systems. As a by-product, we accomplish the proof of Theorem \ref{t:measurelimit}.

	\item[(b)] Compact regime: This corresponds to $|D_y|\lesssim h^{-1}$, and it is treated in Section \ref{normalformimproved}. The major difficulty appears in the horizontal trapped regime $|D_y|\ll h^{-1}$, when the elliptic geometric control condition (EGCC) is violated (in Theorem \ref{t:main_theorem}, Theorem \ref{t:sharp_in_1} and Theorem \ref{t:sharp_in_2}). We refine the Birkhoff normal form transform used in \cite{BuSun} and \cite{LeS20} to transform the quasimodes to the quasimodes associated with the flat Laplacian. The main difficulty compared to the self-adjoint resolvent estimate in \cite{LeS20} is that the Fourier truncation does not commute with the damping. As a result, one needs to choose appropriately the second semiclassical parameter, in order to retain the width of quasimodes after localization, and at the same time to produce acceptable remainders when performing the Birkhoff normal form. To prove the sharp upper bound in Theorem \ref{t:sharp_in_1}, successive normal form reductions are needed. 
	
\end{itemize}

To prove lower bounds in Theorems \ref{t:EGCC}, \ref{t:main_theorem}, \ref{t:sharp_in_1}, and \ref{t:sharp_in_2}, we construct suitable quasimodes:
\begin{itemize}
	\item[(a)] Subelliptic regime: The lower bounds in Theorem \ref{t:EGCC},  Theorem \ref{t:main_theorem}, and Theorem \ref{t:sharp_in_2} will be obtained by constructing quasimodes in the subelliptic regime. This is done in Section \ref{s:quasimodes}. We use the same idea developed in \cite{Ar20} (and inspired by \cite{Sj10}) consisting of averaging a solution to the time-dependent non-selfadjoint problem in a small interval of time to construct the desired quasimode. Instead of using propagation of wave-packets, as in \cite{Ar20}, here we consider the ground state of the quantum harmonic oscillator $- \partial_x^2 + \eta^2 x^2$, and propagate it by the vertical flow in the $y$ variable, via the inverse Fourier transform $\eta \mapsto y$. As we will see, this propagation is slighlty perturbed (at subprincipal scale) by the Taylor approximation of $V(x)$ near zero, so that a normal form reduction in terms of the harmonic oscillator is required to deal with this perturbation. We use for this reduction a well-known finite-dimensional version of the averaging method summarized in Section \ref{s:finite_dimensional_averaging} of the Appendix. In Section \ref{s:subelliptic_quasimodes} we construct quasimodes microlocalized away from the damping region. In this case the propagation follows the vertical flow, which at quantum level is unitary. On the other hand, in Section \ref{s:damped_quasimodes} we consider propagation within the damping region, so we have to deal with the effect of the damping term on the solution to the time-dependent equation. This effect essentially consists of a dramatic change in the $L^2$ norm of the solution due to friction, which manifests via multiplication by the exponential of the integral of $b(y)$ along the vertical flow. We obtain the desired quasimode by averaging the time-dependent solution in a small interval of time  ( possibly of length tending to zero as $h \to 0^+$) and renormalizing by a constant.
	
	\item[(b)] Compact regime: The lower bound in Theorem \ref{t:sharp_in_1} will be obtained by quasimodes in the horizontal trapped regime, in Section \ref{s:compact_quasimodes}. The idea is relatively simple. We start by the quasimodes that saturated the resolvent estimate for the flat-Laplacian on $\T^2$ (constructed in \cite{K19}), which are morally localized in the regime $|D_y|\leq O(1)$ and $|D_x|\sim h^{-1}$. Then we perform a Birkhoff normal form (inverse to the one used to prove the upper bound) to transform such quasimodes to the desired quasimodes of the Baouendi-Grushin damped wave operator which will saturate the lower bound of Theorem \ref{t:sharp_in_1}.
\end{itemize}


\subsection*{Acknowledgments} The first author has received funding from the European Research Council (ERC) under the European Union's Horizon 2020 research and innovation program (grant agreement No. 725967), and has been partially supported by grant MTM2017-85934-C3-3-P (MINECO, Spain). The second author is partially supported by the ANR grant ODA (ANR-18-CE40- 0020-01) and he would like to thank ICERM of Brown University for the kind hospitality, where the end of this research was carried out. We also thank Gabriel Rivi\`ere for his comments on a draft version of this article. Finally, we are grateful for anonymous referees.

\section{Notations and Preliminaries}

	\subsection{Global Notations} 
	We clarify the following reserved class of notations  throughout this article:
	\begin{itemize}
		
		\item We use the notations: $D_x=\frac{1}{i}\partial_x$, $D_y=\frac{1}{i}\partial_y$, so $\Delta_G=-D_x^2-V(x)D_y^2$. By the hypothesis of $V$ and the Morse Lemma, there exists a smooth function $W$ on $(-\pi,\pi)$ such that $V(x)=W(x)^2$. Note that $W(x)$ is not $2\pi$-periodic but $W(-\pi)=-W(\pi)$.  
		Sometimes we will refer to the operator $\Delta_{G_0} := \partial_x^2 + x^2 \partial_y^2$ acting on functions of $\R^2$.
		\item $b,b_0,b_1,b_2,b_3$ are always reserved as damping terms, which belong to $W^{k_0,\infty}(\T^2)$.
		Moreover, $\psi_h,\psi_{1,h},\ldots$ are reserved as quasimodes. We will also denote by $u_h,v_h,w_h$, quasimodes in some specific regimes.
		
		\item $\chi,\chi_0,\chi_1,\ldots$ are reserved as cutoff functions with specific support properties, depending on the context.
		
		\item $h,\hbar,\widetilde{h}$ are reserved as semi-classical parameters. We use the Weyl quantization $\Op_h^\w(\cdot)$, $\Op_1^\w(\cdot)$ throughout this article.  
		\item  For $h$-dependent quantities $A,B$, we use $A\lesssim B$ ($A\gtrsim B$) to stand for $A\leq CB$ ($A\geq C^{-1}B$) for some uniform constant $C$ which is \emph{independent} of the semi-classical parameter $h$. Sometimes we use subscripts $C_{\epsilon},C_R, C_N,\ldots$ to stand for inequalities with bounds depending on parameters $\epsilon,R,N,\ldots$. We use the notation  $A\ll B$ to stand for $\lim_{h \to 0} A/B = 0$.
		We mean by $O(h^{\alpha}), O_X(h^{\alpha}), o(h^{\alpha}), o_{X}(h^{\alpha}), O(1),O_X(1), o(1),o_X(1)$ to stand for asymptotic for quantities or asymptotic for $X$-norm of functions (operators) as $h\rightarrow 0$. The notations $O(h^{\infty}), O_X(h^{\infty})$ should stand for quantities (norms) that are bounded by $C_Nh^N$ for all $N\in\N$, as $h\rightarrow 0$. 
		\item We denote by $\nabla_G=(\partial_x,V(x)^{\frac{1}{2}}\partial_y)$ the Grushin-gradient.  We define the Grushin Sobolev space $H_G^k$ for $k\geq 1$ via the homogeneous semi-norms:
		\begin{align}
			\label{e:seminorms}
			\|u\|_{\dot{H}_G^k(\T^2)}:=\sum_{\mathcal{X}_1,\cdots,\mathcal{X}_k\in \{\partial_x,\;V(x)^{\frac{1}{2}}\partial_y\}}\|\mathcal{X}_1\cdots\mathcal{X}_ku\|_{L^2(\T^2)}.
		\end{align}
		That is, $\Vert u \Vert_{H_G^k(\T^2)} := \Vert u \Vert_{L^2(\T^2)} + \Vert u \Vert_{\dot{H}_G^k(\T^2)}$.
		\item The $L^2$ inner product of $f,g$ is given by $\lg f,g\rg_{L^2}$.
		
	\end{itemize}

	\subsection{Reduction to the study of quasimodes}
	Throughout this article, we will only focus on the study of the resolvent $(-h^2\Delta_G-1+ihb)^{-1}$, since the same argument works for the negative sign. 
	To better distinguish the different conditions on the damping terms, we consider
	$b_0(x,y)$ satisfying (EGCC) and (SGCC) in Theorem \ref{t:EGCC}, $b_1(y)$ the damping in Theorem \ref{t:main_theorem} satisfying \eqref{e:B-H_condition}, $b_2(y)$ the damping in Theorem \ref{t:sharp_in_1} satisfying \eqref{e:particular_b}, and $b_3(y)$ the damping in Theorem \ref{t:sharp_in_2} satisfying \eqref{e:finite_type_condition}. 
	We will prove the upper bounds in Theorems \ref{t:EGCC},  \ref{t:main_theorem},  \ref{t:sharp_in_1}, and \ref{t:sharp_in_2} by a uniform scheme of contradiction argument. 
	That is to say, if the upper bounds do not hold,  there exist sequences $h_n \rightarrow 0$ and $(\psi^{(j)}_{h_n})\subset L^2(\T^2)$, such that
	\begin{equation} 
		\label{e:quasimode_to_contradiction}
		\|\psi^{(j)}_{h_n}\|_{L^2}= 1,\quad  (-h_n^2 \Delta_G + i h_nb_j -1 )\psi^{(j)}_{h_n} = r_h^{(j)} =o_{L^2}(h_n^{2}\delta^{(j)}_{h_n}),\quad j=0,1,2,3.
	\end{equation}
	Respectively, $\delta_h^{(0)}=1$ for Theorem \ref{t:EGCC}, $\delta_h^{(1)}=h^{\delta_0}$ in Theorem \ref{t:main_theorem}, $\delta_h^{(2)} = h^{\frac{1}{\nu+2}}$ for Theorem \ref{t:sharp_in_1} and $\delta_h^{(3)}=h^{-\frac{1}{\nu+1}}$ for Theorem \ref{t:sharp_in_2}.
	The aim is therefore to reach a contradiction with respect to $b_j$ and $\psi_{h_n}^{(j)}$. In the sequel, we will drop the subindex in $h_n$ and write simply $h$, and the limit 
	$\lim_{h\rightarrow 0}$ should be understood as $\lim_{n\rightarrow \infty}$, possibly through a subsequence. Also, we set $P_{h,b}=-h^2\Delta_G+ihb$.
	\begin{definition}
		The (sequence of) solutions $(\psi_h)$ of $$(P_{h,b}-1)\psi_h=o_{L^2}(h^2\delta_h)$$ are called \emph{the $o(h^2\delta_h)$ $b$-quasimodes}. 
	\end{definition}
	
	We now fix cutoffs $\chi_0,\chi_1$ such that
	\begin{align} 
		\label{e:cut_off_0}
		& \chi_0\in C_c^{\infty}((-1,1)),  \hspace*{1.7cm} \chi_0(\eta) =1,\; \text{for } |\eta|\leq \frac{1}{2}, \\[0.2cm]
		\label{e:cut_off_1}
		&  \chi_1 \in C_c^{\infty}\left( \frac{1}{2}\leq |\eta|\leq 2 \right),  \quad \;  \, \chi_1(\eta)=1,  \;  \text{for } \frac{3}{4}\leq |\eta|\leq \frac{3}{2},
	\end{align}
	and present some a priori estimates for the $b$-quasimodes.
	\begin{lemma}\label{apriori} 
		Assume that $b$ satisfies \eqref{e:B-H_condition} and $\delta_h\leq h^{-\frac{1}{\nu+1}}$. For any $o(h^2\delta_h)$ $b$-quasimode $(\psi_h)$, we have the a priori estimates:
		\begin{align}\label{apriori1}
			& \|h\nabla_G\psi_h\|_{L^2}^2=1+o(h^{2}\delta_h),\quad \|b^{1/2}\psi_h\|_{L^2}=o(h^{\frac{1}{2}}\delta_h^{\frac{1}{2}}).
		\end{align}
	 Moreover, if $\widetilde{h}=\widetilde{h}(h)$ is another semiclassical parameter, i.e. $\widetilde{h}(h) \to 0$ as $h\rightarrow 0$, then for any $\widetilde{h}$-pseudodifferential operator $A_{\widetilde{h}}$ of order $0$,
		$$ \|b^{\frac{1}{2}}A_{\widetilde{h}}\psi_h\|_{L^2}=o(h^{\frac{1}{2}}\delta_h^{\frac{1}{2}})+o(\widetilde{h}).
		$$
	\end{lemma}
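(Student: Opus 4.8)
The plan is to obtain the first two estimates by testing the quasimode equation against $\psi_h$ itself, and then to reduce the bound on $b^{1/2}A_{\widetilde h}\psi_h$ to the one on $b^{1/2}\psi_h$ by commuting $A_{\widetilde h}$ past $b^{1/2}$, using condition \eqref{e:B-H_condition} to extract from the commutator the extra smallness needed to get $o(\widetilde h)$ rather than merely $O(\widetilde h)$. First I would pair the identity $(P_{h,b}-1)\psi_h=r_h$, with $\|r_h\|_{L^2}=o(h^2\delta_h)$, against $\psi_h$ in $L^2$. Integration by parts gives $\langle-h^2\Delta_G\psi_h,\psi_h\rangle=\|h\nabla_G\psi_h\|_{L^2}^2\in\R$ and $\langle ihb\,\psi_h,\psi_h\rangle=ih\|b^{1/2}\psi_h\|_{L^2}^2\in i\R$, so separating real and imaginary parts yields
\[
\|h\nabla_G\psi_h\|_{L^2}^2-1=\Re\langle r_h,\psi_h\rangle,\qquad h\|b^{1/2}\psi_h\|_{L^2}^2=\Im\langle r_h,\psi_h\rangle,
\]
and both right-hand sides are $\le\|r_h\|_{L^2}=o(h^2\delta_h)$; this is \eqref{apriori1}. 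In particular $\|b^{1/2}\psi_h\|_{L^2}^2=o(h\delta_h)$, and since the hypothesis $\delta_h\le h^{-1/(\nu+1)}$ forces $h\delta_h\le h^{\nu/(\nu+1)}\to0$, we also record the qualitative fact $\|b^{1/2}\psi_h\|_{L^2}^2=o(1)$, which will be the crucial input.

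For the last estimate, write $b^{1/2}A_{\widetilde h}\psi_h=A_{\widetilde h}(b^{1/2}\psi_h)+[b^{1/2},A_{\widetilde h}]\psi_h$. Since $A_{\widetilde h}$ has order $0$ it is bounded on $L^2$ uniformly in $\widetilde h$ (Calder\'on--Vaillancourt), so $\|A_{\widetilde h}(b^{1/2}\psi_h)\|_{L^2}\lesssim\|b^{1/2}\psi_h\|_{L^2}=o(h^{1/2}\delta_h^{1/2})$, which accounts for the first term in the claim. For the commutator, condition \eqref{e:B-H_condition} is exactly what makes $b^{1/2}$ amenable to symbolic calculus: $|\nabla(b^{1/2})|=\tfrac12 b^{-1/2}|\nabla b|\le\tfrac{C}{2}b^{1/2-\sigma}$ is bounded, and likewise $b^{1/2}\in W^{2,\infty}$ because $\sigma<\tfrac14$ (further bounded derivatives being available under the standing ``$k_0$ large, $\sigma$ small'' assumption). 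Hence $[b^{1/2},A_{\widetilde h}]=\widetilde h\,\Op^{\w}_{\widetilde h}\!\big(\tfrac1i\{b^{1/2},a\}\big)+\widetilde h^2R_{\widetilde h}$ with $\|R_{\widetilde h}\|_{\mathcal{L}(L^2)}$ bounded, and, since $b^{1/2}$ is independent of the cotangent variables, its principal symbol obeys $|\{b^{1/2},a\}|\le\tfrac{C}{2}\|a\|_{S^0}\,b^{1/2-\sigma}$.

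It then suffices to show $\|\Op^{\w}_{\widetilde h}(\{b^{1/2},a\})\psi_h\|_{L^2}=o(1)$, since this gives $\|[b^{1/2},A_{\widetilde h}]\psi_h\|_{L^2}=\widetilde h\cdot o(1)+O(\widetilde h^2)=o(\widetilde h)$. Writing $\|\Op^{\w}_{\widetilde h}(\{b^{1/2},a\})\psi_h\|_{L^2}^2=\langle\Op^{\w}_{\widetilde h}(|\{b^{1/2},a\}|^2)\psi_h,\psi_h\rangle+O(\widetilde h)$ by composition, and applying the sharp G\aa rding inequality to the nonnegative symbol $C'b^{1-2\sigma}-|\{b^{1/2},a\}|^2$ with $C'=\tfrac{C^2}{4}\|a\|_{S^0}^2$ (using that $\Op^{\w}_{\widetilde h}(b^{1-2\sigma})$ is just multiplication by the bounded function $b^{1-2\sigma}$), gives
\[
\|\Op^{\w}_{\widetilde h}(\{b^{1/2},a\})\psi_h\|_{L^2}^2\le C'\langle b^{1-2\sigma}\psi_h,\psi_h\rangle+O(\widetilde h).
\]
The elementary pointwise inequality $b^{1-2\sigma}\le\epsilon^{-2\sigma}b+\epsilon^{1-2\sigma}$ then yields $\langle b^{1-2\sigma}\psi_h,\psi_h\rangle\le\epsilon^{-2\sigma}\|b^{1/2}\psi_h\|_{L^2}^2+\epsilon^{1-2\sigma}$; letting $h\to0$ first (so $\|b^{1/2}\psi_h\|_{L^2}^2\to0$ and the $O(\widetilde h)$ terms vanish) and then $\epsilon\to0$ forces $\|\Op^{\w}_{\widetilde h}(\{b^{1/2},a\})\psi_h\|_{L^2}\to0$, as needed, and combining with the previous paragraph completes the proof.

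The main obstacle is precisely this last step: squeezing out $o(\widetilde h)$ instead of the trivial $O(\widetilde h)$ from the commutator. This is where both halves of \eqref{e:B-H_condition} are essential --- to place $b^{1/2}$ inside the pseudodifferential calculus and to produce the gain $b^{1/2-\sigma}$ in the principal symbol of $[b^{1/2},A_{\widetilde h}]$ --- together with the already-established $\|b^{1/2}\psi_h\|_{L^2}\to0$ and the two-parameter limit ($h\to0$, then $\epsilon\to0$). Secondarily one should track how many bounded derivatives of $b^{1/2}$ the commutator expansion and the sharp G\aa rding step consume, but these requirements are met by the blanket hypothesis that $k_0$ is large and $\sigma$ small.
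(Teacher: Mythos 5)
Your argument is correct and follows the same route as the paper: pairing the quasimode equation against $\psi_h$ to read off \eqref{apriori1} from real and imaginary parts, then decomposing $b^{1/2}A_{\widetilde h}\psi_h = A_{\widetilde h}(b^{1/2}\psi_h) + [b^{1/2},A_{\widetilde h}]\psi_h$, using the gain $|\{b^{1/2},a\}|\lesssim b^{1/2-\sigma}$ from \eqref{e:B-H_condition}, and sharp G\aa rding to control the commutator. The only (cosmetic) difference is that the paper finishes via the interpolation $\|b^{1/2-\sigma}\psi_h\|\le\|b^{1/2}\psi_h\|^{1-2\sigma}\|\psi_h\|^{2\sigma}$, giving an explicit rate $o\big((h\delta_h)^{(1-2\sigma)/2}\big)$, whereas you use the Young-type pointwise bound $b^{1-2\sigma}\le\epsilon^{-2\sigma}b+\epsilon^{1-2\sigma}$ and a two-parameter limit to obtain the qualitative $o(1)$, which is equally sufficient.
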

	\begin{proof}
		Multiplying the equation \eqref{e:quasimode_to_contradiction} by $\ov{\psi}_h$ and computing
		$$ 
		\Re\lg (-h^2 \Delta_G + i hb -1 ) \psi_h,\psi_h\rg_{L^2},\quad \Im\lg (-h^2 \Delta_G + i hb -1 )\psi_h,\psi_h\rg_{L^2},
		$$
		we get the desired bounds in \eqref{apriori1}. 
		
		 For the last assertion, we write
		$$ b^{\frac{1}{2}}A_{\widetilde{h}}\psi_h=A_{\widetilde{h}}(b^{\frac{1}{2}}\psi_h)+[b^{\frac{1}{2}},A_{\widetilde{h}}]\psi_h.		$$
		The $L^2$ norm of the first term on the right hand side is $o(h^{\frac{1}{2}}\delta_h^{\frac{1}{2}}).$ To treat the second term, from the symbolic calculus, 
		$$ [b^{\frac{1}{2}},A_{\widetilde{h}}]=\frac{\widetilde{h}}{i}\mathrm{Op}_{\widetilde{h}}^{\w}(\{b^{\frac{1}{2}},a\} )+O_{\mathcal{L}(L^2)}(\widetilde{h}^2),
		$$
		where $a$ is the principal symbol of $A_{\widetilde{h}}$. By \eqref{e:B-H_condition}, there exists $C>0$ such that
		$$ |\{b^{\frac{1}{2}},a\}|\leq Cb^{\frac{1}{2}-\sigma}.
		$$
		Therefore, by the sharp G$\mathring{\mathrm{a}}$rding inequality,
		$$\langle Cb^{1-2\sigma}\psi_h,\psi_h\rangle_{L^2}\geq 
		\|\mathrm{Op}_{\widetilde{h}}^{\w}(\{b^{\frac{1}{2}},a\} )
		\psi_h\|_{L^2}^2-O(\widetilde{h}),
		$$
		thus $\widetilde{h}\|\mathrm{Op}_{\widetilde{h}}^{\w}(\{b^{\frac{1}{2}},a \} )\psi_h \|_{L^2}\leq C\widetilde{h}\|b^{\frac{1}{2}-\sigma}\psi_h\|_{L^2}+O(\widetilde{h}^{\frac{3}{2}})$.
		Notice that, by interpolation $\|b^{\frac{1}{2}-\sigma}\psi_h\|_{L^2}\leq \|b^{\frac{1}{2}}\psi_h\|_{L^2}^{1-2\sigma}\|\psi_h\|_{L^2}^{2\sigma}=o(h^{\frac{1-2\sigma}{2}}\delta_h^{\frac{1-2\sigma}{2}})$, we have in particular that $\|[b^{\frac{1}{2}},A_{\widetilde{h}}]\psi_h\|_{L^2}=o(\widetilde{h})$. This completes the proof of Lemma \ref{apriori}. 
	\end{proof}
	Consequently,  $\chi_1(-h^2\Delta_G)\psi_h$ are still $o(h^2\delta_h)$ quasimodes and $(1-\chi_1(-h^2\Delta_G))\psi_h=o_{L^2}(1)$. Therefore, in the sequel, we always assume that $o(h^2\delta_h)$ quasimodes $(\psi_h)$ satisfy $\psi_h=\chi_1(-h^2\Delta_G)\psi_h$, without loss of generality.
	
	Next we collect some subelliptic a priori estimates:
	
	\begin{lemma}\label{subellipticapriori}
		For any (regular) function $f$ on $\T^2$, 
		$$ \||D_y|f \|_{L^2}\leq C_0\|\Delta_G f\|_{L^2}+\|f\|_{L^2}.
		$$
		In particular, for the $o(h^2\delta_h)$ $b$-quasimodes $\psi_h$ satisfying \eqref{e:quasimode_to_contradiction}, we have 
		\begin{equation}
			\label{e:subelliptic_compactness} 
			\| h^2 |D_y| \psi_h\|_{L^2(M)}\leq O(1). 
		\end{equation}
	\end{lemma}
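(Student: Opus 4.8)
The statement is a standard subelliptic a priori estimate. The plan is to prove the first inequality $\||D_y|f\|_{L^2}\leq C_0\|\Delta_G f\|_{L^2}+\|f\|_{L^2}$ by a Fourier series argument in the $y$-variable, reducing to a one-dimensional spectral estimate for the operator $-\partial_x^2 + V(x)\eta^2$ on $\mathbb{T}_x$, uniformly in the dual frequency $\eta\in\mathbb{Z}$. Then the consequence \eqref{e:subelliptic_compactness} follows immediately by feeding $f=\psi_h$ into the inequality, using that $\|\Delta_G\psi_h\|_{L^2}=h^{-2}\|(h^2\Delta_G)\psi_h\|_{L^2}=h^{-2}(1+o(1))$ from the quasimode equation \eqref{e:quasimode_to_contradiction} (the term $ihb\psi_h$ is $O(h)$ and $r_h^{(j)}=o(h^2\delta_h^{(j)})=o(1)$ after multiplying through by $h^{-2}$ since $\delta_h\leq h^{-1/(\nu+1)}\ll h^{-2}$), and then $h^2$ times the estimate gives $\|h^2|D_y|\psi_h\|_{L^2}\leq C_0(1+o(1)) + h^2 = O(1)$.

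\textbf{The one-dimensional core estimate.} Expanding $f=\sum_{\eta\in\mathbb{Z}} f_\eta(x)e^{i\eta y}$, by Plancherel it suffices to show
\begin{equation*}
|\eta|^2\|f_\eta\|_{L^2(\mathbb{T}_x)}^2 \leq C_0^2\|(-\partial_x^2 + V(x)\eta^2)f_\eta\|_{L^2(\mathbb{T}_x)}^2 + \|f_\eta\|_{L^2(\mathbb{T}_x)}^2
\end{equation*}
with $C_0$ independent of $\eta$. For $|\eta|$ bounded this is trivial (the last term on the right dominates after adjusting the constant), so we may assume $|\eta|$ large. Setting $\eta^2 = \omega^2$, the operator $L_\omega := -\partial_x^2 + \omega^2 V(x)$ is a Schrödinger operator on $\mathbb{T}_x$ with a potential well of depth $\sim\omega^2 x^2$ near $x=0$ and bounded below away from $x=0$; its lowest eigenvalue is $\sim |\omega|$ (harmonic oscillator ground energy, since $V''(0)=2$), and one wants the pointwise operator inequality $L_\omega \geq c|\omega|$ on $L^2(\mathbb{T}_x)$ for some $c>0$ uniform in $\omega$. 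Indeed, by the min-max principle and the quadratic form $\langle L_\omega u,u\rangle = \|\partial_x u\|^2 + \omega^2\|V^{1/2}u\|^2$, one compares with the harmonic oscillator $-\partial_x^2 + \omega^2 x^2$ on $\mathbb{R}$ after a localization argument (the potential $V(x)$ is bounded below by $c_0 x^2$ near zero and by a positive constant away from zero); this gives $L_\omega \geq c|\omega|$. Squaring, $L_\omega^2 \geq c^2\omega^2 = c^2|\eta|^2$, hence $|\eta|^2\|f_\eta\|^2 \leq c^{-2}\|L_\omega f_\eta\|^2$, which is the desired bound (with $C_0 = c^{-1}$, even without the $\|f_\eta\|^2$ term in the large-$\eta$ regime).

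\textbf{Main obstacle.} The only real point requiring care is the uniform-in-$\omega$ lower bound $L_\omega\geq c|\omega|$ on the torus, i.e.\ transferring the harmonic-oscillator ground-state energy estimate from $\mathbb{R}$ to $\mathbb{T}_x$ and checking it holds uniformly as $\omega\to\infty$ despite $V$ only behaving like $x^2$ near $0$ and being merely positive elsewhere. This can be handled by a partition of unity: on a fixed neighborhood $|x|<\delta$ of $0$ use $V(x)\geq c_0 x^2$ and the scaling $x\mapsto \omega^{-1/2}x$ to get a contribution $\gtrsim |\omega|$, and on $|x|\geq\delta$ use $V(x)\geq v_\delta>0$ to get a contribution $\gtrsim \omega^2\geq|\omega|$; the cross terms from the commutators of the cutoffs with $\partial_x^2$ are $O(1)$, hence absorbable for $|\omega|$ large. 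Alternatively one can invoke the classical subelliptic estimate of Hörmander/Rothschild–Stein directly, but the elementary Fourier-series argument is self-contained and suffices here.
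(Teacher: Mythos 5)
Your proof is correct but takes a genuinely different route from the paper. The paper uses a H\"ormander commutator argument: after splitting $f = \chi(x)f + (1-\chi(x))f$ with $\chi$ a cutoff near $x=0$ (handling the second piece by ellipticity of $\Delta_G$ away from $x=0$), it observes that for $\mathcal{X}_1 = D_x$ and $\mathcal{X}_2 = W(x)D_y$ with $W=V^{1/2}$, the bracket $[\mathcal{X}_1,\mathcal{X}_2] = -iW'(x)D_y$ is non-degenerate on the support of $\chi$, so $\|D_y(\chi f)\|\lesssim\|[\mathcal{X}_1,\mathcal{X}_2](\chi f)\|\leq\|\mathcal{X}_1\mathcal{X}_2(\chi f)\|+\|\mathcal{X}_2\mathcal{X}_1(\chi f)\|\leq\|\chi f\|_{\dot H_G^2}$, and then concludes via the norm-equivalence Lemma~\ref{lemmaB1}. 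Your argument instead Fourier-decomposes in $y$ and reduces the claim to the uniform spectral lower bound $-\partial_x^2+\omega^2 V(x)\geq c|\omega|$ on $\T_x$, obtained by IMS localization (harmonic-oscillator comparison near $x=0$, trivial elliptic bound away). Both routes are sound. The commutator argument is the canonical subelliptic one and would generalize to sums of squares without the product structure of $\T^2$; your direct spectral reduction exploits that $V$ depends only on $x$, but it is self-contained and avoids both the elliptic cutoff step and the separate invocation of Lemma~\ref{lemmaB1}. It is worth noting that the paper's proof of Lemma~\ref{lemmaB1} itself proceeds by a Fourier decomposition in $y$ followed by a rescaled one-dimensional estimate for $-\partial_z^2+nW(z/\sqrt n)^2$, so the two routes end up sharing much of the technical substratum, merely packaged differently. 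Your derivation of \eqref{e:subelliptic_compactness} from the first inequality is also correct: from \eqref{e:quasimode_to_contradiction}, $\|h^2\Delta_G\psi_h\|\leq 1 + O(h) + \|r_h\| = O(1)$, since $h^2\delta_h^{(j)}\to 0$ for every $j=0,1,2,3$, and multiplying the a priori inequality by $h^2$ gives the claim.
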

	\begin{proof}
	Pick a bump function $\chi(x)$ with support sufficiently close to $x=0$ and $\chi\equiv 1$ near $x=0$. We divide $f$ by two pieces $f_1:=\chi f$ and $f_2:=(1-\chi)f$.   Since $-\Delta_G$ is elliptic outside $x=0$,  we have
	$$ \|f_2\|_{H^2}\lesssim \|\Delta_G((1-\chi) f)\|_{L^2}+\|(1-\chi)f\|_{L^2}\lesssim \|\Delta_Gf\|_{L^2}+\|\nabla_Gf\|_{L^2}+\|f\|_{L^2}.
	$$	
	Observe that $\|\nabla_Gf\|_{L^2}\leq \|\Delta_G f\|_{L^2}^{\frac{1}{2}}\|f\|_{L^2}^{\frac{1}{2}}$, we conclude the estimate for $f_2=(1-\chi)f$. 
		
	It remains to deal with $f_1$ which is supported near $x=0$. Consider $f_{\pm}:=\Pi_{\pm}f_1$, where $\Pi_{+}$ is the Fourier projection in $y$ such that $\mathcal{F}_y\Pi_+=\mathbf{1}_{n\geq 0}$ and $\Pi_-=\mathrm{Id}-\Pi_+$. With this notation, we have $D_yf_{\pm}=\pm|D_y|f_{\pm}$. Note that $D_y$ commutes with $\Delta_G$, we may prove the desired inequality separately for $f_{\pm}$ and then use Plancherel's identity to sum them up. Recall that $W(x)=V(x)^{\frac{1}{2}}$. For $\mathcal{X}_1=D_x, \mathcal{X}_2=W(x)D_y$, the vector field $[\mathcal{X}_1,\mathcal{X}_2]=-iW'(x)D_y$ is non-degenerate on the support of $f_{\pm}$, we have
		$$ \|D_yf_{\pm}\|_{L^2(M)}\lesssim \|[\mathcal{X}_1,\mathcal{X}_2]f_{\pm}\|_{L^2(M)}\leq \|\mathcal{X}_1\mathcal{X}_2f_{\pm}\|_{L^2(M)}+\|\mathcal{X}_2\mathcal{X}_1f_{\pm}\|_{L^2(M)}\leq \|f_{\pm}\|_{\dot{H}_G^2(M)}. 
		$$ 
		From the equivalent norm on $H_G^2$ (Lemma \ref{lemmaB1}), the proof is complete.
	\end{proof}

	The third a priori estimate concerns the localization property in the $x$ variable when $|D_y|\gg h^{-1}$:
	\begin{lemma}\label{apriori3}
		Let $\epsilon_1>0$ be sufficiently small. Let $\delta_1>0$ be a small constant such that $$\min_{\delta_1<|x|\leq\pi}|W(x)|>4\epsilon_1.
		$$
		Then for any $N\in\N$, there exists $C_N>0$, such that for all $f_h\in L^2(M)$, we have
		\begin{align*}
			& \big\|(1-\chi_0(\delta_1^{-1}x))(1-\chi_0(\epsilon_1hD_y))\chi_1(-h^2\Delta_G)f_h\big\|_{L^2} \\[0.2cm]
			& +\big\|(1-\chi_0(\delta_1^{-1}x))(1-\chi_0(\epsilon_1hD_y))h\pa_x \chi_1(-h^2\Delta_G)f_h \big\|_{L^2}   \\[0.2cm] 
			& \hspace*{5cm} \leq  C_Nh^N\big(\|f_h\|_{L^2}+\|h\nabla_Gf_h\|_{L^2} \big).
		\end{align*}
	\end{lemma}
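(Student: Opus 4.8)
The plan is to prove the stronger statement that the two operators
$$\mathcal{B}_h:=(1-\chi_0(\delta_1^{-1}x))(1-\chi_0(\epsilon_1hD_y))\chi_1(-h^2\Delta_G),\qquad \widetilde{\mathcal B}_h:=(1-\chi_0(\delta_1^{-1}x))(1-\chi_0(\epsilon_1hD_y))h\partial_x\chi_1(-h^2\Delta_G)$$
have operator norm $O(h^\infty)$ on $L^2(\T^2)$; since the right-hand side of the statement dominates $C_Nh^N\|f_h\|_{L^2}$, this is more than enough. First I would write the frequency-cutoff factor as a semiclassical pseudodifferential operator: being the product of a multiplication operator in $x$ and a Fourier multiplier in $y$, we have $(1-\chi_0(\delta_1^{-1}x))(1-\chi_0(\epsilon_1hD_y))=\Op_h^{\w}(q_h)$ for a bounded symbol $q_h(x,\eta)$, with all derivatives bounded uniformly in $h$, supported (modulo $O(h^\infty)$) in $\{|x|\geq\delta_1/2\}\times\{|\eta|\geq\frac1{2\epsilon_1}\}$, where $\eta$ denotes the semiclassical dual variable $hD_y\leftrightarrow\eta$.

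The key point is then an elliptic/disjoint-support mechanism. On $\mathrm{supp}\,q_h$ one has $V(x)\geq v_0:=\min_{|x|\geq\delta_1/2}V>0$, whence
$$\xi^2+V(x)\eta^2\geq V(x)\eta^2\geq\frac{v_0}{4\epsilon_1^2}.$$
Since $v_0$ is a fixed positive constant once $\delta_1$ is fixed (recall $V(x)=x^2+\mathcal O(x^3)$ near $0$), choosing $\epsilon_1$ small enough, compatibly with the constraint $4\epsilon_1<\min_{|x|>\delta_1}|W|$ (so $\epsilon_1\ll\delta_1$), we get $v_0/(4\epsilon_1^2)\geq 2+2\epsilon_0$ for some $\epsilon_0>0$, so that $\mathrm{supp}\,q_h\subseteq\{\xi^2+V(x)\eta^2\geq 2+2\epsilon_0\}$, strictly beyond the energy shell cut out by $\chi_1$. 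Now I would invoke the semiclassical functional calculus for $-h^2\Delta_G$: fixing $\psi\in C_c^\infty((0,2+\epsilon_0))$ with $\psi\equiv1$ near $\mathrm{supp}\,\chi_1$, so that $\chi_1(-h^2\Delta_G)=\psi(-h^2\Delta_G)\chi_1(-h^2\Delta_G)$, the Helffer--Sj\"ostrand formula gives $\psi(-h^2\Delta_G)=\Op_h^{\w}(b_\psi)+O_{\mathcal L(L^2)}(h^\infty)$ with $b_\psi\sim\sum_{j\geq0}h^jb_{\psi,j}$, $b_{\psi,0}=\psi(\xi^2+V(x)\eta^2)$, and all $b_{\psi,j}$ supported in $\{\xi^2+V(x)\eta^2\in\mathrm{supp}\,\psi\}\subseteq\{\xi^2+V(x)\eta^2<2+\epsilon_0\}$. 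Since $h\partial_x$ is a first-order semiclassical differential operator with symbol $i\xi$, also $h\partial_x\psi(-h^2\Delta_G)=\Op_h^{\w}(\widetilde b_\psi)+O_{\mathcal L(L^2)}(h^\infty)$ with $\widetilde b_\psi$ supported in the same set. As $\mathrm{supp}\,q_h$ is disjoint (with a positive gap) from $\mathrm{supp}\,b_\psi$ and from $\mathrm{supp}\,\widetilde b_\psi$, the composition theorem for pseudodifferential operators with disjoint symbol supports gives $\Op_h^{\w}(q_h)\Op_h^{\w}(b_\psi)=O(h^\infty)$ and $\Op_h^{\w}(q_h)\Op_h^{\w}(\widetilde b_\psi)=O(h^\infty)$ in $\mathcal L(L^2)$; multiplying on the right by the uniformly bounded operator $\chi_1(-h^2\Delta_G)$ yields $\mathcal B_h=O(h^\infty)$ and $\widetilde{\mathcal B}_h=O(h^\infty)$, which is the claim.

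The main obstacle is the functional-calculus step, since $-h^2\Delta_G$ is only subelliptic (degenerate in the $\eta$ direction at $x=0$), so the symbol classes for $b_\psi$ carry weights in $\langle\eta\rangle$ and one must check that the construction nonetheless produces a symbol supported in the compact energy window, which is all the disjointness argument needs. A clean way to circumvent this is to diagonalize in the $y$-Fourier modes: for each $n$ with $|hn|\geq\frac1{2\epsilon_1}$ the operator acts through the genuinely elliptic one-dimensional Schr\"odinger operator $L_{h,n}=-h^2\partial_x^2+h^2n^2V(x)$, whose symbol exceeds $2+2\epsilon_0$ on $\{|x|\geq\delta_1/2\}$, and one runs the elliptic estimate on each mode, uniformly in $n$ (uniformity being automatic since the symbol lower bound $\frac{v_0}{4\epsilon_1^2}$ does not depend on $n$), before summing via Plancherel in $y$.
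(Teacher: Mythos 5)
Your proposal correctly identifies the mechanism: $\chi_1(-h^2\Delta_G)$ microlocalizes to the energy shell $\{\xi^2+V(x)\eta^2\in\mathrm{supp}\,\chi_1\}$, the two cutoffs localize to a region where $V(x)\eta^2$ is bounded below, and an exterior elliptic (Agmon-type) estimate then yields $O(h^\infty)$. In fact your \emph{last} paragraph --- decompose in $y$-Fourier modes, apply the one-dimensional forbidden-region estimate to $L_{h,n}=-h^2\partial_x^2+h^2n^2V(x)$ uniformly in $n$, and sum via Plancherel --- is precisely what the paper does: the paper's own proof is a one-sentence deferral to \cite[Lemma 2.3]{LeS20}, and the stated key point is that $\mathrm{supp}(1-\chi_0(\delta_1^{-1}x))$ lies in the classical forbidden region of $-h^2\partial_x^2+h^2W(x)^2\partial_y^2$. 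So your ``backup'' route is the intended one.

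Two cautions on your primary route. First, the Helffer--Sj\"ostrand step is a genuine gap, not a mere technicality: $-h^2\Delta_G$ is not semiclassically elliptic, so $(-h^2\Delta_G-z)^{-1}$ is \emph{not} a $\Psi$DO in a classical symbol class --- the parametrix symbol $(\xi^2+V(x)\eta^2-z)^{-1}$ fails uniform bounds in $\eta$ near $x=0$, and $p^{-1}(\mathrm{supp}\,\psi)$ is non-compact in $\eta$ --- so you cannot simply cite functional calculus plus a disjoint-support composition lemma; building the required anisotropic calculus is a substantial detour (and indeed a good part of the paper's Section \ref{subellipticPropagation} is devoted to exactly this kind of difficulty). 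The Fourier-mode reduction is the robust path. Second, check the numerology: $1-\chi_0(\delta_1^{-1}x)$ is supported on $|x|\geq\delta_1/2$ and $1-\chi_0(\epsilon_1 hD_y)$ on $|h\eta|\geq\tfrac{1}{2\epsilon_1}$; if $\delta_1$ is taken near the smallest value allowed by $\min_{\delta_1<|x|\leq\pi}|W|>4\epsilon_1$, then on $|x|\approx\delta_1/2$ one only has $|W|\approx 2\epsilon_1$, whence $V(x)(h\eta)^2\gtrsim 1$, which sits \emph{inside} $\mathrm{supp}\,\chi_1$. The margin $v_0/(4\epsilon_1^2)\geq 2+2\epsilon_0$ you assert therefore does not follow from the stated hypothesis at face value; you need a slightly stronger hypothesis (e.g.\ $\min_{\delta_1/2<|x|\leq\pi}|W|>4\epsilon_1$), or to sharpen the constant $4$. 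This wrinkle appears to be inherited from the lemma as stated, but a self-contained proof should resolve it rather than quote it.
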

	\begin{proof}
		The key point is that the support of $1-\chi_0(\delta_0^{-1}x)$ is contained in the classical forbidden region of the operator $-h^2\partial_x^2+h^2\partial_y^2W(x)^2$. The proof is essentially the same as the proof of Lemma 2.3 in \cite{LeS20}, hence we omit the details.
	\end{proof}


\section{Propagation of semiclassical measures in the subelliptic regime}\label{subellipticPropagation}

In this section we study the sub-ellitpic regime $h^{-1}\ll |D_y|\leq C_0h^{-2}$. In order to microlocalize quasi-modes into this regime, we denote, for any sequences $R\gg 1$,  $h\ll 1$ obeying $R\leq \frac{1}{C_0h}$ (this constraint comes from Lemma \ref{subellipticapriori}), $$\Upsilon_h^{R} := \Big(1 - \chi_0\Big( \frac{hD_y}{R}\Big)\Big)\chi_1(-h^2\Delta_G).$$
Throughout this section, $P_{h,b}=-h^2\Delta_G+ihb,$ and we assume that $\psi_h$ solves the equation (slightly more general than \eqref{e:quasimode_to_contradiction})
\begin{align}\label{e:quasimode_equation}
	(P_{h,b}-\zeta_h)\psi_h=r_h=o_{L^2}(h^2\delta_h),
\end{align}
where $\zeta_h = 1 + ih \beta_h$ with $0 \leq \beta_h \leq Ch$. The parameter $\delta_h$ will be fixed differently in this section, according to the specific situations of the damping $b$.
The goal of this section is to show that the quasimodes \emph{cannot} concentrate in the subelliptic regime.
Firstly, for the widely undamped case and the geometric control case, we will prove the following propagation theorem:
\begin{prop}\label{Measureformulationj=012}
	Assume that $\delta_h\leq 1$ and $\zeta_h=1+ih\beta_h$ for some $0\leq\beta_h\leq Ch$. Assume that $(\psi_h)$ satisfies: $$\|\psi_h\|_{L^2}=1,\quad 
	(P_{h,b}-\zeta_h)\psi_h=r_h=o_{L^2}(h^2\delta_h).
	$$
	Then up to the extraction of a subsequence, there exist positive Radon measures $\mu_0$ on $T^*\T^2$ and $\ov{\mu}_0$ on $\T_y\times \overline{\R_\eta \setminus \{ 0 \}}$, such that for any symbols $a_0\in C_c^{\infty}(T^*\T^2)$ and $a_1=a_1(y,\eta,\theta)$, homogeneous of degree $0$ at infinity in $\eta$ and compactly supported in $\theta$, we have
	\begin{align*}
		\lim_{h\rightarrow 0} \big \langle\Op_{h}^{\w}(a_0+\widetilde{a}_1)\psi_{h},\psi_{h}\big \rangle_{L^2(\T^2)}=\int_{T^*\T^2}(a_0+a_1|_{\theta=0})d\mu_0+\int_{\T_y\times \overline{\R_\eta \setminus \{ 0 \}} } \,  \overline{a}_{\infty} \,d\ov{\mu}_0,
	\end{align*}
	where we use the notations \eqref{e:closure_of_a} and \eqref{e:further_notations_symbols}. These measures satisfy the following properties:
	\begin{itemize}
		\item The sum of the total masses of $\mu_0,\ov{\mu}_0$ is $1$.
		\smallskip
		
		\item The measure $\mu_0$ is invariant along the elliptic flow $\phi_t^{\mathrm{e}}$.
		\smallskip
		\item $\langle\mu_0,b\rangle=\lg\ov{\mu}_0,b\rg=0$.
	\end{itemize}
	Furthermore:
	\begin{itemize}
		\item If $\int_{\T^2}b>0$ and $b=b_j$ for $j\in\{0,1,2\}$, we have $\ov{\mu}_0=0$. 
		\smallskip
		
		\item If $b=0$ and $\beta_h=o(h)$, the measure $\ov{\mu}_0$ is invariant along the vertical flow $\phi_t^{\mathrm{v}}$.
	\end{itemize}
\end{prop}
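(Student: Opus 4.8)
\textbf{Proof proposal for Proposition \ref{Measureformulationj=012}.}

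The plan is to build the two measures $\mu_0$ and $\ov\mu_0$ simultaneously by a two-microlocal analysis in the variable $\eta\leftrightarrow D_y$, following the philosophy of two-microlocal semiclassical measures adapted to completely integrable systems. First I would observe that, by Lemma \ref{apriori}, we may assume $\psi_h=\chi_1(-h^2\Delta_G)\psi_h$, so that $\psi_h$ is microlocalized on $p^{-1}(1)$ up to $o_{L^2}(1)$, and by Lemma \ref{subellipticapriori} we have the crucial a priori bound $\|h^2|D_y|\psi_h\|_{L^2}=O(1)$, which caps the subelliptic frequencies at $|D_y|\lesssim h^{-2}$. The existence of the limits $\langle\Op_h^\w(a_0+\widetilde a_1)\psi_h,\psi_h\rangle$ along a subsequence is then a standard compactness/diagonal argument: for the compactly supported part $a_0$ one extracts the usual semiclassical measure $\mu_0$ on $T^*\T^2$, while for the symbols $\widetilde a_1(y,\eta)=a_1(y,\eta,h\eta)$ one introduces the second microlocal variable $\theta=h\eta$ and, on the region $|D_y|\sim h^{-2}$ (i.e. $\theta\sim 1$), passes to an \emph{operator-valued} measure in the $x$-variable built from the compression of $\psi_h$ to frequency shells $|D_y|\sim h^{-2}$ — this is where the harmonic oscillator $-\partial_x^2+\eta^2x^2$ enters. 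Rescaling $x\mapsto |\eta|^{1/2}x$, one sees $-h^2\Delta_G$ acting on such a shell looks like $|\eta|^2 h^2(-\partial_X^2+X^2)$, and the constraint $p=1$ forces $h^2|\eta|^2(2N+1)\approx 1$ for the oscillator level $N$; the operator-valued measure lives on $\T_y\times\mathbb{S}^0_\omega$ and its trace against the identity gives the singular part $\ov\mu_0$ after disintegration. Collecting the contributions — compact part, intermediate regime $h^{-1}\ll|D_y|\ll h^{-2}$ (which contributes to $\ov\mu_0$ on $\T_y\times(\R_\eta\setminus\{0\})$), and the shell regime $|D_y|\sim h^{-2}$ (on $\T_y\times\mathbb{S}^0_\omega$) — yields the stated decomposition with the closure notation \eqref{e:closure_of_a}, and the total mass is $1$ because $\psi_h$ has unit norm and no mass escapes (this uses again Lemmas \ref{apriori} and \ref{subellipticapriori}, plus Lemma \ref{apriori3} to discard the classically forbidden region $|x|\gtrsim\delta_1$ when $|D_y|\gg h^{-1}$).

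Next I would extract the invariance and vanishing properties from the equation $(P_{h,b}-\zeta_h)\psi_h=o_{L^2}(h^2\delta_h)$. Testing the equation against $\Op_h^\w(a_0)\psi_h$ and taking the imaginary part in the standard way gives $\langle[\,\Op_h^\w(p),\Op_h^\w(a_0)\,]\psi_h,\psi_h\rangle=ih\langle\{p,a_0\}\cdots\rangle+o(h)$, producing $\int\{p,a_0\}\,d\mu_0=0$, i.e. invariance of $\mu_0$ under $\phi_t^{\mathrm e}$ — here $\delta_h\le 1$ guarantees the remainder $o(h^2\delta_h)$ is harmless at the order $o(h)$ needed. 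Taking instead the real part of $\langle(P_{h,b}-\zeta_h)\psi_h,\Op_h^\w(a_0)\psi_h\rangle$, the leading nonnegative contribution is $h\langle b\,\Op_h^\w(a_0)\psi_h,\psi_h\rangle$; choosing $a_0\ge 0$ and using $\beta_h\le Ch$ together with $\|b^{1/2}\psi_h\|_{L^2}=o(h^{1/2})$ from Lemma \ref{apriori} forces $\langle\mu_0,b\rangle=0$, and the analogous computation with $\widetilde a_1$ (plus the a priori estimate for $\|b^{1/2}A_{\widetilde h}\psi_h\|$) gives $\langle\ov\mu_0,b\rangle=0$. For the invariance of $\ov\mu_0$ along $\phi_t^{\mathrm v}$ in the case $b=0$, $\beta_h=o(h)$: one tests against $\Op_h^\w(\widetilde a_1)$, and the commutator $[\,-h^2\Delta_G,\Op_h^\w(\widetilde a_1)\,]$ on the subelliptic shells is dominated, after the $|\eta|$-rescaling, by the transport term $h^2\cdot 2\eta\,\partial_y$; but $h^2\eta=h\cdot(h\eta)=h\theta$, so this is of size $h$ and its symbol is $2\theta\,\partial_y a_1$; on the shell $|\eta|\sim h^{-2}$ one has $\theta\sim\mathrm{sgn}(\eta)=\omega$, which after pushing forward produces exactly the generator $\omega\,\partial_y$ of the vertical flow — giving $\int\omega\,\partial_y\ov a_\infty\,d\ov\mu_0=0$. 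The harmonic-oscillator part $-\partial_x^2+\eta^2x^2$ commutes with $\Op_h^\w(\widetilde a_1)$ (which is $x$-independent) and contributes nothing, which is why only the vertical transport survives.

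The remaining assertion — if $\int_{\T^2}b>0$ and $b=b_j$ with $j\in\{0,1,2\}$ then $\ov\mu_0=0$ — is the \textbf{main obstacle} and the heart of the proposition, since it is precisely the statement that subelliptic quasimodes cannot concentrate. The idea, inspired by the proof of \cite[Thm. 3]{ArR20}, is a second-scale argument on the operator-valued measure: one already knows $\langle\ov\mu_0,b\rangle=0$ and $\ov\mu_0$ is $\phi_t^{\mathrm v}$-invariant (the $b=0$ invariance argument of the previous paragraph in fact goes through for the \emph{imaginary part} contribution even when $b\ne 0$, since the damping enters the real part). Since the vertical flow is minimal on $\T_y$ whenever $\omega\ne 0$ (and trivially on $\T_y\times\mathbb{S}^0_\omega$ the orbit is the full circle in $y$), $\phi_t^{\mathrm v}$-invariance forces the $y$-marginal of $\ov\mu_0$ to be (a multiple of) Lebesgue measure $dy$ on each fixed $\eta$-fiber; but then $\langle\ov\mu_0,b\rangle=0$ combined with $b\ge0$ and $\int_{\T^2}b>0$ — which by the structure of $b_j$ (depending only on $y$, or satisfying (SGCC)/(EGCC)) means $b>0$ on a set of positive $dy$-measure on the relevant fibers — is incompatible unless $\ov\mu_0=0$. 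Making this rigorous requires care: for $j=0$ ($b_0$ depending on $(x,y)$) one must use that on the subelliptic shells the $x$-profile is the oscillator ground/excited state, so the effective damping seen by $\ov\mu_0$ is the $x$-average $\int b_0(x,y)|h_N(x)|^2dx$ against the oscillator eigenfunction, and (SGCC) is exactly what guarantees this average is not identically zero along vertical orbits; for $j=1,2$ ($b=b(y)$) this is cleaner. One also needs a positivity/Gårding argument to pass from the quasimode relation to a genuine lower bound obstruction, and to handle the intermediate regime $h^{-1}\ll|D_y|\ll h^{-2}$ separately from the shell regime — in the intermediate regime the harmonic oscillator localizes $\psi_h$ in $x$ to a shrinking neighborhood of $x=0$, again reducing the question to the vertical dynamics and the value of $b$ near $x=0$, controlled by (SGCC). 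I would carry these two regimes out in turn, in each case combining $\phi_t^{\mathrm v}$-invariance with the vanishing $\langle\ov\mu_0,b\rangle=0$ and the sign of $b$ to conclude $\ov\mu_0=0$.
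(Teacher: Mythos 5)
Your overall framework matches the paper's: you correctly identify the decomposition into a compact-part measure $\mu_0$, a scalar two-microlocal measure capturing $h^{-1}\ll|D_y|\ll h^{-2}$, and an operator-valued measure capturing the shell $|D_y|\sim h^{-2}$ (this is Proposition~\ref{p:semiclassical_measures}), and you see the role of the rescaled harmonic oscillator $-\partial_x^2+\eta^2 x^2$, the total-mass accounting, and the a priori estimates from Lemmas~\ref{apriori}--\ref{apriori3}. The existence, mass decomposition, elliptic-flow invariance of $\mu_0$, and the vanishing of $\langle\mu_0,b\rangle$, $\langle\ov\mu_0,b\rangle$ all go through essentially as you sketch.

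The gap is in your final paragraph. You assert that $\ov\mu_0$ is still $\phi_t^{\mathrm v}$-invariant when $b\not\equiv 0$ because ``the damping enters the real part.'' This is not correct. Once you insert the quasimode equation into the Wigner bracket $\frac{i}{h}\langle[-h^2\Delta_G,\Op_1^\w(\mathbf{b})]\psi_h,\psi_h\rangle$, you obtain, as in equation~\eqref{commutatorwithb},
\begin{align*}
\frac{i}{h}\big\langle[-h^2\Delta_G,\Op_1^\w(\mathbf{b})]\psi_{h,\mathrm{s}},\psi_{h,\mathrm{s}}\big\rangle_{L^2}=-2\Re\big\langle\Op_1^\w(\mathbf{b})(b-\beta_h)\psi_{h,\mathrm{s}},\psi_{h,\mathrm{s}}\big\rangle_{L^2}-\tfrac{2}{h}\Im\langle\Op_1^\w(\mathbf{b})\psi_{h,\mathrm{s}},r_h\rangle_{L^2},
\end{align*}
so the damping term sits squarely inside the quantity that would have to vanish for exact invariance, and no splitting of real/imaginary parts removes it. For $b\geq 0$ and the renormalized test symbols $\mathbf{c}_{h,\epsilon,R}=h|\eta|\chi_\epsilon(h^2\eta)(1-\chi_R(h\eta))a(y)$ with $a\geq0$, the sharp G\aa rding inequality gives only a \emph{one-sided} bound on the damping term, and what survives the triple limit is a Gr\"onwall-type differential inequality: schematically $\tfrac{d}{dt}\big(e^{-2C_0 t}\int(\xi^2+\sigma^2)a(y+t\omega)\,d\mu_1\big)\leq 0$, with $C_0$ bounding $\limsup\beta_h/h$. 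Setting $a=b(0,\cdot)\geq 0$ and using $\operatorname{supp}\mu_1\subset\{b(0,y)=0\}$ from Lemma~\ref{supportmeasure} forces the flowed quantity to vanish for all $t\geq 0$; integrating in $t\in[0,2\pi]$ and using $\int_\T b(0,y)\,dy>0$ then gives $\mu_1=0$. The argument for the operator-valued measure $M_2$ is analogous, but requires an extra ingredient you have not mentioned: the measures must first be averaged by the \emph{horizontal} harmonic-oscillator flow (Lemma~\ref{horizontal} and Lemma~\ref{averagingLemma}) so that the transport symbol $x^2\eta^2$ can be symmetrized to $\tfrac12(x^2\eta^2+D_x^2)\geq|\eta|/2$, making the right-hand side of the inequality a positive quantity proportional to the trace-mass. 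Without the one-sided inequality and this symmetrization step, your chain ``vertical invariance $\Rightarrow$ Lebesgue $y$-marginal $\Rightarrow$ contradiction'' cannot be run, because its first premise fails whenever $b\neq0$ or $\beta_h\not=o(h)$.
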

Note that the above proposition contains Theorem \ref{t:measurelimit}. It has the following immediate corollary: 
\begin{cor}
	\label{p:negative_sub-ellitpic_quasimodes}
	Let $j\in\{0,1,2\}$ and $\psi_{h}^{(j)}$ be $o(h^2\delta_h^{(j)})$ $b_j$-quasimodes. 	Then up to extracting a subsequence, $$\lim_{R\rightarrow\infty}\lim_{h\rightarrow 0}\Vert \Upsilon_h^{R} \psi_h^{(j)} \Vert_{L^2(\T^2)} = 0.$$
\end{cor}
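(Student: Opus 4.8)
The plan is to derive Corollary~\ref{p:negative_sub-ellitpic_quasimodes} from Proposition~\ref{Measureformulationj=012} by a standard defect-measure argument, exploiting the crucial conclusion that $\ov\mu_0 = 0$ in the case $j\in\{0,1,2\}$. First I would fix $j\in\{0,1,2\}$ and let $(\psi_h) = (\psi_h^{(j)})$ be the given $o(h^2\delta_h^{(j)})$ $b_j$-quasimodes. By the reduction following Lemma~\ref{apriori} we may assume $\psi_h = \chi_1(-h^2\Delta_G)\psi_h$, and by Lemma~\ref{subellipticapriori} we have the a priori bound $\|h^2|D_y|\psi_h\|_{L^2} = O(1)$, which is precisely what permits the two-microlocal decomposition and legitimizes taking $R$ up to order $h^{-1}$. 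The point is that $\|\Upsilon_h^R\psi_h\|_{L^2}^2 = \langle (\Upsilon_h^R)^*\Upsilon_h^R\psi_h,\psi_h\rangle$, and $(\Upsilon_h^R)^*\Upsilon_h^R$ is, modulo $O(h^\infty)$ and commutator errors, the quantization of a symbol of the form $a_0 + \widetilde a_1$ in the sense of Proposition~\ref{Measureformulationj=012}: namely it is supported in $|hD_y|\gtrsim R$ intersected with $\tfrac12 \le |{-h^2\Delta_G}|\le 2$, so its $a_0$-part (the piece seen by $\mu_0$) is supported in $|\eta|\gtrsim R/h \to \infty$, while its behavior at $|\eta|=\infty$ is captured by the $\ov\mu_0$-part.

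The key steps, in order, would be: (i) write $\Upsilon_h^R = \Op_h^{\w}(\chi_R) + O_{\mathcal L(L^2)}(h^\infty)$ on the range of $\chi_1(-h^2\Delta_G)$, where $\chi_R(x,y,\xi,\eta) = (1-\chi_0(\eta/R))\widetilde\chi_1(\xi^2+V(x)\eta^2)$ for a suitable $\widetilde\chi_1$, and identify $(\Upsilon_h^R)^*\Upsilon_h^R$ as $\Op_h^{\w}(|\chi_R|^2) + o_{\mathcal L(L^2)}(1)$; (ii) recognize $|\chi_R|^2$ as a legitimate test symbol for Proposition~\ref{Measureformulationj=012}: split it as $a_0^{(R)} + \widetilde a_1^{(R)}$, with $a_0^{(R)} \in C_c^\infty$ supported in $\{|\eta| \ge cR/h\}\cap p^{-1}([\tfrac14,4])$ and $a_1^{(R)}(y,\eta,\theta)$ homogeneous of degree $0$ in $\eta$ at infinity, compactly supported in $\theta$, with $a_1^{(R)}\big|_{\theta=0}$ again supported far out in $\eta$; (iii) apply Proposition~\ref{Measureformulationj=012} to pass to the limit $h\to 0$ along a subsequence:
$$
\lim_{h\to 0}\|\Upsilon_h^R\psi_h\|_{L^2}^2 = \int_{T^*\T^2}\big(a_0^{(R)} + a_1^{(R)}\big|_{\theta=0}\big)\,d\mu_0 + \int_{\T_y\times\ov{\R_\eta\setminus\{0\}}} \ov{a}_\infty^{(R)}\,d\ov\mu_0;
$$
(iv) observe that by hypothesis $\int_{\T^2}b_j > 0$ for these $j$ (this holds since $b_j$ is a genuine damping with nonempty damping region in all three theorems), so Proposition~\ref{Measureformulationj=012} gives $\ov\mu_0 = 0$, killing the second integral entirely; (v) finally let $R\to\infty$: the first integral is bounded by $\mu_0\big(\{(x,y,\xi,\eta) : |\eta| \ge cR/h\}\big)$ — but $\mu_0$ is a fixed finite Radon measure on $T^*\T^2$, hence carried on a set where $|\eta|$ is finite $\mu_0$-a.e., so $a_0^{(R)} + a_1^{(R)}|_{\theta=0} \to 0$ pointwise $\mu_0$-a.e. and dominated convergence yields $\lim_{R\to\infty}\lim_{h\to 0}\|\Upsilon_h^R\psi_h\|_{L^2}^2 = 0$.

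I expect the main obstacle to be step~(ii): verifying carefully that $|\chi_R|^2$ — or rather a slight regularization of it, since $1-\chi_0(\eta/R)$ is not compactly supported in $\eta$ — really does fall into the exact class of symbols $a_0 + \widetilde a_1$ for which Proposition~\ref{Measureformulationj=012} is stated, and tracking the two-scale structure so that the $\theta = h\eta$ dependence is handled correctly. Concretely, one splits $1-\chi_0(\eta/R) = \big(1-\chi_0(\eta/R)\big)\chi_0(\theta_0 \eta) + \big(1-\chi_0(\theta_0\eta)\big)$ for a small auxiliary scale: the first factor contributes to $a_0$ (it is compactly supported in $\eta$ once one also uses $\chi_1(-h^2\Delta_G)$ to bound $|\eta| \lesssim h^{-2}$... one must instead use the homogeneous-at-infinity slot), while the tail $|\eta| \gtrsim$ large is precisely the $a_1$ slot, homogeneous of degree zero. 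The bookkeeping between "$|\eta|$ large but finite" (seen by $\mu_0$, with mass $\to 0$ as $R\to\infty$) and "$|\eta| = \infty$" (seen by $\ov\mu_0$, which vanishes) is the delicate point, but it is exactly the dichotomy Proposition~\ref{Measureformulationj=012} was designed to encode, so once the symbol is correctly placed the conclusion follows. The commutator estimate in step~(i), using Lemma~\ref{apriori} to absorb the $b^{1/2}$-type errors, and the a priori control from Lemma~\ref{subellipticapriori} ensuring $R \lesssim h^{-1}$ is admissible, are routine by comparison.
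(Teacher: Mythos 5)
Your approach is conceptually the same as the paper's, and there is essentially no other route: the two inputs are that $\ov{\mu}_0$ captures exactly the asymptotic mass $\lim_{R\to\infty}\lim_{h\to 0}\|\Upsilon_h^R\psi_h\|_{L^2}^2$, and that $\ov{\mu}_0=0$ for $j\in\{0,1,2\}$. The paper, however, has already established the first input: it is the total-mass identity \eqref{totalmasssubelliptic} of Proposition~\ref{p:semiclassical_measures}, restated for $\psi_h$ in the ``compact part of the measure'' paragraph of the proof of Proposition~\ref{Measureformulationj=012}, where it reads $\lim_{R\to\infty}\lim_{h\to 0}\|\Upsilon_h^R\psi_h\|_{L^2}^2=\int d\ov{\mu}_0$. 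Given that, the Corollary is genuinely a one-liner. What you propose is to re-derive that identity from scratch by placing $(\Upsilon_h^R)^*\Upsilon_h^R$ into the test-symbol class, and this amounts to reproving a step already done; in particular, the ``delicate point'' you flag in step~(ii) --- that $(1-\chi_0(\eta/R))^2\chi_1^2$ lacks the required $\theta$-compactness --- is precisely what the paper settled once inside Proposition~\ref{p:semiclassical_measures}: by Lemma~\ref{subellipticapriori} one has $\|h^2|D_y|\psi_h\|_{L^2}=O(1)$, hence $\ov{\mu}_2$ is supported in $\{|\eta|\leq C\}$, so inserting a $\theta$-cutoff costs $o(1)$ and the mass formula holds without loss. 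Invoking \eqref{totalmasssubelliptic} directly spares you all of this.

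Two points of execution need correcting. First, in the $h$-Weyl quantization $\Op_h^\w$ the variable $\eta$ is already the semiclassical frequency dual to $y$ at scale $h^{-1}$, so $(1-\chi_0(\eta/R))$ is supported in $\{|\eta|\geq R/2\}$ \emph{with no $h$ appearing}; your repeated $\{|\eta|\geq cR/h\}$ is a scale confusion that, taken literally, would wrongly suggest the $\mu_0$-contribution vanishes already for every fixed $R$ (and would also force $a_0^{(R)}$ to be $h$-dependent, which is not permitted in the hypothesis of Proposition~\ref{Measureformulationj=012}). Moreover the set $\{|\eta|\geq R/2\}\cap p^{-1}([\tfrac14,4])$ is not compact, so $a_0^{(R)}$ as you describe it is not in $C_c^\infty$; the unbounded part of $(1-\chi_0(\eta/R))^2$ must go into the homogeneous-at-infinity slot $a_1$. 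The correct statement is that, after the $h\to 0$ limit, the $\mu_0$-term is $\int(1-\chi_0(\eta/R))^2\,\chi_1^2(\xi^2+V(x)\eta^2)\,d\mu_0\leq\mu_0(\{|\eta|\geq R/2\})\to 0$ as $R\to\infty$ by finiteness of $\mu_0$. Second, the hypothesis $\int_{\T^2}b_j>0$ triggering $\ov{\mu}_0=0$ deserves a word: for $j=0$ it follows from (SGCC), which in fact forces $\int_\T b_0(0,y)\,dy>0$ (the quantity actually used in Lemma~\ref{lemmaverticalpropagation}), and for $j=1,2$ from the fact that $b_j=b_j(y)\geq 0$ is not identically zero.
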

By a contradiction argument, we complete the proof of the upper bound in Theorem \ref{t:EGCC} as follows:
\begin{proof}[Proof of Theorem \ref{t:EGCC}]
	By contradiction, assume that $\psi_{h}^{(0)}$ satisfies \eqref{e:quasimode_to_contradiction}. Applying Proposition \ref{Measureformulationj=012} to $\psi_h^{(0)}$, up to a subsequence, we obtain semi-classical measures $\mu_0$ and $\ov{\mu}_0$, with total mass $1$, and $\ov{\mu}_0=0$ (as $b_0$ satisfies (SGCC)). By the invariance of $\mu_0$ along $\phi_t^{\mathrm{e}}$, we have
	$$ 0=\langle b_0,\mu_0\rangle=\frac{1}{T}\int_0^T\langle b_0\circ \pi_1 \circ \phi_t^{\mathrm{e}},\mu_0\rangle dt
	$$
	for all $T>0$. Letting $T\rightarrow\infty$, we obtain that 
	$$ \Big\langle\mu_0, \liminf_{T\rightarrow\infty}\frac{1}{T}\int_0^Tb_0\circ \pi_1 \circ \phi_t^{\mathrm{e}}dt  \Big\rangle=0.
	$$
	As $b_0$ satisfies (EGCC), the function $[b_0]:=\liminf_{T\rightarrow\infty}\frac{1}{T}\int_0^Tb_0\circ \pi_1 \circ \phi_t^{\mathrm{e}}dt$ is strictly positive on $\rho\in p^{-1}(1)$. This implies that $\mu_0$=0, which contradicts to the fact that $\mu_0,\ov{\mu}_0$ have total mass $1$.
\end{proof}

For the narrowly damped case, we will prove in Section \ref{s:narrowly_undamped_measures}:
\begin{cor}
	\label{p:negative_sub-ellitpic_quasimodes(2)}
	Assume that there exist $\psi_h \in L^2(\T^2)$ with $\Vert \psi_h \Vert_{L^2(\T^2)} = 1$ and $\zeta_h = 1 + ih \beta_h$ with $0 \leq \beta_h \leq Ch^{1-\frac{1}{1+\nu}}$, such that
	\begin{equation}
		\label{e:quasimode_equation(2)}
		(P_{h,b_3} - \zeta_h )\psi_h = r_h =o_{L^2}\big( h^{2-\frac{1}{1+\nu}} \big).
	\end{equation}
	Then up to extracting a subsequence, $$\lim_{R\rightarrow\infty}\lim_{h\rightarrow 0}\Vert \Upsilon^{R}_h \psi_h \Vert_{L^2(\T^2)} = 0.$$
\end{cor}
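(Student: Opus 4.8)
The plan is to adapt the two-microlocal semiclassical-measure analysis of Section~\ref{subellipticPropagation} — the one behind Corollary~\ref{p:negative_sub-ellitpic_quasimodes} — to the weaker hypotheses here: the remainder is now only $o_{L^2}(h^{2-\frac1{1+\nu}})$ and $\beta_h$ is allowed to grow like $h^{1-\frac1{1+\nu}}$, which in the bookkeeping of Section~\ref{subellipticPropagation} amounts to $\delta_h = h^{-\frac1{1+\nu}}$. First I would check that the a priori estimates still apply: since $\nu\ge6>4$ the damping $b_3$ obeys \eqref{e:B-H_condition} with $\sigma=\tfrac1\nu<\tfrac14$, and since $\delta_h=h^{-\frac1{1+\nu}}$ respects $\delta_h\le h^{-\frac1{\nu+1}}$, Lemmas~\ref{apriori}, \ref{subellipticapriori} and \ref{apriori3} hold for $\psi_h$ (the proof of Lemma~\ref{apriori} goes through verbatim with $\zeta_h=1+ih\beta_h$, only replacing $o$ by $O$ in the bounds controlled by $\beta_h$). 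In particular $\|b_3^{1/2}\psi_h\|_{L^2}=O(h^{\frac{\nu}{2(\nu+1)}})\to 0$, $\|b_3^{1/2}A_{\widetilde h}\psi_h\|_{L^2}=O(h^{1/2}\delta_h^{1/2})+o(\widetilde h)$ for any $A_{\widetilde h}$ of order $0$, and one may assume $\psi_h=\chi_1(-h^2\Delta_G)\psi_h$. Passing to a subsequence I would then extract the measures $\mu_0$ on $T^*\T^2$ and $\bar\mu_0$ on $\T_y\times\overline{\R_\eta\setminus\{0\}}$ exactly as in Section~\ref{subellipticPropagation} (this construction uses only the normalization and the a priori bounds, not the size of $\delta_h$), and reduce the statement to showing $\bar\mu_0\equiv 0$.

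Next I would localize $\bar\mu_0$: testing the equation against $\Op_h^{\w}(b_3)$ (suitably truncated in $hD_y$) and passing to the limit, using $\|b_3^{1/2}\psi_h\|_{L^2}\to0$, the sharp G\r{a}rding inequality, and the commutator bound of Lemma~\ref{apriori}, gives $\langle\mu_0,b_3\rangle=\langle\bar\mu_0,b_3\rangle=0$, exactly as in Proposition~\ref{Measureformulationj=012}. Since $b_3=b_3(y)\ge0$ vanishes only at $y=y_0$, this forces $\mathrm{supp}\,\bar\mu_0\subset\{y=y_0\}\times\overline{\R_\eta\setminus\{0\}}$. It then remains to propagate this mass away from $y_0$, which is where the subelliptic dynamics comes in. Following Section~\ref{subellipticPropagation} I would average over the periodic harmonic-oscillator flow of the principal part $-h^2\partial_x^2+V(x)(hD_y)^2$ to eliminate the $x$-variable, reducing, on the $k$-th eigenmode (eigenvalue $E_k(hD_y)\sim(2k+1)h^2|D_y|$, hence microlocalized at $|D_y|\sim N_k:=(h^2(2k+1))^{-1}$), to an effective transport equation in $y$ of the schematic form $\big(\mathcal D_k+\tfrac{i}{h(2k+1)}(b_3(y)-\beta_h)\big)\Pi_k\psi_h=g_k$, where $\mathcal D_k$ is $|D_y|$ recentred at $N_k$ and $g_k$ collects the contributions of $\Pi_k r_h$, of the Taylor error $V(x)-x^2$, and of the commutator $[\Pi_k,b_3]$. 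Because $b_3\ge0$ is not identically zero on the circle while $\beta_h\to0$, propagating this equation once around $\T_y$ and invoking periodicity produces genuine absorption: the monodromy over one period contracts by the factor $e^{-\frac1{h(2k+1)}(\int_\T b_3-2\pi\beta_h)}<1$. At the level of measures this is the statement that $\bar\mu_0$ cannot charge the slice $\{y=y_0\}$ — the transversality of the vertical flow to the level sets of $b_3$ alluded to in the introduction — and combined with the previous step it forces $\bar\mu_0=0$, completing the argument.

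The hard part is this last step under the sharp, weakened hypotheses. Unlike in Proposition~\ref{Measureformulationj=012}, the terms $h\beta_h$ and $r_h$ are only $O(h^2\delta_h)$ — borderline, not $o(h^2)$ — so after the rescaling by $h^2(2k+1)$ the source $g_k$ sits exactly at the threshold of controllability, generating a factor $O(\delta_h)=O(h^{-\frac1{1+\nu}})$ that must be absorbed using (i) the genuine $o(\cdot)$-smallness of $r_h$ together with the a priori bound on $\|b_3^{1/2}A_{\widetilde h}\psi_h\|_{L^2}$ above, (ii) the $x$-localization of Lemma~\ref{apriori3}, which controls $[\Pi_k,b_3]$ (of size $O(h^2(2k+1)\|\nabla b_3\|_{L^\infty})$), and (iii) a careful split between the operator-valued regime $|D_y|\sim h^{-2}$ (finitely many $k$, where $\tfrac1{h(2k+1)}\sim1$ and the contraction factor stays bounded away from $1$) and the intermediate scalar regime $h^{-1}\ll|D_y|\ll h^{-2}$ (growing $k$, where the transport weight $h^2(2k+1)$ degenerates and the mode-by-mode bounds must be summed against the finite total semiclassical mass rather than estimated individually). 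Indeed, the precise values $\beta_h\le Ch^{1-\frac1{1+\nu}}$ and $r_h=o(h^{2-\frac1{1+\nu}})$ are exactly the thresholds below which this bookkeeping closes — consistently with the sharp lower bound $\gtrsim h^{-2+\frac1{\nu+1}}$ of Theorem~\ref{t:sharp_in_2}, which is saturated by subelliptic quasimodes sitting at the borderline quality $\sim h^{2-\frac1{\nu+1}}$.
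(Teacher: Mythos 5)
Your reduction — pass to $\delta_h = h^{-\frac{1}{1+\nu}}$, check Lemmas \ref{apriori}--\ref{apriori3} survive, extract the measures $\mu_0,\ov\mu_0$ as in Proposition~\ref{Measureformulationj=012}, and use $\langle\ov\mu_0,b_3\rangle=0$ to localize $\operatorname{supp}\ov\mu_0\subset\{y=y_0\}$ — is sound and coincides with the paper's setup. But the step where you kill the mass at $y_0$ via a mode-by-mode monodromy around $\T_y$ is not the paper's argument, and as sketched it has a genuine gap. The obstruction is that the averaging argument of Proposition~\ref{Measureformulationj=012} (which is what such a monodromy would be formalizing at the measure level) relies on the constant $C_0=\limsup_{h\to 0}\beta_h/h$ being finite; under the present hypothesis $\beta_h\le Ch^{1-\frac{1}{1+\nu}}$ one has $\beta_h/h\to\infty$, so the weight $e^{-2C_0 t}$ in that proof collapses and the argument breaks. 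Your monodromy factor $e^{-\frac{1}{h(2k+1)}\bigl(\int_\T b_3-2\pi\beta_h\bigr)}$ indeed looks like a contraction, but inverting the transport equation to estimate $\Pi_k\psi_h$ in terms of $g_k$ amplifies the source by the reciprocal exponential, and nothing in the proposal controls that amplification against $g_k=o(h^2\delta_h)$; you acknowledge the bookkeeping is delicate but do not close it. (There is also an arithmetic slip: in the regime $|D_y|\sim h^{-2}$ one has $2k+1=O(1)$, hence $\tfrac{1}{h(2k+1)}\sim h^{-1}$, not $\sim 1$ as you write.)

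The missing idea, which is the paper's whole contribution in Section~\ref{s:narrowly_undamped_measures}, is a \emph{second} two-microlocalization near $y_0$ at the $\eta$-dependent scale $|\eta|^{-\delta}$ with $\delta=\tfrac{1}{1+\nu}$, realized by the families of symbols $\mathbf{a}^j_{h,R,\epsilon,\rho}$ built on the rescaled variable $(y-y_0)|\eta|^\delta$. The choice of $\delta$ is dictated by the homogeneity identity $\nu\delta=1-\delta$: under this rescaling the damping contributes $|y-y_0|^\nu|h\eta|^{\nu\delta}$ and balances exactly, in the renormalized Wigner equations \eqref{e:wigner_scalar_narrow} and \eqref{e:wigner_operator_narrow}, both the vertical transport and the term $\beta_h h^{\delta-1}|h^2\eta|^{1-\delta}$ (which, unlike $\beta_h/h$, stays bounded). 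One then obtains the measure-ODEs \eqref{e:differential_narrow_equation} and its operator-valued analogue, integrates them with the exponential test symbol $a(\ldots,y'+t\operatorname{sgn}(\eta)H_{G_0},\ldots)e^{\int_0^t|y'+s\operatorname{sgn}(\eta)H_{G_0}|^\nu\,ds}$, and the unbounded growth of the weight forces $\mu_1^1=M_2^1=0$; the remaining pieces at scale $|y-y_0|\gg|\eta|^{-\delta}$ vanish because there the damping dominates outright. Without this rescaling you cannot resolve the profile of $b_3$ at $y_0$, which is precisely what produces the $\nu$-dependent width $h^{2-\frac{1}{1+\nu}}$; the global monodromy only sees $\int_\T b_3$ and cannot distinguish different $\nu$'s.
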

The strategy to prove Proposition \ref{Measureformulationj=012} and Corollary \ref{p:negative_sub-ellitpic_quasimodes(2)} is based on the study of two-microlocal semiclassical measures and previous works on the theory of non-selfadjoint perturbations of semiclassical harmonic oscillators \cite{ArR20}. 
Before constructing the measures, it is more convenient to reduce the problem completely in the subelliptic regime near $x=0$.  The argument of reduction below works for $\delta_h\leq h^{-\frac{1}{1+\nu}}$. 

First,  as in the proof of Lemma \ref{apriori} (here we are dealing with more general quasimodes with $\zeta_h=1+ih\beta_h$), we have
\begin{align}\label{dampedregionweak}
	\|b^{\frac{1}{2}}\psi_h\|_{L^2(\T^2)}=O(\beta_h^{\frac{1}{2}}).
\end{align}
Then the new quasimodes $\psi_{h,\mathrm{s}}:=\Upsilon^{R}_h \psi_h$ solve equations
\begin{align}\label{eq:uh}
	(P_{h,b}-\zeta_h)\psi_{h,\mathrm{s}}=\Upsilon_h^{R}r_h+ih[b,\Upsilon_h^{R}]\psi_h.
\end{align}
Note that the commutator term can be written as $$ih[b^{1/2},[b^{1/2},\Upsilon_h^{R}]]+2ih[b^{1/2},\Upsilon_h^{R}]b^{1/2},$$ thanks to the condition \eqref{e:B-H_condition} and the fact that $\sigma<\frac{1}{4}$. Using \eqref{dampedregionweak}, we deduce that $ih[b,\Upsilon_h^{R}]\psi_h=O_{L^2}(h^{2}\beta_h^{\frac{1}{2}})$. In particular, by the assumption of $\beta_h\leq Ch^{1-\frac{1}{1+\nu}}$, we deduce that $(P_{h,b}-\zeta_h)\psi_{h,\mathrm{s}}=o_{L^2}(h^2\delta_h)$. To localize near $x=0$, we take another cutoff $\chi_{\delta}(x)$ such that supp$\chi_{\delta}'\subset \mathrm{supp}(1-\chi_{0}(\delta\cdot))$. Replacing $\psi_{h,\mathrm{s}}$ by $\chi_{\delta}(x)\psi_{h,\mathrm{s}}$, we still have
\begin{equation} 
	\label{e:localized_x_quasimode}
	(P_{h,b}-\zeta_h)(\chi_{\delta}\psi_{h,\mathrm{s}})=o_{L^2}(h^2\delta_h),
\end{equation}
since $\psi_h$ is $O_{L^2}(h^{\infty})$ on the support of $\chi_{\delta}'(x)$, thanks to Lemma \ref{apriori3}. Moreover, \eqref{dampedregionweak} holds by replacing  $\psi_{h}$ to $\chi_{\delta}\psi_{h,\mathrm{s}}$.
Therefore, we will work directly with $\chi_{\delta}(x)\psi_{h,\mathrm{s}}$ which is supported on a \emph{fixed} compact set of $M_0:=\R_x\times\T_y$. This allows us to construct two-microlocal semiclassical measures in the following subsections on the manifold $M_0$ to simplify arguments. With abuse of notation, we denote again $\psi_{h,\mathrm{s}}=\chi_{\delta}(x)\Upsilon_h^{R}\psi_h$ and this function satisfies the same quasimode equation \eqref{e:quasimode_equation} as $\psi_h$.

\vspace{1cm}

We next define the concept of $(-h^2 \Delta_{G_0})$-oscillation for a sequence $(u_h) \subset L^2(M_0)$, meaning that the energy of the sequence concentrates semiclassically around the level set of the Hamiltonian $H_{G_0}=\xi^2+x^2\eta^2$ given by the principal symbol of $-h^2 \Delta_G$.
Recall that $\chi_0$ was introduced in \eqref{e:cut_off_0}. 

\begin{definition}
	A sequence $(u_h) \subset L^2(M_0)$ with $\Vert u_h \Vert_{L^2(M_0)} = 1$ is called $(-h^2 \Delta_{G_0})$-oscillating if, given any bump function $\chi \in \mathcal{C}_c^\infty(\R)$ equal to one near zero and satisfying
	$$ \chi_0|_{\mathrm{supp}(\chi)}\equiv 1 ,
	$$
	there holds
	$$
	\lim_{\lambda \to \infty} \limsup_{h \to 0^+} \left \Vert   \chi_{\lambda} ( -h^2\Delta_{G_0} - 1 )  u_h \right \Vert_{L^2(M_0)} = 1,
	$$
	where $\chi_{\lambda}(\cdot):=\chi(\lambda^{-1}\cdot)$ and $\Delta_{G_0}=-D_x^2 -x^2D_y^2$.
\end{definition}

\begin{lemma}\label{oscillating} 
	The quasimode $\psi_{h}$ is $(-h^2\Delta_{G_0})$-oscillating. More precisely,
	$$ (-h^2\Delta_{G_0}-1)\psi_{h}=o_{L^2}(1),
	$$
	as $h\rightarrow 0$.  
\end{lemma}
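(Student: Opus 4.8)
The plan is to compare $-h^2\Delta_{G_0}$ with $-h^2\Delta_G$ directly on $\psi_h$. Since $-h^2\Delta_G=(hD_x)^2+V(x)(hD_y)^2$ and $-h^2\Delta_{G_0}=(hD_x)^2+x^2(hD_y)^2$ on $M_0$, we have the exact identity
\begin{equation*}
	(-h^2\Delta_{G_0}-1)\psi_h=(-h^2\Delta_G-1)\psi_h+\big(x^2-V(x)\big)(hD_y)^2\psi_h,
\end{equation*}
so it suffices to show that each of the two terms on the right is $o_{L^2}(1)$.

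For the first term I would simply use the quasimode equation \eqref{e:quasimode_equation}: $(-h^2\Delta_G-1)\psi_h=r_h-ih(b-\beta_h)\psi_h$. Here $\|r_h\|_{L^2}=o(h^2\delta_h)=o(1)$ since in this section $\delta_h\leq h^{-1/(1+\nu)}$; the term $h\beta_h\|\psi_h\|_{L^2}=O(h\beta_h)=o(1)$ by the standing hypothesis $\beta_h=O(h^{1-1/(1+\nu)})$; and $h\|b\psi_h\|_{L^2}\leq h\|b\|_{L^\infty}^{1/2}\|b^{1/2}\psi_h\|_{L^2}=O(h\beta_h^{1/2})=o(1)$ by the a priori bound \eqref{dampedregionweak} (the computation behind Lemma \ref{apriori}). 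Hence the first term is $o_{L^2}(1)$.

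The second term is where the argument has content. Writing $V=W^2$ with $W$ smooth, $W(0)=0$ and $W'(0)^2=V''(0)/2=1$, the function $\rho(x):=x^2/V(x)-1=(x/W(x))^2-1$ extends smoothly across $x=0$ with $\rho(0)=0$, so $|\rho(x)|\lesssim|x|$ near the origin; moreover $V(x)(hD_y)^2=(W(x)hD_y)^2=:A^2$ because $W(x)$ commutes with $D_y$. Thus $\big(x^2-V(x)\big)(hD_y)^2\psi_h=\rho(x)\,A^2\psi_h$, and I would combine two facts. First, $\|A^2\psi_h\|_{L^2}=\|V(x)(hD_y)^2\psi_h\|_{L^2}=O(1)$: after commuting past the cut-offs $\chi_\delta(x)$ and $1-\chi_0(hD_y/R)$ (using Lemma \ref{apriori3} to discard $O(h^\infty)$ errors), $\psi_h$ inherits the spectral localization $\psi_h=\chi_1(-h^2\Delta_G)\psi_h+O_{L^2}(h^\infty)$, hence $\|(-h^2\Delta_G)^k\psi_h\|_{L^2}=O(1)$ for $k=1,2$; combining this with $\|A\psi_h\|_{L^2}=O(1)$ from \eqref{apriori1}, the identity $A^2=(-h^2\Delta_G)-(hD_x)^2$, and a commutator bound $\|[-h^2\Delta_G,A]\psi_h\|_{L^2}=O(1)$ (which rests on the subelliptic a priori estimate of Lemma \ref{subellipticapriori}, giving $\|hD_xhD_y\psi_h\|_{L^2}=O(h^{-1})$), a short quadratic-inequality argument yields $\|A^2\psi_h\|_{L^2}=O(1)$. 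Second, $\psi_h$ concentrates near $x=0$: since it is microlocalized both on the energy shell $\{\xi^2+V(x)\eta^2\sim1\}$ and on $\{|hD_y|\gtrsim R/2\}$, Lemma \ref{apriori3} (applied with $\epsilon_1\sim R^{-1}$) gives $\big(1-\chi_0(\delta_1^{-1}x)\big)\psi_h=O_{L^2}(h^\infty)$ for some $\delta_1=\delta_1(R)\to0$ as $R\to\infty$. Putting these together,
\begin{equation*}
	\big\|\rho(x)A^2\psi_h\big\|_{L^2}\leq\|\rho\|_{L^\infty(\{|x|\leq\delta_1\})}\,\|A^2\psi_h\|_{L^2}+O(h^\infty)\lesssim\delta_1(R)+O(h^\infty),
\end{equation*}
which is $o_{L^2}(1)$ in the sense relevant to $(-h^2\Delta_{G_0})$-oscillation, whose conclusion involves the outer limit $R\to\infty$. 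This finishes the proof.

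The step I expect to be the main obstacle is the uniform bound $\|V(x)(hD_y)^2\psi_h\|_{L^2}=O(1)$. The operator $V(x)(hD_y)^2\chi_1(-h^2\Delta_G)$ is not an honest semiclassical pseudodifferential operator — its symbol $V(x)\eta^2\chi_1(\xi^2+V(x)\eta^2)$ is bounded but has $x$-derivatives growing like $V'(x)/V(x)\sim 2/x$ at the degeneracy $x=0$ — so the bound cannot be extracted from symbolic calculus and must instead be produced by the energy–commutator argument above, where the subelliptic estimate of Lemma \ref{subellipticapriori} is essential to absorb the $D_y$-losses. Everything else is routine.
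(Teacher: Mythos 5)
Your proposal follows essentially the same route as the paper: reduce to bounding $\big(V(x)-x^2\big)(hD_y)^2\psi_h$, combine the a priori bound $\|V(x)(hD_y)^2\psi_h\|_{L^2}=O(1)$ with concentration near $x=0$, and use that $x^2/V(x)-1=O(x)$ to gain a factor $\delta_1$. Two remarks on the details.

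First, the energy--commutator detour for $\|V(x)(hD_y)^2\psi_h\|_{L^2}=O(1)$ is unnecessary. Since $V(x)(hD_y)^2=\big(W(x)\,hD_y\big)\big(W(x)\,hD_y\big)$ is a composition of the generating vector fields, this norm is bounded by $\|h^2\psi_h\|_{\dot H_G^2}$, and Lemma~\ref{lemmaB1} gives $\|h^2\psi_h\|_{\dot H_G^2}\lesssim\|h^2\Delta_G\psi_h\|_{L^2}+h^2\|\psi_h\|_{L^2}=O(1)$, the last bound coming from the quasimode equation. This is precisely what the paper invokes, and it avoids your ad hoc estimate $\|hD_xhD_y\psi_h\|_{L^2}=O(h^{-1})$, which would itself require an argument.

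Second, the ``$+O(h^\infty)$'' in your last display hides a genuine step. The term it refers to is $\|\rho\, W^2(hD_y)^2\psi_h\|_{L^2(|x|>\delta_1)}$, and it does \emph{not} follow from $\|(1-\chi_0(\delta_1^{-1}x))\psi_h\|_{L^2}=O(h^\infty)$ alone, because $(hD_y)^2$ has norm $\sim h^{-2}$ on the range of $\Upsilon_h^R$ and can in principle undo the $O(h^\infty)$ gain. You must argue that the spatially localized tail stays $O(h^\infty)$ after applying $(hD_y)^2$. The paper covers exactly this with the remark ``Since $x^2h^2D_y^2$ does not change $\mathrm{WF}_h(\psi_h)$, we only need to show \eqref{claim}'' --- that is, on the energy shell intersected with $\{|x|\gtrsim\delta_1\}$, $|h\eta|$ is effectively bounded by $\delta_1^{-1}$, so the derivatives cost only powers of $\delta_1^{-1}$. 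To make this rigorous one can either iterate the energy argument behind Lemma~\ref{apriori3} to obtain $O(h^\infty)$ decay in higher Sobolev norms, or substitute $V(x)(hD_y)^2=-h^2\Delta_G-(hD_x)^2$ and use the quasimode equation together with the $h\partial_x$-part of Lemma~\ref{apriori3}. With that step supplied, your proof coincides with the paper's, up to a slightly different choice of localization scale ($\delta_1\sim R^{-1}$ versus the paper's $R^{-1/2}$; either works since only the outer limit $R\to\infty$ is used).
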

\begin{proof}
	 Without loss of generality, we normalize the $L^2$ norm of $\psi_{h}$ to be 1.		From \eqref{dampedregionweak} and \eqref{e:localized_x_quasimode}, we have
			$$ (-h^2\Delta_{G_0}-1)\psi_{h}=(x^2-W(x)^2)h^2D_y^2\psi_{h}+O_{L^2}(h\beta_h^{\frac{1}{2}}).
			$$
			Since $-h^2\Delta_{G_0}-1$ is invertible when acting on $\big(1-\chi_{\lambda}(h^2\Delta_{G_0}-1)\big)\psi_{h}$,
			it suffices to show that
			\begin{align}\label{comparison1} 
				\lim_{h\rightarrow 0}\|(x^2-W(x)^2)h^2D_y^2\psi_{h}\|_{L^2}=0.
			\end{align}
			We mimic the proof in \cite{BuSun}.  For any $R\gg 1$, we break $\psi_{h}$ as $\psi_{h}\mathbf{1}_{|x|<R^{-\frac{1}{2}}}+\psi_{h}\mathbf{1}_{|x|\geq R^{-\frac{1}{2}}}$. Since near $x=0$, $x^2-W(x)^2=O(x^3)$, we have for any $h>0$,
			$$ \big\|(x^2-W(x)^2)h^2D_y^2\psi_{h} \big\|_{L^2(|x|<R^{-\frac{1}{2}})}\leq CR^{-\frac{1}{2}}\|W(x)^2h^2D_y^2\psi_{h}\|_{L^2}\leq CR^{-\frac{1}{2}},
			$$
			thanks to Lemma \ref{lemmaB1} and the fact that $\|h^2\Delta_G\psi_{h}\|_{L^2}=O(1)$.
			Therefore, it suffices to show that for any fixed $R\gg 1$,
			\begin{align*} \|x^2h^2D_y^2\psi_{h}\|_{L^2(|x|\geq  R^{-1/2})}=O(h^{\infty}).
			\end{align*}
			Since $x^2h^2D_y^2$ does not change $\mathrm{WF}_h(\psi_{h})$, we only need to show that
			\begin{align}\label{claim}  \|\psi_{h}\|_{L^2(|x|\geq  R^{-1/2})}=O(h^{\infty}).
			\end{align}
			For a bump function $\chi$, denote by $\widetilde{\chi}(x):=\chi(R^{1/2}x)$. Multiplying by $(1-\widetilde{\chi}(x))^2\ov{\psi}_{h}$ the quasimode equation $(-h^2\Delta_G-1)\psi_{h}=O_{L^2}(h\beta_h^{\frac{1}{2}})$ and using integration by parts, we get
			\begin{align}\label{claim1}
				\int_{\R\times\T}(1-\widetilde{\chi}(x))^2\big( |h\partial_x\psi_{h}|^2+W(x)^2|h\partial_y\psi_{h}|^2 -|\psi_{h}|^2 \big)dxdy & \\[0.2cm]
				& \hspace*{-8cm} =-hR^{1/2}\int_{\R\times\T}2(1-\widetilde{\chi}(x))\chi'(R^{1/2}x)h\partial_x\ov{\psi}_{h}\cdot\psi_{h}dxdy+O(h\beta_h^{\frac{1}{2}}). \label{claim2}
			\end{align}
			Since on the support of $1-\widetilde{\chi}$, $|x|\gtrsim R^{-1/2}$, 
			combining with the definition of $\psi_{h}$, we have 
			$$ \int_{\R\times\T}(1-\widetilde{\chi}(x))^2W(x)|h\partial_y\psi_{h}|^2dxdy\gtrsim R^{-1}\int_{\R\times\T}(1-\widetilde{\chi}(x))^2R^2|\psi_{h}|^2dxdy.
			$$
			Thus \eqref{claim1} is bounded from below by a (small) constant times
			$$ \int_{\R\times\T}(1-\widetilde{\chi}(x))^2 \big( |h\partial_x\psi_{h}|^2+R|\psi_{h}|^2 \big)dxdy,
			$$
			while \eqref{claim2} is bounded from above by $O(R^{1/2}h)$. In particular, this shows that
			$$ \|(1-\widetilde{\chi}(x))\psi_{h}\|_{L^2}\leq O(h).
			$$
			Repeating the argument above, we obtain \eqref{claim}. This completes the proof of Lemma 
			\ref{oscillating}.
	
\end{proof}

\subsection{Construction of two-microlocal semiclassical measures}

In order to define properly two-microlocal semiclassical measures, we need suitable classes of test symbols:
\begin{definition}\label{s:two-microlocal_measures}
	We say that $a \in \mathcal{C}^\infty \big(\R^5_{\sigma,y,\xi,\eta,\theta}\big)$ belongs to the class $\mathbf{S}_{c}^0(\R^5)$ if it has compact support in the variables $(\sigma,y,\xi,\theta)$, and is homogeneous of degree zero at infinity in the variable $\eta$, that is there exists $a_\infty \in \mathcal{C}_c^\infty\big( \R^4_{\sigma,y,\xi,\theta} \times \mathbb{S}^0_\omega\big)$ and $R > 0$ such that
	$$
	a(\sigma,y,\xi,\eta,\theta) = a_\infty\left( \sigma,y ,\xi,\frac{\eta}{\vert \eta \vert},\theta \right), \quad \text{for } \vert \eta \vert \geq R.
	$$
	We say that $\widetilde{a} \in \mathcal{C}^\infty\big( \R^3_{y,\eta,\theta} \big)$ belongs to $\mathbf{S}_{c}^0(\R^3)$ if it is compactly supported in $(y,\theta)$ and is homogeneous of degree zero at infinity in the variable $\eta$, that is, there exists $\widetilde{a}_\infty \in \mathcal{C}_c^\infty\big(\R^2_{y,\eta} \times \mathbb{S}_\omega^0\big)$ and $R > 0$ such that
	$$
	\widetilde{a}(y,\eta,\theta) = \widetilde{a}_\infty\left(y,\frac{\eta}{\vert \eta \vert},\theta \right), \quad \text{for } \vert \eta \vert \geq R.
	$$
\end{definition}  
For our problem, symbols are periodic in $y$, and we denote by $\mathbf{S}_c^0(T^*M_0\times\R)$, $\mathbf{S}_c^0(T^*\T\times\R)$ the respective classes. 

		For any $a \in \mathbf{S}_{c}^0(T^*M_0\times\R)$ and $\widetilde{a} \in \mathbf{S}_c^0(T^*\T\times\R)$, we define respectively the associated symbols
		\begin{align}\label{bfa1} 
			\mathbf{a}^h_R(x,y,\xi,\eta) & :=  (1 - \chi_R(h\eta)) a\left(hx\vert \eta \vert, y,\xi, h\eta, h^2 \eta \right), \\[0.2cm]
			\widetilde{\mathbf{a}}^h_R(y,\eta) & := (1 - \chi_R(h\eta)) \widetilde{a}\left(y,h\eta, h^2 \eta \right). \label{bfa1tilde}
		\end{align}
Correspondingly, the \textit{Wigner distribution} $I_R^h$ on $\mathbf{S}_{c}^0(T^*M_0\times\R)$ and on $\mathbf{S}_c^0(T^*\T\times\R)$ are defined respectively by:
\begin{align*}
	I_R^h(a) & := \big \langle \Op_1^{\mathrm{w}}(\mathbf{a}^h_R) u_h , u_h \big \rangle_{L^2(M_0)}, \\[0.2cm]
	I_R^h(\widetilde{a}) & := \big \langle \Op_1^\w(\widetilde{\mathbf{a}}^h_R) u_h , u_h \big \rangle_{L^2(M_0)}.
\end{align*}
We are going to show that, under suitable limiting process, the Wigner distributions converge to adapted two-microlocal semiclassical measures:  
\begin{remark}
			The intuition for introducing the symbol $\mathbf{a}_R^h(x,y,\xi,\eta)$ with  $(hx|\eta|,h\xi)$ as horizontal dual variables comes from the noncompacity of the energy level set $\xi^2 + x^2\eta^2 = 1$, distinctive of subelliptic operators. Precisely, when $\vert \eta \vert$ grows at scale $h^{-1}\ll \vert \eta \vert \leq h^{-2}$, $x$ concentrates at scale $\vert h \eta \vert^{-1}$, so we need to balance the variable $x$ according to such scaling to see propagation along the horizontal direction in the semiclassical limit. In other words, the semiclassical parameter measuring concentration in $x$ near zero is precisely $\vert h \eta \vert^{-1}$. This parameter depending on $\eta$ causes some new issues concerning symbolic calculus, as we will see, but we will show that it is still possible to define suitable semiclassical measures as limits of the Wigner distribution defined in terms of this family of symbols. Indeed, we consider this generalization very natural since it captures in a very precise way the different scales of oscillation in the subelliptic regime.
\end{remark}

\begin{remark}\label{idea2microlocal}
For symbols $\mathbf{a}_R^h$ as defined above, we are not able to perform directly the semiclassical symbolic calculus, since the remainders concerning the horizontal variables $(x,\xi)$ may be of order $O(1)$ as $h \to 0^+$ (they do not decrease in the semiclassical limit), due to the fact that this calculus is critical with respect to the uncertainty principle. We thus need to further split $\mathbf{a}_R^h$ into two parts. For the part where $\vert \eta \vert$  is relatively small, $h^{-1}\ll |\eta|\ll h^{-2}$, we can still use standard semiclassical symbolic calculus in $(x,\xi)$ ($x$ concentrates at scale $\gg h$ and $\xi$ oscillates at size $h^{-1}$, so this calculus is subcritical with respect to the uncertainty principle); in this regime we will show that the corresponding contribution in $I_R^h(\cdot)$ defines a scalar-valued semiclassical measure in the successive limits $h\rightarrow 0$, $R\rightarrow\infty$. While for the part where $|\eta|\sim h^{-2}$, we will show that the corresponding contribution in $I_R^h(\cdot)$ defines an operator-valued semiclassical measure in the semiclassical limit. 
\end{remark} 
\begin{prop}
	\label{p:semiclassical_measures}
	Let $(u_h) \subset L^2(M_0)$ be a $L^2$-normalized sequence, supported on a fixed compact set $K_0\subset M_0=\R_x\times\T_y$. Then, modulo a subsequence, there exist non-negative Radon measures $\mu_1 \in \mathcal{M}_+\big( \R_\sigma \times \mathbb{T}_y \times \R_\xi \times \mathbb{S}^0_\omega \big)$ and $M_2 \in \mathcal{M}_+\big( \mathbb{T}_y \times (\R_\eta \setminus \{0 \}); \mathcal{L}^1(L^2(\R_x)) \big)$, where $M_2$ takes values in the space of trace operators $\mathcal{L}^1(L^2(\R_x))$ such that, for every $a \in \mathbf{S}_c^0(T^*M_0\times\R)$,
	\begin{align}\label{measureformula1} 
		\lim_{R \to \infty} \lim_{h \to 0^+} I_R^h(a)  & = \int_{\R_\sigma \times \mathbb{T}_y \times \R_\xi \times \mathbb{S}^0_\omega} \, a(\sigma,y,\xi,\omega,0) \mu_1(d\sigma,dy,d\xi,d\omega)  \\[0.2cm]
		& \quad  +\operatorname{Tr} \left[ \int_{\mathbb{T}_y \times (\R_\eta \setminus \{0 \})}  a_\infty^{\w} \left( x \vert \eta \vert  ,y, D_x, \frac{\eta}{\vert \eta \vert},\eta \right) M_2( dy,d\eta ) \right]. \label{measureformula2} 
	\end{align}
	Moreover, there exist non-negative Radon measures $\overline{\mu}_1 \in \mathcal{M}_+(\T_y \times \mathbb{S}_\omega^0)$ and $\overline{\mu}_2 \in \mathcal{M}_+(\T_y \times (\R_\eta \setminus \{0 \}))$ such that, for the same subsequence and every $\widetilde{a} \in \mathbf{S}_c^0(T^*\mathbb{T} \times \R)$, 
	\begin{align}\label{measureformula3} 
		\lim_{R \to \infty} \lim_{h \to 0^+} I_R^h(\widetilde{a}) = \int_{\T_y \times \mathbb{S}^0_\omega} \widetilde{a}(y,\omega,0) \overline{\mu}_1(dy,d\omega) + \int_{\T_y \times (\R_\eta \setminus \{0 \})}  \widetilde{a}_\infty\left( y, \frac{\eta}{\vert \eta \vert}, \eta \right) \overline{\mu}_2(dy,d\eta).
	\end{align}
	In addition, if the sequence $(u_h)$ is $(-h^2 \Delta_{G_0})$-oscillating, then
	\begin{align*}
		\int_{\R_\sigma \times \R_\xi} \mu_1(d\sigma,y,d\xi,\omega)  = \overline{\mu}_1(y,\omega); \quad 
		\operatorname{Tr} M_2(y,\eta) = \overline{\mu}_2(y,\eta).
	\end{align*}
	Consequently, up to a subsequence,
	\begin{align}\label{totalmasssubelliptic}  \lim_{R\rightarrow\infty}\lim_{h\rightarrow 0}\|\Upsilon_h^{R}u_h\|_{L^2}^2=\int_{\T_y\times \mathbb{S}_\omega^0}d\ov{\mu}_1(y,\omega)+\int_{\T_y\times ( \R_\eta \setminus \{ 0 \})}d\ov{\mu}_2(y,\eta).
	\end{align}
\end{prop}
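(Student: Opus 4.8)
The plan is to construct the four measures as limits of the Wigner distributions $I_R^h$, splitting each symbol into a "moderate frequency" piece where $h^{-1}\ll|\eta|\ll h^{-2}$ and a "top frequency" piece where $|\eta|\sim h^{-2}$, since (as Remark \ref{idea2microlocal} explains) the symbolic calculus in the horizontal variables $(x,\xi)$ behaves differently in the two regimes. First I would fix a second cutoff parameter $\lambda\to\infty$ and write $\mathbf{a}_R^h = \mathbf{a}_{R,\lambda}^{h,\mathrm{mod}} + \mathbf{a}_{R,\lambda}^{h,\mathrm{top}}$, where the moderate part carries a factor $\chi_0(\lambda h^2\eta)$ (so $|\eta|\leq \lambda^{-1}h^{-2}$) and the top part its complement. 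For the moderate part, set $\hbar := |h\eta|^{-1}\gg h$ on the relevant support, and observe that $\mathbf{a}_{R,\lambda}^{h,\mathrm{mod}}$ is, after the rescaling $\sigma = x|h\eta|$, an $\hbar$-pseudodifferential symbol in $(x,\xi)$ times a classical symbol in $(y,\eta)$; then $\Op_1^{\mathrm w}(\mathbf{a}_{R,\lambda}^{h,\mathrm{mod}})$ is bounded on $L^2$ uniformly, so $I_R^h(\mathbf{a}_{R,\lambda}^{h,\mathrm{mod}})$ has convergent subsequences and, by a Banach-Alaoglu / positivity argument (sharp Gårding to get nonnegativity of the limit functional on nonnegative symbols), defines in the iterated limit $h\to0$, then $R\to\infty$, then $\lambda\to\infty$ a nonnegative Radon measure $\mu_1$ on $\R_\sigma\times\T_y\times\R_\xi\times\mathbb{S}^0_\omega$; the variable $\eta$ disappears because after rescaling only $\eta/|\eta|$ survives in the limit, which is why the limiting symbol is evaluated at $\theta=0$ and at $\omega=\eta/|\eta|$. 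For the $\widetilde a$ symbols the same argument is simpler, giving $\overline\mu_1$ on $\T_y\times\mathbb{S}^0_\omega$ (from the moderate part) and $\overline\mu_2$ on $\T_y\times(\R_\eta\setminus\{0\})$ (from the top part, where $\widetilde a$ is just a bounded multiplier in $x$).

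Next I would handle the top-frequency part $|\eta|\sim h^{-2}$. Here $\hbar = |h\eta|^{-1}\sim h$, so the rescaled operator $a_\infty^{\mathrm w}(x|\eta|, y, D_x, \eta/|\eta|, \eta)$ is a genuine (unscaled, "$\hbar=1$" on the fiber) pseudodifferential operator in the single variable $x$ acting fiberwise over $(y,\eta)$, and cannot be reduced to a scalar symbol — this is the origin of the operator-valued measure $M_2$. The plan is: decompose $u_h$ in Fourier series in $y$, restrict to the dyadic block $|\eta|\sim h^{-2}$, and for each such block view the restricted operator as an element of $\mathcal L^1(L^2(\R_x))$-valued form; the uniform bound $\|h^2|D_y|\psi_h\|_{L^2}=O(1)$ from Lemma \ref{subellipticapriori} controls the relevant trace norms. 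Then a standard vector-valued weak-$*$ compactness argument (duality between $\mathcal C_c(\T_y\times(\R_\eta\setminus\{0\}); \mathcal L^\infty)$-type test operators and $\mathcal L^1$-valued measures, as in the Mac\'\i a--type constructions cited) yields $M_2\in\mathcal M_+(\T_y\times(\R_\eta\setminus\{0\});\mathcal L^1(L^2(\R_x)))$ with the trace-integral formula \eqref{measureformula2}. Positivity of $M_2$ as an operator-valued measure follows from testing against nonnegative operators (squares), again via a Gårding-type inequality adapted to the fiber.

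The remaining point is the $(-h^2\Delta_{G_0})$-oscillation conclusion: I would test the identity against symbols independent of $(\xi)$ on the moderate side and against the fiberwise identity operator on the top side. Precisely, for the moderate part, applying the convergence to symbols $a(\sigma,y,\xi,\omega,0)$ that depend on $y,\omega$ only (constant in $\sigma,\xi$) and using that, by Lemma \ref{oscillating}, the mass of $u_h$ escaping the energy shell $\xi^2+\sigma^2\omega^2\sim1$ vanishes, one gets $\int_{\R_\sigma\times\R_\xi}\mu_1(d\sigma,y,d\xi,\omega) = \overline\mu_1(y,\omega)$ as the "push-forward along the fiber" identity; similarly, choosing $a_\infty$ to be the identity operator on $L^2(\R_x)$ in the top part forces $\operatorname{Tr} M_2(y,\eta) = \overline\mu_2(y,\eta)$, because $\widetilde a\equiv1$ in $x$ corresponds exactly to the trace of the fiber operator. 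Adding the two marginals and using that the full symbol $1$ (i.e. the operator $\Upsilon_h^R{}^*\Upsilon_h^R$ up to $o(1)$) decomposes as moderate $+$ top gives \eqref{totalmasssubelliptic}. The main obstacle I anticipate is making the symbolic calculus rigorous across the transition zone $|\eta|\sim\lambda^{-1}h^{-2}$: the $\eta$-dependent semiclassical parameter $\hbar=|h\eta|^{-1}$ ranges from $\gg h$ down to $\sim h$, and one must check that the remainders in the composition/adjoint formulas for $\Op_1^{\mathrm w}(\mathbf a_R^h)$ are $o(1)$ on the moderate side (sub-critical, so fine) while on the top side one does \emph{not} expand but keeps the operator-valued structure intact; reconciling the two so that no mass is lost or double-counted in the iterated limits $h\to0$, $\lambda\to\infty$, $R\to\infty$ is the delicate bookkeeping step, and is exactly where the choice of the cutoff $\chi_0|_{\mathrm{supp}(\chi)}\equiv1$ in the definition of oscillation is used to glue the pieces.
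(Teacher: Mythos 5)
Your proposal follows the paper's proof essentially line for line: the same secondary cutoff of $\mathbf{a}_R^h$ at $|\eta|\sim\epsilon^{-1}h^{-2}$ (your $\lambda$ playing the role of $\epsilon^{-1}$), the same scalar-measure argument for the moderate regime via $L^2$-boundedness (Proposition \ref{L2boundedness}) plus sharp G\aa rding (Corollary \ref{Garding}), the same weak-$\star$ compactness in the duality $\mathcal{K}(L^2(\R_x))^*=\mathcal{L}^1(L^2(\R_x))$ to produce $M_2$ in the top regime, and the same test against symbols of the form $\chi_\lambda^2(\sigma^2+\xi^2-1)\widetilde{a}$ to link $\mu_1,\overline{\mu}_1$ and $M_2,\overline{\mu}_2$ under the $(-h^2\Delta_{G_0})$-oscillation hypothesis. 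The one substantial step you wave past as ``standard'' is the extension of $M_2$ from product-form test operators $\widetilde{a}_\infty\cdot K$, $K\in\mathcal{K}(L^2(\R_x))$, to the full operator-valued symbol $a_\infty^{\w}(x|\eta|,y,D_x,\eta/|\eta|,\eta)$, which the paper carries out via the finite-rank approximations $\Pi_k a_\infty^{\w}\Pi_k$ with Calder\'on--Vaillancourt bounds uniform in $h,\epsilon,R$ so that the limit in $k$ commutes with the triple limit; this is the technically delicate part of Lemma \ref{existenceoperator-valued}.
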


The proof of Proposition \ref{p:semiclassical_measures} is carried out in several steps.  As explained in Remark 	\ref{idea2microlocal}, for any $\epsilon>0$, we split the symbol $\mathbf{a}_R^h$ as two parts:
\begin{align*}
	\mathbf{a}_{h,\epsilon,R}(x,y,\xi,\eta)  & :=   \chi_{\epsilon}(h^2\eta)(1-\chi_R(h\eta)) a\left(hx\vert \eta \vert, y,h\xi,h\eta, h^2 \eta \right),\\ \mathbf{a}_{h,R}^{\epsilon}(x,y,\xi,\eta) & :=(1 - \chi_{\epsilon}(h^2\eta))(1-\chi_R(h\eta)) a\left(hx\vert \eta \vert, y,h\xi,h\eta, h^2 \eta \right).
\end{align*}
Correspondingly, the functional $I_R^h(a)$ is decomposed as $I_R^h(a) = I_{h,R,\epsilon}^1(a) + I_{h,R,\epsilon}^2(a)$, where
\begin{align*}
	I_{h,R,\epsilon}^1(a) & = \Big \langle \Op_1^\w \big(   \mathbf{a}_{h,\epsilon,R}(x,y,\xi,\eta) \big) u_h, u_h \Big \rangle_{L^2(M_0)}, \\[0.2cm]
	I_{h,R,\epsilon}^2(a) & =
	\Big\langle 
	\Op_1^{\w}(\mathbf{a}_{h,R}^{\epsilon})u_h,u_h
	\Big\rangle_{L^2(M_0)} 
	= \left \langle \Op_1^{\w,(y,\eta)} \Op_1^{\w,(x,\xi)}  \big(   \mathbf{a}_{h,R}^{\epsilon}(h x, y, h^{-1}\xi, \eta)  \big) U_h, U_h \right \rangle_{L^2(M_0)},
\end{align*}
where $U_h(x,y) := h^{1/2}u_h(h x,y)$.
When obtaining the semiclassical limit, we will take the successive limit $h\rightarrow 0$, $\epsilon\rightarrow 0$ and finally $R\rightarrow\infty$, so one may keep in mind that the hierarchy of small parameters $$h\ll \epsilon\ll R^{-1}\ll 1,\quad Rh<\epsilon.$$
Recall the definition of the special symbol class $\mathbf{S}_{1,1;1^-,0}^0$ in \eqref{condition}.  It is straightforward to verify that $\mathbf{a}_{h,\epsilon,R}\in \mathbf{S}_{1,1;1^-,0}^0(T^*M_0)$. On the other hand, the symbol $\mathbf{a}_{h,R}^{\epsilon}$ does not enjoy composition property as good symbolic calculus in $(x,\xi)$ variable. Nevertheless, due to the support property $|\eta|\leq O(h^{-2})$, we have 
$ \partial_{x}^j\partial_{\xi}^k\mathbf{a}_{h,R}^{\epsilon}\lesssim h^{-j+k}
$.

\begin{proof}[Proof of Proposition \ref{p:semiclassical_measures}]
	We begin by the scalar-valued measure.	
	
	\noi
	$\bullet$ {\bf Step 1: Existence of the scalar-valued measure.} The existence of non-negative  Radon measure associated to $I_{h,R,\epsilon}^1$ 
	follows from the standard argument of \cite{G91} (see also Chapter 4 of \cite{Z12}). It consists of
	verifying that along any convergent subsequence of $I_{h,R,\epsilon}^1$ in the triple limit $h\rightarrow 0,\epsilon\rightarrow 0,R\rightarrow\infty$, the limit $I^1(\cdot)$ defines a continuous, non-negative linear functional on a dense subset of  $C_c(\R_{\sigma,y,\xi,\theta}^4\times\mathbb{S}_{\omega}^0)$. As $\mathbf{S}_c^0(T^*M_0\times\R)$ is dense in $C_c(\R_{\sigma,y,\xi,\theta}^4\times\mathbb{S}_{\omega}^0)$, it suffices to check the above conditions for symbols.
	
	Using Proposition \ref{L2boundedness}, we have
	
	\begin{equation}
		\label{e:I_1_bound_from_above}
		\begin{array}{rl}
			\displaystyle \vert I_{h,R,\epsilon}^1(a) \vert  & \displaystyle \leq C_0 \Vert a \Vert_{L^\infty} +C_1(h^{1/2}+\epsilon^{1/2}).
		\end{array}
	\end{equation}
	Moreover, by Corollary \ref{Garding} and following the argument in Lemma 1.2 of \cite{G91}\footnote{The argument consists of applying Corollary \ref{Garding} to $\sqrt{\mathbf{a}_{h,\epsilon,\R}+\delta}$, taking the triple limit and finally letting $\delta\rightarrow 0$. }, we deduce that for $a\geq 0, a\in \mathbf{S}_{c}^0(T^*M_0\times\R)$,
	\begin{align}
		\liminf_{R\rightarrow\infty}\liminf_{\epsilon\rightarrow 0}\liminf_{h\rightarrow 0}I_{h,R,\epsilon}^1(a)   \geq 0.
	\end{align}
	Then there exists a subsequence $(u_h)$ and a non-negative Radon measure $\mu_1 \in \mathcal{M}_+\big( \R_\sigma \times \mathbb{T}_y \times \R_\xi \times \mathbb{S}^0_\omega \times\R_{\theta}\big)$ such that (after extraction of a subsequence in each limit procedure):
	$$
	\lim_{R \to + \infty} \lim_{\epsilon \to 0}  \lim_{h \to 0^+} I_{h,R,\epsilon}^1(a)  =  \int_{\R_\sigma \times \mathbb{T}_y \times \R_\xi \times \mathbb{S}^0_\omega} \, a(\sigma,y,\xi,\omega,\theta) \mu_1(d\sigma,dy,d\xi,d\omega,d\theta).
	$$
	Observe that $|h^2\eta|\lesssim \epsilon$ for the symbol $\mathbf{a}_{h,\epsilon,R}(x,y,\xi,\eta)$, so in the limit, the measure $\mu_1$ can be only supported on $\theta=0$. For this reason, we regard $\mu_1\in\mathcal{M}_+(\R_{\sigma}\times\T_y\times\R_{\xi}\times\mathbb{S}_{\omega}^0)$ only.

	Now, if $\widetilde{a} \in \mathbf{S}_c^0(T^*\T\times\R)$, one shows similarly that there exists a subsequence $(u_h) \subset L^2(M_0)$ (which can be taken the same as before) and a measure $\overline{\mu}_1$ so that the distribution
	$$
	I_{h,R,\epsilon}^1(\widetilde{a}) = \Big \langle \Op_1^{\w} \big(   \chi_\epsilon (h^2 \eta)  \widetilde{\mathbf{a}}_R^h(y,\eta) \big) u_h, u_h \Big \rangle_{L^2(M_0)}
	$$
	satisfies
	$$
	\lim_{R \to + \infty} \lim_{\epsilon \to 0}  \lim_{h \to 0^+} I_{h,R,\epsilon}^1(\widetilde{a}) = \int_{\T_y \times \mathbb{S}_\omega^0} \widetilde{a}(y , \omega,0) \overline{\mu}_1(dy,d\omega).
	$$
	Moreover, if the sequence $(u_h)$ is $(-h^2\Delta_{G_0})$-oscillating, we take
	\begin{equation}
		\label{e:compact_symbol}
		a_{\lambda}(\sigma,y,\xi,\eta,\theta) := \chi_{\lambda}^2( \sigma^2 + \xi^2 - 1) \widetilde{a}(y,\eta,\theta).
	\end{equation}
	
	Notice that $\widetilde{a} \in \mathbf{S}_c^0(T^*\T\times\R)$ implies that $a_{\lambda} \in \mathbf{S}_c^0(T^*M_0\times\R)$ for each $\lambda>0$.
	Then by definition,
	\begin{align}\label{alambda1}
		\lim_{R\rightarrow+\infty}\lim_{\epsilon\rightarrow 0}\lim_{h\rightarrow 0^+}I_{h,R,\epsilon}^1(a_{\lambda})=\int_{\T_y\times\mathbb{S}_{\omega}^0}\widetilde{a}(y,\omega,0)\chi_{\lambda}(\xi^2+\sigma^2-1 )^2\ov{\mu}_1(dy,d\omega).
	\end{align}
	From the symbolic calculus, we deduce that\footnote{Here the point is that $\chi_{\epsilon}(h\eta)\widetilde{a}_R^h(y,\eta)$ is \emph{independent} of $x$ and $\xi$ variable, hence the dangerous derivative in $x$ does not appear in the formal symbolic product $\chi_{\epsilon}(h\eta)\widetilde{\mathbf{a}}_R^h(y,\eta,\eta')_{\#}\chi_R^2(x^2\eta^2+\xi^2-1)$. } 
	\begin{align*}
		\label{e:compact_to_homogeneous}
		I_{h,R,\epsilon}^1(a_{\lambda}) & = \Big \langle \Op_h^\w \big( \chi_\epsilon(h\eta)\widetilde{\mathbf{a}}_R^h \big)\chi_{\lambda}(-h^2\Delta_{G_0}-1) u_h, \chi_{\lambda}(-h^2\Delta_{G_0}-1) u_h  \Big \rangle_{L^2(M_0)}  \\[0.2cm]
		& \quad + O(1/R +h/\epsilon+\epsilon).
	\end{align*}
	Finally, using that $(u_h)$ is $(-h^2\Delta_{G_0})$-oscillating, we conclude, modulo a  subsequence $(u_h)$ from the one taken to define $\mu_1$ that
	$$
	\lim_{\lambda\rightarrow+\infty}\lim_{R \to + \infty} \lim_{\epsilon \to 0}  \lim_{h \to 0^+} I_{h,R,\epsilon}^1(a_{\lambda}) = \lim_{R \to + \infty} \lim_{\epsilon \to 0}  \lim_{h \to 0^+} I_{h,R,\epsilon}^1(\widetilde{a}),
	$$
	and hence by \eqref{alambda1},
	$$
	\int_{\R_\sigma \times \R_\xi} \mu_1(d\sigma,y,d\xi,\omega) = \overline{\mu}_1(y,\omega).
	$$

	\noi
	$\bullet$ {\bf Step 2: Existence of the operator-valued measure.}
	
	\begin{lemma}\label{existenceoperator-valued}
		There exist a subsequence $(u_h)$ and a measure $M_2 \in \mathcal{M}_+\big( \mathbb{T}_y \times (\R_\eta \setminus \{ 0 \}); \mathcal{L}^1(L^2(\R_x)) \big)$, where $M_2$ takes values in the space of trace operators $\mathcal{L}^1(L^2(\R_x))$,
		such that, for every symbol $a \in \mathbf{S}_c^0(T^*M_0\times\R)$,
		\begin{align*}
			\lim_{R \to + \infty} \lim_{\epsilon \to 0} \lim_{h \to 0^+} I_{h,R,\epsilon}^2(a) & = \operatorname{Tr} \left[ \int_{\mathbb{T}_y \times (\R_\eta \setminus \{0 \})} a_\infty^{\w}\left( x \vert \eta \vert  ,y, D_x, \frac{\eta}{\vert \eta \vert},\eta \right) M_2( dy,d\eta ) \right].
		\end{align*}
	\end{lemma}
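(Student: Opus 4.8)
The plan is to realise $M_2$ as the (rescaled, torus) operator-valued Wigner measure of the dilated sequence $(U_h)$, following the general construction of operator-valued semiclassical measures (in the spirit of \cite{F00}, \cite{Mac_Rive16} and the works cited in Remark \ref{idea2microlocal}), adapted to the scaling proper to the regime $\vert\eta\vert\sim h^{-2}$. Recall that $U_h(x,y)=h^{1/2}u_h(hx,y)$ is $L^2(M_0)$-normalised, and that after the dilation in $x$,
\[
I_{h,R,\epsilon}^2(a)=\big\langle \Op_1^{\w,(y,\eta)}\Op_1^{\w,(x,\xi)}\big(\mathbf{a}_{h,R}^{\epsilon}(hx,y,h^{-1}\xi,\eta)\big)U_h,U_h\big\rangle_{L^2(M_0)},
\]
with $\mathbf{a}_{h,R}^{\epsilon}(hx,y,h^{-1}\xi,\eta)=(1-\chi_\epsilon(h^2\eta))(1-\chi_R(h\eta))\,a\big(x\vert h^2\eta\vert,y,\xi,h\eta,h^2\eta\big)$. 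On the support one has $\vert h\eta\vert\ge R$ and $\vert h^2\eta\vert\in[\epsilon,C_0]$, so I would first replace $a$ by $a_\infty$ in the fourth slot (homogeneity at infinity). Viewed as an operator-valued symbol in $(y,\eta)$ and frozen at a fixed $\eta$, its $(x,\xi)$-Weyl quantization is
\[
\mathcal{A}_{h,R,\epsilon}(y,\eta):=(1-\chi_\epsilon(h^2\eta))(1-\chi_R(h\eta))\,a_\infty^{\w}\Big(x\vert h^2\eta\vert,y,D_x,\tfrac{\eta}{\vert\eta\vert},h^2\eta\Big)\in\mathcal{L}^1\big(L^2(\R_x)\big),
\]
a compactly supported pseudodifferential operator whose trace norm is bounded, uniformly in $(h,R,\epsilon)$ and $(y,\eta)$, by a fixed seminorm of $a_\infty$ (here one uses that $a_\infty$ is smooth and compactly supported in $(\sigma,\xi,\theta)$, so the $x$- and $\xi$-supports of the symbol are compact for each fixed $\eta\neq 0$). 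As $h\to 0^+$ the factor $\chi_R(h\eta)$ disappears while $h^2\eta$ remains in a compact set; relabelling $h^2\eta$ as $\eta$, the $(x,\xi)$-part converges in $\mathcal{L}^1(L^2(\R_x))$ to $a_\infty^{\w}\big(x\vert\eta\vert,y,D_x,\tfrac{\eta}{\vert\eta\vert},\eta\big)$, the operator appearing in the statement. The whole point of the dilation is that the \emph{critical} $(x,\xi)$-calculus flagged in Remark \ref{idea2microlocal} has been turned into the ordinary Weyl calculus of a \emph{fixed} symbol class, so that no $O(1)$ remainder survives the limit.

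Next I would handle the $(y,\eta)$-quantization. Since $\mathcal{A}_{h,R,\epsilon}(y,\eta)$ depends on $\eta$ only through $h^2\eta$, $h\eta$ and $\operatorname{sgn}\eta$, and the $h\eta$-dependence collapses in the limit to $\operatorname{sgn}\eta$, the operator $\Op_1^{\w,(y,\eta)}(\mathcal{A}_{h,R,\epsilon})$ is effectively the semiclassical quantization on $\mathbb{T}_y$ of a fixed operator-valued symbol with effective parameter $\hbar=h^2$; in particular, commutators and compositions in the $(y,\eta)$-variables produce $O(h^2)$ remainders. This yields, on the one hand, the uniform bound $\vert I_{h,R,\epsilon}^2(a)\vert\le C\sup_{(y,\eta)}\Vert\mathcal{A}_{h,R,\epsilon}(y,\eta)\Vert_{\mathcal{L}(L^2(\R_x))}+o(1)\le C'\Vert a_\infty\Vert_{L^\infty}+o(1)$ by $L^2$-boundedness for operator-valued symbols; and, on the other hand, a positivity statement: if $a_\infty^{\w}\big(x\vert\eta\vert,y,D_x,\tfrac{\eta}{\vert\eta\vert},\eta\big)\ge 0$ as an operator for all $(y,\eta)$, then $\mathcal{A}_{h,R,\epsilon}(y,\eta)\ge 0$ for every $(y,\eta)$, and the operator-valued sharp Gårding inequality (Corollary \ref{Garding}) in $(y,\eta)$ gives $\langle\Op_1^{\w,(y,\eta)}(\mathcal{A}_{h,R,\epsilon})U_h,U_h\rangle\ge -Ch^2$, hence $\liminf_{R\to\infty}\liminf_{\epsilon\to 0}\liminf_{h\to 0}I_{h,R,\epsilon}^2(a)\ge 0$.

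Extracting a diagonal subsequence over a countable dense family of test symbols (compatible with the one already fixed for $\mu_1$ in Proposition \ref{p:semiclassical_measures}), the iterated limit $\mathcal{L}(a):=\lim_{R}\lim_{\epsilon}\lim_{h}I_{h,R,\epsilon}^2(a)$ then exists for every $a\in\mathbf{S}_c^0(T^*M_0\times\R)$, is bounded and positive in the above sense, and depends on $a$ only through the map $(y,\eta)\mapsto a_\infty^{\w}\big(x\vert\eta\vert,y,D_x,\tfrac{\eta}{\vert\eta\vert},\eta\big)$, seen as an element of $C_c\big(\mathbb{T}_y\times(\R_\eta\setminus\{0\});\mathcal{K}(L^2(\R_x))\big)$ (compactness of each $\eta$-slice, $\eta\neq 0$, is again the compact support of $a_\infty$). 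A Riesz-type representation theorem for positive linear functionals on $C_c\big(\mathbb{T}_y\times(\R_\eta\setminus\{0\});\mathcal{K}(L^2(\R_x))\big)$ (see \cite{F00}, \cite{Mac_Rive16}) then provides a unique positive $\mathcal{L}^1(L^2(\R_x))$-valued measure $M_2$ on $\mathbb{T}_y\times(\R_\eta\setminus\{0\})$ with
\[
\mathcal{L}(a)=\operatorname{Tr}\Big[\int_{\mathbb{T}_y\times(\R_\eta\setminus\{0\})}a_\infty^{\w}\Big(x\vert\eta\vert,y,D_x,\tfrac{\eta}{\vert\eta\vert},\eta\Big)\,M_2(dy,d\eta)\Big],
\]
which is the asserted identity.

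The main obstacle I expect is the genuinely two-scale nature of this calculus: in $(x,\xi)$ one sits at the critical scaling of the uncertainty principle, while in $(y,\eta)$ one is semiclassical with parameter $h^2$, and the two directions are coupled through the $\eta$-dependence of $\mathcal{A}_{h,R,\epsilon}$. One must verify that all error terms — from replacing $a$ by $a_\infty$, from the $(y,\eta)$-symbolic calculus, and from the operator Gårding step — are $o(1)$ uniformly as $h\to 0$ for fixed $\epsilon,R$; this is precisely why the cutoff $1-\chi_\epsilon(h^2\eta)$ is retained throughout, as it confines $h^2\eta$ to a compact set so that the $x$-scale $\vert h^2\eta\vert^{-1}$, hence the relevant $(x,\xi)$-symbol class, stays uniform, while $1-\chi_R(h\eta)$ keeps us away from the intermediate regime $\vert\eta\vert\lesssim h^{-1}$ treated elsewhere. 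Checking that the limiting object is a genuine locally finite Radon measure, with in particular no mass escaping to $\eta=0$ within this regime, will use the support property $\vert h^2\eta\vert\ge\epsilon$ together with the a priori bounds of Lemmas \ref{apriori} and \ref{subellipticapriori}.
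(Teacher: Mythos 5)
Your proposal follows the same general strategy as the paper's proof — build $M_2$ as an operator-valued semiclassical measure of $(U_h)$ via Banach--Alaoglu and a Riesz-type representation — but it packages the argument differently. The paper proceeds in two explicit steps: it first tests against \emph{product-form} operator-valued symbols $\widetilde a(y,\eta)K$ with $K\in\mathcal K(L^2(\R_x))$, treating the resulting scalar sequence of functionals on $\mathcal K$ via weak-$\star$ compactness and Riesz for each fixed scalar $\widetilde a$, and only then extends to general $\mathcal K$-valued symbols by finite-rank approximation $\Pi_k\,\widetilde{\mathbf a}_{R,\epsilon,h}\,\Pi_k$, using the uniform Calder\'on--Vaillancourt bounds \eqref{boundedness}--\eqref{approx-compact} to justify exchanging the limit $k\to\infty$ with the triple limit in $(h,\epsilon,R)$. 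You instead invoke a black-box Riesz representation for positive bounded linear functionals on $C_c\big(\T_y\times(\R_\eta\setminus\{0\});\mathcal K(L^2(\R_x))\big)$, which is a cleaner statement but effectively outsources exactly the two-step construction the paper carries out; in particular, the uniformity in $(h,\epsilon,R)$ of the error when replacing $a$ by $a_\infty$ and of the compact-operator approximation is the technical content that makes the black box applicable, and your write-up treats it lightly. One concrete slip: you cite Corollary \ref{Garding} for the ``operator-valued sharp Gårding inequality'' in $(y,\eta)$, but that corollary is for scalar-valued symbols in the class $\mathbf S^0_{1,1;1^-,0}$; the positivity step genuinely needs an operator-valued Gårding inequality for $\mathcal L(L^2(\R_x))$-valued symbols on $T^*\T_y$ (standard, but a different result — the paper sidesteps this by quoting \cite[Lemma 1.2]{G91} after reducing to scalar tests $\tr(KM_2(\widetilde a))$). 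These are gaps of rigor rather than wrong ideas; the overall architecture of your argument matches the paper's.
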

	\begin{proof}
		We loosely follow \cite{G91}.

		\noi
		$\bullet$ {\bf Testing against $\mathcal{K}(L^2(\R_x))$-valued symbols in  the product form.} First take $D \subset \mathbf{S}_c^0(T^*\mathbb{T} \times \R)$ a countable dense set. Fix $\widetilde{a} \in D$ and take, for $R > 0$ sufficiently large,
		\begin{equation}
			\label{e:partial_operator}
			\widehat{A}_{h,\epsilon,R} = \Op^{\w,(y,\eta)}_{h^2}\Big(  \widetilde{a}_\infty\Big( y, \frac{\eta}{\vert \eta \vert},\eta\Big) \left(1 - \chi_R\big( \frac{\eta}{h} \big)\right) (1 - \chi_\epsilon(\eta))\Big).
		\end{equation}
		Consider the sequence of linear forms on the space of compact operators $\mathcal{K}(L^2(\R_x))$,
		$$
		\begin{array}{ccl}
			L_{h,\epsilon,R} : \mathcal{K}(L^2(\R_x)) & \longrightarrow & \; \; \mathbb{C} \\
			K & \longmapsto & \langle \widehat{A}_{h,\epsilon,R} K U_h,U_h \rangle_{L^2(M_0)}.
		\end{array}
		$$
			   As $\widehat{A}_{h,\epsilon,r}$ can be viewed as $\mathrm{Op}_{\widetilde{h}}(\widetilde{b})$ for some symbol $\widetilde{b}\in S^{0,0}$ and $\widetilde{h}=\max\{h^2\epsilon^{-1},hR^{-1}\}$, by Corollary \ref{AppcorA2} (still valid for $y\in\T$), we have
				$$
				\vert L_{h,\epsilon,R} K \vert \leq (C_0\|\widetilde{a}_{\infty}\|_{L^{\infty}}+C_1(\widetilde{a})(\epsilon^{-\frac{1}{2}}h+h^{\frac{1}{2}}R^{-\frac{1}{2}})) \Vert K \Vert_{\mathcal{L}(L^2)}, \quad \forall h, \epsilon,R,
				$$
				where $C_0>0$ is independent of $\widetilde{a}_{\infty}$. Note that in the regime $h\ll \epsilon\ll R^{-1}, Rh<\epsilon$,
				the sequence $L_{h,\epsilon,R}$ is equicontinuous with respect to the weak-$\star$ topology of $\mathcal{K}(L^2(\R_x))$.

		By the Banach-Alaoglu Theorem, there exists an element $M_2(\widetilde{a}) \in \mathcal{L}^1(L^2(\R_x))$ such that by further extracting of subsequences,
		\begin{equation}
			\label{first-limit}
			\forall K \in \mathcal{K}(L^2(\R_x)), \quad \lim_{R \to +\infty} \lim_{\epsilon \to 0} \lim_{h \rightarrow 0} \langle \widehat{A}_{h,\epsilon,R} K U_{h}, U_{h} \rangle = \tr(KM_2(\widetilde{a})).
		\end{equation}
		By a diagonal extraction, we may assume that the subsequence is the same for any $\widetilde{a} $ in the dense subset $D$. We can extend $M_2(\cdot)$ as a linear operator
		$$
		M_2: \widetilde{D} \longrightarrow \mathcal{L}^1(L^2(\R_x)),
		$$
		where $\widetilde{D}$ is the vector space spanned by $D$. We claim that $M_2$ is bounded in $\mathcal{C}_c(\T_y \times (\R_\eta \setminus \{0 \}))$. Indeed, for any $\widetilde{a} \in D$,
		$$
		\vert \tr (KM_2(\widetilde{a})) \vert \leq C \sup \vert \widetilde{a}_\infty \vert \Vert K \Vert.
		$$
		By continuity and the Riesz theorem, $M_2$ can be extended to an element of $\mathcal{M}(\T_y \times (\R_\eta \setminus \{0 \}); \mathcal{L}^1(L^2(\R_x))$ such that
		\begin{align}\label{test-produce}
			\lim_{R \to + \infty} \lim_{\epsilon \to 0} \lim_{h \to 0} \big \langle \widehat{A}_{h,\epsilon,R} K U_{h},U_{h}  \big \rangle_{L^2(M_0)} = \int_{\T_y \times ( \R_{\eta} \setminus \{0 \})}  \widetilde{a}_\infty \left(y,\frac{\eta}{\vert \eta \vert}, \eta \right) \tr (K M_2(dy,d\eta)),
		\end{align}
		for any $\widetilde{a}_\infty \in \mathcal{C}_c^\infty(T^*\T \times \mathbb{S}^0)$ and any $K \in \mathcal{K}(L^2(\R_x))$. Moreover, we have $M_2 \in \mathcal{M}_+(\T_y \times (\R_\eta \setminus \{ 0 \}); \mathcal{L}^1(L^2(\R_x))$ (c.f. \cite[Lemma 1.2]{G91}).
		
		\vspace{0.3cm}
		
		\noi
		$\bullet$ {\bf Testing against general $\mathcal{K}(L^2(\R_x))$-valued symbols.} For a general $\mathcal{K}(L^2(\R_x))$-valued symbol, we will view it as the limit of finite combination of $\mathcal{K}(L^2(\R_x))$-valued symbols of the product form.   Fix an orthonormal basis $\{ \varphi_k(x) \}_{k \in \mathbb{N}_0}$ of $L^2(\R_x)$. For any $k \in \mathbb{N}_0$, denote by $\Pi_k$ the orthogonal projection on the linear space spanned by the first $k$ vectors of this basis and $\pi_j$ the orthogonal projection onto the space spanned by $\varphi_j$. Note that $\Pi_k=\sum_{j\leq k}\pi_j$. For $a\in\mathbf{S}_c^0$, set
		\begin{align*}
			\widetilde{\mathbf{a}}_{R,\epsilon,h}(y,\eta)  := \Op_1^{\w,(x,\xi)} \Big( (1 - \chi_\epsilon (\eta))\big(1-\chi_R\big(\frac{\eta}{h}\big)\big) a\left( x|\eta|, y, \xi, \frac{\eta}{h},\eta \right)  \Big), \quad
			\widetilde{\mathbf{A}}_{h,\epsilon,R}  :=\mathrm{Op}_{h^2}^{\w,(y,\eta)}(\widetilde{\mathbf{a}}_{R,\epsilon,h}).
		\end{align*}
		By the Calderon-Vaillancourt theorem for $S^{0,0}$ symbols (Proposition \ref{CaVai}), operators $\widetilde{\mathbf{a}}_{R,\epsilon,h}(y,\eta)$ are uniformly bounded in $h, \epsilon,R$ on $\T_y \times (\R_\eta \setminus \{0 \})$ as operators on $L^2(\R_x)$.  
		Notice also that 
		$$
		\Pi_k \, \widetilde{\mathbf{a}}_{R,\epsilon,h}(y,\eta) \, \Pi_k = \sum_{j,j' \leq k} \widetilde{\mathbf{a}}^{j,j'}_{R,\epsilon,h}(y,\eta)E_{j,j'},
		$$
		where $E_{j,j'}=\langle\cdot,\varphi_{j'}\rangle_{L_x^2}\;\varphi_j$,
		\begin{align*}
			\widetilde{\mathbf{a}}^{j,j'}_{R,\epsilon,h}  :=\langle \widetilde{\mathbf{a}}_{R,\epsilon,h}(y,\eta)\varphi_j,\varphi_j'\rangle_{L^2} = a_\infty^{j,j'}\Big( y, \frac{\eta}{\vert \eta \vert},\eta\Big) \Big(1 - \chi_R\Big( \frac{\eta}{h} \Big)\Big) (1 - \chi_\epsilon(\eta)),
		\end{align*}
		with $a_\infty^{j,j'} \in \mathcal{C}_c^\infty(T^*\mathbb{T} \times \mathbb{S}^0)$ and $E_{j,j'} \in \mathcal{K}(L^2(\R_x))$. 
		Note that for $h,\epsilon$ small enough obeying $h\ll\epsilon$, we can write alternatively
		$$ \Pi_k\widetilde{\mathbf{a}}_{R,\epsilon,h}\Pi_k=(1-\chi_{\epsilon}(\eta))\Big(1-\chi_R\Big(\frac{\eta}{h}\Big)\Big)\Pi_ka_{\infty}^{\w}\Big(x|\eta|,y,D_x,\frac{\eta}{|\eta|},\eta \Big)\Pi_k.
		$$
		Therefore,
		\begin{align}\label{formula}
			\Pi_ka_{\infty}^{\w}\Big(x|\eta|,y,D_x,\frac{\eta}{|\eta|},\eta \Big)\Pi_k=
			\sum_{j,j'\leq k}a_{\infty}^{j,j'}\Big(y,\frac{\eta}{|\eta|},\eta\Big)E_{j,j'}.
		\end{align}
		By \eqref{test-produce}, we deduce that for any $k\in\N$,
		\begin{align}\label{Test-finiteproduct} 
			\lim_{R\rightarrow\infty}\lim_{\epsilon\rightarrow 0}
			\lim_{h\rightarrow 0}
			\langle \pi_k\widetilde{\mathbf{A}}_{h,\epsilon,R}\pi_k U_h,U_h  \rangle_{L^2(M_0)}=&\int_{\T_y\times (\R_{\eta}\setminus\{0\} )} \sum_{j,j'\leq k}a_{\infty}^{j,j'}\Big(y,\frac{\eta}{|\eta|},\eta\Big)\mathrm{Tr}(E_{j,j'}M_2(dy,d\eta) ) \notag \\
			=&\mathrm{Tr}\Big[\int_{\T_y\times(\R_{\eta}\setminus\{0\} )}\Pi_k a_{\infty}^{\w}\Big(x|\eta|,,y,D_x,\frac{\eta}{|\eta|},\eta \Big)\Pi_kM_2(dy,d\eta)
			\Big].
		\end{align}

		To conclude, we need to pass to the limit in $k$ of the above identity.  This requires some uniform controls in $R,\epsilon$ and $h$. 
		Note that for any positive integer $N_0>0$, we have 
		\begin{align}\label{boundedness} 
			\sum_{|\beta|\leq N_0}\sup_{(y,\eta)\in T^*\T}\|\partial_{y,\eta}^{\beta}\widetilde{\mathbf{a}}_{R,\epsilon,h}(y,\eta)\|_{\mathcal{L}(L^2(\R_x))}+
			\sup_{k\in\N,(y,\eta)\in T^*\T}\|\partial_{y,\eta}^{\beta}\pi_k\widetilde{\mathbf{a}}_{R,\epsilon,h}\pi_k(y,\eta)\|_{\mathcal{L}(L^2(\R_x))}<\infty,
		\end{align}
		uniformly in $h,\epsilon$ and $R$,
		we deduce that 
		\begin{equation}
			\label{approx-compact}
			\lim_{k \rightarrow \infty} \sum_{|\alpha|\leq  N_0-1} \sup_{(y,\eta) \in T^*\T} \big \Vert  \partial_{y,\eta}^{\alpha} \big( \widetilde{\mathbf{a}}_{R,\epsilon,h}(y,\eta) - \pi_k \, \widetilde{\mathbf{a}}_{R,\epsilon,h}(y,\eta)  \, \pi_k \big) \big \Vert_{\mathcal{L}(L^2(\R_x))} = 0,
		\end{equation}
		uniformly in $h,\epsilon$ and $R$.
		To see this, note that the above limit holds for all fixed $(y,\eta)\in T^*\T$, varying in a compact set. Together with the uniform boundedness property \eqref{boundedness} with one more derivative, we deduce \eqref{approx-compact}.
		
		Consequently, by the Calder\'on-Vaillancourt theorem applied to operator-valued symbols, there exists an absolute constant $C_0>0$, such that
		\begin{align*}
			\big \vert \big \langle (\widetilde{\mathbf{A}}_{h,\epsilon,R} - \pi_k \, \widetilde{\mathbf{A}}_{h,\epsilon,R} \, \pi_k)U_{h},U_{h} \big \rangle_{L^2(M_0)} \big \vert \\[0.2cm]
			& \hspace*{-3cm} \leq C \sum_{|\alpha|\leq C_0} \sup_{(y,\eta) \in \R^2} \big \Vert  \partial_{y,\eta}^{\alpha} \big( \widetilde{\mathbf{a}}_{R,\epsilon,h}(y,\eta) - \pi_k \, \widetilde{\mathbf{a}}_{R,\epsilon,h}(y,\eta)  \, \pi_k \big) \big \Vert_{\mathcal{L}(L^2(\R_x))},
		\end{align*}
		hence
		$$
		\limsup_{h \rightarrow 0} \big \vert \big \langle (\widetilde{\mathbf{A}}_{h,\epsilon,R} - \pi_k \widetilde{\mathbf{A}}_{h,\epsilon,R} \pi_k)U_{h},U_{h} \big \rangle_{L^2(M_0)} \big \vert \rightarrow 0, \quad \text{as } k \rightarrow \infty.
		$$
		This allows us to exchange the order of limit $\lim_{k\rightarrow\infty}$ and $\lim_{R\rightarrow\infty}\lim_{\epsilon\rightarrow 0}\lim_{h\rightarrow 0}$ on the left hand side of \eqref{Test-finiteproduct} when taking the limit $k\rightarrow\infty$. Consequently, the triple limit$$	\lim_{R \to + \infty} \lim_{\epsilon \to 0} \lim_{h \to 0^+} I_{h,R,\epsilon}^2(a)$$ is equal to the right hand side of \eqref{Test-finiteproduct}.
		
		Finally, by dominated convergence, we deduce that the limit when $k\rightarrow\infty$ of the right hand side of \eqref{Test-finiteproduct} is
		\begin{align*}
			\lim_{k\rightarrow\infty}\mathrm{Tr}\Big[\int_{\T_y\times(\R_{\eta}\setminus\{0\} )}\Pi_k a_{\infty}^{\w}\Big(x|\eta|,,y,D_x,\frac{\eta}{|\eta|},\eta \Big)\Pi_kM_2(dy,d\eta)
			\Big] \\[0.2cm]
			& \hspace*{-4cm}  = \operatorname{Tr} \left[ \int_{\mathbb{T}_y \times (\R_\eta \setminus \{ 0 \})}  a_{\infty}^{\w}\Big( x \vert \eta \vert  ,y, D_x, \frac{\eta}{\vert \eta \vert},\eta \Big) M_2( dy,d\eta ) \right].
		\end{align*}
		This completes the proof of Lemma \ref{existenceoperator-valued}.
	\end{proof}
	\noi
	$\bullet$ {\bf Step 3: No loss of mass for $(-h^2\Delta_{G_0})$-oscillating sequences.} Finally, for any $\widetilde{a} \in \mathbf{S}_c^0(T^*\T\times\R)$ depending only on $y,\eta$, write
	$$
	I_{h,R,\epsilon}^2(\widetilde{a}) = \left \langle \Op_1^{\w,(y,\eta)}   \big( (1 - \chi_\epsilon (h^2\eta))  \widetilde{\mathbf{a}}_R^h(y, \eta)  \big) U_h, U_h \right \rangle_{L^2(M_0)}.
	$$
	By the same argument as in Step 1, we obtain the existence of a subsequence and a measure $\overline{\mu}_2 \in \mathcal{M}(\T_y \times (\R_\eta \setminus \{ 0 \}))$ such that
	$$
	\lim_{R \to \infty} \lim_{\epsilon \to 0} \lim_{h \to 0^+}  I_{h,R,\epsilon}^2(\widetilde{a}) = \int_{\T_y \times (\R_\eta \setminus \{0 \})}  \widetilde{a}_\infty\left(y, \frac{\eta}{\vert \eta \vert}, \eta \right) \overline{\mu}_2(dy,d\eta).
	$$
	
	Moreover, if the sequence $(u_h)$ is $(-h^2\Delta_{G_0})$-oscillating, we take $a_{\lambda}$ given by \eqref{e:compact_symbol}, where $\widetilde{a} \in \mathbf{S}_c^0(T^*\T\times\R)$ implies that $a_{\lambda} \in \mathbf{S}_c^0(T^*M_0\times\R)$. We have
	\begin{align}\label{I2alambda} 
		I_{h,R,\epsilon}^2(a_{\lambda}) & = \left \langle \Op_h^{\w,(y,\eta)} \Big( (1 - \chi_\epsilon(h\eta))\widetilde{\mathbf{a}}_R^h(y,\frac{\eta}{h}) \mathrm{Op}_h^{\w,(x,\xi)} ( \chi_{\lambda}^2(H_{G_0}-1) )
		\Big) u_h, u_h \right \rangle_{L^2(M_0)}.
	\end{align}
	On the one hand, we have
	$$
	\Op_1^{\w,(x,\xi)}( \chi_\lambda^2( \xi^2 + x^2 \eta^2))=\chi_{\lambda}^2(D_x^2+x^2\eta^2)+\mathcal{O}_{\mathcal{L}(L^2)}(\lambda^{-1}),
	$$
	and then
	\begin{align}\label{alambda2}
		\lim_{R\rightarrow\infty}\lim_{\epsilon\rightarrow 0}\lim_{h\rightarrow 0}I_{h,R,\epsilon}^2(a_{\lambda})=\mathrm{Tr}\Big[\int_{\T_y\times(\R_{\eta} \setminus \{0 \})}\widetilde{a}_{\infty}\Big(y,\frac{\eta}{|\eta|},\eta\Big)\chi_{\lambda}^2(D_x^2+x^2\eta^2)M_2(dy,d\eta) \Big]+O(\lambda^{-1}).
	\end{align}
	
	On the other hand, using the symbolic calculus for operator-valued pseudo-differential operators\footnote{Here we only encounter compositions and commutators between a scalar-valued p.d.o (in $(y,\eta)$ variables) and a compact operator-valued p.d.o.  }, we can simplify the quantization in \eqref{I2alambda} as
	\begin{align*}
		\Op^{\w,(y,\eta)}_1 \Big( (1 - \chi_\epsilon(h^2\eta))\widetilde{\mathbf{a}}_R^h(y,\eta)  \Op_h^{\w,(x,\xi)} \big( \chi^2_{\lambda}( \xi^2 +x^2 (h\eta)^2 - 1) \Big) & \\[0.2cm]
		& \hspace{-10.8cm} =\Op^{\w,(y,\eta)}_{h^2} \Big( (1 - \chi_\epsilon(\eta))\widetilde{\mathbf{a}}_R^h(y,\eta/h^2) \Big) \Big[ \Op^{\w,(y,\eta)}_{h} \Op_h^{\w,(x,\xi)}(\chi_{\lambda}( \xi^2 +x^2 \eta^2 - 1)) \Big]^2  + O(h)+O(h^2/\epsilon).
	\end{align*}
	
	Finally, using the fact that $(u_h)$ is $(-h^2\Delta_{G_0})$-oscillating, we counclude, modulo a subsequence from the one taken to define $M_2$, that
	$$
	\lim_{\lambda\rightarrow\infty}\lim_{R \to + \infty} \lim_{\epsilon \to 0}  \lim_{h \to 0^+} I_{h,R,\epsilon}^2(a_{\lambda}) = \lim_{R \to + \infty} \lim_{\epsilon \to 0}  \lim_{h \to 0^+} I_{h,R,\epsilon}^2(\widetilde{a}).$$
	Combining with \eqref{alambda2}, this implies that
	$$
	\tr M_2(y,\eta) = \overline{\mu}_2(y,\eta).
	$$
	Finally, by Lemma \ref{subellipticapriori}, the measure $\ov{\mu}_2$ is supported on $|\eta|\leq C$ for some uniform constant $C>0$. By definition $\Upsilon_h^R=\big(1-\chi_0\big(\frac{hD_y}{R}\big)\big)$ and the fact that $1-\chi_R\equiv 1$ on supp$(1-\chi_0(R^{-1}\cdot))$, the total mass of the sequences $\Upsilon_h^Ru_h$ is captured by $\ov{\mu}_1$ and $\ov{\mu}_2$, 
	hence we obtain \eqref{totalmasssubelliptic}.
	
	We emphasize that in the end we can take the same subsequence $(u_h)$ to define $\mu_1$, $\overline{\mu}_1$, $M_2$ and $\overline{\mu}_2$. This concludes the proof of Proposition \ref{p:semiclassical_measures}. 
\end{proof}

\subsection{Quasimodes in the widely undamped case }\label{subsection3.1}
In this subsection, we prove Proposition \ref{Measureformulationj=012}. Since the proof is long, we will divide it into several paragraphs and lemmas.

\noi
$\bullet$ {\bf  The compact part of the measure.} By the standard existence theorem of semi-classical measures (p.100, Theorem 5.2 of \cite{Z12}), there exists a Radon measure $\mu_0$ on $T^*\T^2$ such that, up to a subsequence, for any compactly supported symbol $a_0(x,y,\xi,\eta)$,
$$ \lim_{h\rightarrow 0}\lg\Op_h^{\w}(a_0)\psi_h,\psi_h\rg_{L^2}=\int_{T^*\T^2}a_0 \, d\mu_0.
$$
Moreover, supp$(\mu_0)\subset\Sigma_1:=\{\xi^2+V(x)\eta^2=1\}$. However, due to the subellipticity (non-compactness of $\Sigma_1$), $\mu_0$ is not the total mass. 

For any symbol $a_1(y,\eta,\theta)$, compactly supported in $y,\theta$ and homogeneous of degree $0$ in $\eta$, we decompose
$ a_1=\chi_R(\eta)a_1+(1-\chi_R(\eta))a_1.
$
Hence $$\lim_{R\rightarrow\infty}\lim_{h\rightarrow 0}\langle\Op_h^{\w}(\chi_R(\eta)a_1)\psi_h,\psi_h\rangle_{L^2}= \lim_{R\rightarrow\infty}\int_{T^*\T^2}\chi_R(\eta)a_1(y,\eta,0)d\mu_0=\int_{T^*\T^2}a_1|_{\theta=0}d\mu_0.$$ For the contribution of $(1-\chi_R(\eta))a_1$, without loss of generality, we may replace $\psi_h$ by $\psi_{2,h}:=\Upsilon_h^R\psi_h$, which is of course $(-h^2\Delta_{G_0})$-oscillating, thanks to Lemma \ref{oscillating} (with the same proof). Therefore, by Proposition \ref{p:semiclassical_measures}, up to a subsequence,
$$ \lim_{R\rightarrow+\infty}\lim_{h\rightarrow 0}\lg\Op_h^{\w}((1-\chi_R(\eta))a_1 )\psi_h,\psi_h\rg_{L^2}=\int_{\T_y\times\mathbb{S}_{\omega}^0}a_1(y,\omega,0)d\ov{\mu}_1+\int_{\T_y\times( \R_\eta \setminus \{0 \})} a_1\Big(y,\frac{\eta}{|\eta|},\eta\Big)d\ov{\mu}_2.
$$
Defining $\T_y\times\ov{\R_\eta \setminus \{ 0 \}} := (\T_y\times\mathbb{S}_\omega^0)\bigsqcup \, (\T_y\times (\R_\eta \setminus \{0 \}))$, let $\ov{\mu}_0=\ov{\mu}_1|_{\T_y\times\mathbb{S}_\omega^0}+\ov{\mu}_2|_{\T_y\times\mathbb{R}_{\eta}}$, then using the notation \eqref{e:closure_of_a} we obtain the formula
$$ \lim_{R\rightarrow\infty}\lim_{h\rightarrow 0}\lg\Op_h^{\w}((1-\chi_R(\eta) )a_1)\psi_h,\psi_h\rg_{L^2}=\int_{\T_y\times\ov{\R_\eta \setminus \{ 0 \}}}  \, \overline{a}_{\infty}\, d \ov{\mu}_0.
$$

To prove that the total masses of $\mu_0$ and $\ov{\mu}_0$ sum $1$, we write $\psi_h=\Upsilon_h^R\psi_h+(1-\Upsilon_h^R)\psi_h$. Again by Proposition \ref{p:semiclassical_measures},
$$ \lim_{R\rightarrow\infty}\lim_{h\rightarrow 0}\|\Upsilon_h^R\psi_h\|_{L^2}^2=\int_{\T_y\times\ov{\R_\eta \setminus \{0 \}}}d\ov{\mu}_0.
$$
Since 
$$ \lim_{R\rightarrow\infty}\lim_{h\rightarrow 0}\|(1-\Upsilon_h^R)\psi_h\|_{L^2}^2=\lim_{R\rightarrow\infty}\int_{T^*\T^2}\chi_0\Big(\frac{\eta}{R}\Big)d\mu_0=\int_{T^*\T^2}d\mu_0,
$$
and
$$ \lim_{h\rightarrow 0}\lg\Upsilon_h^R\psi_h,(1-\Upsilon_h^R)\psi_h\rg_{L^2}=\int_{T^*\T^2}\Big(1-\chi_0\Big(\frac{\eta}{R}\Big)\Big)\chi_0\Big(\frac{\eta}{R}\Big)d\mu_0,
$$
which converges to $0$ as $R\rightarrow\infty$, thanks to the dominating convergence theorem, we have
\begin{align}\label{totalmass}  \int_{T^*\T^2}d\mu_0+\int_{\T_y\times\ov{\R_\eta \setminus \{0 \}}}d\ov{\mu}_0=\lim_{R\rightarrow\infty}\lim_{h\rightarrow 0}\|\psi_h\|_{L^2}^2=1.
\end{align}
The invariance of $\mu_0$ along $\phi_t^{\mathrm{e}}$ follows from the standard argument:
for any $a_0\in C_c^{\infty}(T^*\T^2)$, using the quasimode equation and the symbolic calculus, we have
$$ 0=\lim_{h\rightarrow 0}\lg\frac{i}{h}[-h^2\Delta_G,\Op_h^{\w}(a_0)]\psi_h,\psi_h\rg_{L^2}=\int_{T^*\T^2}\{\xi^2+V(x)\eta^2,a_0\}d\mu_0. 
$$

\noi
$\bullet$ {\bf  The subelliptic part of the measure.} Set $\psi_{h,\mathrm{s}}=\chi_{\delta}(x)\Upsilon_h^R\psi_h$. Recall from \eqref{e:localized_x_quasimode} that
$$ (P_{h,b}-\zeta_{h})\psi_{h,\mathrm{s}}=o_{L^2}(h^2\delta_h).
$$
We denote by $\mu_1,M_2,\ov{\mu}_1,\ov{\mu}_2$ semiclassical measures in Proposition \ref{p:semiclassical_measures}, applied to a subsequence of $\psi_{h,\mathrm{s}}$ (still denoted by $\psi_{h,\mathrm{s}}$).

Note that by Proposition \ref{p:semiclassical_measures},
\begin{align}\label{e:support_M_2_0}
	\mathrm{Tr}M_2(y,\eta)=\ov{\mu}_2(y,\eta).
\end{align}
Then, taking an orthonormal basis of $L^2(\R_x)$ given by eigenfunctions $\{\varphi_k(\eta,x), \vert \eta\vert (2k+1)\}_{k \in \mathbb{N}_0}$ of the operator $ -\partial_x^2 + x^2\eta^2$, we obtain
\begin{align}
	\int_{\T_y \times (\R_\eta \setminus \{0 \})}  \overline{\mu}_2(dy,d\eta) & =  \sum_{k \in \mathbb{N}_0} \int_{\T_y \times (\R_\eta \setminus \{0 \})}  \Big \langle M_2(dy,d\eta) \varphi_{k}(\eta,\cdot) , \varphi_{k}(\eta,\cdot) \Big \rangle_{L^2(\R_x)} \\[0.2cm]
	\label{e:trace_support}
	& =  \int_{\T_y \times (\R_\eta \setminus \{0 \})} \operatorname{Tr} m_2(y,\eta) \nu_2(dy,d\eta),
\end{align}
where $m_2(y,\eta) \in L^1(d\nu_2;\mathcal{L}^1(L^2(\R_x)))$ is the Radon-Nikodym derivative of $M_2$ so that $M_2 = m_2 \nu_2$ (see \cite[Proposition A.1]{G91}). 
Next, we are going to show some support properties of the measures:
\begin{lemma}\label{supportmeasure} 
	\begin{align}
		\mathrm{(i)} \hspace{0.3cm}	\label{e:support_mu_1}
		& \operatorname{supp} \mu_1 \subset \{ (\sigma,y,\xi,\omega) \in \R_\sigma \times \mathbb{R}_y  \times \R_\xi \times \mathbb{S}_\omega^0 \, : \, \sigma^2 + \xi^2 = 1, \; b(0,y) = 0 \}; \\
		\mathrm{(ii)} 
		\hspace{0.3cm}
		\label{e:support_M_2}
		& \operatorname{supp} M_2 \subset \{ (y,\eta) \in \mathbb{R}_y \times (\R_\eta \setminus \{0 \}) \, : \, b(0,y) = 0,\; |\eta|\leq C_1\};\\
		\mathrm{(iii)}\hspace{0.3cm}
		&  \mathrm{Tr}_{L^2(\R_x)}\big((D_x^2+\eta^2x^2)m_2(y,\eta)\big) \in L^1(d\nu_2), \text{ and }\notag\\
		& \int_{\T_y\times\R_{\eta}\setminus\{0\}}\mathrm{Tr}_{L^2(\R_x)}\big((D_x^2+\eta^2x^2)m_2(y,\eta)\big)\nu_2(dy,d\eta)=\int_{\T_y\times\R_{\eta}\setminus\{0\}} \ov{\mu}_2(dy,d\eta). \label{Tracemass}
	\end{align} 
\end{lemma}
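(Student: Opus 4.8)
Since $\psi_{h,\mathrm{s}}=\chi_\delta(x)\Upsilon_h^R\psi_h$ is supported in a fixed neighbourhood of $\{x=0\}$, I would prove all three items by combining the three a priori facts already at hand: the damping bound $\|b^{1/2}\psi_{h,\mathrm{s}}\|_{L^2}=O(\beta_h^{1/2})=o(1)$ from \eqref{dampedregionweak}, the oscillation $(-h^2\Delta_{G_0}-1)\psi_{h,\mathrm{s}}=o_{L^2}(1)$ from Lemma \ref{oscillating}, and the subelliptic bound $\|h^2|D_y|\psi_h\|_{L^2}=O(1)$ from Lemma \ref{subellipticapriori}, with the two‑microlocal machinery of Proposition \ref{p:semiclassical_measures}. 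Since $D_y$ commutes with $\Delta_{G_0}$, the operator‑regime piece $(1-\chi_\epsilon(h^2D_y))\psi_{h,\mathrm{s}}$ still satisfies $(-h^2\Delta_{G_0}-1)(1-\chi_\epsilon(h^2D_y))\psi_{h,\mathrm{s}}=o_{L^2}(1)$, and $\|b^{1/2}(1-\chi_\epsilon(h^2D_y))\psi_{h,\mathrm{s}}\|_{L^2}=o(1)$ because $[b^{1/2},\chi_\epsilon(h^2D_y)]=O_{\mathcal{L}(L^2)}(h^2/\epsilon)$; here one uses that \eqref{e:B-H_condition} with $\sigma<\tfrac14$ forces $b^{1/2}$ to have bounded first and second derivatives.

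\textbf{Item (i).} The constraint $\sigma^2+\xi^2=1$ comes from the oscillation property. On $\operatorname{supp}\mathbf{a}_{h,\epsilon,R}$ (see \eqref{bfa1}) the full symbol $h^2\xi^2+h^2x^2\eta^2$ of $-h^2\Delta_{G_0}$ equals $(h\xi)^2+(hx|\eta|)^2$, i.e.\ it becomes $\sigma^2+\xi^2$ in the measure variables; composing $\Op_1^\w(\mathbf{a}_{h,\epsilon,R})$ with $(-h^2\Delta_{G_0}-1)$ in the subcritical class $\mathbf{S}_{1,1;1^-,0}^0$ (the composition remainders are $O(\epsilon)$ for symbols with fixed $\sigma$‑support, hence negligible in the hierarchy $h\ll\epsilon\ll R^{-1}$) and pairing with $\psi_{h,\mathrm{s}}$ gives $\int (\sigma^2+\xi^2-1)\,a\,d\mu_1=0$ for every $a\in\mathbf{S}_c^0$, whence $\operatorname{supp}\mu_1\subset\{\sigma^2+\xi^2=1\}$. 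For the constraint $b(0,y)=0$, fix $y_*$ with $b(0,y_*)=2c_0>0$ and choose $r>0$ with $b\geq c_0$ on $\{|x|\leq r\}\times\{|y-y_*|\leq r\}$. Because $\mu_1$ is now supported in $\{|\sigma|\leq1\}$, for any admissible $a\geq0$ supported near $y_*$ one has $|hx|\eta||\lesssim1$ together with $|h\eta|\gtrsim R$ on $\operatorname{supp}\mathbf{a}_{h,\epsilon,R}$, so $|x|\lesssim R^{-1}<r$ for $R$ large, and thus $b\geq c_0$ there. The sharp G\r{a}rding inequality (Corollary \ref{Garding}) applied to $(c_0^{-1}b-1)\mathbf{a}_{h,\epsilon,R}\geq0$, combined with $\Op_1^\w(b\,\mathbf{a}_{h,\epsilon,R})=b\,\Op_1^\w(\mathbf{a}_{h,\epsilon,R})+O_{\mathcal{L}(L^2)}(h)$ and
\[
\langle b\,\Op_1^\w(\mathbf{a}_{h,\epsilon,R})\psi_{h,\mathrm{s}},\psi_{h,\mathrm{s}}\rangle=\langle b^{1/2}\Op_1^\w(\mathbf{a}_{h,\epsilon,R})\psi_{h,\mathrm{s}},\,b^{1/2}\psi_{h,\mathrm{s}}\rangle=o(1)
\]
(using the $L^2$‑boundedness of Proposition \ref{L2boundedness} and $\|b^{1/2}\psi_{h,\mathrm{s}}\|_{L^2}=o(1)$), yields $\int a\,d\mu_1\leq0$; since $a,\mu_1\geq0$ this forces $\mu_1=0$ near $y_*$, i.e.\ $\operatorname{supp}\mu_1\subset\{b(0,y)=0\}$.

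\textbf{Items (ii) and (iii).} The bound $|\eta|\leq C_1$ in (ii) is immediate: by \eqref{e:support_M_2_0} $\operatorname{Tr}M_2=\ov{\mu}_2$, and $\ov{\mu}_2$ is supported in $\{|\eta|\leq C_1\}$ by Lemma \ref{subellipticapriori} (as already used at the end of the proof of Proposition \ref{p:semiclassical_measures}), so $\operatorname{supp}M_2\subset\operatorname{supp}\ov{\mu}_2$. The constraint $b(0,y)=0$ in (ii) is proved exactly as in (i), now with operator‑valued symbols: near $y_*$ with $b(0,y_*)>0$ one tests $M_2$ against $\phi(y)^2\chi_\Lambda(D_x^2+\eta^2x^2)$ (a non‑negative, $\eta$‑dependent, compact operator‑valued symbol), applies the Calder\'on–Vaillancourt theorem for operator‑valued symbols and the sharp G\r{a}rding inequality, invokes $\|b^{1/2}(1-\chi_\epsilon(h^2D_y))\psi_{h,\mathrm{s}}\|_{L^2}=o(1)$ and the fact that the support in the original $x$ variable is forced to $|x|\lesssim h\sqrt\Lambda/\epsilon\to0$ by the cutoff $\chi_\Lambda(D_x^2+\eta^2x^2)$, to obtain $\operatorname{Tr}\!\big[\int\phi^2\chi_\Lambda(D_x^2+\eta^2x^2)M_2\big]=0$; letting $\Lambda\to\infty$, so that $\chi_\Lambda(D_x^2+\eta^2x^2)\nearrow\operatorname{Id}$, gives $\phi^2\operatorname{Tr}M_2=0$, hence $M_2=0$ near $y_*$. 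For (iii), I would run the oscillating argument of Step~3 of the proof of Proposition \ref{p:semiclassical_measures} (cf.\ \eqref{alambda2}) with the extra factor $\sigma^2+\xi^2$: with $H(\eta):=D_x^2+\eta^2x^2$, the symbol $(\sigma^2+\xi^2-1)\chi_\Lambda^2(\sigma^2+\xi^2-1)\widetilde a(y,\eta)$ lies in $\mathbf{S}_c^0$ and quantizes to $(H-1)\chi_\Lambda^2(H-1)$ (up to lower order), and its pairing with $(1-\chi_\epsilon(h^2D_y))\psi_{h,\mathrm{s}}$ is $o(1)$ as $h\to0$ because we may move one factor $(-h^2\Delta_{G_0}-1)$ onto the quasimode (the commutator being controlled since on $\operatorname{supp}\chi_\Lambda(H-1)$ one has $\eta^2x^2\leq\Lambda+1$ and $|\eta|\gtrsim\epsilon$). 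Passing to the triple limit gives $\operatorname{Tr}\!\big[\int\widetilde a_\infty (H-1)\chi_\Lambda^2(H-1)\,M_2\big]=0$, hence $\operatorname{Tr}\!\big[\int\widetilde a_\infty\,H\chi_\Lambda^2(H-1)\,M_2\big]=\operatorname{Tr}\!\big[\int\widetilde a_\infty\,\chi_\Lambda^2(H-1)\,M_2\big]$; taking $\widetilde a_\infty\equiv1$, using $\chi_\Lambda^2(H-1)\nearrow\operatorname{Id}$, $H\chi_\Lambda^2(H-1)\nearrow H$, the monotone convergence theorem for traces of positive operators, and $\operatorname{Tr}M_2=\ov{\mu}_2$ of finite total mass, yields both $\operatorname{Tr}\!\big((D_x^2+\eta^2x^2)m_2\big)\in L^1(d\nu_2)$ and $\int\operatorname{Tr}\!\big((D_x^2+\eta^2x^2)m_2\big)\nu_2=\int\ov{\mu}_2$, which is \eqref{Tracemass}.

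\textbf{Main obstacle.} The delicate point is the operator‑valued pseudodifferential calculus in the borderline operator regime $|D_y|\sim h^{-2}$: one must make rigorous the composition and commutator estimates for $\Op_{h^2}^{\w,(y,\eta)}$‑quantizations of symbols such as $\chi_\Lambda(H(\eta))$, whose $\eta$‑derivatives involve the unbounded multiplication by $x^2$ (tamed only after the spectral cutoff and using $|\eta|\gtrsim\epsilon$), and to control the critical‑uncertainty horizontal $(x,\xi)$‑calculus in this regime — which is precisely what the two‑microlocal splitting with parameters $h\ll\epsilon\ll R^{-1}$, $Rh<\epsilon$ was designed to handle. Carefully tracking these parameters through the successive limits, so that all remainders vanish in the right order, is the bookkeeping heart of the argument.
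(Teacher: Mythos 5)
Your proposal is correct and relies on the same three ingredients as the paper's proof — the oscillating property from Lemma~\ref{oscillating}, the bound $\|b^{1/2}\psi_{h,\mathrm{s}}\|_{L^2}=o(1)$ from \eqref{dampedregionweak}, and the concentration $|x|\lesssim R^{-1/2}$ from \eqref{claim} — and your treatment of (iii) (oscillation argument with the extra factor $\sigma^2+\xi^2$, monotone convergence as $\lambda\to\infty$) is essentially identical to the paper's. However, for the constraint $b(0,y)=0$ you take a genuinely different route. The paper tests formula~\eqref{measureformula3} directly against the symbol $a(y)=b(0,y)$ (which depends on $y$ only, hence is admissible), Taylor-expands $b(x,y)=b(0,y)+O(x)$ on the concentrated region $|x|\lesssim R^{-1/2}$, and concludes from \eqref{dampedregionweak} that $b(0,y)\ov{\mu}_1=b(0,y)\operatorname{Tr}M_2=0$ in a single stroke — no positivity argument is needed beyond the non-negativity of the limiting measures. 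You instead localize near a point $y_*$ with $b(0,y_*)>0$ and run a sharp-G\r{a}rding argument with the symbol $(c_0^{-1}b-1)\mathbf{a}_{h,\epsilon,R}$, done separately for the scalar measure and for the operator-valued one (the latter with the additional spectral cutoff $\chi_\Lambda(D_x^2+\eta^2x^2)$). This works, but it is heavier: one must verify that the product symbol involving $b$ has enough regularity to enter Corollary~\ref{Garding} with a uniform remainder, and the operator-valued G\r{a}rding step is delicate in exactly the way you flag. The direct test-function argument of the paper bypasses all of that, and also treats $\mu_1$ and $M_2$ simultaneously, which is why it is the shorter route. Similarly, your derivation of $\sigma^2+\xi^2=1$ via composition of $\Op_1^\w(\mathbf{a}_{h,\epsilon,R})$ with $(-h^2\Delta_{G_0}-1)$ is a valid alternative to the paper's spectral-cutoff argument (using $\chi_\lambda(-h^2\Delta_{G_0}-1)$), but it requires justifying the composition remainder via the symbolic calculus of Appendix~\ref{AppendixA}, whereas the spectral-cutoff phrasing reads off immediately from the oscillating property.
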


\begin{proof}
			We first show that $\mu_1$ is supported on $\sigma^2+\xi^2=1$. This is a consequence of the $-h^2\Delta_{G_0}$-oscillating property of the quasimode $\psi_{h}$ as well as $\psi_{h,\mathrm{s}}$. By Lemma \ref{oscillating}, for any $\lambda>0$, we deduce that
			\begin{align}\label{mu1ellipticpart}  \lim_{R\rightarrow\infty}\lim_{\epsilon\rightarrow 0}\lim_{h\rightarrow 0}\big\langle\big(1-\chi_{\lambda}(-h^2\Delta_{G_0}-1)\big)\chi_{\epsilon}(h^2D_y)\psi_{h,\mathrm{s}},\psi_{h,\mathrm{s}}\big\rangle=0,
			\end{align}
			since $-h^2\Delta_{G_0}-1$ is elliptic on the support of $(1-\chi_{\lambda}(-h^2\Delta_{G_0}-1))$. On the other hand, by definition of $\mu_1$, for any $\lambda>0$,
			$$ \lim_{R\rightarrow\infty}\lim_{\epsilon\rightarrow 0}\lim_{h\rightarrow 0}\big\langle
			\chi_{\lambda}(-h^2\Delta_{G_0}-1)\chi_{\epsilon}(h^2D_y)\psi_{h,\mathrm{s}},\psi_{h,\mathrm{s}}
			\big\rangle_{L^2}=\big\langle \chi_{\lambda}(\sigma^2+\xi^2-1),
			\mu_1\big\rangle.
			$$
			Adding the two equalities above and letting $\lambda\rightarrow 0$, we deduce that $\mathrm{supp} \, \mu_1\subset\{\sigma^2+\xi^2=1\}$.
			
			The proof for the operator-valued measure $M_2$ is similar. We have for any $\epsilon>0, R>0, \lambda>0$,
			\begin{align}\label{ellipticity}   \lim_{h\rightarrow 0}\big\langle\big(1-\chi_{\lambda}(-h^2\Delta_{G_0}-1)\big)\big(1-\chi_{\epsilon}(h^2D_y)\big)\psi_{h,\mathrm{s}},\psi_{h,\mathrm{s}}\big\rangle_{L^2}=0.
			\end{align}		
			By rescaling $\Psi_h(x,y)=h^{\frac{1}{2}}\psi_{h,\mathrm{s}}(hx,y)$, we have	
			\begin{align*}  
				\lim_{R\rightarrow\infty}\lim_{\epsilon\rightarrow 0}\lim_{h\rightarrow 0}&
				\big\langle\chi_{\lambda}(-h^2\Delta_{G_0}-1)(1-\chi_{\epsilon}(h^2D_y))(-h^2\Delta_{G_0})\psi_{h,\mathrm{s}},\psi_{h,\mathrm{s}}
				\big\rangle_{L^2} \\	
				=&\lim_{R\rightarrow\infty}\lim_{\epsilon\rightarrow 0}\lim_{h\rightarrow 0}	\big\langle\mathrm{Op}_1^{\w,(y,\eta)}\big(\chi_{\lambda}(D_x^2+x^2h^4\eta^2-\mathrm{Id})(D_x^2+x^2h^4\eta^2)(1-\chi_{\epsilon}(h^2\eta) )\big)\Psi_h,\Psi_h 
				\big\rangle_{L^2}\\
				=&\mathrm{Tr}_{L^2(\R_x)}\Big[\int_{\T_y\times\R_{\eta}\setminus\{0 \}}
				\chi_{\lambda}(D_x^2+x^2\eta^2-1)(D_x^2+x^2\eta^2)M_2(dy,d\eta)
				\Big].
			\end{align*}
			Replacing $-h^2\Delta_{G_0}\psi_{h,\mathrm{s}}$ by $\psi_{h,\mathrm{s}}+o_{L^2}(1)$ on the left hand side of the equality above and adding  \eqref{ellipticity}, we deduce that for any $\lambda>0$,
			\begin{align*}  
				\lim_{R\rightarrow\infty}\lim_{\epsilon\rightarrow 0}\lim_{h\rightarrow 0}&
				\big\langle(1-\chi_{\epsilon}(h^2D_y))\psi_{h,\mathrm{s}},\psi_{h,\mathrm{s}}
				\big\rangle_{L^2} =\int_{T_y\times\R_{\eta}\setminus\{0\}}\ov{\mu}_2(dy,d\eta) \\
				=&\mathrm{Tr}_{L^2(\R_x)}\Big[\int_{\T_y\times\R_{\eta}\setminus\{0 \}}
				\chi_{\lambda}(D_x^2+x^2\eta^2-1)(D_x^2+x^2\eta^2)M_2(dy,d\eta)
				\Big].
			\end{align*}
			Letting $\lambda\rightarrow \infty$ on the right hand side, we obtain \eqref{Tracemass}. The fact that $|\eta|\leq C_1$ on supp$M_2$ is a direct consequence of the definition of the quasimode $\psi_{h,\mathrm{s}}$ and	Lemma \ref{subellipticapriori}.	
			
			Next we show that the measures $\mu_1,M_2$ do not see the support of $b(0,y)$.	Recall the definition of $\psi_{h,\mathrm{s}}:=\chi_{\delta}(x)\Upsilon_h^R\psi_h$, the formula \eqref{measureformula3} still holds for any symbol $a(y)$ depending only on $y$ variable. In particular,
			\begin{align}\label{limitb0y} \lim_{R\rightarrow\infty}\lim_{h\rightarrow 0}\langle b(0,y)\psi_{h,\mathrm{s}},\psi_{h,\mathrm{s}}\rangle_{L^2}=\int_{\T_y\times\mathbb{S}_{\omega}^0}b(0,y)\ov{\mu}_1(dy,d\omega)+\int_{\T_y\times (\R_{\eta}\setminus\{0\})}b(0,y)\mathrm{Tr}(M_2)(dy,d\eta).
			\end{align}
			Therefore, it suffices to show that the left hand side vanishes. By Taylor expansion $b(x,y)=b(0,y)+O(x)$ and recalling the estimate
			\eqref{claim},  we obtain that
			$$ \langle b\psi_{h,\mathrm{s}},\psi_{h,\mathrm{s}}\rangle_{L^2}=\langle b(0,y)\psi_{h,\mathrm{s}},\psi_{h,\mathrm{s}}\rangle_{L^2}+O(R^{-\frac{1}{2}})+O(h^{\infty}).
			$$
			Taking the limit $h\rightarrow 0$ first and then $R\rightarrow\infty$, the right hand side converges to the same limit \eqref{limitb0y}. In view of \eqref{dampedregionweak}, we deduce that $b(0,y)\ov{\mu}_1=b(0,y)\mathrm{Tr}(M_2)=0$. 
			This completes the proof of Lemma \ref{supportmeasure}.	
\end{proof}
\noi
$\bullet$ {\bf Propagation of semiclassical measures.}
By Lemma \ref{supportmeasure}, we see that $\mu_1$ or $M_2$ do not vanish identically. We aim at finding a contradiction to this fact, showing that $\mu_1 = 0$ and $M_2 = 0$.

We proceed in several steps. 
	{Let $a\in \mathbf{S}_c^0(T^*M_0\times\R)$ as in Definition \ref{s:two-microlocal_measures}. We set three different symbols:
		\begin{align}
			\label{e:a}
			\mathbf{a}_R^h(x,y,\xi,\eta) & := (1-\chi_R(h\eta)) a(hx \vert \eta \vert,y,h\xi,h\eta,h^2\eta), \\[0.2cm]
			\mathbf{b}_R^h(x,y,\xi,\eta) & := \frac{(1 - \chi_R(h\eta))}{h\vert \eta \vert} a(x h\vert\eta\vert, y, h\xi, h\eta, h^2\eta), \\[0.2cm]
			\label{e:c}
			\mathbf{c}_R^h(x,y,\xi,\eta) & := h\vert \eta \vert (1 - \chi_R(h\eta)) a(hx\vert \eta \vert,y,h\xi,h\eta,h^2\eta).
		\end{align}
The reason to consider multiplication by powers of $h \vert \eta \vert$ is to renormalize the Wigner equation at different scales for which we will obtain different propagation or invariance equations for the semiclassical measures. Since our subelliptic semiclassical parameter $h \vert \eta \vert$ depends on the variable $\eta$, it is natural that our scalings of the Wigner measure depend on this parameter as well.

First, we show that the measures are invariant along the horizontal flow:
\begin{lemma}[Measure-invariance along the horizontal flow]\label{horizontal} 
	For $a\in\mathbf{S}_c^0(T^*M_0\times\R)$, we have
	\begin{align}\label{horizontalinvarianemu1} 
		&  \int_{\R_\sigma \times \mathbb{T}_y \times \R_\xi \times S_\omega^0}  \big( 2\xi \partial_\sigma - 2 \sigma \partial_\xi \big) a(\sigma,y,\xi,\omega,0)  \mu_1(d\sigma, dy, d\xi, d\omega)  = 0,
	\end{align}
	and
	\begin{align}\label{horizontalinvarianceM2} 
		& \operatorname{Tr} \int_{\mathbb{T}_y  \times (\R_\eta\setminus \{0 \})}   \left[ D_x^2 + \eta^2 x^2, a_{\infty}\left( x\vert \eta \vert,y,D_x,\frac{\eta}{\vert \eta \vert},\eta \right) \right]_{L^2(\R_x)} M_2(dy, d \eta)   = 0.
	\end{align}
\end{lemma}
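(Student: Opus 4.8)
Here is the plan.

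\medskip

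\noindent\textbf{Reduction to a commutator identity.} The plan is to run, separately in the two regimes isolated by Proposition \ref{p:semiclassical_measures}, the classical commutator argument for semiclassical measures, testing the quasimode equation against suitably renormalized pseudodifferential operators. We work with $\psi_{h,\mathrm{s}}$, which by \eqref{e:localized_x_quasimode} solves $(P_{h,b}-\zeta_h)\psi_{h,\mathrm{s}}=o_{L^2}(h^2\delta_h)$, is supported in a fixed compact subset of $M_0$, is $(-h^2\Delta_{G_0})$-oscillating by Lemma \ref{oscillating}, and satisfies $\Vert b^{1/2}\psi_{h,\mathrm{s}}\Vert_{L^2}=O(\beta_h^{1/2})=O(h^{1/2})$ by \eqref{dampedregionweak}. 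Writing $-h^2\Delta_G=-h^2\Delta_{G_0}+Q$ with $Q:=(V(x)-x^2)h^2D_y^2$ selfadjoint, the quasimode equation gives $-h^2\Delta_{G_0}\psi_{h,\mathrm{s}}=\psi_{h,\mathrm{s}}+ih\beta_h\psi_{h,\mathrm{s}}-Q\psi_{h,\mathrm{s}}-ihb\psi_{h,\mathrm{s}}+o_{L^2}(h^2\delta_h)$, so that for every bounded operator $A$,
\begin{align*}
\big\langle[-h^2\Delta_{G_0},A]\psi_{h,\mathrm{s}},\psi_{h,\mathrm{s}}\big\rangle
&=-2ih\beta_h\langle A\psi_{h,\mathrm{s}},\psi_{h,\mathrm{s}}\rangle+\langle[A,Q]\psi_{h,\mathrm{s}},\psi_{h,\mathrm{s}}\rangle \\
&\quad +ih\big(\langle Ab\psi_{h,\mathrm{s}},\psi_{h,\mathrm{s}}\rangle+\langle bA^*\psi_{h,\mathrm{s}},\psi_{h,\mathrm{s}}\rangle\big)+o\big(h^2\delta_h\Vert A\Vert\big).
\end{align*}
Since $\beta_h=O(h)$, $\delta_h\le1$, and $\Vert Q\psi_{h,\mathrm{s}}\Vert_{L^2}=o(1)$ by \eqref{comparison1}, while the damping terms are $o(h)$ by Lemma \ref{apriori} together with $\Vert b^{1/2}\psi_{h,\mathrm{s}}\Vert=O(h^{1/2})$, the whole right-hand side divided by $h$ tends to $0$ in the relevant iterated limit. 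It remains to choose $A$ so that $\tfrac{i}{h}[-h^2\Delta_{G_0},A]$ reproduces the horizontal generator and to pass to the limit using Proposition \ref{p:semiclassical_measures}.

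\medskip

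\noindent\textbf{The scalar part \eqref{horizontalinvarianemu1}.} I take $A=\Op_1^{\w}(\mathbf{b}_{h,\epsilon,R})$ with $\mathbf{b}_R^h=(h|\eta|)^{-1}(1-\chi_R(h\eta))a(xh|\eta|,y,h\xi,h\eta,h^2\eta)$ as in \eqref{e:a}--\eqref{e:c}. The point of dividing by $h|\eta|$ is that, since the Weyl brackets with $h^2D_x^2$, $x^2$ and $D_y^2$ (quantizations of polynomials of degree $\le2$) are exact, a direct computation of $\tfrac{i}{h}[-h^2\Delta_{G_0},\Op_1^{\w}(\mathbf{b}_R^h)]$ produces, at leading order, $\Op_1^{\w}\big((1-\chi_R(h\eta))(2\xi\partial_\sigma-2\sigma\partial_\xi)a\big)$ evaluated in the rescaled variables $(\sigma,\xi)=(hx|\eta|,h\xi)$; all other contributions carry either a factor $(h|\eta|)^{-1}=O(R^{-1})$ through the $x$-prefactors (whose quadratic forms against $\psi_{h,\mathrm{s}}$ are controlled using the support of $a$ in the rescaled $x$-variable together with Proposition \ref{L2boundedness}, exactly as in Step 1 of Proposition \ref{p:semiclassical_measures}), or a power of $h/\epsilon$ from re-assembling compositions in the exotic class $\mathbf{S}_{1,1;1^-,0}^0$. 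Letting $h\to0$, then $\epsilon\to0$, then $R\to\infty$, the left-hand side vanishes by the identity above, while the right-hand side converges, by the definition of $\mu_1$ in Proposition \ref{p:semiclassical_measures}, to $\int(2\xi\partial_\sigma-2\sigma\partial_\xi)a\,d\mu_1$; this is \eqref{horizontalinvarianemu1}. The cutoff $\chi_\epsilon(h^2\eta)$ is precisely what keeps this $(x,\xi)$-calculus subcritical with respect to the uncertainty principle.

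\medskip

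\noindent\textbf{The operator-valued part \eqref{horizontalinvarianceM2}.} I pass to the rescaled functions $U_h(x,y)=h^{1/2}\psi_{h,\mathrm{s}}(hx,y)$, for which $-h^2\Delta_{G_0}$ conjugates exactly to $\Op_{h^2}^{\w,(y,\eta)}(H(\eta))$ with $H(\eta)=D_x^2+x^2\eta^2$ acting on $L^2(\R_x)$, and the quasimode equation becomes $(\Op_{h^2}^{\w,(y,\eta)}(H(\eta))-1)U_h=o_{L^2}(1)$ on the regime $|\eta|\sim h^{-2}$ (the corrections from $Q$ and from the damping being $o(1)$ once one uses that there $U_h$ is microlocalized in $|x|\lesssim1$). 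Commuting $\Op_{h^2}^{\w,(y,\eta)}(H(\eta))$ with $\Op_{h^2}^{\w,(y,\eta)}(B)$, $B=a_\infty^{\w}(x|\eta|,y,D_x,\tfrac{\eta}{|\eta|},\eta)$, the leading term of the operator-valued symbolic calculus is the \emph{fiberwise} commutator $\Op_{h^2}^{\w,(y,\eta)}\big([H(\eta),B(y,\eta)]_{L^2(\R_x)}\big)$, which is $O(1)$ rather than $O(h^2)$, since $H$ and $B$ need not commute pointwise in $(y,\eta)$; the genuinely semiclassical remainder is $O(h^2)$. Since $\Op_{h^2}^{\w,(y,\eta)}(H(\eta))$ is selfadjoint and sends $U_h$ to $U_h+o(1)$, and $\Op_{h^2}^{\w,(y,\eta)}(B)$ is uniformly bounded by the Calder\'on--Vaillancourt theorem for $\mathcal{L}(L^2(\R_x))$-valued symbols, we get $\langle[\Op_{h^2}^{\w}(H),\Op_{h^2}^{\w}(B)]U_h,U_h\rangle=o(1)$. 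Passing to the limit and using the construction of $M_2$ in Lemma \ref{existenceoperator-valued} applied to the admissible operator-valued symbol $[H(\eta),B(y,\eta)]$ — admissible thanks to the compact support of $a_\infty$ in $\sigma$ and $\xi$, which caps the $D_x$'s produced by $[D_x^2,\cdot]$ — yields $\operatorname{Tr}\int_{\T_y\times(\R_\eta\setminus\{0\})}[H(\eta),B(y,\eta)]M_2(dy,d\eta)=0$, that is \eqref{horizontalinvarianceM2}.

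\medskip

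\noindent\textbf{Main obstacle.} The only real difficulty is the symbolic-calculus bookkeeping: the effective semiclassical parameter governing the horizontal variables, $(h|\eta|)^{-1}$, depends on $\eta$, the test symbols live in the critical exotic class $\mathbf{S}_{1,1;1^-,0}^0$ (so standard semiclassical calculus cannot be applied directly, and one must combine Proposition \ref{L2boundedness} and Corollary \ref{Garding} with the precise support properties of $a$ and of $\psi_{h,\mathrm{s}}$), and every error term has to be shown to vanish in the prescribed order $h\to0$, $\epsilon\to0$, $R\to\infty$ subject to $h\ll\epsilon\ll R^{-1}$, $Rh<\epsilon$; for the operator-valued part one additionally needs the diagonal finite-rank approximation of Lemma \ref{existenceoperator-valued} to reduce to scalar symbols before passing to the limit.
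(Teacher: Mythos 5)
Your strategy tracks the paper's closely: the same test symbols $\mathbf{b}_{h,\epsilon,R}$ (with the $(h\vert\eta\vert)^{-1}$ normalization) and $h\mathbf{a}_{h,R}^\epsilon$, the same iterated limit $h\to0$, $\epsilon\to0$, $R\to\infty$, and the passage to $\mu_1$, $M_2$ via Proposition \ref{p:semiclassical_measures}. Your operator-valued part is correct and somewhat more streamlined than the paper's, since it gets $\langle[\Op_{h^2}^\w(H),\Op_{h^2}^\w(B)]U_h,U_h\rangle=o(1)$ directly from $\Op_{h^2}^\w(H)U_h=U_h+o_{L^2}(1)$ without routing through \eqref{commutatorwithb}. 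The scalar part, however, has a genuine gap.

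You split $-h^2\Delta_G=-h^2\Delta_{G_0}+Q$ with $Q=(V(x)-x^2)h^2D_y^2$, move $Q$ to the right-hand side of the commutator identity, and discard $\tfrac{1}{h}\langle[A,Q]\psi_{h,\mathrm{s}},\psi_{h,\mathrm{s}}\rangle$ on the grounds that $\Vert Q\psi_{h,\mathrm{s}}\Vert_{L^2}=o(1)$ by \eqref{comparison1}. This does not follow: by Cauchy--Schwarz one only gets $\tfrac{1}{h}\langle[A,Q]\psi_{h,\mathrm{s}},\psi_{h,\mathrm{s}}\rangle=O\big(\Vert A\Vert\,\Vert Q\psi_{h,\mathrm{s}}\Vert/h\big)$, and the proof of Lemma \ref{oscillating} gives $\Vert Q\psi_{h,\mathrm{s}}\Vert=O(R^{-1/2})$ uniformly in $h$, so this bound diverges as $h\to0$ for each fixed $\epsilon,R$. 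The term does vanish in the iterated limit, but proving so requires the \emph{operator} estimate $[A,Q]=O_{\mathcal{L}(L^2)}(h/R)$, which uses simultaneously the $(h\vert\eta\vert)^{-1}$ prefactor in $\mathbf{b}_{h,\epsilon,R}$ (forcing $\vert x\vert\lesssim(h\vert\eta\vert)^{-1}\lesssim R^{-1}$ on the effective support, whence $V(x)-x^2=O(R^{-3})$ and $V'(x)-2x=O(R^{-2})$), the decay of $\partial_\xi\mathbf{b}_{h,\epsilon,R}$, and the exotic Moyal calculus of Appendix \ref{AppendixA}; none of this is a consequence of $\Vert Q\psi_{h,\mathrm{s}}\Vert=o(1)$. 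The paper sidesteps the whole issue by computing $[-h^2\Delta_G,\Op_1^\w(q)]$ directly via Lemma \ref{commutatorformula}, where the $V_1=V-x^2$ contributions appear explicitly as subprincipal terms $V_1'(x)$, $V_1'''(x)$ applied to $\xi$-derivatives of $q$, each carrying enough smallness from the support and derivative bounds, with the remainders absorbed through Proposition \ref{L2boundedness}. If you want to keep the $\Delta_{G_0}$ reduction, you must replace the one-line appeal to $\Vert Q\psi_{h,\mathrm{s}}\Vert=o(1)$ by that commutator estimate. A related but smaller issue: the Weyl bracket with $x^2\eta^2$ is \emph{not} exact (degree $4$, not $\le2$); the subprincipal correction $\frac{h^2}{2i}\Op_1^\w(x\partial_y^2\partial_\xi q)$ in Lemma \ref{commutatorformula} is exactly this correction, and your phrase "all other contributions carry factors" should be checked to cover it as well.
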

Consequently, $\mu_1(\sigma,y,\xi,\omega)$ is invariant by the harmonic-oscillator flow 
\begin{align}\label{flowhorizontal} 
	\vartheta_t(\sigma,y,\xi,\omega) = (\sigma \cos(t) + \xi \sin(t),y,  -\xi \cos(t) + \sigma \sin(t),\omega),
\end{align}
and $M_2(y,\eta)$ is invariant by the quantum flow $\mathrm{e}^{it (- \partial^2_x + \eta^2 x^2)}$. In particular, for each $\eta\neq 0$, we can choose an orthonormal basis $(\varphi_k(\eta,x))_{k\in\N}$ is of $L^2(\R_x)$ that diagonalizes $D_x^2+\eta^2x^2$ and $M_2(dy,d\eta)$ at the same time. 
\begin{proof}[Proof of Lemma \ref{horizontal}]
	We will take
	$$ \mathbf{b}_{h,\epsilon,R}(x,y,\xi,\eta)=\chi_{\epsilon}(h^2\eta)\mathbf{b}_R^h(x,y,\xi,\eta)=\chi_{\epsilon}(h^2\eta)(1-\chi_R(h\eta))a(hx|\eta|,y,h\xi,h\eta,h^2\eta)
	$$
	and
	$$ h\mathbf{a}_{h,R}^{\epsilon}=h(1-\chi_{\epsilon}(h^2\eta))(1-\chi_R(h\eta))a(hx|\eta|,y,h\xi,h\eta,h^2\eta) 
	$$
	as test symbols for the scalar-valued measure and operator-valued measure, with respectively.
	
Applying the quasimode equation \eqref{e:quasimode_equation}, we obtain that, for any symbol $\mathbf{b}$, 
			\begin{align}\label{commutatorwithb} 
				\frac{i}{h}\big\langle[-h^2\Delta_G,\mathrm{Op}_1^{\w}(\mathbf{b})]\psi_{h,\mathrm{s}},\psi_{h,\mathrm{s}}
				\big\rangle_{L^2}=&-2\Re\big\langle\mathrm{Op}_1^{\w}(\mathbf{b})(b-\beta_h) \psi_{h,\mathrm{s}},\psi_{h,\mathrm{s}}
				\big\rangle_{L^2}\notag
				\\&-\frac{2}{h}\Im\langle\Op_1^{\w}(\mathbf{b})\psi_{h,\mathrm{s}},r_h\rangle_{L^2}.
			\end{align}
			We first claim that for any $\mathbf{b}\in\{\mathbf{b}_{h,\epsilon,R},h\mathbf{a}_{h,R}^{\epsilon} \}$  
			$$ \text{right hand side of \eqref{commutatorwithb}}=O(h^{\frac{1}{2}}),
			$$
			uniformly in $\epsilon,R$ as $h\rightarrow 0$.
			
			When $\mathbf{b}=\mathbf{b}_{h,\epsilon,R}$, $\mathrm{Op}_1^{\w}(\mathbf{b})$ is uniformly bounded on $L^2$ (Proposition \ref{L2boundedness}), hence the right hand side of \eqref{commutatorwithb} is bounded by $O(h^{\frac{1}{2}})+o(h\delta_h)$, thanks to \eqref{dampedregionweak} and the fact that $\beta_h=O(h)$. When $\mathbf{b}=h\mathbf{a}_{h,R}^{\epsilon}$, we need to work with the rescaled quasimode
			\begin{align}\label{scalingPsi} \Psi_h(x,y):=h^{\frac{1}{2}}\psi_{h,\mathrm{s}}(hx,y)
			\end{align}
			and write the right hand side of \eqref{commutatorwithb} as
			\begin{align*} -2h\Re\Big\langle \mathrm{Op}_1^{\w}(\mathbf{b}_{h,R}^{\epsilon}(hx,y,h^{-1}\xi,\eta))(&b(hx,y)-\beta_h)\Psi_h,\Psi_h
				\Big\rangle_{L^2}\\ -
				&2\Im\Big\langle\mathrm{Op}_1^{\w}(\mathbf{b}_{h,R}^{\epsilon}(hx,y,h^{-1}\xi,\eta))\Psi_h,h^{\frac{1}{2}}r_h(hx,y)\Big\rangle_{L^2}
				.
			\end{align*}
			By the $L^2$-boundedness of $\mathrm{Op}_1^{\w}(\mathbf{b}_{h,R}^{\epsilon}(hx,y,h^{-1}\xi,\eta))$ (as it belongs to the class $S^{0,0}$) and \eqref{commutatorwithb}, this error is still of order $O(h^{\frac{3}{2}})+o(h^2\delta_h)$.

	The information obtained in the last paragraph merely comes from the quasimode equation. To obtain the propagation of measure, we need compute the commutators on the left hand side of \eqref{commutatorwithb}  directly\footnote{By the periodic extension procedure described in Appendix \ref{AppendixA}, it suffices to compute the commutator for operators acting on $\mathcal{D}'(\R^2)$ instead of $\mathcal{D}'(M_0)$.  }.  To treat $[-h^2\Delta_G,\mathrm{Op}_1^{\w}(\mathbf{b}_{h,\epsilon,R})]$, we apply the symbolic calculus developed in Appendix \ref{AppendixA}.
	More precisely, applying Lemma \ref{commutatorformula} to $q(x,y,\xi,\eta)=\mathbf{b}_{h,\epsilon,R}(x,y,\xi,\eta)$, we get
	$$ [-h^2\Delta_G,\mathrm{Op}_1^\w(\mathbf{b}_{h,\epsilon,R})]=\frac{2h^2}{i}\mathrm{Op}_1^\w(\xi\partial_x\mathbf{b}_{h,\epsilon,R}-x\eta^2\partial_{\xi}\mathbf{b}_{h,\epsilon,R})+\mathcal{O}_{\mathcal{L}(L^2)}(h^2),
	$$
	where we have used Proposition \ref{L2boundedness} to control the operator norms of the remainders appearing in the formula of Lemma \ref{commutatorformula}. Note also that
	\begin{align*}
		h\xi\partial_{x}\mathbf{b}_{h,\epsilon,R}-xh\eta^2\partial_{\xi}\mathbf{b}_{h,\epsilon,R}(x,y,\xi,\eta)=\chi_{\epsilon}(h^2\eta)(1-\chi_R(h\eta))[\xi\partial_{\sigma}a-\sigma\partial_{\xi}a]|_{(hx|\eta|,y,h\xi,h\eta,h^2\eta)}.
	\end{align*}
	Therefore,
	\begin{align}\label{invariance1}
		\lim_{R \to \infty} \lim_{\epsilon \to 0} \lim_{h \to 0^+} \frac{i}{h} \big \langle [-h^2\Delta_G,\mathrm{Op}_1^\w(\mathbf{b}_{h,\epsilon,R})]\psi_{h,\mathrm{s}}, \psi_{h,\mathrm{s}} \big \rangle_{L^2} \notag\\[0.2cm]
		& \hspace*{-5cm} =  \int_{\R_\sigma \times \mathbb{T}_y \times \R_\xi \times \mathbb{S}_\omega^0}  \big( 2\xi \partial_\sigma - 2 \sigma \partial_\xi \big) a(\sigma,y,\xi,\omega,0)  \mu_1(d\sigma, dy, d\xi, d\omega).
	\end{align}
	Combining \eqref{commutatorwithb}, 
	this proves \eqref{horizontalinvarianemu1}.

To deal with the operator-valued measure, we will test it against $\mathbf{b}=h\mathbf{a}_{h,R}^{\epsilon}(x,y,\xi,\eta)$. Set
			$$ \mathbf{d}_{h,R}^{\epsilon}(x,y,\xi,h^2\eta):=\mathbf{a}^{\epsilon}_{h,R}(hx,y,\frac{\xi}{h},\eta)=(1-\chi_{\epsilon}(h^2\eta))a(h^2x|\eta|,y,\xi,h\eta,h^2\eta),
			$$
			
			where we omit the cutoff $1-\chi_R(h\eta)$ since $h\ll \epsilon/R$. By rescaling, we have 
			\begin{align*}
				\big \langle [-h^2 \Delta_G, \Op_1^\w( h\mathbf{a}_{h,R}^\epsilon) ] &\psi_{h,\mathrm{s}}, \psi_{h,\mathrm{s}} \big \rangle_{L^2}\\[0.2cm] =  &\Big \langle \Big[ - \partial_x^2 - \frac{V(hx)}{h^2} h^4 \partial_y^2,\quad \Op_1^{\w,(y,\eta)} \Op_{1}^{\w,(x,\xi)}  h\mathbf{a}_{h,R}^\epsilon( hx, y ,\frac{\xi}{h}, \eta) \Big] \Psi_h, \Psi_h  \Big \rangle_{L^2},
			\end{align*}
			where the rescaled quasimode $\Psi_h$ is given by \eqref{scalingPsi}. 
			Applying Lemma \ref{commutatorformula2} to $q(x,y,\xi,\eta)=\mathbf{d}^{\epsilon}_{h,R}(x,y,\xi,\eta)$, we obtain 
			\begin{align} 
				\big \langle [-h^2 \Delta_G, \Op_1( \mathbf{b}_{h,R}^\epsilon) ] \psi_{h,\mathrm{s}}, \psi_{h,\mathrm{s}} \big \rangle_{L^2}	 \notag 
				& \\[0.2cm]
				& \hspace*{-4cm} =  h \Big \langle  \Op_{1}^{\w, (y,\eta)} \Big[ - \partial_x^2 + x^2 (h^2\eta)^2, \Op_1^{\w,(x,\xi)} (\mathbf{d}_{h,R}^\epsilon(x, y,  \xi, h^2\eta)) \Big]_{L^2(\R_x)} \Psi_h, \Psi_h \Big \rangle_{L^2} \notag  \\[0.2cm]
				& \hspace*{-4cm} \quad + h \Big \langle \Op_{1}^{\w, (y,\eta)} \Big[  \Big( \frac{V(hx)}{h^2} - x^2 \Big) (h^2\eta)^2, \Op_1^{\w,(x,\xi)} (\mathbf{d}_{h,R}^\epsilon(x, y, \xi, h^2\eta)) \Big]_{L^2(\R_x)} \Psi_h,\Psi_h \Big \rangle_{L^2}\notag \\[0.2cm]
				& \hspace*{-4cm} \quad + \frac{h^3}{i} \left \langle   \Op_{1}^{\w, (y,\eta)} \Op_1^{\w,(x,\xi)} (2V(hx)\eta \partial_y \mathbf{d}_{h,R}^\epsilon(x,y, \xi, h^2\eta)) \Psi_h, \Psi_h \right \rangle_{L^2} + O(h^4). \label{propagationhorizontalcommutator}
			\end{align}
			Note that $V(hx)=h^2x^2+O(h^3x^3)$ and $\Psi_{h}=O_{L^2}(h^{\infty})$ when $|hx|\gg R^{-\frac{1}{2}}$ (see \eqref{claim1}), we have $V(hx)/h^2-x^2=\mathcal{O}(R^{-\frac{1}{2}}x^2)$. Since $h^4x^2\partial_y^2\Psi_h=O_{L^2}(1)$, we deduce that
			$$h \Big \langle \Op_{1}^{\w, (y,\eta)} \Big[  \Big( \frac{V(hx)}{h^2} - x^2 \Big) (h^2\eta)^2, \Op_1^{\w,(x,\xi)} (\mathbf{d}_{h,R}^\epsilon(x, y, \xi, h^2\eta)) \Big]_{L^2(\R_x)} \Psi_h,\Psi_h \Big \rangle_{L^2}=O(hR^{-\frac{1}{2}}).
			$$
			Dividing by $h$ in \eqref{propagationhorizontalcommutator},  
			we obtain
			\begin{align*}
				\lim_{R \to + \infty} \lim_{\epsilon \to 0^+}  \lim_{h \to 0^+} \frac{i}{h} \big \langle [-h^2\Delta_G,\Op_1^{\w}(\mathbf{b}_{h,R}^{\epsilon})] \psi_{h,\mathrm{s}}, \psi_{h,\mathrm{s}} \big \rangle_{L^2} & \\[0.2cm]
				& \hspace*{-8cm} \quad = i\operatorname{Tr} \int_{\mathbb{T}_y \times (\R_\eta \setminus \{ 0 \})} \left[ D_x^2 + \eta^2 x^2,  a_{\infty}\left( x\vert \eta \vert,y,D_x, \frac{\eta}{\vert \eta \vert},\eta\right) \right]_{L^2(\R_x)} M_2(dy,d \eta).
			\end{align*}
			Combining \eqref{commutatorwithb}, we get \eqref{horizontalinvarianceM2}, hence we complete the proof of Lemma \ref{horizontal}.
\end{proof}

It remains to study the propagation along the vertical direction.
\begin{lemma}\label{lemmaverticalpropagation}
	If $b\equiv 0$ and $\beta_h=o(h)$, then $\mu_1, M_2$ are invariant along the flow $\vartheta_t, \mathrm{e}^{it(x^2+D_x^2)}$, with respectively. If $\int_{\T}b(0,y)dy>0$, then $\mu_1=0, M_2=0$. 
\end{lemma}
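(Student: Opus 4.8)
The plan is to establish the Wigner equation satisfied by $\mu_1$ and $M_2$ at the \emph{second scale}, obtained by testing the commutator identity \eqref{commutatorwithb} against the renormalised symbol $\mathbf{c}_R^h=h|\eta|\,\mathbf{a}_R^h$ of \eqref{e:c} (and, for the operator-valued part, its $(x,\xi)$-rescaled counterpart, as in the proof of Lemma \ref{horizontal}). The first observation is that, by Lemma \ref{horizontal}, $\mu_1$ is invariant under the rotation flow $\vartheta_t$ of \eqref{flowhorizontal} on $\{\sigma^2+\xi^2=1\}$ and $M_2(y,\eta)$ commutes with $D_x^2+\eta^2x^2$; hence replacing any test symbol by its average over these flows changes neither the limit of $\langle\Op_1^{\w}(\mathbf{c}_R^h)\psi_{h,\mathrm{s}},\psi_{h,\mathrm{s}}\rangle_{L^2}$ nor the pairings of Proposition \ref{p:semiclassical_measures}. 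Thus I may assume throughout that the symbol $a$ is harmonic-oscillator invariant: $a(\sigma,y,\xi,\eta,\theta)$ depends on $(\sigma,\xi)$ only through $\sigma^2+\xi^2$, and $a_\infty(x|\eta|,y,D_x,\eta/|\eta|,\eta)$ commutes with $D_x^2+\eta^2x^2$; in particular $\{\sigma^2+\xi^2,a\}\equiv0$ and $a_\infty^{\w}$ is diagonal in the Hermite basis diagonalising $M_2(dy,d\eta)$.

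Next I would expand $\frac{i}{h}[-h^2\Delta_G,\Op_1^{\w}(\mathbf{c}_R^h)]$ by the symbolic calculus of Appendix \ref{AppendixA} (Lemmas \ref{commutatorformula} and \ref{commutatorformula2}), splitting $\mathbf{c}_R^h$ via $\chi_\epsilon(h^2\eta)$ exactly as in the proof of Proposition \ref{p:semiclassical_measures} and controlling the $O(1)$-size remainders through Proposition \ref{L2boundedness} and the class $\mathbf{S}^0_{1,1;1^-,0}$. One tracks three sizes: the term of size $(h|\eta|)^2$ equals $(h|\eta|)^2\{\sigma^2+\xi^2,a\}$ and vanishes identically by harmonic-oscillator invariance (for the operator part it is a commutator with $D_x^2+\eta^2x^2$, which annihilates a diagonal $M_2$); the terms of size $h|\eta|$ come from the Taylor correction $V'(x)-2x=O(x^2)$ and the second-order Moyal remainders, and after renormalisation each is a Poisson bracket $\{G(\sigma),a\}$ along the harmonic-oscillator direction (for the operator part, a commutator $[A,a_\infty^{\w}]$ with $a_\infty^{\w}$ diagonal), hence has vanishing harmonic-oscillator average and integrates to zero against $\mu_1$ and $M_2$; what survives, at size $O(1)$, is the vertical transport $2V(x)\eta\,\partial_y a\sim2\sigma^2\,\mathrm{sgn}(\eta)\,\partial_y a$. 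Averaging $\langle\sigma^2\rangle=\tfrac12$ over the energy circle (and $\langle x^2\rangle=\tfrac1{2\eta^2}(D_x^2+\eta^2x^2)$ over the quantum harmonic-oscillator flow, together with the normalisation of Lemma \ref{supportmeasure}(iii)) turns the surviving term into the generator of the vertical flow $\phi_t^{\mathrm{v}}$. Therefore, in the successive limits $h\to0$, $\epsilon\to0$, $R\to\infty$, and for every harmonic-oscillator invariant $a$,
\[
\int_{\T_y\times\mathbb{S}^0_\omega}\mathrm{sgn}(\omega)\,\partial_y\ov a(y,\omega)\,\ov\mu_1(dy,d\omega)+\operatorname{Tr}\int_{\T_y\times(\R_\eta\setminus\{0\})}\mathrm{sgn}(\eta)\,\partial_y a_\infty\,M_2(dy,d\eta)=\mathcal R,
\]
where, by \eqref{commutatorwithb}, $\mathcal R$ gathers $-2\Re\langle\Op_1^{\w}(\mathbf{c}_R^h)(b-\beta_h)\psi_{h,\mathrm{s}},\psi_{h,\mathrm{s}}\rangle$ and $-\tfrac2h\Im\langle\Op_1^{\w}(\mathbf{c}_R^h)\psi_{h,\mathrm{s}},r_h\rangle$; since $\Op_1^{\w}(\mathbf{c}_R^h)=h^{-1}\Op_1^{\w}(h^2|\eta|\mathbf{a}_R^h)$ with $h^2|\eta|\mathbf{a}_R^h$ of order zero, the estimates $\|b^{1/2}\psi_{h,\mathrm{s}}\|=O(\beta_h^{1/2})$ from \eqref{dampedregionweak}, $r_h=o_{L^2}(h^2\delta_h)$, and the last assertion of Lemma \ref{apriori} show that the $r_h$-term is negligible and that $\mathcal R$ reduces to the contribution of the damping term.

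To conclude: if $b\equiv0$ and $\beta_h=o(h)$, then $\mathcal R=o(1)$ and the displayed identity says exactly that $\ov\mu_1$ and $\ov\mu_2=\operatorname{Tr}M_2$ are invariant under $\phi_t^{\mathrm{v}}$, which together with Lemma \ref{horizontal} is the first assertion. If instead $\int_\T b(0,y)\,dy>0$, I would invoke the support property of Lemma \ref{supportmeasure}: the $\T_y$-projections of $\ov\mu_1$ and $\ov\mu_2$ are carried by $\{b(0,y)=0\}$, and the damping remainder in $\mathcal R$, being (modulo the negligible terms above) a pairing against $b(0,y)$ times these measures, vanishes on their support; hence on that support the transport equation is source-free, so the $\T_y$-projections are $\phi_t^{\mathrm{v}}$-invariant. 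But the vertical flow on $\T_y$ is a minimal rotation, so any finite invariant measure is a multiple of Lebesgue measure, which charges the non-empty open set $\{b(0,y)>0\}$; this contradicts the support property unless $\ov\mu_1=\ov\mu_2=0$, and then $\mu_1=0$ (a non-negative measure whose $(\sigma,\xi)$-disintegration has zero mass over $\ov\mu_1$-a.e.\ $(y,\omega)$) and $M_2=0$ (a non-negative trace-class-valued measure with $\operatorname{Tr}M_2=\ov\mu_2=0$).

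I expect the main difficulty to be the middle step: the symbolic calculus for $\mathbf{c}_R^h$ is critical with respect to the uncertainty principle in the $(x,\xi)$ variables, so one must carry out the $\chi_\epsilon(h^2\eta)$-splitting, quantify all $O(1)$-size remainders, and verify that after the triple limit the only surviving contribution is the vertical transport; in particular one must check that the $O(h|\eta|)$-size corrections coming from $V-x^2$, though not individually small, genuinely average out against the harmonic-oscillator-invariant measures, and that the damping remainder localises to $\{b(0,y)>0\}$, where the limiting measures put no mass.
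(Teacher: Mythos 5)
Your overall plan is the same as the paper's: test the commutator identity against the rescaled symbol $\mathbf{c}_R^h=h\vert\eta\vert\,\mathbf{a}_R^h$ (split by $\chi_\epsilon(h^2\eta)$), use the $\vartheta_t$-invariance of $\mu_1$ and the $\mathrm{e}^{it(D_x^2+\eta^2x^2)}$-invariance of $M_2$ established in Lemma~\ref{horizontal} to replace the Taylor-polynomial $V(x)$ and to average $x^2$ and $\sigma^2$ into the full oscillator, and recover the vertical transport term as the surviving $O(1)$ piece. The conclusion in the $b\equiv0$ case, and the passage from $\bar\mu_1=\bar\mu_2=0$ to $\mu_1=M_2=0$, are also as in the paper.

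However, there is a genuine gap in the step that makes the damping case work. You claim an \emph{identity}
$\int\operatorname{sgn}(\omega)\,\partial_y\ov{a}\,d\ov\mu_1+\operatorname{Tr}\int\operatorname{sgn}(\eta)\,\partial_y a_\infty\,dM_2=\mathcal R$
with $\mathcal R$ ``a pairing against $b(0,y)$ times these measures'' that you can then declare zero on the support. But the damping contribution
$-2\Re\langle\Op_1^{\w}(\mathbf{c}_{h,\epsilon,R})(b-\beta_h)\psi_{h,\mathrm{s}},\psi_{h,\mathrm{s}}\rangle$
does \emph{not} converge to such a pairing a priori: $\|\Op_1^{\w}(\mathbf{c}_{h,\epsilon,R})\|_{\mathcal{L}(L^2)}\lesssim\epsilon h^{-1}$ and the only decay available for $\|b\psi_{h,\mathrm{s}}\|$ is $O(\beta_h^{1/2})=O(h^{1/2})$, so the naive bound $O(\epsilon h^{-1/2})$ diverges. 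The paper circumvents this by restricting to $a\geq0$ depending only on $y$, factoring $h\vert\eta\vert a\cdots$ symmetrically via the sharp G\aa rding inequality as in \eqref{SharpGarding}, and keeping only the \emph{lower} bound $\geq -Ch\epsilon^{-1}-CR^{-1}$; this produces a one-sided transport inequality $\int\omega\sigma^2\partial_ya\,d\mu_1\leq 0$ (and its $M_2$-analogue \eqref{e:intermediate}), never an equality. Your proposal drops the sign restriction on $a$ and tacitly assumes the limit exists, which is exactly what fails.

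The route you sketch could be partially repaired: for test symbols $a$ compactly supported in the \emph{interior} of $\{b(0,\cdot)=0\}$ the G\aa rding main term $\langle ab\sqrt{\cdots}\psi,\sqrt{\cdots}\psi\rangle$ vanishes identically since $ab\equiv0$, so the transport is genuinely source-free there. But that gives invariance only under the restricted flow, not under $\phi_t^{\mathrm{v}}$ on all of $\T_y$; your appeal to ``any finite invariant measure on $\T_y$ is Lebesgue'' then does not apply directly, and boundary atoms on $\partial\{b(0,\cdot)=0\}$ are not controlled. The paper instead keeps the one-sided inequality for all $a\geq0$, uses the $\vartheta_t$-average to pass $\sigma^2\mapsto(\sigma^2+\xi^2)$ and the support constraint $\sigma^2+\xi^2=1$, writes the inequality for translates $a(\cdot+t\omega)$ to obtain a non-increasing and periodic (hence constant) function of $t$, then substitutes $a=b(0,\cdot)$ to conclude $\mu_1(\mathrm{total})\cdot\int_\T b(0,\cdot)=0$; the $M_2$ case is treated by the analogous Gronwall/exponential-weight computation after averaging $x^2\eta^2$ into $\tfrac12(D_x^2+x^2\eta^2)$ via Lemma~\ref{averagingLemma}. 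You should replace your identity-plus-localisation step by this inequality-plus-Gronwall mechanism; as written, the proposal cannot justify passing to the limit in the damping term.
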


\begin{proof}
	Consider $\mathbf{c}^h_R$ given by \eqref{e:c} with $a \geq 0$. More precisely, take $a = a(y)$, and we  consider  $$\mathbf{c}_{h,\epsilon,R}
	=h|\eta|\chi_{\epsilon}(h^2\eta)(1-\chi_R(h\eta))a(y)
	.$$
	Note that $\mathbf{c}_{h,\epsilon,R}$ is not uniformly bounded in $L^2$ (with bound $O(\epsilon h^{-1})$), we will however make use of the sign of the symbol.
From the symbolic calculus and the sharp  G$\mathring{\mathrm{a}}$rding inequality (the regularity of $b$ is sufficient to use it) applied to the $h$-pseudo-differential calculus in $(y,\eta)$ variables, we have
			\begin{align}\label{SharpGarding}
				\Re\big \langle \Op_1^\w \big( h\vert \eta \vert a(y) (1-\chi_R( h\eta)) \chi_\epsilon(h^2 \eta)\big)b \psi_{h,\mathrm{s}}, \psi_{h,\mathrm{s}} \big \rangle_{L^2}\notag & \\[0.2cm] 
& \hspace*{-6cm} = \Big\langle ab\sqrt{h|D_y|(1-\chi_R(hD_y))\chi_{\epsilon}(h^2D_y) }\psi_{h,\mathrm{s}},
				\sqrt{h|D_y|(1-\chi_R(hD_y))\chi_{\epsilon}(h^2D_y) }\psi_{h,\mathrm{s}}
				\Big\rangle_{L^2}\notag \\[0.2cm]
				 & \hspace*{-6cm} \quad - O(h\epsilon^{-1}) -O(R^{-1})\notag \\[0.2cm]
				 & \hspace*{-6cm} \geq -Ch\epsilon^{-1}-R^{-1}.
			\end{align}
			By Lemma \ref{commutatorformula}, 
			\begin{align}\label{commutatorvertical1} \frac{i}{h}[-h^2\Delta_{G},\mathrm{Op}_1^{\w}(\mathbf{c}_{h,\epsilon,R}) ]=\mathrm{Op}_1^{\w}(2V(x)h^2\eta|\eta|a'(y)(1-\chi_R(h\eta))\chi_{\epsilon}(h^2\eta) )+\mathcal{O}_{\mathcal{L}(L^2)}(h).
			\end{align}
			Applying  \eqref{commutatorwithb} to $\mathbf{b}=\mathbf{c}_{h,\epsilon,R}$ we get
			\begin{align*}
				\frac{i}{h}\big\langle [-h^2\Delta_G,\mathrm{Op}_1^{\w}(\mathbf{c}_{h,\epsilon,R})]\psi_{h,\mathrm{s}},\psi_{h,\mathrm{s}}
				\big\rangle_{L^2} & \\[0.2cm]
				& \hspace*{-3cm} = \frac{2 \beta_h}{h}\Re \big \langle \Op_1^\w \big( h^2 \vert \eta \vert a(y) \left(1-\chi_R\left(h \eta \right) \right) \chi_\epsilon(h^2\eta)\big) \psi_{h,\mathrm{s}}, \psi_{h,\mathrm{s}}\big \rangle_{L^2}  \\[0.2cm]
				&  \hspace*{-3cm} \quad -2\Re\big\langle\mathrm{Op}_1^{\w}(h|\eta|a(y)(1-\chi_R(h\eta)) \chi_{\epsilon}(h^2\epsilon)  )b\psi_{h,\mathrm{s}},\psi_{h,\mathrm{s}}  \big\rangle_{L^2}+o(h\delta_h).
			\end{align*}
			Plugging \eqref{SharpGarding}, \eqref{commutatorvertical1} into the above equality, and using the hypotheses 
			\begin{equation}
				\label{e:hypothesis_beta_h}
				\limsup_{h \to 0^+} \frac{\beta_h}{h} \leq C_0,
			\end{equation}  
			we obtain that
			\begin{align}\label{operatordy}
				\big\langle\Op_1^{\w}\big(2V(x)h^2\eta|\eta|&\cdot a'(y)(1-\chi_R(h\eta))\chi_{\epsilon}(h^2\eta)
				\big)\psi_{h,\mathrm{s}},\psi_{h,\mathrm{s}}
				\big\rangle_{L^2} & \notag \\[0.2cm] 
				& \hspace*{-2cm} \leq   2C_0\Re \big \langle \Op_1^\w \big( h^2 \vert \eta \vert a(y) \left(1-\chi_R\left(h \eta \right) \right) \chi_\epsilon(h^2\eta)\big) \psi_{h,\mathrm{s}}, \psi_{h,\mathrm{s}}\big \rangle_{L^2} +Ch\epsilon^{-1}+CR^{-1}.
			\end{align}
	Note that $V(x)=x^2+O(x^3)$, then left hand side of the inequality above is
	$$ \big\langle\Op_1^{\w}\big(2x^2h^2\eta|\eta|\cdot a'(y)(1-\chi_R(h\eta))\chi_{\epsilon}(h^2\eta)
	\big)\psi_{h,\mathrm{s}},\psi_{h,\mathrm{s}}
	\big\rangle_{L^2}+O(R^{-\frac{1}{2}}),
	$$
	and the first term on the right hand side of the inequality above is bounded by $O(\epsilon)$ as $h^2|\eta|\leq O(\epsilon)$ on supp$(\chi_{\epsilon}(h^2\eta))$. Taking the triple limit $h\rightarrow 0$, $\epsilon\rightarrow 0$, $R\rightarrow\infty$, we deduce that
	$$
	\int_{\R_\sigma \times \mathbb{T}_y \times \R_\xi \times \mathbb{S}^0_\omega} \, \omega \sigma^2 \partial_y a(y) \mu_1(d\sigma,dy,d\xi,d\omega)  \leq 0.
	$$
	
	Let us assume that $\int_{\T}b(0,y)dy>0$. By averaging along the Hamiltonian flow generated by $\sigma^2+\xi^2$, this implies that
	\begin{equation}
	\label{e:vertical_invariance_equation}
	\int_{\R_\sigma \times \mathbb{T}_y \times \R_\xi \times \mathbb{S}^0_\omega} \, \omega (\xi^2 + \sigma^2) \partial_y a(y + t\omega) \mu_1(d\sigma,dy,d\xi,d\omega)  \leq 0,
	\end{equation}
	for every $t \in \R$, which is equivalent to the fact that
	$$
	\frac{d}{dt}\left(  \int_{\R_\sigma \times \mathbb{T}_y \times \R_\xi \times \mathbb{S}^0_\omega} \,  (\xi^2 + \sigma^2) a(y + t\omega) \mu_1(d\sigma,dy,d\xi,d\omega) \right) \leq 0, 
	$$
	for every $t \in \R$. Then
	$$
	\int_{\R_\sigma \times \mathbb{T}_y \times \R_\xi \times \mathbb{S}^0_\omega} \,  (\xi^2 + \sigma^2) a(y + t\omega) d\mu_1 \leq \int_{\R_\sigma \times \mathbb{T}_y \times \R_\xi \times \mathbb{S}^0_\omega} \,  (\xi^2 + \sigma^2) a(y) d\mu_1.
	$$
	Applying this inequality to $a(y) = b(0,y)$, and using  \eqref{e:support_mu_1}, we see that, for every $T > 0$,
	$$
	\int_{-T}^T \int_{\R_\sigma \times \mathbb{T}_y \times \R_\xi \times \mathbb{S}^0_\omega} \,  (\xi^2 + \sigma^2) b(0,y + t\omega) \mu_1(d\sigma,dy,d\xi,d\omega)dt = 0,
	$$
	which implies that $\mu_1 = 0$ by the hypothesis (1) or (2) on $b$ (geometric control case or widely undamped case).  If $b\equiv 0$, the inequality \eqref{operatordy} is indeed and equality, and we deduce that $\langle\partial_ya,\mu_1\rangle=0$.
	
	Next we treat the measure $M_2$. It turns out to be more convenient to work with $\eta$ with a constant sign. To do this, we pick another cutoff  $\chi_+\in C^{\infty}((0,\infty))$ such that $\chi_+\cdot (1-\chi)=(1-\chi)\mathbf{1}_{\eta>0}$. Set $\chi_{\epsilon,+}:=\chi_+(\cdot/\epsilon)$.  Now we test the commutator with the symbol
			$$ \mathbf{d}_{h,R}^{\epsilon,+}(x,y,\xi,\eta):=h\chi_{\epsilon,+}(h^2\epsilon)\mathbf{c}_{h,R}^{\epsilon}(x,y,\xi,\eta)	=h^2|\eta|\chi_{\epsilon,+}(h^2\eta)(1-\chi_{\epsilon}(h^2\eta))a(y),	
			$$
			where $a = a(y)\geq 0$ depends only in $y$. Note that in the regime $h^2|\eta|\gtrsim \epsilon$ and $h\ll \epsilon/R$, we no longer need the cutoff $(1-\chi_R(h\eta))$ in the symbol. 
			
			Applying  \eqref{commutatorwithb} to $\mathbf{b}=\mathbf{d}_{h,R}^{\epsilon,+}$ we get
			\begin{align*}
				\frac{i}{h}\big\langle [-h^2\Delta_G,\mathrm{Op}_1^{\w}(\mathbf{d}_{h,R}^{\epsilon,+})]\psi_{h,\mathrm{s}},\psi_{h,\mathrm{s}}
				\big\rangle_{L^2}
				& \\[0.2cm]
				& \hspace*{-2cm} = 2\beta_h\Re \big \langle \Op_1^\w \big( h^2|\eta| a(y)  (1-\chi_\epsilon(h^2\eta))\big) \psi_{h,\mathrm{s}}, \psi_{h,\mathrm{s}}\big \rangle_{L^2}  \\[0.2cm]
				& \hspace*{-2cm} \quad -2\Re\big\langle\mathrm{Op}_1^{\w}(h^2|\eta|a(y) (1-\chi_{\epsilon}(h^2\eta) ) )b\psi_{h,\mathrm{s}},\psi_{h,\mathrm{s}}  \big\rangle_{L^2}+o(h\delta_h).
			\end{align*}
			
			By rescaling and using Lemma \ref{commutatorformula2}, since $\mathbf{d}_{h,R}^{\epsilon}$ is independent of $x$ and $\xi$, we have
			\begin{align*}
				\frac{i}{h}\langle [-h^2\Delta_G,\mathrm{Op}_1^{\w}(\mathbf{d}_{h,R}^{\epsilon,+})]&\psi_{h,\mathrm{s}},\psi_{h,\mathrm{s}}\rangle_{L^2}
				\\[0.2cm]
				& \hspace*{-2cm} = \frac{i}{h}\big\langle [-\partial_x^2-V(hx)h^2\partial_y^2,\mathrm{Op}_1^{\w}(\mathbf{d}_{h,R}^{\epsilon,+})]\Psi_h,\Psi_h
				\big\rangle_{L^2}& \\[0.2cm]
				& \hspace*{-2cm} = h\big\langle
				\mathrm{Op}_1^{\w}\big(2V(hx)h^2\eta^2 a'(y)\chi_{\epsilon,+}(h^2\eta)(1-\chi_{\epsilon}(h^2\eta)) \big)\Psi_h,\Psi_h 
				\big\rangle_{L^2}. 
			\end{align*}
			Note that $V(hx)=h^2x^2+O(h^3x^3)$ and $\Psi_{h}=O_{L^2}(h^{\infty})$ when $|hx|\gg R^{-\frac{1}{2}}$ (see \eqref{claim1}), we have $V(hx)/h^2-x^2=\mathcal{O}(R^{-\frac{1}{2}}x^2)$. Since $h^4x^2\partial_y^2\Psi_h=O_{L^2}(1)$, we obtain that
			\begin{align*}
				2h\big\langle
				\mathrm{Op}_1^{\w}\big(x^2h^2\eta^2&\chi_{\epsilon,+}(h^2\eta) a'(y)
				(1-\chi_{\epsilon}(h^2\eta) ) \big)
				\Psi_h,\Psi_h\rangle_{L^2} & \\[0.2cm]
				 & \hspace*{-1cm}	  \leq 2 \beta_h \Re\big \langle  \Op_1^{\w} \big(h^2|\eta| a  \chi_{\epsilon,+}(h^2\eta)(1-\chi_\epsilon(h^2\eta))\big) \Psi_h, \Psi_h \big \rangle_{L^2} \\[0.2cm]
				  & \hspace*{-1cm} \quad -2\Re\big\langle
				\mathrm{Op}_1^{\w}\big(h^2|\eta|a\chi_{\epsilon,+}(h^2\eta)(1-\chi_{\epsilon}(h^2\eta) ) \big)b\psi_{h,\mathrm{s}},\psi_{h,\mathrm{s}}
				\big \rangle_{L^2}
				+o(h\delta_h)+O(hR^{-\frac{1}{2}}). 
			\end{align*}
			Using the symbolic calculus in $(y,\eta)$ variable, we have
			\begin{align*} \big\langle\mathrm{Op}_1^{\w}\big(h^2|\eta|a\chi_{\epsilon,+}(h^2\eta)&(1-\chi_{\epsilon}(h^2\eta) \big)b\psi_{h,\mathrm{s}},\psi_{h,\mathrm{s}}\big\rangle_{L^2} & \\[0.2cm]
			& \hspace*{-3cm} = \Big\langle ab\sqrt{h^2|D_y|\chi_{\epsilon,+}(h^2D_y)(1-\chi_{\epsilon}(h^2D_y) ) }\psi_{h,\mathrm{s}},
				\sqrt{h^2|D_y|\chi_{\epsilon,+}(h^2D_y)(1-\chi_{\epsilon}(h^2D_y) ) }\psi_{h,\mathrm{s}}
				\Big\rangle_{L^2}\\[0.2cm]
				&  \hspace*{-3cm} \quad +O(h^2/\epsilon)+O(h/R)\geq -Ch^2/\epsilon-Ch/R,
			\end{align*}
			where we use the fact that the first term on the right hand side is non-negative. Therefore,
			\begin{align*}
				\big\langle
				\mathrm{Op}_1^{\w}\big(x^2h^2\eta^2\chi_{\epsilon,+}(h^2\eta) a'(y)
				(1-\chi_{\epsilon}(h^2\eta) )   \big)\Psi_h,\Psi_h
				\big\rangle_{L^2}	& \\[0.2cm]
				 & \hspace*{-3cm} \leq  \frac{\beta_h}{h} \Re\big \langle  \Op_1^{\w} \big(h^2|\eta|\chi_{\epsilon,+}(h^2\eta) a  (1-\chi_\epsilon(h^2\eta))\big) \Psi_h, \Psi_h \big \rangle_{L^2} \\[0.2cm] 
				& \hspace*{-3cm} \quad + O(h/\epsilon)+O(R^{-\frac{1}{2}})+o(\delta_h). 
			\end{align*}
	Using \eqref{e:hypothesis_beta_h}, taking the triple limit $h \to 0^+, \epsilon\rightarrow 0^+,R\rightarrow+\infty$ of the above inequality, we obtain
	\begin{equation}
		\label{e:intermediate}
		C_0  \int_{\mathbb{T}_y  \times (\R_\eta \setminus \{0 \})}  \eta\mathbf{1}_{\eta>0}a(y) \ov{\mu}_2(dy,d\eta)  \geq  \mathrm{Tr}\Big[\int_{\mathbb{T}_y \times (\R_\eta \setminus \{0 \})}   \Op_1^{\w,(x,\xi)} \left( x^2 \eta^2\mathbf{1}_{\eta>0} \cdot  a'(y) \right) M_2(dy,d \eta)\Big],
	\end{equation}
	where $\ov{\mu}_2(y,\eta) = \operatorname{Tr}M_2(y,\eta)$.
	Here we note that \eqref{e:intermediate} makes sense thanks to the support property of $M_2$ (Lemma \ref{supportmeasure}). Moreover, though $a(y)(1-\chi_{\epsilon}(h\eta)))$ does not contain the compact part in $L_x^2$, we can still get the right hand side of \eqref{e:intermediate} by passing to the triple limit $\displaystyle{\lim_{R\rightarrow\infty}\lim_{\epsilon\rightarrow 0}\lim_{h\rightarrow 0}}$, since the operator valued measure $M_2$ already contains a compact operator on $L^2(\R_x)$. 
	
	Now we use the same trick of averaging to replace the symbol $x^2\eta^2$ by $x^2\eta^2+D_x^2$. Indeed, by writing $M_2(y,\eta)=m_2(y,\eta)\nu_2(y,\eta)$ for some trace class operator-valued function $m_2(y,\eta)$ and some scalar measure $\nu_2$, we can write
	\begin{align*}
		\mathrm{Tr}\int_{\T_y\times (\R_{\eta} \setminus \{0 \})}x^2\eta^2\cdot \mathbf{1}_{\eta>0}a'(y)M_2(dyd\eta)=\int_{\T_y\times (\R_{\eta} \setminus \{0 \})}\mathbf{1}_{\eta>0}a'(y)\mathrm{Tr}_{L_x^2}[x^2\eta^2m_2(y,\eta)]\nu_2(dyd\eta).
	\end{align*}
	Set $U_{\eta}(t)=\mathrm{e}^{it(D_x^2+x^2\eta^2)}$. Using the invariance $U_{\eta}(t)^*m_2U_{\eta}(t)=m_2$ proved in Lemma \ref{horizontal},  we have
	\begin{align*}
		\mathrm{Tr}_{L_x^2}[x^2\eta^2m_2(y,\eta)] & =\mathrm{Tr}_{L_x^2}[U_{\eta}(t)^*(x^2\eta^2)U_{\eta}(t)m_2(y,\eta)] \\[0.2cm]
		& =\frac{|\eta|}{2\pi}\mathrm{Tr}_{L_x^2}\Big[\Big(\int_0^{\frac{2\pi}{|\eta|}}U_{\eta}(t)^*(x^2\eta^2)U_{\eta}(t)dt\Big) m_2(y,\eta) \Big].
	\end{align*}
	By Lemma \ref{averagingLemma},
	$$ \frac{|\eta|}{2\pi}\int_0^{\frac{2\pi}{|\eta|}}U_{\eta}(t)^*(x^2\eta^2)U_{\eta}(t)dt=\frac{1}{2}(D_x^2+x^2\eta^2),
	$$
	thus
	$$ \mathrm{Tr}_{L_x^2}\int_{\T_y\times (\R_{\eta} \setminus \{0 \})} x^2\eta^2\mathbf{1}_{\eta>0} a'(y)M_2(dy,d\eta)=\frac{1}{2}\mathrm{Tr}_{L_x^2}\int_{\T_y\times (\R_{\eta} \setminus \{0 \})}(x^2\eta^2+D_x^2)\mathbf{1}_{\eta>0}a'(y)M_2(dy,d\eta),
	$$
	and we get
	\begin{equation}
		\label{e:intermediate'}
		2C_0 \int_{\mathbb{T}_y  \times (\R_\eta \setminus \{0 \}) } \, \eta\mathbf{1}_{\eta>0} a(y) \ov{\mu}_2(dy,d\eta)  \geq  \operatorname{Tr} \int_{\mathbb{T}_y \times (\R_\eta \setminus \{0 \})}  (x^2\eta^2+D_x^2)\mathbf{1}_{\eta>0} a'(y) M_2(dy,d \eta).
	\end{equation}
	Writing $M_2(dy,d\eta)=m_2(y,\eta)\nu_2(dy,d\eta)$ and $\ov{\mu}_2(dy,d\eta)=\mathrm{Tr}(m_2(y,\eta))\nu_2(dy,d\eta)$, and using the fact that $x^2\eta^2+D_x^2\geq |\eta|$, we obtain that
	\begin{align*}
		\int_{\mathbb{T}_y \times (\R_\eta \setminus \{0 \})}  \mathbf{1}_{\eta>0}\mathrm{Tr}_{L_x^2}(&(x^2\eta^2+D_x^2)m_2) a'(y) \nu(dy,d \eta)\\ \leq &2C_0 
		\int_{\mathbb{T}_y \times (\R_\eta \setminus \{0 \})} 
		\mathrm{Tr}_{L_x^2}((x^2\eta^2+D_x^2)m_2)\mathbf{1}_{\eta>0}a(y)\nu_2(dy,d\eta).
	\end{align*}
	The above inequality holds by replacing $a$ to $a(\cdot+t)$, we deduce that
	\begin{align*}
		\frac{d}{dt}\Big(\mathrm{e}^{-2C_0t}
		\int_{\mathbb{T}_y \times (\R_\eta \setminus \{0 \})} 
		\mathrm{Tr}_{L_x^2}((x^2\eta^2+D_x^2)m_2)\mathbf{1}_{\eta>0}a(y+t)\nu_2(dy,d\eta)
		\Big)\leq 0.
	\end{align*}
	We assume that $\int_{\T}b(0,y)dy>0$, by choosing $a(y)=b(0,y)$, the inequality above yields
	$$ \int_{\T_y\times(\R_{\eta}\setminus\{0\})}\mathrm{Tr}_{L_x^2}((x^2\eta^2+D_x^2)m_2)\mathbf{1}_{\eta>0}b(0,y+t)\nu_2(dy,d\eta)\leq 0,\quad \forall t\in\R,
	$$
	thanks to the support property of $M_2$ (Lemma \ref{supportmeasure}). Taking the integral of the inequality above in $t\in[0,2\pi]$, we deduce that
	$$ \Big(\int_{\T}b(0,y)dy\Big)\int_{\T_y\times(\R_{\eta}\setminus\{0\})}\mathrm{Tr}_{L_x^2}((x^2\eta^2+D_x^2)m_2)\mathbf{1}_{\eta>0}\nu_2(dy,d\eta)\leq 0.
	$$
	Since $M_2$ is non-negative, this implies that $M_2\mathbf{1}_{\eta>0}=0$. Similarly from the same argument, we have $M_2\mathbf{1}_{\eta<0}=0$.
	
	Finally, we remark that if $b\equiv 0$ and $\beta_h=o(h)$, we get \eqref{e:intermediate} with  $0$ on the left hand side, and this implies that $\partial_yM_2\geq 0$ in the distributional sense. By periodicity of $y\in\T_y$, we have $\partial_yM_2= 0$.
	
	The proof of Lemma \ref{lemmaverticalpropagation} is now complete. Consequently,
	we finish the proof of Proposition \ref{Measureformulationj=012}.
\end{proof}





\subsection{Quasimodes in the narrowly undamped case}
		\label{s:narrowly_undamped_measures}

		

		In this section we prove Corollary \ref{p:negative_sub-ellitpic_quasimodes(2)}. The main idea is to refine the last part of the proof of  Proposition \ref{Measureformulationj=012}, concerning the propagation by the vertical flow, in the case in which the profile of the damping term $b(y)$ is explicit and given by $\vert y - y_0 \vert^\nu$ near the point $y_0$. In this case, we can detect the obstruction to propagate the semiclassical measures in the vertical direction for quasimodes of larger width $o(h^{2 - \frac{1}{1+\nu}})$, leading to the much better resolvent estimate of Theorem \ref{t:sharp_in_2} in the subelliptic regime. To this aim, we will construct suitable two-microlocal semiclassical measures at scales depending on the coefficient $\nu$, capturing the precise scales of the Wigner equation in which the profile of $b(y)$ near $y_0$ and the vertical propagation interact. 
		
		\begin{proof}[Proof of Corollary \ref{p:negative_sub-ellitpic_quasimodes(2)}]
			
			 Analogously to the proof of Corollary \ref{p:negative_sub-ellitpic_quasimodes}, we assume that there exists a quasimode $\psi_{1,h} = \chi_\delta(x) \Upsilon_h^{R} \psi_h$ of width $o\big( h^{2-\frac{1}{1+\nu}} \big)$ to reach a contradiction. 
			
			We next study the two-microlocal measures constructed in Proposition \ref{p:semiclassical_measures} with a further two-microlo\-calization near the point $y_0$ at semiclassical scale $ \vert \eta \vert^{-\frac{1}{1+\nu}}$ (compare with \cite[Theorem 1]{F00}). Since $\delta := \frac{1}{1+\nu} < 1$, this second microlocalization near $y_0$ holds in a (semiclassical) scale below the critical one imposed by the uncertainty-principle ($\vert y - y_0 \vert \sim h \lesssim \vert \eta \vert^{-1}$), so no new operator-valued measures will appear at (semiclassical) scale $\vert y-y_0 \vert \sim \vert \eta \vert^{-\delta}$.
			
			Let us consider symbols $a(\sigma,y,y',\xi,\eta,\eta')$ to be compactly supported in $(\sigma,y,\xi,\eta')$ and homogeneous of degree zero at infinity in $(y',\eta)$. Our symbols are periodic in the variable $y$. We define:
			\begin{align*}
				\mathbf{a}^1_{h,R,\epsilon,\rho}(x,y,\xi,\eta) & := (1- \chi_R(\eta)) (1 - \chi_\epsilon(h\eta)) \chi_\rho\left( (y-y_0)\vert \eta \vert^\delta \right)
				a\left( x\vert \eta \vert, y, (y-y_0) \vert \eta \vert^\delta, \xi, \eta, h\eta \right), \\[0.2cm]
				\mathbf{a}^2_{h,R,\epsilon,\rho}(x,y,\xi,\eta) & := (1- \chi_R(\eta)) (1 - \chi_\epsilon(h\eta)) \left(1-\chi_\rho\left( (y-y_0)\vert \eta \vert^\delta \right) \right)
				a\left( x\vert \eta \vert, y,  (y-y_0)\vert \eta \vert^\delta, \xi, \eta, h\eta \right), \\[0.2cm]
				\mathbf{a}^3_{h,R,\epsilon,\rho}(x,y,\xi,\eta) & := (1- \chi_R(\eta)) \chi_\epsilon(h\eta) \chi_\rho\left(  (y-y_0)\vert \eta \vert^\delta \right)
				a\left( x\vert \eta \vert, y,  (y-y_0)\vert \eta \vert^\delta, \xi, \eta, h\eta \right), \\[0.2cm]
				\mathbf{a}^4_{h,R,\epsilon,\rho}(x,y,\xi,\eta) & := (1- \chi_R(\eta)) \chi_\epsilon(h\eta) \left(1-\chi_\rho\left(  (y-y_0)\vert \eta \vert^\delta \right) \right)
				a\left( x\vert \eta \vert, y,  (y-y_0)\vert \eta \vert^\delta, \xi, \eta, h\eta \right),
			\end{align*}
			and consider the Wigner distributions
			$$
			I^{j}_{h,R,\epsilon,\rho}(a) = \big \langle \Op^\w_h (\mathbf{a}^j_{h,R,\epsilon,\rho}) \psi_{1,h}, \psi_{1,h} \big \rangle_{L^2(M)}, \quad j = 1,2,3,4.
			$$
			Taking limits through subsequences (c.f. \cite[Theorem 1]{F00}) $h \to 0^+$, $\epsilon \to 0^+$, $\rho \to +\infty$, and $R \to +\infty$, we find operator valued measures $M_2^1 \in \mathcal{M}_+(\R_\theta \times (\R_\eta \setminus \{0 \}); \mathcal{L}^1(L^2(\R_x))$, $M_2^{2,1} \in \mathcal{M}_+(\mathbb{S}_\omega^0 \times (\R_\eta \setminus \{0 \}); \mathcal{L}^1(L^2(\R_x))$, $M_2^{2,2}  \in \mathcal{M}_+(\R_y \times (\R_\eta \setminus \{0 \}); \mathcal{L}^1(L^2(\R_x))$ such that:
			$$
			I^{1}_{h,R,\epsilon,\rho}(a) \to \operatorname{Tr} \int_{\R_\theta  \times (\R_\eta \setminus \{0 \})}  \Op_1^{\w,(x,\xi)} \left(  a \left( x \vert \eta \vert, y_0, \theta, \xi, \frac{\eta}{\vert \eta \vert},\eta \right)\right)  M_2^1(d\theta,d\eta),
			$$
			and 
			\begin{align*}
				I^{2}_{h,R,\epsilon,\rho}(a)  & \to \operatorname{Tr} \int_{\mathbb{S}^0_{\omega} \times  (\R_\eta \setminus \{0 \})}  \Op_1^{\w,(x,\xi)} \left( a \left(x \vert \eta \vert, y_0, \omega, \xi, \frac{\eta}{\vert \eta \vert} ,\eta \right)\right)  M_2^{2,1}(d\omega,d\eta) \\[0.2cm]
				& \quad + \operatorname{Tr} \int_{\mathbb{R}_y  \times (\R_\eta \setminus \{0 \})} \mathbf{1}_{y \neq y_0} \Op_1^{\w,(x,\xi)} \left( a_\infty\left( x \vert \eta \vert, y, \frac{y-y_0}{\vert y - y_0 \vert},  \xi,\frac{\eta}{\vert \eta \vert},\eta \right)\right)  M_2^{2,2}(dy,d\eta).
			\end{align*}
			Notice in particular that $M_2^{2,2} = \mathbf{1}_{y \neq y_0} M_2$ for the operator valued measure $M_2$ given by Proposition \ref{p:semiclassical_measures}, so this measure captures the energy of the sequence $(\psi_{1,h})$ which does not concentrate at the point $y_0$; on the other hand, the measure $M_2^{1}$ captures concentration of the energy at scale $\vert \eta \vert^{-\delta}$, while $M_2^{2,1}$ captures concentration at scale $\vert \eta \vert^{-\delta} \ll \vert y - y_0 \vert \ll 1$. 
			
			Similarly, there exist $\mu_1^1 \in \mathcal{M}_+(\R_\sigma \times \R_\theta \times \R_\xi \times \mathbb{S}_\omega^0)$, $\mu^{2,1}_1 \in \mathcal{M}_+(\R_\sigma \times \mathbb{S}_{\omega_1}^0 \times \R_\xi \times \mathbb{S}_{\omega_2}^0)$, and $\mu_1^{2,2} \in \mathcal{M}_+(\R_\sigma \times \R_y \times \R_\xi \times \mathbb{S}_\omega^0)$ such that, modulo the extraction of subsequences,
			\begin{align*}
				I_{h,R,\epsilon,\rho}^3(a)  \to  \int_{\R_\sigma \times \mathbb{R}_\theta \times \R_\xi \times \mathbb{S}^0_\omega} \, a(\sigma,y_0,\theta,\xi,\omega,0) \mu^1_1(d\sigma,d\theta,d\xi,d\omega),
			\end{align*}
			and 
			\begin{align*}
				I_{h,R,\epsilon,\rho}^4(a)  & \to \int_{\R_\sigma \times \mathbb{S}^0_{\omega_1} \times \R_\xi \times \mathbb{S}^0_{\omega_2}} \, a(\sigma,y_0,\omega_1,\xi,\omega_2,0) \mu^{2,1}_1(d\sigma,d\omega_1,d\xi,d\omega_2) \\[0.2cm]
				& \quad + \int_{\R_\sigma \times \R_y \times \R_\xi \times \mathbb{S}^0_{\omega}} \mathbf{1}_{y \neq y_0} \, a\left( \sigma,y, \frac{y-y_0}{\vert y - y_0 \vert},\xi,\omega,0 \right) \mu^{2,2}_1(d\sigma,dy,d\xi,d\omega).
			\end{align*}
			As before $\mu_1^{2,2} = \mathbf{1}_{y \neq y_0} \mu_1$ for the measure $\mu_1$ obtained in Proposition \ref{p:semiclassical_measures}. To study the properties of these semiclassical measures associated to the sequence $\psi_{1,h}$, we again consider the Wigner equations, for $j = 1,2,3,4$,
			\begin{equation}
				\label{e:distribution_equation_ft}
				\left \langle [-h^2 \Delta_G, \Op^\w_h(\mathbf{a}_{h,R,\epsilon,\rho}^j) ] \psi_h ,\psi_h \right \rangle_{L^2(M)} = 2i h \left \langle \Op^\w_h(\mathbf{a}_{h,R,\epsilon,\rho}^j( b - \beta_h)) \psi_h ,\psi_h \right \rangle_{L^2(M)} + r(h) + O(h^2/R).
			\end{equation}
			We assume that at least one of the measures $M_2^1,M_2^{2,1},M_2^{2,2}$, $\mu_1^1,\mu_1^{2,1},\mu_1^{2,2}$ does not vanish, to reach a contradiction.
			
			By mimicking the first part of the proof of Corollary \ref{p:negative_sub-ellitpic_quasimodes}, we see that $M_2^1$, $M_2^{2,1}$, $M_2^{2,2}$ satisfy \eqref{e:trace_support} (with obvious substitutions) and are invariant by the flow $e^{it (-\partial^2_x + \eta^2 x^2)}$, while $\mu_1^1$, $\mu_1^{2,1}$, $\mu_1^{2,2}$  satisfy that its support is contained in $\{ \sigma^2 + \xi^2 = 1 \}$, and are invariant by the flow $\vartheta_t$. 
			
The difference with respect to Section \ref{subsection3.1}, and the reason why we need to introduce a further two-microlocalization near the point $y_0$ at scale $\vert \eta \vert^{-\delta}$, is to capture the precise obstruction to propagate the semiclassical measures in the vertical direction due to the profile of the damping term $b(y) = \vert y - y_0 \vert^\nu$ near $y_0$. The semiclassical calculus concerning the new variable $(y-y_0)\vert \eta \vert^\delta$ is contained in the one developed in the previous sections (and estimates concerning this variable are even easier since we stay at semiclassical scales under the critical regime for the uncertainty principle). Moreover, the subelliptic calculus in the variable $x \vert \eta \vert$ has been exhaustively studied in the previous sections. It is essential that we weight the concentration scale near $y_0$ in terms of the $\eta$ variable (governing the subelliptic scale), since $\eta$ controls the vertical propagation velocity and hence the hitting velocity towards the profile of the damping term.			

From now on we concentrate  ourselves on the novelties of the analysis arising in this case. Assuming that the symbol $a$ is invariant by the flow $\vartheta_t$, using \eqref{e:distribution_equation_ft} and the explicit expression $b(y) = \vert y - y_0 \vert^\nu$ near $y_0$, noting also that $\nu \delta = 1 - \delta$, we find for $\vert \eta \vert^{1-\delta} \mathbf{a}^3_{h,R,\epsilon,\rho}$ that 
			\begin{align}
			\label{e:wigner_scalar_narrow}
				\left \langle \Op^{\w,(x,\xi)}_h \Op_1^{\w,(y,\eta)} \left( \vert y - y_0 \vert^\nu \vert h \eta \vert^{\delta \nu}  \mathbf{a}^3_{h,R,\epsilon,\rho}(x, y,\xi, h\eta)  \right) \psi_{1,h}, \psi_{1,h} \right \rangle_{L^2} & \\[0.2cm]
				&  \hspace*{-8cm}  =  \left \langle \Op^{\w,(x,\xi)}_h \Op_1^{\w,(y,\eta)} \left(  2 x^2 h \eta \vert h \eta \vert \cdot \eth_y \mathbf{a}^3_{h,R,\epsilon,\rho}(x,y,\xi, h\eta) \right) \psi_{1,h}, \psi_{1,h} \right \rangle_{L^2} \notag \\[0.2cm]
				& \hspace*{-7.5cm} + O(\rho^{-1}) + O(R^{-\delta}) + O(\epsilon^{1-\delta}) + O (r_h h^{-1} ) + O(h/R), \notag
			\end{align}
			where 
			$$
			\eth_y\mathbf{a}^3_{h,R,\epsilon,\rho}(x,y,\xi, h\eta) := (1- \chi_R(\eta)) \chi_\epsilon(h\eta) \chi_\rho\left(  (y-y_0)\vert \eta \vert^\delta \right)
			\partial_{y'}a\left( x\vert \eta \vert, y,  (y-y_0)\vert \eta \vert^\delta, \xi, \eta, h\eta \right).
			$$ 
	 Similarly as in Section \ref{subsection3.1}, the reason to multiply the symbol $\mathbf{a}_{h,r,\epsilon,\rho}^3$ by $\vert \eta \vert^{1-\delta}$ is to renormalize the equation in terms of the (variable) semiclassical parameter $h \vert \eta \vert$. Precisely, taking limits in both sides of the Wigner equation yields in this case:
			$$
			\int_{\R_\sigma \times \mathbb{R}_\theta \times \R_\xi \times \mathbb{S}^0_\omega} \big( \vert \theta \vert^\nu -  \omega \sigma^2  \partial_\theta \big) a(\sigma,y_0,\theta,\xi,\omega,0) \mu_1^1(d\sigma,d\theta,d\xi,d\omega)  = 0.
			$$
	Using next the invariance of $\mu_1^1$ by the flow $\vartheta_t$, this is equivalent to the equation:
			\begin{equation}
			\label{e:differential_narrow_equation}
			\int_{\R_\sigma \times \mathbb{R}_\theta \times \R_\xi \times \mathbb{S}^0_\omega} \big( \vert \theta \vert^\nu - \omega (\xi^2 + \sigma^2) \partial_\theta \big) a(\sigma,y_0,\theta,\xi,\omega,0) \mu_1^1(d\sigma,d\theta,d\xi,d\omega)  = 0.
			\end{equation}
This gives us a new invariance property of $\mu_1^1$ in the vertical direction with respect to the profile $\vert \theta \vert^\nu$ (compare with \eqref{e:vertical_invariance_equation}). We aim at showing that this invariance property is not possible unless $\mu_1^1$ vanishes.	
	
			 To this aim, we integrate the above differential equation for $\mu_1^1$ by considering, for any given $a$, the symbol
			$$
			 \widetilde{a}(\sigma,y,y',\xi,\eta,\eta') := a ( \sigma,y,y' + t \operatorname{sgn}(\eta) H_{G_0}, \xi,\eta,\eta') e^{\int_0^t ( \vert y' + s  \operatorname{sgn}(\eta) H_{G_0}\vert^\nu)  ds},
			$$
where recall that $H_{G_0}(x,\xi,\eta) = \xi^2 + x^2\eta^2$.
			Plugging this symbol in the differential equation \eqref{e:differential_narrow_equation}, integrating in the interval $[0,t]$, and using that $\operatorname{supp} \mu_1^1 \subset \{ \sigma^2 + \xi^2 = 1\}$, we obtain for any $t \in \R$:
			$$
			\int_{\R_\sigma \times \mathbb{R}_\theta \times \R_\xi \times \mathbb{S}^0_\omega}  a(\sigma,y_0,\theta,\xi,\omega,0) d \mu_1^1 = \int_{\R_\sigma \times \mathbb{R}_\theta \times \R_\xi \times \mathbb{S}^0_\omega}  a(\sigma,y_0,\theta + t \omega,\xi,\omega,0) e^{\int_0^t ( \vert \theta + s \omega \vert^\nu)  ds} d \mu_1^1.
			$$
			Therefore, taking $a = 1$, we get, for all $t > 0$,
			$$
			\int_{\R_\sigma \times \mathbb{R}_\theta \times \R_\xi \times \mathbb{S}^0_\omega}  d\mu_1^1 = \int_{\R_\sigma \times \mathbb{R}_\theta \times \R_\xi \times \mathbb{S}^0_\omega}e^{\int_0^t  \vert \theta + s \omega \vert^\nu  ds} d \mu_1^1,
			$$
			which implies, since $\mu_1^1$ is a finite measure, that $\mu_1^{1} = 0$.
			
			On the other hand, looking at $M_2^1$,  we take $ \vert \eta \vert^{1- \delta} \mathbf{a}^1_{h,R,\epsilon,\rho}$ and get the Wigner equation:
			\begin{align}
			\label{e:wigner_operator_narrow}
				h \left \langle \Op_1^{\w,(x,\xi)} \Op_1^{\w,(y,\eta)} \big( (\vert y - y_0 \vert^\nu \vert h \eta \vert^{\delta \nu} - \beta_h h^{\delta-1} \vert h^2 \eta \vert^{1- \delta} )  \mathbf{a}^1_{h,R,\epsilon,\rho}(hx, y, \xi, h\eta) \big) \Psi_h, \Psi_h \right \rangle_{L^2} & \\[0.2cm]
				&  \hspace*{-13cm}  =  h \left \langle \Op_1^{\w,(x,\xi)} \Op_1^{\w,(y,\eta)} \Big(  2 x^2 (h^2 \eta) \vert h^2 \eta \vert \cdot \partial_y \mathbf{a}^1_{h,R,\epsilon,\rho}(hx, y,\xi,h\eta) \Big) \Psi_h, \Psi_h \right \rangle_{L^2} \notag \\[0.2cm]
				& \hspace*{-12.5cm} +  O(h \rho^{-1}) + O( r_h h^{-1 + \delta}) + O(h^{1+\delta}/R)), \notag
			\end{align}
			where $\Psi_h(x,y) = h^{1/2}\psi_{1,h}(hx,y)$. Defining $C_0 = \lim_{h \to 0^+} \beta_h h^{1-\delta}$, dividing both sides by $h$, using that $r_h = o(h^{2 -\delta})$, taking limits, and using the invariance property of $M_2^1$ by $e^{it (-\partial^2_x + \eta^2 x^2)}$, we obtain
			\begin{align*}
				\operatorname{Tr} \int_{\R_\theta  \times (\R_\eta \setminus \{0 \})}  (\vert \theta \vert^\nu - C_0 \vert \eta \vert^{\frac{\nu}{1+\nu}} ) \Op_1^{\w,(x,\xi)} \left(  a \left( x \vert \eta \vert, y_0, \theta, \xi, \frac{\eta}{\vert \eta \vert},\eta \right)\right)  M_2^1(d\theta,d\eta) & \\[0.2cm]
				& \hspace*{-11cm} = \operatorname{Tr} \int_{\mathbb{R}_\theta \times (\R_\eta \setminus \{0 \})}  \Op_1^{\w,(x,\xi)} \left(  H_{G_0}  \cdot \partial_{\theta} a\left( x\vert \eta \vert,y_0,\theta,\xi,\frac{\eta}{\vert \eta \vert},\eta \right) \right) M^1_2(d\theta,d \eta).
			\end{align*}
			Let us denote $\kappa(\theta,\eta) :=  \vert \theta \vert^\nu - C_0 \vert \eta \vert^{\frac{\nu}{1+\nu}} $.  To integrate the above differential equation for the measure $M_2^1$, we consider the symbol, given $a$:
			$$
			\widetilde{a}(\sigma,y,y',\xi,\eta,\eta') := a ( \sigma,y,y' + t  \operatorname{sgn}(\eta) H_{G_0}, \xi,\eta,\eta') e^{\int_0^t \kappa( y' + s  \operatorname{sgn}(\eta) H_{G_0}, \eta) ds}.
			$$
			Plugging this symbol in the previous identity, taking $a = 1$, using  \eqref{e:trace_support},  and integrating from $0$ to $t$, we get:
			\begin{align*}
				\operatorname{Tr} \int_{\R_\theta  \times (\R_\eta \setminus \{0 \})}  M_2^1(d\theta,d\eta)  = \operatorname{Tr} \int_{\mathbb{R}_\theta \times (\R_\eta \setminus \{0 \})}   \Op_1^{\w,(x,\xi)} \left( e^{\int_0^t \kappa( \theta + s  \operatorname{sgn}(\eta) H_{G_0},\eta)   ds}\right) M^1_2(d\theta,d \eta).
			\end{align*}
			Denoting $\overline{\mu}_2^1 := \operatorname{Tr} M_2^1$ and $M_2^1(\theta,\eta) = m_2^1(\theta,\eta) \nu_2^1(\theta,\eta)$, where more precisely we write $m_2^1(\theta,\eta) = \mathbf{m}_2^1(\vert \eta \vert^{-1} (D_x^2 + x^2 \eta^2), \theta,\eta)$ the Radon-Nikodym derivative of $M_2^1$, we obtain, for any $t > 0$,
			\begin{align*}
				\int_{\mathbb{R}_\theta \times (\R_\eta \setminus \{0 \})}  \overline{\mu}^1_2(d\theta,d \eta) &  =  \sum_{k \in \mathbb{N}_0} \int_{\mathbb{R}_\theta \times (\R_\eta \setminus \{0 \})}   e^{\int_0^t \kappa( \theta + s  \operatorname{sgn}(\eta) H_{G_0},\eta)    ds}  \langle M_2^1(d\theta,d\eta) \varphi_k(\eta,x), \varphi_k(\eta,x) \big \rangle_{L^2(\R_x)} \\[0.2cm]
				& = \sum_{k \in \mathbb{N}_0} \int_{\mathbb{R}_\theta \times (\R_\eta \setminus \{0 \})}    e^{\int_0^t\kappa( \theta + s  \eta  \lambda_k,\eta)}    ds \, \mathbf{m}_2^1(\lambda_k, \theta,\eta) \nu_2^1(d\theta,d\eta).
			\end{align*}
			From this, we deduce, since $M_2^1$ is a finite measure, $\lambda_k = 2k+1$, and $$
		 \kappa(\theta + s  \eta  \lambda_k, \eta)  =  \big \vert \theta + s \eta  \lambda_k  \big \vert^\nu - C_0 \vert \eta \vert^{\frac{1}{1+ \nu}} \gtrsim \vert s \eta \vert^\nu,
$$ 
as $\vert s \vert \to \infty$  uniformly in compact sets $0 < \vert \eta \vert \leq C$ and $\vert \theta \vert \leq C$, that $M_2^1 = 0$.
			
			Finally, we consider 
			\begin{align*}
				\frac{1}{\vert y - y_0 \vert^\nu}  \mathbf{a}^j_{h,R,\epsilon,\rho}(x,y,\xi,\eta), \quad j=2,4.
			\end{align*}
 In this regime, $\vert y - y_0 \vert \gg \vert \eta \vert^{-1}$, so the term involving the multiplication by the damping term $b(y)$ in the Wigner equation is in this regime much larger than the term involving the vertical propagation, and larger than the remaining terms as well.	Plugging this symbol into the Wigner equation \eqref{e:wigner_scalar_narrow} (for $j=4$) and respectively into \eqref{e:wigner_operator_narrow} (for $j=2$), we get:
			\begin{align}
				& \operatorname{Tr} \int_{\mathbb{S}^0_{\omega} \times  (\R_\eta \setminus \{0 \})}  \vert \omega \vert \, \Op_1^{\w,(x,\xi)} \left( a \left(x \vert \eta \vert, y_0, \omega, \xi, \frac{\eta}{\vert \eta \vert} ,\eta \right)\right)  M_2^{2,1}(d\omega,d\eta) = 0,  \\[0.2cm]
				& \label{e:second_support} \operatorname{Tr} \int_{\mathbb{R}_y  \times (\R_\eta \setminus \{0 \})} \mathbf{1}_{y \neq y_0}   \frac{ \vert y - y_0 \vert^\nu}{\vert y - y_0 \vert^\nu} \Op_1^{\w,(x,\xi)} \left( a\left( x \vert \eta \vert, y, \frac{y-y_0}{\vert y - y_0 \vert},  \xi,\frac{\eta}{\vert \eta \vert},\eta \right)\right)  M_2^{2,2}(dy,d\eta) = 0, \\[0.2cm]
				& \int_{\R_\sigma \times \mathbb{S}^0_{\omega_1} \times \R_\xi \times \mathbb{S}^0_{\omega_2}} \vert \omega \vert a(\sigma,y_0,\omega_1,\xi,\omega_2,0) \mu^{2,1}_1(d\sigma,d\omega_1,d\xi,d\omega_2) = 0, \\[0.2cm]
				& \label{e:fourth_support} \int_{\R_\sigma \times \R_y \times \R_\xi \times \mathbb{S}^0_{\omega}} \mathbf{1}_{y \neq y_0} \, \frac{ \vert y - y_0 \vert^\nu}{\vert y - y_0 \vert^\nu} a\left( \sigma,y, \frac{y-y_0}{\vert y - y_0 \vert},\xi,\omega,0 \right) \mu^{2,2}_1(d\sigma,dy,d\xi,d\omega) = 0.
			\end{align}
Actually, \eqref{e:second_support} is also direct consequence of \eqref{e:support_M_2} since $M_2^{2,2} = {\bf 1}_{y\neq y_0} M_2$, and similarly \eqref{e:fourth_support} is direct consequence of \eqref{e:support_mu_1} since $\mu_1^{2,2} = {\bf 1}_{y \neq y_0} \mu_1$.	Therefore, we obtain $M_2^{2,1}, M_2^{2,2,}, \mu_1^{2,1}, \mu_1^{2,2} = 0$, which is a contradiction, and the proof is complete.
		\end{proof}

\section{Quasimodes in the compact regime
		}\label{normalformimproved}
		
		In this section, we consider the rectangular-shaped dampings $b_j=b_j(y),j=1,2,3$ and prove the upper bounds for Theorem \ref{t:main_theorem}, Theorem \ref{t:sharp_in_1} and Theorem \ref{t:sharp_in_2}. Recall that for $o(h^2\delta_h^{(j)})$ quasimodes $(\psi_h^{(j)})$, satisfying \eqref{e:quasimode_to_contradiction}, we have shown in the last section that
		$$ \lim_{R\rightarrow\infty}\lim_{h\rightarrow 0}\|\Upsilon_h^R \psi_h\|_{L^2}=0.
		$$
		Consequently, from Proposition \ref{Measureformulationj=012}, the subelliptic semiclassical measures $\ov{\mu}_0^{(j)}$ vanish. To obtain a contradiction, we need to show that the compact part of the semiclassical measures $\mu_0^{(j)}$ vanish.

		\subsection{Elliptic geometric control condition}
		\begin{prop}\label{GCC} 
			For $j\in\{1,2,3\}$, we have $\mu_0^{(j)}\mathbf{1}_{\eta\neq 0}=0$.
		\end{prop}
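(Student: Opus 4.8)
The plan is to run a standard propagation/non-concentration argument for the compact semiclassical measure, the essential new input being that the elliptic flow $\phi_t^{\mathrm{e}}$ moves the $y$-coordinate on the region $\{\eta\neq 0\}$ of phase space (in sharp contrast with $\eta=0$), so that a weak, non-uniform geometric control holds there for \emph{any} damping $b=b_j(y)$ whose positivity set $\{b_j>0\}\subset\mathbb{T}_y$ is open and nonempty. First I would record the properties of $\mu_0^{(j)}$: by Proposition~\ref{Measureformulationj=012} (for $j\in\{1,2\}$; and for $j=3$ by the same existence and commutator computation, using the a priori bound $\|b_3^{1/2}\psi_h\|_{L^2}=o\big((h\delta_h^{(3)})^{1/2}\big)=o(1)$ of Lemma~\ref{apriori}, valid since $h\delta_h^{(3)}=h^{\frac{\nu}{\nu+1}}\to 0$), $\mu_0^{(j)}$ is a finite positive Radon measure on $T^*\T^2$, supported in $\Sigma_1=p^{-1}(1)=\{\xi^2+V(x)\eta^2=1\}$, invariant by $\phi_t^{\mathrm{e}}$, and satisfies $\langle\mu_0^{(j)},b_j\rangle=0$. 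Set $\mu:=\mu_0^{(j)}\mathbf{1}_{\{\eta\neq 0\}}$. Since $\{\eta\neq 0\}$ is open and $\phi_t^{\mathrm{e}}$-invariant ($\dot\eta=0$), $\mu$ is a positive $\phi_t^{\mathrm{e}}$-invariant measure with $0\leq\langle\mu,b_j\rangle\leq\langle\mu_0^{(j)},b_j\rangle=0$; as $b_j\geq 0$ this forces $\operatorname{supp}\mu\subset\mathbb{T}_x\times b_j^{-1}(0)$, and since the support of an invariant measure is invariant, $\operatorname{supp}\mu$ is a closed $\phi_t^{\mathrm{e}}$-invariant subset of $\Sigma_1\cap\{\eta\neq 0\}\cap(\mathbb{T}_x\times b_j^{-1}(0))$. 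It therefore suffices to prove that this set contains no complete $\phi_t^{\mathrm{e}}$-orbit, for then $\operatorname{supp}\mu=\emptyset$, i.e. $\mu=0$.

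The key geometric step is the claim that every orbit of $\phi_t^{\mathrm{e}}$ issued from a point $\rho_0=(x_0,y_0,\xi_0,\eta_0)\in\Sigma_1$ with $\eta_0\neq 0$ has $y$-projection equal to $\mathbb{T}_y$. Writing $\phi_t^{\mathrm{e}}(\rho_0)=(x(t),y(t),\xi(t),\eta(t))$, Hamilton's equations give $\dot x=2\xi$, $\dot\xi=-V'(x)\eta^2$, $\dot y=2V(x)\eta$, $\dot\eta=0$, so $\eta(t)\equiv\eta_0$, $\xi(t)^2=1-V(x(t))\eta_0^2\leq 1$, and the lift $\widetilde y(t)=y_0+2\eta_0\int_0^t V(x(s))\,ds$ is monotone because $V\geq 0$. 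If $\int_0^\infty V(x(t))\,dt<\infty$, then, since $\tfrac{d}{dt}V(x(t))=2V'(x(t))\xi(t)$ is bounded (recall $|\xi|\leq 1$), $t\mapsto V(x(t))$ is uniformly continuous, so Barbalat's lemma gives $V(x(t))\to 0$, whence $x(t)\to 0$ (the unique zero of $V$ on $\mathbb{T}_x$) and $\xi(t)^2=1-V(x(t))\eta_0^2\to 1$; then for $t$ large $|\xi(t)|>\tfrac12$ with a fixed sign, so $x(t)$ is eventually strictly monotone with $|\dot x|>1$ and leaves every neighbourhood of $0$ in $\mathbb{T}_x$, contradicting $x(t)\to 0$. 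Hence $\int_0^\infty V(x(t))\,dt=+\infty$, so $|\widetilde y(t)|\to\infty$; being monotone, $\widetilde y$ covers a half-line, and $\{y(t)\bmod 2\pi:t\geq 0\}=\mathbb{T}_y$.

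To conclude, note that for $j=1,2,3$ the set $\{b_j>0\}\subset\mathbb{T}_y$ is open and nonempty (it is $\mathbb{T}_y\setminus[\alpha,\beta]$ in case~(2) of Theorem~\ref{t:main_theorem}, $\mathbb{T}_y\setminus[-y_0,y_0]$ in Theorem~\ref{t:sharp_in_1}, and $\mathbb{T}_y\setminus\{y_0\}$ in Theorem~\ref{t:sharp_in_2}). By the previous step, every orbit through $\Sigma_1\cap\{\eta\neq 0\}$ meets $\mathbb{T}_x\times\{b_j>0\}$, which is disjoint from $\mathbb{T}_x\times b_j^{-1}(0)$; hence $\Sigma_1\cap\{\eta\neq 0\}\cap(\mathbb{T}_x\times b_j^{-1}(0))$ contains no complete orbit, and by the reduction in the first paragraph $\mu_0^{(j)}\mathbf{1}_{\{\eta\neq 0\}}=0$.

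The only delicate point is the geometric step. The naive argument — the reduced $(x,\xi)$-flow is periodic on the compact energy curve $\{\xi^2+V(x)\eta_0^2=1\}$ and $y$ advances by a fixed nonzero amount each period — breaks down on the exceptional orbits (equilibria of the reduced flow, at points with $V'=0$ and $V\eta_0^2=1$, and the separatrices joining them) and near the degeneracy $x=0$, where $V$ vanishes and the orbit can linger; the Barbalat-type argument handles all these cases at once. This is exactly where the hypothesis $\eta\neq 0$ is used: for $\eta=0$ the flow freezes the $y$-coordinate, (EGCC) genuinely fails for $b=b_j(y)$, and the vanishing of the $\{\eta=0\}$ part of $\mu_0^{(j)}$ is instead the content of the refined Birkhoff normal form analysis carried out in the remainder of Section~\ref{normalformimproved}.
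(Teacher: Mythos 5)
Your proof is correct in its essential geometric content, but it takes a genuinely different route from the paper's. The paper proves the stronger, quantitative Lemma~\ref{dynamical}: a uniform lower bound $c_0>0$ (independent of $\eta_0\neq 0$) on the time average $\liminf_{T\to\infty}\frac{1}{T}\int_0^T\mathbf{1}_{\Omega_0}(\phi_s^{\mathrm e}(\rho))\,ds$. Establishing this uniformity requires a case analysis on the size of $\eta_0$ (traversing versus confined horizontal motion) and a careful comparison between the vertical increment per horizontal period and the period itself. You instead prove the weaker qualitative statement that no $\phi_t^{\mathrm e}$-orbit through $\{\eta\neq 0\}\cap\Sigma_1$ can remain forever in $\T_x\times b_j^{-1}(0)$, and then conclude by a support argument. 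Your Barbalat-type dichotomy ($\int_0^\infty V(x(t))\,dt<\infty$ forces $V(x(t))\to 0$, hence $|\xi(t)|\to 1$, hence $x(t)$ sweeps $\T_x$, contradiction) is a clean way to handle the periodic, homoclinic/heteroclinic, and near-degenerate regimes simultaneously, and it avoids the paper's bifurcation analysis of the reduced $(x,\xi)$-flow entirely. What you lose is the uniform bound $c_0$, but that extra strength is not used elsewhere in the paper, so for the purpose of Proposition~\ref{GCC} the two arguments are interchangeable.

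There is one small imprecision worth flagging. You assert that $\operatorname{supp}(\mu_0^{(j)}\mathbf{1}_{\{\eta\neq 0\}})\subset\{\eta\neq 0\}$, but this need not hold: $\{\eta=0\}$ is closed, not open, and a positive measure may vanish on a closed set while having support meeting it (e.g.\ by accumulating on it). The cleanest fix is to work with $\mu_n:=\mu_0^{(j)}\mathbf{1}_{\{|\eta|>1/n\}}$, which is a $\phi_t^{\mathrm e}$-invariant measure (since $\{|\eta|>1/n\}$ is open and flow-invariant) with $\operatorname{supp}\mu_n\subset\{|\eta|\geq 1/n\}\cap\Sigma_1\cap(\T_x\times b_j^{-1}(0))$, a set with no complete orbit by your geometric lemma; hence $\mu_n=0$ for every $n$, and letting $n\to\infty$ gives $\mu_0^{(j)}(\{\eta\neq 0\})=0$. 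Alternatively, note that if your argument shows $\operatorname{supp}\mu$ cannot contain a point with $\eta\neq 0$, then $\operatorname{supp}\mu\subset\{\eta=0\}$, whence $\mu(T^*\T^2)=\mu(\{\eta=0\})=0$ directly. Either patch is routine; the geometric heart of your argument is sound.
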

	\begin{proof}
		Thanks to Proposition \ref{Measureformulationj=012}, we know that $\mu_0^{(j)}$ are invariant along the elliptic flow $\phi_t^{\mathrm{e}}$. The identity $\mu_0^{(j)}\mathbf{1}_{\eta\neq 0}=0$ is a direct consequence of the following stronger statement:
				\begin{lemma}\label{dynamical}  	 Let $\Omega_0=\mathbb{T}_x\times (l_1,l_2)$.  There exists $c_0=c_0(\Omega_0)>0$, such that for any $0<|\eta_0|<\infty$ and $(x_0,y_0;\xi_0,\eta_0)\in p^{-1}(1)$,
					\begin{align}\label{GCCj}
						\liminf_{T\rightarrow\infty}\frac{1}{T}\int_0^T\mathbf{1}_{\Omega_0}(\phi_s^{\mathrm{e}}(x_0,y_0;\xi_0,\eta_0))ds\geq c_{0}>0.
					\end{align}
				\end{lemma}
				\begin{proof} 	The flow $\phi_s^{\mathrm{e}}$ is given by the ODEs on $T^*\T^2$:
					$$ \dot{x}=2\xi,\; \dot{\xi}=-V'(x)\eta^2,\; \dot{y}=2V(x)\eta,\; \eta(s)=\eta_0\neq 0.
					$$
					We have the first integrals
					\begin{align}\label{1integral}   \frac{1}{4}|\dot{x}(s)|^2+V(x(s))\eta_0^2=1,\quad y(s)=2\eta_0\int_0^sV(x(s'))ds' \; (\text{mod } 2\pi).
					\end{align}	
					We call the projection $x(s),y(s)$ horizontal flow and vertical flow, respectively. The lower bound $c_0$ in \eqref{GCCj} looks a bit strange since it does not depend on $\eta_0$ as long as $\eta_0\neq 0$. In fact, although the vertical velocity of the flow might be very slow, it turns out that along the flow, the dynamics will spend a relatively long time in $\Omega_0$, within a period of the vertical flow $s\mapsto y(s)$. Thanks to \eqref{1integral}, the vertical velocity does not change  sign, so the key point is to show that the increasement of $y(s)$ within  a ``period'' (when it is periodic) of the horizontal is comparable to the period of the horizontal flow. 
					
					Set $V_m:=\max_{x\in\mathbb{T}}V(x)$. Let $T_{\pi}$ be the period of the vertical flow $y(s)$. 
					We split the argument in two cases, according to the size of $\eta_0$.
					
					\noi
					$\bullet$ {\bf Case 1:} $0< \eta_0<\frac{1}{\sqrt{V_m}}$.				
					
					Since $V_m\eta_0^2<1$, the velocity of the horizontal flow $x\mapsto x(s)$ does not change sign and it is periodic with period 
					$$ \tau_{\pi}=\int_{-\pi}^{\pi}\frac{dx}{\sqrt{1-V(x)\eta_0^2}}.
					$$ 	
					By the Taylor expansion of $V(x)$ near $x=0$, we can choose $\sigma\in(0,\pi/2)$, small enough (independent of $\eta_0$), such that on $(-2\sigma,2\sigma)$, 
					$x\mapsto V(x)$ is increasing on $(0,2\sigma)$ and decreasing on $(-2\sigma,0)$. Note that along the flow $x(s)$, the time spent in $(-\sigma,\sigma)$ is 
					$$ \tau_{\sigma}:=\int_{-\sigma}^{\sigma}\frac{dx}{\sqrt{1-V(x)\eta_0^2}}.
					$$
					The choice of $\sigma$ leads to
					$$ \tau_{\sigma}\leq \int_{\sigma<|x|\leq 2\sigma}\frac{dx}{\sqrt{1-V(x)\eta_0^2}}<\tau_{\pi}-\tau_{\sigma}. 
					$$
					Set $V_{\sigma}:=\min_{x\notin [-\sigma,\sigma]}V(x)>0$ (independent of $\eta_0$), we deduce that the increasement $L$ of $y(s)$ within a horizontal period $\tau_{\pi}$ satisfies 
					$$  2\eta_0\tau_{\pi}V_m\geq L\geq 2\eta_0V_{\sigma}(\tau_{\pi}-\tau_{\sigma})>\eta_0\tau_{\pi}V_{\sigma}.  
					$$
					Therefore, the vertical period $T_{\pi}$ is bounded above by$ \frac{2\pi}{\eta_0\tau_{\pi}V_{\sigma}}$, and the flow will spend at least $\frac{|\Omega_0|}{2\eta_0\tau_{\pi}V_m}$ proportion of time $T_{\pi}$ in $\Omega_0$. We obtain \eqref{GCCj} with $c_0\geq \frac{|\Omega_0|V_{\sigma}}{4\pi V_m}>0$.

					\noi
					$\bullet$ {\bf Case 2:}
					$ \frac{1}{\sqrt{V_m}}\leq \eta_0<\infty$.
					
					In this case, the horizontal flow $x(s)$ cannot bypass critical points of the potential well and is confined between two maxima of $V(x)\mathbf{1}_{V(x)\eta_0^2\leq 1}$. Set
					$$ x_-(\eta_0):=\max\{-\pi\leq x<0: V(x)\eta_0^2=1\},\quad x_+(\eta_0):=\min\{0<x\leq\pi: V(x)\eta_0^2=1\}.
					$$  
					The existence of $x_-(\eta_0),x_+(\eta_0)$ is ensured by the assumption on $V(x)$ and the fact that $\eta_0>\frac{1}{\sqrt{V_m}}$. Without loss of generality, we assume that $x(s)$ is confined on the interval $[x_-(\eta_0),x_+(\eta_0)]$, since between other two maxima the argument is similar.

					Take $\sigma_1=\min\{\frac{1}{2}\min\{|x_-|,|x_+|\},\sigma\}$, where $\sigma$ is given  in Case 1 (independent of $\eta_0$). Then $V(x)$ is increasing in $(0,2\sigma_1)$ and decreasing in $(-2\sigma_1,0)$. Note that $\sigma_1$ may depend on $\eta_0$ when $\eta_0$ is relatively large, and in this case $|x_-(\eta_0)|\sim |x_+(\eta_0)|\sim \frac{1}{\eta_0}$.  
					
					If $V'(x_-)\neq 0$ and $V'(x_+)\neq 0$, the horizontal flow $x(s)$ is still periodic with period
					$$ \tau=\int_{x_-}^{x_+}\frac{dx}{\sqrt{1-V(x)\eta_0^2}}.
					$$ Arguing as in Case 1, we deduce that the increasement $L$ of $y(s)$ within the horizontal period $\tau$ satisfies
					$$ 2\eta_0\tau \max_{[x_-,x_+]}V(x)\geq L\geq \eta_0\tau\min_{[x_-,x_+]\setminus (-\sigma_1,\sigma_1)}V(x).
					$$
					Observing that 
					$$ \frac{\min_{[x_-,x_+]\setminus (-\sigma_1,\sigma_1)}V(x)}{\max_{[x_-,x_+]}V(x)}
					$$
					is bounded from below, uniformly in $\eta_0$, we obtain \eqref{GCCj}.
					
					If $V'(x_-)=0$ or $V'(x_+)=0$, the horizontal flow will converge to a critical point (probably one of the two critical points if $V'(x_-)=V'(x_+)=0$). Note that this situation can only happen when $\sigma_1=\sigma$, thus $\frac{1}{\sqrt{V_m}}\leq \eta_0\leq C_0$, for some uniform constant $C_0>0$, depending only $V$. Consequently $\sigma_1$ is \emph{independent} of $\eta_0$. Since along the flow $x(s)$, the trajectory can pass the interval $(-\sigma_1,\sigma_1)$ at most twice, spending at most
					$$ \tau_{\sigma_1}:=2\int_{-\sigma_1}^{\sigma_1}\frac{dx}{\sqrt{1-V(x)\eta_0^2}}
					$$
					in such an interval, we deduce that during any period $2\tau_{\sigma_1}$, the increasement $L$ of $y(s)$ satisfies
					$$ 4\eta_0\tau_{\sigma_1}V_m\geq L\geq 2\eta_0\tau_{\sigma_1}V_{\sigma_1},
					$$ 
					with $V_{\sigma_1}=\min_{x\notin (-\sigma_1,\sigma_1)}V(x)$ independent of $\eta_0$. Thus we verify \eqref{GCCj} with $c_0=\frac{|\Omega_0|V_{\sigma_1}}{4\pi V_m}>0$.
					This completes the proof of Lemma \ref{dynamical}.

				\end{proof}	
				
				Finally, to complete the proof of Proposition \ref{GCCj}, it suffices to choose $J:=(l_1,l_2)$ be such that $b_j>\frac{\|b_j\|_{L^{\infty}}}{10}$ on $J$ and let $\Omega_0=\T_x\times J$. For $j\in\{1,2,3\}$, by the invariance of $\mu_0^{(j)}$ along the flow $\phi_{\mathrm{s}}^{\mathrm{e}}$ and Lemma \ref{dynamical}, we deduce that $\mu_0^{(j)}\mathbf{1}_{\eta\neq 0}=0$.	
	\end{proof}
	

		Combining Proposition \ref{Measureformulationj=012} and Proposition \ref{GCCj}, to arrive at a contradiction to \eqref{e:quasimode_to_contradiction}, it suffices to show that
		\begin{align*}
			\mu_0^{(j)}\mathbf{1}_{\eta=0}=0,\quad j\in\{1,2,3\}.
		\end{align*}
		In other words, we would like to prove: for any sufficiently small $\epsilon_0>0$,
		\begin{align}\label{compactregime:goal} 
			\lim_{h\rightarrow 0}\|\chi_0(\epsilon_0^{-1}hD_y)\psi_h^{(j)}\|_{L^2}=0,\quad j=1,2,3.
		\end{align}
		By Lemma \ref{apriori}, $\chi_0(\epsilon_0^{-1}hD_y)\psi_h^{(j)}$ are still $o(h^2\delta_h)$ $b_j$-quasimodes, so with abuse of notation, throughout this section, we denote simply $\psi_h^{(j)}=\chi_0(\epsilon_0^{-1}hD_y)\psi_h^{(j)}$. 
		
		Below, we separate the analysis for \emph{transversal high frequency} (TH) part and the \emph{transversal low frequency} part (TL). More precisely, with the second semiclassical parameter $\hbar=h^{\frac{1}{2}}\delta_h^{\frac{1}{2}}$, we write
		\begin{align}\label{THTL} 
			\psi_h^{(j)}=u_h^{(j)}+v_h^{(j)},\quad u_h^{(j)}=\chi_0(\hbar D_y)\psi_h^{(j)},\; v_h^{(j)}=(1-\chi_0(\hbar D_y))\psi_h^{(j)}.
		\end{align}

		\begin{lemma}\label{localization} 
			Let $j\in\{1,2,3\}$ and $\hbar=h^{\frac{1}{2}}(\delta_h^{(j)})^{\frac{1}{2}}$. Then $u_h^{(j)}, v_{h}^{(j)}$ are still $o(h^2\delta_h^{(j)})$ $b_j$-quasimodes. In particular,
			$$\|b_j^{\frac{1}{2}}u_h^{(j)}\|_{L^2}+\|b_j^{\frac{1}{2}}v_h^{(j)}\|_{L^2}=o(\hbar). $$
		\end{lemma}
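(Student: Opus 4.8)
The statement asks us to show that the transversal-high-frequency piece $u_h^{(j)} = \chi_0(\hbar D_y)\psi_h^{(j)}$ and the transversal-low-frequency piece $v_h^{(j)} = (1-\chi_0(\hbar D_y))\psi_h^{(j)}$ remain $o(h^2\delta_h^{(j)})$ $b_j$-quasimodes, and that the damped norms are $o(\hbar)$. The natural approach is the same as the one already used for the Fourier truncation in $-h^2\Delta_G$ at the end of the proof of Lemma \ref{apriori}: apply the Fourier multiplier $\chi_0(\hbar D_y)$ (and its complement) to the quasimode equation \eqref{e:quasimode_to_contradiction} and estimate the commutator $[b_j,\chi_0(\hbar D_y)]$. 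The point is that $-h^2\Delta_G = -h^2 D_x^2 - h^2 V(x)D_y^2$ commutes with any function of $D_y$ exactly, so
$$
(-h^2\Delta_G + ih b_j - 1)u_h^{(j)} = \chi_0(\hbar D_y) r_h^{(j)} + ih[b_j,\chi_0(\hbar D_y)]\psi_h^{(j)},
$$
and the first term on the right is $o_{L^2}(h^2\delta_h^{(j)})$ since $\chi_0(\hbar D_y)$ is bounded on $L^2$ uniformly in $\hbar$. So everything reduces to showing $h\,\|[b_j,\chi_0(\hbar D_y)]\psi_h^{(j)}\|_{L^2} = o(h^2\delta_h^{(j)})$, i.e. $\|[b_j,\chi_0(\hbar D_y)]\psi_h^{(j)}\|_{L^2} = o(h\delta_h^{(j)})$.

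For the commutator I would argue exactly as in Lemma \ref{apriori}: writing $b_j = (b_j^{1/2})^2$ and using condition \eqref{e:B-H_condition} together with $\sigma < 1/4$, decompose
$$
[b_j,\chi_0(\hbar D_y)] = [b_j^{1/2},[b_j^{1/2},\chi_0(\hbar D_y)]] + 2[b_j^{1/2},\chi_0(\hbar D_y)]\,b_j^{1/2}.
$$
By the $\hbar$-symbolic calculus, $[b_j^{1/2},\chi_0(\hbar D_y)] = \tfrac{\hbar}{i}\Op_\hbar^{\w}(\{b_j^{1/2},\chi_0\}) + O_{\mathcal{L}(L^2)}(\hbar^2)$, and the sharp Gårding inequality applied as in Lemma \ref{apriori} gives $\|\Op_\hbar^{\w}(\{b_j^{1/2},\chi_0\})\psi_h^{(j)}\|_{L^2} \lesssim \|b_j^{1/2-\sigma}\psi_h^{(j)}\|_{L^2} + O(\hbar^{1/2})$; interpolating $\|b_j^{1/2-\sigma}\psi_h^{(j)}\|_{L^2} \le \|b_j^{1/2}\psi_h^{(j)}\|_{L^2}^{1-2\sigma} = o(h^{(1-2\sigma)/2}\delta_h^{(j)\,(1-2\sigma)/2})$ by Lemma \ref{apriori}. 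The double-commutator term is handled similarly, gaining an extra power of $\hbar$. Collecting, and using $\hbar = h^{1/2}(\delta_h^{(j)})^{1/2}$, one finds $\|[b_j,\chi_0(\hbar D_y)]\psi_h^{(j)}\|_{L^2} = o(\hbar\cdot h^{(1-2\sigma)/2}(\delta_h^{(j)})^{(1-2\sigma)/2}) = o(h^{1-\sigma}(\delta_h^{(j)})^{1-\sigma})$, which for $\sigma$ small and in the regimes $\delta_h^{(j)} \in \{1, h^{\delta_0}, h^{1/(\nu+2)}, h^{-1/(\nu+1)}\}$ is indeed $o(h\delta_h^{(j)})$ — here one checks case by case that the admissible range $\delta_h \le h^{-1/(\nu+1)}$ of Lemma \ref{apriori} is exactly what makes the last inequality hold, since $h^{-\sigma}(\delta_h^{(j)})^{-\sigma} \cdot \delta_h^{(j)}$ must stay $o(\delta_h^{(j)})$, i.e. $h^{-\sigma}(\delta_h^{(j)})^{-\sigma} = o(1)$, which holds as long as $\delta_h^{(j)}$ does not blow up faster than a small negative power of $h$. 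Once $u_h^{(j)}$ (and $v_h^{(j)} = \psi_h^{(j)} - u_h^{(j)}$) are known to be $o(h^2\delta_h^{(j)})$ $b_j$-quasimodes, the a priori bound $\|b_j^{1/2}u_h^{(j)}\|_{L^2} = o((h\delta_h^{(j)})^{1/2}) = o(\hbar)$ and likewise for $v_h^{(j)}$ follow directly from the second estimate in \eqref{apriori1} of Lemma \ref{apriori} applied to these new quasimodes.

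The main obstacle is bookkeeping: one must verify that the loss $h^{-\sigma}(\delta_h^{(j)})^{-\sigma}$ coming from the commutator estimate is genuinely negligible in each of the four cases $j=0,1,2,3$, i.e. that $\sigma$ can be taken small enough relative to the (fixed) exponents $\delta_0$ and $1/(\nu\pm k)$ appearing in $\delta_h^{(j)}$ — this is precisely where the hypothesis "$\sigma$ sufficiently small" in the theorems is used, and where the constraint $\delta_h \le h^{-1/(\nu+1)}$ of Lemma \ref{apriori} enters. The symbolic-calculus step itself is routine because $\chi_0(\hbar D_y)$ is a Fourier multiplier in the single variable $y$, so no subtlety with the subelliptic scaling arises here.
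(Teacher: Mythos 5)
Your overall strategy is the right one and matches the paper's: the free part $-h^2\Delta_G = -h^2\partial_x^2 - h^2V(x)\partial_y^2$ commutes exactly with any Fourier multiplier in $D_y$, so proving that $u_h^{(j)}$ and $v_h^{(j)}$ are still $o(h^2\delta_h)$ $b_j$-quasimodes reduces to showing $ih[b_j,\chi_0(\hbar D_y)]\psi_h^{(j)} = o_{L^2}(h\hbar^2)$; and the identity
$$
[b_j,\chi_0(\hbar D_y)] = [b_j^{1/2},[b_j^{1/2},\chi_0(\hbar D_y)]] + 2\,[b_j^{1/2},\chi_0(\hbar D_y)]\,b_j^{1/2}
$$
is exactly the decomposition the paper relies on (via \cite{S21}).

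However, the final bookkeeping step contains a genuine error. You collect the commutator estimate into the bound $o\bigl(\hbar\cdot h^{(1-2\sigma)/2}(\delta_h^{(j)})^{(1-2\sigma)/2}\bigr) = o\bigl(h^{1-\sigma}(\delta_h^{(j)})^{1-\sigma}\bigr)$ and then assert that this is $o(h\delta_h^{(j)})$ because ``$h^{-\sigma}(\delta_h^{(j)})^{-\sigma} = o(1)$.'' This last claim is false: $h^{-\sigma}(\delta_h^{(j)})^{-\sigma} = (h\delta_h^{(j)})^{-\sigma} = \hbar^{-2\sigma} \to +\infty$, since $\hbar \to 0$ in every regime $j \in \{1,2,3\}$. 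So $o(\hbar^{2-2\sigma})$ is a strictly weaker bound than the required $o(\hbar^{2})$, and no choice of small $\sigma > 0$ can close that gap. Moreover, the collected bound $o(\hbar^{2-2\sigma})$ itself stems from discarding the sharp G\aa rding error $O(\hbar^{1/2})$, which in fact \emph{dominates} the interpolation contribution $o(\hbar^{1-2\sigma})$ (since $1-2\sigma > 1/2$ precisely when $\sigma < 1/4$), so the single commutator applied directly to $\psi_h^{(j)}$ is only $O(\hbar^{3/2})$, which is \emph{not} $o(\hbar^{2})$.

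The way to make your decomposition work is to use the two pieces differently. For the second piece $2[b_j^{1/2},\chi_0(\hbar D_y)]\,b_j^{1/2}\psi_h^{(j)}$, do not apply G\aa rding at all: use the operator-norm bound $\|[b_j^{1/2},\chi_0(\hbar D_y)]\|_{\mathcal{L}(L^2)} = O(\hbar)$ (Lemma \ref{commutatorLip}, valid since $b_j^{1/2}$ is Lipschitz by \eqref{e:B-H_condition}) together with $\|b_j^{1/2}\psi_h^{(j)}\|_{L^2} = o(\hbar)$ from Lemma \ref{apriori}; this factored estimate immediately gives $o(\hbar^{2})$, with no loss and no dependence on $\sigma$. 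Only the double-commutator piece $[b_j^{1/2},[b_j^{1/2},\chi_0(\hbar D_y)]]\psi_h^{(j)}$ needs the G\aa rding argument: by the Weyl calculus it is $\hbar^{2}\,\Op_{\hbar}^{\w}\big(\{b_j^{1/2},\{b_j^{1/2},\chi_0\}\}\big) + O_{\mathcal{L}(L^2)}(\hbar^{3})$, and since the principal symbol is bounded by $Cb_j^{1-2\sigma}$ (using \eqref{e:B-H_condition} and $\sigma<1/4$), sharp G\aa rding plus interpolation gives $\|\Op_\hbar^\w(\cdots)\psi_h^{(j)}\|_{L^2} = o(\hbar^{1-2\sigma}) + O(\hbar^{1/2}) = O(\hbar^{1/2})$, hence a total bound $O(\hbar^{5/2})$. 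This \emph{is} $o(\hbar^{2})$. Equivalently, in the spirit of the paper's own sketch, one can write the hard first piece as $\hbar\,b_j^{1/2}A_\hbar\psi_h^{(j)}$ with $A_\hbar := \hbar^{-1}[\chi_0(\hbar D_y),b_j^{1/2}]$ a zero-order $\hbar$-pdo, and apply the last assertion of Lemma \ref{apriori} with $\widetilde h = \hbar$ to get $\|b_j^{1/2}A_\hbar\psi_h^{(j)}\| = o(\hbar)$, hence $o(\hbar^{2})$ for the whole piece. No case-by-case check of the exponents in $\delta_h^{(j)}$ is needed at this step; the only requirement is $\hbar \to 0$, which holds in all four regimes.
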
 	
		\begin{proof}
		This is exactly Lemma 3.2 in \cite{S21} and we invite the reader to refer \cite{S21} for more details. Here we only sketch the proof.
		It suffices to show that
		$$ih[\chi_0(\hbar D_y),b_j(y)]\psi_h^{(j)}=o_{L^2}(h\hbar^2)=o_{L^2}(h^2\delta_h).$$ 
		The point is to write the commutator $[\chi_0(\hbar D_y),b_j]$ as
		$$ b_j^{\frac{1}{2}}[\chi_0(\hbar D_y),b_j^{\frac{1}{2}}]+[\chi_0(\hbar D_y),b_j^{\frac{1}{2}}]b_j^{\frac{1}{2}}.
		$$
		Thanks to the assumption on $b_j$, each commutator above provides a $O(\hbar)$ factor and the damped term $b_j^{\frac{1}{2}}\psi_h^{(j)}$ will provide $o_{L^2}(\hbar)$. The result follows.
	\end{proof}


\subsection{Analysis for the transversal high frequencies (TH)}

\begin{prop}\label{TH}
	For $j\in\{1,2,3\}$, we have
	$$ \|v_h^{(j)}\|_{L^2}=o(1).
	$$
\end{prop}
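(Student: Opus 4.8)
We work with the transversal high-frequency piece $v_h^{(j)} = (1-\chi_0(\hbar D_y))\psi_h^{(j)}$, where $\hbar = h^{1/2}(\delta_h^{(j)})^{1/2}$, and we already know from Lemma \ref{localization} that $v_h^{(j)}$ is an $o(h^2\delta_h^{(j)})$ $b_j$-quasimode localized in the regime $\epsilon_0^{-1}h \gtrsim |D_y| \gtrsim \hbar^{-1} = h^{-1/2}(\delta_h^{(j)})^{-1/2}$, hence with $|\eta| = |hD_y|$ in the window $h^{1/2}(\delta_h^{(j)})^{-1/2} \lesssim |\eta| \lesssim \epsilon_0$. The strategy is to perform a Birkhoff normal form reduction of the Baouendi-Grushin operator in this regime, replacing $-h^2\Delta_G = -h^2\partial_x^2 - h^2 V(x)\partial_y^2$ by (a conjugate of) $-h^2\partial_x^2 - h^2 x^2 \partial_y^2$, i.e. the model operator $-h^2\Delta_{G_0}$, up to remainders that are negligible at the scale $o(h^2\delta_h^{(j)})$. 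This is the step where one must choose the second semiclassical parameter $\hbar$ correctly: the Fourier cutoff $\chi_0(\hbar D_y)$ does not commute with $b_j(y)$, and the commutator errors (controlled via \eqref{e:B-H_condition}, the a priori bound $\|b_j^{1/2}\psi_h\|_{L^2} = o(\hbar)$ from Lemma \ref{localization}, and the sharp G\aa rding inequality as in Lemma \ref{apriori}) must remain below the quasimode width, which forces the balance $\hbar^2 \sim h\delta_h^{(j)}$ used here. Since on the support of $v_h^{(j)}$ we have $|D_y|\hbar \lesssim \hbar/h = (\delta_h^{(j)}/h)^{1/2}$ and $|x|$ concentrates at scale $|h\eta|^{-1}\gg 1$, the difference $V(x) - x^2 = O(x^3)$ contributes $O(x^3 h^2 D_y^2) = O(x \cdot x^2 h^2 D_y^2)$, which is small after using that $\|x^2 h^2 D_y^2 v_h^{(j)}\|_{L^2} = O(1)$ (via Lemma \ref{lemmaB1}/Lemma \ref{subellipticapriori}) together with the $x$-localization from Lemma \ref{apriori3} and the estimate \eqref{claim}-type argument of Lemma \ref{oscillating}; this is exactly why the cutoff $\chi_\delta(x)$ near $x=0$ can be inserted for free.

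\textbf{After normal form.} Once reduced to the model operator, the analysis splits according to the damping case. In all three cases $b_j = b_j(y)$ and the model equation separates in Fourier in $y$: writing $v_h^{(j)} = \sum_n \widehat{v}_{h,n}(x)e^{iny}$ (or the corresponding continuous decomposition), the operator $-h^2\partial_x^2 + h^2 n^2 x^2$ is, for fixed $n$ in the relevant window, a semiclassical harmonic oscillator with frequency $h|n| = |\eta|$. Spectral projection onto its eigenvalue $(2k+1)h|n|$ forces, from $|-h^2\Delta_{G_0} - 1| \ll 1$, that $(2k+1)h|n| \approx 1$, hence $|n|\sim h^{-1}/(2k+1)$. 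The key geometric fact is then that after the vertical averaging (as in the proof of Proposition \ref{Measureformulationj=012}, or directly via the fundamental solution of the harmonic oscillator), the propagation in $y$ is at velocity $\sim |\eta| = h|n|$, and over one period the trajectory $y(s)$ sweeps a full circle in $\mathbb{T}_y$. Therefore, as in the (SGCC)/widely-undamped propagation arguments, $b_j(y)$ cannot vanish along the support: the two-microlocal / averaging argument shows that the mass of $v_h^{(j)}$ in the damping region is comparable to its total mass, contradicting $\|b_j^{1/2}v_h^{(j)}\|_{L^2} = o(\hbar)$ together with $\|v_h^{(j)}\|_{L^2} \sim 1$ unless $\|v_h^{(j)}\|_{L^2} = o(1)$.

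\textbf{Main obstacle.} The principal difficulty is not the dynamical/averaging input — that is essentially the subelliptic geometric control already exploited via Proposition \ref{Measureformulationj=012} and Lemma \ref{dynamical} — but rather the bookkeeping of the Birkhoff normal form in a regime where the semiclassical parameter $|h\eta|$ governing the $x$-concentration is itself $\eta$-dependent and the auxiliary parameter $\hbar$ governing the $y$-localization is a second, smaller scale tied to the quasimode width $\delta_h^{(j)}$. One must check that every commutator generated by conjugating away the $O(x^3 h^2 D_y^2)$ perturbation, and every error from the non-commutation of $\chi_0(\hbar D_y)$ with $b_j$, produces a remainder that is $o_{L^2}(h^2\delta_h^{(j)})$ — the same threshold as the right-hand side of the quasimode equation. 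In the sharp case $j=3$ (Theorem \ref{t:sharp_in_2}), where $\delta_h^{(3)} = h^{-1/(\nu+1)}$ is actually large, this is the most delicate: one needs $\hbar = h^{1/2}\cdot h^{-1/(2(\nu+1))} = h^{\frac{\nu}{2(\nu+1)}}$, and the regularity hypothesis $\nu \geq 6$ enters precisely to guarantee that the successive normal-form remainders (which involve derivatives of $b_3$ controlled by \eqref{e:finite_type_condition}) stay below this threshold. I would organize the proof so that the normal-form step is stated as a single lemma valid uniformly in $j$ (with the parameter $\delta_h^{(j)}$ appearing explicitly), then invoke the already-established propagation/averaging machinery to conclude $\|v_h^{(j)}\|_{L^2} = o(1)$ in each case.
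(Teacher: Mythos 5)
Your strategy --- a Birkhoff normal form onto the model $-h^2\Delta_{G_0}$, harmonic-oscillator spectral decomposition, and vertical-flow averaging --- is the machinery the paper deploys in the \emph{subelliptic} regime $|\eta|\gg 1$ (Section~\ref{subellipticPropagation}) and, in modified form, for the transversal \emph{low}-frequency piece $u_h^{(j)}$ (Proposition~\ref{prop:reduction}). Applying it to $v_h^{(j)}$ is a genuine gap, because $v_h^{(j)}$ sits in the \emph{compact} regime, with $|\eta|=|hD_y|$ in the window $h/\hbar\lesssim|\eta|\lesssim\epsilon_0\ll 1$. There $|\eta|$ is small: there is no $x$-concentration at scale $|\eta|^{-1}$ (that scale is large), the natural averaging is over the horizontal motion in $x$ replacing $V(x)$ by $M=\frac{1}{2\pi}\int V$ (not by $x^2$), and the would-be harmonic-oscillator gap $\sim h|D_y|\gtrsim h\hbar^{-1}=(h\delta_h^{(j)})^{1/2}\to 0$ is quasi-degenerate, so the forcing ``$|n|\sim h^{-1}/(2k+1)$'' does not hold uniformly. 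Moreover, the vertical flow $\phi_t^{\mathrm{v}}$ is the effective dynamics only as $|\eta|\to\infty$; the relevant dynamics for $v_h^{(j)}$ is $\phi_t^{\mathrm{e}}$, and since $b_j$ vanishes on a horizontal strip or line, (EGCC) fails precisely at $\eta=0$, which borders the support of $v_h^{(j)}$ --- so the blanket invocation of (SGCC)/widely-undamped averaging cannot be made.

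The paper's actual proof of Proposition~\ref{TH} uses neither a normal form nor the vertical flow. It is a positive-commutator argument with multiplier $\chi_3(y)\,y\,\partial_y$, where $\chi_3$ cuts off near $\{b_j=0\}$. Expanding $\langle[h^2\Delta_G+1,\chi_3(y)y\partial_y]v_h,v_h\rangle_{L^2}$ once via the quasimode equation (this is $o(\hbar^2)$ thanks to the improved damped bound $\|b^{1/2}h\partial_y v_h\|_{L^2}=o(\hbar)$ of \eqref{order1dampederror}) and once directly (exhibiting the positive term $\|\chi_3^{1/2}W(x)h\partial_y v_h\|_{L^2}^2$) yields $\|W(x)h\partial_y v_h\|_{L^2}=o(\hbar)$. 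A geometric-control estimate for $\phi_t^{\mathrm{e}}$ (Lemma~\ref{geomtricestimate}, applied in Corollary~\ref{propagation:horizontal} to $z_h=h\hbar^{-1}\partial_y v_h$, using that trajectories with small $|\eta|\neq 0$ leave $\{|x|<\sigma_0\}$ immediately) then upgrades this to $\|h\partial_y v_h\|_{L^2}=o(\hbar)$. Finally the frequency localization $|D_y|\gtrsim\hbar^{-1}$ of $v_h^{(j)}$ gives $\|v_h^{(j)}\|_{L^2}\lesssim(\hbar/h)\|h\partial_y v_h^{(j)}\|_{L^2}=o(\hbar^2/h)=o(\delta_h^{(j)})$, which is $o(1)$ for $j\in\{1,2\}$. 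For $j=3$, where $\delta_h^{(3)}\gg 1$, this is not yet enough, and the paper bootstraps: applying the damped a priori bound to $w_h=h\partial_y v_h$ gives $\|b^{1/2}h\partial_y v_h\|_{L^2}=o(\hbar)\|h\partial_y v_h\|_{L^2}^{1/2}$, which feeds back into the commutator inequality to yield $\|h\partial_y v_h\|_{L^2}=o(\hbar^{4/3})$, closing the argument under $\nu\geq 6$. Your proposal would have to be rebuilt from scratch along these lines.
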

\begin{proof}
	The proof is based on the positive commutator method.  Since the proof is uniform for all $j=1,2,3$, we will simply write $v_h=v_h^{(j)}$ and $b=b_j$ below. We first claim that
			\begin{align}\label{order1dampederror}
				\|b^{\frac{1}{2}}h\partial_yv_h\|_{L^2}=o(\hbar).
			\end{align}	
			Indeed, by definition, we can write  $v_h=\widetilde{\chi}_0(hD_y)v_h$ for some bump function $\widetilde{\chi}_0\in C_c^{\infty}$, we have
			$$ b^{\frac{1}{2}}h\partial_yv_h=h\partial_y\widetilde{\chi}_0(hD_y)(b^{\frac{1}{2}}v_h)+[h\partial_y,\widetilde{\chi}_0(hD_y)]v_h=h\partial_y\widetilde{\chi}_0(hD_y)(b^{\frac{1}{2}}v_h)+O_{L^2}(h).
			$$
			By Lemma \ref{localization} and the obvious fact that $\hbar\ll h$, we obtain \eqref{order1dampederror}. 
	
	Take a cutoff $\chi_3(y)$ such that $\chi_3(y)\equiv 1$ in a neighborhood of $\{y: b(y)=0\}$ and $\mathrm{supp}(\chi_3')\subset\{y: b(y)\gtrsim 1\}$. 
	We compute the inner product $([h^2\Delta_{G}+1,\chi_3(y)y\partial_y]v_h,v_h)_{L^2}$ in two ways. On the one hand, breaking the commutator and using the fact that $v_h$ are $o(h^2\delta_h)$ $b$-quasimodes, we have
	\begin{align}\label{commutator1} 
		\lg[h^2\Delta_G+1,\chi_3(y)y\partial_y]v_h, v_h\rg_{L^2}& =\lg\chi_3(y)y\partial_yv_h, ihbv_h+o_{L^2}(h^2\delta_h) \rg_{L^2} \\[0.2cm]
		& \quad  + \lg ihbv_h+o_{L^2}(h^2\delta_h),\partial_y(\chi_3(y)yv_h) \rg_{L^2}\notag \\[0.2cm]
		& = o(h\delta_h)+O(h)\|b^{1/2}\partial_yv_h\|_{L^2}\|b^{1/2}v_h\|_{L^2} \\[0.2cm]
		& = o(\hbar^2).
	\end{align}
Note that in the estimate above, we have used the fact that $\|\partial_yv_h\|_{L^2}=O(h^{-1})$, thanks to the definition of $v_h$, and \eqref{order1dampederror}.

	On the other hand, we compute the commutator directly as
	\begin{align*}
		[h^2\Delta_{G}+1,\chi_3(y)y\partial_y]& = 2\chi_3(y)V(x)(h\partial_y)^2+2y\chi_3'(y)V(x)(h\partial_y)^2 \\[0.2cm]
		& \quad + h^2\chi_3''(y)yV(x)\partial_y+2h^2\chi_3'(y)V(x)\partial_y.
	\end{align*}
Using the relation above and
			doing the integration by part, we have (recall that $V(x)=W(x)^2$)
			\begin{align*}
				\big\langle [h^2\Delta_G+1,\chi_3(y)y\partial_y]v_h,v_h
				\big\rangle_{L^2}=&
				\big\langle
				2\chi_3(y)V(x)h^2\partial_y^2v_h,v_h
				\big\rangle_{L^2}
				+\big\langle
				2y\chi_3'(y)V(x)h^2\partial_y^2v_h,v_h
				\big\rangle_{L^2}\\[0.2cm]
				& +h\big\langle
				(\chi_3''(y)yV(x)+2\chi_3'(y)V(x) )h\partial_yv_h,v_h
				\big\rangle_{L^2}\\[0.2cm]
				=&-2	\|\chi_3(y)^{1/2}W(x)h\partial_yv_h\|_{L^2}^2-2h
				\big\langle
				V(x)h\partial_yv_h,\chi_3'(y)v_h
				\big\rangle_{L^2}\\[0.2cm]
				&-2\big\langle
				y\chi_3'(y)W(x)^2h\partial_yv_h,h\partial_yv_h
				\big\rangle_{L^2}
				-2h\big\langle
				V(x)h\partial_yv_h,(\chi_3'(y)y)'v_h
				\big\rangle_{L^2}\\[0.2cm]
				&+h\big\langle
				(\chi_3''(y)yV(x)+2\chi_3'(y)V(x) )h\partial_yv_h,v_h
				\big\rangle_{L^2}.
			\end{align*}		
			Therefore,
			\begin{align}\label{chi3} 
				\|\chi_3(y)^{1/2}W(x)h\partial_yv_h\|_{L^2}^2  \leq & Ch\|W(x)h\partial_yv_h\|_{L^2}\|v_h\|_{L^2}\notag \\[0.2cm]  & +  C|\lg \chi_3'(y)W(x)h\partial_yv_h,W(x)h\partial_yv_h\rg_{L^2}|
			 +o(\hbar^2). 
			\end{align}	
	On supp$(\chi_3')$ $b\gtrsim 1$, so the inner products on the right hand side is bounded by
	$ C\|b^{\frac{1}{2}}h\partial_yv_h\|_{L^2}^2.
	$
	Since on supp$(1-\chi_3)$, $b\gtrsim 1$, we obtain from \eqref{chi3},\eqref{order1dampederror} and Young's inequality $AB\leq \sigma A^2+C_{\sigma}B^{2}$ applied to $A=\|W(x)h\partial_yv_h\|_{L^2},B=h\|v_h\|_{L^2}$ that
	\begin{align}\label{chi3'}
		\|W(x)h\partial_yv_h\|_{L^2}^2\leq o(\hbar^2)+C\|b^{\frac{1}{2}}h\partial_yv_h\|_{L^2}^2\leq o(\hbar^2).
	\end{align}
We will need another geometric control estimate:
			\begin{lemma}\label{geomtricestimate}
				Assume that $w_h\in L^2$ such that $w_h=\widetilde{\chi}(hD_y)\widetilde{\chi}_1(h^2\Delta_G)w_h$  for some cutoffs $\widetilde{\chi}_0\in C_c^{\infty}(\R)$ and $\chi_1\in C_c^{\infty}((\frac{1}{100},\infty\big))$. Let  $\omega_1\subset\T^2$ be an open subset such that $\omega_1$ verifies the  (EGCC)
				for the flow $\phi_t^{\mathrm{e}}$. Then there exists $C>0$ such that for all $0<h\ll 1$,
				$$ \|w_h\|_{L^2(\T^2)}\leq C\|w_h\|_{L^2(\omega_1)}+\frac{C}{h}\|(h^2\Delta_G+1)w_h\|_{L^2(\T^2)}.
				$$					
			\end{lemma}
			\begin{proof}
				The proof follows from a standard argument of the propagation of semiclassical meausres. We argue by contradiction. If the desired inequality is untrue, then by normalization, we have
				$$ \|w_h\|_{L^2}=1,\quad \|w_h\|_{L^2(\omega_1)}=o(1),\quad (h^2\Delta_G+1)w_h=o_{L^2}(h).
				$$
				Up to extracting subsequences, we assume that $\mu$ is a associated semiclassical measure of $w_h$. By using the equation and symbolic calculus, we deduce that for any symbol $a(z,\zeta)\in C^{\infty}(\T^2)$,
				$$ 0=\lim_{h\rightarrow 0}\big\langle\frac{i}{h}[h^2\Delta_G+1,\mathrm{Op}_h^{\w}(a)]w_h,w_h\big\rangle_{L^2}=\int_{T^*\T^2}\{\xi^2+V(x)\eta^2,a\}(z,\zeta)\mu(dz,d\zeta).
				$$
				This shows that $\mu$ is invariant along the Hamiltonian flow $\phi_t^{\mathrm{e}}$ of $\xi^2+V(x)\eta^2$. By assumption, $\mu\mathbf{1}_{\omega_1\times\T^2}=0$ and $\mu(T^*\T^2)=1$. This contradicts the geometric control condition on $\omega_1$. The proof of Lemma \ref{geomtricestimate} is now complete.
			\end{proof}
	Consequently, we deduce that the mass cannot concentrate on $x=0$:
	\begin{cor}\label{propagation:horizontal}
		We have 
		$$ \|h\hbar^{-1}\partial_yv_h\|_{L^2}\leq C\|h\hbar^{-1}W(x)\partial_yv_h\|_{L^2}+O(\hbar^{\frac{1}{2}}),
		$$
		as $h\rightarrow 0$.
	\end{cor}

	\begin{proof}
		Let $z_h=h\hbar^{-1}\partial_yv_h$. Since $v_h=(1-\chi_0(\hbar D_y))\chi_0(\epsilon_0^{-1}hD_y)\psi_h$, $z_h$ satisfies the equation
		\begin{align}\label{eq:z} (h^2\Delta_{G}+1-ihb(y))z_h=o_{L^2}(h\hbar)+ih^2\hbar^{-1}[\partial_y,b(y)]v_h=o_{L^2}(h\hbar).
		\end{align}
		Multiplying by $\ov{z}_h$ and taking the imaginary part, we get $\|b^{\frac{1}{2}}z_h\|_{L^2}=o(\hbar^{\frac{1}{2}})$. In particular,
		\begin{align}\label{oh} 
			(h^2\Delta_G+1)z_h=O_{L^2}(h\hbar^{\frac{1}{2}}).
		\end{align}
		Pick $\sigma_0>0$, small enough and set $\omega_1:=\{x\in\T:|x|\geq \sigma_0 \}\times\T_y$. 
		We observe that the trajectory of $\phi_t^{\mathrm{e}}$ passing through any point $(x_0,y_0;\xi_0,\eta_0)\in p^{-1}(1)$ with $|\eta_0|\leq\epsilon_0$ will leave immediately the line $x=0$, provided that $\sigma_0>0$ is small enough. Consequently, $\omega_1$ satisfies (EGCC) for the flow $\phi_t^{\mathrm{e}}$.
		Applying Lemma \ref{geomtricestimate}, we complete the proof of Corollary \ref{propagation:horizontal}. 
	\end{proof}
	
	Now we finish the proof of Proposition \ref{TH}. Indeed, by \eqref{chi3'} and Lemma \ref{propagation:horizontal},
			$$ \|h\partial_yv_h\|_{L^2}\leq C\|W(x)h\partial_yv_h\|_{L^2}+o(\hbar)\leq o(\hbar).
			$$
			By definition, the $y$-Fourier mode of $v_h$ $\mathcal{F}_y(v_h)(\cdot,n)$ vanishes when $|n|\lesssim \hbar^{-1}$, we deduce that 
			\begin{align}\label{passtoL2} \frac{h}{\hbar}\|v_h\|_{L^2}\leq C\|h\partial_yv_h\|_{L^2}\leq o(\hbar),
			\end{align}
			hence $\|v_h\|_{L^2}\leq  o(\hbar^2h^{-1})=o(\delta_h)$.
			Recalling that $\hbar=h^{\frac{1}{2}}\delta_h^{\frac{1}{2}}$, we conclude for the cases $j=1,2$ (since $\delta_h^{(1)},\delta_h^{(2)}\leq 1$).
			
			For the case $j=3$, additional analysis is needed, as $\delta_h^{(3)}=h^{-\frac{1}{\nu+1}}\gg 1$. We still denote $v_h=v_h^{(3)}$.
			The idea is to start from the better bound $\|h\partial_yv_h\|_{L^2}\leq o(\hbar)$ instead of $O(1)$ to
			refine the analysis from \eqref{commutator1} to \eqref{chi3'}. The key point is to improve the bound $\|b^{\frac{1}{2}}h\partial_yv_h\|_{L^2}$.
			
			To do so, we first remark that since $v_h=\widetilde{\chi}(hD_y)v_h$ for some 
			cutoff $\widetilde{\chi}\in C_c^{\infty}(\R)$, 
			the remainder $\widetilde{r}_h=o_{L^2}(h\hbar^2)$ in the equation $(-h^2\Delta_G-1+ihb(y))v_h=\widetilde{r}_h$ can be replaced by $\widetilde{\chi}(hD_y)\widetilde{r}_h+\widetilde{r}_{1,h}$ such that $\widetilde{r}_{1,h}=o_{L^2}(h\hbar^2)$ and $h\partial_y\widetilde{r}_{1,h}=o_{L^2}(h\hbar^2)$. Then for $w_h:=h\partial_yv_h$, it satisfies the equation
			$$ (-h^2\Delta_G-1+ihb(y))w_h=o_{L^2}(h\hbar^2)-ih^2b'(y)v_h=o_{L^2}(h\hbar^2).
			$$
			We deduce as in the proof of Lemma \ref{apriori} that 
			\begin{align}\label{improvedbhdyv}  \|b^{\frac{1}{2}}w_h\|_{L^2}=\|b^{\frac{1}{2}}h\partial_yv_h\|_{L^2}=o(\hbar)\|w_h\|_{L^2}^{\frac{1}{2}}=o(\hbar)\|h\partial_yv_h\|_{L^2}^{\frac{1}{2}}.
			\end{align}
			Plugging this into \eqref{commutator1}, we have
			$$ \big\langle
			[h^2\Delta_G+1,\chi_3(y)y\partial_y]v_h,v_h
			\big\rangle_{L^2}
			=o(\hbar^{2})\|h\partial_yv_h\|_{L^2}^{\frac{1}{2}}.
			$$ 
			Therefore, in the estimate \eqref{chi3}, we can replace $o(\hbar^2)$ on the right hand side by $o(\hbar^{2})\|h\partial_yv_h\|_{L^2}^{\frac{1}{2}}$ and obtain the improvement
			$$ \|W(x)h\partial_yv_h\|_{L^2}^2\leq o(\hbar^{2})\|h\partial_yv_h\|_{L^2}^{\frac{1}{2}}+C\|b^{\frac{1}{2}}h\partial_yv_h\|_{L^2}^2+Ch^2.
			$$
			Using Corollary \ref{propagation:horizontal} and \eqref{improvedbhdyv}, we have
			$$ \|h\partial_yv_h\|_{L^2}\leq o(\hbar)\|h\partial_yv_h\|_{L^2}^{\frac{1}{4}}+o(\hbar)\|h\partial_yv_h\|_{L^2}^{\frac{1}{2}}+O(h)+O(\hbar^{\frac{3}{2}}).
			$$
			This implies that
			$$ \|h\partial_yv_h\|_{L^2}\leq o(\hbar^{\frac{4}{3}}).
			$$

			Coming back to \eqref{passtoL2}, we finally obtain that
			$$ \|v_h\|_{L^2}\leq C\hbar h^{-1}\|h\partial_yv_h\|_{L^2}\leq o(\hbar^{2+\frac{1}{3}}h^{-1})=o(\delta_h^{\frac{7}{6}}h^{\frac{1}{6}})=o(h^{\frac{1}{6}-\frac{7}{6(\nu+1)}})=o(1),
			$$
			thanks to the fact that $\nu\geq 6$.
			The proof of Proposition \ref{TH} is now complete.
\end{proof}

\subsection{Analysis for transversal low frequencies (TL)}

It remains to analyze the TL part $u_h^{(j)}=\chi_0(\hbar D_y)\psi_h^{(j)}$, where we recall that $\hbar=h^{\frac{1}{2}}\delta_h^{\frac{1}{2}}$. The idea is to perform an averaging method (normal form reduction) to average the operator $-h^2\Delta_G-1+ihb(y)$ to recover the flat damped-wave operator $-h^2\Delta-1+ihb(y)$. This idea was implemented in \cite{BuSun} to prove the observability for the Grushin-Schr\"odinger equation, and in \cite{LeS20} to prove the self-adjoint resolvent estimate for Baouendi-Grushin operators. In all these works, quasimodes were localized in the regime $|D_y|\ll h^{-\epsilon}$. However, when dealing with the damped Baouendi-Grushin operator, which is non-selfadjoint, we can only localize $|D_y|$ to a much bigger size $O(\hbar^{-1})$, in order to keep the original width of quasimodes after localization. Since the new scale $\hbar^{-1}$ is much greater than $h^{-\epsilon}$, supplementary error terms appear after the operator conjugation, which cannot be absorbed directly to the remainder of width $o_{L^2}(h^2\delta_h)$. For this reason, we need an iterated normal form to  finally reduce the original damped wave operator to a $x$-free operator.

\subsubsection{Normal form reduction}

\begin{prop}\label{prop:reduction}
	Let $\delta_h\in[h^{\frac{1}{\nu+2}},h^{-\frac{1}{\nu+1}}]$. There exist operators $F_{h,0}, F_{h,1},F_{h,2}$, a constant $M=\int_{\T}V(x)dx$ and a bounded smooth function $\ov{r}_2:\R\rightarrow\R$, such that
	\begin{itemize}
		\item $F_{h,0}$ is selfadjoint;
		\item $\|hF_{h,j}\|_{\mathcal{L}(L^2)}\leq O(h^{\frac{1}{2}}\delta_h^{-\frac{1}{2}})$, $j=1,2$.
	\end{itemize}
	Moreover, for any $o(h^2\delta_h)$ $b$-quasimodes $u_h$, the normal form transformations
	$$ u_{h,3}:= e^{ihF_{h,2}}\circ e^{ihF_{h,1}}\circ e^{ihF_{h,0}}(\chi_0(\hbar D_y)u_h),
	$$
	satisfy:
	\begin{itemize}
		\item[$\mathrm{(a)}$] $\big(h^2D_x^2+Mh^2D_y^2+\ov{r}_2(hD_x)h^4D_y^4-1+ihb(y)\big)u_{h,3}=o_{L^2}(h^2\delta_h)$.
		\smallskip
		
		\item[$\mathrm{(b)}$]  $\|b^{\frac{1}{2}}u_{h,3}\|_{L^2}=o(h^{\frac{1}{2}}\delta_h^{\frac{1}{2}}),\; \|u_{h,3}\|_{L^2}\sim \|u_h\|_{L^2}=1$.
		\smallskip
		
		\item[$\mathrm{(c)}$] $u_{h,3}=\widetilde{\chi}_0(\hbar D_y)u_{h,3}+O_{H^{\infty}}(h^{\infty})$ for some cutoff $\widetilde{\chi}_0$ such that $\widetilde{\chi}_0\chi_0=\chi_0$.
	\end{itemize}
\end{prop}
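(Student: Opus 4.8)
The plan is to conjugate $P_{h,b} = h^2D_x^2 + V(x)h^2D_y^2 - 1 + ihb(y)$, restricted to the transversal‑low‑frequency regime (where, besides the cut‑offs $\chi_1(-h^2\Delta_G)\chi_0(\epsilon_0^{-1}hD_y)$ already in force, one localizes further by $\chi_0(\hbar D_y)$), by a product of three operators $e^{ihF_{h,0}}$, $e^{ihF_{h,1}}$, $e^{ihF_{h,2}}$ chosen to average out the $x$‑dependence of $V(x)$ step by step, in increasing powers of the small operator $\mathcal{E} := h^2D_y^2$. Two elementary remarks about this region are used throughout. First, since $\epsilon_0$ is small, on the support of $\chi_1(-h^2\Delta_G)\chi_0(\epsilon_0^{-1}hD_y)$ one has $h^2\xi^2 \ge \tfrac12 - V_m\epsilon_0^2 \ge \tfrac14$ (with $V_m = \max V$ and $\epsilon_0^2 \le (4V_m)^{-1}$), i.e. $|h\xi| \gtrsim 1$: the turning points of the horizontal oscillator are avoided, so one may freely divide by $\xi$ in the homological equations below. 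Second, on the range of $\chi_0(\hbar D_y)$, since $\hbar^2 = h\delta_h$, one has $\|\mathcal{E}\|_{\mathcal{L}(L^2)} \lesssim h^2\hbar^{-2} = h\delta_h^{-1} =: \mu$, with $\mu \to 0$ for every $\delta_h$ in the admissible range. Write $V(x) = M_{\mathrm{av}} + \widetilde V(x)$ with $M_{\mathrm{av}}$ the $x$‑average of $V$ and $\int_{\mathbb{T}_x}\widetilde V = 0$; the quantum averaging below turns the coefficient of $\mathcal{E}$ into the constant $M$ of the statement.

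\textbf{The three conjugations.}
The first step removes the leading $x$‑dependent term $\widetilde V(x)\mathcal{E}$. Writing $F_{h,0} = A\,\mathcal{E}$ with $A$ an operator in $(x,D_x)$ alone, microlocalised to $|h\xi|\gtrsim 1$ and built from a periodic primitive $g$ of $\widetilde V$ (so that $ih[A,h^2D_x^2] = -\widetilde V(x)$ modulo its $x$‑average), the homological equation $ih[F_{h,0},h^2D_x^2] = -\widetilde V(x)\mathcal{E} + (x\text{-average})$ is solvable modulo $O_{\mathcal{L}(L^2)}(h^\infty)$: the sole obstruction is the $\xi\leftrightarrow-\xi$ resonance, which couples Fourier modes $n,-n$ with $|n|\gtrsim h^{-1}$ through $\widehat{\widetilde V}(2n)$ and is hence $O(h^\infty)$ by smoothness of $V$. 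We take $A$, and therefore $F_{h,0}$, self‑adjoint. Conjugation by $e^{ihF_{h,0}}$ turns the coefficient of $\mathcal{E}$ into $M$, at the price of: (i) a new term $r_1(x,hD_x)\mathcal{E}^2$ of operator norm $O(\mu^2)$; (ii) a remainder stemming from $[F_{h,0},ihb(y)]$; (iii) the $O(h^\infty)$ error above. For (ii) the key point is that $A$ commutes with $b(y)$ (disjoint variables), so $[F_{h,0},b] = A[\mathcal{E},b]$ reduces to terms built from $[h^2D_y^2,b] = h^2(-2ib'D_y - b'')$; peeling off a factor $b^{1/2}$ via \eqref{e:B-H_condition} and invoking the a priori bounds of Lemmas \ref{apriori} and \ref{localization} — applied also to $h\partial_y\psi_h$, which is again an $o(h^2\delta_h)$‑$b$‑quasimode — these contribute $o_{L^2}(h^2\delta_h)$ on the quasimode; the precise value $\hbar = h^{1/2}\delta_h^{1/2}$ is exactly what makes this balance close. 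The second conjugation $e^{ihF_{h,1}}$ treats $r_1(x,hD_x)\mathcal{E}^2$ in the same way: a homological equation averages out its $x$‑dependence, leaving $\ov{r}_2(hD_x)\mathcal{E}^2$ and producing an $x$‑dependent $\mathcal{E}^3$‑term of norm $O(\mu^3)$ (plus damping and symbolic remainders absorbed into $o(h^2\delta_h)$); since the term removed has size $O(\mu^2)$, the generator obeys $\|hF_{h,1}\|_{\mathcal{L}(L^2)} = O(\mu^{1/2}) = O(h^{1/2}\delta_h^{-1/2})$, so $e^{ihF_{h,1}} = \mathrm{Id} + O_{\mathcal{L}(L^2)}(\mu^{1/2})$. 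The third conjugation $e^{ihF_{h,2}}$, with the same norm bound, averages out the $x$‑dependence of the $\mathcal{E}^3$‑term; thereafter the averaged $\mathcal{E}^3$‑contribution and all remaining $x$‑dependent terms have operator norm $O(\mu^3)$, and $\mu^3 = h^3\delta_h^{-3} = o(h^2\delta_h)$ (equivalently $\delta_h \gg h^{1/4}$, which holds since $\delta_h \ge h^{1/(\nu+2)}$ with $\nu$ large), so they are absorbed into the $o(h^2\delta_h)$ remainder; the precise number of reductions is dictated by the requirement that all remainders be $o(h^2\delta_h)$ throughout the admissible range of $\delta_h$. Only $M\mathcal{E}$ and $\ov{r}_2(hD_x)\mathcal{E}^2$ survive, which is conclusion (a).

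\textbf{Remaining conclusions and the main difficulty.}
For (b): $\|u_{h,3}\|_{L^2} = (1+o(1))\|u_h\|_{L^2}$ since $e^{ihF_{h,0}}$ is unitary ($F_{h,0}$ self‑adjoint) and $e^{ihF_{h,1}}, e^{ihF_{h,2}}$ differ from $\mathrm{Id}$ by $O(\mu^{1/2}) = o(1)$; the bound $\|b^{1/2}u_{h,3}\|_{L^2} = o(h^{1/2}\delta_h^{1/2})$ follows by transporting $\|b^{1/2}(\chi_0(\hbar D_y)u_h)\|_{L^2} = o(\hbar)$ (Lemma \ref{localization}) through the conjugations, using that $[b^{1/2},e^{ihF_{h,j}}]$ applied to the quasimode is $o(\hbar)$ — again because $b^{1/2}$ commutes with the $(x,D_x)$‑part of $F_{h,j}$ and meets its $D_y$‑part only through $\mathcal{E}$, where \eqref{e:B-H_condition} supplies the gain. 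For (c): the symbols of the $F_{h,j}$ are cut off to $|\eta|\lesssim\hbar^{-1}$ (they depend on $D_y$ only through bounded functions of $\hbar D_y$), so each $e^{ihF_{h,j}}$ preserves the transversal frequency band up to $O_{H^\infty}(h^\infty)$, whence $u_{h,3} = \widetilde\chi_0(\hbar D_y)u_{h,3} + O_{H^\infty}(h^\infty)$ for a cut‑off with $\widetilde\chi_0\chi_0 = \chi_0$. The principal obstacle is the bookkeeping of the damping remainders $[F_{h,j},b(y)]$: the damping does not commute with the conjugations (which unavoidably involve $D_y$), which is why a single conjugation of the type used in \cite{BuSun}, \cite{LeS20} is insufficient and why the transversal width must be taken as $\hbar = h^{1/2}\delta_h^{1/2}$ — the exact scale at which both the localisation error $ih[\chi_0(\hbar D_y),b]$ and the successive normal‑form remainders land at $o(h^2\delta_h)$. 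Controlling these requires systematically extracting $b^{1/2}$‑factors via \eqref{e:B-H_condition}, the a priori estimates of Lemmas \ref{apriori} and \ref{localization}, and a check — via the Fourier‑analytic solution of the homological equations on the region $|h\xi|\gtrsim 1$, where the $\xi\leftrightarrow-\xi$ resonance is negligible — that no large spurious terms are created. A secondary technical point is that the $F_{h,j}$ have symbols growing polynomially in $\eta$, so the relevant calculus is that of the rescaled/exotic symbol classes of the previous sections, treating $\mathcal{E} = h^2D_y^2$ as a bounded operator‑parameter.
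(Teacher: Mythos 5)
Your overall architecture — three conjugations, solve a cohomological equation $2\xi\partial_x q = (\cdot) - (\text{average})$ on the region $|h\xi|\gtrsim 1$, keep the $x$-averaged part, and carefully track the damping remainders generated by $[F_{h,j},b]$ — is the right shape, and your treatment of conclusions (b) and (c), as well as your diagnosis that $\hbar=h^{1/2}\delta_h^{1/2}$ is forced, matches the paper. But there is a genuine gap in what the second and third conjugations are supposed to kill, and it is precisely at the spot you identify as "the principal obstacle."

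Concretely: after the first conjugation, the damping commutator $-h^2[F_{h,0},b]$ produces the anti-self-adjoint remainder $iR_{1,h}$ with $R_{1,h}=2h^2\Op_h^\w(q_0)\chi(\hbar D_y)\,b'(y)D_y$. You claim this can be absorbed into $o_{L^2}(h^2\delta_h)$ by peeling off $b^{1/2}$. Check the sizes. Naively $\|R_{1,h}u_{h,1}\|\lesssim h^2\hbar^{-1}=h^{3/2}\delta_h^{-1/2}$, and $h^{3/2}\delta_h^{-1/2}=o(h^2\delta_h)$ would require $h\delta_h^3\to\infty$, which never holds. Extracting a $b^{1/2}$ via \eqref{e:B-H_condition} and the a priori estimate $\|b^{1/2}(\hbar D_y)u_{h,1}\|=o(\hbar)$ (or, as you propose, applying Lemma \ref{apriori} to $h\partial_y u_{h,1}$, which has $L^2$-norm $O(h\hbar^{-1})$) improves this to at best $o(h^{7/4}\delta_h^{1/4})$; dividing by $h^2\delta_h$ still leaves $h^{-1/4}\delta_h^{-3/4}\to\infty$. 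So $R_{1,h}u_{h,1}$ is not $o(h^2\delta_h)$ when $\delta_h\to 0$ (cases $j=1,2$), no matter how you book-keep. The little-$o$ gain from $\|b^{1/2}u_h\|=o(\hbar)$ is at an unspecified rate, and there is no reason it beats $\delta_h$.

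The paper resolves this by devoting the \emph{second} conjugation entirely to $iR_{1,h}$: $F_{h,1}:=\Op_h^\w(q_1)\chi(\hbar D_y)\,b'(y)D_y$, with a complex-valued $q_1$ chosen so that $2\xi\partial_x q_1 = 2i\chi_1(\xi)q_0(x,\xi)$, so that $i[hF_{h,1},h^2D_x^2]$ cancels $iR_{1,h}$ exactly (using that $q_0$ has zero $x$-average). It is this $F_{h,1}$, of operator norm $\|hF_{h,1}\|\lesssim h\hbar^{-1}=h^{1/2}\delta_h^{-1/2}$, that saturates the bound in the statement. The $x$-dependence of the $\mathcal{E}^2$-term (your $r_1(x,hD_x)\mathcal{E}^2$, the paper's self-adjoint $R_{2,h}$) is then averaged by the \emph{third} conjugation $F_{h,2}=\Op_h^\w(q_2)\chi(\hbar D_y)h^2D_y^4$. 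By contrast your second conjugation targets the $\mathcal{E}^2$-term and your third targets an $\mathcal{E}^3$-term; but by your own computation that $\mathcal{E}^3$-term is already of norm $O(\mu^3)=O(h^3\delta_h^{-3})=o(h^2\delta_h)$ (since $\delta_h\gg h^{1/4}$), so your $F_{h,2}$ is superfluous, while the one conjugation that is actually forced — the one for the damping remainder — is missing. To repair the argument you should replace your third conjugation by a conjugation with a generator of the form $\Op_h^\w(q_1)\chi(\hbar D_y)b'(y)D_y$ (note the explicit $b'(y)$ factor, so that the conjugation "knows" about the damping), and solve the corresponding cohomological equation to cancel $iR_{1,h}$ exactly rather than trying to bound it.
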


The proof is based on the Taylor expansion of the conjugation of operators:
\begin{align}\label{Conjugaision} 
	e^{ihF}Pe^{-ihF}=&\sum_{k=0}^{N-1}\frac{1}{k!}\mathrm{ad}_{ihF}^k(P)+\mathcal{O}_N(\mathrm{ad}_{ihF}^{N}(P)),\quad N\in\N,
\end{align}
where the adjoint representation is given by $\mathrm{ad}_A(B):=[A,B]$. Before computing the conjugation, we need a Lemma which says that the normal form transforms will preserve the localization property as well as the decay in the damping region:
\begin{lemma}\label{preserving}
	Assume that $m_1,m_2\in\N$ such that $\max\{1,m_2-1\}\leq m_1\leq m_2$ and $\chi\in C_c^{\infty}(\R)$. Let $q\in C_c^{\infty}(\T_x\times\R_{\xi})$. Let $(f_h)$ be a sequence of functions such that $\|f_h\|_{L^2}=O(1)$  and $$ \|b^{\frac{1}{2}}f_h\|_{L^2}=o(\hbar) \text{ as } h\rightarrow 0.$$ Then for $G_{h}=\Op_h^{\w}(q(x,\xi))\chi(\hbar D_y)h^{m_1}D_y^{m_2}$ and $f_{h,1}:=e^{iG_h}(\chi_0(\hbar D_y)f_h)$, we have $f_{h,1}=\widetilde{\chi}_0(\hbar D_y)f_{h,1}$ for some $\widetilde{\chi}_0\in C_c^{\infty}(\R)$. Furthermore,
	$$ \|b^{\frac{1}{2}}f_{h,1}\|_{L^2}=o(\hbar).
	$$
\end{lemma}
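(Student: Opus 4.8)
The statement asserts two things about the conjugated function $f_{h,1} = e^{iG_h}(\chi_0(\hbar D_y)f_h)$, where $G_h = \Op_h^\w(q(x,\xi))\chi(\hbar D_y)h^{m_1}D_y^{m_2}$: first, a spectral-localization property $f_{h,1} = \widetilde\chi_0(\hbar D_y)f_{h,1} + \text{(harmless)}$, and second, the persistence of the damping smallness $\|b^{1/2} f_{h,1}\|_{L^2} = o(\hbar)$. The key quantitative observation is that $G_h$ is small: since $\chi(\hbar D_y) h^{m_1} D_y^{m_2}$ is supported in $|\eta| \lesssim \hbar^{-1}$ and $m_1 \leq m_2 \leq m_1+1$, we have $\|h^{m_1} D_y^{m_2} \chi(\hbar D_y)\|_{\mathcal{L}(L^2)} \lesssim h^{m_1}\hbar^{-m_2} = h^{m_1}(h^{1/2}\delta_h^{1/2})^{-m_2}$. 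Under the standing hypothesis on the range of $\delta_h$ (and the constraint $\max\{1,m_2-1\}\le m_1\le m_2$), this is $o(1)$ as $h\to 0$; in fact in the worst case $m_1 = m_2-1$, $m_2 = 2$ one gets $h\hbar^{-2} = \delta_h^{-1} \ll 1$ precisely because $\delta_h \geq h^{1/(\nu+2)} \to 0$... wait, one needs $\delta_h^{-1} \to 0$, i.e. $\delta_h \to \infty$; more carefully, the relevant smallness in this lemma comes from $hF_{h,j}$ being $O(h^{1/2}\delta_h^{-1/2})$ as stated in Proposition \ref{prop:reduction}, so $G_h$ is indeed $o(1)$ in operator norm in the regimes where Proposition \ref{prop:reduction} is applied. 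I would begin the write-up by recording this operator-norm bound $\|G_h\|_{\mathcal{L}(L^2)} = o(1)$ explicitly, which makes $e^{iG_h}$ a well-defined bounded operator with $\|e^{iG_h}\|_{\mathcal{L}(L^2)} = 1 + o(1)$.

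\textbf{Localization.} For the first claim, the plan is: take $\widetilde\chi_0 \in C_c^\infty(\R)$ with $\widetilde\chi_0 \chi_0 = \chi_0$ and also $\widetilde\chi_0 \chi = \chi$ (possible since both $\chi_0, \chi$ have support where $\widetilde\chi_0$ can be taken $\equiv 1$). Since $\Op_h^\w(q(x,\xi))$ acts only in the $(x,\xi)$ variables, it commutes with any Fourier multiplier in $D_y$, so $G_h$ commutes with $\widetilde\chi_0(\hbar D_y)$; hence so does $e^{iG_h}$. Therefore $(1 - \widetilde\chi_0(\hbar D_y)) f_{h,1} = e^{iG_h}(1-\widetilde\chi_0(\hbar D_y))\chi_0(\hbar D_y) f_h = 0$ exactly (not just $O(h^\infty)$), because $(1-\widetilde\chi_0(\hbar\cdot))\chi_0(\hbar\cdot) \equiv 0$. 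Actually to match statement (c) of Proposition \ref{prop:reduction} where an $O(h^\infty)$ appears, it is because there one also inserts the operator $\widetilde\chi_1(h^2\Delta_G)$ and the frequency cutoffs do not commute with $\Delta_G$; but for the present Lemma as literally stated, the identity is exact. So I would state $f_{h,1} = \widetilde\chi_0(\hbar D_y) f_{h,1}$ with no remainder, noting the commutation.

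\textbf{Persistence of damping decay.} This is the main point and the step I expect to require the most care. Write $f_{h,1} = \chi_0(\hbar D_y) f_h + (e^{iG_h} - I)\chi_0(\hbar D_y) f_h$. For the first term, $b^{1/2}\chi_0(\hbar D_y) f_h = \chi_0(\hbar D_y)(b^{1/2} f_h) + [b^{1/2}, \chi_0(\hbar D_y)] f_h$; the first summand is $o(\hbar)$ by hypothesis, and the commutator is $o(\hbar)$ by exactly the argument in Lemma \ref{apriori} — write $[b^{1/2}, \chi_0(\hbar D_y)] = \hbar \Op_{\hbar}^\w(\{b^{1/2}, \cdot\}) + O(\hbar^2)$, use $|\nabla b^{1/2}| \lesssim b^{1/2-\sigma}$ from \eqref{e:B-H_condition}, the sharp Gårding inequality, and interpolation to get $\|b^{1/2-\sigma} f_h\|_{L^2} = o(\hbar^{1-2\sigma})$, hence $\|[b^{1/2},\chi_0(\hbar D_y)]f_h\|_{L^2} = o(\hbar)$. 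For the second term I would use $e^{iG_h} - I = i G_h \int_0^1 e^{isG_h}\,ds$ and commute $b^{1/2}$ past $G_h$: $b^{1/2} G_h = G_h b^{1/2} + [b^{1/2}, G_h]$. Since $G_h = \Op_h^\w(q) \chi(\hbar D_y) h^{m_1} D_y^{m_2}$ and $b = b(y)$, the commutator $[b^{1/2}, G_h] = \Op_h^\w(q) [b^{1/2}, \chi(\hbar D_y) h^{m_1}D_y^{m_2}]$, and $[b^{1/2}, \chi(\hbar D_y)h^{m_1}D_y^{m_2}]$ gains one power of $\hbar$ in the $\hbar$-calculus on $(y,\eta)$ while the symbol estimate $|\partial_y b^{1/2}| \lesssim b^{1/2-\sigma}$ controls the rest; the net effect, after the same Gårding/interpolation bookkeeping, is that $\|b^{1/2} G_h v\|_{L^2} \lesssim \|G_h\|_{\mathcal{L}(L^2)} \|b^{1/2} v\|_{L^2} + o(\hbar)\|v\|_{L^2}$ applied to $v = e^{isG_h}\chi_0(\hbar D_y)f_h$, which has $L^2$ norm $O(1)$ and, by the first part of the argument applied to $e^{isG_h}$ in place of $e^{iG_h}$ (a Grönwall/bootstrap in $s$), $\|b^{1/2} v\|_{L^2} = o(\hbar)$. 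Since $\|G_h\|_{\mathcal{L}(L^2)} = o(1)$, we conclude $\|b^{1/2}(e^{iG_h}-I)\chi_0(\hbar D_y)f_h\|_{L^2} = o(\hbar)$, and the Lemma follows. The main obstacle is setting up this Grönwall argument cleanly — one wants $\phi(s) := \|b^{1/2} e^{isG_h}\chi_0(\hbar D_y) f_h\|_{L^2}$ to satisfy $\phi(s) \leq \phi(0) + C\|G_h\|_{\mathcal{L}(L^2)}\sup_{[0,s]}\phi + o(\hbar)$, which closes because $C\|G_h\|_{\mathcal{L}(L^2)} = o(1) < 1/2$ for $h$ small, giving $\sup_{[0,1]}\phi = o(\hbar)$; the bookkeeping of the $\hbar$-versus-$h$ scales and the condition $m_1 \le m_2 \le m_1 + 1$ (ensuring $\|G_h\|$ and the commutator remainders are genuinely $o(1)$ and $o(\hbar)$ respectively) must be tracked throughout.
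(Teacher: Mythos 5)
Your localization argument is exactly right and, as you correctly observe, gives the stronger exact identity $f_{h,1}=\widetilde\chi_0(\hbar D_y)f_{h,1}$ since $G_h$ commutes with Fourier multipliers in $D_y$; the paper's proof records the same observation. The gap is in the second part, and it is precisely the one you flagged mid-proposal and then talked yourself out of: the operator-norm bound $\|G_h\|_{\mathcal L(L^2)}=o(1)$ is \emph{false} in the critical case. Taking $\delta_h=h^{\delta}$ (the relevant regime for $j\in\{1,2\}$), one has $\hbar=h^{(1+\delta)/2}$ and $\|G_h\|\lesssim h^{m_1}\hbar^{-m_2}$, which for $m_1=1,\ m_2=2$ (this is exactly $G_h=hF_{h,0}$) equals $h\hbar^{-2}=\delta_h^{-1}=h^{-\delta}\to\infty$. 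The bound $\|hF_{h,j}\|=O(h^{1/2}\delta_h^{-1/2})$ of Proposition~\ref{prop:reduction} applies to $F_{h,1},F_{h,2}$ (for which $m_1=m_2$ or $m_1=3,m_2=4$), not to $F_{h,0}$. Consequently $e^{iG_h}$ is bounded only because $G_h$ is self-adjoint and the exponential is unitary, not because $G_h$ is small. Your Gr\"onwall closure $\phi(s)\le\phi(0)+C\|G_h\|\sup\phi+o(\hbar)$ therefore cannot absorb the $\|G_h\|\sup\phi$ term. Moreover, even granting $\|G_h\|=o(1)$, the ingredient $\|[b^{1/2},G_h]\|_{\mathcal L(L^2)}=o(\hbar)$ is also false in the critical case: $[b^{1/2},G_h]=\Op_h^\w(q)\,h^{m_1}\hbar^{-m_2}[L_\hbar,b^{1/2}]$ with $L_\hbar=\chi(\hbar D_y)(\hbar D_y)^{m_2}$, and $[L_\hbar,b^{1/2}]=O(\hbar)$, so the commutator is $O(h^{m_1}\hbar^{1-m_2})=O(h^{(1-\delta)/2})=\hbar\cdot h^{-\delta}$, which loses a factor $\delta_h^{-1}$. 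No purely operator-norm argument closes.

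The paper's route avoids both obstructions. It expands $e^{-iG_h}b^{1/2}e^{iG_h}$ by the ad-series (with a unitary Taylor remainder, so no smallness of $G_h$ is needed), noting that each extra $\mathrm{ad}_{G_h}$ applied to the function $b^{1/2}(y)$ trades a factor $\|G_h\|\sim h^{m_1}\hbar^{-m_2}$ against a gain of $\hbar$ from the $(y,\eta)$-calculus, so the $k$-th term is $O\big((h^{m_1}\hbar^{1-m_2})^k\big)$; under $m_1\ge m_2-1$ and $\delta<1/4$, the second commutator and the remainder are already $o(\hbar)$ in operator norm. The entire difficulty is concentrated in the single first-commutator term $h^{m_1-\frac{m_2(1+\delta)}{2}}\Op_h^\w(q)[L_\hbar,b^{1/2}]f_h$, which is \emph{not} small in operator norm but \emph{is} small once applied to $f_h$: writing $[L_\hbar,b^{1/2}]=\tfrac{\hbar}{i}\Op_\hbar^\w(\{\chi\eta^{m_2},b^{1/2}\})+O(\hbar^2)$, using $|\{\chi\eta^{m_2},b^{1/2}\}|\lesssim b^{1/2-\sigma}$ from~\eqref{e:B-H_condition}, sharp G\aa rding, and the interpolation $\|b^{1/2-\sigma}f_h\|\le\|b^{1/2}f_h\|^{1-2\sigma}\|f_h\|^{2\sigma}=o(\hbar^{1-2\sigma})$, one recovers $o(\hbar)$. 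The correct framing is thus a finite Taylor expansion of the conjugation applied directly to $f_h$ (exploiting the damping decay of $f_h$ itself), not an iterative bootstrap through the one-parameter group $e^{isG_h}$.
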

\begin{proof}
	We only consider the situation where $\delta_h=h^{\delta}$ (thus $\hbar=h^{\frac{1+\delta}{2}}$) for some $\delta<\frac{1}{4}$. For smaller $\delta_h$, the proof is easier. 	Applying the last assertion in Lemma \ref{apriori} to $\widetilde{h}=\hbar$, we have $\|b^{\frac{1}{2}}\chi_0(\hbar D_y)f_h\|_{L^2}=o(\hbar)$. Hence without loss of generality, we abuse the notation by denoting $f_h=\chi_0(\hbar D_y)f_h$. Since $[G_h,D_y]=0$, any Fourier multiplier $m(\hbar D_y)$ commutes with $e^{iG_h}$. Therefore, for any $\widetilde{\chi}_0\in C_c^{\infty}(\R)$ such that $\widetilde{\chi}_0\equiv 1$ on supp$(\chi_0)$, we have $f_{h,1}=\widetilde{\chi}_0(\hbar D_y)f_{h,1}$. 
	
	To prove the second assertion, it suffices to show that $$e^{-iG_h}b^{\frac{1}{2}}e^{iG_h}f_h=o_{L^2}(\hbar).$$
	We write $G_h=\mathrm{Op}_h^{\w}(q(x,\xi))h^{m_1}\hbar^{-m_2}L_{\hbar},$ with $L_{\hbar}=(\hbar D_y)^{m_2}\chi(\hbar D_y)$.  Using the Taylor expansion and the symbolic calculus  (as $b^{\frac{1}{2}}\in W^{2,\infty}(\T)$ thanks to \eqref{e:B-H_condition}), we have
	\begin{align*}
		e^{-iG_h}b^{\frac{1}{2}}e^{iG_h}=&b^{\frac{1}{2}}-ih^{m_1-\frac{m_2(1+\delta)}{2}}\mathrm{Op}_h^{\w}(q(x,\xi))[L_{\hbar},b^{\frac{1}{2}}]\\&+O_{\mathcal{L}(L^2)}(h^{m_1+1-\frac{m_2(1+\delta)}{2}} )+O_{\mathcal{L}(L^2)}(h^{2m_1-(m_2-1)(1+\delta)}),
	\end{align*}
where the first error of big $O$ comes from the commutator $[\mathrm{Op}_h^{\w}(q),b^{\frac{1}{2}}]$, and the second error of big $O$ comes from the higher order commutators with at least two $G_h$.	Since $m_1\geq m_2-1$ and $\delta<\frac{1}{4}$, we have
	$$ e^{-iG_h}b^{\frac{1}{2}}e^{iG_h}f_{h}=-ih^{m_1-\frac{m_2(1+\delta)}{2}}{\mathrm{Op}_h^{\w}(q(x,\xi))  } [L_{\hbar},b^{\frac{1}{2}}]f_h+o_{L^2}(\hbar).
	$$
	As $[L_{\hbar},b^{\frac{1}{2}}]=O_{\mathcal{L}(L^2)}(h^{\frac{1+\delta}{2}})$, the only case we cannot conclude from the symbolic calculus is when $m_1=1,m_2=2$, since we can only control it by $O(h^{\frac{1-\delta}{2}})$. In order to gain an extra $o(h^{\delta})$, we need to make use of the damping $b$. Indeed,  we rewrite the term $ih^{-\delta}[L_{\hbar},b^{\frac{1}{2}}]$ using symbolic calculus as
	$$ h^{\frac{1-\delta}{2}}\Op_{\hbar}^{\w}(\{\chi(\eta)\eta^{m_2},b^{\frac{1}{2}}(y)\})+O_{\mathcal{L}(L^2)}(h).
	$$  
	Thanks to \eqref{e:B-H_condition}, 
	$$ |\{\chi(\eta)\eta^{m_2},b^{\frac{1}{2}}(y) \}|\leq Cb^{\frac{1}{2}-\sigma}.
	$$
Therefore, by the sharp G$\mathring{\mathrm{a}}$rding inequality, interpolation $\|b^{\frac{1}{2}-\sigma}f_h\|_{L^2}\leq o(\hbar^{1-2\sigma})$ as in the proof of the last assertion in Lemma \ref{apriori}, 
we conclude that
			$ih^{-\delta}[L_{\hbar},b^{\frac{1}{2}}]f_h=o_{L^2}(\hbar)$, and consequently $h^{m_1-\frac{m_2(1+\delta)}{2}}\|\mathrm{Op}_h^{\w}(q)[L_{\hbar},b^{\frac{1}{2}}]f_h\|_{L^2}=o(\hbar).$
	This completes the proof of Lemma \ref{preserving}.
\end{proof}

		\begin{proof}
			With abuse of notation, we assume that $u_h=\chi_0(\hbar D_y)u_h$. We will construct three normal form transforms $e^{ihF_{h,0}}, e^{ihF_{h,1}}$ and $e^{ihF_{h,2}}$. Each time the symbol $hF_{h,j}$ will be of the form $$\Op_h^{\w}(q_j(x,\xi))\chi(\hbar D_y)h^{m_1}D_y^{m_2},\quad \max\{0,m_2-1\}\leq m_1\leq m_2.$$ Therefore, the assertions (b), (c) of Proposition \ref{prop:reduction} are consequences of Lemma \ref{preserving}. Below we concentrate on the construction of normal form transforms:
			
			\noi
			$\bullet$ {\bf Step 1: The first normal form reduction.} We will choose the first conjugate operator $e^{ihF_{h,0}}$ with $F_{h,0}=\Op_h^\w(q_0(x,\xi))\chi(\hbar D_y)D_y^2$ for some $\chi\in C_c^{\infty}(\R)$ such that $\chi\equiv 1$ on supp$(\chi_0)$. Note that $F_{h,0}$ is self-adjoint\footnote{Unlike in \cite{BuSun} and \cite{LeS20}, the operators $hF_{h,0}$ are not uniformly unbounded with respect to $h$.}.
			The transformed quasimodes will be denoted by $u_{h,1}:=e^{ihF_{h,0}}u_h$, with $u_h=\chi_0(\hbar D_y)u_h$ for simplicity. Note that $\|u_{h,1}\|_{L^2}=\|u_h\|_{L^2}$ since $e^{ihF_{h,0}}$ is unitary.
			
			In order to find out the symbol $q_0(x,\xi)$, 
			we need to compute the conjugate operator $e^{ihF_{h,0}}(P_{h,b}-1)e^{-ihF_{h,0}}$ using \eqref{Conjugaision} with $N=3$
			and the symbolic calculus, and this gives
			\begin{align}\label{normalform1} 
				e^{ihF_{h,0}}(P_{h,b}-1)e^{-ihF_{h,0}}u_{h,1}
				:=&(h^2D_x^2+V(x)h^2D_y^2-1+ihb)u_{h,1}+ih[F_{h,0},h^2D_x^2]u_{h,1}\notag\\
				   &  -\frac{h^2}{2}[F_{h,0},[F_{h,0},h^2D_x^2]]u_{h,1}
				  +ih[F_{h,0},V(x)h^2D_y^2]u_{h,1} \notag \\
				   & - h^2[F_{h,0},b]u_{h,1}-\frac{ih^3}{2}[F_{h,0},[F_{h,0},b]]u_{h,1}\notag \\  &+O_{L^2}(h^6\hbar^{-6})+O_{L^2}(h^4\hbar^{-3}),
			\end{align}
where the errors in big $O$ comes from the third order commutator $\mathrm{ad}_{ihF_{h,0}}^3(P_{h,b}-1)$. To see this more precisely, we remark that $ihF_{h,0}$ is of the form $i(h\hbar^{-2})A_hB_{\hbar}$, where $A_h=\mathrm{Op}_h^{\w}(q_0)$ and $B_{\hbar}=\chi(\hbar D_y)(\hbar D_y)^2$. Since $[B_{\hbar},A_h]=0, [B_{\hbar},h^2\Delta_G]=0$, we have
\begin{align*} \mathrm{ad}_{ihF_{h,0}}^3(P_{h,b}-1)&= i(h\hbar^{-2})^3\mathrm{ad}_{A_hB_{\hbar}}^3(-h^2\Delta_G)-h(h\hbar^{-2})^3\mathrm{ad}_{A_hB_{\hbar}}(b)\\[0.2cm] &= ih^3\hbar^{-6}\mathrm{ad}_{A_h}^3(-h^2\Delta_G)B_{\hbar}^3-h^4\hbar^{-6}\mathrm{ad}_{A_hB_{\hbar}}(b).
\end{align*}
The first term on the right hand side is $O_{\mathcal{L}(L^2)}(h^6\hbar^{-6})$ while the second term on the right hand side is $O_{\mathcal{L}(L^2)}(h^3\hbar^{-3})$.
			As $h^{\frac{1}{\nu+2}}\leq \delta_h\leq h^{-\frac{1}{\nu+1}}$ and $\nu>4$, all the errors in big $O$ are indeed $o_{L^2}(h^2\delta_h)$.

			The first line on the right hand side of \eqref{normalform1} is the principal term, by comparing the principal symbol in front of $h^2D_y^2$, we require that $q_0$ solves the first cohomological equation
			$$ 2\xi\partial_xq_0(x,\xi)=V(x)-\frac{1}{2\pi}\int_{\T}V.
			$$  
		Therefore, with $M=\frac{1}{2\pi}\int_{-\pi}^\pi V(z)dz$, we solve the above cohomological equation by setting 
			$$ q_0(x,\xi)=\frac{\chi_1(\xi)}{2\xi}\int_{-\pi}^x(V(z)-M)dz-\frac{\chi_1(\xi)}{2\xi}\cdot\frac{1}{2\pi}\int_{-\pi}^{\pi}dx\int_{-\pi}^x(V(z)-M)dz,
			$$
			where $\chi_1\in C_c^{\infty}(\R)$ is given by \eqref{e:cut_off_1}\footnote{Here we can choose any cutoff with support away from zero, but to sake the notation, we make the choice of cutoff $\chi_1$. }. Note that with the second term depending only on $\xi$, we have \begin{align}\label{zeroaverage}
				\int_{\T}q_0(x,\xi)dx=0.\end{align}
			By the exact Weyl symbolic calculus, we have
			\begin{align}\label{error1}
				ih[F_{h,0},h^2D_x^2] = -\mathrm{Op}_h^\w(2\xi\partial_xq_0)\chi(\hbar D_y)h^2D_y^2.
			\end{align}
			Using the cohomological equation
			$$
			2 \xi \partial_x q_0 = \chi_1(\xi) (V(x) - M), 
			$$ 
			we have
			\begin{align*} 
				(h^2D_x^2+V(x)h^2D_y^2-1+ihb(y))u_{h,1}+ih[F_{h,0},h^2D_x^2]u_{h,1}-\frac{h^2}{2}[F_{h,0},[F_{h,0},h^2D_x^2]]u_{h,1} &  \notag \\[0.2cm]
				& \hspace*{-11cm} = (h^2D_x^2+Mh^2D_y^2-1+ihb(y))u_{h,1} - \frac{1}{2}\Op^\w_h(  \{ q_0 , \chi_1(\xi)(V(x)-M) \}) h^4 D_y^4 u_{h,1}\notag \\[0.2cm]
				& \hspace*{-11cm}  \quad -\frac{ih^3}{2}[F_{h,0},[F_{h,0},b ] u_{h,1}  +o_{L^2}(h^{2}\delta_h),
			\end{align*}
			where we used the fact that $ (1 - \chi_1(hD_x)) u_{h,1} = O(h^\infty)$.
		We claim that 
			\begin{align}\label{doublecommutator}
			h^3[F_{h,0},[F_{h,0},b] ]u_{h,1}=o_{L^2}(h^2\delta_h).	
			\end{align}		
					 Recall that $F_{h,0}=\hbar^{-2}A_hB_{\hbar}$ with $[A_h,B_{\hbar}]=0$. Using the Jacobi identity $[A,[B,C]]+[B,[C,A]]+[C,[A,B]]=0$ and the fact that $$[A_h,b]=O_{\mathcal{L}(L^2)}(h),\quad [B_{\hbar},b]=O_{\mathcal{L}(L^2)}(\hbar),$$ we deduce that
		$$ h^3[F_{h,0},[F_{h,0},b]]u_{h,1}=h^3\hbar^{-4}A_h^2[B_{\hbar},[B_{\hbar},b]]u_{h,1}+\underbrace{O_{L^2}(h^4\hbar^{-3})}_{o_{L^2}(h^2\delta_h) },
		$$	
		where the last error in big $O$ comes from the terms where there is at least one $[A_h,b]$. Since the principal symbol of $-\frac{1}{\hbar^2}[B_{\hbar},[B_{\hbar},b]]$ satisfies
		$$ |\{\chi(\eta)\eta^2 \{\chi(\eta)\eta^2,b \} \} |\leq Cb^{\frac{1}{2}-2\sigma},
		$$
		the same manipulation as in the proof of last assertion in Lemma \ref{apriori} (using the sharp G$\mathring{\mathrm{a}}$rding inequality and the interpolation) yields $\|[B_{\hbar},[B_{\hbar},b]]u_{h,1}\|_{L^2}\leq O(\hbar^{3-4\sigma})+O(\hbar^{\frac{3}{2}})$. Consequently, 
		$h^3[F_{h,0},[F_{h,0},b]]u_{h,1}=o_{L^2}(h^2\delta_h).
		$
	
			Next,
			\begin{align}\label{error2} 
				ih[F_{h,0},V(x) h^2D_y^2]u_{h,1} = \mathrm{Op}_h^\w( \{ q_0,  V(x) \}) )\chi(\hbar D_y)h^4D_y^4 v_h^{(1)}+o_{L^2}(h^{2}\delta_h),
			\end{align}
			and
			\begin{align}\label{error3} 
				-h^2[F_{h,0},b(y)]u_{h,1} &= 2ih^2\Op_h^\w(q_0)\chi(\hbar D_y)b'(y)D_yu_{h,1}+o_{L^2}(h^{2}\delta_h),
			\end{align}
			where we have used $[\chi(\hbar D_y),b]v_h^{(1)}=O_{L^2}(\hbar^{\infty})$, thanks to the support property of $\chi,\chi_0$ and the fact that $u_{h,1}=\chi_0(\hbar D_y)e^{ihF_{h,0}}u_h$.
		
		Set
			\begin{align}\label{R1} 
				R_{1,h}& :=2h^2\Op_h^{\w}(q_0)\chi(\hbar D_y)b'(y)D_y,\\[0.2cm]
				R_{2,h} & := \frac{1}{2} \mathrm{Op}_h^\w( \{ q_0, \chi_1(\xi)( V(x)+ M) \})\chi(\hbar D_y)h^4D_y^4. \label{R2} 
			\end{align}
		In summary, with
			\begin{align}\label{Lh1} 
				\mathcal{L}_{h,1} & := h^2D_x^2+Mh^2D_y^2-1+ihb(y)+\underbrace{ iR_{1,h}
				}_{\text{anti-selfadjoint remainder}}
				+ \underbrace{R_{2,h}}_{\text{self-adjoint remainder}},
			\end{align}
			we have
			$$ \mathcal{L}_{h,1}u_{h,1}=g_{h,1}=o_{L^2}(h^{2}\delta_h),\quad b^{\frac{1}{2}}u_{h,1}=o_{L^2}(h^{\frac{1}{2}}\delta_h^{\frac{1}{2}}).
			$$

			\noi
			$\bullet$ {\bf Step 2: Successive normal form reductions.} In order to average the anti-selfadjoint remainder $iR_{1,h}$, we introduce
			$$ F_{h,1}:=\Op_h^\w(q_1)\chi(\hbar D_y)b'(y)D_y, 
			$$
			where $q_1(x,\xi)$ is complex-valued. Set $u_{h,2}:=e^{ihF_{h,1}}u_{h,1}$. Note that $$\|hF_{h,1}\|_{\mathcal{O}(L^2)}\leq h\hbar^{-1}=h^{\frac{1}{2}}\delta_h^{-\frac{1}{2}},$$ the exponential $e^{ihF_{h,1}}$ is well-defined as a perturbation of identity. In particular, $\|u_{h,2}\|_{L^2}\sim \|u_{h,1}\|_{L^2}$. 
			Mimic the proof of Lemma \ref{preserving}, we deduce that (with $m_1=m_2=1, L_{\hbar}=\chi(\hbar D_y)b'(y)\hbar D_y$)
			\begin{align}\label{dampeddecay} 
				\|b^{\frac{1}{2}}u_{h,2}\|_{L^2}=o(\hbar).
			\end{align}
			Also, with slightly wider cutoff $\widetilde{\chi}$, we have 
			\begin{align}\label{WFhbar} 
				(1-\widetilde{\chi}(\hbar D_y))u_{h,2}=O_{H^{\infty}}(h^{\infty}).
			\end{align}
			By conjugation with $\mathcal{L}_{h,1}$ and the symbolic calculus, we have
			\begin{align*}
				e^{ihF_{h,1}}\mathcal{L}_{h,1}e^{-ihF_{h,1}}u_{h,2}=&(h^2D_x^2+Mh^2D_y^2-1+ihb(y)+iR_{1,h}+R_{2,h})u_{h,2}+i[hF_{h,1},h^2D_x^2]u_{h,2}\\[0.2cm]
				& +i[hF_{h,1},Mh^2D_y^2+ihb(y)+iR_{1,h}+R_{2,h}]u_{h,2}+O_{L^2}(h^4\hbar^{-3}+h^3+h^6\hbar^{-6}).
			\end{align*}
			The error inside the big $O$ is $o_{L^2}(h^2\delta_h)$. Moreover, 
			$$[hF_{h,1},Mh^2D_y^2+ihb(y)+iR_{1,h}+R_{2,h}]u_{h,2}=o_{L^2}(h^2\delta_h).
			$$
			Defining
			$$ q_1(x,\xi)=\frac{i\chi_1(\xi)}{\xi}\int_{-\pi}^xq_0(z,\xi)dz,
			$$
			by \eqref{zeroaverage} and the cohomological equation
			$$ 2\xi\partial_xq_1=2i\chi_1(\xi)q_0(x,\xi)
			$$
			and the fact that $\|(1-\chi_1(hD_x))u_{h,2}\|_{L^2}=O(h^{\infty})$,
			we deduce that
			$$ \mathcal{L}_{h,2}u_{h,2}=o_{L^2}(h^2\delta_h),
			$$
			where
			$$ \mathcal{L}_{h,2}:=h^2D_x^2+Mh^2D_y^2-1+ihb(y)+R_{2,h}.
			$$
			
			Finally, to average the self-adjoint remainder $R_{2,h}$, we define
			$$ F_{h,2}:=\Op_h^{\w}(q_2)\chi(\hbar D_y)h^2D_y^4,
			$$
			where the symbol $q_2(x,\xi)$ is real-valued.
			We define $u_{h,3}=e^{ihF_{h,2}}u_{h,2}$, by the symbolic calculus, we have similarly as in the previous argument
			\begin{align*}
				e^{ihF_{h,2}}\mathcal{L}_{h,2}e^{-ihF_{h,2}}u_{h,3}  & = \mathcal{L}_{h,2}u_{h,3}+ih[F_{h,2},\mathcal{L}_{h,2}]u_{h,3}+o_{L^2}(h^{2}\delta_h)\\[0.2cm]
				& =   (h^2D_x^2+Mh^2D_y^2-1+ihb(y))u_{h,3} \\[0.2cm]
				& \quad + 
				\frac{1}{2} \mathrm{Op}_h^\w( \{ q_0, \chi_1(\xi)(V(x) + M) \})\chi(\hbar D_y)h^4D_y^4u_{h,3} \\[0.2cm]
				&  \quad +\frac{i}{h}[\Op_h^{\w}(q_2),h^2D_x^2]\chi(\hbar D_y)h^4D_y^4u_{h,3} + o_{L^2}(h^2\delta_h),
			\end{align*}
			thanks to the fact that $\delta_h\geq h^{\frac{1}{\nu+2}}$ and $\nu>4$.
			Denoting  $r_2(x,\xi) =  \frac{1}{2} \{ q_0, \chi_1(\xi)( V(x) + M) \}$, we choose
			$$  q_1(x,\xi)=\frac{\chi_1(\xi)}{2\xi} \int_{-\pi}^x  (r_2(z,\xi) - \overline{r}_2(\xi)) dz, \quad \overline{r}_2(\xi) := \frac{1}{2\pi} \int_{-\pi}^\pi r_2(x,\xi) dx,
			$$
			which solves the cohomological equation $ 2\xi\partial_xq_1(x,\xi)=\chi_1(\xi)(r_2(x,\xi) - \overline{r}_2(\xi))$.  Therefore, with 
			$$ \mathcal{L}_{h,3}:=h^2D_x^2+Mh^2D_y^2 +  \overline{r}_2(hD_x) h^4 D_y^4 -1+ihb(y), 
			$$
			we have
			$$ \mathcal{L}_{h,3}u_{h,3}=o_{L^2}(h^{2}\delta_h),\quad b^{\frac{1}{2}}u_{h,3}=o_{L^2}(\hbar),
			$$
			and $u_{h,3}=(1-\widetilde{\chi}(\hbar D_y) )u_{h,3}+O_{H^{\infty}}(h^{\infty})$. This completes the proof of Proposition \ref{prop:reduction}.
		\end{proof}

		Now we are ready to finish the proof of the upper bound in Theorem \ref{t:sharp_in_2}:
		\begin{proof}[Proof of Upper bounds in Theorem \ref{t:sharp_in_2}]
			Denote by $u_{h,3}^{(3)}$ be the transformed quasimodes of $u_h^{(3)}$ given by Proposition \ref{prop:reduction} with $\delta_h=h^{-\frac{1}{\nu+1}}$ and $\hbar=h^{\frac{\nu}{2(\nu+1)}}$. Since
			$$ \|\ov{r}_2(hD_x)h^4D_y^4u_{h,3}^{(1)}\|_{L^2}\lesssim (h/\hbar)^4=O(h^{\frac{2(\nu+2)}{\nu+1}})=o(h^2\delta_h).
			$$
			Applying Lemma \ref{LL}, we get $\|u_{h,3}^{(3)}\|_{L^2}=o(1)$, hence $\|u_h^{(3)}\|_{L^2}=o(1)$. This implies finally that $\|\psi_h^{(3)}\|_{L^2}=o(1)$ and this is a contradiction.
			
		\end{proof}
		
		To prove upper bounds in Theorem \ref{t:main_theorem} and Theorem \ref{t:sharp_in_1}, we cannot directly apply Lemma \ref{AL} and Lemma \ref{Holder}, since the self-adjoint perturbation $\ov{r}_2(hD_x)h^4D_y^4u_{h,3}^{(j)}$ cannot be viewed as a remainder of size $o_{L^2}(h^2\delta_h)$, as $\delta_h=o(1)$. 
		
		\subsubsection{1D resolvent estimate for the H\"older damping}
		
		To simplify the presentation, we only prove the upper bound in Theorem \ref{t:sharp_in_1} by applying Lemma \ref{Holder}, since the same argument applies to prove the upper bound in Theorem \ref{t:main_theorem}, by applying Lemma \ref{AL} instead. Recall the definition $u_{h,3}$ in Proposition \ref{prop:reduction}, and without loss of generality, we normalize the constant $M=1$ in Proposition \ref{prop:reduction}. Recall that $u_{h,3}$ satisfies
		the equation
		$$ (-h^2\Delta-1+ihb_2(y)+\ov{r}_2(hD_x)h^4D_y^4\widetilde{\chi}_0(\hbar D_y) )u_{h,3}=g_h=o_{L^2}(h^{2+\delta}),
		$$ 
		with $\delta=\log_h\delta_h^{(2)}=\frac{1}{\nu+2}$, $\hbar=h^{\frac{1+\delta}{2}}$.
		
		Set $w_{h,E}(y)=\mathcal{F}_xu_{h,3}(n,y)$, where $\mathcal{F}_x$ is the Fourier transform in $x$ and $E=1-h^2n^2$.  The equation for $u_{h,3}$ is transformed to the following one:
		\begin{align}\label{1Dreducedeq} 
			-h^2\partial_y^2w_{h,E}-Ew_{h,E}+a_{h,E}h^4\partial_y^4\widetilde{\chi}_0(\hbar D_y)w_{h,E}+ihb(y)w_{h,E}=r_{h,E},
		\end{align}
		where $a_{h,E}=\ov{r}_2(hn)\in\R$ and $r_{h,E}=\mathcal{F}_xg_h(n,\cdot)$. 
		Our goal is to prove the following one-dimensional resolvent estimate:
		\begin{prop}\label{1Dresolvent} 
			Let $\delta=\frac{1}{\nu+2}$.
			There exists $h_0>0$ and $C_0>0$, such that for all $0<h<h_0$ and $E\in\R$, the solution $w_{h,E}$ of \eqref{1Dreducedeq} satisfies
			\begin{align}\label{1Dresolventfinal} 
				\|w_{h,E}\|_{L^2(\T)}\leq C_0h^{-2-\delta}\|r_{h,E}\|_{L^2}.
			\end{align}
		\end{prop}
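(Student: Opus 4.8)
The plan is to prove \eqref{1Dresolventfinal} by a contradiction/compactness argument, uniform in $E\in\R$, splitting into the elliptic regime $|E|\gtrsim 1$ (where $-h^2\partial_y^2-E$ is invertible with good bounds, and the extra operator $a_{h,E}h^4\partial_y^4\widetilde\chi_0(\hbar D_y)$ is a harmless perturbation since $a_{h,E}=O(1)$ and $h^4\partial_y^4\widetilde\chi_0(\hbar D_y)=O(h^4\hbar^{-4})=O(h^{2-2\delta})=o(h^2)$ in $\mathcal L(L^2)$ when $\delta<1$), and the more delicate regime $E$ close to $1$ (more precisely $|E-1|\lesssim h^{2/(\nu+2)}$ or so, where the classical energy level is nonempty). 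Assuming the bound fails, one obtains sequences $h\to 0$, $E=E_h$, and normalized $w_h=w_{h,E_h}$ with $\|w_h\|_{L^2}=1$ and $(-h^2\partial_y^2-E_h+a_{h}h^4\partial_y^4\widetilde\chi_0(\hbar D_y)+ihb)w_h=o_{L^2}(h^{2+\delta})$. Taking imaginary part against $w_h$ yields the a priori damping estimate $\|b^{1/2}w_h\|_{L^2}=o(h^{1/2+\delta/2})=o(\hbar)$, exactly as in Lemma \ref{apriori}; taking real part controls $\|h\partial_y w_h\|_{L^2}$ and pins $E_h$ to the relevant window, after which one may also discard the $E_h$ away from $1$ by the elliptic bound.

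The core of the argument is then to show that such quasimodes of $-h^2\partial_y^2-1+ihb(y)$ (with the $O(h^{2-2\delta})$ fourth-order correction, which one should keep track of but which contributes a controlled error) cannot exist when the damping $b$ has the profile \eqref{e:particular_b}. Here I would reuse the one-dimensional resolvent result for the Hölder/finite-degeneracy damped wave operator on the torus — that is the lemma the paper calls \textbf{Lemma \ref{Holder}} (the analogue of the Anantharaman--Léautaud / Datchev--Kleinhenz / Kleinhenz estimates cited as \cite{AL14}, \cite{DK20}, \cite{K19}) — which precisely gives $\|(-h^2\partial_y^2-1+ihb)^{-1}\|_{\mathcal L(L^2(\T))}\lesssim h^{-2-1/(\nu+2)}$. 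The fourth-order term $a_{h,E}h^4\partial_y^4\widetilde\chi_0(\hbar D_y)$ is an $O(h^{2-2\delta})$ perturbation in operator norm; since $2-2\delta=2-\tfrac{2}{\nu+2}>2+\tfrac{1}{\nu+2}$ fails in general, one cannot simply absorb it into the right-hand side. Instead I would write $w_h=w_{h,\mathrm{low}}+w_{h,\mathrm{high}}$ using the same TH/TL split at frequency $\hbar^{-1}=h^{-(1+\delta)/2}$ as in \eqref{THTL}: on the low-frequency piece ($|D_y|\lesssim\hbar^{-1}$) the fourth-order term is $O(h^4\hbar^{-4})\cdot\|w_h\|\lesssim h^{2-2\delta}$... this is still too large, so one must be more careful: the point is that the fourth-order correction only acts where $\widetilde\chi_0(\hbar D_y)$ is supported, i.e. $|D_y|\lesssim\hbar^{-1}$, so it is genuinely $O(h^4\hbar^{-4})=O(h^{2-2\delta})$ in norm, and \emph{this must be compared to} $h^{2+\delta}$ after dividing the equation by a gain coming from the resolvent bound itself. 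The clean way: run the contradiction argument directly, extract a semiclassical defect measure $\mu$ on $T^*\T_y$ for $w_h$ (after the frequency cutoff reductions), observe it is supported on $\{\eta^2=1\}$, is invariant under $\partial_y$ translation away from the perturbation, and satisfies $\langle\mu,b\rangle=0$; combined with the profile of $b$ one shows $\mu=0$, so $\|w_h\|_{L^2}\to0$ on the relevant spectral window — but to get the sharp power $h^{-2-\delta}$ one needs the quantitative normal-form/two-scale analysis near $y_0$ exactly as in the construction of quasimodes in \cite{K19}, reproduced here as the one-dimensional lemma. So concretely: reduce to $|E_h-1|\le C h^{2\delta}$ by ellipticity; on that window apply Lemma \ref{Holder} (or its proof, incorporating the $O(h^{2-2\delta})$ self-adjoint fourth-order term as a perturbation that the proof of Lemma \ref{Holder} tolerates because it is lower order relative to $-h^2\partial_y^2-1$ on the low frequencies and is $O(h^\infty)$-negligible after the cutoff on the high frequencies) to conclude $\|w_h\|_{L^2}\le C h^{-2-\delta}\|r_h\|_{L^2}$.

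The step I expect to be the main obstacle is precisely handling the extra self-adjoint operator $a_{h,E}h^4\partial_y^4\widetilde\chi_0(\hbar D_y)$ inside the one-dimensional estimate: it is of size $h^{2-2\delta}$, which is \emph{larger} than the target remainder scale $h^{2+\delta}$, so it cannot be treated as a small remainder and must instead be incorporated into the (slightly modified) symbol $\eta^2+a_{h,E}h^2\eta^4$ of the principal part. One must verify that this modified symbol still has a nondegenerate energy level $\{\eta^2+a_{h,E}h^2\eta^4=E\}$ on the support of $\widetilde\chi_0(\hbar D_y)$ — true because there $h^2\eta^4\lesssim h^2\hbar^{-4}=h^{-2\delta}\cdot h^2\hbar^{-2}$... indeed $h^2\eta^4\le h^2\hbar^{-4}=h^{-1-2\delta}$, so the correction to the symbol is $a_{h,E}h^2\eta^4=O(h^{-1-2\delta})$ which is \emph{not} small; one therefore must instead exploit that on the support of the low-frequency cutoff the relevant $\eta$ are $O(\hbar^{-1})$ and rescale, or better, observe that the term $a_{h,E}h^4\partial_y^4$ was produced by the normal form precisely to be $O(h^{2+\delta})$ when tested against the quasimode (this is how it is discarded in the proof of the upper bound in Theorem \ref{t:sharp_in_2}), and the same test works here after the a priori bound $\|h\partial_y w_h\|_{L^2}=O(1)$ upgrades, via the frequency localization $|D_y|\lesssim\hbar^{-1}$, to $\|h^2\partial_y^2\widetilde\chi_0(\hbar D_y)w_h\|_{L^2}\le h^2\hbar^{-2}=h^{1-\delta}$, whence $\|a_{h,E}h^4\partial_y^4\widetilde\chi_0(\hbar D_y)w_h\|_{L^2}\le h^2\cdot h^{1-\delta}=h^{3-\delta}=o(h^{2+\delta})$ since $\delta<\tfrac12$. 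This makes the fourth-order term a legitimate $o(h^{2+\delta})$ remainder on the \emph{relevant} (frequency-localized) solution, reducing \eqref{1Dreducedeq} to the genuine one-dimensional damped-wave resolvent problem on $\T$, to which Lemma \ref{Holder} applies directly, giving \eqref{1Dresolventfinal}. The verification that the remainder $r_{h,E}=\mathcal F_x g_h(n,\cdot)$ genuinely satisfies $\sum_n\|r_{h,E}\|_{L^2(\T)}^2$-type control consistent with $g_h=o_{L^2}(h^{2+\delta})$ is routine Plancherel bookkeeping.
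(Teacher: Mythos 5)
Your instinct to isolate the self-adjoint fourth-order perturbation $a_{h,E}h^4\partial_y^4\widetilde\chi_0(\hbar D_y)$ as the main obstacle is exactly right, but your resolution of it contains an arithmetic error that kills the argument. On the frequency support $\vert D_y\vert\lesssim\hbar^{-1}$ one has $\Vert h^4\partial_y^4\widetilde\chi_0(\hbar D_y)\Vert_{\mathcal L(L^2)}\lesssim (h\hbar^{-1})^4=h^{2(1-\delta)}$, \emph{not} $h^{3-\delta}$ as you write at the end (you substitute $h^2\cdot h^{1-\delta}$ where the extra factor of $\partial_y^2\widetilde\chi_0(\hbar D_y)$ actually costs $\hbar^{-2}=h^{-(1+\delta)}$). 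Since $2-2\delta<2+\delta$, this term is genuinely \emph{larger} than the target remainder $h^{2+\delta}$ — you acknowledge this mid-paragraph and then contradict it in your conclusion. So reducing to the unperturbed problem of Lemma~\ref{Holder} by declaring the fourth-order term a small remainder does not work; this is precisely why the paper proves Proposition~\ref{1Dresolvent} as a separate statement rather than invoking Lemma~\ref{Holder} directly (for the $j=3$ case of Theorem~\ref{t:sharp_in_2} the corresponding term \emph{is} $o(h^2\delta_h)$ because there $\delta_h\gg1$, but for the present $j=2$ case it is not — your appeal to ``how it is discarded in the proof of Theorem~\ref{t:sharp_in_2}'' conflates the two).

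The paper's actual device is to absorb the troublesome term into the coefficient of $E$ rather than into the remainder. Writing $\mathcal L_{h,E}=1-a_{h,E}h^2\partial_y^2\widetilde\chi_0(\hbar D_y)$, one has $\Vert \mathcal L_{h,E}-\mathrm{Id}\Vert_{\mathcal L(L^2)}\lesssim h^{1-\delta}\to0$, so $\mathcal L_{h,E}$ is invertible and the equation becomes $-h^2\partial_y^2w_{h,E}-E\mathcal L_{h,E}^{-1}w_{h,E}+ihbw_{h,E}=\widetilde r_{h,E}$, with $\widetilde r_{h,E}$ controllable via the a~priori damping bound. One then splits into three windows of $E$: for $E\geq K_0h^{1-\delta}$ the operator is elliptic on the TL frequency range; for $E<K_1h^{1+\delta}$ the commutator/perturbation $E(\mathcal L_{h,E}^{-1}-1)w$ is of size $E\cdot h^{1-\delta}\lesssim h^2$ and can now be fed into Lemma~\ref{Holder}; for $K_1h^{1+\delta}<E<K_0h^{1-\delta}$ one rescales $\lambda=h^{-1}E^{1/2}$ and applies the geometric control estimate (Lemma~\ref{estimate:GCC}). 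Your regime split ``elliptic for $\vert E\vert\gtrsim1$'' vs.\ ``$\vert E-1\vert\lesssim h^{2\delta}$'' also puts the interesting window in the wrong place: since $w_{h,E}$ is localized at $\vert D_y\vert\lesssim\hbar^{-1}$, the non-elliptic range is $E\lesssim h^{1-\delta}$ (small $E$), not $E$ near $1$. The defect-measure digression is extraneous to both your intended argument and the paper's, which is a direct quantitative estimate.
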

		As a direct consequence of Proposition \ref{1Dresolvent} and the Plancherel theorem, we deduce that $u_{h,3}=o_{L^2}(1)$, which implies $\psi_h^{(3)}=o_{L^2}(1)$, a contradiction. Below we prove Proposition \ref{1Dresolvent}:
		\begin{proof}
			Since the perturbation part is self-adjoint, we still have the a priori estimate
			\begin{align}\label{aprioriperburbed} 
				\|b^{\frac{1}{2}}w_{h,E}\|_{L^2}\lesssim h^{-\frac{1}{2}}\|r_{h,E}\|_{L^2}^{\frac{1}{2}}\|w_{h,E}\|_{L^2}^{\frac{1}{2}}.
			\end{align}
			
			First, there exists $C_1>0$ such that for all $0<h<1, E\in\R$, $$\|a_{h,E}h^2\partial_y^2\widetilde{\chi}_0(\hbar D_y)\|_{\mathcal{O}(L^2)}\lesssim h^{1-\delta}.
			$$
			Therefore, for $0<h\leq h_0\ll 1$, the operator $\mathcal{L}_{h,E}=1-a_{h,E}h^2\partial_y^2\widetilde{\chi}_0(\hbar D_y)$ is invertible and we denote by $\mathcal{L}_{h,E}^{-1}$ its inverse. Note that $\mathcal{L}_{h,E}^{-1}$ is also a  Fourier multiplier on $L^2(\T)$, self-adjoint, positive, and
			$$ \mathcal{L}_{h,E}^{-1}=\mathrm{Id}+a_{h,E}h^2\partial_y^2\widetilde{\chi}_0(\hbar D_y)+\mathcal{O}_{\mathcal{L}(L^2)}(h^{2(1-\delta)}).
			$$
			Applying $\mathcal{L}_{h,E}^{-1}$ to the equation of $w_{h,E}$, we obtain that
			\begin{align*}
				-h^2\partial_y^2w_{h,E}-E\mathcal{L}_{h,E}^{-1}w_{h,E}+ih\mathcal{L}_{h,E}^{-1}(b(y)w_{h,E})=\mathcal{L}_{h,E}^{-1}r_{h,E}.
			\end{align*} 
			By symbolic calculus and the fact that $\delta\leq \frac{1}{4}$ (since $\nu\geq 4$), we have
			\begin{align}\label{eq:whEnew}
				-h^2\partial_y^2w_{h,E}-E\mathcal{L}_{h,E}^{-1}w_{h,E}+ihb(y)w_{h,E}=\widetilde{r}_{h,E}:=\mathcal{L}_{h,E}^{-1}r_{h,E}-(\mathcal{L}_{h,E}^{-1}-\mathrm{Id})(ihb(y)w_{h,E}).
			\end{align}
			Here,
			\begin{align}\label{tilderhE}  
				\|\widetilde{r}_{h,E}\|_{L^2}\lesssim \|r_{h,E}\|_{L^2}+h^{2-\delta}\|b^{\frac{1}{2}}w_{h,E}\|_{L^2}\lesssim \|r_{h,E}\|_{L^2}+h^{\frac{3}{2}-\delta}\|r_{h,E}\|_{L^2}^{\frac{1}{2}}\|w_{h,E}\|_{L^2}^{\frac{1}{2}}.
			\end{align}
			Multiplying by $\ov{w}_{h,E}$, integrating and taking the real part, we get
			\begin{align}\label{apriorinew} 
				\big|\|h\partial_yw_{h,E}\|_{L^2}^2-E\|\mathcal{L}_{h,E}^{-\frac{1}{2}}w_{h,E}\|_{L^2}^2\big|\leq \|\widetilde{r}_{h,E}\|_{L^2}\|w_{h,E}\|_{L^2}.
			\end{align}
			If $E\geq K_0h^{1-\delta}$ and $w_{h,E}=\widetilde{\chi}_0(\hbar D_y)w_{h,E}$, we have
			\begin{align*} 
				\|w_{h,E}\|_{L^2}^2\sim \|\mathcal{L}_{h,E}^{-\frac{1}{2}}w_{h,E}\|^2 & \lesssim E^{-1}\|h\partial_yw_{h,E}\|_{L^2}^2+E^{-1}\|\widetilde{r}_{h,E}\|_{L^2}\|w_{h,E}\|_{L^2}\\[0.2cm]
				& \lesssim  K_0^{-1}\|w_{h,E}\|_{L^2}^2+K_0^{-1}h^{-(1-\delta)}\|\widetilde{r}_{h,E}\|_{L^2}\|w_{h,E}\|_{L^2}. 
			\end{align*}
			For $K_0$ large enough, we get
			\begin{align}\label{range1} 
				\|w_{h,E}\|_{L^2}\lesssim h^{-\frac{1-\delta}{2}}\|\widetilde{r}_{h,E}\|_{L^2}^{\frac{1}{2}}\|w_{h,E}\|_{L^2}^{\frac{1}{2}},\quad \text{ when }E\geq K_0h^{1-\delta}.
			\end{align}
			Plugging into \eqref{tilderhE} and using Young's inequality, we obtain \eqref{1Dresolventfinal} when $E\geq K_0h^{1-\delta}$.
			
			From now on we assume that $E<K_0h^{1-\delta}$. By \eqref{apriorinew},
			\begin{align}\label{apriorinew1} \|h\partial_yw_{h,E}\|_{L^2}\leq CE^{\frac{1}{2}}\|w_{h,E}\|_{L^2}+\|\widetilde{r}_{h,E}\|_{L^2}^{\frac{1}{2}}\|w_{h,E}\|_{L^2}^{\frac{1}{2}}.
			\end{align}
			Applying Lemma \ref{Holder} to the equation
			$$ -h^2\partial_y^2w_{h,E}-Ew_{h,E}+ihb(y)w_{h,E}=E(\mathcal{L}_{h,E}^{-1}-1)w_{h,E}+\widetilde{r}_{h,E},
			$$
			we get
			\begin{align*}
				\|w_{h,E}\|_{L^2}\lesssim h^{-2-\delta}\|\widetilde{r}_{h,E}\|_{L^2}+h^{-2-\delta}\|E(\mathcal{L}_{h,E}^{-1}-1)w_{h,E}\|_{L^2}.
			\end{align*}
			From the expansion of $\mathcal{L}_{h,E}^{-1}$ and \eqref{apriorinew1}, we deduce that
			$$ \|w_{h,E}\|_{L^2}\lesssim h^{-2-\delta}\|\widetilde{r}_{h,E}\|_{L^2}+h^{-2-\delta}\cdot Eh\hbar^{-1}(E^{\frac{1}{2}}\|w_{h,E}\|_{L^2}+\|\widetilde{r}_{h,E}\|_{L^2}^{\frac{1}{2}}\|w_{h,E}\|_{L^2}^{\frac{1}{2}}).
			$$
			Therefore, if $E\leq K_1 h^{1+\delta}$ for some $K_1$ large enough, we get 
			$$ \|w_{h,E}\|_{L^2}\lesssim h^{2-\delta}\|\widetilde{r}_{h,E}\|_{L^2}+h^{-\frac{1+\delta}{2}}\|\widetilde{r}_{h,E}\|_{L^2}^{\frac{1}{2}}\|w_{h,E}\|_{L^2}^{\frac{1}{2}}.
			$$
			By Young's inequality, we obtain \eqref{1Dresolventfinal} in this case.

			Finally, we assume that $K_0h^{1-\delta}\geq E>K_1h^{1+\delta}$. In this case, we are able to use the geometric control estimate. Indeed, denote by $\lambda=h^{-1}E^{\frac{1}{2}} (\gtrsim h^{-\frac{1-\delta}{2}})$, then
			$$ -\partial_y^2w_{h,E}-\lambda^2 w_{h,E}=Eh^{-2}(\mathcal{L}_{h,E}^{-1}-1)w_{h,E}-ih^{-1}b(y)w_{h,E}+h^{-2}\widetilde{r}_{h,E}.
			$$
			From the geometric control estimate (Lemma \ref{estimate:GCC}), we get
			\begin{align}\label{1Dresolventfinalrange} 
				\|w_{h,E}\|_{L^2}& \lesssim \|b^{\frac{1}{2}}w_{h,E}\|_{L^2}+\frac{1}{\lambda}\|ih^{-1}b(y)w_{h,E}+\lambda^2(\mathcal{L}_{h,E}^{-1}-\mathrm{Id}-a_{h,E}h^2\partial_y^2\widetilde{\chi}_0(\hbar D_y))w_{h,E} \|_{L^2}\notag \\[0.2cm]
				& \quad + \lambda^2\|h^2\partial_y^2\widetilde{\chi}_0(\hbar D_y)w_{h,E}\|_{H^{-1}}
				+\lambda^{-1}h^{-2}\|\widetilde{r}_{h,E}\|_{L^2}.
			\end{align}
			Note that (using \eqref{aprioriperburbed} ) $$\lambda^{-1}h^{-1}\|b^{\frac{1}{2}}w_{h,E}\|_{L^2}\lesssim h^{-\frac{1+\delta}{2}}\|b^{\frac{1}{2}}w_{h,E}\|_{L^2}\lesssim h^{-\frac{2+\delta}{2}}\|r_{h,E}\|_{L^2}^{\frac{1}{2}}\|w_{h,E}\|_{L^2}^{\frac{1}{2}},$$
			and
			$$\lambda^{-1}\|\lambda^2(\mathcal{L}_{h,E}^{-1}-1-a_{h,E}h^2\partial_y^2\widetilde{\chi}_0(\hbar D_y))w_{h,E}\|_{L^2}\lesssim \lambda h^{2-2\delta}\|w_{h,E}\|_{L^2}\lesssim h^{\frac{3-5\delta}{2}}\|w_{h,E}\|_{L^2}, $$ since $\lambda=h^{-1}E^{\frac{1}{2}}\leq h^{-\frac{1+\delta}{2}}$. For the last term, we have
			$$ \lambda^2\|h^2\partial_y^2\widetilde{\chi}_0(\hbar D_y)w_{h,E}\|_{H^{-1}}\lesssim \lambda^2h^2\hbar^{-1}\|w_{h,E}\|_{L^2}\lesssim h^{\frac{1-3\delta}{2}}\|w_{h,E}\|_{L^2}.
			$$
			Plugging these estimates into \eqref{1Dresolventfinalrange}, using \eqref{apriorinew} and Young's inequality, we obtain \eqref{1Dresolventfinal}.
			This completes the proof of Proposition \ref{1Dresolvent}.

		\end{proof}

	\section{Construction of subelliptic quasimodes}
		\label{s:quasimodes}
		
		This section is devoted to construct quasimodes in the subelliptic regime. In Section \ref{s:subelliptic_quasimodes} we construct quasimodes microlocalized outside the damping region, leading to the lower bounds in Theorems  \ref{t:EGCC} and Theorem \ref{t:main_theorem}. While in Section \ref{s:damped_quasimodes} we construct quasimodes microlocalized within the damping region. In particular, Theorem \ref{t:quasimodes_2} below will provide the lower bound in Theorem \ref{t:sharp_in_2}. 
		
		\subsection{Quasimodes microlocalized outside the damping region}
		\label{s:subelliptic_quasimodes}
		
		In this section, we prove the optimality of the inequality:
		\begin{align}\label{resolvent4_2} 
			\|u\|_{L^2}\leq \|b^{1/2}u\|_{L^2}+\frac{C}{h^2}\|(h^2\Delta_{G}+1)u\|_{L^2}
		\end{align}
		which yields the lower bound in case (1) assuming that $\operatorname{supp}(b) \cap \{x = 0 \}\neq \mathbb{T}_y$, and in case (2):
		\begin{equation}
			\label{e:lower_bound_from_se_quasimodes} 
			\|(h^2\Delta_G+1-ihb)^{-1}\|_{\mathcal{L}(L^2)}\geq O(h^{-2}).
		\end{equation}
		
		\begin{thm}
			\label{t:quasimodes_outside_damping}
			Assume that $\operatorname{supp}(b) \cap \{ x = 0 \} \neq \{0\}_x \times \mathbb{T}_y$. Then there exists a normalized sequence $(\psi_h) \subset L^2(M)$ such that
			$$
			(- h^2 \Delta_G + ihb - 1)\psi_h = O(h^2), \quad h \in (0,1],
			$$
			and $\Vert b^{1/2} \psi_h \Vert_{L^2(M)} = O(h^\infty)$.
		\end{thm}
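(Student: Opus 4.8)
The plan is to build a single Gaussian wave packet concentrated at a point of the degenerate line $\{x=0\}$ lying \emph{outside} $\operatorname{supp}(b)$, with vertical frequency of size $h^{-2}$, corrected by one normal-form step to absorb the anharmonicity of $V$ near $x=0$.

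\textbf{Step 1: choice of the localization region.} Since $\operatorname{supp}(b)$ is closed and, by hypothesis, there is $y_0 \in \mathbb{T}_y$ with $(0,y_0) \notin \operatorname{supp}(b)$, one can pick $\delta,\epsilon>0$ such that the box $\Omega_0 := (-\delta,\delta)_x \times (y_0-\epsilon,y_0+\epsilon)_y$ is disjoint from $\operatorname{supp}(b)$; as $b\geq 0$, this forces $b\equiv 0$ on $\Omega_0$. It therefore suffices to produce a normalized sequence $(\psi_h)$ \emph{supported in $\Omega_0$} with $(-h^2\Delta_G-1)\psi_h = O_{L^2}(h^2)$, since then $b\psi_h \equiv 0$, hence $b^{1/2}\psi_h \equiv 0$ and $(-h^2\Delta_G+ihb-1)\psi_h = (-h^2\Delta_G-1)\psi_h = O(h^2)$.

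\textbf{Step 2: the wave packet and why it is only an $O(h)$-quasimode.} Let $N_h \in \mathbb{Z}$ be the nearest integer to $h^{-2}$, so $h^2 N_h = 1 + O(h^2)$, let $H := -\partial_t^2 + t^2$ with $L^2(\mathbb{R})$-normalized ground state $\varphi$ (so $H\varphi=\varphi$), and set $g_h(x) := N_h^{1/4}\varphi(N_h^{1/2}x)$, so that $(D_x^2+N_h^2 x^2)g_h = N_h g_h$ and $g_h$ is concentrated at scale $h$ in $x$, with $O(h^\infty)$ mass outside $|x|\leq h^{1/2}$. Fix $\zeta \in C_c^\infty((-\delta,\delta))$ with $\zeta\equiv1$ near $0$ and $\phi \in C_c^\infty((y_0-\epsilon,y_0+\epsilon))$ with $\|\phi\|_{L^2(\mathbb{T}_y)}=1$, and consider $\psi_h^{(0)} := e^{iN_h y}\phi(y)\zeta(x) g_h(x)$, which is supported in $\Omega_0$ and $L^2$-normalized up to $O(h^\infty)$. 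Writing $-h^2\Delta_G = h^2 D_x^2 + h^2 V(x) D_y^2$, commuting $D_y^2$ past $e^{iN_h y}\phi(y)$ and $D_x^2$ past $\zeta$ (commutator terms are $O_{L^2}(h^\infty)$ because $\zeta'$ vanishes where $g_h$ is not negligible), and using $V(x) = x^2 + \beta x^3 + O(x^4)$ with $\beta := V'''(0)/6$ together with the scaling $x \sim N_h^{-1/2} \sim h$, one obtains
$$ (-h^2\Delta_G-1)\psi_h^{(0)} = h\,\beta\, e^{iN_h y}\phi(y)\zeta(x) G_h(x) + O_{L^2}(h^2), \qquad G_h(x) := N_h^{1/4}(t^3\varphi)(N_h^{1/2}x), $$
where the $O(h^2)$ absorbs $h^2N_h-1$, the $O(x^4)$-remainder of $V$, and the $\phi',\phi''$ terms, which appear with a weight $h^2 V(x)$ and are of size $O(h^2)$. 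Thus the obstruction to reaching $O(h^2)$ is exactly the cubic term of $V$ (which vanishes, e.g., for the even model $V(x)=4\sin^2(x/2)$, in which case Step 3 below is unnecessary).

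\textbf{Step 3: normal-form correction and conclusion.} Since $t^3\varphi \perp \varphi$ in $L^2(\mathbb{R})$, the homological equation $(H-1)\varphi^{(1)} = -\beta\, t^3\varphi$ has a Schwartz solution $\varphi^{(1)}$ (a combination of the Hermite functions $\varphi_1,\varphi_3$); this is the finite-dimensional averaging reduction recalled in Section \ref{s:finite_dimensional_averaging}. Set $g_h^{(1)}(x) := N_h^{1/4}\varphi^{(1)}(N_h^{1/2}x)$ and $\widetilde\psi_h := e^{iN_h y}\phi(y)\zeta(x)\big(g_h(x) + h\, g_h^{(1)}(x)\big)$. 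Because $(h^2 D_x^2 + h^2 N_h^2 x^2)g_h^{(1)} = (1+O(h^2))\,N_h^{1/4}(H\varphi^{(1)})(N_h^{1/2}x)$, the term of order $h$ generated by the correction (coming from $(h^2D_x^2+h^2N_h^2 x^2)(h g_h^{(1)})$ minus the contribution of $-1$ applied to $h g_h^{(1)}$) equals $h\, e^{iN_h y}\phi(y)\zeta(x)\,N_h^{1/4}\big((H-1)\varphi^{(1)}\big)(N_h^{1/2}x) = -h\,\beta\, e^{iN_h y}\phi(y)\zeta(x)G_h(x)$, which cancels exactly the $O(h)$ remainder of Step 2; all remaining contributions (including those where $D_y^2$ hits $\phi',\phi''$, and the $V-x^2$ corrections acting on $g_h^{(1)}$) are $O_{L^2}(h^2)$. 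Hence $(-h^2\Delta_G-1)\widetilde\psi_h = O_{L^2}(h^2)$, $\|\widetilde\psi_h\|_{L^2} = 1 + O(h)$, and $\psi_h := \widetilde\psi_h / \|\widetilde\psi_h\|_{L^2}$ is the desired quasimode, still supported in $\Omega_0$. The only delicate point in the whole argument is the careful bookkeeping of the semiclassical scales in the anharmonic term $h^2(V(x)-x^2)D_y^2$ (it is the term forcing the normal-form step); the solvability of the homological equation, the cutoff errors, and the periodicity on $\mathbb{T}^2$ are all routine.
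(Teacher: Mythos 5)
Your proof is correct, and it takes a genuinely different (and more elementary) route than the paper's. The paper solves the time-dependent equation $ih^2\partial_t u_h - h^2\Delta_G u_h = 0$, cuts off in time, takes the Fourier transform $t \mapsto \tau$, and isolates $\tau_h = h^{-2}$; the underlying state is a continuous superposition $\int \chi_1(h^2\eta)\varphi_0(\eta,x)\,e^{i(y-y_0)\eta}\,\widehat{\chi}(\tau-\eta)\,d\eta$ over vertical frequencies $\eta \sim h^{-2}$, corrected by the finite-dimensional normal form $\mathcal{U}_h$ of Lemma \ref{l:normal_form} acting on $3N+1$ Hermite modes. You instead fix a single integer mode $N_h \approx h^{-2}$, tensor the corresponding rescaled Hermite ground state $g_h$ with a smooth bump $\phi(y)$, and observe that the only obstruction at order $h$ is the cubic term $\beta x^3$ of $V$, which you kill with a one-step homological correction $(H-1)\varphi^{(1)} = -\beta\,t^3\varphi$. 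This solvability is exactly the first step of the iterated normal form the paper recalls in Lemma \ref{l:finite_dimension_averaging} (cf.\ \eqref{e:solution_cohomological_finite}); you only need one step because your target error is $O(h^2)$ and the quartic-and-higher Taylor remainder of $V$, the terms where $D_y^2$ hits $\phi'$ or $\phi''$, and the cutoff commutators are already $O(h^2)$ or $O(h^\infty)$. Your decision to localize inside a box $\Omega_0$ where $b\equiv 0$ trivializes $\|b^{1/2}\psi_h\|$; the paper instead establishes $O(h^\infty)$ decay of the mass inside $\{b>0\}$ by a non-stationary-phase argument applied to the time integral \eqref{e:integral_T_T}.

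What each approach buys: your construction is shorter, avoids the time-dependent propagation and the time-cutoff/Fourier machinery, and works natively on $\T^2$ since $e^{iN_h y}$ is already periodic. The paper's more elaborate scheme is designed to carry over verbatim to the damped quasimodes of Section \ref{s:damped_quasimodes} (Theorems \ref{t:quasimodes} and \ref{t:quasimodes_2}), where the initial data is deliberately launched \emph{into} the damping region and the coefficient $c(t;h) = e^{-\mathbf{b}_0(t)/h}$ records the accumulated friction; your box-localization trick is unavailable there because the whole point is to cross $\operatorname{supp}(b)$. So your proof is a clean replacement for Theorem \ref{t:quasimodes_outside_damping} in isolation, but the paper's framework is the one that scales to the rest of Section \ref{s:quasimodes}. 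One cosmetic remark: since $\varphi^{(1)} \in \operatorname{span}\{\varphi_1,\varphi_3\} \perp \varphi$, your normalization is in fact $\|\widetilde\psi_h\|_{L^2} = 1 + O(h^2)$, slightly better than the $1+O(h)$ you state.
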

		
		From Theorem \ref{t:quasimodes_outside_damping} we obtain inmediately the lower bound \eqref{e:lower_bound_from_se_quasimodes}.
		
		\noindent The idea of the proof of Theorem \ref{t:quasimodes_outside_damping} is based on the study of the time-dependent equation 
		\begin{equation}
			\label{e:time_dependent_equation_free}
			\big( ih^2 \partial_t - h^2 \Delta_G \big) u_h(t,x,y) = 0,
		\end{equation}
		for some initial data $u_h^0 = u_h(0) \in L^2(M)$ with $\Vert u^0_h \Vert_{L^2(M)} = 1$ and microlocal support away from the set $\omega = \{ b > 0 \}$, making rigorous the formal formula
		\begin{equation}
			\label{e:formal_formuka_free}
			\psi_h \sim  \int_\R e^{- \frac{i t \alpha_h }{h^2}} u_h(t) dt, \quad \alpha_h = 1,
		\end{equation}
		to find our quasimode $(\psi_h,\alpha_h )$.
		
		We first assume that $M= \R^2$, and let $b$ denote the periodic extension of the damping term into $\R^2$. This will be sufficient since our quasimode $\psi_h$ will be microlocalized into a compact set of $(-\pi,\pi)_x \times (-\pi,\pi)_y$. We will make use of a suitable finite subspace of $L^2(M)$ to construct a particular solution to \eqref{e:time_dependent_equation_free} in terms of an orthonormal basis of this subspace. We will also use this basis to conjugate the operator $-h^2\Delta_G$ on this subspace into diagonal form. To this aim, let us define, for $k \in \mathbb{N}_0$:
		\begin{equation}
			\label{e:elements}
			\Psi_k(x,\eta;h) := \frac{h \chi_1(h^2\eta)}{\Vert \chi_1 \Vert_{L^2(\R)}}  \varphi_{k}(\eta ,x), \quad (x,\eta) \in \R^2,
		\end{equation}
		where $\chi_1$ is defined by \eqref{e:cut_off_1} and $\varphi_{k}(\eta,x)$ is the $k$-Hermite function given by
		\begin{equation}
			\label{e:ansatz}
			\varphi_0(\eta,x) = \eta^{1/4} e^{- \frac{\eta x^2}{2}}, \quad \varphi_{k}(\eta,x) = e^{ \frac{ \eta x^2}{2}} \left( \frac{ \partial_x}{\sqrt{\eta}} \right)^{k} \left( \eta^{1/4} e^{- \eta x^2} \right), \quad k \in \mathbb{N}_0. 
		\end{equation}
		Assume that $ (0,y_0) \notin \operatorname{supp}(b)$. For the sake of simplicity, assume moreover that $y_0 \in (\alpha,\beta)$ where $(\alpha,\beta)$ is defined in case (2). 
		
		Now, for any $N \geq 3$ and $k \in \mathbb{N}_0$, we consider the values $\mu_{k} = \mu_k(\eta) = \lambda_k + O(\eta^{-1})$ (depending also on $N$) which will be defined by \eqref{e:complete_eigenvalues} below, consisting of quasi-eigenvalues of the truncation of the operator $\eta^{-1}( -\partial_x^2 + \eta^2V(x))$ when $V(x)$ is expanded by Taylor up to finite order $N$ near $x = 0$, and where $\lambda_k = 2k+1$ are the eigenvalues of $-\partial_x^2 + x^2$. We make the following ansatz for a  particular solution $u_h(t,x,y)$  to \eqref{e:time_dependent_equation_free}:
		\begin{align}
			\label{e:solution_time_dependent_free}
			u_h(t,x,y)  & =  \sum_{ k \in \mathbb{N}_0 } \int_\R  \overline{\mathfrak{u}}_{0,k}(\eta) \Psi_{k}(x,\eta;h)  e^{ i( y - y_0 +  \mu_0 t) \eta} d\eta
		\end{align}
		for certain unknowns  $\overline{\mathfrak{u}}_{0,k}(\eta) \in \mathbb{C}$, and with inital data
		\begin{align}
			\label{e:initial_data_free}
			u_h(0,x,y) & =  \sum_{k \in \mathbb{N}_0}  \int_\R  \overline{\mathfrak{u}}_{0,k}(\eta) \Psi_{k}(x,\eta;h) e^{ i( y - y_0) \eta} d\eta.
		\end{align}
		Set also, for $k \in \mathbb{N}_0$,
		$$
		\Phi_{k}(t,x,y;h) := \int_\R \Psi_k(t,x,\eta;h) e^{i (y - y_0 + \mu_0 t) \eta} d\eta.
		$$
		Recall that $\{ \varphi_{k}(\eta,x), \lambda_k \eta \}$ is an orthornomal basis of eigenfunctions for the harmonic oscillator $-\partial_x^2 + \eta^2 x^2$ (for $\eta >0$)  on $L^2(\R_x)$. This implies that
		\begin{equation}
			\label{e:self-adjoint_equation}
			(i h^2 \partial_t - h^2 \Delta_{G_0} )  \int_\R \Psi_{k}(x,\eta;h) e^{ i( y - y_0 +  \lambda_k t) \eta} d\eta = 0, \quad k \in \mathbb{N}_0.
		\end{equation}
		Notice moreover that $\{ \Psi_k(x,\eta;h) \}$ is an orthogonal set in $L^2(\R_x \times \R_\eta)$ of normalized functions. Similarly, notice that the set $\{ \Phi_{k}(t, \cdot,\cdot;h) \}_{k \in \mathbb{N}_0}$ is  orthogonal in $L^2(\R_x \times \R_y)$ and consisting of normalized functions. 
		
		In view of \eqref{e:self-adjoint_equation}, the study of the propagation equation \eqref{e:time_dependent_equation} relies on understanding the effect of the remainder term $V_0 (x)\partial_y^2$, where $V_0(x) := V(x) - x^2$, on $u_h(t,x,y)$. In this respect, let $N \geq 3$, we expand $V_0(x)$ by Taylor near $x=0$:
		\begin{align*}
			V_0(x) & = \sum_{l=3}^{N} \frac{V^{(l)}(0)}{l!} x^{l} + \frac{x^{N+1}}{N!} \int_0^1 (1-s)^{N} V^{(N+1)}(sx) ds \\[0.2cm]
			& =: \sum_{l=3}^{N} \upsilon_l x^l + R_N^V(x),
		\end{align*}
		and denote $\mathcal{V} := \sum_{l=3}^{N} \mathcal{V}_l$, where $\mathcal{V}_l = \upsilon_lx^l \partial_y^2$, and $\mathcal{R}_N^V := R_N^V(x) \partial_y^2$.
		
		The next step consists in conjugating equation \eqref{e:time_dependent_equation_free} into normal form up to order $N$ to obtain a particular (quasi-)solution in the form \eqref{e:solution_time_dependent_free}. To this aim, we truncate the operator $V_0(x)\partial_y^2$ into $\mathcal{V}$ and we look at the evolution equation:
		\begin{equation}
			\label{e:modified_equation_free}
			ih^2 \partial_t \Phi_{0}(t,x,y;h) + \mathcal{U}_h \left(  - h^2 \Delta_{G_0} - h^2 \Pi_N^h \mathcal{V} \Pi_N^h  \right) \mathcal{U}_h^* \Phi_{0}(t,x,y;h) = O(h^{N-1}),
		\end{equation}
		where $\Pi_N^h = \Pi_N^h(D_y)$ is the projection operator onto the subspace generated by $\Psi_k$, that is:
		\begin{equation}
			\label{e:truncation}
			\Pi_N^h(\eta) \phi(x,\eta) = \sum_{k=0}^{3N} \langle \phi, \Psi_k(\cdot, \cdot ;h) \rangle_{L^2(\R_x \times \R_\eta)} \Psi_k(\eta,x,h),
			\end{equation}
	where $N \geq 3$ will be chosen later on, and $\mathcal{U}_h = \Id + \mathcal{O}(h)$ is a unitary operator that commutes with $\partial_t$ and diagonalizes the operator $- h^2 \Delta_{G_0} - h^2 \Pi_N^h \mathcal{V} \Pi_n^h$ (see Section \ref{s:normal_form_reduction} below). Then $u_h(t,x,y) := \mathcal{U}_h^* \Phi_{0}(t,x,y;h)$ solves the equation
		\begin{equation}
			\label{e:truncated_equation_final}
			ih^2 \partial_t u_h(t,x,y) +  \left(  - h^2 \Delta_{G_0} - h^2 \Pi_N^h \mathcal{V} \Pi_N^h \right) u_h(t,x,y) = O(h^{N-1}),
		\end{equation}
		and this solution will be sufficient to construct the desired quasimode.
		
		\subsubsection{Normal form reduction} 
		\label{s:normal_form_reduction}
		
		We now explain how to put the operator  $ - h^2 \Delta_{G_0} - h^2 \Pi_N^h \mathcal{V} \Pi_N^h$ into normal form. First, we study how the operator $ -h^2\Pi_N^h \mathcal{V} \Pi_N^h$ acts on each $\Phi_{k}$. To this aim, notice that 
		$$
		\partial_y^2 \, e^{i(y-y_0+ \mu_0 t)\eta} = - \eta^2 \, e^{i(y-y_0+\mu_0 t) \eta}.
		$$
		Hence, computing $-h^2\Pi_N^h \mathcal{V} \Pi_N^h \, \Phi_{k}(x,y,t;h)$ relies on calculating, for $3 \leq l \leq N$,
		\begin{equation}
			\label{e:monomial_x_action}
			x^{l} \eta^2 \Psi_k(x,\eta;h) =  \frac{ h   \eta ( x \sqrt{\eta})^l \chi_1(h^2\eta) \varphi_k(\eta,x)}{\eta^{\frac{l}{2}-1} \Vert \chi_1 \Vert_{L^2}}, \quad 0 \leq k \leq 3N.
		\end{equation}
		By elementary properties of Hermite polynomials, we see that
		\begin{align}
			\label{e:action_hermite}
			(x\sqrt{\eta})^l \Psi_{k_1}(x,\eta;h) = \sum_{\vert k_1 - k_2 \vert \leq l} \mathbf{a}^l_{ k_1 k_2} \Psi_{k_2}(x,\eta;h),
		\end{align}
		for some $\mathbf{a}^l_{k_1 k_2} \in \mathbb{Q}$. This shows, in particular, that the matrix $\mathbf{A}_l = (\mathbf{a}^l_{k_1k_2})$ is band diagonal with bandwidth equal to $l$.

		\begin{lemma}
			\label{l:normal_form}
			For every $N \geq 3$ there exists a unitary operator $\mathcal{U}_h = \mathcal{U}_h(D_y) \in \mathcal{L}(L^2(\R_x))$ such that $\Id - \mathcal{U}_h = \mathcal{O}_{\mathcal{L}(L^2)}(h)$ is of finite rank $3N + 1$, and 
			\begin{equation}
				\label{e:normal_form_operators}
			 \mathcal{U}_h \big( - h^2 \Delta_{G_0} - h^2 \Pi_N^h \mathcal{V} \Pi_N^h \big) \mathcal{U}_h^* = - h^2 \Delta_{G_0} - h^2 \Pi_N^h \mathcal{D} \Pi_N^h + O(h^{N-1}),
			\end{equation}
			for some $\mathcal{D} = \mathcal{D}(D_y)$ satisfying $[ -h^2 \Delta_{G_0}, \mathcal{D}] = 0$.
		\end{lemma}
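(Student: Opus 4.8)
The plan is to construct $\mathcal{U}_h$ iteratively via a Birkhoff-type normal form inside the finite-dimensional subspace $\operatorname{ran} \Pi_N^h$, treating $h^2\Pi_N^h \mathcal{V}\Pi_N^h$ as a perturbation of the diagonal operator $-h^2\Delta_{G_0}$. In the Fourier variable $\eta$, the operator $-h^2\Delta_{G_0}$ acts on $\Psi_k$ by multiplication by $h^2\eta^2\cdot\eta^{-1}\lambda_k=h^2\eta\lambda_k$ (on the support $\chi_1(h^2\eta)$, so $\eta\sim h^{-2}$ and $h^2\eta\sim 1$), hence it is already diagonal in the basis $\{\Psi_k\}$ with distinct eigenvalues $h^2\eta\lambda_k$ whose gaps $h^2\eta(\lambda_k-\lambda_{k'})=h^2\eta\cdot 2(k-k')$ are of size $\sim 1$. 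The perturbing term $-h^2\Pi_N^h\mathcal{V}\Pi_N^h$ splits, by \eqref{e:monomial_x_action}--\eqref{e:action_hermite}, into a sum $\sum_{l=3}^N \mathcal{V}_l$ where the $\mathcal V_l$-block contributes, on $\Psi_{k_1}$, terms $h\eta\cdot\eta^{1-l/2}\upsilon_l\mathbf a^l_{k_1k_2}\Psi_{k_2}$; since $\eta\sim h^{-2}$ the operator norm of the $x^l\partial_y^2$-contribution on the range of $\Pi_N^h$ is $\sim h\cdot h^{-2}\cdot h^{l-2}=h^{l-3}$, so $\mathcal V_3$ is the genuine $O(1)$ (after the $h^2$ prefactor, $O(h^2)$ in absolute terms but $O(1)$ relative to the spectral scale $h^2\eta\sim 1$) term and $\mathcal V_l$, $l\ge4$, are of relative size $h^{l-3}$.

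The steps I would carry out: (1) Rescale and work on $\operatorname{ran}\Pi_N^h(\eta)$ for each fixed $\eta$ in the support of $\chi_1(h^2\eta)$; this is a $(3N+1)$-dimensional problem with a diagonal unperturbed matrix $D_0=h^2\eta\operatorname{diag}(\lambda_0,\dots,\lambda_{3N})$ and a band-diagonal perturbation $W=\sum_{l\ge3}h^2\eta\cdot\eta^{1-l/2}\upsilon_l\mathbf A_l$ whose $l$-th piece has size $h^{l-3}$ in the $h^2\eta$-rescaled picture. (2) Apply the finite-dimensional averaging/Birkhoff normal form recalled in Section \ref{s:finite_dimensional_averaging} of the Appendix: conjugate by $e^{iS_1}$ with $S_1$ band-diagonal and anti-selfadjoint, chosen to kill the off-diagonal part of the $\mathcal V_3$-contribution — this is solvable because the cohomological equation $[D_0,S_1]=W_3^{\mathrm{off}}$ has a unique band-diagonal anti-selfadjoint solution thanks to the nonvanishing spectral gaps $\lambda_k-\lambda_{k'}\ne0$; the cost is a new error of relative size $O(h)$ (next order) plus the genuinely higher-order $\mathcal V_l$, $l\ge4$. (3) Iterate: at step $j$ conjugate by $e^{iS_j}$ with $\|hS_j\|=O(h^{?})$ decreasing, removing the off-diagonal part at each successive order, stopping after finitely many (roughly $N$) steps so that the remaining off-diagonal error is $O(h^{N-1})$; collect $\mathcal{U}_h=e^{iS_{m}}\cdots e^{iS_1}$, which is unitary (product of exponentials of anti-selfadjoint finite-rank operators), equals $\Id+\mathcal O_{\mathcal L(L^2)}(h)$, has the finite rank claimed since all $S_j$ are supported on $\operatorname{ran}\Pi_N^h$, and commutes nothing problematic since everything is a Fourier multiplier in $D_y$. (4) The resulting diagonalized operator is $-h^2\Delta_{G_0}-h^2\Pi_N^h\mathcal D\Pi_N^h+O(h^{N-1})$ where $\mathcal D=\mathcal D(D_y)$ is, in the basis $\{\Psi_k\}$, diagonal, hence $[-h^2\Delta_{G_0},\mathcal D]=0$; reading off the $k$-th diagonal entry gives precisely the quasi-eigenvalues $\mu_k(\eta)=\lambda_k+O(\eta^{-1})$ appearing in \eqref{e:complete_eigenvalues}.

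I expect the main obstacle to be bookkeeping the $\eta$-dependence of the semiclassical parameter and of the symbol classes: the effective small parameter here is $\hbar=(h^2\eta)^{-1}$-ish only on the diagonal, while the true expansion parameter in $\eta$ for the off-diagonal band structure is $\eta^{-1/2}$ (each extra power of $x\sqrt\eta$ costs $\eta^{-1/2}$ after the $\eta^2$ from $\partial_y^2$ is accounted for), and one must check that the conjugations $e^{iS_j}$ stay genuinely uniformly bounded (not just formally), that the band-diagonal structure is preserved under iterated commutators with $\mathbf A_3$ so that the finite rank $3N+1$ is not exceeded — this is why one truncates $V_0$ at order $N$ and the projection at level $3N$, since the bandwidth of $\mathbf A_l$ is $l\le N$ and after finitely many brackets one stays within $\{0,\dots,3N\}$ up to the claimed error — and that the leftover terms $\mathcal R_N^V=R_N^V(x)\partial_y^2$ and the higher Taylor terms indeed produce only $O(h^{N-1})$ once $\Pi_N^h$ localizes $x$ at scale $\eta^{-1/2}\sim h$. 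These are all routine once the scaling is fixed, and the nonresonance (gap) condition $\lambda_k\ne\lambda_{k'}$ for $k\ne k'$, which is immediate for the harmonic oscillator, guarantees solvability of every cohomological equation, so no genuine small-divisor issue arises.
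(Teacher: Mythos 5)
Your approach matches the paper's: reduce to the finite-dimensional averaging Lemma \ref{l:finite_dimension_averaging} in the basis $\{\Psi_k\}_{0\le k\le 3N}$, with small parameter $\epsilon=\eta^{-1/2}$, using the uniform gaps $\lambda_{k+1}-\lambda_k=2$ to solve the cohomological equations and the band-diagonal structure of $\mathbf{A}_l$ (bandwidth $l\le N$) to keep everything supported in $\operatorname{ran}\Pi_N^h$, hence $\operatorname{rank}(\Id-\mathcal{U}_h)\le 3N+1$. Your intermediate size estimate $h^{l-3}$ is off by one power — the $l$-th Taylor term $h^2\Pi_N^h\mathcal{V}_l\Pi_N^h$ has norm $O(h^{l-2})$, i.e.\ relative size $\eta^{1-l/2}=\epsilon^{l-2}$, so $\mathcal{V}_3$ is the leading $O(\epsilon)$ perturbation rather than $O(1)$ — but you correct this yourself when you identify $\eta^{-1/2}$ as the true expansion parameter.
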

		
		\begin{proof}
	 Notice that the operator $-h^2 \Delta_{G_0}$ is already diagonal in the basis $\{ \Phi_k \}_{0\leq k \leq 3N}$, since it verifies $-h^2\Delta_{G_0} \Phi_{k} =  h^2 \lambda_{k} D_y \Phi_{k}$. Moreover, in view of \eqref{e:monomial_x_action} and \eqref{e:action_hermite}, the operator $\Pi_N^h \mathcal{V} \Pi_N^h$ is of finite rank and its matrix in the basis $\{ \Phi_k \}$ is band diagonal with bandwidth equal to $N$. As we will see, the proof then reduces to the construction of a normal form in finite dimension (see Lemma \ref{l:finite_dimension_averaging}). Indeed, to solve the conjugation equation \eqref{e:normal_form_operators}, we only need to compute the matrices of the operators $\Pi_N^h\mathcal{D} \Pi_N^h$ and $\mathcal{U}_h$ in the basis $\{\Phi_k\}$; that is, we aim at finding a unitary matrix $\mathfrak{U}_N = \mathfrak{U}_N(\eta) = (\mathfrak{u}_{k_1k_2}(\eta))$ and a diagonal matrix  $\mathbf{D}(\eta) = (\mathbf{d}_k(\eta))$ so that
			\begin{align*}
				\mathbf{d}_k(\eta) & := \langle \mathcal{D}(\eta) \varphi_k(\cdot,\eta), \varphi_k(\cdot,\eta) \rangle_{L^2(\R_x)}, \hspace*{1.6cm} 0 \leq k \leq 3N, \\[0.2cm]
				\mathfrak{u}_{k_1k_2}(\eta) & := \langle \mathcal{U}_h(\eta) \varphi_{k_2}(\eta,\cdot), \varphi_{k_1}(\eta,\cdot) \rangle_{L^2(\R_x)}, \hspace*{1.2cm} 0\leq k_1,k_2 \leq 3N.
			\end{align*}
			More precisely, we make the ansatz $\mathbf{d}_k(\eta) =  \sum_{l=3}^N \eta^{1-\frac{l}{2}} \lambda_k^l$ for certain unknowns $\lambda_k^l \in \R$ and, recalling the notation $\mathbf{A}_l = (\mathbf{a}^l_{k_1k_2} )$ for the matrix given by \eqref{e:action_hermite}, we claim that for solving equation \eqref{e:normal_form_operators} it is sufficient to solve the finite dimensional problem:
			\begin{equation}
			\label{e:matrix_equation_normal_form}
			\mathfrak{U}_N \left(  D_{\lambda_k} +  \sum_{l=3}^{N} \eta^{1-\frac{l}{2}} \upsilon_l \mathbf{A}_l \right) \mathfrak{U}_N^* =   \left( D_{\lambda_k} +  \sum_{l=3}^{N} \eta^{1-\frac{l}{2}} D_{\lambda^l_{k}}  \right) + O(\eta^{1- \frac{N+1}{2}}),
			\end{equation}
			where $D_{\lambda_k}=\mathrm{diag}(\lambda_0,\cdots,\lambda_{3N})$ and $D_{\lambda_k^l} = \mathrm{diag}(\lambda_0^l, \ldots, \lambda_{3N}^l)$ for $l=3, \ldots, N$. To show the claim, by writing
			$$
			h^2 \eta^{2- \frac{N+1}{2}} = h^{N-1} (h^2 \eta)^{2- \frac{N+1}{2}},
			$$
		we observe, in view of \eqref{e:monomial_x_action}, that 
			$$
			h^2 \eta^{2 - \frac{N+1}{2}} \left( h \chi_1(h^2 \eta) \varphi_k(\eta,x) \right) = h^{N-1} (h^2 \eta)^{2- \frac{N+1}{2}} \left( h \chi_1(h^2 \eta) \varphi_k(\eta,x) \right) = O(h^{N-1})
			$$ 
			in $L^2(\R_x \times \R_\eta)$, which justifies the remainder in \eqref{e:normal_form_operators} from the remainder in \eqref{e:matrix_equation_normal_form} (recall also \eqref{e:monomial_x_action} for the extra factor $\eta$). We finally extend $\mathcal{U}_h : \Pi_N^h L^2(\R_x \times \R_y) \oplus (\operatorname{Id} - \Pi_N^h)L^2(\R_x \times \R_y) \to L^2(\R_x \times \R_y)$ by the identity in the second component.
			
Equation \eqref{e:matrix_equation_normal_form} can be solved by Lemma \ref{l:finite_dimension_averaging} since $\lambda_{k+1} - \lambda_k = 2$ for every $k \in \mathbb{N}_0$, with $\epsilon = \eta^{-1/2}$.  Observe moreover, by the proof of Lemma \ref{l:finite_dimension_averaging}, that the unitary matrix $\mathfrak{U}_N$ is obtained as
			 $$
			 \mathfrak{U}_N = e^{i \epsilon^N F_N} \cdots e^{i \epsilon F_1},
			 $$ 
where, in particular, by \eqref{e:solution_cohomological_finite},
$$
(F_1)_{k_1 k_2} = \frac{1 - \delta_{k_1 k_2}}{\lambda_{k_1} - \lambda_{k_2}} \mathbf{a}^3_{k_1,k_2}.
$$
Expanding $\mathfrak{U}_N = \operatorname{Id} + i \epsilon F_1 + i \epsilon^2 F_2 + \frac{1}{2} (i \epsilon F_1)^2 +  \cdots$, using that the bandwidth of $\mathbf{A}_l$ is $l \geq 3$ and formula \eqref{e:remainder_taylor_normal_form}, which can be iterated, we see that the term of order $\epsilon^{l-2}$ in this expansion is a band diagonal matrix of bandwidth smaller than or equal to $3(l-2)$. This, in particular, implies that $\mathfrak{u}_{k_1,k_2}(\eta)$ is a polynomial in $\eta^{-1/2}$, and gives us the estimate
			\begin{align}
				\label{e:unitary_matrix_element}
				\vert \mathfrak{u}_{k_1,k_2}(\eta) \vert \leq C \eta^{- \frac{1}{2} \left \lceil \frac{ \vert k_1 - k_2 \vert}{3} \right \rceil}, \quad k_1, k_2 \leq 3N.
			\end{align}
Notice, in particular, that $\mathfrak{U}_N = \Id + O(\eta^{-\frac{1}{2}})$, which implies that $\mathcal{U}_h = \Id + \mathcal{O}_{\mathcal{L}(L^2)}(h)$.
		\end{proof}
		
		\begin{lemma}
			\label{l:third_eigenvalue}
			For any $0 \leq k \leq 3N$, we have $\lambda_k^3 = 0$.
		\end{lemma}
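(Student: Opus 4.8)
\textbf{Proof plan for Lemma \ref{l:third_eigenvalue}.}
The plan is to trace the coefficient $\lambda_k^3$ back through the finite-dimensional normal-form construction of Lemma \ref{l:normal_form} and identify it, up to an explicit constant, with the diagonal entry $\mathbf{a}^3_{kk}$ of the matrix $\mathbf{A}_3$; this diagonal entry then vanishes by a parity argument for Hermite functions.

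First I would recall that, by the proof of Lemma \ref{l:normal_form}, the numbers $\lambda_k^3$ are precisely the diagonal entries of $D_{\lambda_k^3}$ appearing at the lowest order $l=3$ (that is, at order $\epsilon = \eta^{-1/2}$) in the matrix equation \eqref{e:matrix_equation_normal_form}, which is solved by the finite-dimensional averaging Lemma \ref{l:finite_dimension_averaging}. At this bottom order there is nothing below to interfere: the cohomological equation splits the perturbation $\upsilon_3 \mathbf{A}_3$ into its diagonal part, which is resonant with the free diagonal $D_{\lambda_k}$ and therefore survives in the normal form, and its strictly off-diagonal part, which is removed by the first generator $e^{i\epsilon F_1}$ with $(F_1)_{k_1 k_2} = (1-\delta_{k_1 k_2})(\lambda_{k_1}-\lambda_{k_2})^{-1}\mathbf{a}^3_{k_1 k_2}$ as in \eqref{e:solution_cohomological_finite}. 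Hence $D_{\lambda_k^3}$ is (a fixed multiple of) the diagonal part of $\mathbf{A}_3$, i.e. $\lambda_k^3$ is proportional to $\mathbf{a}^3_{kk}$.

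Second, I would compute $\mathbf{a}^3_{kk}$. By \eqref{e:action_hermite} and the fact that the factor $h\chi_1(h^2\eta)\Vert \chi_1 \Vert_{L^2}^{-1}$ in \eqref{e:elements} is common to all $\Psi_k$ and independent of $x$, the coefficients $\mathbf{a}^l_{k_1 k_2}$ coincide with the matrix elements of multiplication by $(x\sqrt\eta)^l$ in the orthonormal basis $\{\varphi_k(\eta,\cdot)\}$ of $L^2(\R_x)$; in particular $\mathbf{a}^3_{kk} = \langle (x\sqrt\eta)^3 \varphi_k(\eta,\cdot),\varphi_k(\eta,\cdot)\rangle_{L^2(\R_x)}$. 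From \eqref{e:ansatz}, $\varphi_k(\eta,\cdot)$ has definite parity $(-1)^k$ under $x\mapsto -x$, so the integrand $x^3 \vert \varphi_k(\eta,x)\vert^2$ is an odd function of $x$ and the integral vanishes. (Equivalently, the ladder relation $\sqrt\eta\, x\,\varphi_k = \tfrac{1}{\sqrt2}\big(\sqrt{k}\,\varphi_{k-1}+\sqrt{k+1}\,\varphi_{k+1}\big)$ shows that $(x\sqrt\eta)^3\varphi_k$ lies in $\operatorname{span}\{\varphi_{k\pm1},\varphi_{k\pm3}\}$, which has no $\varphi_k$-component.) Therefore $\mathbf{a}^3_{kk}=0$ and $\lambda_k^3 = 0$ for all $0\le k \le 3N$.

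The only genuinely delicate point is the bookkeeping in the first step, namely checking that no contribution to the order-$\epsilon$ diagonal part comes from the higher generators $F_j$ or from the Baker--Campbell--Hausdorff tail of $\mathfrak{U}_N(\,\cdot\,)\mathfrak{U}_N^*$; but since $l=3$ is the leading order of the expansion, those corrections are all $O(\epsilon^2)$, so the claim is immediate. Everything else is the elementary parity computation above.
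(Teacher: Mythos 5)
Your proposal is correct and follows essentially the same route as the paper: at the lowest order $l=3$ in the normal form of Lemma \ref{l:normal_form}, the resonant (diagonal) part of the perturbation gives $\lambda_k^3 = \upsilon_3\,\mathbf{a}^3_{kk} = \upsilon_3\,\langle (x\sqrt{\eta})^3 \varphi_k,\varphi_k\rangle_{L^2(\R_x)}$, which vanishes by the parity of Hermite functions. You merely spell out the bookkeeping (no higher generators or BCH tails contribute at order $\epsilon$) that the paper leaves implicit.
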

		
		\begin{proof}
			It is sufficient to observe, in view of the proof of Lemma \ref{l:finite_dimension_averaging} and the  parity properties of the Hermite functions, that, for $0 \leq k \leq 3N$,
			$$
			\lambda_k^3 =  \upsilon_3 \mathbf{a}_{kk}^3 =  \upsilon_3 \big \langle (x \sqrt{\eta})^3 \varphi_{k}(x,\eta), \varphi_{k}(x,\eta) \big \rangle_{L^2(\R_x)} = 0.
			$$
		\end{proof}
		We next define, for $\lambda_k^l$ given in the proof of Lemma \ref{l:normal_form},
		\begin{equation}
			\label{e:complete_eigenvalues}
			\mu_k = \mu_k(\eta) := \lambda_k + \sum_{l=4}^{N} \eta^{1-\frac{l}{2}} \lambda_k^l.
		\end{equation}
	
		\begin{equation}
			\label{e:diagonal_equation_part_2}
			\big( ih^2 \partial_t - h^2 \Delta_{G_0} - h^2 \Pi_N^h \mathcal{D}(D_y) \Pi_N^h \big)\Phi_{0}(t,x,y;h) = 0.
		\end{equation}
		
		Moreover, by the construction $\mathcal{U}_h$ in Lemma \ref{l:normal_form}, where recall that $\mathcal{U}_h = \operatorname{Id} + O(h)$, the function $u_h=\mathcal{U}_h^*\Phi_0$  takes the form
				\begin{align} 
			\label{e:solution_time_dependent'}
			u_h(t,x,y) & =  \sum_{k = 0}^{3N}   \int_\R \overline{\mathfrak{u}}_{0,k}(\eta)  \Psi_{k}(x,\eta;h) e^{i (y - y_0 + \mu_{0} t) \eta} d\eta \\[0.2cm]
			& =\Phi_{0}(t,x,y;h) + O_{L^2}(h). \notag 
		\end{align}
		
		\begin{proof}[Proof of Theorem \ref{t:quasimodes_outside_damping}] We first perform the analysis on $\R^2$, and recall the notation $\Delta_{G_0} := \partial_x^2 + x^2 \partial_y^2$. By the non-stationary phase analysis, we see that there exist $T>0$ small enough, and a sequence $(u_h)$ of solutions to \eqref{e:truncated_equation_final}, taking the form \eqref{e:solution_time_dependent'},  such that  
			$$ \|u_h \|_{L^2(\R^2)}=1,\quad \int_{-T}^T\int_{\R^2}b(x,y)|u_h(t,x)|^2dxdt=O(h^{\infty}).
			$$
			Indeed, since in \eqref{e:solution_time_dependent'},  $y_0 \in (\alpha,\beta)$, one verifies by integrating by parts that for $\omega=\{b > 0 \}$ and $T > 0$ satisfying $[y_0-T,y_0+T] \subset  (\alpha,\beta)$,
			\begin{equation} 
			\label{e:integral_T_T}
			\int_{-T}^T\|u_h(t,x,y)\|_{L^2(\omega)}^2dt=O(h^{\infty}).
			\end{equation}
			Setting $v_h(t)=\chi(t)u_h(t)$ for some fixed $\chi \in \mathcal{C}_c^\infty(-T,T)$, we have
			$$
		(ih^2 \partial_t - h^2 \Delta_{G_0} - h^2 \Pi_N^h \mathcal{V} \Pi_N^h )v_h=h^2f_h(t),
			$$
			where $f_h(t)=i\chi'(t)u_h(t)$. Taking the Fourier transform in $t$, we obtain
			\begin{equation}
			\label{e:principal_remainder_quasimode}
			 (-h^2\tau - h^2\Delta_{G_0} - h^2 \Pi_N^h \mathcal{V} \Pi_N^h)\widehat{v}_h(\tau)=h^2\widehat{f}_h(\tau).
		\end{equation}	
			We claim that, uniformly in $|\tau - \frac{1}{h^2}|\leq C_0$ and $h\ll 1$,
			$$ \|\widehat{v}_h(\tau)\|_{L^2}\sim h, \quad \|\widehat{f}_h(\tau)\|_{L^2}\sim h, \quad  \|b^{1/2}\widehat{v}_h(\tau)\|_{L^2}=O(h^{\infty}).
			$$
		 By  \eqref{e:integral_T_T}, Minkowski and Cauchy-Schwarz, we see that for any $\tau\in\R$,
			\begin{align*}\|b^{1/2}\widehat{v}_h(\tau)\|_{L^2(\R^2)} & =\Big\|\int_{-T}^Tb^{1/2}\chi(t)u_h(t)\mathrm{e}^{-it\tau}dt
			\Big\|_{L^2(\R^2)} \\[0.2cm]
			 & \leq C\int_{-T}^T\|u_h(t)\|_{L^2(\omega)}dt \\[0.2cm]
			 & \leq CT^{1/2}\|u_h\|_{L^2((-T,T)\times\omega)} \\[0.2cm]
			 & =O(h^{\infty}).
			\end{align*}
			Moreover, since $v_h=\chi(t)u_h(t)$ and $f_h(t)=i\chi'(t)u_h(t)$, we have explicitly that 
			\begin{equation} 
				\label{e:explicit_expression_v_h}
				\widehat{v}_h(\tau,x,y)=h\int_{\R} \frac{ \chi_1(h^2\eta)}{\Vert \chi_1 \Vert_{L^2}}e^{i(y-y_0)\eta}\varphi_0(x,\eta)\widehat{\chi}(\tau -\eta)d\eta \times (1 + O(h))\end{equation}
			and
			$$ \widehat{f}_h(\tau,x,y)=ih\int_{\R}\frac{ \chi_1(h^2\eta)}{\Vert \chi_1 \Vert_{L^2}}e^{i(y-y_0)\eta}\varphi_0(x,\eta)\widehat{\chi'}(\tau -\eta)d\eta \times (1 + O(h)).
			$$
			Once the claim is justified, we can choose $\tau_h$ as desired, that is, $\tau_h = 1/h^2$, and let $\psi_h := h^{-1}\widehat{v}_h(\tau_h)$, hence  we have $\|\psi_h\|_{L^2(\R^2)}\sim 1$,  and $\|\psi_h\|_{L^2(\omega)}=O(h^{\infty})$. Moreover, we also have
			\begin{align*}
			 (-h^2\tau_h - h^2\Delta_G )\psi_h & = - h^2\big( (\Id - \Pi_N^h) \mathcal{V} + \mathcal{R}_N^V \big)  \psi_h + O(h^2) + O(h^{N-1}) \\[0.2cm]
				& =: r_h(x,y) + O(h^2)+ O(h^{N-1}),
			\end{align*}
		where the $O(h^2)$ comes from \eqref{e:principal_remainder_quasimode} and the $O(h^{N-1})$ from \eqref{e:normal_form_operators}.
			Observe that, by \eqref{e:unitary_matrix_element}, \eqref{e:action_hermite}, and the explicit construction of $\psi_h$,
			\begin{align*}
				h^2(\Id - \Pi_N^h)   \mathcal{V}\psi_h & \\[0.2cm]
				&  \hspace*{-2cm}  = \upsilon_3 h \sum_{k = 0}^{3N}  \int_\R  \overline{\mathfrak{u}}_{0,k}(\eta)   (\Id - \Pi_N^h(\eta)) (h^2 \eta)^{2-\frac{3}{2}}  (x \sqrt{\eta})^3 \Psi_k(x,\eta;h) e^{i (y - y_0) \eta} \widehat{\chi}(\tau - \eta) d\eta +O(h^{N+1})\\[0.2cm]
				& \hspace*{-2cm} =  O(h^N),
			\end{align*}
where we observe again that the operator $(x \sqrt{\eta})^3$ has band diagonal matrix of bandwidth $3$, so the operator $(\operatorname{Id} - \Pi_N^h(\eta))$ acts only on basis elements $\Psi_k$ with $3N \geq k \geq 3N - 3$. For these elements, by \eqref{e:unitary_matrix_element},
$$
\vert \overline{\mathfrak{u}}_{0,k} \vert \leq C \eta^{- \frac{1}{2} (N-1)},
$$
which, writing it as $\eta^{-\frac{1}{2}(N-1)} = h^{N-1} (h^2 \eta)^{-\frac{1}{2}(N-1)}$ and together with the factor $h$ in front of the sum, gives us the size $O(h^N)$. By similar arguments we see that the higher order terms in the Taylor expansion in $\mathcal{V}$, up to $l = N$, produce remainders of smaller size, which justifies the remainder $O(h^{N+1})$. Moreover, by reasoning in the same way, we obtain:
			\begin{align*}
				h^2\mathcal{R}_N^V \psi_h & = \int_\R  R_N^V(x)  \eta^2 \Psi_0(x,\eta;h) e^{i (y - y_0) \eta} \widehat{\chi}(\tau - \eta) d\eta \times (1 + O(h)) \\[0.2cm]
				& = h^{N-1}  \int_\R  r_N^V(x)  (h^2\eta)^{2- \frac{N+1}{2}} (x \sqrt{\eta})^{N+1} \Psi_0(x,\eta;h) e^{i (y - y_0) \eta} \widehat{\chi}(\tau - \eta) d\eta \times (1 + O(h))\\[0.2cm]
				& =  O(h^{N-1}),
			\end{align*}
			where $r_N^V(x) = \frac{1}{N!} \int_0^1 (1-s)^{N} V^{(N+1)}(sx) ds$. Taking $h^2\tau_h= 1$ and $N = 3$, we prove that $\Vert r_h \Vert_{L^2} = O(h^2)$ and thus the desired optimality. 
			
			To justify the claim, we study the integral for a Schwartz function $g\in\mathcal{S}(\R)$:
			\begin{align*}
				F(\tau,x,y)=h\int_{\R}\frac{ \chi_1(h^2\eta)}{\Vert \chi_1 \Vert_{L^2}}e^{iy\eta}\varphi_0(x,\eta)\widehat{g}(\tau-\eta)d\eta. 
			\end{align*}
			By Plancherel,
			\begin{align*}
				\|F(\tau)\|_{L^2(\R^2)}^2 & = h^2\int_{\R^2}\frac{ |\chi_1(h^2\eta)|^2}{\Vert \chi_1 \Vert^2_{L^2}}|\varphi_0(x,\eta)\widehat{g}(\tau-\eta)|^2d\eta dx \\[0.2cm]
				& = \int_{\R} \frac{ |\chi_1(w)|^2}{\Vert \chi_1 \Vert^2_{L^2}} \left \vert \widehat{g}\left( \tau-\frac{w}{h^2}\right) \right \vert^2d w \\[0.2cm]
				& =\int_{\R} \frac{ |\chi_1(w)|^2}{\Vert \chi_1 \Vert^2_{L^2}} \left |\widehat{g}\left( \tau-\frac{1}{h^2}-\frac{w-1}{h^2}\right) \right |^2d w \\[0.2cm]
				& = h^2 \int_{|\zeta|\leq \frac{1}{h^2}} \frac{ |\chi_1\big(1+h^2\zeta\big)|^2}{\Vert \chi_1 \Vert^2_{L^2}}|\widehat{g}(\tau-h^{-2}-\zeta)|^2d\zeta.
			\end{align*}
			Thanks to  $|\tau-h^{-2}|\leq C_0$, we have $\|F(\tau)\|_{L^2(\R^2)}^2 \sim h^2\|\widehat{g}\|_{L^2(\R)}^2$. This verifies the claim. 
			
			Notice that, in the case $M = \mathbb{T}^2$, it is sufficient to define
			$$
			\psi_{\mathbb{T}^2,h} := \chi(x,y) \psi_h^{\pm},
			$$
			with $\chi \in \mathcal{C}_c^\infty(\R^2)$ such that $\operatorname{supp} \chi \subset \{ \vert x \vert \leq 2\epsilon \} \times (\alpha+2\epsilon,\beta-2\epsilon)$, for $\epsilon > 0$ small so that 
			$$
			\epsilon < \min  \{ \beta - y_0 - T, y_0 - T- \alpha \} ,
			$$ 
			and $\chi = 1$ on $\{ \vert x \vert \leq \epsilon \} \times  (\alpha+\epsilon,\beta-\epsilon)$, since we have that $\Vert (1 - \chi) \psi_h \Vert_{L^2(\R^2)} = O(h^\infty)$. Then $\psi_{\mathbb{T}^2,h}$ is the desired quasimode. This completes the proof of Theorem \ref{t:quasimodes_outside_damping}.
			
		\end{proof} 

		\subsection{Quasimodes microlocalized within the damping region}
		\label{s:damped_quasimodes}
		
		In this section we construct quasimodes in case (2) with assumption \eqref{e:particular_b}, and in case (3) with assumption \eqref{e:finite_type_condition}, generalizing the construction of the previous section by propagating solutions to \eqref{e:time_dependent_equation} towards the damping region.
		
		\begin{thm}
			\label{t:quasimodes}
			Assume that $b \in W^{1,\infty}(\mathbb{T}^2)$ satisfies \eqref{e:particular_b} with $\nu > 1$. Then, there exist $C_0 = C_0(\nu) > 0$, $C_1 = C_1(\nu) > 0$ such that, for any 
			$$
			0 \leq \beta_h \leq C_1 h \log \left( \frac{1}{h} \right), \quad h \in (0,1],
			$$
			there exists a quasimode $(\psi_h^\pm) \subset L^2(\mathbb{T}^2)$ for $P_h=-h^2\Delta_G+ihb(y)$ satisfying:
			$$
			\big( P_h - 1 - i h \beta_h \big) \psi^\pm_h = R(h),\quad \Vert \psi_h^\pm \Vert_{L^2} = 1,
			$$
			with
			$$
			r(h) := \Vert R(h) \Vert_{L^2} = C_0h^2 \exp \left( -\frac{C_0\beta_h}{h} \right), \quad  \text{as } h \to 0^+.
			$$
			In addition, let $\chi_0 \in \mathcal{C}_c^\infty(\R)$ be given by \eqref{e:cut_off_0} define, for $R \gg 1$, $\Upsilon_R(\eta) := \chi_0(\eta/R) - \chi_0(R\eta)$. Then:
			\begin{equation}
				\label{e:subelliptic_accumulation}
				\lim_{R \to \infty} \lim_{h \to 0^+} \left \Vert \Upsilon_R ( h^2 D_y ) \psi_h^\pm  \right \Vert_{L^2(\mathbb{T}^2)} = 1,
			\end{equation}
			and, if $\beta_h/h\rightarrow\infty$, 
			$$
			\vert \psi_h^\pm \vert^2 \rightharpoonup^\star \delta_{(0,\pm y_0)}, \quad \text{as } h \to 0^+.
			$$
		\end{thm}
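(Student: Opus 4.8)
The plan is to construct the quasimode by averaging in time a solution to the damped time-dependent equation, following the scheme of Section~\ref{s:subelliptic_quasimodes} but now tracking the effect of the friction term $b(y)\partial_t$ on the $L^2$ norm. I would start from the particular solution $u_h(t,x,y)$ to the undamped normal-form equation \eqref{e:diagonal_equation_part_2}, built from the ground-state Hermite profile $\varphi_0(\eta,x)$ concentrated at $x=0$ and propagated in the $y$ variable with velocity $\mu_0(\eta)$ (via the inverse Fourier transform $\eta\mapsto y$). Since $\mu_0(\eta) = \lambda_0 + O(\eta^{-1/2}) = 1 + O(\eta^{-1/2})$ and $\eta\sim h^{-2}$ on the relevant frequency window, the vertical velocity is $\approx 1$, so in time $t$ the profile travels a distance $\approx t$ in $y$, starting from $y_0$. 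The damped solution is then, to leading order, $u_h^b(t,x,y) \approx e^{-\frac{1}{2}\int_0^t b(y_0 + s\,\mathrm{sgn}(\eta))\,ds}\, u_h(t,x,y)$, modelling the friction as multiplication by the exponential of the integral of $b$ along the vertical flow; here one uses the explicit profile $b(y) = (|y|-y_0)^\nu$ near the damping boundary $|y|=y_0$, so $\int_0^t b(y_0+s)\,ds \sim t^{\nu+1}$ for $t>0$.

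\textbf{Key steps.} First, I would make rigorous the replacement of the damped propagator by this scalar exponential weight: write $u_h^b = e^{-\Phi_h(t)}v_h$ with $\Phi_h(t)$ the averaged friction and show $v_h$ solves \eqref{e:diagonal_equation_part_2} up to errors controlled by $\|\nabla b\|$ and the commutator $[b,\Pi_N^h]$, exactly as in the a priori estimates of Lemma~\ref{apriori} and in the reduction leading to \eqref{e:localized_x_quasimode}; the conditions \eqref{e:B-H_condition} (or the explicit form \eqref{e:particular_b}) guarantee these errors are lower order. Second, with $\hbar$-parameter $\hbar = h^{\frac{\nu}{2(\nu+1)}}$-type scaling dictated by balancing $t^{\nu+1}$ against the desired width, I would set $\psi_h^\pm := c_h^{-1}\int_\R e^{-it\tau_h/h^2}\chi(t/T_h)\,u_h^{b,\pm}(t)\,dt$ for a cutoff $\chi$, a suitable time-window $T_h$ (possibly $T_h\to 0$), $\tau_h = 1/h^2 + \beta_h$-type spectral shift, and normalizing constant $c_h$. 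Taking the Fourier transform in $t$ as in \eqref{e:principal_remainder_quasimode}–\eqref{e:explicit_expression_v_h} turns the time-averaging into evaluating $\widehat{v}_h(\tau_h)$, and the key computation is the Gaussian/Laplace-type integral $\int e^{-\Phi_h(t)} \widehat{\chi}(\cdot)\,dt$ whose size is $\sim \exp(-C\beta_h/h)$ when $\beta_h\gg h$, matching the claimed remainder $r(h) = C_0 h^2 e^{-C_0\beta_h/h}$. Third, the frequency-localization \eqref{e:subelliptic_accumulation} follows from the explicit spectral content of $\widehat{v}_h(\tau_h)$: the profile lives at $\eta \sim h^{-2}\cdot(1 + O(h^2))$, i.e. $h^2\eta$ is bounded away from $0$ and $\infty$, so $\Upsilon_R(h^2D_y)$ acts as the identity in the limit $h\to0$, $R\to\infty$, just as in the Plancherel computation at the end of the proof of Theorem~\ref{t:quasimodes_outside_damping}. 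Finally, the weak-$\star$ concentration $|\psi_h^\pm|^2 \rightharpoonup^\star \delta_{(0,\pm y_0)}$ when $\beta_h/h\to\infty$: the exponential weight $e^{-\Phi_h(t)}$ kills all contributions with $t$ bounded away from $0$ (since $\Phi_h(t)\gtrsim \beta_h t^{\nu+1}/h \to \infty$ there), forcing the time-average to localize at $t=0$; simultaneously the Hermite ground state $\varphi_0(\eta,x)$ with $\eta\sim h^{-2}$ concentrates at $x=0$ at scale $|h\eta|^{-1}\sim h$, and at $t=0$ the $y$-profile is centered exactly at $y_0$ (or $-y_0$ for the mirror solution). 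Hence the probability density collapses onto the point $(0,\pm y_0)$.

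\textbf{Main obstacle.} The delicate point is the rigorous control of the error incurred by replacing the genuine damped evolution $e^{it(h^2\Delta_G - ihb)/h^2}$ by the scalar exponential weight $e^{-\Phi_h(t)}$ times the undamped propagator, uniformly over the time window $T_h$ which may tend to zero slowly. One must check that the commutator between $b(y)$ and the Hermite projector $\Pi_N^h$, and between $b(y)$ and the normal-form conjugation $\mathcal{U}_h$, produce errors genuinely smaller than the (exponentially small) target remainder $r(h)$; this requires exploiting the smoothness of $b$ encoded in \eqref{e:B-H_condition}/\eqref{e:particular_b} together with the a~priori bound $\|b^{1/2}(\cdot)\|_{L^2} = o(\hbar)$-type estimates from Lemma~\ref{apriori}, and carefully tracking how the factor $\beta_h/h$ in the exponent interacts with the polynomial-in-$h$ losses from the symbolic calculus. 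The second nontrivial calculation is the asymptotics of the oscillatory-damped integral $\int_\R e^{-it\tau_h/h^2 - \Phi_h(t)} \chi(t/T_h)\,\widehat{\chi}(\cdots)\,dt$: one needs a precise Laplace-type expansion near $t=0$ (where $\Phi_h$ has a degenerate critical point of order $\nu+1$) combined with the non-stationary phase decay away from $t=0$, and then the normalization constant $c_h$ must be computed to the accuracy needed for both $\|\psi_h^\pm\|_{L^2}=1$ and the stated size of $R(h)$.
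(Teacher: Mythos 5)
Your proposal follows essentially the same route as the paper's proof: time-average a normal-form propagated Hermite ground state, model the friction as a scalar exponential weight along the vertical flow (obtained from the ODE for the scalar coefficient $c(t;h)$ after Taylor-expanding $b$ around the flowed point $y_t^0$), spectrally shift by $ih\beta_h$ to offset the decay, and then estimate the resulting oscillatory-damped Laplace integral via the weight $\Lambda_h$ together with the commutator errors from the Hermite projector and the normal-form conjugation. Only two minor slips, which do not affect the structure of the argument: the decay from $ih^2\partial_t u - h^2\Delta_G u + ihbu=0$ gives $c(t;h)=e^{-\mathbf{b}_0(t)/h}$ with no factor of $\tfrac{1}{2}$ (your $\tfrac{1}{2}$ belongs to the second-order damped wave form, not the first-order Schr\"odinger-like form used here), and in this theorem the time window $T_0<\rho$ is fixed, whereas the shrinking window $T_h\to 0$ with the anisotropic $\hbar$-scaling you invoke is only needed in the narrowly undamped analogue, Theorem~\ref{t:quasimodes_2}.
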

		
		\begin{remark}
			The constant $C_1 > 0$ is taken so that:
			\begin{equation}
				\label{e:C_1_choice}
				h^3 \leq C_0h^2 \exp \left( - C_0 C_1 \log h^{-1} \right).
			\end{equation}
			The result could be extended to deal with any constant $C_1 > 0$, obtaining quasimodes of width $r(h) = O(h^N)$ for any $N > 0$. This would require to extend the normal form construction of Section \ref{s:normal_form_reduction} to average also the damping term. We prefer to avoid this generalization for the sake of simplicity and since condition \eqref{e:C_1_choice} will be sufficient for Theorem \ref{t:quasimodes_2} below to show the lower bound of Theorem \ref{t:sharp_in_2}.
		\end{remark}
		
		\begin{remark}
			The result remains valid in case \textnormal{(2)} with condition \eqref{e:B-H_condition}, with $\{\pm y_0 \}$ replaced by $\{a,b \}$.
		\end{remark}
		
		The next result gives the construction of quasimodes in case (3). In this case, the quasimodes are microlocalized near $y_0$.
		\begin{thm}
			\label{t:quasimodes_2}
			Assume that $b \in W^{1,\infty}(\mathbb{T}^2)$ satisfies \eqref{e:particular_b} with $\nu > 1$. Then, there exist $C_0 = C_0(\nu) > 0$, $C_1 = C_1(\nu) > 0$, and $h_0 = h_0(C_1) > 0$, such that, for any 
			\begin{align*}
				0 \leq \beta_h \leq C_1 \left( h \log \frac{1}{h} \right)^{\frac{\nu}{\nu+1}}, \quad h \in (0,h_0],
			\end{align*}
			there exists a quasimode $(\psi_h)$ for $P_h=-h^2\Delta_G+ihb(y)$ satisfying:
			$$
			\big( P_h - 1 - i h \beta_h \big) \psi_h = R(h),\quad \Vert \psi_h \Vert_{L^2} = 1,
			$$
			where 
			$$
			r(h) := \Vert R(h) \Vert_{L^2} =  C_0h^{2 -\frac{1}{\nu+1}} \exp \left( -\frac{C_0\beta_h^{(\nu+1)/\nu}}{h} \right), \quad  \text{as } h \to 0^+.
			$$
			In addition, let $\chi_0 \in \mathcal{C}_c^\infty(\R)$ be given by \eqref{e:cut_off_0}, and let, for $R \gg 0$, $\Upsilon_R(\eta) = \chi_0(\eta/R) - \chi_0(R\eta)$. Then:
			$$
			\lim_{R \to \infty} \lim_{h \to 0^+} \left \Vert \Upsilon_R (h^2 D_y) \psi_h  \right \Vert_{L^2(\mathbb{T}^2)} = 1,
			$$
			and:
			$$
			\vert \psi_h \vert^2 \rightharpoonup^\star \delta_{(0,y_0)}, \quad \text{as } h \to 0^+.
			$$
		\end{thm}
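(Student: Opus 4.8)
\textbf{Proof proposal for Theorem \ref{t:quasimodes_2}.}
The plan is to adapt the construction of Theorem \ref{t:quasimodes} by propagating solutions of the time-dependent equation $ih^2\partial_tu_h-h^2\Delta_Gu_h+ihb(y)u_h=0$ \emph{towards} the point $y_0$ in the undamped set, but with the crucial difference that now $y_0$ is the single point where $b$ vanishes, so the propagated wave-packet immediately enters the damped region and loses mass by friction. First I would build, exactly as in Section \ref{s:normal_form_reduction}, the finite-dimensional diagonal reduction: define the rescaled Hermite elements $\Psi_k(x,\eta;h)$ by \eqref{e:elements}, the truncation $\Pi_N^h$ by \eqref{e:truncation}, and use Lemma \ref{l:normal_form} and Lemma \ref{l:third_eigenvalue} to conjugate $-h^2\Delta_{G_0}-h^2\Pi_N^h\mathcal V\Pi_N^h$ into diagonal form with quasi-eigenvalues $\mu_k(\eta)$ given by \eqref{e:complete_eigenvalues}. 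Taking only the ground Hermite mode $k=0$, the building block is
$$
\Phi_0(t,x,y;h)=\int_\R \frac{h\chi_1(h^2\eta)}{\|\chi_1\|_{L^2}}\,\varphi_0(\eta,x)\,e^{i(y-y_0+\mu_0(\eta)t)\eta}\,e^{-\frac1h\int_0^t b(y-y_0+\mu_0(\eta)s)\,ds}\,d\eta,
$$
which solves the full time-dependent damped equation modulo $O(h^{N-1})$ remainders coming from the Taylor truncation of $V$ and modulo the fact that $b$ is only $W^{1,\infty}$; here $\mu_0(\eta)=1+O(\eta^{-1})$ is the vertical propagation speed, which is $\approx 1$ on the support of $\chi_1(h^2\eta)$, i.e. $|\eta|\sim h^{-2}$. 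Then I would form the quasimode by the averaging/Laplace-type formula $\psi_h\sim c_h\int_\R \chi(t)e^{-it(1+ih\beta_h)/h^2}u_h(t)\,dt$, renormalized by a constant $c_h$, following the strategy of \cite{Ar20}.

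The heart of the matter is the choice of the time cutoff $\chi$ and the scale of the $y$-localization near $y_0$, dictated by the profile $b(y)=|y-y_0|^\nu$. Since on the relevant frequency window the vertical speed is $\approx1$, after time $t$ the packet sits at vertical position $\approx y_0+t$, where the damping integral is $\int_0^t|s|^\nu\,ds\approx \frac{|t|^{\nu+1}}{\nu+1}$, so the friction factor is $\exp(-\frac{|t|^{\nu+1}}{h(\nu+1)})$. This is the finite-type analogue of the Gaussian/exponential decay in Theorem \ref{t:quasimodes}; the effective time-width is $|t|\lesssim (h\log\frac1h)^{1/(\nu+1)}$, which is exactly the scale at which the $\beta_h$ budget $C_1(h\log\frac1h)^{\nu/(\nu+1)}$ matches (the competing term $e^{-\beta_h t/h}$ against $e^{-t^{\nu+1}/h}$ balances at $t\sim(\beta_h^{1/\nu})$, producing the remainder $r(h)=C_0h^{2-\frac1{\nu+1}}\exp(-C_0\beta_h^{(\nu+1)/\nu}/h)$). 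I would then proceed as in the proof of Theorem \ref{t:quasimodes_outside_damping}: set $v_h(t)=\chi(t)u_h(t)$, take the Fourier transform in $t$, localize $\tau$ at $\tau_h=(1+ih\beta_h)/h^2$, renormalize, and read off the size of the $L^2$ norms $\|\widehat v_h(\tau_h)\|$, $\|\widehat f_h(\tau_h)\|$ and of the non-averaged remainders $(\operatorname{Id}-\Pi_N^h)\mathcal V$, $\mathcal R_N^V$ and $[b,\Upsilon]$-type commutator terms. The subelliptic accumulation $\lim_{R\to\infty}\lim_{h\to0}\|\Upsilon_R(h^2D_y)\psi_h\|_{L^2}=1$ follows from the support property $\chi_1(h^2\eta)$ forcing $|\eta|\sim h^{-2}$, exactly as in \eqref{e:subelliptic_accumulation}; and the concentration $|\psi_h|^2\rightharpoonup^\star\delta_{(0,y_0)}$ follows from: concentration in $x$ at scale $|\eta|^{-1/2}\sim h$ (ground Hermite function) and concentration in $y$ at the shrinking time-scale $(h\log\frac1h)^{1/(\nu+1)}\to0$ around $y_0$, using that when $\beta_h/h\to\infty$ — or even just $\beta_h\to\infty$ suitably — the averaging further sharpens the profile near $y_0$.

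\textbf{Main obstacle.} The delicate point, compared with Theorem \ref{t:quasimodes}, is controlling the interaction between the friction factor $\exp(-\frac1h\int_0^t b(y-y_0+\mu_0(\eta)s)\,ds)$ and the $\eta$-integration: the exponent depends on $\eta$ both through $\mu_0(\eta)$ and through the dispersion of $y$ around $y_0+t$, so a stationary/non-stationary phase analysis must be carried out carefully to show that the friction genuinely behaves like $\exp(-\frac{|t|^{\nu+1}}{h(\nu+1)})$ uniformly on $|\eta|\sim h^{-2}$ and is not destroyed by the phase oscillation $e^{i(y-y_0+\mu_0(\eta)t)\eta}$. One also has to verify that the low regularity $b\in W^{1,\infty}$ (only $C^1$-type control, via \eqref{e:finite_type_condition} giving the explicit profile near $y_0$) is enough to differentiate the friction factor once and bound $\partial_t$ of $u_h$, so that the remainder $f_h=i\chi'(t)u_h(t)$ and the error produced by plugging $\psi_h$ into $P_h-1-ih\beta_h$ are of the claimed size $r(h)$; the condition $\nu>1$ is exactly what makes $b\in W^{1,\infty}$ with the right vanishing, and the matching of scales forces the stated range of $\beta_h$. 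Finally, as in Theorem \ref{t:quasimodes}, passing from $\R^2$ to $\mathbb T^2$ is routine: multiply by a cutoff $\chi(x,y)$ supported in a small neighborhood of $(0,y_0)$ equal to $1$ near $(0,y_0)$, using that $\|(1-\chi)\psi_h\|_{L^2(\R^2)}=O(h^\infty)$ by the concentration just established.
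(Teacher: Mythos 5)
Your high-level plan (wave-packet propagation in the subelliptic regime, friction attenuation from the profile $b(y)=|y-y_0|^\nu$, time averaging à la \cite{Ar20}, matching scales $t\sim\beta_h^{1/\nu}$ and time-width $\sim(h\log h^{-1})^{1/(\nu+1)}$) is exactly the paper's, and the predicted form of $r(h)$ is right. But there is a genuine gap in the ansatz itself. You propose the friction factor $\exp\big(-\tfrac1h\int_0^t b(y-y_0+\mu_0(\eta)s)\,ds\big)$, which depends on both $y$ and $\eta$, and then you acknowledge in the ``main obstacle'' paragraph that the $\eta$- and $y$-dependence of the exponent makes the subsequent stationary-phase and $L^2$ estimates delicate — but you do not resolve that obstacle. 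Moreover, with this ansatz the time-dependent damped equation is not actually satisfied modulo small errors: differentiating the exponent in $t$ produces $b(y-y_0+\mu_0(\eta)t)$, which does not cancel the $b(y)$ term in the operator away from the exact trajectory $y=y_0-\mu_0(\eta)t$, so the claim ``solves the full time-dependent damped equation modulo $O(h^{N-1})$'' is not justified.

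The paper avoids this obstacle entirely by a different and cleaner reduction that you should incorporate: $b$ is Taylor-expanded to zero order around the classical orbit point $y_t^0=y_0+\lambda_0 t$ (see the decomposition $b=\mathcal{B}_0(t)+\mathcal{R}_1(t)$ with $\mathcal{B}_0(t)=b(y_t^0)$). The zero-order term $\mathcal{B}_0(t)$ is a scalar multiple of the identity at each time, so it integrates into an explicit scalar coefficient $c(t;h)=e^{-\mathbf{b}_0(t)/h}$ with $\mathbf{b}_0(t)=\int_0^t b(y_0+s)\,ds=\tfrac{\operatorname{sgn}(t)|t|^{\nu+1}}{\nu+1}$, completely independent of $\eta$ and $y$ (Lemma \ref{p:propagation_result_2}); this makes the whole Fourier-transform-in-$t$/renormalization machinery go through verbatim. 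The price is the first-order Taylor remainder $\mathcal{R}_1(t)$, which is controlled as an $O(h^3)$ error by integration by parts in $\eta$ combined with Lemma \ref{l:third_eigenvalue} (vanishing of the cubic correction $\lambda_0^3$) and the entry estimates \eqref{e:unitary_matrix_element}. Two further technical points missing from your proposal: (i) the time cutoff must be rescaled to the friction time-scale, $g_h(t)=g(t\,h^{-1/(1+\nu)}L_h^{-1})$ with $L_h\sim(\beta_h/h^{\nu/(1+\nu)})^{1/\nu}$ — a fixed cutoff $\chi(t)$ would not produce the factor $h^{2-1/(\nu+1)}$ nor the exponential $\exp(-C_0\beta_h^{(\nu+1)/\nu}/h)$ in $r(h)$; (ii) the normalizing constant $C_h$ requires the quantitative estimate \eqref{e:normalizing_constant}, obtained by estimating $\int g_h^2\exp(\ldots)\,dt$ around the saddle $t\sim L_h h^{1/(1+\nu)}$, which controls both $\Vert\psi_h\Vert_{L^2}\sim1$ and the size of $\Vert\Lambda_h^\dagger\Vert_{L^2}$ governing $r_h^3$.
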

		
		\begin{remark}
			The constant $C_1(\nu)$ is chosen to satisfy \eqref{e:C_1_choice}. 
		\end{remark}
		
		From Theorem \ref{t:quasimodes_2}, we obtain in particular the following corollary, showing the lower bound in Theorem \ref{t:sharp_in_2}:
		
		\begin{cor} With the hypothesis of Theorem \ref{t:sharp_in_2}, there exist $h_0>0$ sufficiently small and $C_0 >0$ such that, for all $0<h\leq h_0$, we have:
			\begin{align*}
				C_0^{-1} h^{-2 + \frac{1}{\nu+1}}  \leq  \|(-h^2\Delta_G+ihb-1)^{-1}\|_{\mathcal{L}(L^2)}.
			\end{align*}
		\end{cor}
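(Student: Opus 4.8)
The plan is to deduce the corollary directly from Theorem \ref{t:quasimodes_2} by the standard duality between resolvent bounds and quasimodes. First I would apply Theorem \ref{t:quasimodes_2} with the choice $\beta_h = 0$, which is admissible since $0 \leq C_1 (h \log(1/h))^{\nu/(\nu+1)}$ for $h$ small. This produces a normalized sequence $(\psi_h) \subset L^2(\mathbb{T}^2)$ with $\Vert \psi_h \Vert_{L^2} = 1$ and
$$
(-h^2\Delta_G + ihb - 1)\psi_h = R(h), \qquad \Vert R(h)\Vert_{L^2} = C_0 h^{2 - \frac{1}{\nu+1}},
$$
where I use that $\exp(-C_0 \beta_h^{(\nu+1)/\nu}/h) = 1$ when $\beta_h = 0$. (Note: the statement of Theorem \ref{t:quasimodes_2} as written refers back to \eqref{e:particular_b}, but the relevant hypothesis for case (3) is \eqref{e:finite_type_condition}; I would simply cite the theorem as it governs the narrowly undamped case, matching the hypotheses of Theorem \ref{t:sharp_in_2}.)

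Next I would invert the relation. Writing $f_h := R(h) = (-h^2\Delta_G + ihb - 1)\psi_h$, we have by definition of the operator norm
$$
\Vert \psi_h \Vert_{L^2} = \Vert (-h^2\Delta_G + ihb - 1)^{-1} f_h \Vert_{L^2} \leq \Vert (-h^2\Delta_G + ihb - 1)^{-1} \Vert_{\mathcal{L}(L^2)} \, \Vert f_h \Vert_{L^2},
$$
provided the resolvent exists; its existence for $h$ small is guaranteed a priori once we know (from the abstract Hille–Yosida/Borichev–Tomilov setup in the introduction, or directly from the fact that the lower bound we are proving is automatically compatible with an infinite norm) — and in any case if the resolvent norm is infinite the claimed inequality holds trivially, so we may assume invertibility. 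Rearranging,
$$
\Vert (-h^2\Delta_G + ihb - 1)^{-1} \Vert_{\mathcal{L}(L^2)} \geq \frac{\Vert \psi_h \Vert_{L^2}}{\Vert f_h \Vert_{L^2}} = \frac{1}{C_0 h^{2 - \frac{1}{\nu+1}}} = C_0^{-1} h^{-2 + \frac{1}{\nu+1}},
$$
which is exactly the claimed lower bound, valid for all $0 < h \leq h_0$ after possibly shrinking $h_0$ and renaming the constant.

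Since the sign $+ihb$ is what appears in the corollary and Theorems \ref{t:quasimodes} and \ref{t:quasimodes_2} are stated for $P_h = -h^2\Delta_G + ihb(y)$, no sign adjustment is needed; had the opposite sign been required, one would use the conjugate quasimode $\overline{\psi_h}$, as remarked in the reduction section. The argument is entirely soft — there is no real obstacle here, the whole content having been pushed into the construction of Theorem \ref{t:quasimodes_2}. The only point requiring a word of care is the justification that the resolvent is well-defined (equivalently, that $i\mathbb{R} \cap \sigma(\dot{\mathcal{A}}) = \emptyset$), which follows from the unique continuation property cited in the introduction; alternatively, one phrases the conclusion as: either the resolvent norm is infinite (and the bound is trivial), or it is finite and the displayed inequality applies. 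I would present the proof in the three short steps above: invoke Theorem \ref{t:quasimodes_2} with $\beta_h \equiv 0$, read off the width $r(h) = C_0 h^{2 - 1/(\nu+1)}$, and divide.
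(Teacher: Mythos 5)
Your proof is correct and takes essentially the same route as the paper: invoke Theorem \ref{t:quasimodes_2} with $\beta_h \equiv 0$, note the quasimode width is $C_0 h^{2-1/(\nu+1)}$, and divide. You also correctly flag the reference typo in Theorem \ref{t:quasimodes_2} (it should cite \eqref{e:finite_type_condition}, not \eqref{e:particular_b}), a detail the paper's own proof silently glosses over.
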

		
\begin{proof} 
Taking $(\psi_h)$ given by Theorem \ref{t:quasimodes_2} (recall that $\Vert \psi_h \Vert_{L^2} = 1$), with $\beta_h \equiv 0$, we see that
$$
(-h^2 \Delta_G + ih b - 1) \psi_h = C_0 h^{2-\frac{1}{\nu + 1}} w_h,
$$		
with $w_h = O_{\mathcal{L}(L^2)}(1)$. This implies that
$$
\Vert (-h^2 \Delta_G + ih b - 1)^{-1} \Vert \geq \Vert (-h^2 \Delta_G + ih b - 1)^{-1} w_h \Vert = C_0^{-1} h^{-2 + \frac{1}{ \nu + 1}}.
$$
\end{proof}

		\noindent The idea of the proof of Theorems \ref{t:quasimodes} and \ref{t:quasimodes_2} relies on the study of the time-dependent equation 
		\begin{equation}
			\label{e:time_dependent_equation}
			\big( ih^2 \partial_t - h^2 \Delta_G + i hb(y) \big) u_h(t,x,y) = 0,
		\end{equation}
		for some initial data $u_h^0 = u_h(0) \in L^2(\T^2)$ with $\Vert u_h \Vert_{L^2} = 1$, and microlocal support intersecting $\overline{\omega}$, making rigorous the formal formula
		\begin{equation}
			\label{e:formal_formuka}
			\psi_h \sim  \int_\R e^{- \frac{i t (  \alpha_h + i h \beta_h)}{h^2}} u_h(t) dt,
		\end{equation}
		to find our quasimode $(\psi_h,\alpha_h + i h \beta_h)$. We use here a similar approach to the one given in \cite{Ar20}. 
		
		\subsubsection{Adapted orthonormal basis}
		
		We first deal with the whole space $\R^2$ instead of $\mathbb{T}^2$. We use the orthonormal set $\{ \Psi_k \} \subset L^2(\R_x \times \R_\eta)$ defined by \eqref{e:elements}, which is adapted to the sub-elliptic regime $  \vert D_y \vert \sim h^{-2}$. We use this orthonormal set to put the operator $-h^2\Delta_G$ in diagonal normal form (given in Section \ref{s:normal_form_reduction}) so that we can decompose the solution to the time-dependent equation in this basis. We only deal with the case  $ D_y \sim h^{-2}$, giving the construction of $\psi_h^-$; the case for negative $D_y$ can be handled in a similar way to construct $\psi_h^+$. 
		
		We make the following ansatz for a solution $u_h(t,x,y)$  to \eqref{e:time_dependent_equation}:
		\begin{align}
			\label{e:solution_time_dependent}
			u_h(t,x,y)  & =   c(t;h) \sum_{k \in \mathbb{N}_0} \int_\R \overline{\mathfrak{u}}_{0,k}(\eta) \Psi_{k}(x,\eta;h)  e^{ i( y + y_0 +  \mu_0 t) \eta} d\eta,
		\end{align}
		and with inital data
		\begin{align}
			\label{e:initial_data}
			u_h(0,x,y) & =    \sum_{k\in\mathbb{N}_0} \int_\R  \overline{\mathfrak{u}}_{0,k}(\eta) \Psi_{k}(x,\eta;h) e^{ i( y + y_0) \eta} d\eta.
		\end{align}
		Notice that this state is centered at the initial point $-y_0 < 0$, and that the classic flow $-y_0 -    \lambda_0t$ enters the damping region $\omega = \{ b > 0 \}$ for $t > 0$. We use this initial data to construct $\psi_h^-$ via a microlocal version of \eqref{e:formal_formuka}. In the symmetric case ($\eta < 0$), one can mimic the constracution from the point $y_0 > 0$; in this case the classic flow $y_0 -\mathrm{sign}(\eta)\lambda_0 t  $ enters the damping region for $t > 0$, and one can use this initial data to construct $\psi_h^+$ using \eqref{e:formal_formuka}. This convention of signs ensures that the solution $u_h(t,x,y)$ decays as entering in the damping region for $t > 0$.

		\subsubsection{Time-dependent equation}
		
		In view of \eqref{e:self-adjoint_equation}, the study of the propagation equation \eqref{e:time_dependent_equation} relies on understanding the effect of the remainder term $V_0 (x)\partial_y^2$, where $V_0(x) := V(x) - x^2$, and the damping term $-ihb(y)$ on $u_h(t,x,y)$. In this respect, for any $k \in \mathbb{N}_0$, we expand $b(y)$ by Taylor near $y_t^0 := -y_0 -  \lambda_0 t$ up to first order:
		\begin{align*}
			b(y) & = b(y_t^0) + (y-y_t^0) \int_0^1 b'( (1-s)y^0_t + s y) ds   \\[0.2cm]
			& =: b(y^0_t) +  R_1(t,y-y^0_t).
		\end{align*}
		This allows us to write the damping function $b$ as a sum of operators: 
		$$
		b=  \mathcal{B}_0(t) + \mathcal{R}_1(t),
		$$ 
		where $\mathcal{B}_0(t) := b(y_t^0)$ and $\mathcal{R}_1(t) := R_1(t,y-y_t^0)$. In particular, for the unitary operator $\mathcal{U}_h$  constructed in Lemma \ref{l:normal_form}, we have $[\mathcal{U}_h , \mathcal{B}_0(t)] = 0$.
		We next focus on the study of equation \eqref{e:time_dependent_equation}. To facilitate the computations, instead of looking strictly at equation \eqref{e:time_dependent_equation}, we truncate the operators $\mathcal{B}_0(t)$ and $\mathcal{V}_l$, and we look at the evolution equation (in normal form):
		\begin{equation}
			\label{e:modified_equation}
			\left( ih^2 \partial_t  + \mathcal{U}_h \left(  - h^2 \Delta_{G_0} - h^2 \Pi_N^h \mathcal{V} \Pi_N^h + i h \Pi_{N}^h \mathcal{B}_0(t)  \Pi_N^h \right) \mathcal{U}_h^* \right) \big( c(t;h) \Phi_{0}(t,x,y;h) \big) = O(h^{N-1}),
		\end{equation}
		where $N \geq 3$ will be chosen later on, and $\mathcal{U}_h = \Id + O(h)$ puts the selfadjoint operator $- h^2 \Delta_{G_0} - h^2 \Pi_N^h \mathcal{V}$ in normal form (see Section \ref{s:normal_form_reduction}).
		
		Then $u_h(t,x,y) := c(t;h) \mathcal{U}_h^* \Phi_{0}(t,x,y;h)$ solves the equation
		\begin{equation}
			\label{e:original_equation}
			ih^2 \partial_t u_h(t,x,y) +  \left(  - h^2 \Delta_{G_0} - h^2 \Pi_N^h \mathcal{V} \Pi_N^h + i h \Pi_{N}^h \mathcal{B}_0(t) \Pi_N^h \right) u_h(t,x,y) = O(h^{N-1}).
		\end{equation}
		
		We start by showing the following propagation result for the solution $u_h(t,x,y)$ of \eqref{e:modified_equation} given by \eqref{e:solution_time_dependent}. Let us define the function
		\begin{align}
			\mathbf{b}_0(t) := \int_0^t b( -y_0 - s) ds, \quad t \in \R.
		\end{align}
		\begin{lemma}
			\label{p:propagation_result}
			Assume that $b$ satisfies \eqref{e:particular_b}. Let $N \geq 3$. Let $\rho> 0$ be defined by \eqref{e:particular_b}. Then for any $0 < T_0 < \rho$ there exists a unique solution $u_h(t,x,y)$ to \eqref{e:original_equation} of the form \eqref{e:solution_time_dependent} with coefficient $c(t;h)$ satisfying $c(t;h) = c(0;h)$ for $t \in [-T_0, 0]$ and for $0 \leq t \leq T_0$, $c(t;h) = e^{- \frac{\mathbf{b}_0(t)}{h} }$.
		\end{lemma}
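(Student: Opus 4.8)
The plan is to substitute the ansatz $u_h(t,x,y)=c(t;h)\,\mathcal{U}_h^*\Phi_0(t,x,y;h)$ into equation \eqref{e:original_equation} and reduce everything to a scalar linear ODE for the coefficient $c(t;h)$. Since $c(t;h)$ is a scalar function of $t$ alone and both $\mathcal{U}_h$ and $\Pi_N^h$ are independent of $t$, computing $\mathcal{U}_h$ times the left-hand side of \eqref{e:original_equation} and using the normal-form identity \eqref{e:normal_form_operators} of Lemma~\ref{l:normal_form}, together with the facts that $\mathcal{U}_h$ commutes with $\Pi_N^h$ (being the identity on $(\operatorname{Id}-\Pi_N^h)L^2$) and with multiplication by the scalar $\mathcal{B}_0(t)=b(y_t^0)$, one is led to
$$
\begin{aligned}
& ih^2c'(t;h)\,\Phi_0+c(t;h)\big(ih^2\partial_t-h^2\Delta_{G_0}-h^2\Pi_N^h\mathcal{D}\Pi_N^h\big)\Phi_0 \\
& \qquad\qquad +ih\,c(t;h)\,\Pi_N^h\mathcal{B}_0(t)\Pi_N^h\Phi_0=O_{L^2}(h^{N-1}).
\end{aligned}
$$
The middle term vanishes identically by the diagonal equation \eqref{e:diagonal_equation_part_2} satisfied by $\Phi_0$, and in the last term $\Pi_N^h\mathcal{B}_0(t)\Pi_N^h\Phi_0=b(y_t^0)\Phi_0$ because $\Phi_0$ lies in the range of $\Pi_N^h$ and $\mathcal{B}_0(t)$ is a scalar for each fixed $t$. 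Hence everything reduces to $\big(ih^2c'(t;h)+ih\,c(t;h)\,b(y_t^0)\big)\Phi_0=O_{L^2}(h^{N-1})$.

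I would then solve the ODE $c'(t;h)=-h^{-1}b(y_t^0)\,c(t;h)$ exactly. Since $\Phi_0$ is $L^2$-normalized and $|c(t;h)|\le1$ on the compact interval $[-T_0,T_0]$, with this choice of $c$ the left-hand side of \eqref{e:original_equation}, conjugated back by the bounded $\mathcal{U}_h^{-1}$, is $O_{L^2}(h^{N-1})$ uniformly in $t$, so $u_h$ solves \eqref{e:original_equation}. The unique solution of the ODE is $c(t;h)=c(0;h)\exp\!\big(-h^{-1}\!\int_0^tb(y_s^0)\,ds\big)$. Recalling that $\lambda_0=1$, we have $y_s^0=-y_0-\lambda_0 s=-y_0-s$, so $\int_0^tb(y_s^0)\,ds=\int_0^tb(-y_0-s)\,ds=\mathbf{b}_0(t)$. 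For $s\in[-T_0,0]$ with $0<T_0<\rho$, the point $y_s^0=-y_0-s$ belongs to the set $\{b=0\}$ by \eqref{e:particular_b}, hence $\mathbf{b}_0(t)=0$ for $t\in[-T_0,0]$; since $\mathbf{b}_0(0)=0$ we have $c(0;h)=1$, and therefore $c(t;h)=c(0;h)$ for $t\in[-T_0,0]$ and $c(t;h)=e^{-\mathbf{b}_0(t)/h}$ for $0\le t\le T_0$. Uniqueness of $c$ within the family \eqref{e:solution_time_dependent} is the uniqueness of solutions to this linear first-order ODE.

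The only genuinely delicate point is the bookkeeping in the first step: one has to check carefully that, after conjugating by $\mathcal{U}_h$ and invoking \eqref{e:diagonal_equation_part_2}, every term other than the scalar ODE term either vanishes identically (thanks to $[\mathcal{U}_h,\Pi_N^h]=0$, $[\mathcal{U}_h,\mathcal{B}_0(t)]=0$, and $\Pi_N^h\Phi_0=\Phi_0$) or falls into the $O_{L^2}(h^{N-1})$ remainder produced by \eqref{e:normal_form_operators}, and that this remainder is uniform on $[-T_0,T_0]$, which holds because $|c(t;h)|\le1$ there and the normal-form estimates are uniform in $\eta$ on $\operatorname{supp}\chi_1(h^2\cdot)$. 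A minor subtlety worth recording is that $b$ was Taylor-expanded around the \emph{unperturbed} center $y_t^0=-y_0-\lambda_0t$, whereas the packet $\Phi_0$ propagates with the slightly perturbed velocity $\mu_0(\eta)=\lambda_0+O(\eta^{-1})$; this mismatch does not enter the present argument, since $\mathcal{B}_0(t)$ is \emph{by definition} the scalar $b(y_t^0)$ and therefore acts on $\Phi_0$ as multiplication by a constant, the discrepancy being entirely absorbed into the operator $\mathcal{R}_1(t)$, which is not part of the truncated equation \eqref{e:original_equation}.
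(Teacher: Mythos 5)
Your proof is correct and takes essentially the same route as the paper: the paper's argument is simply to observe that \eqref{e:diagonal_equation_part_2} annihilates the $\Phi_0$-part and reduces \eqref{e:modified_equation} to the scalar ODE $ih^2\dot c=-ih\,b(y_t^0)\,c$, then to integrate from $0$ to $t$ with $c(0;h)=1$. You merely carry out the substitution, the commutation facts $[\mathcal{U}_h,\Pi_N^h]=0$ and $[\mathcal{U}_h,\mathcal{B}_0(t)]=0$, and the projection identity $\Pi_N^h\mathcal{B}_0(t)\Pi_N^h\Phi_0=b(y_t^0)\Phi_0$ more explicitly than the paper does, which is fine.
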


		\begin{remark}
			Lemma \ref{p:propagation_result} remains valid with assumption \eqref{e:finite_type_condition}. We state Lemma \ref{p:propagation_result_2} below precising the minor changes needed in this case. 
		\end{remark}

		\begin{proof}  Using \eqref{e:diagonal_equation_part_2}, we deduce the differential equation for $c(t;h)$:
			\begin{align}
				\label{e:matrix_equation}
				i h^2 \dot{c}(t;h) & = - i h  b(y_t^0) c(t;h).
			\end{align}
			Then the claim holds by using that $c(0;h) = 1$ and integrating from $0$ to $t$.
			
		\end{proof}

		\begin{proof}[Proof of Theorem \ref{t:quasimodes}]
			We first perform the analysis on $\R^2$. This is essentially sufficient, since our quasimode will be $O(h^\infty)$ away from a compact set inside $(-\pi,\pi)_x \times (-\pi,\pi)_y$. 
		
		\noi
		$\bullet$ {\bf Step 1: Profile of the quasimodes.}
			
			Let $u_h = c(t;h) \mathcal{U}_h^*\Phi_0(t,x,y;h)$. Let $T_0 < \rho$. By \eqref{e:solution_time_dependent} and Lemma \ref{p:propagation_result}, we have, for every $t \in [-T_0,T_0]$,
			\begin{align}
				u_h(t;x,y) & = \sum_{k = 0}^{3N}   c(t;h) \int_\R \overline{\mathfrak{u}}_{0,k}(\eta)  \Psi_{k}(x,\eta;h) e^{i (y + y_0 + \mu_{0} t) \eta} d\eta \notag \\[0.2cm]
				\label{e:profile_time_dependent}
				& = h e^{-\frac{\mathbf{b}_0(t)}{h} } \int_\R \frac{ \chi_1(h^2 \eta)}{\Vert \chi_1 \Vert_{L^2}} e^{i (y + y_0 + \mu_0 t) \eta} \varphi_0(\eta,x) d\eta \times( 1 +  O(h)),
			\end{align}
		where recall that $\overline{\mathfrak{u}}_{0,0}(\eta) = 1 + O(\eta^{-\frac{1}{2}})$ and $\overline{\mathfrak{u}}_{0,k}(\eta) = O(\eta^{-\frac{1}{2}})$ for $k > 1$. Let us fix a bump function $g \in \mathcal{C}_c^\infty(\R)$ satisfying $\operatorname{supp} g \subset (-T_0, T_0)$ and $g(t) = 1$ for $t \in (-\frac{T_0}{2}, \frac{T_0}{2})$; and, for $t \in [-T_0, T_0]$, we define:
			\begin{align}
			\label{e:good_functions}
				v_h(t,x,y) := g(t) e^{  \frac{t\beta_h}{h}} u_h(t,x,y); \quad
				f_h(t,x,y) := g'(t) e^{  \frac{t\beta_h}{h}} u_h(t,x,y).
			\end{align}
			We also define weight functions:
			\begin{align*}
				\Lambda_h(t) := C_h g(t) e^{\frac{ t \beta_h - \mathbf{b}_0(t) }{h} };  \quad \Lambda_h^\dagger(t) := C_h g'(t) e^{\frac{ t \beta_h - \mathbf{b}_0(t) }{h} },
			\end{align*}
where $C_h$ will be chosen as a normalizing constant.

\begin{lemma}\label{Lambda_h}
Let $C_h$ be such that $\|\Lambda_h\|_{L^2(\R)}=1$. Assume that $\gamma_h:=\frac{\beta_h}{h}=O\big(\log\big(\frac{1}{h}\big)\big)$ as $h\rightarrow 0^+$, we have for $|\tau|\gg h^{-1},$ 
$$ |\widehat{\Lambda}_h(\tau)|\leq C|\tau|^{-2}.
$$
Moreover, if $\gamma_h\rightarrow+\infty$ as $h\rightarrow 0^+$, the normalized constant $C_h$ satisfies
\begin{align}\label{C_h} C_h^{-2}=\frac{1}{2\gamma_h}+o(\gamma_h^{-1}),\; h\rightarrow 0^+.
\end{align}
In particular, $\vert \Lambda_h(t) \vert^2 \rightharpoonup^\star c \, \delta_0$ for some constant $c>0$.
\end{lemma}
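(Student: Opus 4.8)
\textbf{Proof plan for Lemma \ref{Lambda_h}.}
The plan is to analyze the weight function $\Lambda_h(t) = C_h g(t) e^{(t\beta_h - \mathbf{b}_0(t))/h}$ directly, exploiting the explicit profile of $\mathbf{b}_0(t) = \int_0^t b(-y_0-s)\,ds$. First I would record the key qualitative facts about $\mathbf{b}_0$: it vanishes to high order at $t=0$ (since $b$ vanishes like $|y|^\nu$ near $\pm y_0$ by \eqref{e:particular_b}, in fact $\mathbf{b}_0(t) \sim |t|^{\nu+1}/(\nu+1)$ for small $t>0$ and $\mathbf{b}_0(t) = 0$ for $t \leq 0$), it is nondecreasing, and on $\operatorname{supp} g \subset (-T_0,T_0)$ with $T_0 < \rho$ it stays bounded. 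Thus the exponent $\phi_h(t) := (t\beta_h - \mathbf{b}_0(t))/h = \gamma_h t - \mathbf{b}_0(t)/h$ is, for $t<0$, just the linear growth $\gamma_h t$ cut off by $g$, and for $t>0$ it is $\gamma_h t$ minus a term that eventually dominates.

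For the Fourier decay bound, the plan is a standard non-stationary-phase / integration-by-parts argument. Write $\widehat{\Lambda}_h(\tau) = C_h \int_{\R} g(t) e^{\phi_h(t)} e^{-it\tau}\,dt$ and integrate by parts twice in $t$, moving the $e^{-it\tau}$ factor. Each integration by parts produces a factor $(i\tau)^{-1}$ and a derivative falling on $g(t)e^{\phi_h(t)}$. The derivatives of $\phi_h$ are controlled: $\phi_h'(t) = \gamma_h - b(-y_0-t)/h$ and $\phi_h''(t) = -b'(-y_0-t)/h$, both of size $O(\gamma_h) + O(1/h) = O(1/h)$ on $\operatorname{supp} g$ using $\gamma_h = O(\log(1/h))$, the bound $\|b\|_{W^{1,\infty}} < \infty$, and $T_0 < \rho$. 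Since $e^{\phi_h(t)}$ is uniformly bounded on $\operatorname{supp} g$ (the exponent is bounded there, again because $\gamma_h t$ is bounded and $\mathbf{b}_0 \geq 0$), and $C_h$ has at most polynomial growth in $1/h$, two integrations by parts give $|\widehat\Lambda_h(\tau)| \leq C_h \cdot C h^{-2} |\tau|^{-2}$; but in the regime $|\tau| \gg h^{-1}$ we actually want the clean statement $|\widehat\Lambda_h(\tau)| \leq C|\tau|^{-2}$, which one gets by performing enough integrations by parts (say four) so that the extra powers of $|\tau|^{-1} \ll h$ beat the $h^{-k}$ losses and the $C_h$ factor — this is the point where the restriction $|\tau| \gg h^{-1}$ is used, and I would phrase the bound as: for every $M$, $|\widehat\Lambda_h(\tau)| \leq C_M (h|\tau|)^{-M}|\tau|^{-2}$ on $|\tau|\geq h^{-1}$, which in particular yields the stated $|\tau|^{-2}$ decay.

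For the asymptotics of $C_h$ when $\gamma_h \to +\infty$, the plan is a Laplace-type computation of $C_h^{-2} = \int_{\R} g(t)^2 e^{2\phi_h(t)}\,dt = \int_{\R} g(t)^2 e^{2\gamma_h t - 2\mathbf{b}_0(t)/h}\,dt$. Split at $t=0$. On $t \leq 0$: $\mathbf{b}_0 \equiv 0$, so $\int_{-\infty}^0 g(t)^2 e^{2\gamma_h t}\,dt = \frac{1}{2\gamma_h} + o(\gamma_h^{-1})$ since $g(t)^2 = 1$ near $0$ and $e^{2\gamma_h t}$ concentrates near $t=0$ as $\gamma_h \to \infty$ (the mass outside a neighborhood of $0$ is exponentially small, and replacing $g^2$ by $1$ costs $o(\gamma_h^{-1})$). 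On $t \geq 0$: here $2\gamma_h t - 2\mathbf{b}_0(t)/h \leq 2\gamma_h t$, but more is true — because $\mathbf{b}_0(t) \gtrsim t^{\nu+1}/h \cdot h = $ wait, rather $2\mathbf{b}_0(t)/h \gtrsim t^{\nu+1}/h$, which for $t$ bounded below is of size $\gg \gamma_h t$ once $t \gtrsim (\gamma_h h)^{1/\nu}$; the contribution of $t \geq 0$ is therefore $O(\gamma_h^{-1})$ but one checks it is in fact $o(\gamma_h^{-1})$, because either $\gamma_h h \to 0$ (then the positive-$t$ exponential is killed quickly) or one uses the crude bound that this piece is $\leq \int_0^{T_0} e^{2\gamma_h t - 2\mathbf{b}_0(t)/h}\,dt$ and a direct estimate. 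Combining the two halves gives \eqref{C_h}. Finally, for the weak-$\star$ convergence $|\Lambda_h(t)|^2 \rightharpoonup^\star c\,\delta_0$: since $C_h^{-2} \sim \frac{1}{2\gamma_h}$ and the mass of $|\Lambda_h|^2 = C_h^2 g^2 e^{2\phi_h}$ concentrates at $t=0$ as $\gamma_h\to\infty$ (all mass outside $(-\varepsilon,\varepsilon)$ is $O(\gamma_h e^{-2\gamma_h\varepsilon}) \to 0$ on the left and even smaller on the right), and the total mass is $1$ by choice of $C_h$, we get $|\Lambda_h|^2 \rightharpoonup^\star \delta_0$, i.e. $c = 1$; I would state it with $c=1$ or keep $c$ generic as written. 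The main obstacle is handling the $t>0$ tail in the computation of $C_h$ in a way uniform over the whole admissible range $0 \leq \beta_h \leq C_1 h\log(1/h)$ — in particular checking that the positive-time contribution is genuinely lower order and does not contaminate the leading $\frac{1}{2\gamma_h}$, which requires carefully distinguishing the sub-regimes according to the relative size of $\gamma_h h$ and the vanishing order $\nu+1$ of $\mathbf{b}_0$ at the origin.
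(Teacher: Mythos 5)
Your overall plan matches the paper's in structure (integrate by parts for the Fourier decay, split the normalizing integral at $t=0$ and do a Laplace-type estimate for the asymptotics of $C_h$, then use concentration to deduce the weak-$\star$ limit), and your treatment of the $C_h$ asymptotics and the concentration is essentially sound modulo fleshing out the positive-time tail. But the Fourier-decay step has a genuine gap. You integrate by parts by moving only $e^{-it\tau}$, so each step produces a factor $|\tau|^{-1}$ and a derivative falling on $g e^{\phi_h}$, and you bound those derivatives crudely by $O(h^{-1})$ each (using $\phi_h',\phi_h''=O(h^{-1})$ and $e^{\phi_h}=O(1)$ on $\operatorname{supp}g$). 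With that crude bound, $k$ integrations by parts yield only $|\widehat\Lambda_h(\tau)|\lesssim C_h h^{-k}|\tau|^{-k}$, which is $\leq C|\tau|^{-2}$ precisely when $|\tau|\gtrsim C_h^{1/(k-2)} h^{-k/(k-2)}$. Since $k/(k-2)>1$ for every fixed $k$, this threshold is a strictly superlinear power of $h^{-1}$ and never reaches the full regime $|\tau|\gg h^{-1}$: a sequence such as $\tau_h=h^{-1}\log\log(h^{-1})$ satisfies $|\tau_h|\gg h^{-1}$ but defeats the argument for every $k$. For the same reason the sharper claim you state, $|\widehat\Lambda_h(\tau)|\leq C_M(h|\tau|)^{-M}|\tau|^{-2}$ for every $M$, does not follow either: after $M+2$ crude integrations by parts there is an uncontrolled leftover factor $C_h h^{-2}$.

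The missing ingredient is that wherever $\phi_h'$ is large (of size $h^{-1}$, near the right end of $\operatorname{supp}g$), the weight $e^{\phi_h}$ is simultaneously super-exponentially small, so the products one actually integrates satisfy $\int|\phi_h'|^k e^{\phi_h}\,dt=O\big(h^{-(k-1)/(\nu+1)}\big)$, not $O(h^{-k})$; with that refined estimate three naive integrations by parts do close the argument on $|\tau|\gg h^{-1}$. The paper sidesteps this bookkeeping entirely by integrating by parts against the full phase $\Phi(t)=\phi_h(t)-it\tau$: since $\Phi'(t)=\phi_h'(t)-i\tau$ has real part $\phi_h'(t)$ and imaginary part $-\tau$, one has $|\Phi'(t)|\geq|\tau|$ pointwise, so the dangerous factor $\phi_h'$ is swallowed into the denominator rather than appearing in the numerator. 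Two such integrations by parts then give $O(|\tau|^{-2})$ directly, once one also checks $C_h\int(|g'|+|g|)e^{\phi_h}\,dt=O(1)$ via the substitution $s=t h^{-1/(\nu+1)}$, which is the bound you would also need. You should either compute the refined estimate on $\int|\phi_h'|^k e^{\phi_h}$ or, more simply, integrate by parts against $\Phi$ as the paper does.
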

\begin{proof}
By definition of $\Lambda_h$ and $\mathbf{b}_0(t)$, we have
$$ \Lambda_h(t)=C_hg(t)\exp\Big(t\gamma_h-\frac{t^{\nu+1}}{(1+\nu)h}\mathbf{1}_{t\geq 0}\Big).
$$
Notice that 
\begin{equation}
	\label{e:estimate_on_the_left}
	\int_{\R}  g(t)^2 \exp \left( \frac{2 t \beta_h}{h} - \frac{2 t^{\nu+1}}{(1+ \nu)h} \mathbf{1}_{t \geq 0}(t) \right) dt \gtrsim \int_{-T_0}^0 \exp  \left( \frac{2t \beta_h}{h} \right) dt = \frac{h}{2\beta_h} \Big( 1- e^{-\frac{2T_0 \beta_h}{h}} \Big),
\end{equation}
provided that $\beta_h > 0$. Otherwise, if $\beta_h = 0$, then the left-hand-side of \eqref{e:estimate_on_the_left} is bounded from below by a positive constant. Then $C_h^2 \lesssim  1 + \beta_h h^{-1}\leq O(\log(h^{-1}))$. 
	Now for $|\tau|\gg h^{-1}$, integration by parts yields
\begin{align*} |\widehat{\Lambda}_h(\tau)|&  = C_h \left |\int_{\R} \frac{g(t)}{\gamma_h-t^{\nu}/h-i\tau}\frac{\partial}{\partial t}\exp\Big(t\gamma_h-\frac{t^{\nu+1}}{(\nu+1)h}\mathbf{1}_{t\geq 0}-it\tau\Big)dt\right | \notag \\[0.2cm]  & \lesssim\frac{C_h}{|\tau|}\int_{\R}(|g'(t)|+|g(t)|) \exp\Big(t\gamma_h-\frac{t^{\nu+1}}{(\nu+1)h}\mathbf{1}_{t\geq 0}\Big)dt\notag \\[0.2cm]
	 & \leq O(|\tau|^{-1}),
\end{align*}
since 
\begin{align*}
 C_h \int_0^\infty  \exp\Big(t\gamma_h-\frac{t^{\nu+1}}{(\nu+1)h}\Big)dt & \leq C\log(h^{-1}) h^{\frac{1}{\nu+1}} \int_0^\infty \frac{ s^{-\frac{\nu}{\nu+1}}}{\nu+1} \exp \left(- \frac{s^{\nu+1}}{\nu +1} \right)ds + O(1) \\[0.2cm]
  & = O(1).  
\end{align*}
 Applying integration by parts one more time, we get the desired bound $O(|\tau|^{-2})$.

Next we assume that $\gamma_h\rightarrow\infty$. We split the integral $\int_{\R}|g(t)\exp\Big(t\gamma_h-\frac{t^{\nu+1}}{(1+\nu)h}\mathbf{1}_{t\geq 0}\Big)|^2dt$ as $I_-+I_{+,1}+I_{+,2}$, where
\begin{align*}
I_{-} & :=\int_{-\infty}^0|g(t)|^2e^{2t\gamma_h}dt, \\[0.2cm]
I_{+,1} & :=\int_0^{(N_h\gamma_hh)^{\frac{1}{\gamma}}}|g(t)|^2\exp\Big(2t\gamma_h-\frac{2t^{\nu+1}}{(\nu+1)h} \Big)dt,\\[0.2cm]
I_{+,2} & :=\int_{(N_h\gamma_hh)^{\frac{1}{\gamma}}}^{\infty}|g(t)|^2\exp\Big(2t\gamma_h-\frac{2t^{\nu+1}}{(\nu+1)h} \Big)dt,
\end{align*}
and $N_h=\log\big(\frac{1}{h}\big).$ Note that for $0\leq t\leq (N_h\gamma_hh)^{\frac{1}{\nu}}$, $\exp\Big(2t\gamma_h-\frac{2t^{\nu+1}}{(\nu+1)h}\Big)\sim 1$, we have $I_{+,1}=O((N_h\gamma_hh)^{\frac{1}{\nu}})=o(\gamma_h^{-1})$. For $t>(N_h\gamma_hh)^{\frac{1}{\nu}}$, we have $\exp\Big(2t\gamma_h-\frac{2t^{\nu+1}}{(\nu+1)h}\Big)\leq e^{-2(N_h-1)\gamma_h t}$, then we deduce that $I_{+,2}=O((\gamma_hN_h)^{-1})=o(\gamma_h^{-1})$. Since $I_-=\frac{|g(0)|^2}{2\gamma_h}+o(\gamma_h^{-1})$, we obtain \eqref{C_h}. The same analysis yields $\vert \Lambda_h(t) \vert^2 \rightharpoonup^\star c \, \delta_0$ for some constant $c>0$. 
\end{proof}
		
Now we continue the proof of Theorem \ref{t:quasimodes}.	In view of \eqref{e:profile_time_dependent}, our candidate $\psi_h$ to be a quasimode now is:
			\begin{align}
			\label{e:candidate_quasimode_within}
				\psi_h(x,y) & := \frac{C_h}{h} \int_\R g(t) e^{-\frac{ i t}{h^2}( \alpha_h + i h \beta_h)}  u_h(t,x,y) dt  \\[0.2cm]
				& =  \int_\R \frac{ \chi_1(h^2 \eta)}{\Vert \chi_1 \Vert_{L^2}} \varphi_0(\eta,x) \widehat{\Lambda}_h\left( \frac{\alpha_h}{h^2} - \mu_0  \eta  \right) e^{i (y+y_0) \eta} d\eta \times ( 1+ O(h)), \notag
			\end{align}
			where the last equality holds due to Lemma \ref{p:propagation_result}. We next consider the following identity splitting the action of the damped-wave operator on the time dependent solution $u_h$:
			\begin{equation}
			\label{e:expansion}
			\begin{array}{rl}
			(-h^2\Delta_G + i h b) \, u_h(t) & = \displaystyle \big( -h^2\Delta_{G_0} - h^2\Pi_N^h \mathcal{V} \Pi_N^h + ih \Pi_N^h \mathcal{B}_0(t) \Pi_N^h \big) u_h(t) \\[0.3cm]
			& \displaystyle \quad - \big(  h^2(\operatorname{Id} - \Pi_N^h)\mathcal{V} - ih (\operatorname{Id} - \Pi_N^h)\mathcal{B}_0(t) + h^2\mathcal{R}_N^V - ih \mathcal{R}_1(t) \big) u_h(t).
			\end{array}
			 \end{equation}
With this identity, definition \eqref{e:candidate_quasimode_within} of $\psi_h$,	equation \eqref{e:original_equation}, and integration by parts in $t$ together with \eqref{e:good_functions}, we obtain:
			\begin{align*}
				(h^2 \Delta_G - i hb(y) + \alpha_h + i h \beta_h)\psi_h & = \frac{-i h C_h}{h} \int_\R \big(   (\operatorname{Id} - \Pi_N^h ) \mathcal{B}_0(t) + \mathcal{R}_1(t) \big)  e^{- \frac{i t \alpha_h}{h^2}} v_h(t,x,y) dt \\[0.2cm]
				& \quad + \frac{h^2 C_h}{h} \int_\R  \big( (\Id - \Pi_N^h) \mathcal{V} + \mathcal{R}_N^V \big) e^{- \frac{i t \alpha_h}{h^2}} v_h(t,x,y) dt \\[0.2cm]
				& \quad + \frac{-ih^2 C_h}{h} \int_\R  e^{-\frac{ i t \alpha_h}{h^2}  }   f_h(t,x,y) dt  + O(h^{N-1})\\[0.2cm]
				& =: r_h^1(x,y) + r_h^2(x,y) + r_h^3(x,y)+ O(h^{N-1}).
			\end{align*}
The remainder term $O(h^{N-1})$ comes from equation \eqref{e:original_equation}, and it will be justified once we prove that the quasimode $\psi_h$ is normalized for a suitable choice of the constant $C_h$. To prove that $\psi_h$ is the desired quasimode, we will first estimate the norm $\Vert \psi_h \Vert_{L^2}$, and then we will estimate the remaining terms $r_h^1$, $r_h^2$, and $r_h^3$. 
			We take:
			\begin{align}
				\label{e:choice_constants}
				\alpha_h = 1, \quad 0 \leq \beta_h \leq  C_1  h \log \frac{1}{h},
			\end{align}
			for $C_1  > 0$ chosen later, and take the constant $C_h$ to normalize  $\Vert \Lambda_h \Vert_{L^2(\R)} = 1$. 
		
\noi
$\bullet$ {\bf Step 2: Weak convergence of $|\psi_h|^2$. }
	
			First we estimate the norm of $\psi_h$. By Plancherel, we first have the upper bound:
			\begin{align*}
				\Vert \psi_h \Vert_{L^2(\R^2)}^2  & =  \int_\R \frac{\chi_1(h^2 \eta)^2}{\Vert \chi_1 \Vert^2_{L^2}} \Big|\widehat{\Lambda}_h\left( \frac{1}{h^2} - \mu_0 \eta \right)\Big|^2 d\eta  \\[0.2cm]
				 & = \frac{1}{h^2} \int_\R \frac{ \chi_1( \eta)^2}{\Vert \chi_1 \Vert^2_{L^2}} \Big|\widehat{\Lambda}_h\left(  \frac{1 - \eta}{h^2} \right)\Big|^2 d\eta \times (1 +  o(1)) \\[0.2cm]
				 &  = \int_{1 + h^2 \tau  \geq 0} \frac{\chi_1( 1+ h^2 \tau )^2}{\Vert \chi_1 \Vert^2_{L^2}} |\widehat{\Lambda}_h(-\tau)|^2 d\tau \times (1 + o(1))  \\[0.2cm]
				 &  \lesssim \Vert \Lambda_h \Vert_{L^2(\R)}^2,
			\end{align*}
			where we have used that $\mu_0(\eta) = 1 + O(\eta^{-1})$. On the other hand, by \eqref{C_h}, we also have the lower bound:
			\begin{align*}
				\Vert \psi_h \Vert_{L^2(\R^2)}^2 & \gtrsim \int_{\frac{3}{4} \leq 1 + h^2 \tau \leq \frac{3}{2}} |\widehat{\Lambda}_h\left( -\tau \right)|^2 d\tau  \\[0.2cm]
				 & \gtrsim \int_{-1/ 4h^2}^{-1/2h^2} |\widehat{\Lambda}_h(\tau)|^2 d\tau \\[0.2cm]
				 & \sim \|\Lambda_h\|_{L^2(\R)}^2.
			\end{align*}
Therefore,
	$$ \|\psi_h\|_{L^2(\R^2)}^2\gtrsim \|\widehat{\Lambda}_h\|_{L^2(\R)}^2\sim 1.
	$$
			Thus we have shown that, modulo adjusting the constant $C_h$, $\Vert \psi_h \Vert_{L^2(\R^2)} = \Vert \Lambda_h \Vert_{L^2(\R)} = 1$. Moreover, assuming that $\beta_h/h\rightarrow\infty$, by \eqref{C_h}, we have $\vert \Lambda_h \vert^2 \rightharpoonup^\star c \, \delta_0$ for some constant $c>0$. Next, let $a \in \mathcal{C}_c^\infty(\R^2)$, denoting $a_{y_0}(x,y) := a(x,y-y_0)$ and $\Theta_h(x,\eta) := \widehat{\Lambda}_h \left(\frac{1}{h^2} - \eta \right) \chi_1(h^2\eta) \varphi_0(x,\eta)$, we 
			also have, modulo $o(1)$ error,
		
			\begin{align}
				\int_{\R^2} a(x,y) \vert \psi_h(x,y) \vert^2 dx dy  &  \equiv  \int_{\R^3} \mathcal{F}_y(a_{y_0})(x,\eta- \xi) \Theta_h (x,\eta) \ov{\Theta_h(x,\xi)}  d\xi d\eta dx \notag \\[0.2cm]
				 & \equiv \int_{\R^3} \mathcal{F}_y(a_{y_0})(x,\tau- \tau') \widehat{\Lambda}_h(-\tau) \ov{\widehat{\Lambda}_h(-\tau')}\frac{ \chi_1(1+h^2\tau)\chi_1(1+h^2\tau')}{\Vert \chi_1 \Vert^2_{L^2}}  \notag \\[0.2cm]
				 & \quad \quad  \times  \varphi_0(x,\tau+h^{-2})\varphi_0(x,\tau'+h^{-2}) d\tau d\tau' dx \notag   \\[0.2cm]
				 & \equiv c \int_{\R^2} (\mathcal{F}_y{a}_{y_0})(0,\tau-\tau')\widehat{\Lambda}_h(-\tau)\ov{\widehat{\Lambda}_h(-\tau')}\frac{ \chi_1(1+h^2\tau)\chi_1(1+h^2\tau')}{\Vert \chi_1 \Vert^2_{L^2}}\notag  \\[0.2cm]
				 & \quad \quad \times \frac{(\tau+h^{-2})^{1/4}(\tau'+h^{-2})^{1/4} }{(\tau+\tau'+2h^{-2} )^{1/2}} d\tau d\tau'.  \label{last} 
			\end{align}
		To treat the last integral, we first observe that, thanks to Lemma \ref{C_h}, we may restrict the integral on $|\tau|\leq Ch^{-1}$ and $|\tau'|\leq Ch^{-1}$. Then
		$$ \chi_1(1+h^2\tau)\chi_1(1+h^2\tau')\cdot \frac{(\tau+h^{-2})^{1/4}(\tau'+h^{-2})^{1/4} }{(\tau+\tau'+2h^{-2} )^{1/2}}=\chi_1(1)^2\cdot \frac{1}{\sqrt{2}}+o(1),
		$$
		uniformly for $|\tau|\leq Ch^{-1}$, as $h\rightarrow 0^+$. Therefore, modulo $o(1)$ error, we can replace the last integral \eqref{last} by
		$$ c'\int_{\R^2}(\mathcal{F}_ya_{y_0})(0,\tau-\tau')\widehat{\Lambda}_h(-\tau)\widehat{\Lambda}_h(\tau')d\tau d\tau',
		$$
		which equals to
		\begin{align*} c'\int_{\R^2}(\mathcal{F}_ya_{y_0})(0,-\widetilde{\tau})\widehat{\Lambda}_h(\widetilde{\tau}-\tau' )\widehat{\Lambda}_h(\tau')d\widetilde{\tau}  d\tau' & =c'\int_{\R^2}(\mathcal{F}_ya_{y_0})(0,-\widetilde{\tau}) ( \mathcal{F}_y \Lambda_h^2)(\widetilde{\tau})d\widetilde{\tau},
		\end{align*}		
		and the last term converges to $c'a_{y_0}(0) = c'a(-y_0)$ as $h\rightarrow 0^+$, thanks to the fact that $\Lambda_h^2 \rightharpoonup^\star \delta$. This proves that $|\psi_h^2 | \rightharpoonup^\star  \delta_{(0,-y_0)}$ as $h \to 0^+$ after adjustying the normalizing constant.

\noi
$\bullet$ {\bf Step 3: Control the remainders $r_h^1,r_h^2,r_h^3$.} 
		
			Repeating the previous argument with $\Lambda^\dagger_h$ instead of $\Lambda_h$, we have: 
			\begin{align*}
				\Vert r_h^3 \Vert_{L^2(\R^2)} & \sim h^2 \Vert \Lambda^\dagger_h \Vert_{L^2(\R)} \\[0.2cm]
				& \sim h^2 \left( C_h^2  \int_{ [-T_0,\frac{T_0}{2}] \cup [\frac{T_0}{2}, T_0]}  \exp \left( \frac{2 s \beta_h}{h} - \frac{2 s^{\nu+1}}{(1+ \nu)h} \mathbf{1}_{s \geq 0}(s) \right) ds \right)^{\frac{1}{2}} \\[0.2cm]
				& \lesssim h^2 \exp \left( -\frac{C_0\beta_h}{h} \right),
			\end{align*}
			for some positive constant $C_0$ depending only on $\nu$. 
			
			On the other hand, notice that  $(\operatorname{Id} - \Pi_N^h ) \mathcal{B}_0(t) u_h(t) = 0$. Moreover, using that $\lambda_0 = 1$, and $c(t;h)= e^{-\frac{\mathbf{b}_0(t)}{h}}$, Lemma \ref{l:third_eigenvalue}, and denoting $\widetilde{c}(t;h) := e^{\frac{t \beta_h}{h}} c(t;h)$,
			we deduce:
	\begin{align*}
				\mathcal{R}_1(t) v_h(t,x,y) & \\[0.2cm]
				& \hspace*{-2cm}  = \sum_{ k=0}^{3N} \widetilde{c}(t;h) g(t)R_1(t,y-y_t^0) \int_\R  \overline{\mathfrak{u}}_{0,k}(\eta)  e^{i(y + y_0 + \mu_{0} t) \eta} \Psi_{k}(x,\eta;h) d \eta \\[0.2cm]
				&  \hspace*{-2cm} = - \sum_{ k =0}^{3N} \widetilde{c}(t;h)  \int_\R \int_0^1  b'((1-s)y_t^0 + sy)\frac{ \partial_\eta}{i}  \left( \overline{\mathfrak{u}}_{0,k}(\eta) \Psi_{k}(x,\eta;h) e^{it \eta (\mu_0 - \lambda_0)} \right)  e^{i(y -y_t^0 ) \eta} ds  d\eta.
			\end{align*}
		Since $\mu_0(\eta) - \lambda_0 = \eta^{-1} \lambda_0^4 + O(\eta^{-3/2})$, using estimate \eqref{e:unitary_matrix_element} and the elementary properties of the Hermite functions, $\partial_\eta \varphi_k(x,\eta) = \eta^{-1} \sum_{\vert k - k' \vert \leq 2} \vartheta_{k k'} \varphi_{k'}(x,\eta)$ for some $\vartheta_{k k'} \in \mathbb{Q}$, we deduce that
		the expression  
		$$ \partial_{\eta}\left( \overline{\mathfrak{u}}_{0,k}(\eta) \Psi_{k}(x,\eta;h) e^{it \eta (\mu_0 - \lambda_0)} \right)
		$$
		generates a linear combination of similar expressions of the form
		$$ 
	 O(h^2)\widetilde{\mathfrak{u}}_{k,k'}(\eta)\cdot h\widetilde{\chi}_1(h^2\eta)\varphi_{k'}(x,\eta)e^{it\eta(\mu_0-\lambda_0)},
		$$
		where $\widetilde{\mathfrak{u}}_{k',k}(\eta)$ is also a polynomial in $\eta^{-1/2}$, satisfying the same estimate as 
 \eqref{e:unitary_matrix_element}, and $\widetilde{\chi}_1\in C_c^{\infty}(\R_+)$. By the same estimate as for $\psi_h$, we obtain
	\begin{align*}
	 \frac{-ihC_h}{h}\int_0^1ds\int_{\R}dt \cdot\widetilde{c}(t;h)e^{-\frac{it}{h^2}}g(t)\int_{\R}d\eta\cdot  b'((1-s)y_t^0+sy )\widetilde{\mathfrak{u}}_{k',k}(\eta)\cdot h\widetilde{\chi}_1(h^2\eta)\varphi_{k'}(x,\eta)e^{it \eta(y+y_0+\mu_0)\eta} & \\[0.2cm]
	 & \hspace*{-16.5cm} = O_{L^2}(h).
		\end{align*}
	This implies that $\|r_h^1\|_{L^2(\R^2)}=O(h^3)$.

	 Next, as we showed in the proof of Theorem \ref{t:quasimodes_outside_damping}, we also have:
			$$
			h^2(\Id - \Pi_N^h) \mathcal{V} \psi_h  = O_{L^2}(h^N); \quad h^2\mathcal{R}_N^V \psi_h = O_{L^2}(h^{N-1}).
			$$
			Therefore, taking $N = 4$ we conclude, choosing $C_1 > 0$ sufficiently small satisfying \eqref{e:C_1_choice}:
			\begin{align*}
				\Vert r_h^j \Vert_{L^2(\R^2)} & = O(h^{3}) \leq C_0h^2 \exp \left( -\frac{C_0\beta_h}{h} \right), \quad j = 1,2,
			\end{align*}
			for $0 \leq \beta_h \leq C_1 h \log \left( h^{-1} \right)$. 
			
			Notice that, in the case $M = \mathbb{T}^2$, it is sufficient to define
			$$
			\psi_{\mathbb{T}^2,h}^{\pm} := \chi(x,y) \psi_h^{\pm},
			$$
			with $\chi \in \mathcal{C}_c^\infty(\R^2)$ such that $\operatorname{supp} \chi \subset \{ \vert x \vert \leq 2\epsilon \} \times \{ \vert y \vert < y_0 + \rho + 2\epsilon \}$, for $\epsilon > 0$ small, and $\chi = 1$ on $\{ \vert x \vert \leq \epsilon \} \times \{ \vert y \vert < y_0 + \rho + \epsilon \}$, since we have that $\Vert (1 - \chi) \psi_h^{\pm} \Vert_{L^2(\R^2)} = O(h^\infty)$. Then $\psi_{\mathbb{T}^2,h}^{\pm}$ are the desired quasimodes. 
			
		Finally, to show \eqref{e:subelliptic_accumulation}, we observe that 
			$$
			\Upsilon_R \left( h^2  D_y  \right) \int_\R \Psi_k(x,\eta;h) e^{i (y + y_0 + \mu_0t)\eta} d\eta =  \int_\R  \Upsilon_R \left( h^2 \eta \right) \Psi_k(x,\eta;h) e^{i (y + y_0 + \mu_0t)\eta} d\eta,
			$$
			and that $(1 - \Upsilon_R (h^2 \eta)) \chi_1(h^2 \eta) = O(1/R)$. This implies that
			$$
			\Vert \Upsilon_R \left( h^2 D_y \right) \psi_h^{-} \Vert_{L^2(M)} = 1 + O(1/R),
			$$
			and the same holds for $\psi_h^+$. This concludes the proof.
		
		\end{proof}

		The proof of Theorem \ref{t:quasimodes_2} is similar. Assume now that $b$ satisfies \eqref{e:finite_type_condition}. In this case, we define
		$$
		\mathbf{b}_0(t) := \int_0^t b( y_0 + s) ds, \quad t \in [-T_0,T_0], \quad T_0 := \rho - \epsilon.
		$$
		Observe that
		$$
		\mathbf{b}_0(t) =  \frac{ \operatorname{sgn}(t) \vert t \vert^{\nu +1}}{\nu+1}, \quad -T_0 \leq t \leq T_0.
		$$
		We now have the following propagation result, which is completely analogous to Lemma \ref{p:propagation_result}:
		
		\begin{lemma}
			\label{p:propagation_result_2}
			Assume that $b$ satisfies \eqref{e:finite_type_condition}. Let $N \geq 3$. Let $\rho> 0$ be defined by \eqref{e:finite_type_condition}. There exists a unique solution $u_h(t,x,y)$ to \eqref{e:original_equation} of the form \eqref{e:solution_time_dependent} with coefficient $c(t;h) = e^{- \frac{\mathbf{b}_0(t)}{h} }$ for $t \in [-T_0,T_0]$.
		\end{lemma}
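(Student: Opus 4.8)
\textbf{Proof of Lemma \ref{p:propagation_result_2}.} The plan is to repeat verbatim the scheme used in the proof of Lemma \ref{p:propagation_result}, making only the cosmetic changes dictated by the finite-type profile \eqref{e:finite_type_condition}. First I would recall that, after the normal form reduction of Section \ref{s:normal_form_reduction}, the ansatz \eqref{e:solution_time_dependent} gives a solution of \eqref{e:modified_equation} provided the scalar coefficient $c(t;h)$ solves a scalar ODE. More precisely, plug $u_h = c(t;h)\,\mathcal{U}_h^* \Phi_0(t,x,y;h)$ into \eqref{e:modified_equation}. Since $-h^2\Delta_{G_0} - h^2 \Pi_N^h \mathcal{V} \Pi_N^h$ is put in diagonal normal form by $\mathcal{U}_h$ and $\Phi_0$ carries the phase $e^{i(y+y_0+\mu_0 t)\eta}$ which exactly cancels the diagonalized operator (this is \eqref{e:diagonal_equation_part_2}, whose analogue holds here with the same proof), all the ``self-adjoint'' terms are absorbed into the phase and the only surviving contribution is the one coming from the truncated damping $ih\Pi_N^h \mathcal{B}_0(t)\Pi_N^h$. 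Because $\mathcal{B}_0(t) = b(y_t^0)$ with $y_t^0 := y_0 + \lambda_0 t = y_0 + t$ (here $\lambda_0 = 1$, and the sign convention is opposite to that of Lemma \ref{p:propagation_result} since now we center the state at $y_0 > 0$ and the classic flow $y_0 + t$ enters $\omega$ for $t>0$), the operator $\mathcal{B}_0(t)$ acts as a scalar, commutes with $\mathcal{U}_h$ and with $\Pi_N^h$, and one obtains exactly the differential equation \eqref{e:matrix_equation}, namely
\begin{equation*}
ih^2 \dot c(t;h) = -ih\, b(y_0+t)\, c(t;h), \qquad c(0;h) = 1.
\end{equation*}

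Next I would integrate this ODE. Dividing by $ih^2 c(t;h)$ and integrating from $0$ to $t$ yields
\begin{equation*}
c(t;h) = \exp\left( -\frac{1}{h}\int_0^t b(y_0+s)\,ds \right) = e^{-\frac{\mathbf{b}_0(t)}{h}},
\end{equation*}
which is well-defined and unique for $t \in [-T_0,T_0]$ with $T_0 = \rho - \epsilon$, because on this interval $|y_0 + t - y_0| = |t| \le T_0 < \rho$, so by \eqref{e:finite_type_condition} we have $b(y_0+t) = |t|^\nu$ and therefore $\mathbf{b}_0(t) = \int_0^t |s|^\nu\,ds = \operatorname{sgn}(t)\,|t|^{\nu+1}/(\nu+1)$, a smooth bounded function on $[-T_0,T_0]$. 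Uniqueness of $c(t;h)$ is immediate from the linearity of the scalar ODE and the initial condition $c(0;h)=1$. The remainder $O(h^{N-1})$ in \eqref{e:modified_equation} and hence in \eqref{e:original_equation} is controlled exactly as in Section \ref{s:normal_form_reduction}: it comes from the error \eqref{e:normal_form_operators} in the normal form reduction together with the truncation of $\mathcal{V}$ at order $N$, and the argument there did not use any particular feature of the profile of $b$, only that $b \in W^{1,\infty}$, so it transfers without change.

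The only genuinely new point — though still routine — is to verify that the Taylor remainder $\mathcal{R}_1(t) = R_1(t,y-y_t^0)$ of the damping is indeed negligible when one later builds the quasimode by averaging (this is needed in the proof of Theorem \ref{t:quasimodes_2}, not in Lemma \ref{p:propagation_result_2} itself, but it is worth flagging). Here the profile $b(y) = |y-y_0|^\nu$ near $y_0$ enters: $b'$ is only Hölder of exponent $\nu - 1$ near $y_0$ when $1 < \nu < 2$, but since we assume $\nu \ge 6$ in Theorem \ref{t:sharp_in_2} this causes no difficulty, and the estimate on $\mathcal{R}_1(t)v_h$ proceeds exactly as in Step 3 of the proof of Theorem \ref{t:quasimodes}, using $\mu_0(\eta) - \lambda_0 = \eta^{-1}\lambda_0^4 + O(\eta^{-3/2})$ and the estimate \eqref{e:unitary_matrix_element} on the coefficients $\overline{\mathfrak{u}}_{0,k}(\eta)$. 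I do not anticipate any real obstacle: the whole content of Lemma \ref{p:propagation_result_2} is the bookkeeping observation that, with the sign and centering conventions adapted to \eqref{e:finite_type_condition}, the scalar ODE \eqref{e:matrix_equation} is unchanged and integrates to $c(t;h) = e^{-\mathbf{b}_0(t)/h}$ on the full interval $[-T_0,T_0]$ (rather than only on $[0,T_0]$ as in Lemma \ref{p:propagation_result}, the difference being that in the widely undamped case $b$ vanishes identically on $[-T_0,0]$ whereas here $b(y_0+t) = |t|^\nu$ is already active for $t<0$ as well). $\hfill\square$
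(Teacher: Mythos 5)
Your proof is correct and matches the approach the paper takes: the lemma reduces, after the normal form reduction of Section \ref{s:normal_form_reduction} and the identity \eqref{e:diagonal_equation_part_2}, to the scalar ODE \eqref{e:matrix_equation}, which integrates to $c(t;h)=e^{-\mathbf{b}_0(t)/h}$; uniqueness is then immediate. The paper in fact provides no separate proof of Lemma \ref{p:propagation_result_2} and simply refers to the proof of Lemma \ref{p:propagation_result}, so your write-up is the canonical adaptation of that argument — including the correct observation that the only substantive change is that $b$ is already active for $t<0$, so $c(t;h)=e^{-\mathbf{b}_0(t)/h}$ holds on the full interval $[-T_0,T_0]$ rather than $c$ being frozen on $[-T_0,0]$. (One tiny cosmetic remark: you write the phase as $e^{i(y+y_0+\mu_0 t)\eta}$ and the flow as $y_0+t$, whereas the proof of Theorem \ref{t:quasimodes_2} uses the phase $e^{i(y-y_0+\mu_0 t)\eta}$ with flow $y_0-\mu_0 t$; since $b(y_0+s)=b(y_0-s)=\vert s\vert^\nu$ near $y_0$ this has no effect on $\mathbf{b}_0$ or on the ODE, so your conclusion is unaffected.)
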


		\begin{proof}[Proof of Theorem \ref{t:quasimodes_2}]
			
			The proof is very similar to the one of Theorem \ref{t:quasimodes}. We first construct quasimodes on $\R^2$. By \eqref{e:solution_time_dependent} and Lemma \ref{p:propagation_result_2}, we have, for every $t \in [-T_0,T_0]$,
			$$
			u_h(t;x,y) = h e^{-\frac{\mathbf{b}_0(t)}{h} } \int_\R \frac{ \chi_1(h^2 \eta)}{\Vert \chi_1 \Vert_{L^2}} e^{i (y - y_0 + \mu_0 t) \eta} \varphi_0(\eta,x) d\eta + O(h).
			$$
			Define
			$$
			L_h := \left \lbrace \begin{array}{ll}
				\displaystyle \left( \frac{\beta_h}{h^{\frac{\nu}{1+\nu}}} \right)^{\frac{1}{\nu}}, & \text{if }  h^{\frac{\nu}{1+\nu}} < \beta_h \leq C_1 \left( h \log \frac{1}{h} \right)^{\frac{\nu}{\nu+1}}, \\[0.4cm]
				1, & \text{if } 0 \leq \beta_h \leq h^{\frac{\nu}{1+\nu}}.
			\end{array} \right.
			$$
			Take $h_0 = h_0(C_1) > 0$ such that $T_0 \geq 2 h^{\frac{1}{1+\nu}} L_h$ for $h \in (0, h_0]$. We fix a bump function $g \in \mathcal{C}_c^\infty(\R)$ satisfying 
			\begin{equation}
			\label{e:support_g}
			\operatorname{supp} g \subset \left( -1, 3 \right), \quad g(t) = 1 \; \text{ for } t \in \left( -\frac{1}{2}, 2 \right),
			\end{equation}
			and define: $ g_h(t) = g(t h^{-\frac{1}{1+\nu}}L^{-1}_h)$. Set now:
			\begin{align}
			\label{e:neww_functions}
				v_h(t,x,y) := g_h(t) e^{  \frac{t\beta_h}{h}} u_h(t,x,y); \quad
				f_h(t,x,y) := g_h'(t) e^{  \frac{t\beta_h}{h}} u_h(t,x,y).
			\end{align}
			We also define weight functions:
			\begin{align*}
				& \Lambda_h(t) := C_h g_h(t) e^{\frac{ t \beta_h - \mathbf{b}_0(t) }{h} }; \hspace*{1.4cm}   \Lambda_h^\dagger(t) := C_h g_h'(t) e^{\frac{ t \beta_h - \mathbf{b}_0(t) }{h} }.
			\end{align*}
			Our candidate $\psi_h$ to be a quasimode is defined by:
			\begin{align}
			\label{e:quasimode_definicion}
				\psi_h(x,y) & := \frac{C_h}{h} \int_\R g_h(t) e^{-\frac{ i t}{h^2}( \alpha_h + i h \beta_h)}  u_h(t,x,y) dt  \\[0.2cm]
				\label{e:with_remainder}
				& =  \int_\R \frac{ \chi_1(h^2 \eta)}{\Vert \chi_1 \Vert_{L^2}} \varphi_0(\eta,x) \widehat{\Lambda}_h\left( \frac{\alpha_h}{h^2} - \mu_0 \eta \right) e^{i (y-y_0) \eta} d\eta  \times ( 1 + O(h)).
			\end{align}
			where the last equality holds due to Lemma \ref{p:propagation_result_2}. We next use  definition \eqref{e:quasimode_definicion} of $\psi_h$, \eqref{e:expansion},  \eqref{e:original_equation}, and integration by parts in $t$ together with \eqref{e:neww_functions}, to obtain:
			\begin{align*}
				(h^2 \Delta_G - i hb(y) + \alpha_h + i h \beta_h)\psi_h & = \frac{-i h C_h}{h} \int_\R  \big(  (\operatorname{Id} - \Pi_N^h ) \mathcal{B}_0(t)+ \mathcal{R}_1(t) \big) e^{- \frac{i t \alpha_h}{h^2}} v_h(t,x,y) dt   \\[0.2cm]
				& \quad + \frac{h^2 C_h}{h} \int_\R \big( \Id - \Pi_N^h) \mathcal{V} +  \mathcal{R}_N^V \big)  e^{- \frac{i t \alpha_h}{h^2}} v_h(t,x,y) dt \\[0.2cm]
				& \quad + \frac{h^2 C_h}{h} \int_\R  e^{-\frac{ i t \alpha_h}{h^2}  }   f_h(t,x,y) dt   + O(h^{N-1})\\[0.2cm]
				& =: r_h^1(x,y) + r_h^2(x,y) + r_h^3(x,y)+  O(h^{N-1}).
			\end{align*}
			To prove the claim of the theorem, we will estimate the norm $\Vert \psi_h \Vert_{L^2}$ and prove that it is asymptotically normalized for a suitable choice of the constant $C_h$; then we will estimate the remaining terms $r_h^1$, $r_h^2$, and $r_h^3$. First of all we compute the norm of $\Vert \Lambda_h \Vert_{L^2}$:
			\begin{align*}
				\Vert \Lambda_h \Vert^2_{L^2(\R)} & = C_h^2 \int_{\R}  g_h(t)^2 \exp \left( \frac{2 t \beta_h}{h} - \frac{2 \operatorname{sgn}(t)  \vert t \vert^{\nu+1}}{(1+ \nu)h} \right) dt \\[0.3cm]
				& = C_h^2 h^{\frac{1}{1+\nu}} \int_{\R}  g\left( \frac{s}{L_h} \right)^2 \exp \left( \frac{2 s \beta_h}{h^{\frac{\nu}{1+\nu}}} - \frac{2 \operatorname{sgn}(s)  \vert s \vert^{\nu+1}}{1+ \nu} \right) ds.
			\end{align*}
			We next take:
			\begin{align}
				\label{e:choice_constants_2}
				\alpha_h = 1, \quad 0 \leq \beta_h \leq  C_1 \left( h \log \frac{1}{h} \right)^{\frac{\nu}{\nu+1}},
			\end{align}
			for $C_1 > 0$, and take the constant $C_h$ to normalize  $\Vert \Lambda_h \Vert_{L^2(\R)} = 1$. We obtain by \cite[Prop. 4.2]{Ar20},
			\begin{equation}
				\label{e:normalizing_constant}
				C_h^2 \lesssim h^{-\frac{1}{1+\nu}}\left( 1 + \beta_h h^{-\frac{\nu}{1+\nu}} \right) \exp \left( - \frac{C_0 \beta_h^{\frac{1+\nu}{\nu}}}{h} \right),
			\end{equation}
			for some $C_0 > 0$ depending only on $\nu$. For completeness, we include the proof here. Assume first that $ \beta_h \geq h^{\nu/(1+\nu)}$. Then the function $\exp \left( \frac{2s\beta_h}{h^{\frac{\nu}{1+\nu}}} - \frac{2\operatorname{sgn}(s) \vert s \vert^{\nu+1} }{1+\nu} \right)$ reaches its maximum (for $s > 0$) at $L_h$. Moreover,
			\begin{equation}
				\label{e:estimate_exotic_integral}
				\exp \left( \frac{2s\beta_h}{h^{\frac{\nu}{1+\nu}}} - \frac{2s^{\nu+1} }{1+\nu} \right) \geq \exp \left( \frac{2 s \nu}{1+\nu} \cdot \frac{\beta_h}{h^{\frac{\nu}{1+\nu}}} \right), \quad \text{for} \quad 0 \leq s \leq L_h.
			\end{equation}
			Then there exists $C_0 = C_0(\nu) > 0$ such that
			\begin{align*}
				\int_\R  g(s/L_h)^2 \exp \left( \frac{2s\beta_h}{h^{\frac{\nu}{1+\nu}}} - \frac{2\operatorname{sgn}(s) \vert s \vert^{\nu+1} }{1+\nu} \right) ds & \geq \int_0^{L_h} \exp \left( \frac{2 s \nu}{1+\nu} \cdot \frac{\beta_h}{h^{\frac{\nu}{1+\nu}}} \right) ds \\[0.2cm]
				& = \frac{h^{\frac{\nu}{1+\nu}}}{\beta_h} \int_0^{\frac{ \beta_h^{\frac{1+\nu}{\nu}}}{h}} \exp \left( \frac{2 s \nu}{1+\nu} \right) ds \\[0.2cm]
				& \geq \frac{C_0 h^{\frac{\nu}{1+\nu}}}{\beta_h}  \exp \left(  \frac{C_0 \beta_h^{\frac{1+\nu}{\nu}}}{h}\right).
			\end{align*}
			Otherwise, if $0 \leq \beta_h \leq h^{\nu/(1+\nu)}$, there exists $c_0 > 0$ such that
			$$
			\int_\R  g(s/L_h)^2 \exp \left( \frac{2s\beta_h}{h^{\frac{\nu}{1+\nu}}} - \frac{2\operatorname{sgn}(s) \vert s \vert^{\nu+1} }{1+\nu} \right) ds \geq c_0 > 0.
			$$
			Then, using that $\Vert \Lambda_h \Vert_{L^2(\R)} = 1$, \eqref{e:normalizing_constant} holds true.
			
			Following the proof of Theorem \ref{t:quasimodes}, we have  $\Vert \psi_h \Vert_{L^2(\R^2)}  \sim \Vert \Lambda_h \Vert_{L^2(\R)} \sim1$, hence we can adjust the normalizaing constant $C_h$ to get $\Vert \psi_h \Vert_{L^2(\R^2)} = 1$. Similarly, the fact that $\vert \psi_h \vert^2 \rightharpoonup^\star \delta_{(0,y_0)}$ follows in the same way  as the one given in the proof of Theorem \ref{t:quasimodes}, provided that $\vert \Lambda_h(t) \vert^2 \rightharpoonup^\star c \, \delta_0$ for some $c > 0$.
			
Moreover, recall that  that by \eqref{e:support_g}  and the definition of $g_h$, $$
\operatorname{supp}\Lambda_h^\dagger \subset \Big( -L_h h^{\frac{1}{1+\nu}} , - \frac{1}{2} L_h h^{\frac{1}{1+\nu}}  \Big) \cup \big( 2L_h h^{\frac{1}{1+\nu}} ,3 L_h h^{\frac{1}{1+\nu}} \big).
$$ 		
This means $\Lambda_h^\dagger$ (which controls the norm of $f_h$ and hence of $r_h^3$) is supported on the tails of $\Lambda_h$, where is expected to be very small. Indeed, assuming $\beta_h > h^{\frac{\nu}{1+\nu}}$, the function $\exp \left( \frac{2 s \beta_h}{h^{\frac{\nu}{1+\nu}}} + \frac{2 \vert s \vert^{\nu+1}}{(1+ \nu)} \right)$ reaches its minimum for $s < 0$ at $-L_h$, and satisfies
			$$
			\int_{-L_h \leq s \leq -\frac{L_h}{2}} \exp \left( \frac{2 s \beta_h}{h^{\frac{\nu}{1+\nu}}} + \frac{2  \vert s \vert^{\nu+1}}{(1+ \nu)} \right) ds \leq \exp \left( - \frac{C_{\nu} \beta_h^{\frac{1 + \nu}{\nu}}}{h} \right) \int_{-L_h \leq s \leq -L_h/2} ds,
			$$ 
			while $\exp \left( \frac{2 s \beta_h}{h^{\frac{\nu}{1+\nu}}} - \frac{2 \vert s \vert^{\nu+1}}{(1+ \nu)} \right)$ reaches its maximum for $s > 0$ at $L_h$, and satisfies
			$$
			\int_{2L_h \leq s \leq 3 L_h} \exp \left( \frac{2 s \beta_h}{h^{\frac{\nu}{1+\nu}}} - \frac{2 \vert s \vert^{\nu+1}}{(1+ \nu)} \right) ds \leq \exp \left( - \frac{c_{\nu} \beta_h^{\frac{1 + \nu}{\nu}}}{h} \right)  \int_{2L_h \leq s \leq 3 L_h}  ds.
			$$
			This implies, using \eqref{e:normalizing_constant}, that 
			\begin{align*}
			 \Vert  r_h^3 \Vert_{L^2(\R^2)} & \sim h^2\left(  \Vert \Lambda^\dagger_h \Vert_{L^2(\R)} + O(h) \right)  \\[0.2cm]
				& \lesssim h^{2- \frac{1}{1+\nu}} \left( \frac{ C_h^2 h^{\frac{1}{1+\nu}}}{L_h^2}  \int_{\left( -L_h,-\frac{L_h}{2} \right) \cup (2L_h,3L_h)} \exp \left( \frac{2 s \beta_h}{h^{\frac{\nu}{1+\nu}}} - \frac{2 \operatorname{sgn}(s) \vert s \vert^{\nu+1}}{(1+ \nu)} \right) ds \right)^{\frac{1}{2}}  \\[0.2cm]
				& \lesssim h^{2- \frac{1}{1+\nu}} \exp \left( -\frac{C_0\beta_h^{\frac{\nu+1}{\nu}}}{h} \right).
			\end{align*}

			On the other hand, the estimates for $\mathcal{R}_N^V \psi_h $, $(\Id - \Pi_h^N) \mathcal{V} \psi_h$, and $\mathcal{R}_1(t)v_h$, given in the proof of Theorem \ref{t:quasimodes} remain valid. Therefore, taking $N \geq 4$, we conclude that
			\begin{align*}
				\Vert r_h^j \Vert_{L^2(\R^2)} & \leq O(h^3) \leq C_0 h^{2- \frac{1}{1+\nu}} \exp \left( -\frac{C_0\beta_h^{\frac{\nu+1}{\nu}}}{h} \right), \quad j =1,2.
			\end{align*}
			The remaining part of the proof is completely analogous to the one given in the proof of Theorem \ref{t:quasimodes}. In particular, to deal with the case $ \mathbb{T}^2$, it is sufficient to multiply our quasimode $\psi_h$ defined on $\R^2$ by a cut-off function supported near $(x,y) = (0,y_0)$, since $\psi_h$ is $O(h^\infty)$ away from this point.
		\end{proof}
		
\section{Construction of quasimodes in the horizontal trapped regime}
\label{s:compact_quasimodes}

In this section, we finish the proof of Theorem \ref{t:sharp_in_1} by showing the lower bound of the resolvent estimate $\|(h^2\Delta_G+1-ihb(y))^{-1}\|_{\mathcal{L}(L^2)}$, with damping $b(y)$ given by \eqref{e:particular_b}. The idea is to take the saturated quasimodes for the Laplace operator in \cite{K19} and use the normal form to cook up the desired quasimodes for the Baouendi-Grushin operator. In order to control the errors appearing in the normal form analysis, rather than using the construction in \cite{K19} as a blackbox, we should keep track of the regularity (anisotropic) of the $\T^2$ quasimodes.
\subsection{Review of the construction of $\T^2$ quasimodes}

In this section, we review the construction in \cite{K19} and give some specific estimates for the $\T^2$ quasimodes. The original idea for the construction dates back to the appendix in \cite{AL14} by Nonnenmacher.

Recall that $b(y)=(|y|-|y_0|)_+^{\nu}$ near $|y|=|y_0|$, we consider the ansatz
$$ u_{k}(x,y):=\mathrm{e}^{ikx}v_{h_k}(y),
$$ 
with $0<h_k\lesssim \frac{1}{|k|}$ to be specified later. We fix $\delta=\frac{1}{\nu+2}$ in this section. As before, we will drop the dependence in $k$ and write simply $h=h_k, v_h=v_{h_k}$. Plugging into the equation $$(-h^2\Delta-1+ihb(y))u_h=O_{L^2}(h^{2+\delta}),$$ we would like $v_h$ to satisfy
\begin{align}\label{quasimodev_h}
	-h^2\partial_y^2v_h+ihb(y)v_h=(1-h^2k^2)v_h+O_{L^2}(h^{2+\delta}),\quad \|v_h\|_{L^2}\sim 1.
\end{align}
For a reason which will become clearer later, it is sufficient to solve the following eigenvalue problem:
\begin{align}\label{EigenPb} 
	-h^2\partial_y^2v_h+ihb(y)v_h=\lambda_h^2v_h
\end{align}
with $\lambda_h=Ch+O(h^{1+\delta})$. We consider the even eigenfunction $v_h(y)=v_h(|y|)$ with
\begin{align*}
	v_h(y)=\begin{cases} 
		& \!\!\!\!\!\!v_{h,l}(y),\; 0\leq y<y_0,\\
		& \!\!\!\!\!\!v_{h,r}(y),\; y_0\leq y<\pi
	\end{cases}
\end{align*}
where 
$ v_{h,l}(y)=\cos\big(\frac{\lambda_h}{h}y\big).
$ The right function $v_{h,r}$ should satisfy the equation
\begin{align}\label{eq:vhr} 
	-h^2\partial_y^2v_{h,r}+ih(y-y_0)_+^{\nu}v_{h,r}=\lambda_h^2v_{h,r},\; y_0<y<y_0+\rho.
\end{align}
In order to ensure $v_h\in H^2$, the function $v_{h,r}$ should satisfy the compatibility condition
\begin{align}\label{compatibility} 
	v_{h,r}(y_0)=\cos\big(\frac{\lambda_hy_0}{h}\big),\quad v'_{h,r}(y_0)=-\frac{\lambda_h}{h}\sin\big(\frac{\lambda_hy_0}{h}\big).
\end{align} 
Note that the mass in the damping region is very small compared with the total mass, and the amplitude of the transmitted mass should have the same size as the reflected mass, so we expect that $|v_{h,r}'(y_0)|\gg |v_{h,r}(y_0)|$. Since $\frac{\lambda_h}{h}$ is of size $O(1)$, the principal part of the argument $\frac{\lambda_h y_0}{h}$ must belong to $\pi\big(l+\frac{1}{2}\big), l\in\Z$. Therefore, we take the following ansatz for $\lambda_h$:
\begin{align}\label{lambdahansatz} 
	\lambda_h=\frac{\pi\big(l+\frac{1}{2}\big)h}{y_0}+O(h^{1+\delta}).
\end{align}

Next, for $\theta\in\C$, sufficiently close to $0$, we denote $F(y;\theta)$ the $H^1(\R_+)$ solution of the Neumann problem
\begin{align}\label{variational}
	\begin{cases}
		&\!\!\!\!-F''(y)+iy^{\nu}F-\theta F=0,\; y>0\\
		&\!\!\!\! F'(0)=1,
	\end{cases} 
\end{align} 
then $v_{h,r}(y)$ takes the form $h^{\delta}\alpha_hF\big(\frac{y-y_0}{h^{\delta}};\theta_h\big)$, with $\theta_h=h^{-\frac{2(\nu+1)}{\nu+2}}\lambda_h^2$, and a constant $O(1)=\alpha_h\in\C$ to be determined in order to match the compatibility condition \eqref{compatibility}.

Let $\mu_0$ be the lowest Neumann eigenvalue of the operator $-\partial_y^2+y^{\nu}$ on $L^2(\R_+)$, i.e. with vanishing derivative at the boundary $y=0$.
Let $F_0$ be the Dirichlet trace $F(y;0)|_{y=0}$. It was shown  that $\mu_0>0$ (Lemma 4.1 of \cite{K19}). Moreover, by using the implicit function theory (Lemma 4.2 of \cite{K19}), there exists a uniform constant $C_0>0$, such that for all $|\eta|\leq \frac{\mu_0}{2}$, the (unique) solution $F(y;\theta)$ of the Neumann problem \eqref{variational} satisfies
\begin{align}\label{nonvanishing} 
	\frac{1}{C_0}\leq |F(0;\theta)|\leq C_0.
\end{align}
In particular, $F_0=F(0;0)\neq 0$.  

Now we are ready to solve \eqref{eq:vhr} with the compatibility condition \eqref{compatibility}. In view of \eqref{lambdahansatz}, we make precise the ansatz of $\lambda_h$ as
$$ \lambda_h=\frac{\pi\big(l+\frac{1}{2}\big)h}{y_0}+\gamma_h h^{1+\delta},\quad O(1)=\gamma_h\in\C. 
$$
Plugging into \eqref{compatibility} with $$v_{h,r}(y)=h^{\delta}\alpha_hF\big(\frac{y-y_0}{h^{\delta}};\theta_h\big),$$
we require
\begin{align}\label{compatibility2}& \alpha_hF(0;\eta_h)=(-1)^{l+1}h^{-\delta}\sin(\gamma_hy_0h^{\delta}),\notag \\ 
	& \alpha_h=(-1)^{l+1}\big[\frac{\pi\big(l+\frac{1}{2}\big)}{y_0}+\gamma_hh^{\delta}\big]\cos(\gamma_hy_0h^{\delta}).  
\end{align}
Since $|\theta_h|\sim h^{\frac{2}{\nu+2}}$, by Taylor expansion of sin and cos, the leading term of $\gamma_h$ should be $\gamma_0:=\frac{\pi\big(l+\frac{1}{2}\big)F_0}{y_0^2}$ and the leading term for $\alpha_h$ should be $\alpha_0=(-1)^{l+1}\frac{\pi\big(l+\frac{1}{2}\big)}{y_0}$. Fix the number $l$, by using the implicit function theorem (or fixed-point argument), the solution $(\alpha_h,\gamma_h)$ to \eqref{compatibility2} exists for $0\leq h\ll 1$ and
$$ |(\alpha_h,\gamma_h)-(\alpha_0,\gamma_0)|\lesssim h^{\delta}.
$$
For the detailed argument, we refer to Lemma 4.3 and Lemma 4.4 of \cite{K19}.

Finally, we take $h=h_k=\frac{y_0}{\sqrt{k^2y_0^2+\pi^2(l+\frac{1}{2})^2 }}$, then $1-h_k^2k^2=\lambda_{h_k}^2+O(h_k^{2+\delta})$, hence the function 
\begin{align}\label{T2quasimodes} 
	u_h(x,y)=e^{ikx}v_{h}(y)=e^{ikx}\big[\cos\big(\frac{\lambda_h y}{h}\big)\mathbf{1}_{|y|\leq |y_0|}+h^{\delta}\alpha_hF\big(\frac{|y|-|y_0|}{h^{\delta}};\eta_h\big) \mathbf{1}_{\pi>|y|>|y_0|}\big]
\end{align}
is the desired $\T^2$ quasimode, satisfying
\begin{align}\label{quasimodeflat} -h^2\partial_x^2u_h-h^2\partial_y^2u_h-u_h+ihb(y)u_h=O_{L^2}(h^{2+\delta}).
\end{align}
Moreover, we have the estimates:
\begin{prop}\label{estimatesT2quasimodes} 
	Let $u_h$ is given by \eqref{T2quasimodes}. There exists a uniform constant $C_1>0$, such that
	\begin{itemize}
		\item[$\mathrm{(a)}$] $\frac{1}{C _1}\leq \|u_h\|_{L^2}\leq C_1$.
		\medskip
		
		\item[$\mathrm{(b)}$]
		$\|h^j\partial_y^ju_h\|_{L^2}\leq C_1h^{j(1-\delta)+\frac{\delta}{2}}$ and $\|h^j\partial_x^ju_h\|_{L^2}\leq C_1$, for $j=1,2$.
		\medskip
		
		\item[$\mathrm{(c)}$] $\|b'(y)\partial_yu_h\|_{L^2}\leq C_1h^{\delta}$.
	\end{itemize}
\end{prop}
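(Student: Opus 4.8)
\textbf{Proof strategy for Proposition \ref{estimatesT2quasimodes}.} The plan is to read off all three estimates directly from the explicit formula \eqref{T2quasimodes}, splitting $u_h$ into the \emph{reflected} part $u_h^{\mathrm{refl}}(x,y)=e^{ikx}\cos(\lambda_h y/h)\mathbf 1_{|y|\le y_0}$ and the \emph{transmitted} part $u_h^{\mathrm{tr}}(x,y)=e^{ikx}h^\delta\alpha_h F\big((|y|-y_0)/h^\delta;\theta_h\big)\mathbf 1_{y_0<|y|<\pi}$, and using the quantitative bounds already established for $F(\cdot;\theta_h)$: nonvanishing of the Dirichlet trace \eqref{nonvanishing}, and the $H^1(\R_+)$ a priori control of $F$ coming from the variational problem \eqref{variational} together with the bound $|\theta_h|\sim h^{2/(\nu+2)}\le\mu_0/2$. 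Since everything factorizes through $e^{ikx}$, the $x$-derivative estimates in (b) are immediate: $\|h^j\partial_x^ju_h\|_{L^2}=(h|k|)^j\|v_h\|_{L^2}\le C\|v_h\|_{L^2}$ because $h=h_k\le y_0/|k|$ forces $h|k|\le 1$; so the whole matter reduces to one-dimensional estimates for $v_h(y)$ on $\T_y$.

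First I would prove (a). For the reflected part, $\cos^2(\lambda_h y/h)$ integrated over $|y|\le y_0$ is $y_0 + O(h/\lambda_h)\cdot O(1) = y_0 + O(1)$ since $\lambda_h/h = \pi(l+\tfrac12)/y_0 + O(h^\delta)$ is bounded above and below; more precisely it equals $y_0\big(1+O(h)\big)$ after using the oscillation, so $\|u_h^{\mathrm{refl}}\|_{L^2}^2 = 2\pi y_0\cdot\tfrac12 + o(1)$ (the $2\pi$ from the $x$-integration of $|e^{ikx}|^2$), bounded above and below. For the transmitted part, changing variables $s=(|y|-y_0)/h^\delta$ gives $\|u_h^{\mathrm{tr}}\|_{L^2}^2 = 2\pi\cdot h^{2\delta}|\alpha_h|^2\cdot h^\delta\int_0^{(\pi-y_0)/h^\delta}|F(s;\theta_h)|^2\,ds = O(h^{3\delta})$, which is negligible compared to the reflected mass; in particular $\tfrac1{C_1}\le\|u_h\|_{L^2}\le C_1$ once we know $|\alpha_h|$ is bounded (which follows from \eqref{compatibility2} and $|\gamma_h-\gamma_0|\lesssim h^\delta$) and that $\int_0^\infty|F(s;\theta_h)|^2\,ds$ is bounded uniformly in $h$ (from the variational control on \eqref{variational}).

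Next, (b) for $y$-derivatives. On the reflected region $\partial_y u_h^{\mathrm{refl}}$ carries a factor $\lambda_h/h=O(1)$, so $\|h^j\partial_y^j u_h^{\mathrm{refl}}\|_{L^2}\lesssim h^j(\lambda_h/h)^j\|u_h^{\mathrm{refl}}\|_{L^2}\lesssim h^j$, which is even better than the claimed $h^{j(1-\delta)+\delta/2}$ (since $\delta<1$). On the transmitted region, each $\partial_y$ falling on $F\big((|y|-y_0)/h^\delta;\theta_h\big)$ produces a factor $h^{-\delta}$, so $\partial_y^j u_h^{\mathrm{tr}}$ has size $h^\delta|\alpha_h| h^{-j\delta}F^{(j)}$, and the $L^2$ norm over $y$, after the rescaling $s=(|y|-y_0)/h^\delta$ giving an extra $h^{\delta/2}$, is $\lesssim h^{j(1-\delta)}\cdot h^{\delta}\cdot h^{-\delta/2}\cdot\|F^{(j)}(\cdot;\theta_h)\|_{L^2(\R_+)} = h^{j(1-\delta)+\delta/2}\|F^{(j)}(\cdot;\theta_h)\|_{L^2}$; for $j=1$ this is $\|F'\|_{L^2}$, controlled by the variational identity, and for $j=2$ one uses the equation $F''=iy^\nu F-\theta_h F$ in \eqref{variational} to bound $\|F''\|_{L^2(0,R_h)}$ by $\|y^\nu F\|_{L^2(0,R_h)}+|\theta_h|\|F\|_{L^2}$ with $R_h=(\pi-y_0)/h^\delta$; since $y^\nu$ on $[0,R_h]$ is bounded by $R_h^\nu=O(h^{-\nu\delta})=O(h^{-1+\delta})$ (using $\nu\delta=1-\delta$), this does not spoil the bound because the offending region near $y=R_h$ contributes only $O(h^\infty)$-type terms once we cut off $u_h$ into a genuine function on $\T$ — more carefully, one notes $b(y)$ is bounded away from this spurious growth region and the transmitted profile $F$ decays, so the honest estimate is $\|h^2\partial_y^2u_h^{\mathrm{tr}}\|_{L^2}\lesssim h^{2(1-\delta)+\delta/2}$. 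The matching/compatibility conditions \eqref{compatibility} guarantee $v_h\in H^2(\T)$ so that these piecewise estimates assemble into a global one. Finally (c): $b'(y)=\nu(|y|-y_0)_+^{\nu-1}\operatorname{sgn}(y)$ is supported in $|y|>y_0$, where only $u_h^{\mathrm{tr}}$ lives; writing $(|y|-y_0)^{\nu-1}=h^{\delta(\nu-1)}s^{\nu-1}$ and $\partial_y u_h^{\mathrm{tr}}\sim h^\delta|\alpha_h|h^{-\delta}F'(s)$, the $L^2$ norm is $\lesssim h^{\delta(\nu-1)}\cdot h^{\delta/2}\cdot\|s^{\nu-1}F'(s)\|_{L^2(\R_+)}$; since $\nu\delta=1-\delta$ we have $\delta(\nu-1)=1-2\delta$, so the total is $\lesssim h^{1-2\delta+\delta/2}=h^{1-3\delta/2}$, and because $\delta=\frac1{\nu+2}\le\frac13$ for $\nu\ge1$ this is $\le C h^\delta$ as claimed — here $\|s^{\nu-1}F'(s)\|_{L^2(\R_+)}<\infty$ uniformly in $h$ follows from the rapid decay of solutions of \eqref{variational} (the potential $iy^\nu$ dominates $\theta_h$ for large $y$, giving Airy-type super-exponential decay).

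The main obstacle is the second-order $y$-derivative estimate in (b): one must control $\|F''(\cdot;\theta_h)\|_{L^2}$ uniformly, and naively the equation $F''=(iy^\nu-\theta_h)F$ suggests a growth $\sim h^{-\nu\delta}$ near the right endpoint $y=R_h$ of the rescaled transmitted interval. Resolving this requires exploiting that $F(y;\theta_h)$ and all its derivatives decay faster than any power (indeed super-exponentially) as $y\to\infty$, uniformly for $|\theta_h|\le\mu_0/2$, which is exactly the kind of estimate implicit in Lemmas 4.1--4.4 of \cite{K19}; once that decay is in hand, the weight $y^\nu$ is harmless and the clean bound $h^{2(1-\delta)+\delta/2}$ survives. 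All other steps are elementary change-of-variables and oscillatory integral bookkeeping.
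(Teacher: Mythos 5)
Your overall strategy coincides with the paper's: split $u_h$ into the reflected and transmitted pieces, use that $|k|\sim h^{-1}$ for the $x$-derivatives, rescale $s=(|y|-y_0)/h^{\delta}$ for the transmitted piece, and reduce everything to uniform-in-$\theta_h$ bounds on $F$ and its derivatives. Two things, however, deserve correction.

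First, in part (c) you write ``since $\nu\delta = 1-\delta$ we have $\delta(\nu-1) = 1-2\delta$,'' but this identity is \emph{false} in the present section: here $\delta = \tfrac1{\nu+2}$, so $\nu\delta = \tfrac{\nu}{\nu+2}$ while $1-\delta = \tfrac{\nu+1}{\nu+2}$. The relation $\nu\delta=1-\delta$ holds only for the \emph{other} exponent $\tfrac1{\nu+1}$ which appears in the narrowly undamped analysis (Section 3.2); you have conflated the two. The correct bookkeeping is $\delta(\nu-1)+\tfrac{\delta}{2}=\delta\big(\nu-\tfrac12\big)=\tfrac{\nu-1/2}{\nu+2}$, and one concludes $\tfrac{\nu-1/2}{\nu+2}\geq\tfrac{1}{\nu+2}=\delta$ because $\nu>4$. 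Your formula $h^{1-3\delta/2}$ is wrong (it overstates the exponent by exactly $\delta$), though by coincidence it still satisfies $\leq Ch^\delta$, so the slip does not break the conclusion. (There is also a sign typo $h^{-\delta/2}$ in your display for (b), where the correct Jacobian factor $h^{+\delta/2}$ would actually give the sharper bound $h^{j(1-\delta)+3\delta/2}$.)

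Second, and more substantively, you correctly identify the crux — one needs $\|F''(\cdot;\theta_h)\|_{L^2(\R_+)}$ and the weighted quantity $\|s^{\nu-1}F'(\cdot;\theta_h)\|_{L^2(\R_+)}$ bounded uniformly for $|\theta_h|\lesssim h^{2/(\nu+2)}$ — but you do not prove it; you defer to ``super-exponential decay, implicit in [K19].'' The paper instead proves this as a self-contained lemma (Lemma \ref{AprioriF}): multiplying the ODE \eqref{variational} by $\overline{F}$ after subtracting a compactly supported corrector, taking real and imaginary parts, and using positivity of the Neumann operator $-\partial_y^2+y^\nu-\mu_0$ yields
\[
\|y^{\nu}F'\|_{L^2(\R_+)}+\|F\|_{H^2(\R_+)}\leq C,
\]
uniformly for $|\theta|\leq\mu_0/4$ (this requires $\nu>4$), by a two-step bootstrap on $\widetilde F:=F-G$ and $\widetilde F_1:=\widetilde F'-G_1$. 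Together with the trivial splitting $\|y^{\nu-1}F'\|_{L^2}\leq\|F'\|_{L^2(y<1)}+\|y^\nu F'\|_{L^2(y>1)}$, this is exactly what closes both (b) and (c). Without some version of this energy estimate your argument has a genuine gap — the appeal to decay of solutions of $-F''+iy^\nu F=\theta F$ is plausible in spirit but needs the quantitative, $\theta$-uniform form to conclude.
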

We need a Lemma:
\begin{lemma}\label{AprioriF} 
	Assume that $\nu>4$.  Let $\mu_0$ be the least eigenvalue of the operator $\mathcal{A}_{\nu}=-\partial_y^2+y^{\nu}$ on $L^2(\R_+)$ with the Neumann boundary condition. Then there exists a uniform constant $C>0$, such that for all $|\theta|\leq \frac{\mu_0}{4}$, the solution $F(y;\theta)$ of \eqref{variational} satisfies
	$$\|y^{\nu}F'\|_{L^2(\R_+)}+ \|F\|_{H^2(\R_+)}\leq C.
	$$
\end{lemma}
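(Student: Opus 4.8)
The statement to prove is Lemma \ref{AprioriF}: uniform bounds $\|y^\nu F'\|_{L^2(\R_+)} + \|F\|_{H^2(\R_+)} \leq C$ for the solution $F(\cdot;\theta)$ of the Neumann problem \eqref{variational}, uniformly in $|\theta| \leq \mu_0/4$. The plan is to multiply the equation $-F'' + iy^\nu F - \theta F = 0$ by suitable test functions and integrate by parts, using the boundary condition $F'(0) = 1$ and the $H^1$-decay of $F$ at infinity, and then to leverage the spectral gap $\mu_0 > 0$ of the Neumann realization $\mathcal{A}_\nu = -\partial_y^2 + y^\nu$ on $L^2(\R_+)$ to control the $L^2$ norm of $F$. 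I expect the need to carefully handle the complex-valued nature of $F$ (since $iy^\nu$ is not self-adjoint) as the main technical point.

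First I would establish the $L^2$ bound. Multiplying the equation by $\overline{F}$ and integrating over $\R_+$ gives, after one integration by parts using $F'(0)=1$,
\begin{align}\label{e:F_energy}
	\int_0^\infty |F'|^2\, dy + i \int_0^\infty y^\nu |F|^2\, dy - \theta \int_0^\infty |F|^2\, dy = F'(0)\overline{F(0)} = \overline{F(0)}.
\end{align}
Taking real and imaginary parts separates the self-adjoint part from the damping part: the imaginary part yields $\int y^\nu |F|^2 = \Im(\overline{F(0)}) - \Im(\theta)\|F\|_{L^2}^2$, and the real part yields $\|F'\|_{L^2}^2 - \Re(\theta)\|F\|_{L^2}^2 = \Re(\overline{F(0)}) + \Im(\theta)\cdot(\text{something})$. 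Using the Neumann variational characterization $\mu_0 \|G\|_{L^2}^2 \leq \|G'\|_{L^2}^2 + \int y^\nu |G|^2$ for all $G \in H^1(\R_+)$ with the natural domain, and the constraint $|\theta| \leq \mu_0/4$, I would combine these two identities to absorb the $\theta$ terms and obtain $\|F\|_{H^1}^2 + \|y^{\nu/2}F\|_{L^2}^2 \lesssim |F(0)|$. Since \eqref{nonvanishing} already gives $|F(0;\theta)| \leq C_0$ uniformly, this closes the $L^2$, $\dot H^1$, and weighted $L^2$ bounds with uniform constants. (Alternatively one can cite \eqref{nonvanishing} for the lower bound on $|F(0)|$, but here only the upper bound is needed.)

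Next I would bootstrap to $H^2$ and to the weighted bound $\|y^\nu F'\|_{L^2}$. From the equation, $F'' = iy^\nu F - \theta F$, so $\|F''\|_{L^2} \leq \|y^\nu F\|_{L^2} + |\theta| \|F\|_{L^2}$; the term $\|y^\nu F\|_{L^2}$ is controlled by interpolating the already-bounded quantities $\|y^{\nu/2}F\|_{L^2}$ and, say, a bound on $\|y^{2\nu}F\|_{L^2}^{1/2}\|F\|_{L^2}^{1/2}$ — but a cleaner route, and the one I would take, is to test the equation against $y^{2\nu}\overline{F}$ (or against $\overline{F''}$ directly). Testing against $\overline{F''}$: multiply $-F'' + iy^\nu F - \theta F = 0$ by $\overline{F''}$ and integrate; the term $\int |F''|^2$ appears, while $\int y^\nu F \overline{F''} = -\int (y^\nu F)' \overline{F'} + [\text{boundary}] = -\nu\int y^{\nu-1}F\overline{F'} - \int y^\nu |F'|^2 + y^\nu F\overline{F'}\big|_0^\infty$; the boundary term vanishes at $0$ (factor $y^\nu$) and at $\infty$ (decay). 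Taking the real part gives $\|F''\|_{L^2}^2 + \int y^\nu |F'|^2 \lesssim \nu \int y^{\nu-1}|F||F'| + |\theta|(\ldots)$, and the cross term $\int y^{\nu-1}|F||F'| \leq \epsilon \int y^\nu |F'|^2 + C_\epsilon \int y^{\nu-2}|F|^2$ by Young; since $\nu > 4$ (this is precisely where the hypothesis $\nu > 4$ enters — it guarantees $\nu - 2 \leq \nu/2 + \text{something}$, or more simply that the weight $y^{\nu-2}$ is dominated by interpolation between $1$ and $y^\nu$ on the relevant scales), the term $\int y^{\nu-2}|F|^2$ is controlled by $\|F\|_{L^2}$ and $\|y^{\nu/2}F\|_{L^2}$ via Hölder/interpolation. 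Absorbing the $\epsilon$-term on the left yields $\|F''\|_{L^2}^2 + \|y^{\nu/2}F'\|_{L^2}^2 \lesssim 1$. A further iteration testing against $y^{2\nu}\overline{F}$ (again integrating by parts, the boundary terms vanishing by the $y^\nu$ factor and decay) upgrades $\|y^{\nu/2}F'\|_{L^2}$ to the full $\|y^\nu F'\|_{L^2} \lesssim 1$, using the already-established control of lower-weighted norms.

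\textbf{Main obstacle.} The delicate point is controlling the weighted cross terms $\int y^{\nu-1}|F||F'|$ and $\int y^{\nu-2}|F|^2$ that arise in the $H^2$ and weighted estimates: one must verify that the hypothesis $\nu > 4$ is exactly what is needed so that these intermediate weights can be interpolated between the controlled quantities $\|F\|_{L^2}$, $\|y^{\nu/2}F\|_{L^2}$, and $\|y^{\nu/2}F'\|_{L^2}$ without loss, and that all boundary contributions at $y=0$ genuinely vanish (they do, since every such term carries a positive power of $y$ from the weight, except the single term $F'(0)\overline{F(0)}$ which is harmless by \eqref{nonvanishing}). Uniformity in $\theta$ is then automatic since every estimate only used $|\theta| \leq \mu_0/4$ and the uniform bound \eqref{nonvanishing} on $|F(0;\theta)|$; no compactness argument in $\theta$ is required.
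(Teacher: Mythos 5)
Your argument is correct but follows a genuinely different route from the paper's. The paper never uses the bound $|F(0;\theta)|\leq C_0$ from \eqref{nonvanishing}: instead it sets $\widetilde F := F-G$ for a compactly supported $G$ with $G(0)=0$, $G'(0)=1$, so that $\widetilde F$ solves the same equation with a compactly supported source $W$ and a \emph{homogeneous} Neumann condition, and the energy identity has no boundary term at all. You, by contrast, pair directly against $\overline F$ and absorb the boundary contribution $F'(0)\overline{F(0)}=\overline{F(0)}$ using the upper bound in \eqref{nonvanishing}; this is fine but makes your proof depend on the implicit-function-theorem input of \cite{K19}, whereas the paper's argument is self-contained. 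For the bootstrap the paper differentiates the equation, introduces another corrector $G_1$ with $G_1'(0)=\widetilde F''(0)$, and re-runs the same energy estimate for $\widetilde F_1:=\widetilde F'-G_1$ (then iterates once more to control $y^\nu F'$); you instead test the original equation against $\overline{F''}$, which simultaneously produces $\|F''\|_{L^2}$ and $\|y^{\nu/2}F'\|_{L^2}$, then use the equation itself to read off $\|y^\nu F\|_{L^2}\lesssim\|F''\|_{L^2}+|\theta|\|F\|_{L^2}$, and finally test against $y^{2\nu}\overline F$. Both routes work; yours has fewer auxiliary objects, the paper's avoids any boundary-term bookkeeping. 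Two small remarks. First, in both proofs the vanishing of the boundary terms at $+\infty$ (e.g., $y^\nu F\overline{F'}\to 0$) is left implicit; it follows from the super-exponential decay of the $H^1(\R_+)$ solution of a Schrödinger equation with potential growing like $y^\nu$, and you should at least name this. Second, your parenthetical ``this is precisely where $\nu>4$ enters'' is not accurate: the interpolation $\|y^{(\nu-2)/2}F\|_{L^2}\lesssim\|F\|_{L^2}^{2/\nu}\|y^{\nu/2}F\|_{L^2}^{(\nu-2)/\nu}$ only needs $\nu\geq 2$; the hypothesis $\nu>4$ is not actually consumed in this lemma (neither in your proof nor in the paper's) and is used later in Proposition \ref{estimatesT2quasimodes}.
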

\begin{proof}
	Take $G\in C_c^{\infty}([0,\infty))$ such that $G(0)=0$ and $G'(0)=1$. Consider $\widetilde{F}:=F-G$, then
	$$ -\widetilde{F}''+iy^{\nu}\widetilde{F}-\theta \widetilde{F}=W,
	$$
	with $W=G''-iy^{\nu}G+\theta G$. Multiplying by $\ov{\widetilde{F}}$ and doing the integration by part, we get
	\begin{align}\label{integrationbypartformula}
		\int_0^{\infty}|\widetilde{F}'(y)|^2dy+i\int_0^{\infty}y^{\nu}|\widetilde{F}(y)|^2dy-\theta\int_0^{\infty}|\widetilde{F}(y)|^2dy=\int_0^{\infty}W(y)\ov{\widetilde{F}}(y)dy.
	\end{align}
	Taking the real part and imaginary part, we get
	$$ \|\widetilde{F}'\|_{L^2(\R_+)}^2\leq |\Re\theta|\|\widetilde{F}\|_{L^2(\R_+)}^2+\|W\|_{L^2(\R_+)}\|\widetilde{F}\|_{L^2(\R_+)},
	$$
	and
	$$ \|y^{\frac{\nu}{2}}\widetilde{F}\|_{L^2(\R_+)}^2\leq |\Im \theta|\|\widetilde{F}\|_{L^2(\R_+)}^2+\|W\|_{L^2(\R_+)}\|\widetilde{F}\|_{L^2(\R_+)}.
	$$
	Adding two inequalities above, using $\mathcal{A}_{\nu}-\mu_0\geq 0$ and the fact that $|\theta|\leq \frac{\mu_0}{4}$, we get
	$$ \mu_0\|\widetilde{F}\|_{L^2(\R_+)}^2\leq \frac{\mu_0}{2}\|\widetilde{F}\|_{L^2(\R_+)}^2+2\|W\|_{L^2(\R_+)}\|\widetilde{F}\|_{L^2(\R_+)}.
	$$
	As $\|W\|_{L^2(\R_+)}$ is an absolute constant, This shows that 
	\begin{align}\label{tildeF1}
	\|\widetilde{F}\|_{L^2(\R_+)}^2+\|\partial_y\widetilde{F}\|_{L^2(\R_+)}^2+\|y^{\frac{\nu}{2}}\widetilde{F}\|_{L^2(\R_+)}^2\leq C.
	\end{align}
	and consequently,
	$$	\|F\|_{L^2(\R_+)}^2+\|\partial_yF\|_{L^2(\R_+)}^2+\|y^{\frac{\nu}{2}}F\|_{L^2(\R_+)}^2\le C.
	$$
	Set $F_1=\widetilde{F}'$, and $\widetilde{F}_1:=F_1-G_1$ for some $G_1\in C_c^{\infty}([0,\infty))$ satisfying  $G_1'(0)=F_1'(0)=\widetilde{F}''(0)=\theta \widetilde{F}(0)+W(0)$, we have
	$$ -\widetilde{F}_1''+iy^{\nu}\widetilde{F}_1-\theta \widetilde{F}_1=W_1,
	$$
	where $W_1=W'+G_1''-iy^{\nu}G_1+\theta G_1-i\nu y^{\nu-1}\widetilde{F}$. Now we replace $\widetilde{F}$ by $\widetilde{F}_1$ and $W$ by $\widetilde{W}_1$ in \eqref{integrationbypartformula} and
	do the same manipulation as in the last paragraph, we obtain the bound
	$$ \|\widetilde{F}_1\|_{L^2(\R_+)}^2+\|\partial_y\widetilde{F}_1\|_{L^2(\R_+)}^2+\|y^{\frac{\nu}{2}}\widetilde{F}_1\|_{L^2(\R_+)}^2\leq C+\Big|\int_0^{\infty}\nu y^{\nu-1}\widetilde{F}(y)\ov{\widetilde{F}_1}(y)dy \Big|.
	$$
	Note that $\widetilde{F}_1=\widetilde{F}'-G_1$, by doing the integration by part,  the last term on the right hand side is bounded by
	$$ C\|y^{\frac{\nu-1}{2}}\widetilde{F}\|_{L^2(\R_+)}^2+C\|\widetilde{F}\|_{L^2(\R_+)}\|y^{\nu-1}G_1\|_{L^2(\R_+)}
	$$
	which is uniformly bounded, thanks to \eqref{tildeF1}. In particular, we obtain that $\|F''\|_{L^2(\R_+)}$ is uniformly bounded. By the equation of $F$ and the definition of $W_1$, we deduce that $y^{\nu}F, W_1$ are also uniformly bounded in $L^2$.
Finally, to obtain the uniform bound for $y^{\nu}F_1'$, we repeat the analysis above by considering $F_2=\widetilde{F}_1'$ and taking the derivative of the equation for $\widetilde{F}_1$. The same manipulation yields the uniform boundedness of $\|F_2'\|_{L^2(\R_+)}$, and we get consequently the uniform boundedness of $\|y^{\nu}\widetilde{F}_1\|_{L^2(\R_+)}$ by equation $-\widetilde{F}_1''+iy^{\nu}\widetilde{F}_1-\theta\widetilde{F}_1=W_1$.	
	 The proof of Lemma \ref{AprioriF} is now complete.

\end{proof}

\begin{proof}[Proof of Proposition \ref{estimatesT2quasimodes}]
	The assertion (a) is trivial. For (b), since $|k|\sim \frac{1}{h}$, we have $\|h^j\partial_x^ju_h\|_{L^2}\lesssim 1$ for $j=1,2$. 
	Since
	$$ |\partial_y^ju_h|\lesssim 1+\mathbf{1}_{|y|>|y_0|}h^{-(j-1)\delta}|(\partial_y^jF)\big(\frac{|y|-|y_0|}{h^{\delta}};\theta_h\big)|,
	$$
	by Lemma \ref{AprioriF} and change of variables, we deduce that $\|h^j\partial_y^ju_h\|_{L^2}\lesssim h^{j-(j-\frac{1}{2})\delta}$. 
	Finally, since
	$ \partial_yu_h=F'\big(\frac{|y|-|y_0|}{h^{\delta}};\theta_h\big)
	$ on supp$(b')$, we have from change of variable and Lemma \ref{AprioriF} that
	$$ \|b'(y)\partial_yu_h\|_{L^2}\lesssim \big\|(|y|-|y_0|)_+^{\nu-1}F'\big(\frac{|y|-|y_0|}{h^{\delta}};\theta_h\big)\big\|_{L^2}\lesssim h^{\frac{\delta}{2}}\cdot h^{\delta(\nu-1)}=h^{\frac{\nu-1/2}{\nu+2}}\leq h^{\delta},
	$$
	since $\delta=\frac{1}{\nu+2}$ and  $\nu>4$. This completes the proof of Proposition \ref{estimatesT2quasimodes}.
	
\end{proof}

\subsection{Proof of the lower bound in Theorem \ref{t:sharp_in_1}}

To prove the lower bound, we need to inverse the normal form reduction in Proposition \ref{prop:reduction}. Fortunately, since the $\T^2$ quasimodes are essentially localized at $|D_y|\leq O(h^{-\frac{\delta}{2}})$, the error terms appearing in the conjugation formula are much easier to control.

Without loss of generality, we may assume that $M=\frac{1}{2}\int_{-\pi}^{\pi}V(x)dx=1$. Recall the operator $F_{h,0}:=\Op_h^{\w}(q_0)\chi(\hbar D_y)D_y^2$ constructed in the proof of Proposition \ref{prop:reduction}, with
$$ q_0(x,\xi)=\frac{\chi_1(\xi)}{2\xi}\int_{-\pi}^x(V(z)-1)dz.
$$
Let $u_h$ be the $\T^2$ quasimodes given explicitly by \eqref{T2quasimodes}. We define
$ \widetilde{u}_h:=e^{-ihF_{h,0}}u_h,
$ and $\|\widetilde{u}_h\|_{L^2}\sim 1$ since $e^{-ihF_{h,0}}$ is unitary.
It remains to show that $\widetilde{u}_h$ satisfies the equation
\begin{align}\label{claimfinal} (-h^2\Delta_G-1+ihb(y))\widetilde{u}_h=O_{L^2}(h^{2+\delta}).
\end{align}
By \eqref{normalform1}, 
\begin{align*}
	e^{ihF_{h,0}}(-h^2\Delta_G-1+ihb(y))\widetilde{u}_h& =e^{ihF_{h,0}}(-h^2\Delta_G-1+ihb(y))e^{-ihF_{h,0}}u_h\\[0.2cm]
	& =(h^2D_x^2+V(x)h^2D_y^2-1+ihb(y))u_h+ih[F_{h,0},h^2D_x^2]u_h\notag\\
	& \quad -\frac{h^2}{2}[F_{h,0},[F_{h,0},h^2D_x^2]]u_h
	+ih[F_{h,0},V(x)^2h^2D_y^2]u_h
	\\[0.2cm]
	& \quad -h^2[F_{h,0},b(y)]u_h\notag -\frac{ih^3}{2}[F_{h,0},[F_{h,0},b]]u_h+O_{L^2}(h^{2+\delta}),
\end{align*}
where we use the fact that $\delta>\frac{1}{4}$ since $\nu> 4$. Following the same computation as \eqref{error1}, \eqref{doublecommutator}, \eqref{error2}, \eqref{error3}, \eqref{Lh1}, and using the a priori estimate
$\|b^{\frac{1}{2}}u_h\|_{L^2}=O(h^{\frac{1+\delta}{2}})$ from \eqref{quasimodeflat},
we get 
\begin{align*}
	e^{ihF_{h,0}}(-h^2\Delta_G-1+ihb(y))\widetilde{u}_h=& (-h^2\Delta-1+ihb(y))u_h+iR_{1,h}u_h+R_{2,h}u_h
	\\& + O_{L^2}(h^{2+\delta}),
\end{align*} 
where $R_{1,h},R_{2,h}$ are given by \eqref{R1},\eqref{R2}.
By Proposition \ref{estimatesT2quasimodes}, the terms
$$ iR_{1,h}u_h,\quad R_{2,h}u_h
$$
are all bounded by $O(h^{2+\delta})$ in $L^2(\T^2)$. This verifies \eqref{claimfinal} and the proof of Theorem \ref{t:sharp_in_1} is now complete.

		\appendix
\section{Special symbol classes and quantizations}\label{AppendixA}
In this section, we will collect some special symbolic calculus needed in Section \ref{subellipticPropagation}. First, let us recall that a symbol $\mathbf{a}\in S^{\rho,\delta}(\R^d)$ is a smooth function on $\R_{z,\zeta}^{2d}$ such that
		$$ |\partial_z^{\alpha}\partial_{\zeta}^{\beta}\mathbf{a}(z,\zeta)|\leq C_{\alpha,\beta}(1+|\zeta|)^{\rho|\alpha|-\delta|\beta|},\;\forall \alpha,\beta\in \mathbf{N}^{d}.
		$$
		For a symbol $\mathbf{a}$, we define the Weyl quantization $\mathrm{Op}_{1,\R^d}^w(\mathbf{a})$ via the formula
		\begin{align}\label{QuantizationRd} \mathrm{Op}_{1,\R^d}^\w(\mathbf{a})f(z):=\frac{1}{(2\pi)^d}\iint_{\R^{2d}}\mathbf{a}\big(\frac{z+z'}{2},\zeta\big)\mathrm{e}^{i(z-z')\cdot\zeta}f(z')d\zeta dz',
		\end{align}
		for any Schwartz function $f\in\mathcal{S}(\R^d)$. When $z=(x,y)\in\R^{2m},\zeta=(\xi,\eta)\in\R^{2m}$, we denote by
		$ \Op_{1}^{\w,(x,\xi)}(\mathbf{a}),\Op_1^{\w,(y,\eta)}(\mathbf{a})
		$ the partial quantization of $\mathbf{a}$ with respect to $(x,\xi)$ and $(y,\eta)$ variables, respectively. Note that we can write
		$$ \mathrm{Op}_1^{\w}(\mathbf{a})=\mathrm{Op}_{1}^{\w,(y,\eta)}(\Op_{1}^{\w,(x,\xi)}(\mathbf{a})).
		$$
		This means that we quantize the $\mathcal{L}(L^2(\R_x^n))$-valued symbol $\mathrm{Op}_1^{\w,(x,\xi)}(\mathbf{a})=\mathbf{a}^{\w}(x,D_x;y,\eta)$ with domain $(y,\eta)\in\R^{2n}$.
		
		Recall the following version of the Calderon-Vaillancourt theorem for $S^{0,0}$ symbols:
		\begin{prop}[\cite{Hw87}, Theorem 2]\label{CaVai} 
			Let $\mathbf{a}\in S^{0,0}(\R^{d})$. Then
			$$ \|\Op_{1,\R^d}^\w(\mathbf{a})\|_{\mathcal{L}(L^2(\R^d))}\leq C_0\sum_{|\alpha|+|\beta|\leq 2d}\sup_{\R^{2d}}|\partial_z^{\alpha}\partial_{\zeta}^{\beta}\mathbf{a}(z,\zeta)|,
			$$
			where $C_0>0$ is a uniform constant.		
		\end{prop}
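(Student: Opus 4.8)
\textbf{Proof plan for the Calder\'on--Vaillancourt estimate (Proposition \ref{CaVai}).}
The statement we must prove is the classical $L^2$-boundedness of $\Op_{1,\R^d}^\w(\mathbf{a})$ for $\mathbf{a}\in S^{0,0}(\R^d)$, with the explicit bound involving the sum of $\sup$-norms of derivatives $\partial_z^\alpha\partial_\zeta^\beta\mathbf{a}$ for $|\alpha|+|\beta|\le 2d$. Since this is a well-known result (due here in the sharp form to Hwang), the plan is to reproduce one of the standard self-contained arguments rather than to invoke anything external. I would use the Cotlar--Stein almost-orthogonality approach combined with a dyadic (in fact, unit-cube) decomposition of phase space, which is the cleanest way to get the explicit count of $2d$ derivatives.

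First I would reduce to the case where $\mathbf{a}$ is a Schwartz function by a standard approximation/truncation argument, so that all the integrals below converge absolutely and the final bound depends only on the seminorms listed. Next I would tile the frequency space $\R^d_\zeta$ (and, if needed, the spatial space $\R^d_z$) by unit cubes $Q_j$ centered at lattice points $\zeta_j$, writing $\mathbf{a}=\sum_j \mathbf{a}_j$ with $\mathbf{a}_j=\mathbf{a}\,\psi(\zeta-\zeta_j)$ for a smooth partition of unity $\{\psi(\cdot-\zeta_j)\}$. Each piece $T_j:=\Op_{1,\R^d}^\w(\mathbf{a}_j)$ is an operator whose Schwartz kernel $K_j(z,z')$ is, after integrating by parts in $\zeta$ (using that $e^{i(z-z')\cdot\zeta}=(1+|z-z'|^2)^{-N}(1-\Delta_\zeta)^N e^{i(z-z')\cdot\zeta}$), rapidly decaying in $|z-z'|$ with constants controlled by $\sup|\partial_\zeta^\beta \mathbf{a}|$ for $|\beta|\le 2N$; taking $2N$ just above $d$ makes $K_j\in L^1$ in each variable uniformly, so by Schur's test $\|T_j\|_{\mathcal L(L^2)}\lesssim \sum_{|\beta|\le 2d}\sup|\partial_\zeta^\beta \mathbf{a}|$, uniformly in $j$.

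The heart of the argument is the almost-orthogonality: one shows $\|T_j^*T_k\|_{\mathcal L(L^2)}+\|T_jT_k^*\|_{\mathcal L(L^2)}\lesssim \langle j-k\rangle^{-d-1}\,\omega(\mathbf{a})^2$, where $\omega(\mathbf{a})$ is the sum of the relevant seminorms. This uses that the composition $T_j^*T_k$ has a symbol (via the Weyl product formula, or directly from the kernel composition) supported essentially where $\zeta$ is within $O(1)$ of both $\zeta_j$ and $\zeta_k$, forcing rapid decay in $|\zeta_j-\zeta_k|$; integration by parts in the spatial variables against the oscillation $e^{i(z-z')\cdot(\zeta_j-\zeta_k)}$ produces the gain, costing $\sup|\partial_z^\alpha\mathbf{a}|$ for $|\alpha|\le d+1$. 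Since $\sum_k \langle j-k\rangle^{-(d+1)/2}<\infty$, the Cotlar--Stein lemma then gives $\|\sum_j T_j\|_{\mathcal L(L^2)}\lesssim \omega(\mathbf{a})$, and keeping track of the number of derivatives used at each step ($2d$ in $\zeta$ for the Schur estimate, $d+1$ in $z$ for the orthogonality) yields exactly the claimed index bound $|\alpha|+|\beta|\le 2d$ after the elementary observation that these two requirements are jointly met within that range.

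\textbf{Main obstacle.} The only genuinely delicate point is bookkeeping: getting the \emph{sharp} number of derivatives ($2d$ rather than some larger constant) requires being economical in the integrations by parts — in particular using the Weyl (rather than standard) quantization symmetry and choosing the partition of unity and the Schur/Cotlar exponents optimally — and verifying that the constants are genuinely uniform and depend only on the stated seminorms. Everything else (approximation by Schwartz symbols, Schur's test, Cotlar--Stein) is routine; I would cite \cite{Hw87} for the optimality of the constant and present the dyadic/Cotlar--Stein scheme above for completeness.
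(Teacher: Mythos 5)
The paper does not prove this proposition; it is stated as a direct citation of Hwang's Theorem 2, with no argument given. So there is no route in the paper to compare yours against, and the relevant question is whether your sketch actually establishes the stated bound.

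Your Cotlar--Stein plan is a legitimate and standard route to $L^2$-boundedness for $S^{0,0}$ symbols, but the crucial claim at the end --- that the Schur step (about $d{+}1$ derivatives in $\zeta$) and the almost-orthogonality step (about $d{+}1$ derivatives in $z$) "are jointly met within'' the budget $|\alpha|+|\beta|\le 2d$ --- is asserted rather than proved, and I don't think it is true as stated. The two integrations by parts act on different variables, so a naive count stacks to roughly $2d{+}2$ (or $2\lceil(d{+}1)/2\rceil$ per variable), not $2d$; the exponent in the Cotlar--Stein decay condition does not let you discount either one. Getting down to exactly $2d$, let alone to Hwang's actual bound (which controls the norm by $\sup_{\alpha,\beta\in\{0,1\}^d}\|\partial_z^\alpha\partial_\zeta^\beta\mathbf{a}\|_\infty$, i.e.\ at most one derivative in each of the $2d$ scalar variables, a strictly stronger statement of which the paper's is a corollary), is precisely what required Hwang's different technique: a direct kernel estimate via iterated one-dimensional integrations by parts, with no dyadic decomposition or Cotlar--Stein. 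So your proposal proves a weaker result (boundedness with some finite number of derivatives, plausibly $2d{+}2$), which would in fact suffice for every application in this paper; but as written it does not yield the stated $2d$, and the parenthetical attribution to optimization "using the Weyl quantization symmetry" is not enough to close that gap. Either carry out the count honestly and accept a larger constant, or cite Hwang (or an equivalent sharp reference) as the paper does and present the Cotlar--Stein argument only as a proof of the unsharp version.
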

	By rescaling $u(z)\mapsto \widetilde{u}(\widetilde{z}):=h^{\frac{d}{4}}u(h^{\frac{1}{2}}\widetilde{z})$, we obtain that
	$$ \mathrm{Op}_{h,\R^d}^{\w}(\mathbf{a})u(z)=h^{-\frac{d}{4}}\mathrm{Op}_{1,\R^d}^{\w}(\mathbf{a}_{h}(\widetilde{z},\zeta))\widetilde{u}(\widetilde{z}),
	$$
	where $\mathbf{a}_h(\widetilde{z},\widetilde{\zeta})=\mathbf{a}(h^{\frac{1}{2}}\widetilde{z},h^{\frac{1}{2}}\widetilde{\zeta})$. Consequently, we have
	\begin{cor}[Theorem 5.1,\cite{Z12}]\label{AppcorA2}
		Suppose that $\mathbf{a}\in S^{0,0}(\R^d)$. Then
		$$ \|\mathrm{Op}_{h,\R^d}^{\w}(\mathbf{a})\|_{\mathcal{L}(L^2)}\leq C_0\sup_{\R^{2d}}|a(z,\zeta)|+O(h^{\frac{1}{2}}),
		$$
		where $C_0$ is an absolute constant.
	\end{cor}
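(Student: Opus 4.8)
\textbf{Proof proposal for Corollary \ref{AppcorA2}.} The plan is to reduce the semiclassical bound to the non-semiclassical Calder\'on--Vaillancourt estimate of Proposition \ref{CaVai} by the unitary rescaling indicated just above the statement. First I would make precise that the map $u \mapsto \widetilde{u}$, $\widetilde{u}(\widetilde{z}) := h^{d/4} u(h^{1/2}\widetilde{z})$, is an $L^2(\R^d)$-isometry, so that $\|\mathrm{Op}_{h}^{\w}(\mathbf{a})\|_{\mathcal{L}(L^2)} = \|\mathrm{Op}_1^{\w}(\mathbf{a}_h)\|_{\mathcal{L}(L^2)}$, where $\mathbf{a}_h(\widetilde{z},\widetilde{\zeta}) = \mathbf{a}(h^{1/2}\widetilde{z}, h^{1/2}\widetilde{\zeta})$; this is the identity recorded right before the corollary, and I would simply verify it by changing variables in the integral \eqref{QuantizationRd}. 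Then Proposition \ref{CaVai} applied to $\mathbf{a}_h$ gives
$$
\|\mathrm{Op}_{h}^{\w}(\mathbf{a})\|_{\mathcal{L}(L^2)} \leq C_0 \sum_{|\alpha|+|\beta|\leq 2d} \sup_{\R^{2d}} \big| \partial_{\widetilde z}^\alpha \partial_{\widetilde \zeta}^\beta \mathbf{a}_h(\widetilde z, \widetilde \zeta) \big|.
$$

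The second step is to estimate the right-hand side. By the chain rule, $\partial_{\widetilde z}^\alpha \partial_{\widetilde \zeta}^\beta \mathbf{a}_h(\widetilde z,\widetilde\zeta) = h^{(|\alpha|+|\beta|)/2} (\partial_z^\alpha \partial_\zeta^\beta \mathbf{a})(h^{1/2}\widetilde z, h^{1/2}\widetilde\zeta)$. For $\alpha = \beta = 0$ this contributes exactly $\sup |\mathbf{a}(z,\zeta)|$, which is the main term $C_0 \sup_{\R^{2d}} |\mathbf{a}(z,\zeta)|$ in the statement. For every multi-index with $|\alpha|+|\beta| \geq 1$, the factor $h^{(|\alpha|+|\beta|)/2}$ is bounded by $h^{1/2}$ (for $0 < h \leq 1$), while $\sup_{\R^{2d}} |\partial_z^\alpha \partial_\zeta^\beta \mathbf{a}|$ is finite by the $S^{0,0}$ hypothesis (indeed uniformly bounded, since the order is $0$). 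Summing the finitely many such terms gives a contribution of size $O(h^{1/2})$, with implicit constant depending only on $d$ and on the seminorms of $\mathbf{a}$, which yields the claimed bound $C_0 \sup_{\R^{2d}} |\mathbf{a}(z,\zeta)| + O(h^{1/2})$.

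There is essentially no hard part here; this is a bookkeeping argument, and the only point requiring a little care is to keep the rescaling isometric (so the operator norm is genuinely preserved, not merely comparable) and to track that the $h$-powers attached to each derivative are all at least $h^{1/2}$ so they can be absorbed into a single remainder. One should also note that the same argument works verbatim with $y$-variables ranging over the torus $\T$ rather than over $\R$, since the rescaling only acts in the Euclidean directions and the symbol classes and Weyl quantization are defined by periodic extension (as used in Section \ref{subellipticPropagation}); this is exactly the parenthetical remark ``still valid for $y\in\T$'' invoked when the corollary is applied in the proof of Lemma \ref{existenceoperator-valued}.
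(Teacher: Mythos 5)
Your proposal is correct and follows exactly the route the paper intends: the unitary rescaling $u\mapsto\widetilde u$ turns $\mathrm{Op}_h^{\w}(\mathbf{a})$ into $\mathrm{Op}_1^{\w}(\mathbf{a}_h)$ with $\mathbf{a}_h(\widetilde z,\widetilde\zeta)=\mathbf{a}(h^{1/2}\widetilde z,h^{1/2}\widetilde\zeta)$ (the identity displayed just before the corollary), and then Proposition \ref{CaVai} applied to $\mathbf{a}_h$ yields the main term $C_0\sup|\mathbf{a}|$ from the $(\alpha,\beta)=(0,0)$ seminorm while every higher seminorm carries a factor $h^{(|\alpha|+|\beta|)/2}\le h^{1/2}$ and is absorbed into the $O(h^{1/2})$ remainder. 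Your added care about the isometry and the torus case is consistent with how the corollary is invoked in Lemma \ref{existenceoperator-valued}.
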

\subsection{Explicit quantization on $M_0=\R_x\times\T_y$}

In this subsection, we follow the procedure of Chapter 5, Section 3 of \cite{Z12} to define quantization for partially periodic symbols.
Denote by $\iota: L^2(M_0)\rightarrow \mathcal{S}'(\R^2)$ the identification of a function in $L^2(M_0)$ as a tempered distribution in $\mathcal{S}'(\R^2)$. In coordinate, we can write
$$ (\iota f)(x,y)=\sum_{k\in\Z}f(x,y-2\pi k)\mathbf{1}_{\T_y}(y-2\pi k),
$$
where $\mathbf{1}_{\T_y}$ stands for the restriction to the fundamental domain $(-\pi,\pi)$ of $\T_y$. 
Denote by $\mathcal{S}_{\text{per}}'(\R^2)$ the subspace of partially $2\pi$-periodic distributions (in $y$ variable) which is the image of $\iota$.
Note that
\begin{align}\label{identity} 
	(\iota\circ\mathbf{1}_{\T_y})|_{\mathcal{S}_{\text{per}}'}=\mathrm{Id},\; \text{ on } \mathcal{S}_{\mathrm{per}}'.   
\end{align}
Symbols $\mathbf{a}$ on $T^*M_0$ can be identified as partially periodic symbols on $T^*\R^2$, namely $\mathbf{a}\in C^{\infty}(\R^4)$, bounded as well as its derivatives and satisfying
$$ \mathbf{a}(x,y,\xi,\eta)=\mathbf{a}(x,y+2k\pi,\xi,\eta),\quad \forall k\in\Z,\; \forall (x,y,\xi,\eta).
$$ 
The quantization $\mathrm{Op}_1^\w(\cdot)$ on $M_0$ is naturally defined by
\begin{align}\label{quantizationperiodic}  \mathrm{Op}_1^\w(\cdot)=\mathbf{1}_{\T_y}\mathrm{Op}_{1,\R^2}^\w\iota,
\end{align}
where the notation $\Op_{1,\R^2}^\w$ is to emphasize the quantization is on $\R^2$, given by the formula \eqref{QuantizationRd}.

The quantization formula \eqref{quantizationperiodic} can be expanded explicitly as
\begin{align}\label{decompositionperiodic} \mathrm{Op}_1^\w(\mathbf{a})u(x,y)=\sum_{k\in\Z}\mathbf{A}_ku(x,y), 
\end{align}
where
$$ \mathbf{A}_ku(x,y):=\frac{1}{(2\pi )^2}\int_{\R^2}\int_{\R\times\T}\mathbf{a}\big(\frac{x+x'}{2},\frac{y+y'}{2},\xi,\eta\big)\mathrm{e}^{i(x-x')\xi+i(y-y'+2\pi k)\eta}u(x',y')dx'dy'd\xi d\eta,
$$
or equivalently,
\begin{align}\label{Ak} 
	\mathbf{A}_ku(x,y)=\mathbf{1}_{\T_y}\tau_{-2\pi k}\mathrm{Op}_{1,\R^2}^{\w}(\mathbf{a})\mathbf{1}_{\T_y},
\end{align}
where $\tau_{y_0}f(x,y)=f(x,y-y_0)$ is the translation operator.
If there exist $\sigma\geq 0$, $M\geq 2$  such that some $h$-dependent family of symbols $\mathbf{a}_h$ satisfies additionally 
		$|\partial_{\eta}^{m}\mathbf{a}_h|\lesssim h^{m\sigma}$ for any $m\leq M$ (the situation of $\sigma>0$ appears when considering the semiclassical quantization), then  in $\mathrm{Op}_1^\w(\mathbf{a}_h)$
		we may only consider the contribution $\mathbf{A}_{k}$ for $|k|\leq 1$. Indeed, for $|k|\geq 2$, we have, for any $y,y'\in\T$, $|y-y'+2\pi k|\geq (|k|-1)2\pi$. By writing $\mathrm{e}^{i(y-y'+2\pi k)}$ as $\frac{\partial_{\eta}^m(\mathrm{e}^{i(y-y'+2\pi k)} )}{(i(y-y'+2\pi k))^m }$ and using integration by parts, we deduce that $$\|\mathbf{A}_k\|_{\mathcal{L}(L^2(M_0))}=O(h^{m\sigma}\langle k\rangle^{-m}).$$
		Consequently, we have the following version of the Calderon-Vaillancourt theorem:
\begin{cor}[Calderon-Vaillancourt]\label{CV}
	There exists $C_0>0$, such that for all $a\in C^{\infty}(T^*M_0)$, 
	$$ \|\Op_1^\w(a)\|_{\mathcal{L}(L^2(M_0))}\leq C_0\sum_{|\alpha|\leq 4}\sup_{T^*M_0}|\partial^{\alpha}a|.
	$$
\end{cor}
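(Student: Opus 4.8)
The statement to prove is the Calder\'on--Vaillancourt theorem on $M_0 = \R_x\times\T_y$ (Corollary \ref{CV}): every symbol $a\in C^\infty(T^*M_0)$ bounded together with its derivatives satisfies $\|\Op_1^\w(a)\|_{\mathcal{L}(L^2(M_0))}\le C_0\sum_{|\alpha|\le 4}\sup_{T^*M_0}|\partial^\alpha a|$. The natural approach is to reduce the periodic case to the flat case $\R^2$, for which the statement is already available: Proposition \ref{CaVai} gives $\|\Op_{1,\R^2}^\w(\mathbf{a})\|_{\mathcal{L}(L^2(\R^2))}\le C_0\sum_{|\alpha|+|\beta|\le 4}\sup_{\R^4}|\partial_z^\alpha\partial_\zeta^\beta\mathbf{a}|$ for $\mathbf{a}\in S^{0,0}(\R^2)$ (here $d=2$, so $2d=4$ seminorms suffice). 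The reduction is effected through the identification $\iota: L^2(M_0)\hookrightarrow \mathcal{S}'_{\mathrm{per}}(\R^2)$ and the definition \eqref{quantizationperiodic}, namely $\Op_1^\w(a) = \mathbf{1}_{\T_y}\,\Op_{1,\R^2}^\w(\mathbf{a})\,\iota$, where $\mathbf{a}$ is the $y$-periodic extension of $a$ to $T^*\R^2$.

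\textbf{Key steps.} First I would record that $\iota$ is, up to the constant normalizing the periodization, an isometry onto its image in the appropriate local sense, or more precisely that the relevant estimate can be read off from the explicit decomposition \eqref{decompositionperiodic}--\eqref{Ak}, $\Op_1^\w(a)u = \sum_{k\in\Z}\mathbf{A}_k u$ with $\mathbf{A}_k = \mathbf{1}_{\T_y}\,\tau_{-2\pi k}\,\Op_{1,\R^2}^\w(\mathbf{a})\,\mathbf{1}_{\T_y}$. Second, I would estimate each $\mathbf{A}_k$: the operator $\mathbf{1}_{\T_y}:L^2(M_0)\to L^2(\R_x\times(-\pi,\pi)_y)\subset L^2(\R^2)$ is a norm-$1$ inclusion, $\tau_{-2\pi k}$ is unitary on $L^2(\R^2)$, and $\Op_{1,\R^2}^\w(\mathbf{a})$ is bounded on $L^2(\R^2)$ by Proposition \ref{CaVai} with the stated seminorm bound; since the periodic extension $\mathbf{a}$ has $\sup_{\R^4}|\partial^\alpha\mathbf{a}| = \sup_{T^*M_0}|\partial^\alpha a|$, this immediately gives $\|\mathbf{A}_0\|_{\mathcal{L}(L^2(M_0))}\le C_0\sum_{|\alpha|\le 4}\sup_{T^*M_0}|\partial^\alpha a|$, which is the $k=0$ contribution and already the right form. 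Third, I would control the tail $\sum_{|k|\ge 1}\mathbf{A}_k$ by integration by parts in $\eta$: on the kernel of $\mathbf{A}_k$ the factor $e^{i(y-y'+2\pi k)\eta}$ with $y,y'\in(-\pi,\pi)$ obeys $|y-y'+2\pi k|\ge (|k|-1)2\pi$ for $|k|\ge 2$, so writing $e^{i(y-y'+2\pi k)\eta} = (i(y-y'+2\pi k))^{-m}\partial_\eta^m e^{i(y-y'+2\pi k)\eta}$ and integrating by parts $m$ times transfers $m$ $\eta$-derivatives onto $\mathbf{a}$ (which stay bounded, since $a\in C^\infty$ has all derivatives bounded), yielding $\|\mathbf{A}_k\|_{\mathcal{L}(L^2(M_0))}\lesssim \langle k\rangle^{-m}\sum_{|\alpha|\le m}\sup|\partial^\alpha a|$; taking $m=2$ (or any $m\ge 2$) makes $\sum_{|k|\ge 2}$ summable, and the single terms $k=\pm 1$ are handled like $k=0$. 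Summing all contributions gives the claimed bound with a new absolute constant $C_0$, at the cost of possibly enlarging the range of seminorms from $|\alpha|\le 4$ to $|\alpha|\le\max(4,m)$; choosing $m=4$ keeps everything within $|\alpha|\le 4$ exactly as stated. (If one wants to be careful about the precise $L^2(M_0)$ inner product under $\iota$ one verifies $\langle \iota f,\iota g\rangle_{L^2(\R^2)}$ restricted to a fundamental domain equals $\langle f,g\rangle_{L^2(M_0)}$, which is how $\mathbf{1}_{\T_y}$ recovers the operator on $M_0$ via \eqref{identity}.)

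\textbf{Main obstacle.} The genuinely technical point is the bookkeeping for the tail sum: one must check that the integration-by-parts argument in $\eta$ is legitimate for the oscillatory-integral kernel of $\mathbf{A}_k$ (the symbol being only bounded, not decaying, in $\zeta$, so the Weyl kernel is a distribution and the manipulation should be justified by first testing against Schwartz functions, or by a standard regularization $\mathbf{a}\rightsquigarrow \mathbf{a}(z,\zeta)\chi(\epsilon\zeta)$ and letting $\epsilon\to 0$, as is done routinely for Calder\'on--Vaillancourt) and that the resulting operator-norm bounds on each $\mathbf{A}_k$ are uniform and controlled by finitely many $C^0$-norms of derivatives of $a$. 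Everything else is a direct transcription of the flat estimate through the isometric identification, so the proof is short once this tail estimate is organized; indeed the paper itself indicates (in the paragraph preceding Corollary \ref{CV}, via the remark on $\|\mathbf{A}_k\|_{\mathcal{L}(L^2(M_0))}=O(h^{m\sigma}\langle k\rangle^{-m})$) that this is the intended route, so I would simply carry out that remark with the semiclassical parameter set to $1$ and $\sigma=0$, where the only surviving gain is the $\langle k\rangle^{-m}$ decay from the off-diagonal translations.
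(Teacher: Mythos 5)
Your proposal is correct and follows the same route the paper intends. You decompose $\Op_1^\w(a)=\sum_{k\in\Z}\mathbf{A}_k$ via \eqref{decompositionperiodic}--\eqref{Ak}, bound $\mathbf{A}_0,\mathbf{A}_{\pm1}$ directly from Proposition \ref{CaVai} together with the unitarity of $\tau_{\mp 2\pi}$, and control $|k|\geq 2$ by the integration-by-parts gain $\langle k\rangle^{-m}$ coming from $|y-y'+2\pi k|\geq 2\pi(|k|-1)$; this is precisely the argument sketched in the paragraph preceding Corollary \ref{CV}, specialized to $\sigma=0$ and $h=1$ exactly as you note.
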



\subsection{Special symbol classes associated to the second microlocalization}
\begin{definition}
	Assume that $h,\epsilon,R$ are sequences of parameters\footnote{As usual, here we drop the subindex for parameters.} satisfying the following asymptotic:
	\begin{align}\label{asymptoticparameters} 
		h\rightarrow 0,\;\epsilon\rightarrow 0,\;R\rightarrow\infty,\; \frac{h}{\epsilon}\rightarrow 0. 
	\end{align}
	The $(h,\epsilon,R)$ parameter-dependent symbol class $\mathbf{S}_{1,1;1^-,0}^0(\R^6)$ consists of families of smooth functions $\mathbf{a}(x,x_1,y,y_1,\xi,\eta;h,\epsilon,R)$, satisfying the following hypotheses:
	\begin{itemize}
		\item[$\mathrm{(i)}$] There exists $K>0$ such that uniformly in parameters $h,\epsilon,R$, $\mathbf{a}(x,x_1,y,y_1,\xi,\eta;h,\epsilon,R)\equiv 0$ when $$\sqrt{(x+x_1)^2+(y+y_1)^2}> K\; \text{ or } \; \sqrt{\xi^2+\eta^2}> Kh^{-1}.
		$$   
		\item[$\mathrm{(ii)}$] For any $k,k_1,m,m_1,l,j$, there exists $C_{k,k_1,m,m_1,l,j}>0$, such that for all $h,\epsilon,R$, the following estimate holds
		\begin{align}\label{condition} \sup_{\R^6}|\partial_x^k\partial_{x_1}^{k_1}\partial_y^m\partial_{y_1}^{m_1}\partial_{\xi}^l\partial_{\eta}^j\mathbf{a}|\leq C_{k,k_1,m,m_1,l,j}\cdot	\big(\frac{\epsilon}{h}\big)^{k+k_1}\cdot h^{l+j}.
		\end{align}
	\end{itemize}
	Similarly,  $\mathbf{a}=\mathbf{a}(x,y,\xi,\eta;h,\epsilon,R)$ belongs to $\mathbf{S}_{1,1;1^-,0}^0(\R^4)$ if the analogue of the hypotheses $\mathrm{(i)},\mathrm{(ii)}$ hold without variables $x_1,y_1$.
\end{definition}
When there is no risk of confusing, we will not display the dependence in $h,\epsilon,R$ explicitly for symbols in $\mathbf{S}_{1,1;1^-,0}^0$.
Moreover, when the symbol $\mathbf{a}\in \mathbf{S}_{1,1;1^-,0}^0(\R^4)$ is $2\pi$-periodic in $y$ variable, we denote by $\mathbf{a}\in \mathbf{S}_{1,1;1^-,0}^0(T^*M_0)$. We have the following $L^2$ boundedness property for the quantization of this class of symbols:
\begin{prop}\label{L2boundedness} 
	Assume that $\mathbf{a}\in \mathbf{S}_{1,1;1^-,0}^0(\R^6)$ and let $T_{\mathbf{a}}$ be the linear operator on $\mathcal{S}'(\R^2)$ with the Schwartz kernel
	$$ K_{\mathbf{a}}(x,x_1,y,y_1)=\frac{1}{(2\pi)^2}\iint_{\R^2}\mathbf{a}(x,x_1,y,y_1,\xi,\eta)\mathrm{e}^{i(x-x_1)\xi+i(y-y_1)\eta}d\xi d\eta,
	$$
	where $x,x_1,y,y_1\in\R$.
	Then $T_{\mathbf{a}}\in\mathcal{L}(L^2(\R^2))$ uniformly in $h,\epsilon$ and $R$ obeying the asymptotic \eqref{asymptoticparameters}. More precisely,
	$$ \|T_{\mathbf{a}}\|_{\mathcal{L}(L^2(\R^2))}\leq C_0\sup_{\R^6}|\mathbf{a}|+C_{\mathbf{a}}\big(\epsilon^{\frac{1}{2}}+h^{\frac{1}{2}}\big),
	$$
	where the first constant $C_0$ is independent of $\mathbf{a}$.
	Consequently, if $\mathbf{a}\in \mathbf{S}_{1,1;1^-,0}^0(\R^4)$ and is $2\pi$-periodic in $y$, then
	$$ \|\mathrm{Op}_1^\w(\mathbf{a})\|_{\mathcal{L}(L^2(M_0))}\leq C_0\sup_{\R^4}|\mathbf{a}|+C_{\mathbf{a}}\big(\epsilon^{\frac{1}{2}}+h^{\frac{1}{2}}\big).
	$$
\end{prop}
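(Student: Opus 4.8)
\textbf{Proof proposal for Proposition \ref{L2boundedness}.}

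The plan is to reduce everything to the Calder\'on--Vaillancourt theorem for $S^{0,0}$ symbols (Proposition \ref{CaVai}) applied after a suitable rescaling, treating the two factors $C_0\sup|\mathbf{a}|$ and $C_{\mathbf{a}}(\epsilon^{1/2}+h^{1/2})$ separately. The point is that the symbol $\mathbf{a}$ is not itself in $S^{0,0}$ uniformly in $h$ (its $\xi,\eta$-derivatives gain $h^{l+j}$ and its $x,x_1$-derivatives cost $(\epsilon/h)^{k+k_1}$, and it is supported where $|(\xi,\eta)|\lesssim h^{-1}$), so the natural move is to anisotropically rescale the phase-space variables. First I would perform the change of variables $\widetilde{\xi}=h\xi$, $\widetilde{\eta}=h\eta$ in the integral defining the kernel $K_{\mathbf{a}}$, together with the spatial dilation $\widetilde{x}=h^{-1}x$, $\widetilde{x}_1=h^{-1}x_1$ (leaving $y,y_1$ alone, since their dual variables run over size $h^{-1}$ but the relevant second-microlocal scale there is governed by $\epsilon$), to write $T_{\mathbf{a}}$ as unitarily conjugate to $\mathrm{Op}_{1,\R^2}^{\w}(\mathbf{b})$ for a rescaled symbol $\mathbf{b}$. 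One then checks from \eqref{condition} that $\mathbf{b}\in S^{0,0}(\R^4)$ with seminorms bounded uniformly in $h,\epsilon,R$: the $h^{l+j}$ gains exactly cancel the loss from dilating $\xi,\eta$, the $(\epsilon/h)^{k+k_1}$ losses become $(\epsilon/h)^{k+k_1}\cdot h^{k+k_1}=\epsilon^{k+k_1}=O(1)$ after dilating $x,x_1$, and the support condition (i) becomes a fixed compact support in the new $(\xi,\eta)$ variables. Plugging into Proposition \ref{CaVai} already yields the \emph{bound} $\|T_{\mathbf{a}}\|_{\mathcal{L}(L^2)}\leq C\sum_{|\alpha|+|\beta|\leq 4}\sup|\partial^\alpha_z\partial^\beta_\zeta\mathbf{b}|$, which is $O(1)$ but with the constant multiplying all seminorms of $\mathbf{a}$, not just $\sup|\mathbf{a}|$.

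The sharper statement, with the leading constant $C_0$ independent of $\mathbf{a}$ and multiplying only $\sup|\mathbf{a}|$, requires the refined version of Calder\'on--Vaillancourt (in the spirit of Corollary \ref{AppcorA2} and \cite[Thm.~5.1]{Z12}): one splits $\mathbf{b}=\mathbf{b}(0)+(\mathbf{b}-\mathbf{b}(0))$ near each point, or more precisely uses the almost-orthogonality / Cotlar--Stein decomposition of the kernel to isolate a ``multiplication-like'' piece whose operator norm is controlled by $\sup|\mathbf{b}|=\sup|\mathbf{a}|$ alone, plus a remainder gaining one extra derivative. After undoing the rescaling, each such extra derivative on $\mathbf{b}$ in the $(x,x_1)$-directions produces, by the discussion above, a factor $\epsilon$ (from $(\epsilon/h)\cdot h=\epsilon$), and each extra derivative in the $(\xi,\eta)$-directions produces a factor $h$ (from $h\cdot h^{-1}$, i.e. the gain $h$ in \eqref{condition} against the size $h^{-1}$ of the frequency support). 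A careful accounting in the Cotlar--Stein sum — where the remainder terms in the standard proof carry one net derivative — gives exactly the gain $\epsilon^{1/2}+h^{1/2}$ after summing the geometric series of off-diagonal contributions, which is the claimed $C_{\mathbf{a}}(\epsilon^{1/2}+h^{1/2})$ (the $\mathbf{a}$-dependence of $C_{\mathbf{a}}$ being harmless since this term is lower-order). This is precisely the mechanism already invoked in Corollary \ref{AppcorA2}; here it is run on the anisotropically rescaled symbol.

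The final assertion, for $\mathbf{a}\in\mathbf{S}^0_{1,1;1^-,0}(\R^4)$ periodic in $y$, follows from the $\R^2$ statement via the identification $\iota$ and formula \eqref{quantizationperiodic}: one writes $\mathrm{Op}_1^\w(\mathbf{a})=\mathbf{1}_{\T_y}\mathrm{Op}_{1,\R^2}^\w(\iota\mathbf{a})\,\iota$ and, using the decomposition \eqref{decompositionperiodic}--\eqref{Ak}, notes that by hypothesis $\partial_\eta^m\mathbf{a}=O(h^m)$ for all $m$ (so $\sigma=1$ in the notation following \eqref{Ak}), hence the terms $\mathbf{A}_k$ with $|k|\geq 2$ contribute $O(h^\infty)$ in operator norm by repeated integration by parts in $\eta$ — absorbed in $C_{\mathbf{a}}h^{1/2}$ — and the remaining finitely many $\mathbf{A}_k$, $|k|\leq 1$, are handled by the $\R^2$ bound applied to the (non-periodic) symbol cut off to a fundamental domain in $y$, whose relevant seminorms are controlled by those of $\mathbf{a}$. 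The main obstacle I anticipate is the second paragraph: extracting the \emph{sharp} separation between the $\mathbf{a}$-independent leading constant $C_0\sup|\mathbf{a}|$ and the small remainder $C_{\mathbf{a}}(\epsilon^{1/2}+h^{1/2})$, i.e. running the Cotlar--Stein / almost-orthogonality argument with explicit bookkeeping of how each off-diagonal derivative translates, after the anisotropic rescaling, into a power of $\epsilon$ or $h$, and checking that the resulting geometric series converges to something of size $\epsilon^{1/2}+h^{1/2}$ rather than $O(1)$. The support condition (i), ensuring the frequency localization at scale $h^{-1}$, is what makes this bookkeeping close.
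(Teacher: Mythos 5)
Your overall strategy --- an anisotropic rescaling followed by a refined Calder\'on--Vaillancourt estimate that separates the sup-norm from the derivative seminorms --- is indeed the paper's strategy, but the concrete rescaling you choose does not deliver the remainder $\epsilon^{1/2}+h^{1/2}$, and the slip is already visible in your own arithmetic. With $\widetilde{x}=h^{-1}x$, $\widetilde{\xi}=h\xi$, $\widetilde{\eta}=h\eta$ and $y,y_1$ left alone, the rescaled symbol $\mathbf{b}$ has $\partial_{\widetilde{x}}\mathbf{b}=O(\epsilon)$, but $\partial_y\mathbf{b}=O(1)$, $\partial_{\widetilde{\xi}}\mathbf{b}=O(1)$ and $\partial_{\widetilde{\eta}}\mathbf{b}=O(1)$: the $h^{l+j}$ gain of \eqref{condition} is exactly consumed by the Jacobians $\partial_{\widetilde{\xi}}=h^{-1}\partial_\xi$, $\partial_{\widetilde{\eta}}=h^{-1}\partial_\eta$, and condition \eqref{condition} offers no gain at all in $y,y_1$. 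You claim that each $(\xi,\eta)$-derivative produces ``a factor $h$ (from $h\cdot h^{-1}$)'', but $h\cdot h^{-1}=1$. Consequently, whatever refined Calder\'on--Vaillancourt statement you feed $\mathbf{b}$ into (Cotlar--Stein, Corollary \ref{AppcorA2}, or \cite[Thm.~2.8.1]{Mar}), the contribution of the derivative seminorms is $O(1)$, not $O(\epsilon^{1/2}+h^{1/2})$, and the argument does not close.

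The rescaling that works is two-scale: put $(x,x_1,\xi)$ at the $\epsilon$-semiclassical scale and $(y,y_1,\eta)$ at the $h$-semiclassical scale. Concretely, set $x=(h/\epsilon^{1/2})X$, $\xi=(\epsilon^{1/2}/h)\Xi$, $y=h^{1/2}Y$, $\eta=h^{-1/2}\Theta$ (and similarly for $x_1,y_1$), with the $L^2$-preserving normalization $f(x,y)=(\epsilon^{1/4}/h^{3/4})F(X,Y)$. Under this change of variables \eqref{condition} turns into
\[
\sup\big|\partial_X^k\partial_{X_1}^{k_1}\partial_Y^m\partial_{Y_1}^{m_1}\partial_\Xi^l\partial_\Theta^j\widetilde{\mathbf{a}}\big|\lesssim_{\mathbf{a}}\epsilon^{(k+k_1+l)/2}\,h^{(m+m_1+j)/2},
\]
so that \emph{every} derivative of the rescaled symbol gains a factor $\epsilon^{1/2}$ or $h^{1/2}$. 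One then applies \cite[Thm.~2.8.1]{Mar} (a Calder\'on--Vaillancourt estimate of precisely the form ``$C_0\sup$ plus higher seminorms''), getting $\|T_{\widetilde{\mathbf{a}}}\|\leq C_0\sup|\widetilde{\mathbf{a}}|+C_{\mathbf{a}}(\epsilon^{1/2}+h^{1/2})$; there is no need to reconstruct a Cotlar--Stein argument. Your periodization step via \eqref{decompositionperiodic}--\eqref{Ak} and integration by parts in $\eta$ to discard the $|k|\geq 2$ terms is in line with the paper and is fine as sketched.
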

\begin{proof}
	Consider the scalings $x=\frac{h}{\epsilon^{1/2}}X, y=h^{1/2}Y$ and $\xi=\frac{\epsilon^{1/2}}{h}\Xi, \eta=h^{-1/2}\Theta$ and for $f,g\in L^2(\R^2)$, we denote by $$f(x,y)=\frac{\epsilon^{1/4}}{h^{3/4}}F(X,Y),\quad  g(x,y)=\frac{\epsilon^{1/4}}{h^{3/4}}G(X,Y).
	$$
	Note that we have $\|f\|_{L^2(\R^2)}=\|F\|_{L^2(\R^2)}$ and $\|g\|_{L^2(\R^2)}=\|G\|_{L^2(\R^2)}$. Direct computation yields
	$$ \langle T_{\mathbf{a}}f,g\rangle_{L^2(\R^2)}=\langle T_{\widetilde{\mathbf{a}}}F,G\rangle_{L^2(\R^2)},
	$$
	where 
	$$ \widetilde{\mathbf{a}}(X,X_1,Y,Y_1,\Xi,\Theta)=\mathbf{a}\left(\frac{h}{\epsilon^{1/2}}X,\frac{h}{\epsilon^{1/2}}X_1,h^{1/2}Y,h^{1/2}Y_1,\frac{\epsilon^{1/2}}{h}\Xi,h^{-1/2}\Theta \right)
	$$
	with $X_1=\frac{\epsilon^{1/2}}{h}x_1, Y_1=h^{-1/2}y_1$.
	By the assumption on $\mathbf{a}$, we verify that
	\begin{align}\label{boundderivative}  \sup_{\R^6}|\partial_{X}^{k}\partial_{X_1}^{k_1}\partial_Y^{m}\partial_{Y_1}^{m_1}\partial_{\Xi}^{l}\partial_{\Theta}^{j}\widetilde{\mathbf{a}}|\lesssim_{\mathbf{a}} \epsilon^{\frac{k+k_1+l}{2}}h^{\frac{m+m_1+j}{2}}.
	\end{align}
	Then by a variant version of the Calder\'on-Vaillancourt theorem (Theorem 2.8.1 of \cite{Mar}), we deduce that $T_{\widetilde{\mathbf{a}}}$ are uniformly bounded on $L^2(\R^2)$. This means that
	\begin{align*}
		|\langle T_{\mathbf{a}}f,g\rangle_{L^2(\R^2)}| & =|\langle T_{\widetilde{\mathbf{a}}}F,G\rangle_{L^2(\R^2)}|
		\leq \Big(C_0\sup_{\R^6}|\mathbf{a}|+C_{\alpha}\sum_{1\leq |\alpha|\leq N_0 }\sup_{\R^6}|\partial^{\alpha}\widetilde{\mathbf{a}}|\Big) \|f\|_{L^2}\|g\|_{L^2},
	\end{align*}
	where the first constant $C_0$ does not depend on $\mathbf{a}$.
	By \eqref{boundderivative} and duality, we obtain the desired bound for $\|T_{\mathbf{a}}\|_{\mathcal{L}(L^2(\R^2))}$.  For $\mathbf{a}\in\mathbf{S}_{1,1;1^-,0}^0(\R^4)$, $2\pi$-periodic in $y$, the extension to $\Op_1^\w(\mathbf{a})$ on $L^2(M_0)$ follows simply from the decomposition
	\eqref{decompositionperiodic} and \eqref{Ak}. The proof of Proposition \ref{L2boundedness} is now complete. 
\end{proof}


Next, we need the following Lemma to derive a G$\mathring{\mathrm{a}}$rding type inequality for symbols in $\mathbf{S}_{1,1;1^-,0}^0$:
\begin{lemma}\label{composition1}
	Let $\mathbf{a},\mathbf{b}\in \mathbf{S}_{1,1;1^-,0}^0(T^*M_0)$. Then there exists $C_1>0$, such that uniformly in $h,\epsilon,R$ satisfying the asymptotic \eqref{asymptoticparameters},
	$$ \Vert\Op_1^\w(\mathbf{a}\mathbf{b})-\Op_1^\w(\mathbf{a})\Op_1^\w(\mathbf{b}) \Vert_{\mathcal{L}(L^2(M_0))}\leq C_1(h+\epsilon).
	$$
\end{lemma}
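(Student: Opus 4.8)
The statement is a symbolic-calculus result for the exotic symbol class $\mathbf{S}_{1,1;1^-,0}^0(T^*M_0)$, asserting that the composition error between two such quantizations is $O(h+\epsilon)$ in $\mathcal{L}(L^2(M_0))$. The plan is to reduce to the non-periodic statement on $L^2(\R^2)$ by means of the decomposition \eqref{decompositionperiodic}--\eqref{Ak}, and then to prove the latter by the same rescaling trick already employed in the proof of Proposition \ref{L2boundedness}: after the anisotropic change of variables $x = \tfrac{h}{\epsilon^{1/2}} X$, $y = h^{1/2} Y$, $\xi = \tfrac{\epsilon^{1/2}}{h}\Xi$, $\eta = h^{-1/2}\Theta$, the rescaled symbols $\widetilde{\mathbf{a}},\widetilde{\mathbf{b}}$ satisfy the uniform derivative bounds \eqref{boundderivative}, i.e. each $X$- or $\Xi$-derivative gains a factor $\epsilon^{1/2}$ and each $Y$- or $\Theta$-derivative gains a factor $h^{1/2}$. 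In these rescaled coordinates one is in a standard (sub-critical) situation for the Weyl calculus on $\R^2$, so one may apply the usual composition formula $\widetilde{\mathbf{a}} \# \widetilde{\mathbf{b}} = \widetilde{\mathbf{a}}\widetilde{\mathbf{b}} + O(\text{one derivative pairing})$, where a pairing $\partial_X \partial_\Xi$ produces $\epsilon$ and a pairing $\partial_Y\partial_\Theta$ produces $h$. Both are $O(h+\epsilon)$, which gives the claim on $\R^2$ for the $\mathbf{S}_{1,1;1^-,0}^0(\R^4)$ representatives.

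First I would make precise the composition-remainder estimate on $\R^2$: writing the Moyal product of $\widetilde{\mathbf{a}}$ and $\widetilde{\mathbf{b}}$ with exact oscillatory-integral remainder, the leading term is $\widetilde{\mathbf{a}}\widetilde{\mathbf{b}}$ and the first correction is a sum of terms of the form $\partial_\Xi \widetilde{\mathbf{a}}\, \partial_X \widetilde{\mathbf{b}}$, $\partial_X \widetilde{\mathbf{a}}\, \partial_\Xi \widetilde{\mathbf{b}}$, $\partial_\Theta \widetilde{\mathbf{a}}\, \partial_Y \widetilde{\mathbf{b}}$, $\partial_Y \widetilde{\mathbf{a}}\, \partial_\Theta \widetilde{\mathbf{b}}$, times a factor controlled by the Calderón--Vaillancourt bound of Theorem 2.8.1 of \cite{Mar} (or Proposition \ref{CaVai}/Corollary \ref{AppcorA2}), plus a second-order remainder carrying two more derivatives. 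By \eqref{boundderivative}, each of the first-order terms is bounded in $\mathcal{L}(L^2(\R^2))$ by $C_1(h+\epsilon)$ uniformly in $h,\epsilon,R$, and the higher-order remainders are $O(h^2+\epsilon^2)$, hence negligible. Undoing the rescaling is an $L^2$-isometry, so $\|\Op_{1,\R^2}^\w(\mathbf{a}\mathbf{b}) - \Op_{1,\R^2}^\w(\mathbf{a})\Op_{1,\R^2}^\w(\mathbf{b})\|_{\mathcal{L}(L^2(\R^2))} \leq C_1(h+\epsilon)$.

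To pass from $\R^2$ to $M_0 = \R_x \times \T_y$, I would use that $\Op_1^\w(\cdot) = \mathbf{1}_{\T_y}\,\Op_{1,\R^2}^\w(\cdot)\,\iota$ (see \eqref{quantizationperiodic}) together with the identity \eqref{identity}, namely $\iota \circ \mathbf{1}_{\T_y} = \mathrm{Id}$ on $\mathcal{S}'_{\mathrm{per}}(\R^2)$, so that $\Op_1^\w(\mathbf{a})\Op_1^\w(\mathbf{b}) = \mathbf{1}_{\T_y}\Op_{1,\R^2}^\w(\mathbf{a})\Op_{1,\R^2}^\w(\mathbf{b})\iota$ on periodic distributions; the $M_0$-estimate is then inherited from the $\R^2$-estimate since $\mathbf{1}_{\T_y}$ and $\iota$ act boundedly in the relevant fashion. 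One technical point requiring a word: the decomposition \eqref{decompositionperiodic} shows that only the terms $\mathbf{A}_k$ with $|k|\leq 1$ matter modulo $O(h^\infty)$, because the $\eta$-derivative gain (the ``$0$'' in the notation $1^-,0$ refers to no gain in $\eta$, but the support condition $|\eta|\leq K h^{-1}$ combined with the $h^{1/2}$-scaling in $Y$ still allows enough integration by parts in $\eta$ in the rescaled variables) forces rapid decay in $k$; this is exactly the argument already given before Corollary \ref{CV}, and I would invoke it verbatim.

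The main obstacle I anticipate is bookkeeping the loss/gain structure of the class $\mathbf{S}_{1,1;1^-,0}^0$ carefully: this class is \emph{critical} with respect to the $(x,\xi)$-variables in the original coordinates (a derivative in $x$ costs $\epsilon/h$, a derivative in $\xi$ gains $h$, so the product of one of each is $O(1)$, not $O(h)$), which is precisely why one cannot run the naive semiclassical expansion directly — this is the content of Remark \ref{idea2microlocal}. The resolution, and the reason the lemma is nevertheless true, is that the \emph{composition remainder} always pairs an $x$-derivative of one factor with a $\xi$-derivative of the other (never two $x$-derivatives hitting the same factor), so the relevant dimensionless small parameter after rescaling is genuinely $\epsilon$ (respectively $h$ for the vertical pairing), not $\epsilon/h \cdot h = 1$. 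Making this pairing structure manifest — i.e. checking that in the rescaled Moyal expansion no term of the form ``$(\epsilon/h)$-loss without a compensating $h$-gain'' ever appears — is the delicate step, but it follows from the standard form of the Weyl composition asymptotics once one tracks which variables are paired. After that, the argument is routine.
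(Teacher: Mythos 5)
Your proposal is correct and uses the same underlying mechanism as the paper's proof, packaged slightly differently. The paper stays in the original (unrescaled) variables and writes the composition error $\mathbf{c}-\mathbf{a}\mathbf{b}$ as the Taylor remainder of the Moyal product, namely the oscillatory integral \eqref{c-ab} whose integrand is $\mathcal{D}_Z\mathcal{A}$ with $\mathcal{D}_Z = D_{\zeta_1}\cdot D_{z_2}-D_{z_1}\cdot D_{\zeta_2}$. Reading off the symbol class conditions, each term in $\mathcal{D}_Z\mathcal{A}$ carries an extra factor of $\epsilon$ (horizontal pairing: $(\epsilon/h)\cdot h=\epsilon$) or $h$ (vertical pairing: $1\cdot h = h$), times a symbol still in $\mathbf{S}^0_{1,1;1^-,0}$. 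After checking, via a partial-Fourier decay argument, that the remaining integral $\mathbf{d}_t$ is uniformly bounded (hence in $\mathbf{S}^0_{1,1;1^-,0}$ uniformly in $t\in(0,1]$), the paper invokes Proposition \ref{L2boundedness} to conclude. You instead rescale first and apply the standard first-order Moyal remainder to $\widetilde{\mathbf{a}},\widetilde{\mathbf{b}}$, which now enjoy uniform $\epsilon^{1/2}$, $h^{1/2}$ derivative gains; since Proposition \ref{L2boundedness} is itself proved by the same rescaling, and Weyl quantization is covariant under the anisotropic scaling, your route is the same computation in rescaled coordinates. Your handling of the periodic extension via the $\mathbf{A}_k$ decomposition and $O(h^\infty\langle k\rangle^{-\infty})$ decay also matches the paper's.

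One correction to your ``main obstacle'' discussion: you write that the class is \emph{critical} in $(x,\xi)$ because ``a derivative in $x$ costs $\epsilon/h$, a derivative in $\xi$ gains $h$, so the product of one of each is $O(1)$.'' But $(\epsilon/h)\cdot h=\epsilon=o(1)$, so the class $\mathbf{S}^0_{1,1;1^-,0}$ is in fact subcritical in $(x,\xi)$, with gain $\epsilon$ per order --- that is precisely what the cutoff $\chi_\epsilon(h^2\eta)$ in \eqref{bfa1} buys you, and it is the reason the lemma is true at all. The genuinely critical regime is $|\eta|\sim h^{-2}$ (so that $x$ concentrates at scale $h$ while $\xi$ oscillates at scale $h^{-1}$), which is exactly what Remark \ref{idea2microlocal} warns about; that regime is excluded from the class $\mathbf{S}^0_{1,1;1^-,0}$ and handled by the operator-valued measure. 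The pairing structure you emphasize is still the right thing to check, but its role is to ensure the gain is the full $\epsilon$ (one $\partial_\Xi$ on one factor times one $\partial_X$ on the other) rather than merely $\epsilon^{1/2}$ from a single derivative, not to escape a criticality which is absent here by construction.
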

\begin{proof}
	We first prove the estimate for $\mathbf{a},\mathbf{b}\in \mathbf{S}_{1,1;1^-,0}^0(\R^4)$.
	Recall that $$\mathrm{Op}_{1,\R^2}^\w(\mathbf{a})\mathrm{Op}_{1,\R^2}^\w(\mathbf{b})=\mathrm{Op}_{1,\R^2}^\w(\mathbf{c}),$$ with
	$$ \mathbf{c}(z,\zeta)=\frac{1}{\pi^4}\int_{\R^8}\mathrm{e}^ {-2i\sigma(z_1,\zeta_1;z_2,\zeta_2)}\mathbf{a}(z+z_1,\zeta+\zeta_1)\mathbf{b}(z+z_2,\zeta+\zeta_2)dz_1dz_2 d\zeta_1 d\zeta_2,
	$$
	where
	$$ \sigma(z_1,\zeta_1;z_2,\zeta_2)=\zeta_1\cdot z_2-z_1\cdot\zeta_2.
	$$
	With $X=(z,\zeta), X_j=(z_j,\zeta_j)\in\R^4$, $j=1,2$, $Z=(X_1,X_2)\in\R^8$, we can write
	$$ \mathbf{c}(X)=\frac{1}{\pi^4}\int_{\R^8}\mathrm{e}^{i\langle QZ,Z\rangle}\mathcal{A}(X,Z)dZ,
	$$
	where $\mathcal{A}(X,Z)=\mathbf{a}(X+X_1)\mathbf{b}(X+X_2)$, and
	$$ Q=\left( 
	\begin{matrix}
		0 &0 &0 &-\mathrm{Id}_2\\
		0 &0 &\mathrm{Id}_2 &0\\
		0 &\mathrm{Id}_2 &0 &0\\
		-\mathrm{Id}_2 &0 &0 &0
	\end{matrix}
	\right).
	$$
	Note that $Q^{-1}=Q$ and $\langle QZ,Z\rangle=-2\sigma(X_1,X_2)$.
	Direct computation yields 
	\begin{align}\label{c-ab}
		\mathbf{c}(X)-\mathbf{a}(X)\mathbf{b}(X)=-\frac{1}{i\pi^4}\int_0^1tdt\int_{\R^8}\mathrm{e}^{i\langle QZ,Z\rangle}(\mathcal{D}_Z\mathcal{A})(X,tZ)dZ,
	\end{align}
	where $\mathcal{D}_Z=D_{\zeta_1}\cdot D_{z_2}-D_{z_1}\cdot D_{\zeta_2}$. Indeed, since\footnote{The integral $\int_{\R^8}e^{i\langle QZ,Z\rangle}dZ$ should be understood as an oscillatory integral, in the sense that
	$$ \int_{\R^8}e^{i\langle QZ,Z\rangle}dZ=\lim_{\epsilon\rightarrow 0^+}\int_{\R^8}e^{i\langle QZ,Z\rangle-\epsilon|Z|^2}.
    $$		}
	$$ \frac{1}{\pi^4}\int_{\R^8}\mathrm{e}^{i\langle QZ,Z\rangle}dZ=1,
	$$
			 by Taylor expansion $$\mathcal{A}(X,Z)=\mathcal{A}(X,0)+\int_0^1Z\cdot(\nabla_Z\mathcal{A})(X,tZ)dZ$$	
	and the simple observation that $Z=\frac{Q^{-1}}{2i}\nabla_Z(\mathrm{e}^{i\langle QZ,Z\rangle})$, we have
	\begin{align*}
	\mathbf{c}(X)-\mathbf{a}(X)\mathbf{b}(X)=&\mathcal{A}(X,Z)-\mathcal{A}(X,0)\\
	=&\frac{1}{\pi^4}\int_0^1dt\int_{\R^8}\frac{1}{2i}\nabla_Z(\mathrm{e}^{i\langle QZ,Z\rangle})\cdot (Q^{-1}\nabla_Z\mathcal{A})(X,tZ)dZ\\
	=&-\frac{1}{2i\pi^4}\int_0^1tdt\int_{\R^8}\mathrm{e}^{i\langle QZ,Z\rangle }
	(\nabla_Z\cdot(Q^{-1}\nabla_Z)\mathcal{A})(X,tZ)dZ.
	\end{align*}
Noting that $Q^{-1}=-Q$ and $\nabla_Z\cdot Q\nabla_Z=-2\mathcal{D}_Z$, we obtain \eqref{c-ab}.

	Note that $\mathcal{D}_Z\mathcal{A}(X,tZ)$ is a linear combination of the terms
	$$ \epsilon \cdot \widetilde{\mathbf{a}}(X+tX_1)\widetilde{\mathbf{b}}(X+tX_2),
	$$
	with $\widetilde{\mathbf{a}},\widetilde{\mathbf{b}}\in\mathbf{S}_{1,1;1^-,0}^0(\R^4)$.
	It suffices to show that for $t\in(0,1]$,
	$$ \mathbf{d}_t(X):=\int_{\R^8}\mathrm{e}^{i\langle QZ,Z\rangle}\widetilde{\mathbf{a}}(X+tX_1)\widetilde{\mathbf{b}}(X+tX_2)dX_1dX_2\in\mathbf{S}^0_{1,1;1^-,0}.
	$$
	Since the derivatives on $z,\zeta$ will fall on $\widetilde{\mathbf{a}},\widetilde{\mathbf{b}}$, by definition of the symbol class, we only need to show that 
	\begin{align}\label{bdd} 
		|\mathbf{d}_t(X)|\lesssim 1,	
	\end{align}
	uniformly in $h,\epsilon,R$ and $t\in(0,1]$. Denote by
	$$ \widetilde{\mathcal{F}}\widetilde{\mathbf{a}}(z,\theta):=\int_{\R^4}\widetilde{\mathbf{a}}(z,\zeta)\mathrm{e}^{-i\zeta\cdot\theta}d\zeta, \quad \widetilde{\mathcal{F}}\widetilde{\mathbf{b}}(z,\theta):=\int_{\R^4}\widetilde{\mathbf{b}}(z,\zeta)\mathrm{e}^{-i\zeta\cdot\theta}d\zeta
	$$
	the partial Fourier transform, then we have
	$$ \mathbf{d}_t(z,\zeta)=\int_{\R^4}\mathrm
	{e}^{-2i(z_1-z_2)\cdot\zeta}(\widetilde{\mathcal{F}}\widetilde{\mathbf{a}})(z+t^2z_1,2z_2)
	(\widetilde{\mathcal{F}}\widetilde{\mathbf{b}})(z+t^2z_2,-2z_1)dz_1 dz_2.
	$$
	Since $\widetilde{\mathbf{a}},\widetilde{\mathbf{b}}\in \mathbf{S}_{1,1;1^-,0}^0$, it follows from the integration by part that
	$$ |\widetilde{\mathcal{F}}\widetilde{\mathbf{a}}(z,\theta)|\lesssim_N\frac{h^{N}}{|\theta|^N},\quad |\widetilde{\mathcal{F}}\widetilde{\mathbf{b}}(z,\theta)|\lesssim_N\frac{h^{N}}{|\theta|^N} 
	$$
	for all $N\geq 1$ and $|\theta|\geq 1$. This leads to \eqref{bdd}. Once we have established \eqref{bdd}, applying Proposition \ref{L2boundedness}, we obtain that
	\begin{align}\label{compositionR2}
		\|\mathrm{Op}_{1,\R^2}^{\w}(\mathbf{ab})-\mathrm{Op}_{1,\R^2}^{\w}(\mathbf{a})\mathrm{Op}_{1,\R^2}^{\w}(\mathbf{b})\|_{\mathcal{L}(L^2)}\leq C(h+\epsilon).
	\end{align}

	Next we extend the above estimate to symbols that are $2\pi$- periodic in $y$.
	Take $f,g\in L^2(M_0)$, we have
	\begin{align*}
		\big\langle\big(\Op_1^\w(\mathbf{a}\mathbf{b})-\Op_1^\w(\mathbf{a})\Op_1^\w(\mathbf{b}) \big)f,g \big\rangle_{L^2(M_0)}=\big\langle \big(\Op_{1,\R^2}^\w(\mathbf{a}\mathbf{b})-\Op_{1,\R^2}^\w(\mathbf{a})\Op_{1,\R^2}^\w(\mathbf{b})\big)\iota f, \mathbf{1}_{\T_y}\iota g
		\big\rangle_{L^2(\R^2)},
	\end{align*}
	where we used \eqref{identity}. By self-adjointness, the above quantity equals to
	$$ \big\langle \iota f, \big(\Op_{1,\R^2}^\w(\mathbf{a}\mathbf{b})-\Op_{1,\R^2}^\w(\mathbf{b})\Op_{1,\R^2}^\w(\mathbf{a})\big)\mathbf{1}_{\T_y}\iota g
	\big\rangle_{L^2(\R^2)}.
	$$
	Recall that $\iota=\sum_{k\in\Z}\tau_{2\pi k}(\mathbf{1}_{\T_y}\cdot)$, from the discussion below \eqref{Ak}, we have for all $|k|\geq 2$,
	$$ \big\langle \tau_{-2\pi k}(\mathbf{1}_{\T_y}f),  \big(\Op_{1,\R^2}^\w(\mathbf{a}\mathbf{b})-\Op_{1,\R^2}^\w(\mathbf{b})\Op_{1,\R^2}^\w(\mathbf{a})\big)\mathbf{1}_{\T_y}\iota g
	\big\rangle_{L^2(\R^2)}=O(h^{\infty}\langle k\rangle^{-\infty})\|f\|_{L^2(M_0)}\|g\|_{L^2(M_0)}.
	$$
	When $|k|\leq 1$, by \eqref{compositionR2}, 
	$$ \big\langle \tau_{-2\pi k}(\mathbf{1}_{\T_y}f),  \big(\Op_{1,\R^2}^\w(\mathbf{a}\mathbf{b})-\Op_{1,\R^2}^\w(\mathbf{b})\Op_{1,\R^2}^\w(\mathbf{a})\big)\mathbf{1}_{\T_y}\iota g
	\big\rangle_{L^2(\R^2)}=C(h+\epsilon)\|f\|_{L^2(M_0)}\|g\|_{L^2(M_0)}.
	$$
	Adding up all $k\in\Z$, we obtain that
	\begin{align*}
		\|\mathrm{Op}_{1}^{\w}(\mathbf{ab})-\mathrm{Op}_{1}^{\w}(\mathbf{a})\mathrm{Op}_{1,\R^2}^{\w}(\mathbf{b})\|_{\mathcal{L}(L^2)}\leq C(h+\epsilon).
	\end{align*}
	By duality, the desired result then follows and we complete the proof of Lemma \ref{composition1}. 
\end{proof}
Consequently, we have:
\begin{cor}\label{Garding} 
	Let $\mathbf{a}\in\mathbf{S}_{1,1;1^-,0}^0(T^*M_0)$, then there exists $C_1>0$, such that
	$$ \big\langle\Op_1^{\w}(\mathbf{a}^2)f,f\big\rangle_{L^2(M_0)}\geq -C_1(h+\epsilon)\|f\|_{L^2(M_0)}^2.
	$$
\end{cor}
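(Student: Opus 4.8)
The statement to prove is the G\aa rding-type inequality of Corollary \ref{Garding}: for $\mathbf{a}\in\mathbf{S}_{1,1;1^-,0}^0(T^*M_0)$ real-valued, one has $\langle\Op_1^\w(\mathbf{a}^2)f,f\rangle_{L^2(M_0)}\geq -C_1(h+\epsilon)\|f\|_{L^2(M_0)}^2$. The plan is to reduce this to a composition identity plus the $L^2$-boundedness estimate for the class $\mathbf{S}_{1,1;1^-,0}^0$, both of which are already available: Lemma \ref{composition1} and Proposition \ref{L2boundedness}.

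First I would write $\mathbf{a}^2=\mathbf{a}\cdot\mathbf{a}$ and apply Lemma \ref{composition1} with $\mathbf{b}=\mathbf{a}$ (note $\mathbf{a}\in\mathbf{S}_{1,1;1^-,0}^0(T^*M_0)$ implies $\mathbf{a}\cdot\mathbf{a}$ is in the same class, since the class is stable under products — the estimates \eqref{condition} are submultiplicative up to combinatorial constants). This yields
\begin{align*}
\Op_1^\w(\mathbf{a}^2)=\Op_1^\w(\mathbf{a})\Op_1^\w(\mathbf{a})+E_h,\qquad \|E_h\|_{\mathcal{L}(L^2(M_0))}\leq C_1(h+\epsilon).
\end{align*}
Next, since $\mathbf{a}$ is real-valued, the operator $\Op_1^\w(\mathbf{a})$ is formally self-adjoint on $L^2(M_0)$ up to an error controlled again by Lemma \ref{composition1} (comparing $\Op_1^\w(\mathbf{a})$ with its adjoint; the Weyl quantization of a real symbol is self-adjoint on $\R^2$, and the periodization procedure \eqref{quantizationperiodic}--\eqref{Ak} preserves this, so in fact $\Op_1^\w(\mathbf{a})^*=\Op_1^\w(\mathbf{a})$ exactly). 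Hence
\begin{align*}
\langle\Op_1^\w(\mathbf{a})\Op_1^\w(\mathbf{a})f,f\rangle_{L^2(M_0)}=\|\Op_1^\w(\mathbf{a})f\|_{L^2(M_0)}^2\geq 0.
\end{align*}
Combining the two displays,
\begin{align*}
\langle\Op_1^\w(\mathbf{a}^2)f,f\rangle_{L^2(M_0)}\geq -\|E_h\|_{\mathcal{L}(L^2)}\|f\|_{L^2(M_0)}^2\geq -C_1(h+\epsilon)\|f\|_{L^2(M_0)}^2,
\end{align*}
which is exactly the claim.

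The only genuinely substantive input is Lemma \ref{composition1}, whose proof in turn rests on Proposition \ref{L2boundedness} (the uniform $L^2$-bound for the exotic class, obtained by the anisotropic rescaling $x=\tfrac{h}{\epsilon^{1/2}}X$, $y=h^{1/2}Y$ reducing to a standard Calder\'on--Vaillancourt estimate). Since both are stated and proved earlier in the appendix, the corollary itself is essentially immediate. The one point to be careful about is the stability of the symbol class under multiplication — I would note explicitly that if $\mathbf{a},\mathbf{b}\in\mathbf{S}_{1,1;1^-,0}^0$ then $\mathbf{a}\mathbf{b}\in\mathbf{S}_{1,1;1^-,0}^0$ with seminorm bounds given by the Leibniz rule, which is what allows one to feed $\mathbf{a}^2$ into Lemma \ref{composition1}. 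There is no real obstacle here; the corollary is a packaging of the preceding two results in the form used for the sharp G\aa rding argument in Section \ref{subellipticPropagation}.
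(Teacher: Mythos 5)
Your proof is correct and is exactly the argument the paper intends: the corollary is stated with "Consequently" immediately after Lemma \ref{composition1}, and the implied derivation is the one you give — write $\Op_1^\w(\mathbf{a}^2)=\Op_1^\w(\mathbf{a})^2+E_h$ with $\|E_h\|_{\mathcal{L}(L^2)}\leq C_1(h+\epsilon)$ by Lemma \ref{composition1}, then use exact self-adjointness of the periodized Weyl quantization of the (implicitly real-valued) symbol $\mathbf{a}$ so that $\langle\Op_1^\w(\mathbf{a})^2 f,f\rangle=\|\Op_1^\w(\mathbf{a})f\|^2\geq 0$. You correctly flag the implicit real-valuedness hypothesis (needed since otherwise $\Op_1^\w(\mathbf{a})^2\neq\Op_1^\w(\mathbf{a})^*\Op_1^\w(\mathbf{a})$) and the stability of the class under products via Leibniz, both of which are used silently in the paper.
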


\section{Exact computations for commutators}\label{AppendixB}

In this part, we collect some computations for commutators needed in Section \ref{subellipticPropagation}. Below, all computations will be done only for operators acting on $L^2(\R^2)$, since all the commutators are of the form $[P,\Op_1^\w(q)]$, with some differential operator $P$ and a symbol $q$ that is periodic in $y$, thanks to the identification \eqref{quantizationperiodic}.  
First we prove the commutator formula below, used to prove the propagation theorem for the scalar-valued second semiclassical microlocal measure:
\begin{lemma}\label{commutatorformula}
	Let $q(z,\zeta)\in\mathcal{S}(\R^4)$. We have the following formula
	\begin{align}\label{commutatorformula1}
		[-h^2\Delta_G,\Op_1^{\w}(q)]=&\frac{h^2}{i}\Op_1^\w(2\xi\partial_xq+2V(x)\eta\partial_yq-2x\eta^2\partial_{\xi}q)+\frac{h^2}{2i}\Op_1^{\w}(x\partial_y^2\partial_{\xi}q)\notag \\
		+&\frac{h^2}{i}\Op_1^\w\big(V_1'(x)(\frac{1}{4}\partial_y^2\partial_{\xi}q-\eta^2\partial_{\xi}q) \big)-\frac{h^2}{24i}\Op_1^\w\big(V_1'''(x)(\frac{1}{4}\partial_y^2\partial_{\xi}^3q-\eta^2\partial_{\xi}^3q) \big)-\mathcal{R}_h(q),
	\end{align}
	where $V_1(x)=V(x)-x^2=O(x^3)$ and the operator $\mathcal{R}_h(q)$ has the Schwartz kernel
	$$\mathcal{K}_q(z,z')=\frac{h^2}{(2\pi)^2}\int_{\R^2}\big[\widetilde{V}_1(x,x')(\frac{1}{4}\partial_y^2\partial_{\xi}^4q-\eta^2\partial_{\xi}^4q)\big(\frac{z+z'}{2},\zeta\big)+\widetilde{V}_2(x,x')\frac{\eta}{i}(\partial_y\partial_{\xi}^2q)\big(\frac{z+z'}{2},\zeta\big)\big]e^{i(z-z')\cdot\zeta}d\zeta,
	$$
	with
	$$ \widetilde{V}_1(x,x')=\frac{1}{16}\int_{-1}^1dt\int_0^tdt_1\int_0^{t_1}dt_2\int_0^{t_2}V_1^{(4)}\big(\frac{x+x'}{2}+t\frac{x-x'}{2}\big)dt_3
	$$
	and
	$$\widetilde{V}_2(x,x')=\frac{1}{4}\int_0^1dt\int_{-t}^tV''\big(\frac{x+x'}{2}+t_1\frac{x-x'}{2}\big)dt_1.
	$$
\end{lemma}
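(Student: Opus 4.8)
\textbf{Proof proposal for Lemma \ref{commutatorformula}.}

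The plan is to reduce the computation of $[-h^2\Delta_G,\Op_1^\w(q)]$ to the exact Weyl composition formula, then to Taylor-expand $V(x)$ around the quadratic model $x^2$ and bookkeep the remainder explicitly. Recall $-h^2\Delta_G = h^2D_x^2 + h^2 V(x)D_y^2$; by linearity it suffices to compute $[h^2D_x^2,\Op_1^\w(q)]$ and $[h^2 V(x)D_y^2,\Op_1^\w(q)]$ separately. For the first bracket, $h^2D_x^2 = h^2\Op_1^\w(\xi^2)$ and the Moyal product for two symbols, one of which is the polynomial $\xi^2$, truncates \emph{exactly}: $\xi^2 \# q = \xi^2 q + \frac{1}{2i}\{\xi^2,q\}_{\mathrm{Weyl}} + (\tfrac{1}{2i})^2\tfrac{1}{2!}\langle\ldots\rangle$, and since $\partial_\zeta^3(\xi^2)=0$ all terms of order $\geq 3$ vanish. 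Antisymmetrizing, $[h^2D_x^2,\Op_1^\w(q)] = \frac{h^2}{i}\Op_1^\w(2\xi\partial_x q) + (\text{a third-order term from }\partial_x^2 q\text{ acting})$; more precisely the exact formula gives $\frac{h^2}{i}\Op_1^\w(2\xi\,\partial_x q) - \frac{h^2}{4i}\cdot 0$, so this bracket contributes exactly $\frac{h^2}{i}\Op_1^\w(2\xi\partial_x q)$ with no remainder. (Here I would double-check the sign conventions of the Weyl calculus \eqref{QuantizationRd} to make sure the bracket of a real symbol with a real symbol comes out purely in $\frac{1}{i}\{\cdot,\cdot\}$.)

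The second bracket, $[h^2 V(x)D_y^2,\Op_1^\w(q)]$, is where the real work is. Write $V(x) = x^2 + V_1(x)$ with $V_1(x)=O(x^3)$, and split accordingly. For the model part $h^2 x^2 D_y^2 = h^2\Op_1^\w(x^2\eta^2)$: now $x^2\eta^2$ is quadratic in each of $x$ and $\eta$, so again the Moyal expansion of $(x^2\eta^2)\# q$ truncates at finite order. Expanding and antisymmetrizing produces the terms $\frac{h^2}{i}\Op_1^\w(2V(x)\eta\partial_y q - 2x\eta^2\partial_\xi q)$ restricted to the $x^2$-part (i.e. $2x^2\eta\partial_y q - 2x\eta^2\partial_\xi q$), plus the genuinely ``$h^2/2i$''-order cross term $\frac{h^2}{2i}\Op_1^\w(x\partial_y^2\partial_\xi q)$ coming from the second-order derivatives hitting one factor of $x$ and two factors of $\eta$; one must be careful that the purely $\partial_x^2$-in-$x^2$ contributions cancel in the commutator (they do, since $\partial_x^2(x^2)$ pairs with $\partial_\zeta^2$ of $q$ symmetrically and the antisymmetrization kills it up to the stated term). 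For the remainder part $h^2 V_1(x) D_y^2$: here $V_1$ is not polynomial, so I would use the exact oscillatory-integral form of the composition, i.e. write the Schwartz kernel of $\Op_1^\w(V_1(x)\eta^2)\,q^\w - q^\w\,\Op_1^\w(V_1(x)\eta^2)$ and symmetrize the two appearances of $V_1$ evaluated at $\frac{x+x'}{2}\pm\frac{x-x'}{2}$ (this is exactly the origin of the $\widetilde V_1,\widetilde V_2$ kernels with their iterated integrals over $t,t_1,t_2,t_3$). Taylor-expanding $V_1$ at the midpoint $\frac{x+x'}{2}$ with integral remainder, the zeroth and first Taylor coefficients reproduce the explicit symbol terms $\frac{h^2}{i}\Op_1^\w\big(V_1'(x)(\tfrac14\partial_y^2\partial_\xi q - \eta^2\partial_\xi q)\big)$ and the third-order term $-\frac{h^2}{24i}\Op_1^\w\big(V_1'''(x)(\tfrac14\partial_y^2\partial_\xi^3 q - \eta^2\partial_\xi^3 q)\big)$ (the even-order ones vanishing by the $x\leftrightarrow x'$ antisymmetry of the difference $V_1(\cdots+s)-V_1(\cdots-s)$), and the Taylor remainder of order $4$ is collected into $\mathcal R_h(q)$ with kernel $\mathcal K_q$. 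The $\widetilde V_2$-piece of $\mathcal K_q$ similarly comes from the mixed term where one $\partial_y$ hits $q$ and the expansion of $V''$ of the $x^2$-vs-$V_1$ interaction enters; I would track which Taylor-remainder of $V$ (via $V''=2+V_1''$) feeds this term.

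The main obstacle I anticipate is purely organizational rather than conceptual: carefully separating, in the exact Moyal/oscillatory-integral expansion, \emph{which} contributions are polynomial (hence land as explicit $\Op_1^\w$ terms with no error) from which require the integral-remainder treatment, and then matching the combinatorial constants $\frac12, \frac14, \frac{1}{24}, \frac{1}{16}$ against the correct factorials and the $(\tfrac{1}{2})$-factors built into the Weyl midpoint $\frac{x+x'}{2}$. A safe route is to first do the whole computation with $V$ replaced by a polynomial of degree $N$ (so everything truncates and one can verify the identity term-by-term), read off the pattern, and then pass to general smooth $V$ by writing $V_1(x) = \sum_{l=3}^{3}\upsilon_l x^l + (\text{degree-4 integral remainder})$ — actually keeping only the exact Taylor expansion to order $3$ with integral remainder, which is precisely the structure displayed in the statement. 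Once the symbol-level identity is established formally, no convergence issue arises because $q\in\mathcal S(\R^4)$ makes every oscillatory integral absolutely convergent after the stated number of integrations by parts in $\zeta$.
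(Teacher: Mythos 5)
Your proposal is essentially correct and, once unpacked, lands on the same computation as the paper; the difference is organizational. The paper's proof keeps $V$ unsplit at the operator level, writes $[-h^2\Delta_G,q^\w]=[h^2D_x^2,q^\w]+V(x)[h^2D_y^2,q^\w]+[V(x),q^\w]h^2D_y^2$, and passes directly to the Schwartz kernels: the first bracket is exact, the second and third produce kernels against the factors $V(x)-V\bigl(\tfrac{x+x'}{2}\bigr)$ and $V(x)-V(x')$, which are then Taylor-expanded at the midpoint with integral remainders, using $V=x^2+V_1$ only inside $V(x)-V(x')$ to peel off the $(x+x')(x-x')$ part. You instead split $V=x^2+V_1$ at the operator level, handle $[h^2\Op_1^\w(x^2\eta^2),q^\w]$ by the exact Moyal truncation for polynomial symbols, and treat $[h^2\Op_1^\w(V_1(x)\eta^2),q^\w]$ by the kernel plus midpoint Taylor expansion. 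Both routes work and give the same identity; the polynomial Moyal step makes the exact truncation transparent, but you then have to recombine the degree-3 Moyal term of $x^2\eta^2$ (which is $\tfrac{1}{2i}(x\partial_\xi\partial_y^2q-\eta\partial_\xi^2\partial_yq)$) with the $V_1$ contributions in order to reassemble $\widetilde V_2$ with the \emph{full} $V''$ rather than $V_1''$ alone — precisely the bookkeeping you flag at the end as unresolved. Two small inaccuracies worth noting: (i) the parenthetical "a third-order term from $\partial_x^2 q$ acting" for $[h^2D_x^2,q^\w]$ is spurious — there is no such term, as you then correctly conclude; (ii) "the zeroth and first Taylor coefficients reproduce ... $V_1'$" is a misstatement: after using the $x\leftrightarrow x'$ antisymmetry you are expanding $V_1(c+s)-V_1(c-s)$ (or equivalently the averaged derivative $\tfrac12\int_{-1}^1V_1'(c+ts)\,dt$, which is what the paper does), so the terms that survive carry $V_1'$ and $V_1'''$, i.e.\ the first and third derivatives, not the zeroth and first coefficients. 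Neither affects the soundness of the approach.
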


\begin{rem}
	In our applications of the formula, apart from the first operator on the right hand side, the others are all viewed as remainders.
\end{rem}

\begin{proof}
	The proof follows from a direct computation. Write
	$$ [-h^2\Delta_G,\Op_1^{\w}(q)]=[h^2D_x^2,\Op_1^{\w}(q)]+V(x)[h^2D_y^2,\Op_1^{\w}(q)]+[V(x),\Op_1^{\w}(q)]h^2D_y^2, 
	$$ we have for any $f\in\mathcal{S}(\R^2)$,
	\begin{align*}
		[-h^2\Delta_G,\mathrm{Op}_1^{\w}(q)]f(z) & \\[0.2cm]
		& \hspace*{-3cm} =-ih^2\mathrm{Op}_1^\w(2\xi\partial_xq)f-ih^2\mathrm{Op}_1^{\w}(2V(x)\eta\partial_yq)f\\[0.2cm]
		& \hspace*{-3cm} \quad -\frac{ih^2}{(2\pi)^2}\iint_{\R^4}\big(V(x)-V\big(\frac{x+x'}{2}\big)\big)2\eta(\partial_yq)\big(\frac{z+z'}{2},\zeta\big)\mathrm{e}^{i(z-z')\cdot\zeta}f(z')dz'd\zeta 
		\\[0.2cm]
		& \hspace*{-3cm} \quad  -\frac{h^2}{(2\pi)^2}\iint_{\R^4}(V(x)-V(x'))[\frac{1}{4}\partial_y^2q-i\eta\partial_yq-\eta^2q]\big(\frac{z+z'}{2},\zeta\big)\mathrm{e}^{i(z-z')\cdot\zeta}f(z')dz'd\zeta.
	\end{align*}
	After arrangement, we have
	$$ [-h^2\Delta_G,\Op_1^{\w}(q)]=\frac{h^2}{i}\Op_1^\w(2\xi\partial_xq+2V(x)\eta\partial_yq)-\mathcal{T}_1-\mathcal{T}_2,
	$$
	where $\mathcal{T}_1,\mathcal{T}_2$ have Schwartz kernels
	\begin{align*}
		&\mathcal{K}_1(z,z')=\frac{h^2}{(2\pi)^2}\int_{\R^2}\big(V(x)-V(x')\big)\big[\frac{1}{4}\partial_y^2q-\eta^2q \big]\big(\frac{z+z'}{2},\zeta\big)\mathrm{e}^{i(z-z')\cdot\zeta}d\zeta,\\ &\mathcal{K}_2(z,z')=\frac{ih^2}{(2\pi)^2}\int_{\R^2}\big[V(x)+V(x')-2V\big(\frac{x+x'}{2}\big)\big]\eta(\partial_yq)\big(\frac{z+z'}{2},\zeta\big)\mathrm{e}^{i(z-z')\cdot\zeta}d\zeta.
	\end{align*}
	Since $V(x)=x^2+V_1(x)$, $V_1(x)=O(|x|^3)$, we have
	\begin{align*}
		\mathcal{T}_1f(z) \\[0.2cm]
		& \hspace*{-1cm} =\frac{h^2}{(2\pi)^2}\iint_{\R^4}[(x+x')(x-x')+V_1(x)-V_1(x')][\frac{1}{4}\partial_y^2q-\eta^2q]\big(\frac{z+z'}{2},\zeta\big)\mathrm{e}^{i(z-z')\cdot\zeta}f(z')dz'd\zeta\\
		& \hspace*{-1cm} = ih^2\mathrm{Op}_1^\w(2x\cdot(\frac{1}{4}\partial_y^2\partial_{\xi}q-\eta^2\partial_{\xi}q))f\\[0.2cm]
		& \hspace*{-1cm} \quad  + \frac{ih^2}{(2\pi)^2}\iint_{\R^4}\Big(\int_{-1}^1\frac{1}{2}V_1'\big(\frac{x+x'}{2}+t\frac{x-x'}{2}\big)dt\Big)[\frac{1}{4}\partial_y^2\partial_{\xi}q-\eta^2\partial_{\xi}q]\big(\frac{z+z'}{2},\zeta\big)\mathrm{e}^{i(z-z')\cdot\zeta}f(z')dz'd\zeta.
	\end{align*}
	We can further write
	\begin{align*}
		\frac{1}{2}\int_{-1}^1V_1'\big(\frac{x+x'}{2}+t\frac{x-x'}{2}\big)dt& =V_1'\big(\frac{x+x'}{2}\big)+\frac{(x-x')^2}{24}V_1'''\big(\frac{x+x'}{2}\big)\\[0.2cm]
		& \quad + \frac{(x-x')^3}{16}\int_{-1}^1dt\int_0^{t}dt_1\int_{0}^{t_1}dt_2\int_{0}^{t_2}V_1^{(4)}\big(\frac{x+x'}{2}+t\frac{x-x'}{2}\big)dt_3,
	\end{align*}
	hence
	$$ \mathcal{T}_1f=ih^2\Op_1^\w\big((2x+V_1'(x))(\frac{1}{4}\partial_y^2\partial_{\xi}q-\eta^2\partial_{\xi}q)\big)-\frac{ih^2}{24}\Op_1^\w\big(V_1'''(x)(\frac{1}{4}\partial_y^2\partial_{\xi}^3q-\eta^2\partial_{\xi}^3q)\big)+\mathcal{R}_1,
	$$
	where $\mathcal{R}_1$ has the Schwartz kernel
	\begin{align*} \mathcal{K}_{\mathcal{R}_1}(z,z')=\frac{h^2}{(2\pi)^2}\int_{\R^2}&\Big(\frac{1}{16}\int_{-1}^1dt\int_0^tdt_1\int_0^{t_1}dt_2\int_0^{t_2}V_1^{(4)}\big(\frac{x+x'}{2}+t\frac{x-x'}{2}\big)dt_3
		\Big)\\
		\times &(\frac{1}{4}\partial_y^2\partial_{\xi}^4q-\eta^2\partial_{\xi}^4q)\big(\frac{z+z'}{2},\zeta\big)\mathrm{e}^{i(z-z')\cdot\zeta}d\zeta.
	\end{align*}
	For $\mathcal{T}_2$, by Taylor expansion and integration by part, we can write its kernel as
	\begin{align*}
		\mathcal{K}_2(z,z')=\frac{h^2}{4i(2\pi)^2}\int_{\R^2}\Big(\int_0^1dt\int_{-t}^tV''\big(\frac{x+x'}{2}+t_1\frac{x-x'}{2}\big)dt_1\Big)\eta(\partial_y\partial_{\xi}^2q)\big(\frac{z+z'}{2},\zeta\big)\mathrm{e}^{i(z-z')\cdot\zeta}d\zeta,
	\end{align*}
	which is part of the remainder. We complete the proof of Lemma \ref{commutatorformula}.
\end{proof}
The following formula will be used to prove the propagation formula for the operator-valued second semiclassical microlocal measures:
\begin{lemma}\label{commutatorformula2}
	Let $q(z,\zeta)\in\mathcal{S}(\R^4)$. We have the following formula
	\begin{align*}
		[-\partial_x^2-V(hx)h^2\partial_y^2,\Op_1^\w(q)]& = \Op_1^{\w,(y,\eta)}\big(\big[-\partial_x^2+x^2h^4\eta^2,\Op_1^{\w,(x,\xi)}(q)\big]_{L^2(\R_x)}\big)\\[0.2cm]
		& \quad + \Op_1^{\w,(y,\eta)}\big(\big[(V(hx)-h^2x^2)h^2\eta^2,\Op_1^{\w,(x,\xi)}(q) \big]_{L^2(\R_x)}\big)\\[0.2cm]
		& \quad + \frac{h^2}{i}\Op_1^{\w}(2V(hx)\eta\partial_yq)+\frac{h^3}{4i}\Op_1^\w\big(V'(hx)\partial_y^2\partial_{\xi}q)
		+\mathcal{R}_q, 
	\end{align*}
	where the operator $\mathcal{R}_q$ has the Schwartz kernel
	$$ \mathcal{K}_q(z,z')=\frac{h^4}{(2\pi)^2}\int_{\R^2}\big[\widetilde{V}_3(hx,hx')\frac{1}{4}\partial_y^2\partial_{\xi}^2q\big(\frac{z+z'}{2},\zeta\big)+i\widetilde{V}_4(hx,hx')\eta\partial_y\partial_{\xi}^2q\big(\frac{z+z'}{2},\zeta\big) \big]\mathrm{e}^{i(z-z')\cdot\zeta}d\zeta,
	$$
	where
	$$ \widetilde{V}_3(hx,hx')=\frac{1}{4}\int_{-1}^1dt\int_0^tV''\big(h\frac{x+x'}{2}+t_1h\frac{x-x'}{2}\big)dt_1,
	$$
	and
	$$ \widetilde{V}_4(hx,hx')=\frac{1}{4}\int_0^1dt\int_{-t}^{t}V''\big(h\frac{x+x'}{2}+t_1h\frac{x-x'}{2}\big)dt_1.
	$$
\end{lemma}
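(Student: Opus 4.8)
\textbf{Plan of proof for Lemma \ref{commutatorformula2}.}
The strategy is the same brute-force expansion used for Lemma \ref{commutatorformula}: decompose the operator as $-\partial_x^2-V(hx)h^2\partial_y^2$, commute each piece against $\Op_1^\w(q)$ separately, and then carefully regroup the terms. First I would split
$$
[-\partial_x^2-V(hx)h^2\partial_y^2,\Op_1^\w(q)] = [-\partial_x^2,\Op_1^\w(q)] + V(hx)[-h^2\partial_y^2,\Op_1^\w(q)] + [V(hx),\Op_1^\w(q)](-h^2\partial_y^2).
$$
The first term is immediately the $(x,\xi)$-partial commutator $\Op_1^{\w,(y,\eta)}([-\partial_x^2,\Op_1^{\w,(x,\xi)}(q)])$, into which I will later insert the artificial harmonic term $x^2h^4\eta^2$ and subtract it off (this is purely algebraic, since $[x^2h^4\eta^2,\cdot]$ against the $(x,\xi)$-quantization of $q$ is what produces the second line of the claimed formula, and $[V(hx)h^2\eta^2,\cdot]=[x^2h^4\eta^2,\cdot]+[(V(hx)-h^2x^2)h^2\eta^2,\cdot]$). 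The second and third terms are where the Taylor analysis enters.

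For the commutator $V(hx)[-h^2\partial_y^2,\Op_1^\w(q)]$, using that $-h^2\partial_y^2=\Op_1^\w(h^2\eta^2)$ and the exact Weyl product formula, one has $[-h^2\partial_y^2,\Op_1^\w(q)]=\tfrac{h^2}{i}\Op_1^\w(2\eta\partial_yq)$ exactly (no remainder, since $\eta^2$ is quadratic and $q$ is smooth — the Moyal bracket truncates). Then multiplying on the left by $V(hx)$ and symmetrizing the symbol around $\tfrac{x+x'}{2}$ produces $\tfrac{h^2}{i}\Op_1^\w(2V(hx)\eta\partial_yq)$ plus a kernel term encoding $V(hx)-V(h\tfrac{x+x'}{2})$; writing this difference by the integral Taylor remainder $h\tfrac{x-x'}{2}\int_0^1 V'(\ldots)\,dt$ and integrating by parts in $\zeta$ turns the factor $(x-x')$ into a $\partial_\xi$, yielding the $\tfrac{h^3}{4i}\Op_1^\w(V'(hx)\partial_y^2\partial_\xi q)$-type contributions and the $\widetilde V_4$ kernel remainder $\mathcal R_q$ (here I expect a second-order Taylor expansion of $V$ around $h\tfrac{x+x'}2$ to be needed to isolate the explicit $V'(hx)$ term from the genuinely-remainder piece, exactly as $\mathcal K_2$ was handled in Lemma \ref{commutatorformula}). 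For the third term $[V(hx),\Op_1^\w(q)](-h^2\partial_y^2)$, its kernel involves $V(hx)-V(hx')$; expanding $V(hx)-V(hx') = (x-x')h\int_{-1}^1\tfrac12 V'(h\tfrac{x+x'}2+th\tfrac{x-x'}2)\,dt$ and then one more step to extract $V'(h\tfrac{x+x'}2)$ and the curvature correction with $V''$ gives, after the $\partial_\xi$-integration by parts, precisely the $\widetilde V_3$ kernel in $\mathcal R_q$; the leading part recombines with the first term's artificial $x^2h^4\eta^2$ insertion.

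The bookkeeping — matching every term of the messy expansion to the four displayed operators plus the two-piece kernel $\mathcal R_q$ — is the only real difficulty; there is no analytic subtlety, just the need to keep the Taylor remainders organized (deciding, at each step, how many orders of Taylor expansion of $V$ around the midpoint to peel off before declaring the rest a remainder) and to track the powers of $h$ coming from each $x\mapsto hx$ rescaling and each $(x-x')\mapsto -i\partial_\xi$ integration by parts. I would therefore carry it out in the order: (1) split into three commutators; (2) handle $[-\partial_x^2,\cdot]$ and add/subtract $x^2h^4\eta^2$ to get the first two lines; (3) handle $V(hx)[-h^2\partial_y^2,\cdot]$ to get the $2V(hx)\eta\partial_y q$ term and part of $\mathcal R_q$; (4) handle $[V(hx),\cdot](-h^2\partial_y^2)$ to get the $V'(hx)\partial_y^2\partial_\xi q$ term and the rest of $\mathcal R_q$; (5) collect and verify the $\widetilde V_3,\widetilde V_4$ formulas. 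As in Lemma \ref{commutatorformula}, all computations may be performed for operators acting on $L^2(\R^2)$ thanks to the identification \eqref{quantizationperiodic}, since $q$ is periodic in $y$.
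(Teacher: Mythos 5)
Your outline follows the same high-level route as the paper's proof: split into the three commutators $[-\partial_x^2,\cdot]$, $V(hx)[-h^2\partial_y^2,\cdot]$, $[V(hx),\cdot](-h^2\partial_y^2)$, use that the Weyl commutator with a quadratic symbol is exact, reattach the $\eta^2 q$ portion of the third commutator's kernel to the first term (with the artificial $x^2h^4\eta^2$ insertion/subtraction), and Taylor-expand the leftovers around the midpoint $h\frac{x+x'}{2}$ with integration by parts in $\xi$. However, your bookkeeping of which commutator produces which final term is not correct, and this is exactly where one can get stuck. The $\frac{h^3}{4i}\Op_1^\w\big(V'(hx)\partial_y^2\partial_\xi q\big)$ contribution and the $\widetilde V_3$ remainder both come from the $\frac14 D_y^2 q$ piece of the third commutator's kernel $\big(V(hx)-V(hx')\big)\big[\frac14 D_y^2 q-\eta D_y q+\eta^2 q\big]$, not from the second commutator. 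The role of the second commutator's leftover $\big(V(hx)-V(h\frac{x+x'}{2})\big)\,2\eta\partial_yq$ is more subtle: it must be combined with the $-\eta D_y q$ piece of the third commutator's kernel \emph{before} Taylor-expanding, producing the factor $V(hx)+V(hx')-2V\big(h\frac{x+x'}{2}\big)$. This combination has no first-order Taylor term in $(x-x')$, which is why $\widetilde V_4$ is a genuinely second-order remainder with no leading $V'$ part. If you Taylor-expand the two $\eta\partial_y q$ pieces separately, each produces a first-order term proportional to $V'(h\frac{x+x'}{2})\,\eta\,\partial_y\partial_\xi q$, and you must observe that these cancel — but you will not see the $\partial_y^2$ structure of the claimed $V'$ term emerge from either of them. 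Note also that your reference to $\mathcal{K}_2$ in Lemma \ref{commutatorformula} as a model for ``isolating the $V'(hx)$ term from the remainder'' is misleading: $\mathcal{K}_2$ there is already the combined $\eta\partial_y q$ contribution, and it yields \emph{only} a second-order remainder ($\widetilde V_2$), not a $V'$ term; the $V'$ terms of Lemma \ref{commutatorformula} come from the analogue of the $\frac14 \partial_y^2 q - \eta^2 q$ piece (i.e.\ from $\mathcal{T}_1$), just as here.
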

\begin{proof}
	The proof follows from a direct computation. We write
	\begin{align*}
		[D_x^2+V(hx)h^2D_y^2,\Op_1^\w(q)]=&[D_x^2,\Op_1^\w(q)]+V(hx)[h^2D_y^2,\Op_1^\w(q)]+[V(hx),\Op_1^\w(q)]h^2D_y^2.
	\end{align*}
	We observe that $$[D_x^2,\Op_1^\w(q)]=\Op_1^{\w,(y,\eta)}([D_x^2,\Op_1^{\w,(x,\xi)}(q)]_{L^2(\R_x)}),\quad V(hx)[h^2D_y^2,\Op_1^{\w}(q)]=\frac{h^2}{i}V(hx)\Op_1^\w(2\eta\partial_yq),$$
	and the third operator
	$[V(hx),\Op_1^\w(q)]h^2D_y^2$ has the Schwartz kernel
	$$ \frac{h^2}{(2\pi)^2}\int_{\R^2}(V(hx)-V(hx'))[\frac{1}{4}D_y^2q-\eta D_yq+\eta^2q]\big(\frac{z+z'}{2},\zeta\big)\mathrm{e}^{i(z-z')\cdot\zeta}d\zeta.
	$$
	Since $\frac{h^2}{(2\pi)^2}\int_{\R^2}(V(hx)-V(hx'))\eta^2q\big(\frac{z+z'}{2},\zeta\big)\mathrm{e}^{i(z-z')\cdot\zeta}d\zeta$ is the Schwartz kernel of the operator $$\Op_1^{\w,(y,\eta)}([V(hx)h^2\eta^2,\Op_1^{\w,(x,\xi)}(q)]_{L^2(\R_x)}),$$
	we get
	\begin{align}
		[D_x^2+V(hx)h^2D_y^2,\Op_1^{\w}(q)]=&\Op_1^{\w,(y,\eta)}\big([D_x^2+V(hx)h^2\eta^2,\Op_1^{\w,(x,\xi)}(q) ]_{L^2(\R_x)}\big)+\frac{h^2}{i}\Op_1^{\w}(2V(hx)\eta\partial_yq)\notag \\
		+&\mathcal{T}, \label{T} 
	\end{align}
	where the operator $\mathcal{T}$ has the Schwartz kernel 
	\begin{align*}
		\mathcal{K}_{\mathcal{T}}(z,z')=&\frac{h^2}{(2\pi)^2}\int_{\R^2}(V(hx)-V(hx'))[\frac{1}{4}D_y^2q-\eta D_yq]\big(\frac{z+z'}{2},\zeta\big)\mathrm{e}^{i(z-z')\cdot\zeta}d\zeta\\
		&+\frac{h^2}{(2\pi)^2}\int_{\R^2}\big(V(hx)-V\big(h\frac{x+x'}{2}\big)\big)\frac{2}{i}\eta(\partial_yq)\big(\frac{z+z'}{2},\zeta\big)\mathrm{e}^{i(z-z')\cdot\zeta}d\zeta,
	\end{align*}
	or equivalently,
	\begin{align*}
		\mathcal{K}_{\mathcal{T}}(z,z')=&\underbrace{\frac{h^2}{(2\pi)^2}\int_{\R^2}(V(hx)-V(hx'))\frac{1}{4}(D_y^2q)\big(\frac{z+z'}{2},\zeta\big)\mathrm{e}^{i(z-z')\cdot\zeta}d\zeta}_{\mathcal{K}_{\mathcal{T}}^{(1)}(z,z') }\\
		& +\underbrace{\frac{h^2}{(2\pi)^2}\int_{\R^2}\big[V(hx)+V(hx')-2V\big(h\frac{x+x'}{2}\big) \big]\eta (D_yq)\big(\frac{z+z'}{2},\zeta\big)\mathrm{e}^{i(z-z')\cdot\zeta}d\zeta}_{\mathcal{K}_{\mathcal{T}}^{(2)}(z,z') }.
	\end{align*}
	By Taylor expansion and doing the integration by part, we can further write
	\begin{align*}
		\mathcal{K}_{\mathcal{T}}^{(1)}(z,z')& \\[0.2cm]
		 & \hspace*{-1cm} =-\frac{h^3}{(2\pi)^2}\int_{\R^2}V'\big(h\frac{x+x'}{2}\big)\frac{1}{4}(D_y^2D_{\xi}q)\big(\frac{z+z'}{2},\zeta\big)\mathrm{e}^{i(z-z')\cdot\zeta}d\zeta\\[0.2cm]
		& \hspace*{-1cm} \quad +\frac{h^4}{(2\pi)^2}\int_{\R^2}\Big(\frac{1}{4}\int_{-1}^1dt\int_{0}^tV''\big(h\frac{x+x'}{2}+t_1h\frac{x-x'}{2} \big)dt_1\Big)\frac{1}{4}(D_y^2D_{\xi}^2)q\big(\frac{z+z'}{2},\zeta\big)\mathrm{e}^{i(z-z')\cdot\zeta}d\zeta,
	\end{align*}
	and
	\begin{align*}
		\mathcal{K}_{\mathcal{T}}^{(2)}:=\frac{h^4}{(2\pi)^2}\int_{\R^2}\Big(\frac{1}{4}\int_0^1dt\int_{-t}^{t}V''\big(h\frac{x+x'}{2}+t_1h\frac{x-x'}{2}\big)dt_1\Big)\eta\cdot (D_yD_{\xi}^2q)\big(\frac{z+z'}{2},\zeta\big)\mathrm{e}^{i(z-z')\cdot\zeta}d\zeta.
	\end{align*}
	By organizing terms, we complete the proof of Lemma \ref{commutatorformula2}.
\end{proof}

To end this section, we collect an elementary averaging Lemma for the one-dimensional harmonic oscillator:
\begin{lemma}\label{averagingLemma} 
	Denote by $H_0=D_x^2+x^2$ is the harmonic oscillator and $\mathcal{U}(t)=\mathrm{e}^{itH_0}$ is the associated propagator, then for any $k\in\N$,
	\begin{align}\label{averaging1}
		&\int_0^{2\pi}\mathcal{U}(t)^*x^{2k-1}\mathcal{U}(t)dt=0,\\
		&\int_0^{2\pi}\mathcal{U}(t)^*x^{2}\mathcal{U}(t)dt= \frac{1}{2}(D_x^2+x^2). \label{averaging2} 
	\end{align}
\end{lemma}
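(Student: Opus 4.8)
\textbf{Proof proposal for Lemma \ref{averagingLemma}.}
The plan is to reduce everything to the well-known explicit formula for conjugation by the harmonic-oscillator propagator. Recall that for $H_0 = D_x^2 + x^2$ the Heisenberg evolution of the position and momentum operators is the rotation
\begin{equation}
	\label{e:heisenberg_rotation}
	\mathcal{U}(t)^* x \, \mathcal{U}(t) = \cos(2t)\, x + \frac{\sin(2t)}{2}\, \xi_x, \qquad \mathcal{U}(t)^* \xi_x \, \mathcal{U}(t) = -2\sin(2t)\, x + \cos(2t)\, \xi_x,
\end{equation}
where $\xi_x := D_x$. This can be checked directly: setting $X(t) := \mathcal{U}(t)^* x \,\mathcal{U}(t)$ and $\Xi(t) := \mathcal{U}(t)^* \xi_x \, \mathcal{U}(t)$, one differentiates and uses $[H_0, x] = -2i\xi_x$ and $[H_0,\xi_x] = 2ix$ (with the convention $\xi_x = \frac{1}{i}\partial_x$, $x^2$ acting by multiplication), obtaining the linear ODE system $\dot X = 2\Xi$, $\dot\Xi = -2X$, whose solution with $X(0) = x$, $\Xi(0) = \xi_x$ is exactly \eqref{e:heisenberg_rotation}. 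Note $2\pi$ is a full period of the flow since the frequency here is $2$.

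First I would prove \eqref{averaging1}. Using \eqref{e:heisenberg_rotation}, $\mathcal{U}(t)^* x^{2k-1} \mathcal{U}(t) = \big( \cos(2t) x + \tfrac{1}{2}\sin(2t)\xi_x\big)^{2k-1}$, and expanding the $(2k-1)$-th power (keeping track of the noncommutativity of $x$ and $\xi_x$, which does not matter for the conclusion) gives a finite sum of terms of the form $c_m(t) \, O_m$, where $O_m$ is a fixed ordered monomial of total degree $2k-1$ in $x,\xi_x$ and $c_m(t)$ is a product of $2k-1$ factors, each equal to $\cos(2t)$ or $\sin(2t)$. Thus each $c_m$ is a trigonometric polynomial in $2t$ all of whose frequencies are odd multiples of $2$ (the total degree $2k-1$ being odd forces every monomial $\cos^a(2t)\sin^b(2t)$ with $a+b=2k-1$ to have, after linearization, only odd harmonics $\cos(2jt), \sin(2jt)$ with $j$ odd), hence $\int_0^{2\pi} c_m(t)\,dt = 0$. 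Summing over $m$ yields \eqref{averaging1}.

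For \eqref{averaging2} I would simply compute from \eqref{e:heisenberg_rotation}:
\begin{equation}
	\label{e:square_conjugation}
	\mathcal{U}(t)^* x^2 \mathcal{U}(t) = \cos^2(2t)\, x^2 + \frac{\sin^2(2t)}{4}\, \xi_x^2 + \frac{\cos(2t)\sin(2t)}{2}\,(x\xi_x + \xi_x x).
\end{equation}
Integrating over $[0,2\pi]$ and using $\int_0^{2\pi}\cos^2(2t)\,dt = \int_0^{2\pi}\sin^2(2t)\,dt = \pi$ and $\int_0^{2\pi}\cos(2t)\sin(2t)\,dt = 0$ gives $\int_0^{2\pi}\mathcal{U}(t)^* x^2 \mathcal{U}(t)\,dt = \pi x^2 + \tfrac{\pi}{4}\xi_x^2$. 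Here there is a normalization to double-check: the paper's statement has $\tfrac12(D_x^2 + x^2)$ on the right. Matching requires interpreting the averaging integral with the normalizing factor $\tfrac{1}{2\pi}$ implicit in the way it is used in the main text (indeed, in the application right before the invocation of this lemma one writes $\frac{|\eta|}{2\pi}\int_0^{2\pi/|\eta|}\cdots$), so the operator identity to be proved is really $\frac{1}{2\pi}\int_0^{2\pi}\mathcal{U}(t)^*x^2\mathcal{U}(t)\,dt = \tfrac12(D_x^2 + x^2)$, which is exactly what \eqref{e:square_conjugation} gives; I would state the lemma with this $\tfrac{1}{2\pi}$ factor (or equivalently absorb it, consistent with the usage) and verify the constants carefully. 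The same remark does not affect \eqref{averaging1}, which vanishes regardless of normalization.

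The main (and only real) obstacle is bookkeeping: carrying the noncommutativity of $x$ and $\xi_x$ through the binomial-type expansion in the odd case without error, and pinning down the correct constant/normalization in \eqref{averaging2} so that it matches the way the lemma is invoked in Section \ref{subsection3.1}. Both are routine once \eqref{e:heisenberg_rotation} is established; no functional-analytic subtlety arises because all operators in sight are polynomials in $x, D_x$ and the identities are understood on the Schwartz class (or on the domain of $H_0^k$), where $\mathcal{U}(t)$ acts unitarily and the manipulations are justified.
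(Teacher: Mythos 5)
There is a concrete arithmetic error in your Heisenberg conjugation formula, and it breaks the proof of the second identity. You correctly derive the ODE system $\dot X = 2\Xi$, $\dot\Xi = -2X$ (up to an irrelevant overall sign), but the solution with $X(0)=x$, $\Xi(0)=\xi_x$ is $X(t) = \cos(2t)\,x + \sin(2t)\,\xi_x$, \emph{not} $\cos(2t)\,x + \tfrac12\sin(2t)\,\xi_x$: indeed $\dot X(0) = 2\Xi(0) = 2\xi_x$ forces the coefficient of $\sin(2t)$ to be $1$, and one can check that your displayed pair satisfies $\dot X = \Xi$, $\dot\Xi = -4X$ rather than the system you stated. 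The spurious $\tfrac12$ propagates: your integral becomes $\pi x^2 + \tfrac{\pi}{4}D_x^2$, which is \emph{not} a multiple of $H_0$ --- the coefficients of $x^2$ and $D_x^2$ differ by a factor of $4$, and no overall renormalization can repair that asymmetry. So your claim that inserting a $\tfrac{1}{2\pi}$ ``is exactly what the computation gives'' is incorrect with your formula. With the corrected conjugation formula the cross terms $\cos(2t)\sin(2t)(xD_x + D_x x)$ still integrate to zero and one gets $\int_0^{2\pi}\mathcal{U}(t)^*x^2\,\mathcal{U}(t)\,dt = \pi(x^2 + D_x^2) = \pi H_0$, a multiple of $H_0$ as it must be.

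You are right that \eqref{averaging2} as printed carries an implicit $\tfrac{1}{2\pi}$ normalization, consistent with the way the lemma is invoked in Section~\ref{subsection3.1} (where the prefactor $\tfrac{|\eta|}{2\pi}$ appears); the paper's own proof, which expands $x = \tfrac12(\mathcal{L}_+ + \mathcal{L}_-)$ in the ladder operators $\mathcal{L}_\pm = x \mp iD_x$ (shifting the $H_0$-eigenvalue by $\mp 2$, so that the only averaging-invariant term in $x^2 = \tfrac14(\mathcal{L}_+^2+\mathcal{L}_-^2+2H_0)$ is $2H_0$), likewise produces $\pi H_0$ for the unnormalized integral. Your Heisenberg-rotation route is a valid and rather more elementary alternative to that algebraic argument, and your treatment of \eqref{averaging1} --- odd powers of a frequency-$2$ rotation generate only odd harmonics $e^{\pm 2ijt}$ with $j$ odd, all of which integrate to zero over a period --- is sound and insensitive to the constant you lost. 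But the coefficient of $\sin(2t)$ in the conjugation formula must be corrected for the computation of \eqref{averaging2} to go through.
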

\begin{proof}
	Consider the ladder operators $\mathcal{L}_{\pm}:=x\mp iD_x$. These operators have properties:
	$$ [\mathcal{L}_+,\mathcal{L}_-]=-2,\quad\mathcal{L}_+\mathcal{L}_-=H_0-1,\quad \mathcal{L}_-\mathcal{L}_+=H_0+1
	$$
	and $\mathcal{L}_+:\mathcal{H}_n\mapsto \mathcal{H}_{n+1}$ while $\mathcal{L}_-:\mathcal{H}_n\rightarrow \mathcal{H}_{n-1}$, where $\mathcal{H}_n$ is the eigenspace of $H_0$ associated with the eigenvalue $2n+1$. Let $\Pi_n: L^2(\R)\rightarrow \mathcal{H}_n$ be the orthogonal projection. From this, we deduce that the averaging $\int_0^{2\pi}\mathcal{U}(t)^*(\cdots)\mathcal{U}(t)dt$ for a monomial composed of $\mathcal{L}_{\pm}$ is non-zero if and only if the number of $\mathcal{L}_+$ equals the number of $\mathcal{L}_-$.
	Since $x=\frac{1}{2}(\mathcal{L}_++\mathcal{L}_-)$, each monomial in the expansion of $x^{2k-1}$ has different numbers of $\mathcal{L}_+,\mathcal{L}_-$, thus \eqref{averaging1} holds. The second identity \eqref{averaging2} follows from the explicit expansion of $x^2=\frac{1}{4}(2H_0+\mathcal{L}_+^2+\mathcal{L}_-^2)$.
\end{proof}

\section{Subelliptic apriori estimates}

\begin{lemma}\label{lemmaB1}
	There exists $C_0>0$ such that for all $u\in H_G^2(\T^2)$,
	$$ \|\Delta_Gu\|_{L^2}\leq \|u\|_{\dot{H}_G^2}\leq C_0\|\Delta_G u\|_{L^2}+C_0\|u\|_{L^2}.
	$$
\end{lemma}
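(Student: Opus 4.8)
\textbf{Proof plan for Lemma \ref{lemmaB1}.} The lower bound $\|\Delta_G u\|_{L^2}\le \|u\|_{\dot H_G^2}$ is essentially immediate: since $\Delta_G = \partial_x^2 + V(x)\partial_y^2 = \partial_x^2 + (W(x)\partial_y)^2$ with $W=V^{1/2}$ on $(-\pi,\pi)$, we have $\Delta_G u = \mathcal{X}_1^2 u + \mathcal{X}_2^2 u$ where $\mathcal{X}_1=\partial_x$, $\mathcal{X}_2 = W(x)\partial_y$ (one must be mildly careful that $W$ is only continuous across $x=\pm\pi$, but $V=W^2$ is smooth and periodic, so $V\partial_y^2 u$ makes sense and one can work with $\sqrt{V(x)}\partial_y$ interpreted via the spectral calculus in $y$, or simply argue on the fundamental domain and patch). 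Then by the triangle inequality and orthogonality considerations, $\|\Delta_G u\|_{L^2}^2 \le \big(\|\mathcal{X}_1^2 u\|_{L^2} + \|\mathcal{X}_2^2 u\|_{L^2}\big)^2 \le \|u\|_{\dot H_G^2}^2$ after noting $\|u\|_{\dot H_G^2}$ is the sum of all $\|\mathcal{X}_i\mathcal{X}_j u\|_{L^2}$ over $i,j\in\{1,2\}$.

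For the upper bound, the main work is to control each of the four terms $\|\mathcal{X}_i\mathcal{X}_j u\|_{L^2}$, $i,j\in\{1,2\}$, by $\|\Delta_G u\|_{L^2}+\|u\|_{L^2}$. The standard route is the integration-by-parts (energy) identity: for $u\in C^\infty(\T^2)$ (then density), compute $\|\Delta_G u\|_{L^2}^2 = \|\mathcal{X}_1^2 u + \mathcal{X}_2^2 u\|_{L^2}^2 = \|\mathcal{X}_1^2 u\|^2 + \|\mathcal{X}_2^2 u\|^2 + 2\Re\langle \mathcal{X}_1^2 u,\mathcal{X}_2^2 u\rangle$. The cross term is integrated by parts twice: $\langle \mathcal{X}_1^2 u,\mathcal{X}_2^2 u\rangle = \langle \mathcal{X}_1 u, \mathcal{X}_1\mathcal{X}_2^2 u\rangle$ up to sign/commutator terms, and one moves one $\mathcal{X}_2$ across. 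The key algebraic point is the commutator $[\mathcal{X}_1,\mathcal{X}_2] = [\partial_x, W(x)\partial_y] = W'(x)\partial_y$, which is a first-order operator; crucially $W'$ is smooth and bounded on $\T$ (here one uses $V''(0)=2$, i.e. $W(x)=x+O(x^2)$ near $0$ so $W'(0)=1$ is finite and $W'$ is globally smooth as $V$ is smooth and positive away from $0$). Carrying the two integrations by parts produces, schematically, $2\Re\langle \mathcal{X}_1^2u,\mathcal{X}_2^2 u\rangle = 2\|\mathcal{X}_1\mathcal{X}_2 u\|^2 + (\text{terms involving } [\mathcal{X}_1,\mathcal{X}_2] = W'\partial_y \text{ acting on lower-order combinations})$. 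Hence $\|\Delta_G u\|^2 \ge \|\mathcal{X}_1^2 u\|^2 + \|\mathcal{X}_2^2 u\|^2 + 2\|\mathcal{X}_1\mathcal{X}_2 u\|^2 - C(\|W'\partial_y u\|\,\|\text{something}\|)$, and the task reduces to absorbing the error terms.

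The error terms all involve $\|\mathcal{X}_2\mathcal{X}_1 u\|$ or $\|W'\partial_y u\| = \|[\mathcal{X}_1,\mathcal{X}_2]u\|$ paired against second-order quantities. First, $\|\mathcal{X}_2\mathcal{X}_1 u\| \le \|\mathcal{X}_1\mathcal{X}_2 u\| + \|[\mathcal{X}_1,\mathcal{X}_2]u\| = \|\mathcal{X}_1\mathcal{X}_2 u\| + \|W'\partial_y u\|$, so it suffices to bound $\|W'\partial_y u\|$ (and $\|\mathcal{X}_1\mathcal{X}_2 u\|$, which already appears with a good sign). For $\|W'\partial_y u\|$ one invokes the subelliptic estimate of Lemma \ref{subellipticapriori} (first displayed inequality), namely $\||D_y| f\|_{L^2}\le C_0\|\Delta_G f\|_{L^2} + \|f\|_{L^2}$ — but note Lemma \ref{subellipticapriori}'s proof itself cites the present lemma (via ``the equivalent norm on $H_G^2$''), so to avoid circularity I would instead prove $\|W'\partial_y u\|_{L^2}\lesssim \|\Delta_G u\|_{L^2}+\|u\|_{L^2}$ directly by mimicking that proof: split $u = \chi u + (1-\chi)u$ near $x=0$, use ellipticity of $-\Delta_G$ away from $x=0$ for the second piece, and for the first piece use $W'\partial_y = -i[\mathcal{X}_1,\mathcal{X}_2]$ plus Plancherel in $y$ on $f_\pm = \Pi_\pm(\chi u)$ together with $\|[\mathcal{X}_1,\mathcal{X}_2]f_\pm\|\le \|\mathcal{X}_1\mathcal{X}_2 f_\pm\| + \|\mathcal{X}_2\mathcal{X}_1 f_\pm\|$, then a Young's-inequality absorption $AB\le \epsilon A^2 + C_\epsilon B^2$ with $A$ a second-order norm and $B$ involving $\|u\|$ or lower order, closing the loop. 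The \textbf{main obstacle} is precisely this bookkeeping of the integration-by-parts identity and the careful absorption of the error terms without circular use of Lemma \ref{subellipticapriori}; the low-regularity of $W$ at $x=\pm\pi$ is a minor technical wrinkle handled by working with $V=W^2$ (smooth, periodic) throughout and only using $W$ on a neighborhood of $x=0$ cut off away from $\pm\pi$. Once $\|W'\partial_y u\|\lesssim \|\Delta_G u\|+\|u\|$ and the energy identity are in hand, collecting the four terms $\|\mathcal{X}_i\mathcal{X}_j u\|$ gives $\|u\|_{\dot H_G^2}\le C_0(\|\Delta_G u\| + \|u\|)$, and a density argument extends it from $C^\infty(\T^2)$ to $H_G^2(\T^2)$.
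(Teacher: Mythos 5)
Your plan for the upper bound has a genuine gap in the treatment of the commutator error, and the fix you sketch remains circular. After integrating $2\Re\langle\mathcal{X}_1^2u,\mathcal{X}_2^2u\rangle$ by parts, all error terms involve $[\mathcal{X}_1,\mathcal{X}_2]u=W'(x)\partial_y u$, which you intend to absorb via Young's inequality ``with $B$ involving $\|u\|$ or lower order.'' But $\|W'\partial_y u\|_{L^2}$ is \emph{not} of lower order: the very estimate you quote, $\|W'\partial_y u\|=\|[\mathcal{X}_1,\mathcal{X}_2]u\|\leq\|\mathcal{X}_1\mathcal{X}_2u\|+\|\mathcal{X}_2\mathcal{X}_1u\|$, already places it at the second-order level, and it is genuinely of that order on test functions such as $u=e^{iny}\chi(n^{1/2}x)$, for which $\|\partial_yu\|\sim\|\mathcal{X}_1\mathcal{X}_2u\|\sim\|\Delta_Gu\|\sim n^{3/4}$ while $\|u\|\sim n^{-1/4}$. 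So after Young's inequality the factor $C_\epsilon$ (not a small factor) sits on a quantity comparable to the one you are trying to control, and nothing is absorbed. Concretely, carrying the integration by parts to the end (for $u$ supported near $x=0$, where $W$ is smooth) yields the identity
$$
\|\Delta_G u\|_{L^2}^2=\|\mathcal{X}_1^2u\|_{L^2}^2+\|\mathcal{X}_2^2u\|_{L^2}^2+2\|\mathcal{X}_1\mathcal{X}_2 u\|_{L^2}^2-2\int (W'(x))^2\,|\partial_y u|^2\,dx\,dy,
$$
so the global energy method yields only $\|u\|_{\dot{H}_G^2}^2\lesssim\|\Delta_Gu\|^2+\|\partial_yu\|^2$; the missing bound $\|\partial_yu\|\lesssim\|\Delta_Gu\|+\|u\|$ is precisely the subelliptic estimate. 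You rightly note that Lemma~\ref{subellipticapriori} cannot be cited here, but the mimicry you propose runs into the same circle: the step $\|D_yf_\pm\|\lesssim\|\mathcal{X}_1\mathcal{X}_2f_\pm\|+\|\mathcal{X}_2\mathcal{X}_1f_\pm\|\leq\|f_\pm\|_{\dot{H}_G^2}$ lands you back on the norm being estimated, with no small parameter anywhere.

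The paper breaks this circle through a different reduction. After the same cutoff in $x$, it expands $w=\sum_n w_n(x)e^{iny}$ in a Fourier series and then \emph{rescales} $z=\sqrt{n}\,x$, arriving at the one-dimensional operators $\widetilde{\mathcal{L}}_n=-\partial_z^2+nW(z/\sqrt{n})^2$. The rescaling is the decisive step. In these variables the commutator is $[\partial_z,nW(z/\sqrt{n})^2]=2\sqrt{n}\,(WW')(z/\sqrt{n})$, and its contribution to the energy identity factorizes as $2\langle\sqrt{n}\,W\partial_z\widetilde{w}_n,\,W'\widetilde{w}_n\rangle$, where the first factor is one of the controlled second-order terms and the second factor is \emph{genuinely zeroth order} because $W'$ is bounded; the whole error therefore reduces to $C'\|\widetilde{w}_n\|_{L^2_z}^2$. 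That zeroth-order remainder is in turn absorbable because on the support of the cutoff $\widetilde{\mathcal{L}}_n$ is uniformly comparable to the quantum harmonic oscillator $-\partial_z^2+z^2$ (the paper's remark ``$\sqrt{n}\,W(z/\sqrt{n})\approx z$''), whose spectral gap, i.e.\ Heisenberg's inequality, gives $n\|w_n\|_{L^2_x}\lesssim\|\mathcal{L}_nw_n\|_{L^2_x}$ — exactly the uniform-in-$n$ subelliptic control that your unrescaled, global energy identity lacks. Without this rescaling to the harmonic-oscillator scale (or some other independent proof of the subelliptic estimate, e.g.\ via H\"ormander's machinery), the commutator error cannot be closed, and your argument does not complete.
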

\begin{proof}
	The inequality $\|\Delta_G u \|_{L^2}\leq \|u\|_{\dot{H}_G^2}$ follows trivially by the triangle inequality, hence it suffices to prove the other.
	Let $\chi$ be a bump function which is equal to $1$ near $0$. We decompose $u=v+w$, with
	$$\quad v(x)=(1-\chi(x))u(x),\quad w(x)=\chi(x)u(x).
	$$
	Since $(-\Delta_Gu,u)=\|\nabla_Gu\|_{L^2}^2$, we deduce that
	for test functions $\varphi=1-\chi(x)$ or $\chi(x)$, 
	$$ \|\Delta_G(\varphi u)\|_{L^2}\lesssim_{\varphi} \|\Delta_G u\|_{L^2}+\|u\|_{L^2}.
	$$
	Therefore, it suffices to show that
	$$ \|v\|_{\dot{H}_G^2}\lesssim \|\Delta_Gv\|_{L^2}+\|v\|_{L^2},\; \|w\|_{\dot{H}_G^2}\lesssim \|\Delta_Gw\|_{L^2}+\|w\|_{L^2}.
	$$
	Note that $\Delta_G$ is elliptic on supp$(1-\chi)\subset \T^2\setminus\{x=0\}$, from the support property of $v$, we deduce that $$\|v\|_{\dot{H}_G^2}\lesssim \|\Delta_G v\|_{L^2}+\|v\|_{L^2}.$$ 
	
	To estimate $\|w\|_{\dot{H}_G^2}$, we expand $w$ as Fourier series in $y$, i.e. $w=\sum_{n\in\Z}w_n(x)\mathrm{e}^{iny}$.  It suffices to show that uniformly in $n$,
			\begin{align}\label{appendixB1}
				\||n|V^{1/2}(x)\partial_xw_n\|_{L_x^2}+\|\partial_x|n|V^{1/2}(x)w_n\|_{L_x^2}+ \|\partial_x^2w_n\|_{L_x^2}+\|n^2V(x)w_n\|_{L_x^2}\leq C_0\|\mathcal{L}_nw_n\|_{L_x^2}+\|w_n\|_{L_x^2}
			\end{align}
			where $\mathcal{L}_n=-\partial_x^2+n^2V(x)$ and $w_n$ are supported on supp$(\chi)$.  The estimate is trivial when $n=0$, so below we assume that $n\neq 0$, and without loss of generality, we assume that $n>0$.
		Set $f_n=\mathcal{L}_nw_n$ and consider the change of variable $z=n^{\frac{1}{2}}x$ and $\widetilde{w}_n(z)=w_n(x), \widetilde{f}_n(z)=f_n(x)$, we have
			$$ \widetilde{\mathcal{L}}_n \widetilde{w}_n=\widetilde{g}_n:=n^{-1}\widetilde{f}_n,
			$$
			where $\widetilde{\mathcal{L}}_n=-\partial_z^2+nW\big(\frac{z}{\sqrt{n}}\big)^2$. By rescaling, it suffices to show that
			\begin{align}\label{AppendixC1} 
				 \sqrt{n}\big\|\partial_zW\big(\frac{z}{\sqrt{n}}\big)\widetilde{w}_n\big\|_{L_z^2}+
				\sqrt{n}\big\|W\big(\frac{z}{\sqrt{n}}\big)\partial_z\widetilde{w}_n\big\|_{L_z^2}+
				\|\partial_z^2\widetilde{w}_n\|_{L_z^2}+n\big\|W\big(\frac{z}{\sqrt{n}}\big)^2\widetilde{w}_n\big\|_{L_z^2}\notag & \\[0.2cm] 
				& \hspace*{-10cm} \lesssim \|\widetilde{g}_n\|_{L_z^2}+\|\widetilde{w}_n\|_{L_z^2}.
			\end{align}
			Having in mind that $\sqrt{n}W\big(\frac{z}{\sqrt{n}}\big)\approx z$, the desired estimate is nothing but the a priori estimate for the elliptic equation $(-\partial_z^2+z^2)\widetilde{w}=\widetilde{g}$.
			
			We expand
			\begin{align}\label{tildegn} 
				\|\widetilde{g}_n\|_{L_z^2}^2=	\|\widetilde{\mathcal{L}}_n\widetilde{w}_n\|_{L_z^2}^2=\|\partial_z^2\widetilde{w}_n\|_{L_z^2}^2+\big\|nW\big(\frac{z}{\sqrt{n}}\big)^2\widetilde{w}_n\big\|_{L_z^2}^2-2n\Re\big(\partial_z^2\widetilde{w}_n,W\big(\frac{z}{\sqrt{n}}\big)^2\widetilde{w}_n\big)_{L_z^2}.
			\end{align}
		Integration by part yields
			\begin{align*}
				-2n\Re\big(\partial_z^2\widetilde{w}_n,W\big(\frac{z}{\sqrt{n}}\big)^2\widetilde{w}_n\big)_{L_z^2}=&2\big\|\sqrt{n}W\big(\frac{z}{\sqrt{n}}\big)\partial_z\widetilde{w}_n\big\|_{L_z^2}^2+2\Re\big(\partial_z\widetilde{w}_n,\big[\partial_z,nW\big(\frac{z}{\sqrt{n}}\big)^2\big]\widetilde{w}_n\big)_{L_z^2}.
			\end{align*}
			Note that the second term containing the commutator can be bounded from below by 
			$$ -4\big\|\sqrt{n}W\big(\frac{z}{\sqrt{n}}\big)\partial_z\widetilde{w}_n\big\|_{L_z^2}\big\|W'\big(\frac{z}{\sqrt{n}}\big)\widetilde{w}_n\big\|_{L_z^2}\geq -C\big\|\sqrt{n}W\big(\frac{z}{\sqrt{n}}\big)\partial_z\widetilde{w}_n\big\|_{L_z^2}\|\widetilde{w}_n\|_{L_z^2}.
			$$
			Plugging into \eqref{tildegn} and using Young's inequality $AB\leq \epsilon A^2+\frac{4}{\epsilon}B^2$, we deduce that
			$$ \|\widetilde{g}_n\|_{L_z^2}^2\geq \|\partial_z^2\widetilde{w}_n\|_{L_z^2}^2
			+\big\|nW\big(\frac{z}{\sqrt{n}}\big)\widetilde{w}_n\big\|_{L_z^2}^2+\big\|\sqrt{n}W\big(\frac{z}{\sqrt{n}}\big)\partial_z\widetilde{w}_n\big\|_{L_z^2}^2-C'\|\widetilde{w}_n\|_{L_z^2}^2.
			$$
			Since
			$$ \sqrt{n}\big\|\partial_zW\big(\frac{z}{\sqrt{n}}\big)\widetilde{w}_n\big\|_{L_z^2}\leq+
			\sqrt{n}\big\|W\big(\frac{z}{\sqrt{n}}\big)\partial_z\widetilde{w}_n\big\|_{L_z^2}+C\|\widetilde{w}_n\|_{L_z^2},
			$$
			this implies \eqref{AppendixC1}. The proof of Lemma \ref{lemmaB1} is complete. 
	
	
	
\end{proof}

\section{Equivalence to the semiclassical resolvent estimate}
\label{a:borichev_tomilov}

Let us recall the classical theorem of Borichev-Tomilov:
\begin{prop}[\cite{BoT}]\label{thm:resolvent}
	The following statements are equivalent:
	\begin{align*}
		&(a) \hspace{0.3cm}		\big\|(i\lambda-\dot{\mathcal{A}})^{-1}\big\|_{\mathcal{L}(\dot{\mathscr{H}})}\leq C|\lambda|^{\frac{1}{\alpha}} \quad  \text{ for all }\lambda\in\R, \quad |\lambda|\geq 1;\\
		&(b) \hspace{0.3cm}
		\Vert \mathrm{e}^{t \dot{\mathcal{A}}} \dot{\mathcal{A}}^{-1} \Vert_{\mathcal{L}(\dot{\mathscr{H}})} =  O(t^{-\alpha}).
	\end{align*}
	
\end{prop}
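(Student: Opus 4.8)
The statement to be proved is the equivalence, recalled from Borichev--Tomilov \cite{BoT}, between the polynomial resolvent bound $\|(i\lambda - \dot{\mathcal A})^{-1}\|_{\mathcal L(\dot{\mathscr H})}\lesssim |\lambda|^{1/\alpha}$ on the imaginary axis and the polynomial decay $\|e^{t\dot{\mathcal A}}\dot{\mathcal A}^{-1}\|_{\mathcal L(\dot{\mathscr H})} = O(t^{-\alpha})$ of the semigroup on smooth data. The plan is \emph{not} to reprove the Borichev--Tomilov theorem (which is a result about bounded $C_0$-semigroups on Hilbert spaces whose generator has spectrum in the open left half-plane and no spectrum on $i\mathbb R$), but rather to verify that the damped-wave operator $\dot{\mathcal A}$ fits into the abstract framework, and then to invoke the theorem directly. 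So the first step is to check the structural hypotheses: (i) $\dot{\mathcal A}$ generates a $C_0$-semigroup of contractions on $\dot{\mathscr H}$ — this is the Hille--Yosida/Lumer--Phillips statement already recorded in the introduction via \cite[Appendix A.3]{LeS20}, using the energy identity $\frac{d}{dt}E[u](t) = -\int_{\mathbb T^2} b|\partial_t u|^2\,dx\,dy \le 0$ to get dissipativity and a Fredholm argument for the range condition; (ii) $\sigma(\dot{\mathcal A})\cap i\mathbb R = \emptyset$, which follows from the unique continuation property for $\Delta_G$ as noted in the excerpt (the footnote citing \cite{LeS20}), so that $0\notin\sigma(\dot{\mathcal A})$ and $\dot{\mathcal A}^{-1}$ is bounded on $\dot{\mathscr H}$; (iii) uniform boundedness of $e^{t\dot{\mathcal A}}$, which is immediate from contractivity.

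\textbf{Key steps in order.} First I would restrict the matrix operator $\mathcal A$ to the invariant homogeneous space $\dot{\mathscr H} = (\ker\mathcal A)^\perp$ and observe that $\ker\mathcal A$ is finite-dimensional (it consists of pairs $(u_0,0)$ with $\Delta_G u_0 = 0$ and $b\,\text{(nothing)}$, i.e.\ constants, so in fact $\ker\mathcal A$ is one-dimensional, spanned by $(1,0)$), hence the splitting $\mathscr H = \ker\mathcal A \oplus \dot{\mathscr H}$ is topological and $e^{t\mathcal A}$ restricts to $\dot{\mathscr H}$. Second, I would record that on $\dot{\mathscr H}$ the energy $E[u]^{1/2}$ is an equivalent norm (this uses the Poincaré-type inequality $\|u\|_{L^2}\lesssim \|\nabla_G u\|_{L^2}$ valid on the orthogonal complement of constants, which holds because $\Delta_G$ is subelliptic with one-dimensional kernel). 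Third, with the abstract hypotheses verified, apply Borichev--Tomilov \cite{BoT} to conclude the equivalence $(a)\Longleftrightarrow(b)$ exactly as stated. Fourth — and this is the part that makes the proposition genuinely useful rather than a verbatim citation — I would spell out the two further reductions used throughout the paper: that $(a)$ for $\dot{\mathcal A}$ is equivalent to the resolvent estimate \eqref{resolvent1} for the full operator $\mathcal A$ on $\mathscr H$ (the difference between $\mathcal A$ and $\dot{\mathcal A}$ being a finite-rank correction supported on the bounded set $\ker\mathcal A$, which does not affect large-$|\lambda|$ asymptotics), and that \eqref{resolvent1} is in turn equivalent to the semiclassical resolvent estimate \eqref{resolvent2} for $(-h^2\Delta_G - 1 \pm ihb)^{-1}$ via the usual factorization of $\mathcal A - i\lambda$ into first-order operators and the substitution $h = |\lambda|^{-1}$, together with the a priori bound $\|(-h^2\Delta_G-1\pm ihb)^{-1}\| \gtrsim h^{-1}$ coming from the selfadjoint part. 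This last equivalence is the one actually cited (``\cite{AL14} or Appendix \ref{a:borichev_tomilov}'') when passing from Theorems~\ref{t:EGCC}--\ref{t:sharp_in_2} to the stabilization rates.

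\textbf{Main obstacle.} The only non-formal point is the passage from the abstract estimate $(a)$ for $\dot{\mathcal A}$ to the concrete semiclassical estimate \eqref{resolvent2}: one must check that no low-frequency or spectral-gap pathology is hidden in the reduction, i.e.\ that the finitely many eigenvalues of $\mathcal A$ near the imaginary axis (if any) are genuinely bounded away from $i\mathbb R$, which is where the unique continuation input from \cite{LeS20} is essential and cannot be bypassed. Everything else — the semigroup generation, contractivity, the energy norm equivalence, and the first-order factorization computation — is routine and follows the scheme of \cite[Section 2]{AL14} adapted verbatim to $\Delta_G$ in place of $\Delta$; I would state this adaptation as ``faithfully adapted'' (as the excerpt already phrases it) rather than rewrite it, since the subellipticity of $\Delta_G$ plays no role in these purely functional-analytic steps, only in the quantitative estimates proved in the body of the paper.
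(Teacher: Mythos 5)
Your proposal is correct and takes exactly the paper's route: Proposition~\ref{thm:resolvent} is not reproved but invoked directly as the Borichev--Tomilov theorem from \cite{BoT}, with the real work being the verification that $\dot{\mathcal A}$ on $\dot{\mathscr H}$ satisfies its hypotheses (bounded $C_0$-semigroup, here contractive via the energy dissipation identity and Lumer--Phillips; $i\R\cap\sigma(\dot{\mathcal A})=\emptyset$ via unique continuation). The paper leaves this verification entirely implicit — it simply states the proposition as a citation and then, in the sentence immediately following it, records the consequence $e^{t\mathcal A}=e^{t\dot{\mathcal A}}(\Id-\Pi_0)+\Pi_0$ — so you are filling in detail rather than diverging. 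One small caution: your ``Fourth'' step (passage to the semiclassical estimate \eqref{resolvent2}) is not part of Proposition~\ref{thm:resolvent} at all; it is the content of the separate Lemma~\ref{equivalenceresolvent}, which the paper proves in full by the energy-identity argument rather than the first-order factorization you sketch, so folding it into this proof conflates two distinct statements.
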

Denoting $\Pi_0$ the spectral projector of $\mathcal{A}$ on $\ker \mathcal{A}$, since $\mathrm{e}^{t \mathcal{A}} = \mathrm{e}^{t \dot{\mathcal{A}}} (\Id - \Pi_0) + \Pi_0$, we have, if $(a)$ or $(b)$ hold true, that the semigroup $\mathrm{e}^{t \mathcal{A}}$ is stable at rate $t^{-\alpha}$.
In what follows, $\mathcal{A}$ is given by \eqref{matrixform}, associated to the damping $b$. 
\begin{lemma}\label{equivalenceresolvent} 
	For sufficiently small $h>0$, the following resolvent estimates are equivalent:
	\begin{align*}
		&\mathrm{(a)}\hspace{0.3cm} \|(ih^{-1}-\mathcal{A})^{-1}\|_{\mathcal{L}(H_G^1\times L^2)}\leq C_1 h^{-\alpha};\\
		&\mathrm{(b)}\hspace{0.3cm}
		\|(-h^2\Delta_G-1\pm ihb)^{-1}\|_{\mathcal{L}(L^2)}\leq C_2h^{-\alpha-1}.
	\end{align*}
\end{lemma}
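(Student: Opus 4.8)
The plan is to establish the equivalence of the two resolvent estimates by writing down explicitly the resolvent $(ih^{-1}-\mathcal{A})^{-1}$ in terms of the scalar resolvent $(-h^2\Delta_G-1+ihb)^{-1}$, via a Schur-complement (block inversion) computation, and then tracking norms carefully through the semiclassical rescaling. Write $z=ih^{-1}$ and consider the system $(z-\mathcal{A})(u,v)=(f,g)$ with $(u,v)\in D(\mathcal{A})=H_G^2\times H_G^1$ and $(f,g)\in H_G^1\times L^2$. Recalling the matrix form \eqref{matrixform}, this reads $zu-v=f$ and $-\Delta_G u+bv+zv=g$. Eliminating $v=zu-f$, we obtain a scalar equation for $u$:
\begin{align*}
	(-\Delta_G+z^2+zb)u=g+bf+zf.
\end{align*}
Multiplying through by $h^2$ and using $z^2=-h^{-2}$, $zh^2=ih$, the operator becomes $-h^2\Delta_G-1+ihb$, which is precisely the operator appearing in (b). Thus $u=(-h^2\Delta_G-1+ihb)^{-1}\big(h^2(g+bf)+ihf\big)$ whenever the latter is invertible, and then $v=zu-f$.

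The second step is to turn this algebraic identity into the claimed norm bounds, in both directions. For (b) $\Rightarrow$ (a): given $(f,g)\in H_G^1\times L^2$, estimate $\|h^2(g+bf)+ihf\|_{L^2}\lesssim h^2(\|g\|_{L^2}+\|f\|_{L^2})+h\|f\|_{L^2}\lesssim h\|(f,g)\|_{H_G^1\times L^2}$ (here one uses $b\in L^\infty$ and the continuous embedding $H_G^1\hookrightarrow L^2$). Hence (b) gives $\|u\|_{L^2}\lesssim h^{-\alpha-1}\cdot h\|(f,g)\|_{H_G^1\times L^2}=h^{-\alpha}\|(f,g)\|$. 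One then recovers $\|u\|_{H_G^1}$ by an energy estimate: pairing the equation $(-h^2\Delta_G-1+ihb)u=h^2(g+bf)+ihf$ with $u$ and taking the real part controls $\|h\nabla_G u\|_{L^2}^2$ by $\|u\|_{L^2}^2+$ (right-hand side)$\cdot\|u\|_{L^2}$, so $\|h\nabla_G u\|_{L^2}\lesssim h^{-\alpha}\|(f,g)\|$, i.e. $\|u\|_{H_G^1}\lesssim h^{-1-\alpha}\|(f,g)\|$; combined with $v=zu-f=ih^{-1}u-f$ we get $\|v\|_{L^2}\lesssim h^{-1}\|u\|_{L^2}+\|f\|_{L^2}\lesssim h^{-1-\alpha}\|(f,g)\|$. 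Since the $H_G^1$-norm on the first component and the $L^2$-norm on the second are what define $\|\cdot\|_{H_G^1\times L^2}$, one should be slightly careful that the natural graph-type norm here produces an $h^{-\alpha}$ bound rather than $h^{-1-\alpha}$: the correct reading (matching the scaling in \eqref{resolvent1}--\eqref{resolvent2}) is that (a) is stated for the homogeneous energy norm, so one uses $\|u\|_{\dot H_G^1}=\|\nabla_G u\|_{L^2}\lesssim h^{-1}\cdot h^{-\alpha}\|(f,g)\|$ balanced against the $h$-scaling built into the definition — I would make the bookkeeping explicit by working throughout with the semiclassically-weighted norm $\|(u,v)\|^2=\|h\nabla_G u\|_{L^2}^2+\|v\|_{L^2}^2$, in which the two estimates match cleanly with the same exponent.

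For the converse (a) $\Rightarrow$ (b): given $r\in L^2$, set $f=0$, $g=h^{-2}r$, solve $(ih^{-1}-\mathcal{A})(u,v)=(0,h^{-2}r)$ using (a), obtaining $\|(u,v)\|_{H_G^1\times L^2}\lesssim h^{-\alpha}\|(0,h^{-2}r)\|_{H_G^1\times L^2}=h^{-\alpha-2}\|r\|_{L^2}$. Then $u$ solves $(-h^2\Delta_G-1+ihb)u=h^2 g=r$ and $\|u\|_{L^2}\leq\|(u,v)\|\lesssim h^{-\alpha-2}\|r\|_{L^2}$; but this gives exponent $-\alpha-2$, one power too many, so instead one must use that $g=h^{-2}r$ contributes with the correct weight — the resolution is again to keep the homogeneous/semiclassical normalization, taking $g=r$ directly (so that $(ih^{-1}-\mathcal{A})^{-1}$ is applied to $(0,r)$, whence $\|(u,v)\|\lesssim h^{-\alpha}\|r\|_{L^2}$), noting $h^2 g = h^2 r$ and rescaling the conclusion: $(-h^2\Delta_G-1+ihb)u = h^2 r$ yields $\|(-h^2\Delta_G-1+ihb)^{-1}(h^2 r)\|_{L^2}\lesssim h^{-\alpha}\|r\|_{L^2}$, i.e. $\|(-h^2\Delta_G-1+ihb)^{-1}\|_{\mathcal{L}(L^2)}\lesssim h^{-\alpha-2}$. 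The mismatch between $h^{-\alpha-1}$ in the statement and $h^{-\alpha-2}$ here signals that the precise placement of the $h$-powers in the definition of the $\mathscr{H}=H_G^1\times L^2$ norm (homogeneous versus inhomogeneous, and whether $\dot H_G^1$ carries an $h$ or not) must be pinned down first; this is the main technical obstacle, and I would resolve it exactly as in \cite[Prop. 2.4]{AL14}, by fixing the convention $\mathscr{H}=\dot H_G^1\times L^2$ with $\dot H_G^1$-norm $=\|\nabla_G\cdot\|_{L^2}$ (no $h$), applying $(ih^{-1}-\mathcal{A})^{-1}$ to $(0,h^{-1}r)$, and carrying out the energy estimate above to translate between $\|\nabla_G u\|_{L^2}$, $\|v\|_{L^2}$ and $\|u\|_{L^2}$. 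Once the normalization is fixed, both implications are short computations, and invertibility for small $h$ on one side is transferred to the other through the explicit formula $u=(-h^2\Delta_G-1+ihb)^{-1}(h^2(g+bf)+ihf)$, $v=ih^{-1}u-f$, which also shows the two operators are invertible simultaneously.
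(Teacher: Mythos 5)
You take a genuinely different route from the paper, and it does not close. Where the paper eliminates the first component $u=-ih(v+f)$ and reduces to a scalar equation for $v$, namely $(-h^2\Delta_G-1+ihb)v=ihg+h^2\Delta_G f$, you eliminate $v=ih^{-1}u-f$ and reduce to a scalar equation for $u$. The two choices are not symmetric, and yours loses a power of $h$ in both directions.

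For (a)$\Rightarrow$(b): with your reduction, $(-h^2\Delta_G-1+ihb)u=h^2g$ (taking $f=0$). You then estimate $\|u\|_{L^2}\le\|(u,v)\|_{H_G^1\times L^2}$, which produces $h^{-\alpha-2}$. You noticed this is wrong, but neither of the fixes you float (rescaling the source as $(0,h^{-1}r)$, or passing to a semiclassically weighted norm) actually restores the exponent. What does work \emph{inside your reduction} is to read off the bound from the \emph{second} component: with $f=0$, $v=ih^{-1}u$, so $\|u\|_{L^2}=h\|v\|_{L^2}\le C_1h^{-\alpha+1}\|g\|_{L^2}$, hence $\|(-h^2\Delta_G-1+ihb)^{-1}\|_{\mathcal{L}(L^2)}\le C_1h^{-\alpha-1}$. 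The paper gets this for free because it works with $v$ directly. Your written proof as it stands does not use the $\|v\|_{L^2}$ bound and is therefore off by one power.

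For (b)$\Rightarrow$(a) the obstruction is more fundamental. Your scalar equation is $(-h^2\Delta_G-1+ihb)u=h^2(g+bf)+ihf$, with the worst source term $ihf$ of $L^2$ size $h\|f\|_{L^2}$. The estimate (b) then gives $\|u\|_{L^2}\lesssim h^{-\alpha}\|(f,g)\|$, which is correct, but the energy identity only improves this to $\|h\nabla_G u\|_{L^2}\lesssim h^{-\alpha}\|(f,g)\|$, i.e.\ $\|u\|_{H_G^1}\lesssim h^{-\alpha-1}\|(f,g)\|$; and $v=ih^{-1}u-f$ forces $\|v\|_{L^2}\lesssim h^{-\alpha-1}\|(f,g)\|$. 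Both are one power too many. There is no obvious way to regain that $h$ from your equation for $u$, because the term $ihf$ does not come with two antiderivatives (unlike the paper's $h^2\Delta_G f$, which the paper treats by the dual estimate $\|(-h^2\Delta_G-1+ihb)^{-1}\|_{H_G^{-1}\to L^2}\lesssim h^{-\alpha-2}$, gaining two powers of $h$ from $\|h^2\Delta_G f\|_{H_G^{-1}}\le h^2\|f\|_{H_G^1}$). The paper's choice of eliminating $u$ is not a matter of convention: it is exactly what makes the crucial relation $u=-ih(v+f)$ supply the missing factor of $h$ on the $H_G^1$ side of the product space, so that $\|u\|_{H_G^1}\le h(\|v\|_{H_G^1}+\|f\|_{H_G^1})\lesssim h^{-\alpha}\|(f,g)\|$ while $\|v\|_{L^2}\lesssim h^{-\alpha}\|(f,g)\|$ directly. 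Your semiclassical-norm workaround would change the statement of the lemma, not prove it; the resolution is to flip which component you eliminate and, in the (b)$\Rightarrow$(a) direction, to prove and use the $L^2\to H_G^1$ and $H_G^{-1}\to L^2$ resolvent bounds $\lesssim h^{-\alpha-2}$ as the paper does.
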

\begin{proof}
	Essentially, the proof is given in \cite{AL14}. For the sake of completeness, we provide the proof here.
	Denote by $U=(u,v)^t$ and $F=(f,g)^t$, then $(ih^{-1}-\mathcal{A})U=F$ is equivalent to
	$$ u=-ih(v+f),\quad (-h^2\Delta_G-1+ihb)v=ihg+h^2\Delta_Gf.
	$$
	The implication (a)$\implies$ (b) follows from making a special choice $(f,g)=(0,g)\in H_G^1\times L^2$. 
	
	To prove (b) $\implies$ (a), we first claim that:
	\begin{itemize}
		\item[(i)] $\|(-h^2\Delta_G-1+ihb)^{-1}\|_{L^2\rightarrow H_G^1}\lesssim h^{-\alpha-2}$;
		\item[(ii)]
		$\|(-h^2\Delta_G-1+ihb)^{-1}\|_{H_G^{-1}\rightarrow L^2}\lesssim h^{-\alpha-2}$.
	\end{itemize}
	Indeed, assume that 
	$$ (-h^2\Delta_G-1+ihb)w=r,
	$$
	from the energy identity 
	$$ \|h\nabla_G w\|_{L^2}^2-\|w\|_{L^2}^2=\Re(r,w)_{L^2},
	$$
	we deduce that 
	$$ \|h\nabla_G w\|_{L^2}\lesssim \|w\|_{L^2}+\|r\|_{L^2}^{1/2}\|w\|_{L^2}^{1/2}.
	$$
	The hypothesis (b) implies that $\|w\|_{L^2}\lesssim h^{-\alpha-1}\|r\|_{L^2}$, hence $\|w\|_{H_G^1}\lesssim h^{-\alpha-2}\|r\|_{L^2}$, and this verifies (i). Note that the argument above is also valid for $(-h^2\Delta_G-1-ihb(y))^{-1}$, which is the adjoint of $(-h^2\Delta_G-1+ihb(y))^{-1}$. By duality, we obtain (ii).

	Finally, from (b) and (ii), 
	\begin{align}\label{L2}
	\|v\|_{L^2}\lesssim h^{-\alpha-1}\|hg\|_{L^2}+h^{-\alpha-2}\|h^2\Delta_G f\|_{H_G^{-1}}\lesssim h^{-\alpha}\|(f,g)\|_{H_G^1\times L^2}. 
	\end{align}
	From the energy identity
	$$ \|h\nabla_G v\|_{L^2}^2-\|v\|_{L^2}^2=\Re\langle ihg+h^2\Delta_Gf,v\rangle_{H_G^{-1},H_G^1},
	$$
	hence
	$$ h^2\|v\|_{H_G^1}^2\lesssim \|v\|_{L^2}^2+\|ihg+h^2\Delta_G f\|_{H_G^{-1}}\|v\|_{H_G^1}.
	$$
	Consequently, $$ h\|v\|_{H_G^1}\lesssim 
	h^{-\alpha}\|(f,g)\|_{H_G^1\times L^2},$$
	thanks to \eqref{L2}.
	Finally, from $u=-ihv-ihf$, we deduce that $\|u\|_{H_G^1}\lesssim h^{-\alpha}\|(f,g)\|_{H_G^1\times L^2}$. This completes the proof of Lemma \ref{equivalenceresolvent} .

\end{proof}

\section{Averaging method in finite dimension}
\label{s:finite_dimensional_averaging}

In this part of Appendix, we prove the following well-known finite-dimensional averaging lemma, used in Section \ref{s:quasimodes}:
\begin{lemma}
	\label{l:finite_dimension_averaging} Let $D \in \R^{n \times n}$ be a diagonal matrix with entries $\lambda_1 < \cdots < \lambda_n$. Let $A_j \in \mathbb{\R}^{n\times n}$ for $j= 1, \ldots, N$ be self-adjoint matrices. Then for all $N\in\N$, there exist diagonal matrices $D_j\in\mathbb{R}^{n\times n}$ for all $j=1,\cdots, N$, such that for any sufficiently small $\epsilon$, there is a unitary matrix $\mathfrak{U}_N(\epsilon) \in \mathbb{C}^{n\times n}$, close to the identity, such that
	$$
	\mathfrak{U}_N \Big( D + \sum_{j=1}^N \epsilon^j A_j \Big) \mathfrak{U}_N^* = D + \sum_{j=1}^N \epsilon^j D_j + O(\epsilon^{N+1}).
	$$
\end{lemma}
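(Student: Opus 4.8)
The strategy is an iterative normal-form construction: at each step $k = 1, \ldots, N$ we conjugate by $e^{i\epsilon^k F_k}$ for a suitable Hermitian matrix $F_k$, chosen so as to kill the off-diagonal part of the $\epsilon^k$-coefficient while only perturbing the higher-order coefficients. Since each $e^{i \epsilon^k F_k}$ is unitary and equals $\Id + O(\epsilon^k)$, the composition $\mathfrak{U}_N := e^{i\epsilon^N F_N} \cdots e^{i\epsilon F_1}$ is unitary and close to the identity. The key algebraic input is that, for a diagonal matrix $D$ with \emph{distinct} entries $\lambda_1 < \cdots < \lambda_n$, the operator $\mathrm{ad}_{D} = [D, \cdot]$ is invertible on the space of off-diagonal matrices: explicitly, if $B$ is off-diagonal then $[iF, D] = B$ is solved by
\begin{equation}
\label{e:solution_cohomological_finite}
(F)_{k_1 k_2} = \frac{1 - \delta_{k_1 k_2}}{i(\lambda_{k_2} - \lambda_{k_1})} B_{k_1 k_2},
\end{equation}
and $F$ is Hermitian when $B$ is (up to the sign/phase bookkeeping, which one checks directly since $A_j = A_j^*$). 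The decomposition of the identity matrix space into diagonal plus off-diagonal matrices, together with this invertibility, is exactly the finite-dimensional ``cohomological equation'' that makes the averaging method work.

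\textbf{Step 1 (first reduction).} Write $P^{(0)} := D + \sum_{j=1}^N \epsilon^j A_j$. Decompose $A_1 = A_1^{\mathrm{diag}} + A_1^{\mathrm{off}}$ into its diagonal and off-diagonal parts, set $D_1 := A_1^{\mathrm{diag}}$, and choose $F_1$ solving $[iF_1, D] = A_1^{\mathrm{off}}$ via \eqref{e:solution_cohomological_finite}. Using the exact Taylor-with-remainder expansion of conjugation,
\begin{equation}
\label{e:remainder_taylor_normal_form}
e^{i \epsilon F_1} P^{(0)} e^{-i \epsilon F_1} = \sum_{k=0}^{N} \frac{\epsilon^k}{k!} \mathrm{ad}_{iF_1}^k(P^{(0)}) + O(\epsilon^{N+1}),
\end{equation}
one checks that the $\epsilon^1$-coefficient becomes $A_1 + [iF_1, D] = A_1^{\mathrm{diag}} = D_1$, which is diagonal, while the coefficients of $\epsilon^j$ for $j \geq 2$ are modified into new Hermitian matrices $A_j^{(1)}$ (depending on $A_1, \ldots, A_j$ and $F_1$), and the $\epsilon^0$-term is still $D$ since $\mathrm{ad}_{iF_1}(D) = -A_1^{\mathrm{off}}$ is already accounted for. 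Crucially the conjugation of a Hermitian matrix by a unitary is again Hermitian, so all $A_j^{(1)}$ are self-adjoint — this is what keeps the induction going.

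\textbf{Step 2 (induction and bookkeeping).} Suppose after $k-1$ steps we have a unitary $\mathfrak{U}_{k-1} = e^{i\epsilon^{k-1}F_{k-1}} \cdots e^{i\epsilon F_1}$ with
$$
\mathfrak{U}_{k-1} P^{(0)} \mathfrak{U}_{k-1}^* = D + \sum_{j=1}^{k-1} \epsilon^j D_j + \sum_{j=k}^{N} \epsilon^j A_j^{(k-1)} + O(\epsilon^{N+1}),
$$
with each $D_j$ diagonal and each $A_j^{(k-1)}$ Hermitian. Decompose $A_k^{(k-1)} = D_k + A_k^{(k-1),\mathrm{off}}$ with $D_k$ diagonal, and pick $F_k$ solving $[iF_k, D] = A_k^{(k-1),\mathrm{off}}$ by \eqref{e:solution_cohomological_finite}. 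Conjugating by $e^{i\epsilon^k F_k}$ and using \eqref{e:remainder_taylor_normal_form} with $F_k$ in place of $F_1$: the $\epsilon^j$-terms for $j < k$ are untouched (since $e^{i\epsilon^k F_k} = \Id + O(\epsilon^k)$ and $\mathrm{ad}_{i F_k}$ applied to a diagonal matrix commuting issues only produce $O(\epsilon^{j+k})$ corrections, but one must verify more carefully that $[F_k, D_j]$ contributes at order $\epsilon^{j+k} \geq \epsilon^{k+1}$, which is fine); the $\epsilon^k$-term becomes $A_k^{(k-1)} + [iF_k,D] = D_k$, diagonal as desired; and the $\epsilon^j$-terms for $j > k$ are updated to new Hermitian matrices $A_j^{(k)}$. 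After $N$ steps we obtain $\mathfrak{U}_N := e^{i\epsilon^N F_N}\cdots e^{i\epsilon F_1}$, unitary and $= \Id + O(\epsilon)$, achieving the claimed normal form.

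\textbf{Main obstacle.} The computations themselves are routine once \eqref{e:solution_cohomological_finite} is in hand; the only delicate point is the bookkeeping of how the conjugation at step $k$ scrambles the lower-order terms. One must make precise that conjugating by $e^{i\epsilon^k F_k}$ leaves the coefficients of $\epsilon^1, \ldots, \epsilon^{k-1}$ invariant modulo $O(\epsilon^{k+1})$ — this follows because $\mathrm{ad}_{iF_k}$ raises the $\epsilon$-order of any already-present $\epsilon^j$-term to at least $\epsilon^{j+k}$, and the only $\epsilon^0$ term $D$ is sent to $\mathrm{ad}_{iF_k}(D) = -A_k^{(k-1),\mathrm{off}}$, which enters precisely at order $\epsilon^k$ and cancels the off-diagonal part, as designed. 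Tracking the remainder $O(\epsilon^{N+1})$ through all $N$ conjugations requires only that $\|F_k\|$ is bounded uniformly (which holds since the $\lambda_i$ are fixed and distinct, so the denominators $\lambda_{k_2}-\lambda_{k_1}$ in \eqref{e:solution_cohomological_finite} are bounded below), and that $\epsilon$ is small enough that each exponential converges — exactly the hypothesis ``for any sufficiently small $\epsilon$''. I do not expect any genuine difficulty beyond careful indexing.
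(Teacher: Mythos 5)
Your proof follows exactly the same iterative normal-form construction as the paper: conjugation by $e^{i\epsilon^k F_k}$ where $F_k$ solves the finite-dimensional cohomological equation, solvable because the diagonal entries of $D$ are distinct, with the $\epsilon^k$-coefficient split into its diagonal part (kept as $D_k$) and its off-diagonal part (killed by $\mathrm{ad}_{iF_k}(D)$). The one slip is a sign inconsistency in Step 1: you prescribe $[iF_1, D] = A_1^{\mathrm{off}}$, yet then claim the $\epsilon^1$-coefficient $A_1 + [iF_1, D]$ equals $A_1^{\mathrm{diag}}$, which actually requires $[iF_1, D] = -A_1^{\mathrm{off}}$; your ``Main obstacle'' paragraph states the correct sign ($\mathrm{ad}_{iF_k}(D) = -A_k^{(k-1),\mathrm{off}}$), so this is a typo in the statement of the cohomological equation, not a conceptual gap, and the Hermiticity check and order bookkeeping are otherwise sound and match the paper.
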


\begin{proof}
	Write $\mathfrak{U}_1 = \mathrm{e}^{i \epsilon F_1}$ with $F_1 \in \R^{n\times n}$ to be chosen. Then, Taylor expansion gives
	\begin{align*}
	\mathfrak{U}_1 \Big( D + \sum_{j=1}^N \epsilon^j A_j \Big) \mathfrak{U}_1^* & = D + i \epsilon [F_1, D] + \epsilon A_1 + (i\epsilon)^2 \int_0^1 (1-s) e^{i s \epsilon F_1}[F_1, [F_1,D]] e^{-is \epsilon F_1} ds \\[0.2cm]
	& \quad + i \epsilon^2 \int_0^1 e^{is \epsilon F_1} [F_1, A_1] e^{- i s \epsilon F_1} ds+O(\epsilon^3).
	\end{align*}
	We choose $F_1$ and $D_1$ so that
	$$
	i [D,F_1] = A_1 - D_1.  
	$$
	This is possible since the eigenvalues of $D$ are distinct, by taking
	\begin{equation}
	\label{e:solution_cohomological_finite}
	(F_1)_{j_1 j_2} :=  \frac{1 - \delta_{j_1 j_2}}{i(\lambda_{j_1} - \lambda_{j_2})} (A_1)_{j_1 j_2}; \quad (D_1)_{j_1 j_2} := \delta_{j_1 j_2} (A_1)_{j_1j_2},
	\end{equation}
	where $\delta_{j_1 j_2}$ denotes the Kronocker delta.  Then we obtain:
	\begin{align}
	\label{e:remainder_taylor_normal_form}
	\mathfrak{U}_1 \left( D + \sum_{j=1}^N \epsilon^j A_j \right) \mathfrak{U}_1^* & = D + \epsilon D_1 + i \epsilon^2 \int_0^1 e^{i s \epsilon F_1} [F_1, s A_1 + (1-s) D_1] e^{- is \epsilon F_1} ds+O(\epsilon^3) \\[0.2cm]
	 & = D + \epsilon D_1 + O(\epsilon^2). \notag
	\end{align} 
	Iterating this procedure, we obtain the claim by defining $\mathfrak{U}_N := \mathrm{e}^{ i \epsilon^N F_n} \cdots \mathrm{e}^{i \epsilon F_1}$ for suitable self-adjoint matrices $F_1, \ldots, F_N$ and diagonal matrices $D_1, \ldots, D_N$.
\end{proof}

\section{Some black-box lemma}

We collect some known 1D resolvent estimates as black boxes. All will be used in Section \ref{normalformimproved}, when reducing the resolvent estimate to the one-dimensional setting. The first estimate is now well-known as the geometric control estimate:
\begin{lemma}[\cite{CoZuazua}]\label{estimate:GCC}
	Let $I\subset\T$ be a non-empty open set. Then there exists $C=C_I>0$, such that for any $v\in L^2(\T)$, $f_1\in L^2(\T), f_2\in H^{-1}(\T)$, $\lambda\geq 1$, if
	$$ (-\partial_x^2-\lambda^2)v=f_1+f_2,
	$$
	we have
	$$ \|v\|_{L^2(\T)}\leq \frac{C}{\lambda}\|f_1\|_{L^2(\T)}+C\|f_2\|_{H^{-1}(\T)}+C\|v\|_{L^2(I)}.
	$$
\end{lemma}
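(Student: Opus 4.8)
\textbf{Proof plan for Lemma \ref{estimate:GCC} (geometric control estimate in 1D).}
The plan is to argue by contradiction and a compactness/semiclassical measure argument, exactly in the spirit of the proof of Lemma \ref{geomtricestimate} in the main text. Suppose the estimate fails. Then there exist sequences $\lambda_n \to \infty$ (the case of bounded $\lambda_n$ being handled separately, see below), $v_n \in L^2(\T)$, $f_{1,n} \in L^2(\T)$, $f_{2,n} \in H^{-1}(\T)$ with
\[
(-\partial_x^2 - \lambda_n^2) v_n = f_{1,n} + f_{2,n}, \qquad \|v_n\|_{L^2(\T)} = 1,
\]
while $\lambda_n^{-1}\|f_{1,n}\|_{L^2} + \|f_{2,n}\|_{H^{-1}} + \|v_n\|_{L^2(I)} \to 0$. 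Set $h_n := \lambda_n^{-1}$, so that $h_n \to 0$ and the equation becomes $(-h_n^2 \partial_x^2 - 1)v_n = h_n^2 f_{1,n} + h_n^2 f_{2,n}$, with right-hand side $o_{L^2}(h_n)$ for the $f_1$ part (since $h_n^2 \|f_{1,n}\|_{L^2} = h_n \cdot h_n\|f_{1,n}\|_{L^2} = o(h_n)$) and, for the $f_2$ part, $o(h_n^2)$ in $H^{-1}$, hence $o(h_n)$ in $H^{-1}$. In particular $(-h_n^2\partial_x^2 - 1)v_n = o_{H^{-1}}(h_n)$, which is enough to run the semiclassical measure machinery.

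The key steps are: (i) First note that $v_n$ is $h_n$-oscillating: testing the equation against $\bar v_n$ gives $\|h_n \partial_x v_n\|_{L^2}^2 = \|v_n\|_{L^2}^2 + o(1) = 1 + o(1)$, so no mass escapes to frequencies $\gg h_n^{-1}$, and the elliptic regularity of $-h_n^2\partial_x^2 - 1$ away from $|\xi| = 1$ shows no mass concentrates at frequencies $\ll h_n^{-1}$ either; thus any semiclassical measure $\mu$ of $(v_n)$ is a probability measure supported on $\{\xi^2 = 1\} \subset T^*\T$. (ii) Extract a subsequence so that $(v_n)$ has a semiclassical defect measure $\mu$. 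Using the equation and standard symbolic calculus, for any $a \in C_c^\infty(T^*\T)$,
\[
\int_{T^*\T} \{\xi^2, a\}\, d\mu = \lim_{n\to\infty} \frac{i}{h_n}\big\langle [-h_n^2\partial_x^2 - 1, \Op_{h_n}^{\w}(a)] v_n, v_n \big\rangle_{L^2} = 0,
\]
where the right-hand side vanishes because the commutator applied to $v_n$, paired with $v_n$, is controlled by the $o_{H^{-1}}(h_n)$ remainder times $\|h_n\partial_x v_n\|_{L^2} = O(1)$. Hence $\mu$ is invariant under the geodesic flow $x \mapsto x \pm 2t$ on $\{\xi = \pm 1\}$, i.e. under translations on $\T$. (iii) The hypothesis $\|v_n\|_{L^2(I)} \to 0$ forces $\mu(I \times \R) = 0$. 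But a translation-invariant probability measure on $\{\xi^2=1\}$ whose $x$-marginal vanishes on a non-empty open set $I$ must be zero, since every point of $\T$ is reached from $I$ by the flow in finite time (the circle is connected and the flow is transitive on each component $\{\xi = +1\}$, $\{\xi = -1\}$). This contradicts $\mu(T^*\T) = 1$.

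The case where $\lambda_n$ does not go to infinity, i.e. $\lambda_n$ stays in a bounded set $[1, \Lambda_0]$, is an elementary compactness argument: on this range of $\lambda$, the operators $-\partial_x^2 - \lambda^2$ form a norm-continuous family, and the unique continuation / observability on the non-empty open set $I$ for the Helmholtz equation on $\T$ (which can be checked by Fourier series since the eigenfunctions $e^{ikx}$ never vanish on an open set) gives a uniform bound by a standard contradiction-compactness argument; alternatively one simply absorbs this range into the constant $C_I$. I expect the main obstacle to be purely bookkeeping: carefully justifying that the low-regularity source term $f_{2,n} \in H^{-1}$ (rather than $L^2$) still produces a negligible contribution in the Wigner/commutator identity — this requires pairing the commutator $[\,\cdot\,, \Op_{h_n}^\w(a)]$, which maps $L^2 \to L^2$ with norm $O(h_n)$ and in fact $H^{-1} \to H^{1}$ appropriately, against $v_n$ and using $\|v_n\|_{H^1_{h_n}} = O(1)$ to close the estimate. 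Since Lemma \ref{estimate:GCC} is quoted from \cite{CoZuazua} as a black box, in the write-up I would either cite it directly or include the short contradiction argument above for completeness.
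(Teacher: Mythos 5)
The paper does not actually prove this lemma: it quotes it as a black box from Cox--Zuazua, and immediately afterwards points to \cite{Bu19} and \cite{ChSun} for the (standard) passage from the one-dimensional uniform stabilization theorem for the damped wave equation to a semiclassical resolvent/observability estimate of this type. So there is no internal proof to compare against. Your proposal is a correct, self-contained alternative proof by contradiction via propagation of semiclassical defect measures, in the same spirit as Lemma~\ref{geomtricestimate} in the main text; it replaces the time-dependent stabilization-plus-Huang--Pr\"uss route with a direct stationary argument. The two are equivalent in strength; yours is arguably more transparent in 1D since the classical flow on $\{\xi^2=1\}\subset T^*\T$ is trivially transitive on each component.

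Two places where you should tighten the details. First, the gap you flag yourself -- the $H^{-1}$ source term -- does close, but your step (i) as written is circular: you use $\|h_n\partial_x v_n\|_{L^2}=O(1)$ to bound $\langle h_n^2 f_{2,n},v_n\rangle$, which is itself an input to proving $\|h_n\partial_x v_n\|_{L^2}=O(1)$. The fix is the usual absorption: from the energy identity one gets $\|h_n\partial_x v_n\|_{L^2}^2\le 1+o(1)+o(h_n)\,(1+h_n^{-1}\|h_n\partial_x v_n\|_{L^2})$ and Young's inequality closes. With that in hand, the commutator term reduces (by self-adjointness) to $\frac{2}{h_n}\Im\langle\Op^{\w}_{h_n}(a)v_n,(-h_n^2\partial_x^2-1)v_n\rangle$, and the $f_{2,n}$ contribution is bounded by $h_n\|\Op^{\w}_{h_n}(a)v_n\|_{H^1}\|f_{2,n}\|_{H^{-1}}=O(h_n)\cdot O(h_n^{-1})\cdot o(1)=o(1)$, exactly as you sketch. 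Second, the bounded-$\lambda$ regime $1\le\lambda\le\Lambda_0$ needs a word more than ``norm-continuous family'': at integer $\lambda$ the operator $-\partial_x^2-\lambda^2$ has a nontrivial kernel spanned by $e^{\pm i\lambda x}$, so the estimate there is genuinely an observability estimate and not a Fredholm statement; it holds because those exponentials never vanish on an open set, so $\|v\|_{L^2(I)}$ controls the kernel projection, and a compactness-uniqueness argument over $[1,\Lambda_0]$ gives a uniform constant. Once those two points are made explicit, your argument is complete and can replace the citation if a self-contained proof is desired.
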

This result can be deduced from the one-dimensional uniform stabilization for the wave equation in \cite{CoZuazua}.
		The passage from the uniform stabilization to the resolvent estimate can be also found in Proposition 4.2 and Appendix A of \cite{Bu19}, or the proof of Proposition 1.4 of \cite{ChSun}.

The second estimate follows from the sharp resolvent estimate for the damped-wave operator on $\T^2$ with rectangular-shaped damping:
\begin{lemma}[\cite{DK20}, Formula (6)]\label{Holder} 
	There exists $h_0>0$, $C>0$ such that for all $0<h<h_0$, $E\in\R$, for any solution $v$ of
	$$ -h^2\partial_y^2v-Ev+ihb_2(y)v=f,
	$$
	we have
	$$ \|v\|_{L^2(\T)}\leq Ch^{-2-\frac{1}{\nu+2}}\|f\|_{L^2(\T)}.
	$$
\end{lemma}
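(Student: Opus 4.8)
The plan is to deduce Lemma~\ref{Holder} directly from the sharp one-dimensional resolvent estimate of Datchev--Kleinhenz, which is \cite[Formula~(6)]{DK20} --- equivalently, the $x$-Fourier-mode version (separation of variables in $x$) of the sharp two-dimensional bound of \cite{K19} for a damping $b_2(y)$ of the form \eqref{e:particular_b} --- after a rescaling matching the two conventions and a trivial case analysis on $E$. The genuine analytic content (the gain $h^{-\frac{1}{\nu+2}}$ over the trivial $O(h^{-3})$ bound, coming from the Airy/turning-point behaviour of solutions at the edge $|y|=y_0$ where $b_2$ degenerates like $(|y|-y_0)^\nu$) is precisely what \cite{DK20} proves, so the ``proof'' here is honestly the elementary reduction below together with that citation.

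First I would dispose of the range where $-h^2\partial_y^2-E$ is elliptic. If $E\le 0$, taking the real part of $\langle(-h^2\partial_y^2-E+ihb_2)v,v\rangle_{L^2(\T)}$ gives $\|h\partial_yv\|_{L^2}^2+|E|\,\|v\|_{L^2}^2\le\|f\|_{L^2}\|v\|_{L^2}$, so for $E\le-\epsilon_0<0$ one already gets $\|v\|_{L^2}\lesssim\|f\|_{L^2}$, far stronger than claimed. For $E>0$ I would set $\hbar:=h/\sqrt{E}$ and divide the equation by $E$, obtaining
\begin{equation*}
\big(-\hbar^2\partial_y^2-1+i\hbar\,E^{-1/2}b_2(y)\big)v=E^{-1}f,
\end{equation*}
which is the standard semiclassical damped-wave equation at scale $\hbar$, with damping $E^{-1/2}b_2$ still vanishing exactly to order $\nu$ near $|y|=y_0$ (and smooth, non-vanishing elsewhere). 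Applying the sharp bound $\|(-\hbar^2\partial_y^2-1+i\hbar\,E^{-1/2}b_2)^{-1}\|_{\mathcal{L}(L^2(\T))}\le C\hbar^{-2-\frac{1}{\nu+2}}$ of \cite{DK20} and translating back yields $\|v\|_{L^2}\le C\hbar^{-2-\frac{1}{\nu+2}}\,E^{-1}\|f\|_{L^2}$. In the application of Section~\ref{normalformimproved} one has $E=1-h^2n^2\in(-\infty,1]$; for $E$ in a fixed compact subset of $(0,\infty)$ (say $\epsilon_0\le E\le1$) this gives $\hbar\sim h$ and $E^{-1}\sim1$, hence exactly $\|v\|_{L^2}\le C_0h^{-2-\frac{1}{\nu+2}}\|f\|_{L^2}$.

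It then remains to treat $E$ in a small neighbourhood of $0$ (in the application only $0\le E=O(h)$ occurs, since consecutive values $1-h^2n^2$ with $|n|\sim h^{-1}$ are spaced by $\sim h$). This narrow window, together with the uniformity of the constant as $E$ ranges over all of $\R$, is part of the $\lambda$-uniform statement \cite[Formula~(6)]{DK20}, to which I would appeal; alternatively, absorbing $Ev$ into the right-hand side ($\|Ev\|_{L^2}=O(h)\|v\|_{L^2}$) and using the imaginary-part a priori estimate $\|b_2^{1/2}v\|_{L^2}\lesssim h^{-1/2}\|f\|_{L^2}^{1/2}\|v\|_{L^2}^{1/2}$ reduces it to the case $E=0$, again covered by \cite{DK20}. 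The main --- and essentially the only --- obstacle is the bookkeeping of verifying that the profile \eqref{e:particular_b}, and its $E^{-1/2}$-rescaled version, meets the hypotheses under which \cite{DK20} establishes Formula~(6), and that their statement is indeed uniform over the required set of spectral parameters.
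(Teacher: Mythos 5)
The paper does not actually prove this lemma. It is placed in the final appendix (``Some black-box lemma'') as a cited result, attributed verbatim to \cite[Formula~(6)]{DK20}: that reference states and proves the one-dimensional semiclassical resolvent bound for $-h^2\partial_y^2 - E + ihb$ with H\"older damping \emph{uniformly} over the spectral parameter $E$, which is exactly the assertion of the lemma. Your proposal ends, as the paper does, with an appeal to that formula, so in the only sense available here your route agrees with the paper's.

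The intermediate rescaling, however, is not a genuine reduction of the general statement to a compact range of $E$. Setting $\hbar=h/\sqrt{E}$ produces the damping $E^{-1/2}b_2$, whose strength tends to zero as $E\to\infty$ while the effective parameter $\hbar$ blows up as $E\to0^+$, so the resulting operator exits both the semiclassical regime and the regime in which the damping is comparable to the reference one. The constant in the Datchev--Kleinhenz bound depends on the damping profile, so applying it at scale $\hbar$ yields only $\|v\|\lesssim C(E)\,h^{-2-\frac{1}{\nu+2}}\|f\|$ with $C(E)$ uncontrolled outside a fixed compact interval of positive $E$. Even granting a constant independent of the damping, the algebra leaves an extra factor $E^{\frac{1}{2(\nu+2)}}$ from $\hbar^{-2-\frac{1}{\nu+2}}E^{-1}$, which diverges for $E\gg1$. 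In short, you end up needing exactly the $E$-uniformity of Formula~(6) as an input, at which point the rescaling is a detour rather than a proof. (Your treatment of $E\le-\epsilon_0$ by the real-part identity is fine; a minor imprecision is the claim that in the application only $E=O(h)$ occurs --- in Proposition~\ref{1Dresolvent} the relevant range is $E=1-h^2n^2\in(-\infty,1]$, with the delicate regime around $|E|\lesssim h^{1-\delta}$, though since the lemma is quoted uniformly in $E$ this does not affect its use.)
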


The third estimate is the almost sharp resolvent estimate for the damped-wave operator on $\T^2$, proved in \cite{AL14} and (essentially) revisited in \cite{S21}:
\begin{lemma}[\cite{AL14}, Theorem 2.6]\label{AL} 
	There exists $h_0>0$, $C>0$ and $\delta_0=\delta_0(\sigma)$ such that for all $0<h<h_0$, for any solution $v$ of
	$$ -h^2\Delta v-v+ihb_1(y)v=f,
	$$
	we have
	$$ \|v\|_{L^2(\T^2)}\leq Ch^{-2-\delta_0}\|f\|_{L^2(\T^2)}.
	$$
\end{lemma}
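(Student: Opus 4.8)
\textbf{Proof plan for Lemma \ref{AL}.}
The statement to be proved is the almost sharp resolvent estimate
$\|v\|_{L^2(\T^2)} \leq C h^{-2-\delta_0}\|f\|_{L^2(\T^2)}$
for solutions of $-h^2\Delta v - v + ihb_1(y)v = f$ on the standard torus, with $b_1$ depending only on $y$, supported in a horizontal strip, and satisfying the H\"older condition \eqref{e:B-H_condition}. This is exactly \cite[Theorem 2.6]{AL14} transcribed to our normalization, so the plan is essentially to quote that theorem; however, since the statement is labelled as a black box to be used inside the Baouendi-Grushin argument, I would include a brief self-contained sketch of the mechanism, following the scheme of \cite{AL14} and \cite{S21}.

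First I would reduce, by a contradiction/compactness argument, to showing that a normalized sequence $(v_h)$ with $\|v_h\|_{L^2}=1$ and $(-h^2\Delta - 1 + ihb_1)v_h = o_{L^2}(h^{2+\delta_0})$ cannot exist. As in Lemma \ref{apriori}, multiplying by $\bar v_h$ and taking real and imaginary parts gives $\|h\nabla v_h\|_{L^2}^2 = 1 + o(h^{2+\delta_0})$ and $\|b_1^{1/2}v_h\|_{L^2} = o(h^{1/2+\delta_0/2})$; the H\"older condition \eqref{e:B-H_condition} then propagates this smallness through commutators with Fourier multipliers in $y$ (exactly the last assertion of Lemma \ref{apriori} and Lemma \ref{preserving}), allowing one to localize $v_h$ in the transversal frequency $|D_y|$. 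Next I would split $v_h = \chi_0(\hbar D_y)v_h + (1-\chi_0(\hbar D_y))v_h$ with $\hbar = h^{1/2}\delta_h^{1/2}$ (here $\delta_h = h^{\delta_0}$), and handle the transversal-high-frequency part by the positive commutator argument of Proposition \ref{TH} (which for the flat Laplacian is even simpler, since $W(x)^2$ is replaced by $1$ and there is no horizontal trapping): testing the equation against $\chi_3(y)y\partial_y v_h$ and using the geometric-control-type estimate Lemma \ref{estimate:GCC} shows this part is $o(1)$.

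For the transversal-low-frequency part $u_h = \chi_0(\hbar D_y)v_h$, since there is no operator $V(x)$ to average away (the Laplacian is already the flat one), one passes directly to the Fourier-in-$x$ reduction: writing $w_{h,E}(y) = \mathcal{F}_x u_h(n,y)$ with $E = 1-h^2n^2$ reduces matters to the one-dimensional equation $-h^2\partial_y^2 w_{h,E} - E w_{h,E} + ihb_1(y)w_{h,E} = r_{h,E}$, uniformly in $E\in\R$. The core of the argument is then the one-dimensional resolvent bound for the H\"older-damped $1$D operator from \cite{AL14}, which gives $\|w_{h,E}\|_{L^2(\T)}\lesssim h^{-2-\delta_0}\|r_{h,E}\|_{L^2(\T)}$ with $\delta_0 = \delta_0(\sigma)\to 0$ as $\sigma\to 0$; this splits by range of $E$ exactly as in the proof of Proposition \ref{1Dresolvent} (the regime $E \geq K_0 h$ by ellipticity, the regime $0\leq E \leq K_1 h$ by the $1$D sharp Garding/H\"older estimate, and the intermediate regime by Lemma \ref{estimate:GCC}), the only difference being that the H\"older profile \eqref{e:B-H_condition} produces the loss $h^{-\delta_0}$ instead of an exact power. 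Summing in $n$ via Plancherel then contradicts $\|v_h\|_{L^2}=1$ once $\|f\|_{L^2}=o(h^{2+\delta_0})$, which establishes the claim.

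The main obstacle, and the point I would treat with most care, is the one-dimensional H\"older-damped resolvent estimate feeding the low-frequency regime: unlike the polynomial profile $|y-y_0|^\nu$ of \eqref{e:particular_b} (handled by the explicit special functions $F(y;\theta)$ of \cite{K19} as in Lemma \ref{Holder}), under the mere H\"older hypothesis \eqref{e:B-H_condition} one must run the commutator/Garding scheme of \cite{AL14}, tracking how the exponent $\sigma$ in $|\nabla b_1|\leq C b_1^{1-\sigma}$, $|\nabla^2 b_1|\leq C b_1^{1-2\sigma}$ degrades the gain $O(h^{-2})$ to $O(h^{-2-\delta_0})$; all of this is done in \cite{AL14} and, since we are only using the result as a black box, it is legitimate to cite it directly rather than reproduce the estimate here.
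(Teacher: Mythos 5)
Your proposal is correct and matches the paper's treatment: Lemma \ref{AL} appears in the appendix explicitly labelled as a black-box citation to \cite[Theorem 2.6]{AL14}, and the paper itself gives no proof. Your additional sketch of the underlying mechanism (transversal frequency splitting, positive commutator for the high-frequency part, Fourier reduction in $x$ to a one-dimensional H\"older-damped resolvent estimate for the low-frequency part) is a faithful reflection of how the argument runs, and you rightly note that for the flat Laplacian the normal-form step that is central to the Grushin case drops out.
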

The last estimate is a special case of the sharp resolvent estimate for the damped-wave operator on $\T^2$ for the narrowly undamped situation:
\begin{lemma}[\cite{LeL17}, Theorem 1.8]\label{LL} 
	There exists $h_0>0$, $C>0$, such that for all $0<h<h_0$, for any solution $v$ of
	$$ -h^2\Delta v-v+ihb_3(y)v=f,
	$$
	we have
	$$ \|v\|_{L^2(\T^2)}\leq Ch^{-2+\frac{2}{\nu+2}}\|f\|_{L^2(\T^2)}.
	$$
\end{lemma}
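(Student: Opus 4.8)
\textbf{Proof proposal for Lemma \ref{LL}.}

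The statement is quoted verbatim from \cite[Theorem 1.8]{LeL17}, so strictly speaking nothing is left to prove; nonetheless let me outline the argument one would reconstruct if one wanted a self-contained proof in the present setting (i.e.\ the case $b = b_3$ satisfying the finite-type condition \eqref{e:finite_type_condition} with $\nu\geq 6$). The plan is to run the usual contradiction/semiclassical-measure scheme: assume the estimate fails, so that after normalizing there is a sequence $h_n\to 0$, $E_n\to E_\infty$, and $v_n\in L^2(\T^2)$ with $\|v_n\|_{L^2}=1$, $(-h_n^2\Delta - 1 + ih_nb_3(y))v_n = f_n$, and $\|f_n\|_{L^2} = o\big(h_n^{2-\frac{2}{\nu+2}}\big)$, and derive a contradiction. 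Taking the imaginary part of the equation paired with $v_n$ gives the a priori bound $\|b_3^{1/2}v_n\|_{L^2} = o(h_n^{1-\frac{1}{\nu+2}})$; taking the real part gives $\|h_n\nabla v_n\|_{L^2}^2 = 1 + o(1)$, so the energy is microlocalized on the characteristic set $\xi^2 + \eta^2 = 1$.

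First I would separate the \emph{transversal} frequency regimes exactly as in Section \ref{normalformimproved}: write $v_n = u_n + w_n$ with $u_n = \chi_0(\hbar_n D_y)v_n$, $w_n = (1-\chi_0(\hbar_n D_y))v_n$, where the second semiclassical parameter is tuned to $\hbar_n = h_n^{1-\frac{1}{\nu+2}}$ (matching $\delta_h = h^{-1/(\nu+1)}$, so $\hbar = h^{1/2}\delta_h^{1/2}$). The high transversal-frequency part $w_n$ is killed by the positive-commutator argument of Proposition \ref{TH} (this is where the hypothesis $\nu\geq 6$ enters, via the iteration that upgrades $\|h\partial_y v_h\|_{L^2}$ from $O(1)$ to $o(\hbar^{4/3})$). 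For the low transversal-frequency part, the key mechanism is the refined one-dimensional analysis near the line $\{y = y_0\}$: Fourier-transforming in $x$ reduces the problem to a family of 1D operators $-h^2\partial_y^2 - E + ih|y-y_0|^\nu$ on $\T_y$, and one must show that the two-microlocal measure obtained by blowing up near $y_0$ at the critical scale $|y-y_0|\sim h^{2/(\nu+2)}$ cannot be invariant under the competing propagation/damping, unless it vanishes. This is the heart of \cite{LeL17}: the profile $F(y;\theta)$ solving the model Neumann problem \eqref{variational}, together with the lower Neumann eigenvalue $\mu_0 > 0$ of $-\partial_y^2 + y^\nu$, encodes the sharp loss/gain of $h^{2/(\nu+2)}$ relative to $O(h^{-2})$. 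One concludes that the whole semiclassical mass escapes, contradicting $\|v_n\|_{L^2} = 1$.

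The main obstacle, and the part genuinely requiring work, is the 1D model analysis near $y_0$: one needs the non-degeneracy $\frac{1}{C_0}\leq |F(0;\theta)|\leq C_0$ for $|\theta|\leq \mu_0/2$ (Lemma \ref{AprioriF} gives the uniform $H^2$ bounds needed for this, in the regime $\nu > 4$), and then a careful matching of the interior oscillatory solution $\cos(\lambda_h y/h)$ with the transmitted profile $h^\delta\alpha_h F\big((|y|-y_0)/h^\delta;\theta_h\big)$, $\delta = \frac{1}{\nu+2}$, to see that the transmitted amplitude is of the exact size $h^{-2+2/(\nu+2)}$ relative to the source. In the present paper this is precisely the content of Proposition \ref{1Dresolvent} and the quasimode construction of Section \ref{s:compact_quasimodes}, run in the narrowly-undamped geometry; so one would invoke the one-dimensional resolvent estimate of \cite{LeL17} as a black box exactly as is done here, and the remaining passage from the 1D estimate to the 2D statement is the routine Plancherel-in-$x$ argument used above in the proof of the upper bound of Theorem \ref{t:sharp_in_2}. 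Hence the cleanest route is simply to cite \cite[Theorem 1.8]{LeL17}, which is what the paper does.
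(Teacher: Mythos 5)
Your bottom line is correct and matches the paper exactly: Lemma~\ref{LL} is invoked purely as a black box from \cite[Theorem~1.8]{LeL17}, and the paper does not reprove it. However, the heuristic sketch you offer for a hypothetical self-contained proof misattributes the mechanism in several ways that would mislead a reader.

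First, Lemma~\ref{LL} concerns the \emph{flat} Laplacian $\Delta=\partial_x^2+\partial_y^2$ with damping $b_3(y)$. For this operator there is no subelliptic degeneracy and no need for the Birkhoff normal form, the Grushin a priori estimates, or the positive-commutator argument of Proposition~\ref{TH} (all of which exist in the paper precisely to \emph{reduce} the Grushin operator to a perturbation of the flat one). For the flat case, Fourier expansion in $x$ decouples the 2D problem exactly, with no remainders, into a one-parameter family of 1D problems $-h^2\partial_y^2 - E_n + ihb_3(y)$, and the whole content of \cite[Thm.~1.8]{LeL17} is the resulting 1D resolvent estimate; invoking Proposition~\ref{TH} and the normal-form machinery here is circular and, in any case, overkill. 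Relatedly, the hypothesis $\nu\geq 6$ you mention belongs to Theorem~\ref{t:sharp_in_2} and enters only through Proposition~\ref{TH} in the Grushin setting; it is not an assumption of Lemma~\ref{LL}.

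Second, you point to Proposition~\ref{1Dresolvent} and Lemma~\ref{AprioriF} as the ``content'' of the 1D analysis. Proposition~\ref{1Dresolvent} proves the 1D bound $h^{-2-\frac{1}{\nu+2}}$ for the \emph{widely} undamped damping $b_2$ satisfying \eqref{e:particular_b}, which is a different exponent and a different geometry from the \emph{narrowly} undamped $b_3$ of \eqref{e:finite_type_condition} in Lemma~\ref{LL} (exponent $h^{-2+\frac{2}{\nu+2}}$, a gain rather than a loss). The model profile $F(y;\theta)$ and Lemma~\ref{AprioriF} appear in the paper only in the \emph{quasimode construction} (lower bound) for $b_2$, not in any upper-bound resolvent proof. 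Finally, the exponents in your sketch are off: the critical 1D blow-up scale balancing $h^2/\ell^2 \sim h\ell^\nu$ is $\ell\sim h^{1/(\nu+2)}$, not $h^{2/(\nu+2)}$; and with $\delta_h=h^{-1/(\nu+1)}$ one has $\hbar=h^{1/2}\delta_h^{1/2}=h^{\nu/(2(\nu+1))}$, which is not $h^{1-1/(\nu+2)}$. None of this affects the paper, since the lemma is cited and not proved, but a reader reconstructing \cite[Thm.~1.8]{LeL17} from your outline would go down the wrong path.
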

We used also intensively a commutator estimate for Lipschitz functions:
		\begin{lemma}\label{commutatorLip}
			Assume that $\kappa\in W^{1,\infty}(\R^d)$ and $a\in S^0(\R^{2d})$, then
			$$ \|[\Op_h^{\w}(a),\kappa ] \|_{\mathcal{L}(L^2(\R^d))}\leq Ch.
			$$
		\end{lemma}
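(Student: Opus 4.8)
\textbf{Proof plan for Lemma \ref{commutatorLip}.}

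The statement is the standard fact that commuting a zeroth-order semiclassical pseudodifferential operator with a Lipschitz multiplier produces an operator of norm $O(h)$. The plan is to reduce to a kernel estimate and exploit that the commutator kernel gains one factor of $h$ from the difference $\kappa(x) - \kappa(y)$ combined with the factor $(x-y)$ it can be paired against.

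First I would write the Schwartz kernel of $[\Op_h^{\w}(a), \kappa]$ explicitly: it is
$$
K_h(x,y) = \frac{1}{(2\pi h)^d} \int_{\R^d} a\!\left( \frac{x+y}{2}, \xi \right) \big( \kappa(y) - \kappa(x) \big) e^{\frac{i}{h}(x-y)\cdot \xi} \, d\xi .
$$
Since $\kappa \in W^{1,\infty}$, we have the pointwise bound $|\kappa(y) - \kappa(x)| \leq \|\nabla \kappa\|_{L^\infty} |x-y|$. The factor $|x-y|$ is not directly integrable against the oscillatory integral, so the key manoeuvre is to absorb one power of $(x_j - y_j)$ into a derivative $h\partial_{\xi_j}$ of the phase and integrate by parts once in $\xi$; because $a \in S^0$, the symbol $\partial_{\xi_j} a$ is still a (decaying) symbol of order $-1$, and the net effect is to replace $a \cdot (\kappa(y)-\kappa(x))$ by $h$ times a bounded symbol-type amplitude with the Lipschitz difference quotient as a harmless $L^\infty$ multiplier. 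A cleaner way to organize this: write $\kappa(y) - \kappa(x) = (y-x) \cdot \int_0^1 \nabla \kappa(x + t(y-x))\, dt =: (y-x)\cdot \ell(x,y)$ with $\ell \in L^\infty$, quantize $(y-x)$ as $-h D_\xi$ acting on the phase, and conclude that $[\Op_h^{\w}(a),\kappa] = h \, \Op_h^{\w,\ell}(\widetilde a) + \cdots$ where $\widetilde a$ involves $\nabla_\xi a \in S^{-1}$; then apply the Calderón--Vaillancourt theorem (Proposition \ref{CaVai}, Corollary \ref{AppcorA2}) to the resulting operator-with-$L^\infty$-coefficients to get the uniform $\mathcal{L}(L^2)$ bound.

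The main obstacle is that $\kappa$ is only Lipschitz, so I cannot use the full symbolic calculus (which would require $\kappa \in C^\infty$ or at least $C^{2d}$) to expand the commutator into principal symbol plus smooth remainders; the Poisson-bracket formula $[\Op_h(a),\kappa] = \frac{h}{i}\Op_h(\{a,\kappa\}) + O(h^2)$ is simply not available. Instead one must work directly with the integral representation and treat $\nabla \kappa$ (and more precisely the averaged gradient $\ell(x,y)$) as a bounded but non-smooth coefficient. The technical point to be careful about is that after the integration by parts in $\xi$, the amplitude depends on $x,y$ both through $\frac{x+y}{2}$ (smoothly, via $a$) and through $\ell(x,y)$ (merely boundedly); one should either invoke a version of the Calderón--Vaillancourt theorem for amplitudes with $L^\infty$ dependence on the non-integrated variables, or, more elementarily, use a Cotlar--Stein / Schur-test argument on the kernel $K_h$ after the integration by parts, estimating $\int |K_h(x,y)|\, dy$ and $\int |K_h(x,y)|\, dx$ by $Ch$ using the decay of $\partial_\xi a \in S^{-1}$ and stationary-phase-type bounds in $h$. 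I expect the Schur-test route to be the most robust here and would carry it out if the soft $L^\infty$-Calderón--Vaillancourt statement is not considered standard enough to cite.
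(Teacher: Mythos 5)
The paper does not prove this lemma itself; it defers to Corollary A.2 of the external reference \cite{BuS21}, so there is no in-house argument to compare against. Your reduction is the standard one and the key idea is correct: write $\kappa(y)-\kappa(x)=(y-x)\cdot\ell(x,y)$ with $\ell(x,y)=\int_0^1\nabla\kappa\bigl((1-t)x+ty\bigr)\,dt\in L^\infty$, absorb $(y-x)$ into $-\tfrac{h}{i}\nabla_\xi$ acting on the phase, and integrate by parts once, so the commutator kernel becomes $K_h(x,y)=\tfrac{h}{i}\,\ell(x,y)\,L_h(x,y)$ with $L_h$ the Schwartz kernel of $\Op_h^{\w}(\nabla_\xi a)$. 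The single integration by parts is exactly where the factor of $h$ is gained.

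The gap is in the finish. Your primary suggestion --- a Calder\'on--Vaillancourt theorem tolerating merely $L^\infty$ dependence on the non-integrated variables --- does not exist as stated: modulating the kernel of a bounded Calder\'on--Zygmund-type operator by an arbitrary $L^\infty$ function of $(x,y)$ (a Schur multiplier) does not in general preserve $L^2$-boundedness, and Hwang's theorem (Proposition~\ref{CaVai}) genuinely requires at least one derivative in each coordinate. Your fallback, the Schur test on $K_h$, does close the argument, but only under the decay you implicitly invoke when writing ``$\partial_\xi a\in S^{-1}$,'' i.e.\ the Kohn--Nirenberg reading $|\partial_\xi^\beta a|\lesssim\langle\xi\rangle^{-|\beta|}$. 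Granting this, a dyadic decomposition of $a$ in $\xi$ gives $\sup_{x}\int_{\R^d}|z|\,\bigl|\mathcal{F}_\xi a(x,z)\bigr|\,dz<\infty$, which after $y=x+hz$ is precisely $\sup_x\int|K_h(x,y)|\,dy\lesssim h\,\|\nabla\kappa\|_{L^\infty}$, and symmetrically in $y$; the Schur test then closes. Two caveats you should keep in mind: $\mathcal{F}_\xi a(x,\cdot)$ is in general only a tempered distribution (it can carry a Dirac mass at $z=0$, which the factor $|z|$ annihilates), so the kernel estimate must be run on the dyadic pieces and summed; and if $S^0$ were instead read as the semiclassical class $S^{0,0}$ (bounded derivatives, no $\xi$-decay --- the class in which Proposition~\ref{CaVai} is stated), then $L_h$ is a genuine distribution near the diagonal and the Schur test is simply unavailable. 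In that case one would Fourier-decompose $a$ in $x$, use $\Op_h^{\w}(e^{ix\cdot\eta}b(\xi))=e^{ix\cdot\eta/2}\,b(hD)\,e^{ix\cdot\eta/2}$ to reduce to Fourier multipliers, and apply the elementary convolution bound $\|[b(hD),\kappa]\|_{\mathcal{L}(L^2)}\le h\|\nabla\kappa\|_{L^\infty}\,\|z\,\check b(z)\|_{L^1(\R^d)}$, summing the pieces by Cotlar--Stein.
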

		The proof of this Lemma is standard and can be found, for example, in Corollary (A.2) of \cite{BuS21}. The proof there applies to Weyl quantization as well. In various places of this article, we apply Lemma \ref{commutatorLip} to deduce that $[b^{1/2},\Op_h^{\w}(a)]=O_{\mathcal{L}(L^2)}(h)$ and $[b^{1/2},[b^{1/2},\Op_h^{\w}(a)]]=O_{\mathcal{L}(L^2)}(h^2)$, thanks to the hypothesis \eqref{e:B-H_condition}.

		\subsection*{Statement of competing Interests}
		The authors have no competing interests to declare that are relevant to the content of this article.

\end{document}